\def\l@subsection{\@tocline{2}{0pt}{2.5pc}{5pc}{}}
\newtheorem{theorem}{Theorem}[section]
\newtheorem{lemma}[theorem]{Lemma}
\newtheorem{corollary}[theorem]{Corollary}
\newtheorem{proposition}[theorem]{Proposition}
\theoremstyle{definition}
\newtheorem{definition}[theorem]{Definition}
\newtheorem{remark}[theorem]{Remark}
\newtheorem{example}[theorem]{Example}
\newcommand{\C}{\mathbb{C}}
\newcommand{\R}{\mathbb{R}}
\newcommand{\Q}{\mathbb{Q}}
\newcommand{\Z}{\mathbb{Z}}
\newcommand{\N}{\mathbb{N}}
\newcommand{\HH}{\mathbb{H}}
\newcommand{\Har}{\mathscr{H}}
\newcommand{\fg}{\mathfrak{g}}
\newcommand{\ff}{\mathfrak{f}}
\newcommand{\MC}[1]{\mathcal{#1}}
\newcommand{\eps}{\epsilon}
\newcommand{\dd}{\partial}
\newcommand{\inj}{\hookrightarrow}
\newcommand{\ra}{\rightarrow}
\newcommand{\om}[1]{\operatorname{#1}}
\newcommand{\ovl}[1]{\overline{#1}}
\newcommand{\wt}[1]{\widetilde{#1}}
\newcommand{\Mod}[1]{\ (\mathrm{mod}\ #1)}   % Use this throughout then.
\newcommand{\cyc}[1]{\langle #1 \rangle}
\numberwithin{equation}{section}
\begin{document}

\date{\today}
\author{Gard Olav Helle}
\title{Equivariant Instanton Floer Homology and Calculations for the Binary Polyhedral Spaces}

\begin{abstract}
    We calculate the equivariant instanton Floer homology, in the sense of Miller Eismeier, for the trivial $SO(3)$-bundle over the binary polyhedral
spaces with coefficients in a PID $R$ for which $2\in R$ is invertible. Along the way we modify a part of the algebraic construction needed to define
the equivariant instanton Floer groups.
\end{abstract}

\maketitle
\tableofcontents

\emph{The research contained in this paper was
conducted during the authors time as
a PhD fellow at the University of Oslo
and formed a part of the authors thesis.}

\section{Introduction}
Instanton Floer homology has appeared in many forms in the literature.
In this paper we will be concerned with a fairly recent version: the equivariant instanton Floer homology in the sense of Miller Eismeier \cite{Miller19}.
A pair $(Y,E)$ consisting of a closed oriented $3$-manifold $Y$ and
an $\om{SO}(3)$-bundle $E\ra Y$ is said to be weakly admissible if either
$Y$ is a rational homology sphere, $H_*(Y;\Q)\cong H_*(S^3;\Q)$,
or the second Stiefel-Whitney class $w_2(E)$ admits no torsion lift
in $H^2(Y;\Z)$. Associated with such a pair and a commutative ring of coefficients $R$ (required to be a principal ideal domain), there
are four (relatively) $\Z/8$ graded groups
\begin{equation} \label{Intro-IF-Groups}
 I^+(Y,E;R), \;\; I^-(Y,E;R), \;\; I^\infty(Y,E;R) \; \mbox{ and }\;  \wt{I}(Y,E;R),
\end{equation}  
that together constitute the equivariant instanton Floer homology of
the pair $(Y,E)$. The first three groups carry module structure over 
$H^{-*}(\om{BSO}(3);R)$ and fit into an exact triangle
\begin{equation} \label{Intro-Exact-Triangle} 
\begin{tikzcd} I^+(Y,E;R) \arrow{rr}{[3]} & {} &  I^-(Y,E;R) \arrow{ld}{[0]} \\
 {} & I^\infty(Y,E;R) \arrow{lu}{[-4]} & {}, \end{tikzcd}
\end{equation}
where the numbers denote the degrees of the maps,   
while the final group is a module over $H_*(\om{SO}(3);R)$. 
The construction of the equivariant instanton Floer groups is naturally
divided in two steps. The first step consists of applying Morse-theoretic
techniques to the Chern-Simons functional, defined on a framed configuration
space of connections in the bundle, to construct a type of Morse-Bott complex
$\wt{CI}(Y,E;R)$ equipped with an action of $C_*(\om{SO}(3);R)$.
The second step, purely algebraic in nature, extracts the equivariant groups
from this complex using the bar construction from algebraic topology. 

There are two main purposes of this paper. The first
is to rework some of the algebra needed to set up the equivariant Floer groups.
Below we will elaborate further on why we have found this to be necessary.
The second is to calculate the equivariant Floer groups associated with
the trivial $\om{SO}(3)$-bundle over the binary polyhedral spaces, that is, 
the orbit spaces $Y_\Gamma \coloneqq S^3/\Gamma$ for finite subgroups $\Gamma\subset \om{SU}(2)$. For reasons that will become apparent later, our calculations will be restricted to the case where $2$ is invertible in the ring of coefficients
$R$. In fact, we will focus primarily on the calculations for $\overline{Y}_\Gamma$, the manifold $Y_\Gamma$ equipped with the opposite orientation,
and only briefly treat the calculations for $Y_\Gamma$.  

To be able to explain our results we will first elaborate further on
the brief explanation of the equivariant instanton Floer groups given above. 
To simplify the exposition we will assume that $Y$ is a rational homology
sphere and that $E\ra Y$ is the trivial $\om{SO}(3)$-bundle, both of which
are satisfied in our situation of binary polyhedral spaces. In
this case one may just as well replace $E$ by the trivial
$\om{SU}(2)$-bundle, so we will make this assumption as well.  
Let $\MC{A}_E$ denote the space of connections in $E$ and let $\MC{G}_E$
be the group of gauge transformations, that is, the group of bundle automorphisms $u\colon E\ra E$. The gauge group acts by pullback on the space
of connections, but this action is not free. Indeed, the stabilizer of
a connection $A\in \MC{A}_E$ may be identified, through evaluation
at a point $b\in Y$, with the centralizer of the holonomy group
$\om{Hol}_A(b)\subset \om{Aut}(E_b)\cong \om{SU}(2)$. To remedy this 
one may fix a base point $b\in Y$ and define the space of framed connections by
\[ \wt{\MC{A}}_E \coloneqq \MC{A}_E \times E_b . \]
Here the gauge group acts diagonally, through evaluation in the second component. This action is free and in suitable Sobolev completions (see \cite[Section~2.2]{Miller19}) the configuration space 
\[ \wt{\MC{B}}_E \coloneqq \wt{\MC{A}}_E /\MC{G}_E  \]
is a smooth Hilbert manifold. The right $\om{SU}(2)$-action on the fiber
descends to a right $\om{SO}(3)=\om{SU}(2)/\{\pm1\}$-action on the configuration space. 

The Chern-Simons functional $\om{cs}\colon \wt{\MC{B}}_E\ra \R/\Z\cong S^1$
may be defined for $[A,u]\in \wt{\MC{B}}_E$ represented by
$(A,u)\in \wt{\MC{A}}_E$ by 
\begin{equation}  \label{CS-Functional} 
\om{cs}([A,u]) = \frac1{8\pi^2} \int_Y \om{Tr}(a\wedge da + \frac23 a\wedge a\wedge a)  \in \R/\Z 
\end{equation} 
where $a\in \Omega^1(Y,\mathfrak{su}(2))$ is a connection form representing $A$
in a global trivialization of the bundle (see \cite[p.~18]{Donaldson02}).
The fact that this is independent of the gauge equivalence class of $A$,
or equivalently the choice of global trivialization of the bundle,
is shown in \cite[Proposition~1.27]{Freed95}. Clearly, $\om{cs}$ is independent
of the framing coordinate and is therefore $\om{SO}(3)$-invariant.
The equivariant framed Floer complex $\wt{CI}(Y,E)$ is constructed
as a type of Morse-Bott complex for the functional $\om{cs}\colon \wt{\MC{B}}_E\ra S^1$. This means that the complex is, in a certain sense, generated
by the set of critical orbits of $\om{cs}$ and the differentials are defined
via fiber products with the spaces of negative gradient flow lines between the various critical orbits. First, the set of critical points of $\om{cs}$ regarded as a map $\MC{A}_E\ra \R/\Z$ is precisely the set of flat
connections in $E$, so that $\MC{C}$, the set of critical orbits in $\wt{\MC{B}}_E$,
consists precisely of the $\om{SO}(3)$-orbits of the gauge equivalence
classes of the flat connections. Furthermore, the negative gradient
flow equation for $\om{cs}$ is, for a path of connections $A_t\in \MC{A}_E$,
given by, disregarding the normalizing constant,
\[ \frac{dA_t}{dt} = -*F_{A_t}  \]
where $F_{A_t}$ is the curvature of the connection $A_t$ and
$*:\Omega^2(Y,\om{su}(E))\ra \Omega^1(Y,\om{su}(E))$ is the Hodge star
operator. Now, a path of connections $A_t$ may in a natural way be identified
with a connection $\mathbf{A}$ in the bundle $\R\times E\ra \R\times Y$
over the cylinder. In terms of this relation the negative gradient
flow equation takes the form of the familiar anti-self-dual or instanton equation
\[ F_{\mathbf{A}}^+ =0   .\]
In general, as in finite dimensional Morse theory, one is typically forced
to introduce a perturbation of the functional to ensure that
the critical points are non-degenerate and that the spaces of flow lines
are cut out transversally. A flat connection $\alpha$, regarded as a critical point of $\om{cs}$, is non-degenerate if the twisted cohomology group
$H^1_\alpha(Y;\om{su}(E))$ vanishes. Given a pair of non-degenerate flat
connections $\alpha$ and $\beta$ and a relative homotopy class $z\in \pi_1(\wt{\MC{B}}_E,\alpha,\beta)$ one may form a configuration space
$\wt{\MC{B}}_{z,\R\times E}(\alpha,\beta)$ of connections framed at $(0,b)\in \R\times Y$ approaching $\alpha$ at $-\infty$ and $\beta$ at $+\infty$, whose corresponding
path in $\wt{\MC{B}}_E$ belongs to the homotopy class $z$, modulo gauge
transformations that approach $\alpha$-harmonic and $\beta$-harmonic 
gauge transformations at $\pm\infty$. This space also carries
a residual action of $\om{SO}(3)$ and there are equivariant end-point maps
\begin{equation} \label{Intro-Conf-Endpoint-Maps} 
\begin{tikzcd} \alpha & \wt{\MC{B}}_{z,\R\times E}(\alpha,\beta) \arrow{l}[swap]{e_+}
\arrow{r}{e_-} & \beta \end{tikzcd}
\end{equation} 
defined by parallel transport of the framing to $\pm\infty$ along $\R\times \{b\}$. The moduli space of framed instantons $\wt{\MC{M}}_z(\alpha,\beta)=\{ [\mathbf{A},u] \in \wt{\MC{B}}_{\R\times Y,z}(\alpha,\beta) : F_\mathbf{A}^+=0\}$ is then cut out transversally provided the framed analogue of the
anti-self-dual operator
\[ D_\mathbf{A}=(d_\mathbf{A}^*,d_\mathbf{A}^+)\colon 
\Omega^1(\R\times Y,\mathfrak{su}(E)) \ra \Omega^0(\R\times Y,\mathfrak{su}(E))\oplus
\Omega^+(\R\times Y,\mathfrak{su}(E))  \]
is (in appropriate Sobolev completions) surjective for all $[\mathbf{A},u]\in\wt{\MC{M}}_z(\alpha,\beta)$. For the trivial $\om{SU}(2)$-bundle over the binary polyhedral
spaces all of the flat connections are non-degenerate and the moduli
spaces are cut out transversally, so there is no need to introduce a
perturbation (see \cite[Section~4.5]{Austin90}).

In \cite{Miller19} it is shown, more generally, that there exist generic perturbations 
that achieve the above (equivariant) transversality conditions. We will assume that such a perturbation has been chosen, leaving it out from the notation. There
is then a finite set $\MC{C}$ of critical orbits. Given $\alpha\in \MC{C}$
the possible stabilizers in $\om{SO}(3)$ are $\{1\}$, $\om{SO}(2)$ or  
$\om{SO}(3)$, so that
\begin{equation} \label{Orbits} 
\alpha\cong \om{SO}(3), \;\; \alpha \cong \om{SO}(3)/\om{SO}(2)\cong S^2
\; \mbox{ or } \; \alpha \cong * .
\end{equation} 
We say that $\alpha$ is irreducible, reducible or fully reducible respectively. Given a pair of critical orbits $\alpha,\beta\in \MC{C}$
there is then for each homotopy class $z\in \pi_1(\wt{\MC{B}}_E,\alpha,\beta)$ a finite-dimensional moduli space 
$\wt{\MC{M}}_z(\alpha,\beta)$ equipped with
an $\om{SO}(3)$-action and equivariant end-point maps. There is a
natural $\R$-action by translations on the moduli space. Provided
$\alpha\neq \beta$ this action is free and the
quotient is denoted by $\wt{\MC{M}}_z^0(\alpha,\beta)$. The situation is summarized in the following diagram,
\[ \begin{tikzcd} {} & \wt{\MC{M}}_z(\alpha,\beta) \arrow{ld}[swap]{e_-} \arrow{rd}{e_+}
\arrow{d}{/\R} & {} \\
\alpha & \wt{\MC{M}}^0_z(\alpha,\beta) \arrow{l}{e_-} \arrow{r}[swap]{e_+}
& \beta, \end{tikzcd}  \]
where the upper part is obtained by restriction from diagram \eqref{Intro-Conf-Endpoint-Maps}. A relative grading, depending on the relative homotopy
class $z$, may then be defined by
\[ \om{gr}_z(\alpha,\beta) = \om{dim} \wt{\MC{M}}_z(\alpha,\beta)-\om{dim}\alpha \in \Z .\]
This function satisfies a few key properties. 
\begin{enumerate}[label=(\roman*),ref=(\roman*)]
\item For $\alpha,\beta,\gamma\in \MC{C}$ and relative homotopy classes
$z$, $w$ connecting $\alpha$ to $\beta$ and $\beta$ to $\gamma$ respectively
one has 
\[ \om{gr}_{z*w}(\alpha,\gamma)=\om{gr}_z(\alpha,\beta)+\om{gr}_w(\beta,\gamma). \]
\item For $\alpha,\beta\in \MC{C}$ and relative homotopy classes $z,z'$ connecting them one has 
\[ \om{gr}_z(\alpha,\beta)-\om{gr}_{z'}(\alpha,\beta)\in 8\Z  .\]
Moreover, for any integer $n\equiv \om{gr}_z(\alpha,\beta) \Mod{8}$
there exists a homotopy class $w$ such that $\om{gr}_{w}(\alpha,\beta)=n$.
\end{enumerate} 
The relative grading $\om{gr}\colon \MC{C}\times \MC{C}\ra \Z/8$ is then
defined by $\om{gr}(\alpha,\beta)\equiv \om{gr}_z(\alpha,\beta) \Mod{8}$
for any choice of $z$. In our case of a trivial bundle, we may lift
this to an absolute grading $j\colon \MC{C}\ra \Z/8$ by taking the trivial product connection $\theta$ as a reference point, that is, $j(\alpha) \coloneqq \om{gr}(\alpha,\theta)$.

As a graded module, the framed Floer complex $\wt{CI}(Y,E;R)$ may now be
defined to be the totalization of the bigraded module,
\[ \wt{CI}(Y,E;R)_{s,t} = \bigoplus_{j(\alpha)\equiv s} C_t^{gm}(\alpha; R) \]
for all $s,t\in \Z$, where the direct sum is taken over all $\alpha\in \MC{C}$
with $j(\alpha)\equiv s\Mod{8}$. Here, for a smooth manifold $X$,
$C^{gm}_*(X;R)$ is a chain complex of free $R$-modules generated by
(equivalence classes of) maps $\sigma\colon P\ra X$ from a certain set of
stratified topological spaces with quite a lot of additional structure
(see \cite[Section~6]{Miller19}). Importantly, this version of
geometric homology supports a notion of fiber products on the chain level,
and the homology $H_*^{gm}(X;R)$ coincides with the usual singular homology
of $X$.
The differential in $\wt{CI}(Y,E)$
is then defined on a generator $c=[\sigma\colon P\ra \alpha]\in C_t^{gm}(\alpha)\subset \wt{CI}(Y,E)_{s,t}$ by taking the sum of the internal differential
$\dd^{gm} c\in C^{gm}_{t-1}(\alpha)$
and various fiber products with (compactified) moduli spaces
\[ [P\times_{\alpha} \overline{\MC{M}}_z(\alpha,\beta) \ra \beta] \in C^{gm}_{t+\om{gr}_z(\alpha,\beta)-1}(\beta)  .\] 
We will state the key results that make all of this precise in section $2$.

From this point we omit the reference to the ring of coefficients in
the notation. The framed Floer complex $\wt{CI}(Y,E)$ carries two important pieces of additional structure. First, $A:=C^{gm}_*(\om{SO}(3))$ is a differential
graded algebra and the action on $C^{gm}_*(\alpha)$ for $\alpha\in \MC{C}$
gives $\wt{CI}(Y,E)$ the structure of a right differential graded
$A$-module. Second, $\wt{CI}(Y,E)$ is filtered by index:
\[ F_p\wt{CI}(Y,E)_n = \bigoplus_{j(\alpha)\equiv s,\; s\leq p} C^{gm}_{n-s}(\alpha) \]
for $p\in \Z$. This is a filtration by differential graded $A$-submodules
and is naturally referred to as the index filtration.

When $2$ is invertible in the ground ring, Miller Eismeier shows in \cite[Section~7]{Miller19} that it is possible to replace the pair $(\wt{CI}(Y,E),C^{gm}_*(\om{SO}(3))$ with a simpler pair $(DCI(Y,E),H_*(\om{SO}(3))$.
The complex $DCI(Y,E)$, called the Donaldson model, is defined
as a graded module to be the totalization of the bigraded module
\[ DCI(Y,E)_{s,t} =\bigoplus_{j(\alpha)\equiv s} H_t(\alpha)   \]
and the differential has a concrete description. 
Furthermore, as $1/2\in R$, $H_*(\om{SO}(3))\cong \Lambda_R[u]$ is an exterior algebra on a single generator $u$ in degree $3$. Using the results
of \cite{Austin95}, we are able to describe these complexes explicitly
for all the binary polyhedral spaces. This is the key input needed
for our calculations.

The fourth  group, $\wt{I}(Y,E)$, in \eqref{Intro-IF-Groups} is simply 
defined to be the homology of $\wt{CI}(Y,E)$. To explain the other
three groups, we have to delve into some differential graded algebra.
Let $A$ be an augmented differential graded (DG) algebra.
In \cite[Appendix~A]{Miller19} four functors
$C^+_A,C^{+,tw}_A,C^-_A,C^\infty_A$ from the category of right DG $A$-modules
(degree $0$ maps assumed) to the category of left DG $C^-_A(R)$-modules
are constructed. 
The basic tool employed is the two sided bar construction
$B(M,A,N)$ associated with the algebra $A$, a right $A$-module $M$
and a left $A$-module $N$ (see Definition \ref{Def-Bar-Construction}).
Then 
\begin{equation} \label{Intro-Cpm}
 C^+_A(M) \coloneqq B(M,A,R) \; \mbox{ and } \; C^-_A(M) \coloneqq \om{Hom}_A(B(R,A,A),M).
 \end{equation} 
Furthermore, $C^{+,tw}_A(M)\coloneqq B(M,A,D_A)$ where $D_A$ is a dualizing object
associated with $A$ (see Definition \ref{Def-Dualizing-Object}). There
is a natural transformation $N\colon C^{+,tw}_A\ra C^-_A$, called the norm map,
and the Tate functor $C^\infty_A$ is defined to be the mapping cone of $N$.
It is this that eventually gives rise to the exact triangle connecting the
three Floer groups in \eqref{Intro-Exact-Triangle}.  
For a right $A$-module $M$ the homology of $C^+_A(M)$ is denoted by
$H^+_A(M)$ and similarly for the other functors. 
 
Provided the ground ring $R$ is a principal ideal domain and $A$ is degreewise free over $R$, these functors are all exact and preserve quasi-isomorphisms
(isomorphisms upon passage to homology). Aside from the role played
in the construction of $C^\infty_A$ the functor $C^{+,tw}_A$ is mostly inessential. Indeed, provided $A$ satisfies a Poincar\'{e} duality
hypothesis of degree $d\in \Z$, valid in the situations of interest with $d=3$, 
there is a degree $d$ isomorphism $H^+_A(M)\cong H^{+,tw}_A(M)$ for all
right $A$-modules $M$.   

To define the equivariant Floer groups in \eqref{Intro-IF-Groups},
Miller Eismeier also constructs what he calls completed versions, 
$\hat{C}^\bullet_A$ for $\bullet\in \{+,-,(+,tw),\infty\}$, of the above functors. The equivariant Floer groups $I^\bullet(Y,E)$ are then defined by applying $\hat{C}^\bullet_A$ to $\wt{CI}(Y,E)$, with $A=C^{gm}_*(\om{SO}(3))$,
and then passing to homology. To explain these completed versions
it is important to note that the bar construction $B(M,A,N)$ is obtained
as the totalization of a double complex $(B_{*,*}(M,A,N),\dd',\dd'')$.
It therefore carries two filtrations, one by the first simplicial degree, and one by the second internal degree. In \cite[Definition~A.6]{Miller19} the completed bar construction $\hat{B}(M,A,N)$ is defined to be the completion
of $B(M,A,N)$ with respect to the filtration by internal degree,
and the finitely supported cobar construction $c\hat{B}(N,A,M)\subset \om{Hom}_A(B(N,A,A),M)$ is defined to be the submodule of functionals $B(N,A,A)\ra M$
vanishing on $B_{p,q}(N,A,A)$ for all sufficiently large $p$. 
The completed versions $\hat{C}^+_A$ and $\hat{C}^-_A$ are then defined by
replacing $B(M,A,R)$ with $\hat{B}(M,A,R)$ and $\om{Hom}_A(B(R,A,A),M)$
with $c\hat{B}(R,A,M)$ in \eqref{Intro-Cpm}, while $\hat{C}^{+,tw}_A(M)\coloneqq \hat{B}(M,A,\hat{D}_A)$ where $\hat{D}_A$ is a completed version of the dualizing object. Finally, the norm map extends to a map $\hat{N}:\hat{C}^{+,tw}_A(M)\ra \hat{C}^-_A(M)$ and $\hat{C}^\infty_A(M)$ is defined to be the cone of this extended map.    

Let $M=\wt{CI}(Y,E)$ be equipped with the index filtration and let
$A= C^{gm}_*(\om{SO}(3)$. The intention of the above completions is to
make sure that for $\bullet\in \{+,-,(+,tw),\infty\}$ the induced filtration
$F_p\hat{C}^\bullet_A(M):= \hat{C}^\bullet_A(F_pM)$ is well-behaved;
by which we mean exhaustive and complete Hausdorff (see \cite[Definition~2.1]{Boardman99}). This is to ensure that the associated spectral sequence
converges conditionally, in the sense of \cite{Boardman99}, to the
correct target. However, the fact that this induced filtration
is well-behaved in the above sense is not justified in \cite{Miller19}.
In fact, we find this difficult to believe in the case of
$\hat{C}^{+,tw}_A$ and $\hat{C}^\infty_A$. To remedy this potential issue
we suggest a different approach to the completed functors. Rather than defining
completed functors on the whole category of $A$-modules, we simply promote
the existing functors $C^\bullet_A$ to functors between the corresponding
filtered categories and then compose with a full completion functor
to ensure that the resulting filtrations are well-behaved. To be
precise, given a right DG $A$-module $M$ equipped with an increasing
filtration by DG $A$-submodules $ \{F_pM\}_p$ we define
\begin{align*} & \hat{C}^\bullet_A(M)  \coloneqq \om{lim}_q\om{colim}_p C^\bullet_A(F_pM/F_qM)  \;\; \mbox{ filtered by }   \\
& F_p\hat{C}^\bullet_A(M)  \coloneqq \om{lim}_{q<p} C^\bullet_A(F_pM/F_qM)
\;\; \mbox{ for } \;\; p\in \Z
\end{align*}
for each $\bullet \in \{+,-,(+,tw),\infty\}$. This approach has the 
advantage that essentially all the relevant properties
of the functors $C^\bullet_A$ extend in a straightforward way to the
functors $\hat{C}^\bullet_A$.  

For $A=C^{gm}_*(\om{SO}(3))$ and $M = \wt{CI}(Y,E)$ we define
filtered complexes of $C^-_A(R)$-modules
\[ CI^\bullet(Y,E) = \hat{C}^\bullet_A(M)\;\; \mbox{ for } \;\; \bullet\in \{+,-,(+,tw),\infty\}  \]
and define $I^\bullet(Y,E)$ to be its homology. When $\frac12 \in R$ we
also define $DCI^\bullet(Y,E)$ as above with $A=\Lambda_R[u]$ and $M=DCI(Y,E)$.
Our main results concerning this construction are simply that all of
the desired formal properties of the groups mentioned at the beginning
of the section are valid. This is established in section $5$. Furthermore, 
by the nature of our definitions we are with certainty able to establish the existence of the index spectral sequences
\[ E^1_{s,t} = \bigoplus_{j(\alpha)\equiv s} H_{\om{SO}(3)}^\bullet(\alpha)_t
\implies I^\bullet_{s+t}(Y,E) \]
of $H^{-*}(B\om{SO}(3))$-modules converging conditionally to the desired
target. Here, $H_{\om{SO}(3)}^\bullet(\alpha) \coloneqq H^\bullet_A(C^{gm}_*(\alpha))$. All of these spectral sequences carry a periodicity isomorphism $E^r_{s,t}\cong E^r_{s+8,t}$ for all $s,t\in \Z$, compatible with the module structure, the differentials and the target. 

We should note that this modification of the completion procedure has no essential consequences for any of the results in the main body of \cite{Miller19} as far as we can see. Moreover, we explain, see Remark \ref{Remark-Cpm-Coincides-Miller}, that our construction coincides with
the original one for $I^\pm(Y,E)$.  

For $A=\Lambda_R[u]$ with $|u|=3$ it holds true that $C^-_A(R) = H^-_A(R)=R[U]$, a polynomial algebra on a single generator $U$ of degree $-4$. In this
situation we give concrete models for $C^+_A(M)$, $C^-_A(M)$
and $C^\infty_A(M)$. In the first two cases these are easy to extract and are also essentially (up to signs) contained in \cite{Miller19}, but the chain level model for $C^\infty_A$ seems to be new. For $M=\wt{CI}(Y,E)$ or $M=DCI(Y,E)$ equipped with the index filtration we are able extend these models to the completed functors $\hat{C}^\pm_A(M)$ and $\hat{C}^\infty_A(M)$. In all cases these models express the relevant complex as a suitable totalization of a double
complex $(D_{*,*},\dd',\dd'')$ concentrated in the right half plane
in the $+$ case, the left half-plane in the $-$ case and the whole
plane in the $\infty$ case. Each nonzero column $(D_{s,*},\dd'')$ is a shifted
copy of $M$ and the horizontal differential $\dd'$ is given by the action
of $u\in \Lambda_R[u]$ up to a sign. Moreover, the action of $U\in R[U]$ is easy to express in these models. This leads to a new proof of the fact that $U\colon I^\infty(Y,E)\ra I^\infty(Y,E)$ is an isomorphism ($\frac12\in R$ assumed) established in \cite{Miller19}. We find these models very convenient to use at
various places in our calculations.

We now move over to the calculational content of this paper. The finite
subgroups of $\om{SU}(2)$ are naturally divided into two infinite
families consisting of cyclic groups $C_m$ and binary dihedral groups
$D_n^*$, $n\geq 2$, as well as three exceptional ones: the binary tetrahedral
group $T^*$, the binary octahedral group $O^*$ and the binary icosahedral
group $I^*$. The quotients $S^3/C_m$ are lens spaces and $S^3/I^*$ is
the famous Poincar\'{e} homology sphere. Calculations in these
cases are contained in \cite[Section~7]{Miller19} for one of the two
possible orientations.  

Calculations of equivariant instanton Floer groups for binary polyhedral spaces have been considered before. Austin and Braam introduced a type of equivariant instanton Floer groups in \cite{Austin-Braam96}. Their theory, although similar to the one considered here,
relies on using differential forms to express the equivariant complexes
and is therefore restricted to using real coefficients. Austin calculated
these groups for binary polyhedral spaces in \cite{Austin95}. Most of
the necessary background material needed to set up the Donaldson model
$DCI(Y,E)$ is therefore contained in this paper. The key idea
is to exploit a natural bijection between instantons over the cylinder
$\R\times S^3/\Gamma$ and $\Gamma$-invariant instantons on
$\Gamma$-equivariant $\om{SU}(2)$-bundles over $S^4=(\R\times S^3)\cup\{\pm\infty\}$ with the suspended $\Gamma$-action, and then use
an equivariant version of the well-known ADHM correspondence to determine the
relevant moduli spaces. We review and expand on several of the necessary results in section $4$. In particular, to state the 
equivariant ADHM-correspondence clearly we have included a classification of 
$\Gamma$-equivariant $\om{SU}(2)$-bundles over $S^4$, whose proof is given in Appendix $B$.  

The conclusion of the above is that we are able to describe
the complex $DCI(\overline{Y}_\Gamma)$ for all finite subgroups $\Gamma\subset \om{SU}(2)$ explicitly. The description can be summarized as follows. 
Given a finite subgroup $\Gamma\subset \om{SU}(2)$, we construct
a graph $\MC{S}_\Gamma$ whose vertices correspond to the flat connections in the trivial $\om{SU}(2)$-bundle over $Y_\Gamma=S^3/\Gamma$. In all cases 
this graph is a tree. Attached to an edge connecting two vertices
$\alpha$ and $\beta$ there is a symbol encoding the action
of the differential between these. If $\alpha$ and $\beta$ are both
irreducible the symbol is $(n_{\alpha\beta}|n_{\beta\alpha})$ and
if only, say $\alpha$, is irreducible the symbol is $n_{\alpha\beta}$,
where the $n_{\alpha\beta}$ are certain integers. Then the labeled graph 
$\MC{S}_\Gamma$ contains all the information needed to set up 
$DCI(\overline{Y}_\Gamma)$.
The grading is determined by $j(\alpha)=4p(\alpha)$ where $p(\alpha)$
is the length of a minimal path connecting $\alpha$ to the trivial
connection $\theta$ in $\MC{S}_\Gamma$. In particular, $DCI(\overline{Y}_\Gamma)_{s,t}=0$ unless $4|s$. This has the implication that the
only possibly nontrivial part of the differential is 
$\dd:DCI_{4s,0}\ra DCI_{4(s-1),3}$. This differential has components
$\dd_{\alpha\beta}:H_0(\beta)\ra H_3(\alpha)$, which is nonzero precisely 
when $\alpha$ and $\beta$ are adjacent in $\MC{S}_\Gamma$ and $\alpha$ is irreducible. In that case, the differential is specified by $\dd_{\alpha\beta}(b_\beta)=n_{\alpha\beta} t_\alpha$ where $b_\beta\in H_0(\beta)$ and
$t_\alpha\in H_3(\alpha)$ are generators. The labeled graphs $\MC{S}_\Gamma$
for all the finite subgroups $\Gamma\subset \om{SU}(2)$ are contained
in Proposition \ref{Differential-Graphs} and examples of the resulting
Donaldson complexes are given in Example \ref{TOI-Multicomplexes}.

In the final section of the paper we explicitly calculate 
$I^+(\overline{Y}_\Gamma)$, $I^-(\overline{Y}_\Gamma)$ and $I^\infty(\overline{Y}_\Gamma)$ for each finite subgroup $\Gamma\subset \om{SU}(2)$ and observe that the exact triangle relating them splits into a short
exact sequence. The calculations are contained in Theorem \ref{IPluss-1}, Theorem \ref{IMinus-1}, Theorem \ref{IMinus-2} and Theorem \ref{Iinfty-1}. The main tools in the calculations are the explicit complexes $DCI(\overline{Y}_\Gamma)$, the index
spectral sequences of Theorem \ref{Index-SS-Theorem} and the
explicit models for $DCI^+$, $DCI^-$ mentioned earlier. Interestingly,
we obtain essentially the same results as in \cite{Austin95}. Indeed,
the fact that $\frac12 \in R$ ensures that exactly the same type of
degeneration pattern seen in Austin's calculations occur for us as
well.    

In the final part of Appendix $B$ we also include a discussion of
the Chern-Simons invariant of the flat connections in the trivial
$\om{SU}(2)$-bundle over the various binary polyhedral spaces.
The main result we wish to bring out is that this numerical invariant
can easily be calculated with the help of a few results established
in section $4$. Methods for these calculations are certainly known, 
see for instance \cite{KirkKlassen90} or \cite{Auckly94}. 
However, we also show, possibly more interestingly, that this invariant can in a natural way be identified with the second Chern class of the holonomy representation associated with the flat connection.

\subsection{Organization of Paper}
We start in section $2$ by stating the key theorem of \cite{Miller19}
and some additional details on the geometric homology functor $C^{gm}_*$
needed to rigorously define the framed Floer complex $\wt{CI}(Y,E)$.
Moreover, we also introduce the Donaldson model $DCI(Y,E)$ under a simplifying
assumption valid in the case of binary polyhedral spaces. Finally, we
state the precise result that ensures that $DCI(Y,E)$ also may be used
to calculate the various flavours of equivariant instanton Floer homology.

In section $3$ we recall the classification of finite subgroups of
$\om{SU}(2)$ and specify our conventions concerning the binary polyhedral
spaces. We also recall some simple representation theory and the McKay
correspondence. This is needed to efficiently arrange the set of flat
connections in the trivial $\om{SU}(2)$-bundle over the binary polyhedral
spaces. At the end of the section we define the graphs $\MC{S}_\Gamma$.  

In section $4$ we review the key results from \cite{Austin95} that explicitly
determine the low-dimensional instanton moduli spaces needed to set up
the Donaldson model for the binary polyhedral spaces. Here we have, for the
purpose of clarity and precision, expanded on Austin's rather terse exposition in a number of places. In the final part of the section we give an explicit
description of $DCI(\overline{Y}_\Gamma)$ for each finite subgroup
$\Gamma\subset \om{SU}(2)$.

In section $5$ we move into the world of differential graded algebra. 
First, we review the construction of the functors $C^\bullet_A$;
their invariance and functorial properties for $\bullet\in \{+,-,(+,tw),\infty\}$ mainly following \cite[Appendix~A.2,A.3,A.5]{Miller19}.
Detailed calculations are included in the case $A=\Lambda_R[u]$ for
$|u|=3$, but we note that all of these calculations easily extend
to the case of $|u|$ of arbitrary odd degree.
After this we explain how to extend the functors to the category of
filtered modules and establish all the necessary functoriality and
invariance results in detail. We then define the complexes
$CI^\bullet(Y,E)$ for $\bullet\in \{+,-,(+,tw),\infty\}$
calculating the various flavors of equivariant Floer homology,
and similarly for $DCI^\bullet(Y,E)$. In the final part
of the section we recall some theory on spectral sequences and their
convergence properties needed for our later calculations, and then
go on to set up the index spectral sequences.

In the final section we calculate the various flavors of equivariant
instanton Floer homology for the trivial $\om{SU}(2)$-bundle over
the binary polyhedral spaces $\overline{Y}_\Gamma$. In the last
part of the section we explain how $\om{DCI}(Y_\Gamma)$ can be
recovered from $\om{DCI}(\ovl{Y}_\Gamma)$ and then give the
calculations for $Y_\Gamma$, omitting some details. 

In Appendix $A$ we have included various facts concerning the finite
subgroups $\Gamma\subset \om{SU}(2)$. This include explicit realizations
within $\om{Sp}(1)$, information on their complex representation theory,
character tables for $T^*$, $O^*$ and $I^*$, extended Dynkin diagrams
and complete lists over the $1$-dimensional quaternionic representations.  
This is needed to determine all the flat connections in the trivial
$\om{SU}(2)$-bundle over $\overline{Y}_\Gamma$ and for the indexing
and differentials in the complexes $DCI(\overline{Y}_\Gamma)$.

In Appendix $B$ we have included proofs of two results used in section $4$:
the classification of $\Gamma$-equivariant $\om{SU}(2)$-bundles over $S^4$
and an index calculation. As a byproduct, we obtain a simple procedure for the computation of the Chern-Simons invariants for the various flat connections over the binary polyhedral spaces. We also discuss a relation between these invariants and the group cohomology of the binary polyhedral groups.

\section{Equivariant Instanton Floer Homology}
In this section we will expand on, and make more rigorous, the definition of the framed instanton Floer complex $\wt{CI}(Y,E)$. We will assume, as in the introduction, that $Y$ is a rational homology sphere and that $E\ra Y$ is the
trivial $\om{SU}(2)$-bundle as this is sufficient for our purpose.
For the more general case of weakly admissible bundles we refer
to \cite{Miller19}. We will assume as in the introduction that
a perturbation of the Chern-Simons functional has been fixed such that
every critical orbit is non-degenerate and all the relevant moduli spaces are
cut out transversally. As already noted, there is no
need to introduce a perturbation in the case of binary polyhedral spaces
(see \cite[Section~4.5]{Austin95}). The set of critical orbits will always be denoted by $\MC{C}$.   

\subsection{The Framed Instanton Floer Complex}
Recall that we have fixed a basepoint $b\in Y$ and that 
$\wt{\MC{B}}_E$ is a  configuration space of framed connections
in $E$ modulo gauge equivalence. Moreover, for each pair $\alpha,\beta\in \MC{C}$ and a relative homotopy class $z\in \pi_1(\wt{\MC{B}}_E,\alpha,\beta)$
there is a configuration space $\wt{\MC{B}}_{z,\R\times E}(\alpha,\beta)$
of connections in $\R\times E\ra \R\times Y$ framed at $(0,b)$ that approaches
$\alpha$ and $\beta$ respectively as $t$ tends to $\pm\infty$, modulo
gauge. There is a residual $\om{SO}(3)$-action and
equivariant end-point maps $(e_-,e_+)\colon \wt{\MC{B}}_{z,\R\times E}(\alpha,\beta)\ra \alpha\times \beta$. In reality, these spaces are completed with respect
to certain weighted Sobolev norms (see \cite[Section~2.2]{Miller19} for the
specifics). We may now state the main theorem required to set
up the framed instanton Floer complex. Here we have extracted the
relevant part of \cite[Theorem 6.8]{Miller19}.

\begin{theorem} \label{Moduli-Theorem}
Let $E\ra Y$ be the trivial $\om{SU}(2)$-bundle over $Y$ and let $\MC{C}$ be the finite set of critical orbits of the (perturbed) Chern-Simons functional. 
\begin{enumerate}[label=(\alph*). ,ref=(\alph*), wide]
\item For each pair $\alpha,\beta\in \MC{C}$ and
relative homotopy class $z$ between them in $\wt{\MC{B}}_E$ there is a number $\om{gr}_z(\alpha,\beta)\in \Z$. If $\gamma\in \MC{C}$, $w$ is a
relative homotopy class from $\beta$ to $\gamma$
and $z*w$ denotes the concatenation of paths then
\[ \om{gr}_{z*w}(\alpha,\gamma)=\om{gr}_z(\alpha,\beta)+\om{gr}_w(\beta,\gamma). \]
Moreover, if $z'$ is a different homotopy class
from $\alpha$ to $\beta$ it holds true that
$\om{gr}_z(\alpha,\beta)-\om{gr}_{z'}(\alpha,\beta)\in 8\Z$. Hence, $\om{gr}(\alpha,\beta)\in \Z/8$ is
well defined. 
\item For each pair $\alpha,\beta\in \MC{C}$ and
relative homotopy class $z\in \pi_1(\wt{\MC{B}}_E,\alpha,\beta)$ there is a moduli space of framed
instantons $\wt{\MC{M}}_z(\alpha,\beta)$
approaching $\alpha$ as $t\to-\infty$ and $\beta$
as $t\to\infty$. This is a smooth right $\om{SO}(3)$-manifold of dimension $\om{gr}_z(\alpha,\beta)+\om{dim}(\alpha)$.
\item There is a smooth $\R$-action by translations
and parallel transport of the framing on 
$\wt{\MC{M}}_z(\alpha,\beta)$ with smooth
quotient denoted by $\wt{\MC{M}}_z^0(\alpha,\beta)$.  
\item There are smooth $\om{SO}(3)$-equivariant end-point maps as in the diagram
\[ \begin{tikzcd} {} & \wt{\MC{M}}_z(\alpha,\beta)
\arrow{d}{/\R} \arrow{ld}[swap]{e_-} \arrow{rd}{e_+}
& {} \\
\alpha & \wt{\MC{M}}^0_z(\alpha,\beta) \arrow{l}[swap]{e_-} \arrow{r}{e_+} & \beta
\end{tikzcd}  \]
obtained by parallel transport of the framing towards
$\pm\infty$. Here we identify $\alpha\cong E_b/\Gamma_\alpha$ and $\beta\cong E_b/\Gamma_\beta$, where
$\Gamma_\alpha,\Gamma_\beta\subset \om{Aut}(E_b)$
correspond to the stabilizers of $\alpha$ and
$\beta$.
\item For each $\alpha\in \MC{C}$ there is a two-element set $\Lambda(\alpha)$ corresponding to a choice of orientation. For a pair $\alpha$, $\beta$
any choice of orientations in $\Lambda(\alpha)$ and
$\Lambda(\beta)$ induces an orientation in the
fiber of $e_-:\wt{\MC{M}}^0_z(\alpha,\beta)\ra \alpha$. If either of the choices are changed the
resulting fiber orientation is reversed. 
\item If $\om{gr}_z(\alpha,\beta)-\om{dim}(\alpha)\leq 10$ there is a compactification
\[ \wt{\MC{M}}^0_z(\alpha,\beta)\subset\ovl{\MC{M}}_z(\alpha,\beta)  \]
into a topological $\om{SO}(3)$-manifold with
corners and smooth structure on each stratum. The endpoint maps extend over the compactification.
Furthermore, the $\om{SO}(3)$ action is free.
\item Given any choice of elements from $\Gamma(\alpha)$, $\Gamma(\beta)$ and $\Gamma(\gamma)$
there is a decomposition respecting fiber orientations
\[ (-1)^{\om{dim}\alpha}\dd \overline{\MC{M}}_z(\alpha,\beta) = \coprod_{w_1,w_2: \; w_1*w_2=z}
\overline{\MC{M}}_{w_1}(\alpha,\gamma)\times_\gamma
\overline{\MC{M}}_{w_2}(\gamma,\beta) .\]
\end{enumerate}
\end{theorem}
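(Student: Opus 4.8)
The plan is to run the standard Morse--Bott package for the Chern--Simons functional on the framed configuration space, which is exactly the program carried out in \cite{Miller19} (building on \cite{Donaldson02,Austin90}); here I only describe the architecture. The foundation is the Fredholm theory of the framed anti-self-dual operator $D_\mathbf{A}=(d_\mathbf{A}^*,d_\mathbf{A}^+)$ over the cylinder $\R\times Y$, taken in the weighted Sobolev completions adapted to the flat limits. Since $\alpha$ and $\beta$ are non-degenerate (their twisted cohomology $H^1$ vanishes), $D_\mathbf{A}$ is Fredholm for a small exponential weight, and the Atiyah--Patodi--Singer index theorem on the manifold with cylindrical ends expresses its index as a topological quantity depending only on the relative homotopy class $z$. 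I would \emph{define} $\om{gr}_z(\alpha,\beta)$ to be this index minus $\dim\alpha$, which makes the dimension formula in (b) a tautology once transversality and the index computation are in place. Additivity under concatenation in (a) is the excision/gluing property for indices of elliptic operators on manifolds with cylindrical ends. For the mod-$8$ statement I would show that two relative homotopy classes from $\alpha$ to $\beta$ differ by a loop in $\wt{\MC{B}}_E$ whose effect on $\om{ind}\,D_\mathbf{A}$ equals $8$ times the change in topological charge --- equivalently the dimension of the framed moduli space of charge-one instantons on $S^4$, which is a multiple of $8$ --- and realizability of every $n\equiv \om{gr}_z(\alpha,\beta)\Mod{8}$ would follow by precomposing $z$ with the appropriate number of such charge-shifting loops.

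Granting the index theory and the assumed (equivariant) transversality, part (b) is the usual implicit function theorem argument: $\wt{\MC{M}}_z(\alpha,\beta)=D_\mathbf{A}^{-1}(0)$ is a smooth manifold of dimension $\om{gr}_z(\alpha,\beta)+\dim\alpha$, and the residual $\om{SO}(3)$ acting through the framing fibre $E_b$ acts smoothly. For (c) I would note that translation on the cylinder together with parallel transport of the framing along $\R\times\{b\}$ gives a smooth $\R$-action, and that when $\alpha\neq\beta$ no finite-energy instanton is translation-invariant, so the action is free with smooth quotient $\wt{\MC{M}}^0_z(\alpha,\beta)$. Part (d) says the endpoint maps defined by parallel transporting the framing to $\pm\infty$ are smooth and $\om{SO}(3)$-equivariant; this uses the exponential convergence of finite-energy instantons to their flat limits built into the weighted setup, together with the identifications $\alpha\cong E_b/\Gamma_\alpha$ and $\beta\cong E_b/\Gamma_\beta$. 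For (e) I would invoke orientability of instanton moduli spaces: the determinant line of the family $\{D_\mathbf{A}\}$ is trivializable by Donaldson's argument, a choice of trivialization is governed by two-element orientation sets $\Lambda(\alpha)$ at the critical orbits, and an excision computation shows the induced orientation of the fibre of $e_-$ reverses when either choice is changed.

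The analytically substantial content is in (f) and (g), and this is where I expect the real work. For (f) I would build $\ovl{\MC{M}}_z(\alpha,\beta)$ by combining Uhlenbeck compactness with the broken-trajectory analysis of Floer theory: a sequence in $\wt{\MC{M}}^0_z(\alpha,\beta)$ subconverges, after translation, on compact subsets unless energy concentrates, and energy concentration happens either at interior points (bubbling, controlled by the removable-singularity theorem) or along the ends (breaking at intermediate critical orbits). The dimension bound $\om{gr}_z(\alpha,\beta)-\dim\alpha\leq 10$ is there to keep the total codimension of all possible degenerations small enough that the outcome is a topological $\om{SO}(3)$-manifold with corners, smooth on each stratum, with the endpoint maps extending; the $\om{SO}(3)$-action stays free on the compactification because each boundary stratum is assembled from lower moduli spaces between the same critical orbits, on which the action is free for the same reason. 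Part (g) is the identification of the codimension-one boundary, which I would get from the gluing theorem: every broken pair in $\ovl{\MC{M}}_{w_1}(\alpha,\gamma)\times_\gamma\ovl{\MC{M}}_{w_2}(\gamma,\beta)$ with $w_1*w_2=z$ is the limit of a unique one-parameter family of genuine instantons, and conversely every codimension-one boundary point arises this way; the fibre product is over $\gamma$ precisely because the two framings transported to the breaking end must agree. The sign $(-1)^{\dim\alpha}$ and the fibre-orientation compatibility are then bookkeeping: one tracks the orientations of (e) through the gluing parametrization. The hard part will be precisely this last circle of ideas --- the equivariant gluing theorem together with its orientation conventions, and the verification that bubbling and breaking together produce a genuine manifold-with-corners structure --- and rather than reproduce it I would cite the detailed treatment in \cite{Miller19}.
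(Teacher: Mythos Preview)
Your proposal is correct and matches the paper's treatment: the paper does not prove this theorem but states it as an extraction of \cite[Theorem~6.8]{Miller19}, and your sketch correctly outlines the standard Fredholm/gluing/compactness architecture underlying that result while ultimately deferring to the same reference. The only thing to add is that the paper gives no proof at all here, so your architectural overview is in fact more detailed than what appears in the paper.
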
 

The function $\om{gr}\colon \MC{C}\times\MC{C}\ra \Z/8$
is the relative grading on the set of critical orbits.
Since the bundle $E\ra Y$ is trivial we may
define an absolute grading $j\colon \MC{C}\ra \Z/8$ by
setting $j(\alpha)\coloneqq \om{gr}(\alpha,\theta)$, where
$\theta$ denotes the trivial product connection.
The relative grading may be recovered from $j$:
\[ j(\alpha)-j(\beta)=\om{gr}(\alpha,\theta)-\om{gr}(\beta,\theta)=\om{gr}(\alpha,\beta)+\om{gr}(\beta,\theta)-\om{gr}(\beta,\theta)=\om{gr}(\alpha,\beta).\]

To construct the framed Floer complex Miller Eismeier uses a type of geometric homology. For each smooth manifold $X$ he constructs a functorial chain complex
$C_*^{gm}(X)$ whose generators are equivalences
classes of maps from a set of spaces called strong $\delta$-chains. For the precise definition see \cite[Def.:6.1]{Miller19}. This class of spaces
contains all topological manifolds with corners and
smooth structure on each stratum, so in particular contains the
compactified instanton moduli spaces in part $(f)$ of the above theorem.    

\begin{proposition} Given a commutative ring of coefficients $R$ there is a functor $C_*^{gm}(-;R)$ from the category of smooth manifolds to the category of homological chain complexes of $R$-modules. For a smooth manifold
$X$ of dimension $d$ the following holds true. 
\begin{enumerate}[label=(\roman*),ref=(\roman*)]
\item There is a natural isomorphism $H_*^{sing}(X;R)\cong H_*^{gm}(X;R)$.
\item $C_n^{gm}(X;R)$ is a free $R$-module for
each $n\in \Z$.
\item $C_n^{gm}(X;R)=0$ for all $n<0$ and $n>d+1$. 
\end{enumerate}
\end{proposition}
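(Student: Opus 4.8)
The plan is to reduce everything to the corresponding statement about the geometric homology functor $C^{gm}_*$ itself, which is really the only tool at our disposal here since the excerpt has not yet given us the internal definition of strong $\delta$-chains. Since the paper announces that $C^{gm}_*(-;R)$ is a functor from smooth manifolds to homological chain complexes of $R$-modules (this is the content of \cite[Def.~6.1]{Miller19} together with the functoriality statement in \cite{Miller19}), the three numbered claims are what must be verified. I would begin by recalling from \cite[Section~6]{Miller19} that $C^{gm}_n(X;R)$ is defined as the free $R$-module on equivalence classes of maps $\sigma\colon P\to X$ where $P$ runs over strong $\delta$-chains of dimension $n$ (modulo the degeneracy/orientation relations built into the definition), with the differential induced by taking boundary faces. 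With this description in hand, claim (ii) is immediate: by construction $C^{gm}_n(X;R)$ is a \emph{free} $R$-module on a set of generators, so nothing needs to be proven beyond citing the definition.

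For claim (iii) the key point is a dimension count on the generating spaces. A strong $\delta$-chain $P$ mapping to $X$ carries a dimension, and the generators of $C^{gm}_n$ come from $\delta$-chains of dimension exactly $n$; there are none in negative dimensions, giving $C^{gm}_n(X;R)=0$ for $n<0$. For the upper bound one has to recall the constraint, built into Miller Eismeier's definition, that the map $\sigma\colon P\to X$ is required to be (stratawise) smooth and of a controlled type — in particular, after accounting for the extra "collar" directions that strong $\delta$-chains are permitted to carry, a nondegenerate generator mapping to a $d$-manifold $X$ can have dimension at most $d+1$, and a generator of dimension $>d+1$ is declared degenerate hence zero in $C^{gm}_*$. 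I would make this precise by quoting the relevant part of \cite[Def.~6.1]{Miller19} (the "$\delta$" in the name refers exactly to this one extra degree of freedom), and then the vanishing $C^{gm}_n(X;R)=0$ for $n>d+1$ follows.

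Claim (i), the natural isomorphism $H^{sing}_*(X;R)\cong H^{gm}_*(X;R)$, is the substantive one and I would treat it exactly as such. The standard strategy for comparing a bordism/geometric-cycle-type homology theory with singular homology is to check the Eilenberg–Steenrod axioms for $H^{gm}_*$ on the category of smooth manifolds: homotopy invariance (a smooth homotopy $X\times[0,1]\to X'$ induces a chain homotopy, using that $P\times[0,1]$ is again a strong $\delta$-chain), the long exact sequence of a pair (or equivalently a Mayer–Vietoris sequence for an open cover, which one gets from a subdivision/small-chains argument on $\delta$-chains), and the dimension axiom $H^{gm}_*(\mathrm{pt};R)=R$ concentrated in degree $0$ (a direct computation: the generators over a point are classified by their boundary structure and one checks the complex is the trivial resolution of $R$). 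Having verified these axioms, one invokes the uniqueness theorem for homology theories on (say) CW pairs, or one constructs an explicit natural comparison map — the most transparent route is to note that a singular simplex $\Delta^n\to X$ is itself (the underlying map of) a strong $\delta$-chain, giving a natural chain map $C^{sing}_*(X;R)\to C^{gm}_*(X;R)$, and then prove it is a quasi-isomorphism by the axiomatic argument above (acyclic models / induction over skeleta). The hard part will be justifying the Mayer–Vietoris / excision step with full rigor, since it requires a subdivision procedure for strong $\delta$-chains that one must take from the technical heart of \cite[Section~6]{Miller19}; in the interest of not reproving Miller Eismeier's foundational results I would cite \cite[Section~6]{Miller19} for the existence and key properties of $C^{gm}_*$ and present only the comparison-map argument in the detail appropriate to this paper.
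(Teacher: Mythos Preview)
The paper does not prove this proposition at all: it is stated without proof as a summary of properties established in \cite[Section~6]{Miller19}, and the text immediately moves on. So there is nothing to compare your argument against on the paper's side beyond the bare citation.

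Your sketch is a reasonable outline of how one would justify the three claims from scratch, and you correctly recognize at the end that the substantive content (particularly the comparison with singular homology and the subdivision machinery) must ultimately be imported from \cite{Miller19}. In that sense your proposal and the paper agree: the result is Miller Eismeier's, not something to be reproved here. One minor point worth tightening if you ever do write this up: your explanation of the upper vanishing bound $n>d+1$ is somewhat impressionistic. The precise mechanism in \cite{Miller19} is that chains of excessive dimension are forced to factor through lower-dimensional strata in a way that makes them \emph{small} (degenerate) in the sense of the equivalence relation defining $C^{gm}_*$; you gesture at this with the phrase ``declared degenerate hence zero,'' but the actual argument requires the specific small-chain relations, not just a transversality heuristic. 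For the purposes of this paper, though, simply citing the reference---as the paper does---is the appropriate level of detail.
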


There is one additional property of $C^{gm}_*$ that is needed for
the definition of the framed instanton Floer complex.
First, for a pair of smooth manifolds $X$ and $Y$ there is a cross
product $C_i^{gm}(X;R)\times C^{gm}_j(Y;R)\ra C^{gm}_{i+j}(X\times Y)$.
This ensures that $C^{gm}_*(\om{SO}(3);R)$ obtains the structure of a differential graded algebra and for any right $\om{SO}(3)$-manifold $X$, $C^{gm}_*(X;R)$ obtains the structure of a right differential graded $C^{gm}_*(\om{SO}(3);R)$-module.
  
The framed Floer complex $\wt{CI}(Y,E)$ will be defined to be the chain
complex associated with a multicomplex, whose definition we first recall. 

\begin{definition} A (homological) multicomplex is a pair $(M_{*,*},\{\dd^r\}_{r\geq 0})$ consisting
of a bigraded module $M_{*,*}$ and differentials
$\dd^r\colon M\ra M$ of bidegree $(-r,r-1)$ for $r\geq 0$ such that
\begin{enumerate}[label=(\roman*),ref=(\roman*)]
\item $\sum_{i+j=k}\dd^i\circ \dd^j=0:M_{s,t}\ra M_{s-k,t-k+2}$
for all $k\geq 0$, $s,t\in \Z$, and
\item for each $a\in M_{s,t}$ there exists $r_0\geq 0$ such that 
$\dd^r(a)=0$ for all $r\geq r_0$. 
\end{enumerate}
The associated chain complex $(M_*,\dd)$ is
defined by $M_n = \bigoplus_{s+t=n}M_{s,t}$ for all $n\in \Z$ and $\dd=\sum_{r\geq 0}\dd^r$.
\end{definition} 

For a commutative ring of coefficients $R$ define the bigraded
$R$-module $\wt{CI}(Y,E;R)_{*,*}$ by
\[ \wt{CI}(Y,E;R)_{s,t}=\bigoplus_{j(\alpha)\equiv s}C_t^{gm}(\alpha;R) \]
for all $s,t\in \Z$. Observe that since the grading function $j$ takes
values in $\Z/8$ there is a periodicity isomorphism $\wt{CI}(Y,E;R)_{s,t}\cong \wt{CI}(Y,E;R)_{s+8,t}$ for all $s,t\in \Z$.  
We make a preliminary definition before we define
the differentials $\{\dd^r\}_{r\geq 0}$. For each
pair $\alpha,\beta\in \MC{C}$ fix the unique
relative homotopy class $z$ from $\alpha$ to $\beta$
satisfying $-2\leq \om{gr}_z(\alpha,\beta)\leq 5$.
Then according to part $(f)$ of the above theorem
the moduli space $\wt{\MC{M}}^0_z(\alpha,\beta)$
admits a compactification $\ovl{\MC{M}}_z(\alpha,\beta)$ and we have a diagram
\[ \begin{tikzcd} \alpha & \ovl{\MC{M}}_z(\alpha,\beta) \arrow{l}[swap]{e_-} \arrow{r}{e_+} & \beta
\end{tikzcd}.\]
By \cite[Lemma~6.6]{Miller19} there is an induced fiber product map
\[ f_{\alpha\beta}:C_*^{gm}(\alpha;R)\ra C_{*+\om{gr}_z(\alpha,\beta)-1}^{gm}(\beta;R)  \]
defined as follows. Given a basic chain
$\sigma\colon P\ra \alpha$ with $\om{dim}P=t$ representing
an element $[\sigma\colon P\ra \alpha] \in C_t^{gm}(\alpha;R)$ we form
the fiber product $P\times_\alpha \ovl{\MC{M}}_z(\alpha,\beta)$ as in the commutative diagram
\[ \begin{tikzcd} P \arrow{d}{\sigma} & P\times_\alpha \overline{\MC{M}}_z(\alpha,\beta) \arrow{d}{\tau}
\arrow{rd}{e_+\circ \tau} \arrow{l} & {} \\
\alpha & \overline{\MC{M}}_z(\alpha,\beta) \arrow{l}[swap]{e_-}
\arrow{r}{e_+} & \beta , \end{tikzcd} \]
and define
\[ f_{\alpha\beta}([\sigma:P\ra \alpha])
=[e_+\circ \tau\colon P\times_\alpha \overline{\MC{M}}_z(\alpha,\beta)\ra\beta ]\in C^{gm}_{t+\om{gr}_z(\alpha,\beta)-1}(\beta;R)  .\]
In case $\om{gr}_z(\alpha,\beta)\leq 0$ the compactified moduli space $\ovl{\MC{M}}_z(\alpha,\beta)$ is empty and the map $f_{\alpha\beta}$
vanishes. Otherwise, we note that the dimension
formula
\begin{equation}\label{Dim-Formula}
 \om{dim}\overline{\MC{M}}_z(\alpha,\beta)
=\om{gr}_z(\alpha,\beta)+\om{dim}\alpha-1
\end{equation} 
ensures that the chain 
$e_+\circ \tau\colon P\times_\alpha \overline{\MC{M}}_z(\alpha,\beta)\ra \beta$ has dimension $\om{dim}P+\om{gr}_z(\alpha,\beta)-1$ as asserted. Finally, we note that the maps $f_{\alpha\beta}$ commute with the action of 
$C_*^{gm}(\om{SO(3)};R)$.

\begin{definition} \label{FloerComplex}
The framed instanton Floer complex $\wt{CI}(Y,E;R)$ with coefficients in $R$ is defined to be the chain complex associated with the multicomplex 
$(\wt{CI}(Y,E;R)_{*,*},\{\dd^r\}_{r\geq 0})$ where
\[ \wt{CI}(Y,E;R)_{s,t}=\bigoplus_{\alpha\in \MC{C}\;:\; j(\alpha)\equiv s}C_t^{gm}(\alpha;R) \]
and the differentials $\dd^r:\wt{CI}(Y,E;R)_{s,t}\ra \wt{CI}(Y,E;R)_{s-r,t+r-1}$ for $r\geq 0$ are defined as follows:
\begin{enumerate}[label=(\roman*),ref=(\roman*)]
\item $\dd^0$ is the sum of the internal differentials
$\dd^{gm}\colon C_*^{gm}(\alpha;R)\ra C_{*-1}^{gm}(\alpha;R)$.
\item For $1\leq r\leq 5$ the differential $\dd^r$ is the sum
of the maps $f_{\alpha\beta}:C_*^{gm}(\alpha;R)\ra C_{*+\om{gr}_z(\alpha,\beta)-1}^{gm}(\beta;R)$ for pairs
$\alpha$, $\beta$ with $\om{gr}_z(\alpha,\beta)=r$.
\item $\dd^r=0$ for $r>5$.
\end{enumerate}
The homology of $\wt{CI}(Y,E;R)$ is denoted by $\wt{I}(Y,E)$ and
is called the framed instanton Floer homology of the pair $(Y,E)$.
The $C^{gm}_*(\om{SO}(3);R)$-module structure of $C^{gm}_*(\alpha;R)$
for $\alpha\in \MC{C}$ gives $\wt{CI}(Y,E;R)$ the structure of a right
differential graded $C^{gm}_*(\om{SO}(3);R)$-module.  

The filtration by $C^{gm}_*(\om{SO}(3))$-modules given degreewise by 
\[ F_p\wt{CI}(Y,E;R)_n := \bigoplus_{s\leq p} \wt{CI}(Y,E;R)_{s,n-s}  \]
is called the index filtration.     
\end{definition}

\begin{remark} In the above we have omitted discussing orientations.
To get the complex $\wt{CI}(Y,E)$ in the above form one has
to fix an orientation $\mathfrak{o}_\alpha\in \Lambda(\alpha)$ for each critical orbit $\alpha\in \MC{C}$. 
The verification that $\dd^2=0$ and that this complex has the structure
of a right differential graded $C^{gm}_*(\om{SO}(3))$-module are contained
in \cite[Lemma 6.11]{Miller19}.
\end{remark}  

From this point we will omit the ring of coefficients from the notation.
The complex given in the above definition is the unrolled framed instanton Floer complex. Instead of having a $\Z/8$-graded complex, we have a $\Z$-graded complex equipped with an evident periodicity isomorphism 
\[ \wt{CI}(Y,E)_n\cong \wt{CI}(Y,E)_{n+8}   \]
for all $n\in \Z$, compatible with the $C^{gm}_*(\om{SO}(3))$-action and the differential. This is more convenient when we apply the algebraic machinery of section $5$ to extract the equivariant Floer groups. 
It should be noted that the above periodicity
isomorphisms interact with the index filtration in the following way
\[ F_p\wt{CI}(Y,E)_n \cong F_{p+8}\wt{CI}(Y,E)_{n+8} \]
for all $p,n\in \Z$. 

\subsection{The Donaldson Model}
In the previous definition we introduced the framed
Floer complex $\wt{CI}(Y,E)$ as a differential graded
module over the differential graded algebra
$C_*^{gm}(\om{SO}(3))$. Since we are still working
on the chain level, the complex and action are
difficult to handle. It is shown in \cite[Section 7.2]{Miller19} that provided 
$2\in R$ is invertible, one may replace the pair
$(C_*^{gm}(\om{SO}(3)),\wt{CI}(Y,E))$ with a simpler
pair $(H_*(\om{SO}(3)),DCI(Y,E))$. The complex $DCI_*(Y,E)$ is called
the Donaldson model and is associated
with a multicomplex whose bigraded module is given by
\[ DCI(Y,E)_{s,t}=\bigoplus_{\alpha\in\MC{C}\; :\; j(\alpha)\equiv s} H_t(\alpha)  .\]
Note that $H_*(\om{SO}(3);R)=\Lambda_R[u]$ is an exterior algebra on a single
generator $u$ of degree $3$ as $\frac12 \in R$. For $\alpha \in \MC{C}$ we have by \eqref{Orbits}
\begin{equation} \label{Eq-Orbit-Calc}
H_*(\alpha) \cong \left\{ \begin{array}{cl}
R\oplus R[3] & \alpha \mbox{ irreducible} \\
R\oplus R[2] & \alpha \mbox{ reducible } \\
R & \alpha \mbox{ fully reducible} 
\end{array} \right.  
\end{equation}
Here our convention regarding the shift operation is that if $C$ is a graded module then $C[n]_{n+i}=C_i$ for all $i,n\in \Z$. The above isomorphisms are made into identifications by fixing a base point and an orientation for each
orbit $\alpha\in \MC{C}$. The $\Lambda_R[u]=H_*(\om{SO}(3))$-action
is uniquely determined by the map $\cdot u:DCI_{*,*}\ra DCI_{*,*+3}$,
which has components $\om{id}\colon R=H_0(\alpha)\ra H_3(\alpha)=R$ for $\alpha$ irreducible and vanishes otherwise.     

The complete description of the differentials
in this complex is given in \cite[p.:175-176]{Miller19}. In the following
we will only concern ourselves with the components relevant in the situation
for binary polyhedral spaces. We therefore make the following assumption
(see Lemma \ref{Indexing-Lemma}) 
\begin{enumerate}[label=(\roman*),ref=(\roman*)] \label{Mod4Assumption}
\item For each pair $\alpha,\beta\in \MC{C}$ we have $\om{gr}(\alpha,\beta)\equiv 0 \Mod{4}$. 
\end{enumerate}
This implies that $DCI(Y,E)_{s,t}$ can only be nontrivial if
$4|s$ and $0\leq t\leq 3$. The only possibly nontrivial differential is therefore $\dd^4$ of bidegree $(-4,3)$, whose components are 
$R=H_0(\alpha)\ra H_3(\beta)=R$ for
$\alpha,\beta\in \MC{C}$ with $j(\alpha)-j(\beta)=4$ and $\beta$ irreducible.

Recall that for $\alpha,\beta\in \MC{C}$ we defined $z$ to be the
unique homotopy class between $\alpha$ and $\beta$ for which
$-2\leq \om{gr}_z(\alpha,\beta)\leq 5$. Combining this with the above
assumption and the boundary formula in part $(g)$ of Theorem \ref{Moduli-Theorem} we may conclude that $\MC{M}_z^0(\alpha,\beta)=\overline{\MC{M}}_z(\alpha,\beta)$ is compact without boundary. If $\om{gr}_z(\alpha,\beta)=0$ the moduli space is empty and if $\om{gr}_z(\alpha,\beta)=4$
the dimension is $\om{dim}\alpha +3$ (provided it is nonempty). 
For simplicity of notation we write $\ovl{\MC{M}}_{\alpha,\beta}=\ovl{\MC{M}}_z(\alpha,\beta)$ with this choice of homotopy class $z$ implicit. 

To define the differential $\dd^4$ we have to make a little technical detour.
For each orbit $\alpha\in \MC{C}$ let $\wt{\alpha}\ra \alpha$
denote its universal cover. For irreducible orbits $\alpha$ we
have $\wt{\alpha}=\om{SU}(2)$, while for reducible or fully reducible
orbits $\wt{\alpha}=\alpha$. These coverings carry $\om{SU}(2)$-actions
such that the projections $\tilde{\alpha}\ra \alpha$ are equivariant
along $\om{SU}(2)\ra \om{SO}(3)$. For each pair $\alpha,\beta\in \MC{C}$
define $\ovl{\MC{M}}_{\alpha,\beta}^{cov}$ to be the pullback
as in the diagram
\[ \begin{tikzcd} \ovl{\MC{M}}_{\alpha,\beta}^{cov} \arrow{r}{(\wt{e}_-,\wt{e}_+)} \arrow{d} & \tilde{\alpha}\times\tilde{\beta} \arrow{d} \\
\ovl{\MC{M}}_{\alpha,\beta} \arrow{r}{(e_-,e_+)} & \alpha\times \beta.
\end{tikzcd} \]
Thus $\ovl{\MC{M}}_{\alpha,\beta}^{cov}$ is a $1$, $2$ or $4$-sheeted
covering of $\ovl{\MC{M}}_{\alpha,\beta}$ depending on whether
$\alpha$ and/or $\beta$ are irreducible. The free $\om{SO}(3)$-action
on $\ovl{\MC{M}}_{\alpha,\beta}$ lifts to a free $\om{SU}(2)$-action on
$\ovl{\MC{M}}_{\alpha,\beta}^{cov}$ such that $(\wt{e}_-,\wt{e}_+)$ is
equivariant. Define 
\[ \MC{M}_{\alpha,\beta}\coloneqq \ovl{\MC{M}}_{\alpha,\beta}^{cov}/\om{SU}(2) \;\; \mbox{ and } \;\; X_{\alpha,\beta}\coloneqq (\tilde{\alpha}\times \tilde{\beta})/\om{SU}(2)  \]
and let $e\colon \MC{M}_{\alpha,\beta}\ra X_{\alpha,\beta}$ be the map induced by $(\wt{e}_-,\wt{e}_+)$.

The component $\dd^4:R=H_0(\alpha)\ra H_3(\beta)=R$, for $\beta$ irreducible, is then defined to be multiplication by the degree of the map
\[ e:\MC{M}_{\alpha,\beta}\ra X_{\alpha,\beta}   .\]
From the dimension formula we have $\om{dim}\MC{M}_{\alpha,\beta}=\om{dim}\alpha = \om{dim}X_{\alpha,\beta}$, so as these spaces are compact
oriented manifolds this makes sense. Note that $X_{\alpha,\beta}\cong \wt{\alpha}$ as $\beta$ is irreducible. 

For later purposes it will be important to relate the degree of
the map $e\colon M_{\alpha,\beta}\ra X_{\alpha,\beta}$ to the degree
of a different map determined by Austin in \cite{Austin95}.
Recall that if $X$ and $Y$ are closed, oriented manifolds of dimension $n$ and
$Y$ is connected then the degree of a map $f\colon X\ra Y$ is defined by the relation
$f_*([X])=\om{deg}(f)[Y]$ where $f_*\colon H_n(X)\ra H_n(Y)$ is the induced
map and $[X]$, $[Y]$ denote the fundamental classes. 
If $X$ and $Y$ in addition carry smooth structures
and $f$ is a smooth map one may also calculate the degree as
\[ \om{deg}(f)=\sum_{x\in f^{-1}(y)} \om{sgn}(df_x:T_xX\ra T_yY) , \]
for a regular value $y$, where $\om{sgn}(df_x\colon T_xX\ra T_yY)$
is $\pm1$ depending on whether this map preserves or reverses
orientation, respectively. The proof of the following proposition is
a simple exercise in differential topology, so we leave it out. 

\begin{proposition} In the statements below assume that $X$, $Y$ and $Z$
are smooth, closed, oriented manifolds and that $Y$ and $Z$ are connected. 
\begin{enumerate}[label=(\alph*),ref=(\alph*)]
\item Let $G$ be a compact Lie group acting freely on the manifolds
$X$ and $Y$ of the same dimension. Then if $f\colon X\ra Y$ is a smooth
$G$-equivariant map it holds true that
\[ \om{deg}(f\colon X\ra Y)=\om{deg}(f/G\colon X/G\ra Y/G)  .\]
\item Let $f\colon X\ra Y$ be a smooth map between manifolds of equal dimension
and let $p:\wt{Y}\ra Y$ be a finite, connected covering space with
the induced orientation. Let $q:\wt{X}=f^*\wt{Y}\ra X$ be the
pull-back covering and let $\wt{f}:\wt{X}\ra \wt{Y}$ be the
induced map. Then
\[ \om{deg}(f\colon X\ra Y)=\om{deg}(\wt{f}\colon \wt{X}\ra \wt{Y})  .\]
\item Let $(f,g)\colon X\ra Y\times Z$ be a smooth map and assume that
$\om{dim}X=\om{dim}Y+\om{dim}Z$. If $y\in Y$ is a regular value for
$f$, so that $f^{-1}(y)\subset X$ is a closed, oriented submanifold
of dimension $\om{dim}f^{-1}(y)=\om{dim}Z$, then
\[ \om{deg}((f,g)\colon X\ra Y\times Z)=\om{deg}(g|_{f^{-1}(y)}\colon f^{-1}(y)\ra Z) .\]
\end{enumerate}
\end{proposition}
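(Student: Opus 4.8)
The plan is to prove all three parts by the textbook recipe for the degree of a smooth map between closed oriented manifolds of equal dimension: choose (via Sard's theorem) a regular value, identify the resulting finite preimages on the two sides of the asserted equality, and check that the local orientation signs agree point by point, so that the two signed counts coincide. In each case the bijection between the preimages is elementary; essentially all of the work is orientation bookkeeping.

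For part (a), write $\pi_X\colon X\ra X/G$ and $\pi_Y\colon Y\ra Y/G$ for the quotient maps, which are principal $G$-bundles since $G$ is compact and the actions are free. I would pick $y\in Y$ a regular value of $f$. If $\bar x\in(f/G)^{-1}(\pi_Y(y))$, then lifting $\bar x$ and translating by a suitable element of $G$ produces a point of $f^{-1}(y)$ lying over $\bar x$; conversely, if $\pi_X(x_i)=\pi_X(x_j)$ for $x_i,x_j\in f^{-1}(y)$ then $x_j=gx_i$ forces $gy=y$, hence $g=e$ by freeness. Thus $\pi_X$ restricts to a bijection $f^{-1}(y)\ra(f/G)^{-1}(\pi_Y(y))$; in particular this last set is finite and $\pi_Y(y)$ is a regular value of $f/G$. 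Splitting $T_{x_i}X=T_{x_i}(Gx_i)\oplus H_{x_i}$ with $H_{x_i}$ mapping isomorphically onto $T_{\pi_X(x_i)}(X/G)$, and doing the same at $y$, equivariance of $f$ forces $df_{x_i}$ to carry orbit tangent directions isomorphically onto orbit tangent directions, being in the natural identifications with $\mathfrak{g}$ the identity; so for any fixed (and consistently applied) orientation convention on free quotients the differential $df_{x_i}$ becomes block triangular and $\om{sgn}(df_{x_i})=\om{sgn}(d(f/G)_{\pi_X(x_i)})$. Summing over $i$ gives $\om{deg}(f)=\om{deg}(f/G)$.

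Part (b) runs along the same lines. Choose $y\in Y$ a regular value of $f$ and write $p^{-1}(y)=\{\wt{y}_1,\dots,\wt{y}_d\}$. From the description $\wt{X}=f^{*}\wt{Y}=\{(x,\wt{y}):f(x)=p(\wt{y})\}$ with $q(x,\wt{y})=x$ and $\wt{f}(x,\wt{y})=\wt{y}$, one sees that $\wt{f}$ maps each fiber $q^{-1}(x)$, $x\in f^{-1}(y)$, bijectively onto $p^{-1}(y)$; hence $q$ restricts to a bijection $\wt{f}^{-1}(\wt{y}_1)\ra f^{-1}(y)$. Since $p$ and $q$ are orientation-preserving local diffeomorphisms carrying the induced orientations and $p\circ\wt{f}=f\circ q$, the differential of $\wt{f}$ at a point of $\wt{f}^{-1}(\wt{y}_1)$ is just $df_x$ read off in these charts; thus $\wt{y}_1$ is a regular value of $\wt{f}$ and the signs match, so $\om{deg}(\wt{f})=\om{deg}(f)$ (if $\wt{X}$ is disconnected one reads the degree off the sum-of-signs formula, equivalently from $[\wt{X}]=\sum_i[\wt{X}_i]$). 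For part (c), put $W=f^{-1}(y)$ and choose $z\in Z$ a regular value of $g|_W$. For $x\in(f,g)^{-1}(y,z)=(g|_W)^{-1}(z)$ one has $\ker df_x=T_xW$, since $y$ is regular for $f$, while $d(g|_W)_x\colon T_xW\ra T_zZ$ is an isomorphism, since $z$ is regular for $g|_W$ and $\om{dim}W=\om{dim}Z$; hence $d(f,g)_x$ is injective, so an isomorphism by the dimension hypothesis, and $(y,z)$ is automatically a regular value of $(f,g)$ whose preimage set is literally $(g|_W)^{-1}(z)$. Choosing a complement $T_xX=T_xW\oplus N_x$, the matrix of $d(f,g)_x$ relative to $T_xW\oplus N_x$ and $T_yY\oplus T_zZ$ is block triangular with invertible corner blocks $df_x|_{N_x}\colon N_x\ra T_yY$ and $d(g|_W)_x\colon T_xW\ra T_zZ$, so $\om{sgn}(d(f,g)_x)=\om{sgn}(d(g|_W)_x)$ and summing over $x$ yields $\om{deg}(f,g)=\om{deg}(g|_W)$.

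The \emph{only} real obstacle is the orientation bookkeeping, and the subtle point is in part (c): depending on whether a regular preimage $f^{-1}(y)$ is oriented with ``normal bundle first'' or ``normal bundle last'', the determinant of $d(f,g)_x$ acquires an extra factor $(-1)^{\om{dim}Y\cdot\om{dim}Z}$, so the identity as stated holds only for the convention that cancels this factor --- precisely the one compatible with the fiber-orientation conventions used in part (e) of Theorem~\ref{Moduli-Theorem}. Likewise, in (a) and (b) one must pin down once and for all the orientations on free quotients and on coverings. Once such a consistent package of conventions is fixed, all three equalities fall out of the point-counting arguments above, which is why the proof is only a routine exercise.
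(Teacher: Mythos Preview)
The paper does not prove this proposition at all; it simply states that ``the proof of the following proposition is a simple exercise in differential topology, so we leave it out.'' Your regular-value and sign-counting argument is exactly the standard exercise alluded to and is correct, including your observation that part (c) hinges on the convention (normal-first versus normal-last) for orienting the regular preimage $f^{-1}(y)$.
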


\begin{lemma} \label{Differential-Degree-Lemma}
Let $\alpha,\beta\in \MC{C}$ be a pair with
$\beta$ irreducible and $\om{gr}(\alpha,\beta)=4$. Consider the diagram
\[ \begin{tikzcd} \alpha & \ovl{\MC{M}}_z(\alpha,\beta) \arrow{l}[swap]{e_-} \arrow{r}{e_+} & \beta , \end{tikzcd} \]
where $z$ is the relative homotopy class with $\om{gr}_z(\alpha,\beta)=4$.   
Then if $j(\alpha)\equiv s \Mod{8}$, the component 
$H_0(\alpha)\ra H_3(\beta)$ of $\dd^4:DCI_{s,0}\ra DCI_{s-4,3}$ is given
by multiplication by the degree of the map
\[ e_+|_{e_-^{-1}(*)}:e_-^{-1}(*)\ra \beta  \]
where $*\in \alpha$ is a point.
\end{lemma}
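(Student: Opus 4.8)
The plan is to establish the identity
\[ \om{deg}\bigl(e\colon \MC{M}_{\alpha,\beta}\to X_{\alpha,\beta}\bigr)=\om{deg}\bigl(e_+|_{e_-^{-1}(*)}\colon e_-^{-1}(*)\to\beta\bigr), \]
which, since by definition the left-hand side is the component $H_0(\alpha)\to H_3(\beta)$ of $\dd^4$, is precisely the assertion of the lemma. I would obtain this by chaining together the three parts of the preceding proposition. First I would record the relevant dimensions: as $\beta$ is irreducible we have $\beta\cong\om{SO}(3)$, so $\om{dim}\beta=3$, and \eqref{Dim-Formula} with $\om{gr}_z(\alpha,\beta)=4$ gives $\om{dim}\ovl{\MC{M}}_z(\alpha,\beta)=\om{dim}\alpha+3=\om{dim}(\alpha\times\beta)$; passing to the $1$-, $2$- or $4$-sheeted covering $\ovl{\MC{M}}^{cov}_{\alpha,\beta}$ preserves dimension, and quotienting by the free $\om{SU}(2)$-action yields $\om{dim}\MC{M}_{\alpha,\beta}=\om{dim}\alpha=\om{dim}X_{\alpha,\beta}$. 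I would also note that under the standing assumption $\om{gr}(\alpha,\beta)\equiv0\Mod{4}$ together with part (g) of Theorem \ref{Moduli-Theorem}, the space $\ovl{\MC{M}}_z(\alpha,\beta)$ is a closed smooth manifold, so all spaces in sight are smooth, closed and oriented, and all targets are connected.

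The chain of equalities then runs as follows. First, the $\om{SU}(2)$-action on $\ovl{\MC{M}}^{cov}_{\alpha,\beta}$ is free, the diagonal $\om{SU}(2)$-action on $\wt\alpha\times\wt\beta$ is free (being already free on the factor $\wt\beta=\om{SU}(2)$), the two manifolds are equidimensional, and $(\wt e_-,\wt e_+)$ is equivariant; so part (a) of the preceding proposition identifies $\om{deg}(e)$ with $\om{deg}\bigl((\wt e_-,\wt e_+)\colon \ovl{\MC{M}}^{cov}_{\alpha,\beta}\to\wt\alpha\times\wt\beta\bigr)$. Next, $\ovl{\MC{M}}^{cov}_{\alpha,\beta}$ is by its defining diagram the pull-back of the finite connected covering $\wt\alpha\times\wt\beta\to\alpha\times\beta$ along $(e_-,e_+)$, with $(\wt e_-,\wt e_+)$ the induced map, so part (b) identifies this in turn with $\om{deg}\bigl((e_-,e_+)\colon \ovl{\MC{M}}_z(\alpha,\beta)\to\alpha\times\beta\bigr)$. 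Finally, choosing a regular value $*\in\alpha$ of $e_-$ (which exists by Sard's theorem, the case $\om{dim}\alpha=0$ being vacuous) and applying part (c) with $X=\ovl{\MC{M}}_z(\alpha,\beta)$, $Y=\alpha$, $Z=\beta$ identifies this last degree with $\om{deg}\bigl(e_+|_{e_-^{-1}(*)}\colon e_-^{-1}(*)\to\beta\bigr)$, which is independent of the chosen regular value. Concatenating these three identities proves the lemma.

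The one step that is not purely formal, and the only place I expect to have to be careful, is the bookkeeping of orientations. Concretely, one must check that the orientation of the fiber $e_-^{-1}(*)$ used in part (c) (the one induced from the orientations of $\ovl{\MC{M}}_z(\alpha,\beta)$ and $\alpha$) agrees with the fiber orientation furnished by part (e) of Theorem \ref{Moduli-Theorem} from a choice of $\mathfrak{o}_\alpha\in\Lambda(\alpha)$ and $\mathfrak{o}_\beta\in\Lambda(\beta)$, and that the quotient- and pull-back-orientation conventions used implicitly in parts (a) and (b) are the ones used to orient $\MC{M}_{\alpha,\beta}$ and $X_{\alpha,\beta}$ in the definition of the Donaldson differential. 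Since both the statement of the lemma and the definition of $\dd^4$ are built from the same choices of base points, orbit orientations, and elements of $\Lambda(\alpha)$, $\Lambda(\beta)$, this amounts to a consistent unwinding of conventions rather than any genuine computation; I would state it explicitly and leave the routine verification to the reader.
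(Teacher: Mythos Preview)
Your proposal is correct and follows essentially the same approach as the paper's proof: both chain together parts (a), (b), and (c) of the preceding proposition to identify $\om{deg}(e)$ with $\om{deg}(e_+|_{e_-^{-1}(*)})$, you simply run the chain in the opposite direction and add more detail on dimensions and orientation bookkeeping than the paper's terse argument.
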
 
\begin{proof} Write $\ovl{\MC{M}}_{\alpha,\beta} =\ovl{\MC{M}}_z(\alpha,\beta)$
as earlier. By part $(c)$ of the above proposition we have
\[ \om{deg}(e_+|_{e_-^{-1}(*)}\colon e_-^{-1}(*)\ra \beta)=\om{deg}((e_-,e_+)\colon \overline{\MC{M}}_{\alpha,\beta} \ra \alpha\times \beta) ,\]
and by part $(b)$
\[ \om{deg}((e_-,e_+)\colon \overline{\MC{M}}_{\alpha,\beta}\ra 
(\alpha\times \beta) =
\om{deg}((\wt{e}_-,\wt{e}_+)\colon \overline{\MC{M}}_{\alpha,\beta}^{cov}\ra \wt{\alpha} \times \wt{\beta}). \]
Finally by part $(a)$ the latter integer coincides with the degree of
\[ e\colon \overline{\MC{M}}_{\alpha,\beta}^{cov}/\om{SU}(2) =\MC{M}_{\alpha,\beta}
\ra X_{\alpha,\beta} = (\tilde{\alpha}\times \tilde{\beta})/\om{SU}(2), \]
which by definition is the component of $\dd^4$ from
$R=H_0(\alpha)\ra H_3(\beta)=R$.
\end{proof}   

\begin{definition} \label{Def-DCI}
Assume that $\om{gr}(\alpha,\beta)\equiv 0\mod{4}$ for all $\alpha,\beta\in \MC{C}$. Then the Donaldson model $DCI(Y,E)$ for the Floer complex
is the chain complex associated with the multicomplex
\[ (DCI(Y,E)_{*,*},\{\dd^r\}_{r\geq 0}) \; \mbox{ where } \;
DCI(Y,E)_{s,t}=\bigoplus_{\alpha\in \MC{C}\; :\; j(\alpha)\equiv s} H_t(\alpha), \]
$\dd^4$ is given as in Lemma \ref{Differential-Degree-Lemma} and $\dd^r=0$ for $r\neq 4$. 
\end{definition}

The complex $DCI(Y,E)$ enjoys the same periodicity as $\wt{CI}(Y,E)$. Furthermore, it also carries a natural filtration
\[ F_pDCI(Y,E)_n \coloneqq \bigoplus_{s\leq p} DCI(Y,E)_{s,n-s}  \] 
by $H_*(\om{SO}(3))\cong \Lambda_R[u]$-submodules. This filtration is also referred to as the index filtration.

Finally, we state the result that allows the replacement of the pair
$(C_*^{gm}(\om{SO}(3)),\wt{CI}(Y,E))$ by the pair $(\Lambda_R[u],DCI(Y,E))$.

\begin{proposition}\cite[Corollary~7.6]{Miller19} \label{DCI-CI-Equivalence}
Assume that $2\in R$ is invertible. There is a quasi-isomorphism $i:\Lambda_R[u]\ra C_*^{gm}(\om{SO}(3);R)$ of differential graded algebras and there is a zigzag of $\Lambda_R[u]$-equivariant quasi-isomorphisms
\[ \begin{tikzcd} DCI(Y,E) \arrow{r}{f} & X & \wt{CI}(Y,E) \arrow{l}[swap]{g} \end{tikzcd}. \]
Furthermore, $X$ also carries a filtration, $f$ and $g$ are filtration preserving and the induced maps 
\[ \begin{tikzcd} \frac{F_pDCI(Y,E)}{F_{p-1}DCI(Y,E)} \arrow{r} & 
\frac{F_pX}{F_{p-1}X} & \frac{F_p\wt{CI}(Y,E)}{F_{p-1}\wt{CI}(Y,E)} \arrow{l}  \end{tikzcd} \]
are quasi-isomorphisms for all $p$. 
\end{proposition}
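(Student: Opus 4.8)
This proposition is \cite[Corollary~7.6]{Miller19}; the plan is to reprise that argument, which simplifies in our setting because the hypothesis $\om{gr}(\alpha,\beta)\equiv 0\Mod 4$ of Definition \ref{Def-DCI} is in force. The proof divides into two essentially independent parts: first, the construction of the DGA quasi-isomorphism $i\colon \Lambda_R[u]\ra A$, where $A\coloneqq C^{gm}_*(\om{SO}(3);R)$ (``formality of $A$''); and second, a filtered homotopy-transfer argument that, after restriction of scalars along $i$, replaces the filtered DG $\Lambda_R[u]$-module $\wt{CI}(Y,E)$ by the smaller model $DCI(Y,E)$.

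For the formality step, since $\tfrac12\in R$ one computes $H_*(\om{SO}(3);R)=R\oplus R[3]$, and as $H_6(\om{SO}(3);R)=0$ this is the exterior algebra $\Lambda_R[u]$ with $|u|=3$. To realize it by a DGA map I would go through the double cover $q\colon \om{SU}(2)=S^3\ra \om{SO}(3)$: functoriality of $C^{gm}_*$, the naturality of the cross product, and the fact that $q$ is a group homomorphism make $q_*\colon C^{gm}_*(S^3;R)\ra A$ a map of differential graded algebras, which is a quasi-isomorphism because it is the identity on $H_0$ and multiplication by $\deg q=2$ on $H_3$, invertible since $\tfrac12\in R$. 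It then suffices to map $\Lambda_R[u]$ quasi-isomorphically into $C^{gm}_*(S^3;R)$: the assignment $1\mapsto [e\hookrightarrow S^3]$ (with $e\in\om{SU}(2)$ the identity element, so $[e]$ is the Pontryagin unit) and $u\mapsto [\om{id}_{S^3}]$ is a well-defined DGA map because $[\om{id}_{S^3}]^2$ lies in $C^{gm}_6(S^3;R)=0$ for dimension reasons, and it is a quasi-isomorphism since it sends the two generators to generators of $H_0$ and $H_3$. Composing with $q_*$ gives $i$. (Alternatively, one may invoke the intrinsic formality of an exterior algebra on a single odd-degree generator over a ring in which $2$ is invertible.)

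For the transfer step, restrict scalars along $i$ to regard $\wt{CI}(Y,E)$ as a DG $\Lambda_R[u]$-module; the point is that $u$ acts with bidegree $(0,3)$ on $\wt{CI}(Y,E)_{*,*}$, hence preserves the $s$-grading, so $F_p\wt{CI}(Y,E)$ and the subquotients $F_p\wt{CI}(Y,E)/F_{p-1}\wt{CI}(Y,E)\cong\bigoplus_{j(\alpha)\equiv p}(C^{gm}_*(\alpha),\dd^{gm})$ are DG $\Lambda_R[u]$-modules. Since $R$ is a PID and each $C^{gm}_*(\alpha)$ is a bounded complex of free $R$-modules ($C^{gm}_n(\alpha)=0$ outside $0\le n\le\dim\alpha+1$) with free homology $H_*(\alpha)$ by \eqref{Eq-Orbit-Calc}, choose for each $\alpha\in\MC{C}$ a deformation-retract datum $(\iota_\alpha,\rho_\alpha,h_\alpha)$ of $C^{gm}_*(\alpha)$ onto $H_*(\alpha)$, picked compatibly with the distinguished bases so that $\rho_\alpha(u\cdot\iota_\alpha(-))$ is the prescribed $u$-action on $H_*(\alpha)$. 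Under the mod-$4$ hypothesis the differential of $\wt{CI}(Y,E)$ reduces to $\dd^0+\dd^4$ (the bidegree of $\dd^r$ forces $\dd^r=0$ unless $4\mid r$, and $\dd^r=0$ for $r>5$ by Definition \ref{FloerComplex}), so the homological perturbation lemma applies with $\dd^4$ as perturbation of $\bigoplus_\alpha C^{gm}_*(\alpha)$; one checks $(\dd^4 h)^2=0$, since this operator has bidegree $(-4,4)$ and so raises the internal degree $t$ past the range $[0,\dim\alpha+1]$ after two applications. The lemma produces strict, index-filtered quasi-isomorphisms $\iota'\colon DCI(Y,E)\ra\wt{CI}(Y,E)$ and $\pi'\colon\wt{CI}(Y,E)\ra DCI(Y,E)$, identifying $\wt{CI}(Y,E)$ with the transferred complex $(\bigoplus_\alpha H_*(\alpha),\rho\dd^4\iota)$; the potential higher term $\rho\dd^4 h\dd^4\iota$ lands in degrees $>3$ of $H_*(\beta)$, hence vanishes, so the transferred differential is $\rho\dd^4\iota$, whose $H_0(\alpha)\ra H_3(\beta)$ component for $\beta$ irreducible is, by Lemma \ref{Differential-Degree-Lemma}, multiplication by the asserted mapping degree --- i.e.\ the transferred complex is precisely $DCI(Y,E)$ of Definition \ref{Def-DCI}. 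On associated graded, $\iota'$ and $\pi'$ restrict to the chosen $\iota_\alpha$ and $\rho_\alpha$, hence are quasi-isomorphisms.

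The remaining issue is that $\iota'$ and $\pi'$ need not be $\Lambda_R[u]$-linear; the perturbation lemma does transport the $u$-action to an $A_\infty$-$\Lambda_R[u]$-module structure on $DCI(Y,E)$ coinciding, by the choice of the $\iota_\alpha$, with the prescribed strict one (the higher operations vanishing by degree), and promotes $\iota',\pi'$ to $A_\infty$-module quasi-isomorphisms. To rigidify this I would strictify by a standard bar-type construction, obtaining a filtered DG $\Lambda_R[u]$-module $X$ together with strict, index-filtered, $\Lambda_R[u]$-linear quasi-isomorphisms $DCI(Y,E)\ra X\leftarrow\wt{CI}(Y,E)$ whose associated-graded maps are again quasi-isomorphisms --- assembled from the column-wise, $s$-preserving data so that the index filtration is visibly respected throughout. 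I expect the main obstacle to be precisely this last step, the strictification of the transferred module structure while keeping track of the index filtration, rather than the formality statement, which is elementary here because $\om{SO}(3)$ is small and $2$ is invertible.
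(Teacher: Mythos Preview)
The paper does not supply its own proof: the proposition is attributed to \cite[Corollary~7.6]{Miller19}, and the subsequent Remark says only that the stronger filtered statement (that the zigzag is filtered and induces quasi-isomorphisms on associated graded) is extracted by ``a close inspection of the quite involved proof'' in Miller's paper. There is therefore nothing in the present paper to compare your outline against; you are reconstructing (a simplified version of) Miller's argument rather than reproducing the paper's.

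Your formality step is sound: the vanishing $C^{gm}_n(\om{SO}(3))=0$ for $n>4$ kills the square of $i(u)$, and the degree-$2$ factor on $H_3$ coming from $q_*$ is invertible by hypothesis. The perturbation step is also correct as far as it goes: under the mod-$4$ assumption only $\dd^0+\dd^4$ survive in $\wt{CI}$, the nilpotence $(\dd^4 h)^2=0$ holds for the bidegree reason you give, and the same reason forces the transferred differential to collapse to $\rho\,\dd^4\,\iota$, with only the $H_0(\alpha)\to H_3(\beta)$ component (for $\beta$ irreducible) surviving. That this component is the asserted mapping degree follows by unwinding the definition of $f_{\alpha\beta}$ on a point-chain together with Lemma~\ref{Differential-Degree-Lemma}.

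The gap you flag is genuine and is exactly where the work lies. The homological perturbation lemma produces only $R$-linear maps, and upgrading the zigzag to one of strict filtered $\Lambda_R[u]$-modules is what the paper calls ``quite involved''. Your proposed $A_\infty$-rectification via a bar-type construction is a reasonable strategy, but you would need to verify that the strictification can be made compatible with the index filtration; this is plausible since the $u$-action has bidegree $(0,3)$ and hence preserves the $s$-grading, while the bar construction is built from the filtration-preserving data, but it is not automatic and you have not carried it out. This is precisely the step the present paper declines to reproduce and instead delegates to Miller.
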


\begin{remark}
The statement in the cited result is slightly weaker. The fact that
the intermediate objects carry a type of index filtration
and that all the quasi-isomorphisms preserve this filtration,
inducing quasi-isomorphisms on the minimal filtration quotients,
is obtained by a close inspection of the quite involved proof. 
\end{remark} 

\section{Binary Polyhedral Spaces and Flat Connections}
The purpose of this section is to fix our conventions concerning the
binary polyhedral spaces and to recall the basic representation theory
needed to effectively classify the gauge equivalence classes of flat
$\om{SU}(2)$-connections. We also recall the McKay correspondence
between the finite subgroups of $\om{SU}(2)$ and the simply laced
extended Dynkin diagrams, that is, the diagrams of type
$\wt{A}_n$, $\wt{D_n}$, $\wt{E}_6$, $\wt{E}_7$ or $\wt{E}_8$. This
surprising relation is vital for our work in the next section and
is essential in the definition of the graphs $\MC{S}_\Gamma$, given in
the end of the section, mentioned in the introduction.

To avoid making this rather basic section too long we have moved a number
of facts concerning the finite subgroups of $\om{SU}(2)$ into Appendix $A$.
This includes their complex representation theory, the corresponding McKay graphs used in the construction of $\MC{S}_\Gamma$ and complete lists of the flat $\om{SU}(2)$-connections over the binary polyhedral spaces.  

\subsection{Binary Polyhedral Spaces} 
We briefly recall the classification of finite subgroups of
$\om{SU}(2)$. Let $C_l$ denote the cyclic
group of order $l$ for $l\geq 1$, let $D_k$ denote
the dihedral group of order $2k$ for $k\geq 2$, and let $T$, $O$ and $I$
denote the subgroups of $\om{SO}(3)$ that leave
a regular tetrahedron, octahedron and icosahedron, respectively,
in $\R^3$ invariant. $T$ is called the tetrahedral
group, $O$ the octahedral group and $I$ the icosahedral
group. Define $D_k^*$, $T^*$, $O^*$ and $I^*$ in $\om{SU}(2)$ by
pulling back $D_k$, $T$, $O$ and $I$ along the standard double covering
homomorphism $\om{SU}(2)\ra \om{SO}(3)$.
Every cyclic group $C_l$ may also be realized as
a subgroup of $\om{SU}(2)$ by embedding them
in a copy of $U(1)\subset \om{SU}(2)$. We have
thus constructed two infinite families $C_l$, $D_k^*$ of subgroups in $\om{SU}(2)$, as well as the three exceptional ones $T^*$, $O^*$ and $I^*$.
It is a classical fact that up to conjugacy these
exhaust all the finite subgroups of $\om{SU}(2)$. A proof may be found
in \cite[p.~83]{Wolf67}.  

\begin{theorem} 
Let $\Gamma\subset \om{SU}(2)$ be 
a finite subgroup. Then $\Gamma$ is isomorphic to precisely
one of the groups $C_l$ for $l\geq 1$, $D_k^*$ for $k\geq 2$, $T^*$, $O^*$ or
$I^*$. Moreover, if two finite subgroups $\Gamma, \Gamma'\subset \om{SU}(2)$
are isomorphic, then they are conjugate in $\om{SU}(2)$. \end{theorem}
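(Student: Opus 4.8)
The plan is to reduce the whole statement to the classical classification of finite subgroups of $\om{SO}(3)$ via the double covering homomorphism $\pi\colon\om{SU}(2)\ra\om{SO}(3)$, which has kernel $\{\pm1\}$. The key preliminary observation is that $-1$ is the \emph{only} element of order $2$ in $\om{SU}(2)$: if $A\in\om{SU}(2)$ satisfies $A^2=1$ then $A$ is diagonalisable with eigenvalues in $\{\pm1\}$, and $\det A=1$ forces $A\in\{1,-1\}$. Consequently, for a finite subgroup $\Gamma\subset\om{SU}(2)$ there are two cases: if $-1\notin\Gamma$ then $\Gamma$ has no element of order $2$, hence odd order by Cauchy's theorem, and $\pi|_\Gamma$ is injective, realising $\Gamma$ as an odd-order finite subgroup of $\om{SO}(3)$; if $-1\in\Gamma$ then $\Gamma=\pi^{-1}(\pi(\Gamma))$ and $\Gamma/\{\pm1\}\cong\pi(\Gamma)$.

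Next I would run the pole-counting argument for finite subgroups $G\subset\om{SO}(3)$. For $|G|=N>1$, each nontrivial $g\in G$ fixes exactly two points ("poles") of $S^2$; counting incidences between nontrivial elements and their poles and grouping the poles into $G$-orbits with stabiliser orders $n_1,\dots,n_k$ gives $2-\tfrac2N=\sum_{i=1}^{k}\bigl(1-\tfrac1{n_i}\bigr)$. Since every summand lies in $[\tfrac12,1)$ and the left-hand side is $<2$, one gets $k\in\{2,3\}$, and a short case analysis of the admissible tuples produces precisely the list: the cyclic group $C_N$, the dihedral group $D_{N/2}$, and the three sporadic tuples $(n_1,n_2,n_3)=(2,3,3),(2,3,4),(2,3,5)$ with $N=12,24,60$. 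In the sporadic cases a pole orbit of maximal stabiliser order is identified with the vertex set of a regular tetrahedron, octahedron, or icosahedron, so $G$ is the rotation group $T$, $O$ or $I$; a finite check shows the groups occurring in the two infinite families and among the three sporadic ones are pairwise non-isomorphic. Lifting back through $\pi$: when $-1\in\Gamma$, if $\pi(\Gamma)=C_m$ then $\pi^{-1}(\pi(\Gamma))$ is cyclic of order $2m$ (the preimage of a rotation subgroup about a fixed axis doubles in order), while if $\pi(\Gamma)$ is $D_k$, $T$, $O$ or $I$ then $\pi^{-1}(\pi(\Gamma))$ is $D_k^*$, $T^*$, $O^*$ or $I^*$ by definition; when $-1\notin\Gamma$, $\Gamma$ is cyclic of odd order, which one realises inside a circle $U(1)\subset\om{SU}(2)$. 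Together with the explicit realisations of $C_l$, $D_k^*$, $T^*$, $O^*$, $I^*$ inside $\om{SU}(2)$ recorded in Appendix~A, this gives the first assertion, the ranges $l\geq1$, $k\geq2$ being exactly what is needed to avoid the coincidence $\pi^{-1}(D_1)\cong C_4$.

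For the conjugacy statement, suppose $\Gamma,\Gamma'\subset\om{SU}(2)$ are finite and isomorphic. An isomorphism preserves element orders, so by the remark above $-1\in\Gamma$ if and only if $-1\in\Gamma'$; and when this holds the isomorphism sends $-1$ to $-1$, hence descends to an isomorphism $\pi(\Gamma)\cong\pi(\Gamma')$. It therefore suffices to prove that isomorphic finite subgroups of $\om{SO}(3)$ are conjugate in $\om{SO}(3)$: given that, lift the conjugating rotation $\bar g\in\om{SO}(3)$ to $g\in\om{SU}(2)$ with $\pi(g)=\bar g$; then if $-1\in\Gamma$ one has $g\Gamma g^{-1}=\pi^{-1}(\bar g\,\pi(\Gamma)\,\bar g^{-1})=\pi^{-1}(\pi(\Gamma'))=\Gamma'$, and if $-1\notin\Gamma$ then $g\Gamma g^{-1}$ and $\Gamma'$ are both the unique subgroup of odd order $|\Gamma|$ inside the cyclic group $\pi^{-1}(\pi(\Gamma'))$, hence equal. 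Conjugacy of isomorphic subgroups inside $\om{SO}(3)$ is then a rigidity statement: two cyclic (resp.\ dihedral) subgroups of a given order are carried to one another by a rotation matching up their axes, and two copies of $T$, $O$ or $I$ by a rotation matching up the associated regular polyhedra, which are unique up to $\om{SO}(3)$.

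I expect the genuine obstacle to be this last rigidity input together with the sporadic part of the pole-counting argument: turning the numerical tuples $(2,3,3),(2,3,4),(2,3,5)$ into honest identifications $G\cong A_4$, $S_4$, $A_5$, and checking that each such $G$ is pinned down up to conjugacy by its configuration of pole orbits. The rest — the order-$2$ dichotomy, the two reductions through $\pi$, and distinguishing the isomorphism types of the listed groups — is routine bookkeeping. Since this material is entirely classical, one could alternatively just cite \cite[p.~83]{Wolf67} for the $\om{SO}(3)$-classification and its conjugacy rigidity, leaving only the reduction via the covering $\pi$ to be written out.
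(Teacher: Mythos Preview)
The paper does not give its own proof of this theorem; it simply records the statement as a classical fact and cites \cite[p.~83]{Wolf67}. Your proposal is therefore not a comparison with an existing argument but a self-contained outline of the standard proof, and it is correct: the reduction via the double cover $\pi\colon\om{SU}(2)\ra\om{SO}(3)$ using the fact that $-1$ is the unique element of order $2$, followed by the pole-counting classification of finite rotation groups and the lift of the conjugacy statement, is exactly the classical route one finds in references such as Wolf. You have also correctly flagged where the real content lies, namely in identifying the sporadic tuples with the polyhedral groups and in the conjugacy rigidity inside $\om{SO}(3)$; your final remark that one could simply cite Wolf for this step is precisely what the paper chose to do in its entirety.
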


Let $\C^2$ carry the canonical complex orientation and orient
$S^3\subset \C^2$ by the outward pointing normal first convention;
that is, an ordered basis $(v_1,v_2,v_3)\in T_xS^3\subset T_x\C^2\cong \C^2$ is positive if and only if $(x,v_1,v_2,v_3)$ is a positive basis for $\C^2$.
The standard left action of $\om{SU}(2)$ on $\C^2$ restricts to a
transitive and free action by orientation preserving isometries
on the unit sphere $S^3\subset \C^2$.

\begin{definition} \label{RightCosets}
For any finite subgroup
$\Gamma\subset \om{SU}(2)$ let $\Gamma$ act on $S^3\subset \C^2$ by restricting
the standard free action of $\om{SU}(2)$. 
Define  $Y_\Gamma$ to be the quotient manifold $S^3/\Gamma$ equipped with the 
Riemannian metric and orientation induced from the standard round metric and
orientation of $S^3$. Define $\overline{Y}_\Gamma$ to be the
same Riemannian manifold equipped with the opposite orientation.  
The spaces $Y_\Gamma$ and $\ovl{Y}_\Gamma$ are called binary polyhedral 
spaces.  
\end{definition}

Note that by the above theorem 
$Y_\Gamma$ is determined up to $\om{SU}(2)$-equivariant
isometry by the isomorphism class of $\Gamma$.

\begin{lemma} \label{BP-homology}
For $\Gamma\subset \om{SU}(2)$ we have
\[ H_i(Y_\Gamma;\Z) \cong \left\{  \begin{array}{cl}
\Z & \mbox{ for } i=0,3 \\
\Gamma^{ab} & \mbox{ for } i=1 \\
0 & \mbox{ otherwise} \end{array} \right.
\;\; \mbox{ and } \;\; H^i(Y_\Gamma ; \Z)\cong \left\{ 
\begin{array}{cl} \Z & \mbox{ for } i=0,3 \\
       \Gamma^{ab} & \mbox{ for } i=2 \\
      0 & \mbox{ otherwise, } \end{array} \right. \] 
where $\Gamma^{ab}=\Gamma/[\Gamma,\Gamma]$ is the
abelianization of $\Gamma$. In particular,
$H_*(Y_\Gamma ;\Q)\cong H_*(S^3;\Q)$, so the
$Y_\Gamma$ are rational homology $3$-spheres.  \end{lemma}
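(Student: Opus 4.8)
The plan is to exploit that $\Gamma$ acts freely on $S^3$, so that $Y_\Gamma$ is a closed, connected, oriented $3$-manifold with universal cover $S^3$, and then combine the Hurewicz theorem with Poincar\'{e} duality and the universal coefficient theorem.

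First I would observe that the standard $\om{SU}(2)$-action on $S^3$ is free, hence so is the restricted $\Gamma$-action; being the action of a finite group it is automatically properly discontinuous, so $p\colon S^3\ra Y_\Gamma$ is a covering map. Since $S^3$ is simply connected, $p$ is the universal cover and $\pi_1(Y_\Gamma)\cong\Gamma$. As $Y_\Gamma$ inherits a smooth structure and an orientation from $S^3$ (the $\Gamma$-action being by orientation-preserving isometries) and $S^3$ is compact and connected, $Y_\Gamma$ is a closed, connected, oriented smooth $3$-manifold.

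Next I would read off the integral homology. Connectedness together with the closed-oriented-$3$-manifold property give $H_0(Y_\Gamma;\Z)\cong\Z\cong H_3(Y_\Gamma;\Z)$ and $H_i(Y_\Gamma;\Z)=0$ for $i<0$ or $i>3$. The Hurewicz theorem, in its abelianized form, gives $H_1(Y_\Gamma;\Z)\cong\pi_1(Y_\Gamma)^{ab}=\Gamma^{ab}$, which is a finite abelian group because $\Gamma$ is finite. For $H_2$ I would use Poincar\'{e} duality, $H_2(Y_\Gamma;\Z)\cong H^1(Y_\Gamma;\Z)$, and then the universal coefficient theorem, which identifies $H^1(Y_\Gamma;\Z)$ with $\om{Hom}(H_1(Y_\Gamma;\Z),\Z)\oplus\om{Ext}^1_\Z(H_0(Y_\Gamma;\Z),\Z)$; both summands vanish, the first because a homomorphism from a finite group to $\Z$ is zero, the second because $\om{Ext}^1_\Z(\Z,\Z)=0$. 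Hence $H_2(Y_\Gamma;\Z)=0$.

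For the cohomology groups I would invoke Poincar\'{e} duality once more, $H^k(Y_\Gamma;\Z)\cong H_{3-k}(Y_\Gamma;\Z)$, which immediately yields $H^0\cong H^3\cong\Z$, $H^1\cong H_2=0$ and $H^2\cong H_1=\Gamma^{ab}$; alternatively $H^2$ follows from universal coefficients using $\om{Ext}^1_\Z(\Gamma^{ab},\Z)\cong\Gamma^{ab}$ for a finite abelian group. Finally the rational statement follows by tensoring the integral homology with $\Q$: the torsion groups $H_1$ and $H_2$ die, leaving $H_*(Y_\Gamma;\Q)\cong H_*(S^3;\Q)$, so the $Y_\Gamma$ are rational homology $3$-spheres. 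There is no genuine obstacle here; the only place where the hypothesis $\Gamma\subset\om{SU}(2)$ is used, rather than $\Gamma$ being an arbitrary finite group, is to ensure that the action on $S^3$ is free, and that is immediate from freeness of the $\om{SU}(2)$-action.
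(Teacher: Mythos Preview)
Your proof is correct and follows essentially the same route as the paper: identify $S^3$ as the universal cover to get $\pi_1\cong\Gamma$ and hence $H_1\cong\Gamma^{ab}$, then use the universal coefficient theorem and Poincar\'{e} duality to pin down the remaining groups. The only cosmetic difference is the order in which Poincar\'{e} duality and the universal coefficient theorem are invoked.
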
 
\begin{proof} As $Y_\Gamma$ is a connected, closed
and orientable the (co)homology is concentrated in degrees $0\leq i\leq 3$ and
$H_i(Y_\Gamma)\cong H^i(Y_\Gamma)\cong \Z$ for 
$i=0,3$. By construction, $S^3$ is a universal cover
of $Y_\Gamma$ so that $\pi_1(Y_\Gamma)\cong \Gamma$ and hence $H_1(Y_\Gamma)\cong \Gamma^{ab}$. Since this group is finite it follows by the universal coefficient theorem that $H^1(Y_\Gamma)=0$.
By Poincar\'{e} duality we conclude that
$H_2(Y_\Gamma)=0$ and $H^2(Y_\Gamma)\cong \Gamma^{ab}$.
The final statement follows from the universal coefficient theorem as $H_1(Y_\Gamma;\Q)\cong \Q\otimes \Gamma^{ab}=0$. 
\end{proof}

\subsection{A Few Results from Representation Theory}
The framed Floer complex $\wt{CI}(Y,E)$ associated with the trivial 
$\om{SU}(2)$-bundle $E\ra Y_\Gamma$ is generated, in the sense of
Definition \ref{FloerComplex}, by the gauge equivalence classes of the flat connections in $E$. By a well-known result (see for instance \cite{TaubesDG} for a proof)
these gauge equivalence classes are in natural bijection with
the equivalence classes of representations
\[ \Gamma = \pi_1(Y_\Gamma)\ra \om{SU}(2)  \]
or in other words, as $\om{SU}(2)\cong \om{Sp}(1)$, the isomorphism classes of
$1$-dimensional quaternionic representations of $\Gamma$. It is convenient to express this set in terms of the complex representation theory of $\Gamma$, since in this setting we have the calculational power of
character theory at our disposal. The following
three results may be extracted from \cite[II.6]{BrockerDieck85}.

\begin{lemma} \label{Irr-Decomp-C}
Let $G$ be a finite group. The
set $\om{Irr}(G,\C)$ of isomorphism classes of
irreducible complex representations admits a decomposition into disjoint subsets
\[ \om{Irr}(G,\C)=\om{Irr}(G,\C)_\R \cup \om{Irr}(G,\C)_\C \cup \om{Irr}(G,\C)_{\HH}. \]
The set $\om{Irr}(G,\C)_\C$ consists of the classes
$[V]$ where $V\not\cong V^*$, while $\om{Irr}(G,\C)_\R$
and $\om{Irr}(G,\C)_\HH$ consists of the classes
$[W]$ admitting a conjugate linear equivariant map
$s:W\ra W$ satisfying $s^2=1$ or $s^2=-1$, respectively.
\end{lemma}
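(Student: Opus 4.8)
The plan is to reduce everything to Schur's lemma applied to the (at most one-dimensional) space of $\C$-linear $G$-equivariant endomorphisms of an irreducible representation. Fix an irreducible complex representation $W$ of $G$, which we may assume unitary. First I would observe that a conjugate-linear $G$-equivariant map $s\colon W\ra W$ is the same datum as a $\C$-linear $G$-equivariant map $\ovl{W}\ra W$, and that for a unitary representation $\ovl{W}\cong W^*$ as $G$-representations. Hence such an $s$ exists and is nonzero if and only if $\om{Hom}_G(W^*,W)\neq 0$, which by Schur's lemma happens precisely when $W\cong W^*$. This identifies $\om{Irr}(G,\C)_\C$ as the classes $[V]$ with $V\not\cong V^*$ and shows these admit no such $s$; so one defines $\om{Irr}(G,\C)_\C$ this way and sorts the remaining classes (those with $W\cong W^*$) into the other two sets according to the computation below.

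Next, suppose $W\cong W^*$ and pick a nonzero conjugate-linear $G$-equivariant $s\colon W\ra W$. Since $\ker s$ is a subrepresentation and $W$ is irreducible, $s$ is bijective. The composite $s^2$ is $\C$-linear and $G$-equivariant, so by Schur's lemma $s^2=\lambda\cdot\om{id}$ for some $\lambda\in\C^\times$. Comparing $s\circ s^2$ with $s^2\circ s$ (both equal $s^3$) and using conjugate-linearity of $s$ gives $\ovl{\lambda}=\lambda$, so $\lambda\in\R^\times$. Replacing $s$ by $ts$ with $t>0$ real multiplies $\lambda$ by $t^2>0$, so after rescaling we may arrange $\lambda=1$ or $\lambda=-1$. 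It remains to check this sign is independent of the choice of $s$: any other nonzero conjugate-linear $G$-equivariant map is of the form $v\mapsto s(cv)$ for some $c\in\C^\times$ (precomposing $s^{-1}$ with it is $\C$-linear and equivariant, hence scalar), and squaring this map multiplies $\lambda$ by $|c|^2>0$. Thus $\om{sgn}(\lambda)\in\{\pm1\}$ is an invariant of $[W]$; place $[W]$ in $\om{Irr}(G,\C)_\R$ when it is $+1$ and in $\om{Irr}(G,\C)_\HH$ when it is $-1$. Disjointness of the three sets and the fact that they exhaust $\om{Irr}(G,\C)$ are then immediate from the construction.

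The argument is entirely formal once the identification ``conjugate-linear equivariant self-map of $W$ $\leftrightarrow$ element of $\om{Hom}_G(W^*,W)$'' is in place; the only point requiring a little care is the invariance of the sign of $\lambda$, and even that is a two-line Schur's-lemma computation. Alternatively one can phrase the trichotomy via the Frobenius--Schur indicator $\nu(W)=|G|^{-1}\sum_{g\in G}\chi_W(g^2)\in\{1,0,-1\}$, which is $1$, $0$, $-1$ exactly on the real, complex, and quaternionic classes; but since the statement is phrased in terms of the map $s$ the direct route above is the most economical, and in any case the result is classical and contained in \cite[II.6]{BrockerDieck85}.
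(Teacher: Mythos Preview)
Your proof is correct and is precisely the standard Schur's-lemma argument. The paper does not actually prove this lemma; it merely states it (together with the two subsequent results) as extractable from \cite[II.6]{BrockerDieck85}, which you also cite. So you have supplied the details the paper omits, and your route is the expected one.
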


An irreducible representation $V$ is said to be
of real, complex or quaternionic type according to
whether $[V]\in \om{Irr}(G,\C)_K$ for $K=\R,\C,\HH$,
respectively. There is a simple test for determining
the type of an irreducible complex representation.

\begin{lemma}  \label{Type-Criterion}
Let $V$ be an irreducible complex representation of the finite group $G$ with character
$\chi_V$. Then
\[ \frac1{|G|}\sum_{g\in G} \chi_V(g^2) = \left\{
\begin{array}{lr} 1 & \mbox{ if } [V]\in \om{Irr}(G,\C)_\R \\
0 & \mbox{ if } [V]\in \om{Irr}(G,\C)_\C \\
-1 & \mbox{ if } [V] \in \om{Irr}(G,\C)_\HH 
\end{array} \right. \]
\end{lemma}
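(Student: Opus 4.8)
The plan is to compute the class function $\psi(g) \coloneqq \frac{1}{|G|}\sum_{g\in G}\chi_V(g^2)$ by recognizing it as an inner product of characters, and then to identify that inner product using Lemma~\ref{Irr-Decomp-C}. First I would observe that $g \mapsto \chi_V(g^2)$ is the character of a \emph{virtual} representation built canonically from $V$: on the tensor square $V \otimes V$ the swap $\tau(v\otimes w) = w \otimes v$ is $G$-equivariant and involutive, giving the decomposition $V\otimes V = \operatorname{Sym}^2 V \oplus \Lambda^2 V$ into $(+1)$ and $(-1)$ eigenspaces. Standard character identities then give $\chi_{\operatorname{Sym}^2 V}(g) = \tfrac12(\chi_V(g)^2 + \chi_V(g^2))$ and $\chi_{\Lambda^2 V}(g) = \tfrac12(\chi_V(g)^2 - \chi_V(g^2))$, so that $\chi_V(g^2) = \chi_{\operatorname{Sym}^2 V}(g) - \chi_{\Lambda^2 V}(g)$. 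Averaging over $G$ and using that $\frac{1}{|G|}\sum_g \chi_W(g) = \dim W^G = \langle \mathbf{1}, \chi_W\rangle$ computes the multiplicity of the trivial representation, we get
\[
\psi = \dim (\operatorname{Sym}^2 V)^G - \dim(\Lambda^2 V)^G = \langle \mathbf 1, \chi_{\operatorname{Sym}^2 V}\rangle - \langle \mathbf 1, \chi_{\Lambda^2 V}\rangle.
\]

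Next I would connect these multiplicities to invariant bilinear forms. A $G$-invariant vector in $V^* \otimes V^* \cong (V\otimes V)^*$ is exactly a $G$-invariant bilinear form on $V$; its restriction to the symmetric (resp. alternating) summand counts invariant symmetric (resp. skew-symmetric) forms. So $\psi$ equals (number of linearly independent invariant symmetric forms on $V$) minus (number of invariant skew forms on $V$). Since $V$ is irreducible, Schur's lemma forces the space of invariant bilinear forms to be either $0$ (when $V \not\cong V^*$, i.e. $[V] \in \operatorname{Irr}(G,\C)_\C$, giving $\psi = 0$) or $1$-dimensional (when $V \cong V^*$). In the latter case the unique (up to scalar) invariant form is either symmetric or skew-symmetric but not both, so $\psi = +1$ or $\psi = -1$ respectively. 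It then remains to match these two subcases with the types $\R$ and $\HH$ from Lemma~\ref{Irr-Decomp-C}: a nonzero invariant symmetric form on $V$ is equivalent, via the identification $V \cong V^*$ it induces together with a choice of Hermitian structure, to a conjugate-linear equivariant $s\colon V \to V$ with $s^2 = +\lambda\,\mathrm{id}$, $\lambda > 0$, which after rescaling gives $s^2 = 1$ (real type); an invariant skew form gives $s^2 = -\lambda\,\mathrm{id}$ with $\lambda>0$, hence $s^2 = -1$ after rescaling (quaternionic type).

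The main obstacle I anticipate is the last bookkeeping step — carefully relating an invariant bilinear form of a given symmetry type to a conjugate-linear $s$ with $s^2 = \pm 1$, with the correct sign. This requires fixing a $G$-invariant Hermitian inner product $h$ on $V$ (available by averaging), using it to turn the invariant bilinear form $b$ into a conjugate-linear map $s$ via $h(s v, w) = b(v,w)$, checking $s$ is $G$-equivariant, and then computing $s^2$: one finds $s^2$ is a positive or negative self-adjoint scalar according to whether $b$ is symmetric or skew, the sign being exactly what distinguishes $\operatorname{Irr}(G,\C)_\R$ from $\operatorname{Irr}(G,\C)_\HH$. Since all of this is classical (it is the Frobenius–Schur indicator), I would keep this verification brief, citing \cite[II.6]{BrockerDieck85} for the correspondence between the sign of the indicator and the existence of $s$ with $s^2 = \pm 1$ as recorded in Lemma~\ref{Irr-Decomp-C}, and present the tensor-square computation of $\psi$ as the substantive content of the proof.
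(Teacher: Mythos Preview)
Your proof is correct and is the standard argument for the Frobenius--Schur indicator. Note, however, that the paper does not actually prove this lemma: it is one of three results introduced with the sentence ``The following three results may be extracted from \cite[II.6]{BrockerDieck85},'' and no proof is given beyond that citation. Your proposal thus supplies what the paper omits, and indeed the argument you outline---decomposing $V\otimes V$ into symmetric and alternating parts, identifying the averaged sum as $\dim(\operatorname{Sym}^2 V)^G - \dim(\Lambda^2 V)^G$, and then matching symmetric versus skew invariant bilinear forms to real versus quaternionic structures via a choice of invariant Hermitian metric---is essentially the proof one finds in Br\"ocker--tom Dieck.
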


Finally we recall how the set $\om{Irr}(G,\HH)$ of isomorphism classes of irreducible quaternionic representations may be recovered from $\om{Irr}(G,\C)$.
Let $\om{Rep}_K(G)$ denote the category of finite dimensional $G$-representations over $K=\C,\HH$. There
is a restriction functor $r\colon \om{Rep}_\HH(G)\ra \om{Rep}_\C(G)$ given by pullback along the inclusion $\C\inj \HH$ and an extension functor $e\colon\om{Rep}_\C(G)\ra \om{Rep}_\HH (G)$ given by $e(V)=\HH\otimes_\C V$. 
Then we have the following result. 

\begin{proposition} \label{Irr-Decomp-H}
There is a decomposition into disjoint subsets
\[ \om{Irr}(G,\HH)=\om{Irr}(G,\HH)_\R \cup \om{Irr}(G,\HH)_\C \cup \om{Irr}(G,\HH)_\HH . \]
Furthermore, the following maps are bijections
\begin{align*}
r:&\om{Irr}(G,\HH)_\HH \ra  \om{Irr}(G,\C)_\HH  \\
e:& \frac12 \om{Irr}(G,\C)_\C \ra \om{Irr}(G,\HH)_\C \\
e:& \om{Irr}(G,\C)_\R \ra \om{Irr}(G,\HH)_\R, 
\end{align*}
where $\frac12 \om{Irr}(G,\C)_\C$ denotes the set
of unordered pairs $\{[V],[V^*]\}$ for $[V]\in \om{Irr}(G,\C)_\C$. 
\end{proposition}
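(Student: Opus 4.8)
The plan is to use the standard dictionary between quaternionic and complex $G$-representations. Writing $\HH = \C \oplus \C j$ with $j^2=-1$ and $ji=-ij$, a left $\HH G$-module is the same datum as a complex $G$-representation $V$ together with a $G$-equivariant conjugate-linear map $j\colon V\to V$ with $j^2=-1$; under this identification $r$ simply forgets $j$, and $e(V)=\HH\otimes_\C V$ restricts over $\C$ to $V\oplus\overline{V}\cong V\oplus V^*$, with $j$ acting by a twisted swap of the two summands. I would open by recording this equivalence of categories, together with two elementary remarks: for a complex $G$-representation $W$ and a conjugate-linear $G$-equivariant map $j$ defined on a space containing it, the image $jW$ is a complex subspace isomorphic to $\overline{W}\cong W^*$; and a complex irreducible of real type admits no conjugate-linear $G$-equivariant $s$ with $s^2=-1$ (such an $s$, combined with the one of square $+1$ guaranteed by Lemma \ref{Irr-Decomp-C}, would differ from it by a nonzero complex scalar, forcing an impossible relation on the squares), while symmetrically one of quaternionic type admits no such $s$ with $s^2=+1$.

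Next I would classify the irreducible $\HH G$-modules. Let $M$ be one, put $V=r(M)$, and choose any complex-irreducible $\C G$-submodule $W\subseteq V$. Then $W+jW$ is $\C G$-stable and $j$-stable, hence an $\HH G$-submodule, hence equal to $M$. Since $W\cap jW$ is a $\C G$-submodule of the irreducible $W$, comparing complex dimensions (using that $jW$ has the same complex dimension as $W$) leaves exactly two cases. Either $jW=W$, so $V=W$ is complex-irreducible and $j$ is a quaternionic structure on it, whence $V\in\om{Irr}(G,\C)_\HH$ by Lemma \ref{Type-Criterion}. Or $W\cap jW=0$, so $V=W\oplus jW\cong W\oplus W^*$, and the inclusion $W\hookrightarrow r(M)$ adjoins to a surjection $\HH\otimes_\C W\twoheadrightarrow M$ which, by a dimension count, is an isomorphism $M\cong e(W)$; here $W$ cannot be of quaternionic type, for then the $\HH G$-module $(W,s)$ attached to a quaternionic structure $s$ on $W$ would be a proper irreducible quotient of $\HH\otimes_\C W\cong M$, contradicting irreducibility of $M$. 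Thus in the second case $W$ is of complex type (so $V\cong W\oplus W^*$ with $W\not\cong W^*$) or of real type (so $V\cong W\oplus W$). This partitions $\om{Irr}(G,\HH)$ into the three disjoint families distinguished by whether $r(M)$ is $\C G$-irreducible, is $W\oplus W^*$ with $W$ of complex type, or is $W\oplus W$ with $W$ of real type; I would label these $\om{Irr}(G,\HH)_\HH$, $\om{Irr}(G,\HH)_\C$, $\om{Irr}(G,\HH)_\R$, matching the complex type involved.

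It then remains to verify the three bijections. For $r\colon\om{Irr}(G,\HH)_\HH\to\om{Irr}(G,\C)_\HH$: it lands there and is surjective, since any $V\in\om{Irr}(G,\C)_\HH$ with its structure map $s$ gives an irreducible $\HH G$-module $(V,s)$ with $r(V,s)=V$; and it is injective, since a quaternionic structure on a complex-irreducible module is unique up to a scalar of modulus one, and twisting by such a scalar yields an isomorphic $\HH G$-module, so the $\C G$-isomorphism type of $r(M)$ recovers $M$. For $e$ on the other two families: if $W$ is of real or complex type then $e(W)=\HH\otimes_\C W$ is irreducible, because any $\C G$-irreducible submodule $W'$ of $r(e(W))$ is again of the same, non-quaternionic, type, so $W'\cap jW'=0$ and $W'+jW'$ already has full complex dimension, hence equals $e(W)$; surjectivity of $e$ onto $\om{Irr}(G,\HH)_\C$, respectively $\om{Irr}(G,\HH)_\R$, is then exactly the second case of the classification, and injectivity on $\tfrac12\om{Irr}(G,\C)_\C$, respectively on $\om{Irr}(G,\C)_\R$, follows by applying $r$ to an isomorphism $e(W)\cong e(W')$, which gives $W\oplus W^*\cong W'\oplus(W')^*$, respectively $W\oplus W\cong W'\oplus W'$, forcing $\{[W],[W^*]\}=\{[W'],[(W')^*]\}$, respectively $[W]=[W']$. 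Finally, to complete the picture one records that $e(W)\cong(W,s)^{\oplus 2}$ when $W$ is of quaternionic type — already the surjection $\HH\otimes_\C W\twoheadrightarrow(W,s)$ shows $e(W)$ is reducible, which is the input used above.

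I expect the main obstacle to be bookkeeping rather than ideas: keeping the left/right module and conjugate-linearity conventions consistent throughout — in particular pinning down $r(e(W))$ together with the precise action of $j$ on it — and making the ``unique up to a scalar of modulus one'' statement for quaternionic structures airtight enough that $r$ is genuinely injective on $\om{Irr}(G,\HH)_\HH$. Everything else reduces to Schur's lemma and comparisons of complex dimension.
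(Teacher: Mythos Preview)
The paper does not give its own proof of this proposition; it simply states that the result ``may be extracted from \cite[II.6]{BrockerDieck85}'' and moves on. So there is nothing to compare your argument against.

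Your proof is correct and is essentially the standard argument one would reconstruct from that reference: identify an $\HH G$-module with a pair $(V,j)$, pick a $\C G$-irreducible $W\subset V$, and split on whether $jW=W$ or $W\cap jW=0$. The key steps---that $W+jW$ is an $\HH G$-submodule forcing $M=W\oplus jW$ in the second case, that $W$ cannot be of quaternionic type there because $e(W)$ would then surject onto the strictly smaller $(W,s)$, and that the quaternionic structure on a complex irreducible is unique up to a unit scalar so that $r$ is injective---are all sound. Your labeling of the three subsets of $\om{Irr}(G,\HH)$ by the complex type of the associated $W$ is exactly what the bijections in the statement demand, so there is no ambiguity. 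The only thing to watch, as you note yourself, is the left/right and conjugate-linear bookkeeping, but you have handled it consistently.
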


Let $\om{Irr}^n(G,\C)_K$ denote the set of irreducible
$G$-representations of dimension $n$ of type $K=\R,\C,\HH$. 

\begin{corollary} The set of isomorphism classes of
$1$-dimensional quaternionic representations of the
finite group $G$ are in bijection with
\[ \om{Irr}^2(G,\C)_\HH \cup \frac12 \om{Irr}^1(G,\C)_\C
\cup \om{Irr}^1(G,\C)_\R   .\]
In particular, if $G=\Gamma$ is a finite subgroup of 
$\om{SU}(2)$ then the above set is in bijection with
the set of gauge equivalence classes of flat
connections in the trivial $\om{SU}(2)$-bundle
over $Y_\Gamma$. \end{corollary}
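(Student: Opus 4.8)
The plan is to identify the set of isomorphism classes of $1$-dimensional quaternionic representations of $G$ with $\om{Irr}^1(G,\HH)$, the dimension-$1$ part of $\om{Irr}(G,\HH)$, and then simply read off the three summands on the right-hand side from Proposition \ref{Irr-Decomp-H} by keeping track of dimensions under the functors $r$ and $e$. First I would observe that a $1$-dimensional quaternionic representation is automatically irreducible, since a $1$-dimensional module over the division ring $\HH$ has no proper nonzero submodules; hence the set of isomorphism classes of $1$-dimensional quaternionic $G$-representations is exactly $\om{Irr}^1(G,\HH)$.

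Next I would split $\om{Irr}^1(G,\HH)$ according to the disjoint decomposition $\om{Irr}(G,\HH)=\om{Irr}(G,\HH)_\R \cup \om{Irr}(G,\HH)_\C \cup \om{Irr}(G,\HH)_\HH$ of Proposition \ref{Irr-Decomp-H} and determine, for each piece, which complex irreducibles its dimension-$1$ part corresponds to under the bijections $r$ and $e$ of that proposition. This is purely a dimension count. For a complex representation $V$ one has $\dim_\HH(\HH\otimes_\C V)=\dim_\C V$, so the bijection $e$ carries $\om{Irr}^1(G,\C)_\R$ onto $\om{Irr}^1(G,\HH)_\R$ and carries the set $\tfrac12\om{Irr}^1(G,\C)_\C$ of unordered pairs $\{[V],[V^*]\}$ of $1$-dimensional complex representations of complex type onto $\om{Irr}^1(G,\HH)_\C$; here the factor $\tfrac12$ is precisely what makes $e$ well-defined and injective on this piece. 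For a quaternionic representation $W$ the underlying complex representation $r(W)$ satisfies $\dim_\C r(W)=2\dim_\HH W$, so the bijection $r\colon \om{Irr}(G,\HH)_\HH \ra \om{Irr}(G,\C)_\HH$ restricts to a bijection $\om{Irr}^1(G,\HH)_\HH \ra \om{Irr}^2(G,\C)_\HH$ (no complex irreducible of quaternionic type can have odd dimension, so $\om{Irr}^2$ really is the relevant set). Taking the union of these three identifications over the disjoint pieces gives the asserted bijection
\[ \om{Irr}^1(G,\HH) \;\longleftrightarrow\; \om{Irr}^2(G,\C)_\HH \cup \tfrac12\om{Irr}^1(G,\C)_\C \cup \om{Irr}^1(G,\C)_\R. \]

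For the final ``in particular'' clause I would invoke the identification recalled in the text preceding the corollary (attributed to \cite{TaubesDG}) between gauge equivalence classes of flat connections in the trivial $\om{SU}(2)$-bundle over $Y_\Gamma$ and conjugacy classes of homomorphisms $\pi_1(Y_\Gamma)=\Gamma\ra \om{SU}(2)\cong \om{Sp}(1)$, together with the observation that, since a finite-dimensional quaternionic representation of a finite group admits an invariant quaternionic Hermitian form, such conjugacy classes of homomorphisms are exactly the isomorphism classes of $1$-dimensional quaternionic representations of $\Gamma$; the first part of the corollary then applies verbatim. I do not expect any substantial obstacle here: the entire argument is bookkeeping, and the only points requiring genuine care are the dimension doubling under $r$ on the quaternionic-type summand and the role of the ``$\tfrac12$'' on the complex-type summand.
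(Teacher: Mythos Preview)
Your proposal is correct and is exactly the intended argument: the paper states the corollary without proof, treating it as an immediate consequence of Proposition~\ref{Irr-Decomp-H}, and your dimension bookkeeping under $r$ and $e$ together with the holonomy correspondence is precisely what makes that immediate.
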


Note that a flat connection is irreducible, reducible or fully reducible
if and only if the associated representation is of quaternionic, complex
or real type, respectively.  

\subsection{Arrangement of the Flat Connections} \label{Section-McKay}
The McKay correspondence \cite{McKay} sets up a bijection
between the isomorphism classes of the nontrivial finite subgroups of 
$\om{Sp}(1)\cong \om{SU}(2)$ and the simply-laced extended Dynkin diagrams. 
The extended Dynkin graph is constructed from the group
$\Gamma$ as follows. Let $R_0,R_1,\cdots ,R_n$ be
a complete set of representatives for the elements of $\om{Irr}(\Gamma,\C)$, where we take $R_0$ to be the trivial
representation. Let $Q$ denote the two dimensional representation associated with the inclusion $\Gamma\subset \om{SU}(2)$ and define
a matrix $A=(a_{ij})_{ij}$ by
\[  Q\otimes R_i =\bigoplus_{j=0}^n a_{ij}R_j \;\; \mbox{ for } \;\; 0\leq i\leq n  .\]
One may show that $A$ is symmetric with $a_{ii}=0$ and
$a_{ij}\in \{0,1\}$. Define an unoriented graph $\overline{\Delta}_\Gamma$
by taking $I=\{0,1,\cdots,n\}$ as the set of vertices and
an edge connecting $i$ to $j$ if and only if
$a_{ij}=a_{ji}=1$. This graph will then be an extended Dynkin diagram
of type $\wt{A}_n$, $\wt{D}_n$, $\wt{E}_6$, $\wt{E}_7$ or $\wt{E}_8$. Furthermore, if we let $\Delta_\Gamma$ be the graph obtained from $\ovl{\Delta}_\Gamma$
by deleting the vertex $0$ corresponding to the trivial representation
we obtain the underlying Dynkin diagram of type $A_n$, $D_n$, $E_6$, $E_7$ or $E_8$. The precise correspondence is given in the following table where
$l,k \geq 2$.

\[ \begin{array}{|c|c|c|c|c|c|}
\hline
\Gamma & C_l & D^*_k & T^* & O^* & I^* \\ \hline
\Delta_\Gamma & A_{l-1} & D_{k+2} & E_6
& E_7 & E_8 \\ \hline \end{array} \] 

The set of vertices, say $\{1,2,\cdots,n\}$, in a Dynkin diagram corresponds
to a set of simple roots $\{r_i:1\leq i\leq n\}$ in the associated root system. There is then a unique
maximal positive root $r_{\max} = \sum_{i=1}^n d_i r_i$ where $d_i\in \N$
are positive integers. The final fact we wish to bring out is that these integers are determined by $d_i = \om{dim}_\C R_i$.

By mapping an irreducible representation $R_i$ to its dual representation
$R_i^*$ we obtain an involution $\iota\colon \{0,1,\cdots,n\}\ra \{0,1,\cdots,n\}$,
i.e., $R_{\iota(i)}\cong R_i^*$. This map extends to a graph involution of
$\ovl{\Delta}_\Gamma$ as
\[ a_{\iota(i)\iota(j)} = \om{dim}_\C \om{Hom}_\Gamma(Q\otimes R_i^*,R_j^*)
 = \om{dim}_\C \om{Hom}_\Gamma(Q\otimes R_j,R_i) = a_{ji}=a_{ij}  \]
for all $0\leq i,j\leq n$. In view of Lemma \ref{Irr-Decomp-C} we see that
the fixed points of $\iota$ correspond to the vertices of quaternionic or real type, while the nontrivial orbits are pairs $\{i,j\}$ for which $i\neq j$ and $R_i$, $R_j\cong R_i^*$ are of complex type. The set of vertices in the quotient graph
$\ovl{\Delta}_\Gamma / (\iota)$ may therefore by
Proposition \ref{Irr-Decomp-H} be identified with
$\om{Irr}(\Gamma,\HH)$. In particular, we may identify our set
$\MC{C}$ of flat connections with a subset of the vertices in this quotient
graph. The following may be observed from the quotient graphs
given in Appendix $A$.

\begin{lemma} \label{Tree-Lemma}
For each finite subgroup $\Gamma\subset \om{SU}(2)$ the
quotient graph $\overline{\Delta}_\Gamma/(\iota)$ is a connected tree, except
for $\Gamma=C_l$ with $l$ odd in which case it has the form
\begin{center}
\begin{tikzpicture}[scale=1.7]
\node (1) at (0,0) [circle,draw] {} ;
\node (2) at (1,0) [circle,draw] {} ;
\node (3) at (2,0) [circle,draw,] {} ;
\node (4) at (4,0) [circle,draw] {} ;
\node (5) at (5,0) [circle,draw] {} ;
\node (6) at (6,0) [circle,draw] {} ;

\path
(1) edge (2)
(2) edge (3)
(4) edge (5)
(5) edge (6);
\draw[dotted] (2.3,0)--(3.7,0);
\draw (6) to [out=290, in = 270] (6.8,0) to [out=90,in = 70] (6);
\end{tikzpicture}
\end{center}
\end{lemma}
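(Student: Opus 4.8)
The plan is to separate the cyclic family $\Gamma=C_l$, where $\ovl{\Delta}_\Gamma$ is itself a cycle, from the remaining groups, where $\ovl{\Delta}_\Gamma$ is already a tree and the quotient can be controlled by a soft argument.

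For $\Gamma=C_l$ I would argue directly. Identify $\om{Irr}(C_l,\C)=\{\rho_k:k\in\Z/l\}$ with $\rho_k(\zeta)=\zeta^k$ for a primitive $l$-th root of unity $\zeta$; the defining inclusion $C_l\inj\om{SU}(2)$ sends $\zeta$ to $\om{diag}(\zeta,\zeta^{-1})$, so $Q=\rho_1\oplus\rho_{-1}$ and $Q\otimes\rho_k=\rho_{k-1}\oplus\rho_{k+1}$. Hence, for $l\geq 3$, $\ovl{\Delta}_{C_l}$ is the $l$-cycle on $\Z/l$ with edges $k\sim k\pm1$, and since $\rho_k^*=\rho_{-k}$ the involution is $\iota(k)=-k$; the cases $l\leq2$ degenerate and are checked by hand. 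Quotienting the $l$-cycle by the reflection $k\mapsto-k$: if $l$ is even the two fixed vertices $0$ and $l/2$ split the cycle into two arcs that $\iota$ identifies, so the quotient is a path, hence a tree; if $l$ is odd the unique fixed vertex is $0$, the arc $0,1,\dots,(l-1)/2$ maps isomorphically, and the single remaining cycle-edge, joining $(l-1)/2$ to $(l+1)/2=\iota((l-1)/2)$, has its endpoints swapped and so becomes a loop at the terminal vertex, which is exactly the pictured graph. The count $|V|=|E|=(l+1)/2$ confirms that this loop is the only cycle.

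For the remaining groups $\Gamma\in\{D_k^*,T^*,O^*,I^*\}$ (and the trivial $C_1$) the extended Dynkin diagram $\ovl{\Delta}_\Gamma$ is one of $\wt{D}_n,\wt{E}_6,\wt{E}_7,\wt{E}_8$, each a finite simple tree, and I would deduce the statement from the general fact: \emph{if $T$ is a finite simple tree and $\sigma$ is a graph automorphism of $T$ with $\sigma^2=\om{id}$ fixing at least one vertex, then $T/\sigma$ is a tree.} This applies with $\sigma=\iota$ since $\iota$ fixes the vertex $0$, the trivial representation being self-dual. To prove the general fact I would use two standard observations about tree automorphisms: (i) the fixed-vertex set $F$ of $\sigma$ is connected, hence a subtree with $|F|-1$ edges, because $\sigma$ fixing $a$ and $b$ fixes the unique $a$--$b$ path; and (ii) $\sigma$ flips no edge, since swapping the endpoints of an edge $\{a,b\}$ would force $a$ and $b$ to be equidistant from the fixed vertex $0$, impossible for adjacent vertices of a tree. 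By (ii) the quotient has no loops, and the absence of $4$-cycles in $T$ prevents multi-edges, so $T/\sigma$ is a simple connected graph with $|V(T/\sigma)|=(v+f)/2$ vertices and, using (i), $|E(T/\sigma)|=(e+f-1)/2$ edges, where $v$, $e=v-1$, $f$ count the vertices, edges and fixed vertices of $T$. Then $|E(T/\sigma)|=|V(T/\sigma)|-1$, so $T/\sigma$ is a tree.

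Once $\iota$ is known the argument is formal, so the only genuine input is the computation of $\iota$ in each case, which is either carried out above ($C_l$) or read off from the representation theory and quotient graphs in Appendix A; as there are only finitely many diagrams, one may alternatively just inspect them. I expect no serious obstacle: the one point deserving care is checking, before the edge count in the general fact, that $\ovl{\Delta}_\Gamma/\iota$ has neither loops nor multi-edges in the non-cyclic cases, and both follow from the fact that a tree contains no cycles.
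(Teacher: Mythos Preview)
Your argument is correct. The paper, however, does not give a proof at all in the usual sense: immediately before the lemma it writes ``The following may be observed from the quotient graphs given in Appendix~A,'' and Appendix~A simply draws $\ovl{\Delta}_\Gamma/(\iota)$ explicitly for each of the five families, from which the statement is read off by inspection.

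Your route is therefore genuinely different. For the cyclic case you do essentially what the appendix does, computing $\iota$ on the $l$-cycle directly. For the non-cyclic cases you replace the case-by-case inspection with a clean general lemma: a finite simple tree modulo an involution with a fixed vertex is again a tree. Your proof of that lemma is sound; the two ingredients (connectedness of the fixed set, and the parity argument ruling out flipped edges) are exactly what is needed, and the Euler-characteristic count $|E|=|V|-1$ then finishes it. This buys you a conceptual explanation that does not depend on knowing $\iota$ explicitly in each $\wt D_n,\wt E_6,\wt E_7,\wt E_8$ case, whereas the paper's approach has the advantage that the explicit quotient graphs in Appendix~A are needed anyway for the construction of $\MC{S}_\Gamma$ and the later determination of the differentials, so nothing is saved in practice by avoiding them. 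Your closing remark that ``one may alternatively just inspect them'' is in fact precisely what the paper does.
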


From the above lemma it follows that for each pair of vertices there
is a unique minimal edge path connecting them.

\begin{definition} \cite[p.~297]{Austin95} \label{Def-Adjacency}
Let $\Gamma\subset \om{SU}(2)$ be a finite subgroup.
Define a graph $\MC{S}_\Gamma$ by letting the vertices be the set
of $1$-dimensional quaternionic representations of $\Gamma$ and an
(unoriented) edge connecting a pair of distinct vertices $\alpha$ and $\beta$
if and only if the minimal edge path connecting $\alpha$ to $\beta$
in $\overline{\Delta}_\Gamma/(\iota)$ does not pass through a vertex
corresponding to a $1$-dimensional quaternionic representation
different from $\alpha$ and $\beta$. 

Two vertices in $\MC{S}_\Gamma$ are said to be adjacent if there is an edge connecting them. \end{definition}

At the end of the next section we will attach symbols to the edges of the
graph $\MC{S}_\Gamma$ such that the resulting labeled graph contains all
the necessary information needed to set up the complex 
$DCI(\ovl{Y}_\Gamma)$. All of the graphs $\MC{S}_\Gamma$ will be given
in Proposition \ref{Differential-Graphs}. 

\section{Determination of Moduli Spaces and Differentials}
In this section we review the key results that allow us to determine
the Donaldson models $DCI(\ovl{Y}_\Gamma)$, for the finite subgroups
$\Gamma\subset \om{SU}(2)$, explicitly.
The main result of \cite{Austin95} necessary for the calculations, originally due to Kronheimer, is the following.

\begin{theorem} \label{Moduli-Result}
Let $\Gamma\subset \om{SU}(2)$ be a finite subgroup and let 
$\alpha,\beta$ be flat connections in the
trivial $\om{SU}(2)$-bundle over $\overline{Y}_\Gamma$. Let $z$ be the unique
homotopy class from $\alpha$ to $\beta$ with 
$-2\leq \om{gr}_z(\alpha,\beta)\leq 5$. Then if $\alpha$ and $\beta$
are adjacent in $\MC{S}_\Gamma$ (see Def. \ref{Def-Adjacency}) then in
the diagram
\[ \begin{tikzcd} \alpha & \overline{\MC{M}}_z(\alpha,\beta)
\arrow{l}[swap]{e_-} \arrow{r}{e_+} & \beta \end{tikzcd}   \]
there is an identification $e_-^{-1}(*)\cong Y_{\Gamma'}$ for some
other finite subgroup $\Gamma'\subset \om{SU}(2)$.
If $\alpha$ and $\beta$ are not adjacent in $\MC{S}_\Gamma$, then
$\ovl{\MC{M}}_z(\alpha,\beta)$ is empty. 
\end{theorem}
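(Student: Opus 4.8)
The plan is to reduce everything to Austin's analysis of instanton moduli spaces over the cylinder $\R\times \ovl{Y}_\Gamma$ via the equivariant ADHM correspondence, and to extract the statement from his identification of the relevant low-dimensional moduli spaces. First I would recall the bijection between finite-energy instantons on the trivial $\om{SU}(2)$-bundle over $\R\times Y_\Gamma$ asymptotic to flat connections $\alpha,\beta$ and $\Gamma$-invariant instantons on $\Gamma$-equivariant $\om{SU}(2)$-bundles over $S^4 = (\R\times S^3)\cup\{\pm\infty\}$, using the suspended $\Gamma$-action (to be developed in detail in section $4$, following \cite{Austin95}). Under this correspondence the energy of the instanton is constrained by $\om{gr}_z(\alpha,\beta)$ together with the dimension formula \eqref{Dim-Formula}, so the condition $-2\le \om{gr}_z(\alpha,\beta)\le 5$ (in fact $\om{gr}_z(\alpha,\beta)\in\{0,4\}$ under the standing mod-$4$ assumption for binary polyhedral spaces) pins down a small charge. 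The asymptotic flat connections $\alpha,\beta$ correspond to a pair of $1$-dimensional quaternionic representations of $\Gamma$, i.e. a pair of vertices in $\MC{S}_\Gamma$, and the representation-theoretic data of which $\Gamma$-equivariant bundle can carry a $\Gamma$-invariant instanton of that charge with those asymptotics is exactly what Austin computes, via the equivariant ADHM data, in terms of the McKay graph $\ovl{\Delta}_\Gamma/(\iota)$.

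The key steps, in order, would be: (1) set up the suspension/cylinder correspondence and record how $\om{gr}_z$, energy, and second Chern class are related; (2) translate the asymptotic-limit conditions at $\pm\infty$ into boundary conditions for the $\Gamma$-equivariant ADHM data and identify the moduli space of such data; (3) invoke Austin's computation showing that for the minimal positive charge this moduli space of ADHM data is nonempty precisely when the two vertices are adjacent in $\MC{S}_\Gamma$ in the sense of Definition \ref{Def-Adjacency} — i.e. when the minimal edge path in $\ovl{\Delta}_\Gamma/(\iota)$ meets no intermediate quaternionic vertex — and is empty otherwise; (4) when nonempty, extract from the explicit ADHM description the identification of the fiber $e_-^{-1}(*)$ (the ``based'' moduli space, obtained by quotienting the $\om{SU}(2)$-cover of $\ovl{\MC{M}}_z(\alpha,\beta)$ by the residual $\om{SU}(2)$) with a space of the form $S^3/\Gamma' = Y_{\Gamma'}$ for an explicitly determined subgroup $\Gamma'\subset\om{SU}(2)$, reading $\Gamma'$ off from the local structure of the Dynkin graph near the edge. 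The emptiness half of the statement is the cleaner one: if $\alpha,\beta$ are not adjacent, any hypothetical instanton would, after a neck-stretching/gluing argument (or directly from the structure of the ADHM data), force an intermediate flat connection along the minimal path, contradicting minimality of $\om{gr}_z$ via the additivity in part $(a)$ of Theorem \ref{Moduli-Theorem} and the fact that all intermediate gradings are positive multiples of $4$.

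The main obstacle I expect is step (4): carrying out the equivariant ADHM bookkeeping carefully enough to obtain the \emph{precise} identification $e_-^{-1}(*)\cong Y_{\Gamma'}$, rather than merely a diffeomorphism type, and in particular determining the correct subgroup $\Gamma'$ and the correct $\om{SU}(2)$-equivariance of the identification (which is what makes the subsequent degree computation in Lemma \ref{Differential-Degree-Lemma} meaningful). Austin's original treatment is terse here, so the work is to unwind the $\Gamma$-representation theory attached to the nodes adjacent to the relevant edge of $\ovl{\Delta}_\Gamma$, track how the linear-algebra data of the ADHM construction for charge-$1$-type instantons reduces, modulo the symmetry group, to a sphere, and verify that the residual $\Gamma$-action on that sphere is the standard linear one — so that the quotient is genuinely a binary polyhedral space. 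A secondary but nontrivial point is verifying transversality/regularity of these moduli spaces for binary polyhedral spaces so that the compactified moduli space is the honest closed manifold asserted in Theorem \ref{Moduli-Theorem}$(f)$; here I would cite \cite[Section~4.5]{Austin95}, where the absence of obstruction (vanishing of the relevant cokernel) is established. The remaining steps — the energy/grading bookkeeping in (1), the boundary conditions in (2), and the adjacency dichotomy in (3) — are essentially a careful reorganization of results in \cite{Austin95}, which section $4$ will supply in expanded form.
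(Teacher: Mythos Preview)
Your overall framework is correct and matches the paper: reduce to $\Gamma$-invariant instantons on $S^4$ via the cylinder/suspension correspondence, then apply the equivariant ADHM description. However, the proposal has a genuine gap in step~(4), which is the heart of the argument.

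You describe the task as ``track how the linear-algebra data of the ADHM construction for charge-$1$-type instantons reduces, modulo the symmetry group, to a sphere, and verify that the residual $\Gamma$-action on that sphere is the standard linear one.'' This is not how the identification $e_-^{-1}(*)\cong Y_{\Gamma'}$ is obtained, and attempting it this way would likely stall. The charge is not $1$ in general (it is $k=\eps(\Har)$, which depends on $\alpha,\beta$), and there is no residual $\Gamma$-action to analyze: $\Gamma'$ is a \emph{different} finite subgroup of $\om{SU}(2)$, determined by the subgraph $\Delta_{\Har}\subset\ovl{\Delta}_\Gamma$. The key input you are missing is Kronheimer's theorem that the hyper-K\"ahler quotient $\mu^{-1}(0)/F$ associated with the regular representation of $\Gamma'$ is isometric to $\C^2/\Gamma'$. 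The actual work is then to exhibit an explicit equivariant isometry between the equivariant ADHM data $(M^\Gamma, U(\Har)^\Gamma)$ for the triple $(\alpha,\beta,\Har)$ and Kronheimer's data $(K,F)$ for $\Gamma'$; this uses the graphical solution of $(2-Q)\Har=\alpha-\beta$ (Proposition~\ref{R(G)-Adjacent-Solution}) to match the McKay-graph decompositions. One must also verify that $U_{reg}\cap\mu^{-1}(0)\cap M^\Gamma$ corresponds exactly to $\mu^{-1}(0)\setminus\{0\}$ on the Kronheimer side, which requires a separate argument exploiting the $\Gamma$-equivariance of the ADHM maps.

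For the emptiness half, your neck-stretching sketch is not the paper's route and is not quite right as stated: a compactness argument controls what lies in the \emph{boundary} of a moduli space, not whether the interior is empty. The paper instead argues directly from the equivariant ADHM correspondence. Using additivity of solutions along the minimal path in $\MC{S}_\Gamma$ (Corollary~\ref{R(G)-Sol-Corollary}), the minimal positive solution $\Har$ of $(2-Q)\Har=\alpha-\beta$ corresponds to the concatenated homotopy class $w$ with $\om{gr}_w(\alpha,\beta)=4s>4$. The homotopy class $z$ with $\om{gr}_z\in\{0,4\}$ then corresponds to a bundle whose required ADHM representation is $\Har-lR$ for some $l>0$; since $\Har$ was already the minimal positive solution, this is not an actual representation, and Theorem~\ref{Equivariant-ADHM} gives emptiness immediately.
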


There is also fairly simple graphical procedure to determine the subgroup
$\Gamma'$ from the positions of $\alpha$ and $\beta$ in the extended
Dynkin diagram $\ovl{\Delta}_\Gamma$ associated with $\Gamma$ under 
the McKay correspondence (see Proposition \ref{R(G)-Adjacent-Solution}). 

The main idea of the proof is to translate the problem of determining
instantons over $\R\times Y_\Gamma$ with $L^2$ curvature modulo gauge transformations into the problem of determining
$\Gamma$-invariant instantons in $\Gamma$-equivariant $\om{SU}(2)$-bundles
over $S^4$ modulo $\Gamma$-invariant gauge transformations, and then
to solve the latter problem with the use of the equivariant ADHM correspondence.

We begin by setting up the first correspondence. Recall that
the instantons on the trivial $\om{SU}(2)$-bundle over the cylinder
$\R\times Y_\Gamma$ with $L^2$-curvature are naturally partitioned
into subsets by three pieces of data: the limiting flat connections
$\alpha_\pm \in \MC{C}\cong \om{Irr}^1(\Gamma,\HH)$ and the relative homotopy class $z$ traced out by the connection. For our purpose it is more convenient to replace this latter invariant by the relative Chern number
(see \cite[p.~48]{Donaldson02})
\[ \hat{c}_2(A) \coloneqq \frac{1}{8\pi^2}\int_{\R\times Y_\Gamma} \om{tr}F_A^2 =\frac{1}{8\pi^2}\int_{\R\times Y_\Gamma} |F_A|^2 \in \R  .\]
Thus to any instanton $A$ with $L^2$-curvature over $\R\times Y_\Gamma$ we may associate a unique triple 
\[ (\alpha_+,\alpha_-,\hat{c}_2(A))\in \om{Irr}^1(\Gamma,\HH)\times \om{Irr}^1(\Gamma,\HH)\times \R  .\]

Let $\Gamma$ act on $\R\times S^3$ by $\gamma\cdot (t,x)\mapsto (t,\gamma x)$ such that $q=(1\times p)\colon \R\times S^3\ra \R\times Y_\Gamma$ is an orientation preserving covering map.
Regard $S^4\subset \C^2\oplus \R$ such that the suspended action $\Gamma\times S^4\ra S^4$ takes the form $\gamma\cdot (x,t)=(\gamma x,t)$. By composing
the conformal equivalence 
$\R\times S^3 \cong \C^2-\{0\}$, $(t,x)\mapsto e^tx$, with
the inverse of stereographic projection from the north pole we obtain
an equivariant conformal equivalence $\tau\colon\R\times S^3\cong S^4-\{N,S\}$,
where $N,S=(0,\pm1 )\in S^4$ are the poles. Explicitly, 
\[ \tau(t,x)=(\cosh(t)x,\tanh(t))  .\]
Let $S^4$ carry the orientation that makes this map orientation preserving,
that is, the opposite of the standard orientation.

Recall that a $\Gamma$-equivariant vector-bundle over a $\Gamma$-space
$X$ is a vector bundle $E\ra X$ with a $\Gamma$-action on
the total space making the projection equivariant and such that
$\gamma:E_x\ra E_{\gamma x}$ is linear for each $x\in X$ and $\gamma\in \Gamma$. If $E$
carries an $\om{SU}(2)$-structure, that is, a Hermitian metric and a
fixed unitary trivialization of $\Lambda^2_\C E$, then we call $E$
a $\Gamma$-equivariant $\om{SU}(2)$-bundle provided the action
preserves this additional structure. In that case, there is an induced action
of $\Gamma$ on the space of connections $\MC{A}_E$. A connection is said to
be invariant if it is fixed under this action. The subspace of invariant
connections is denoted by $\MC{A}_E^\Gamma$. The action of the full gauge
group does not preserve this subspace, but the subgroup of equivariant gauge
transformations $\MC{G}^\Gamma_E \subset\MC{G}_E$ does. Therefore, the natural configuration space in this equivariant setting is 
$\MC{B}_{E,\Gamma} \coloneqq \MC{A}^\Gamma_E /\MC{G}_E^\Gamma$. We may also include
a framing coordinate to obtain the framed configuration space
$\wt{\MC{B}}_{E,\Gamma}$. All of the above may, of course, be spelled out
in terms of principal bundles as well. 

Let $F\ra \R\times Y_\Gamma$ be the trivial $\om{SU}(2)$-bundle. Then
$E\coloneqq q^*F\ra  \R\times S^3$ obtains the structure of a $\Gamma$-equivariant vector bundle in a natural way. Moreover, since $q$ is a finite covering map, pullback induces
a bijection $\MC{A}_F\cong \MC{A}_E^\Gamma$ that matches the subsets
of anti-self-dual connections with curvature in $L^2$.
Let $B$ be an ASD connection in $F$ with curvature in $L^2$ and let
$A = q^*(B)$ be the pullback connection in $E$. According to
Uhlenbeck's removable singularities theorem there exists a bundle
$\wt{E}\ra S^4$ with connection $\wt{A}$ and a bundle map
$\rho:E\ra \wt{E}$ covering $\tau\colon \R\times S^3\ra S^4$ such that
$\rho^*(\wt{A})=A$. The following diagram summarizes the situation

\begin{equation} \label{Eq-Correspondence-Diagram}
 \begin{tikzcd} (F,B) \arrow{d} & (E,A) \arrow{l} \arrow{r}{\rho} \arrow{d} &
(\wt{E},\wt{A}) \arrow{d} \\
\R\times Y_\Gamma & \R\times S^3 \arrow{l}[swap]{q} \arrow{r}{\tau} & S^4.
\end{tikzcd}
\end{equation}

According to Austin \cite{Austin95}
the $\Gamma$-action extends over $\wt{E}$ making $\wt{A}$ invariant and
the flat limiting connections of the connection $B$ are encoded in the isomorphism classes of the representations
$\wt{E}_N$ and $\wt{E}_S$ over the fixed points. Furthermore,
the following calculation shows that $c_2(\wt{E})$ is uniquely determined
by $\hat{c}_2(B)$
\begin{align} \label{c_2-relation}
8\pi^2 c_2(\wt{E})[S^4] =& \int_{S^4} \om{tr}(F_{\wt{A}}^2)
=\int_{S^4} |F_{\wt{A}}|^2 = \int_{\R\times S^3} |F_A|^2 \\
=& |\Gamma |\int_{\R\times Y_\Gamma}|F_B|^2 =8\pi^2 |\Gamma | \hat{c}_2(B). \nonumber
\end{align}
Here we have used the fact that the Chern-Weil integrand
$\om{tr}(F_{\wt{A}}^2)$ coincides pointwise with the norm 
$|F_{\wt{A}}|^2$ for ASD connections, the conformal invariance of the integral of $2$-forms in dimension four and that $\R\times S^3\ra \R\times Y_\Gamma$ is a finite Riemannian covering map with fibers of cardinality 
$|\Gamma |$. 

The following theorem classifies the $\Gamma$-equivariant $\om{SU}(2)$-bundles
over $S^4$. The proof is given in Appendix $B$ (see Theorem \ref{Classification of Equivariant Bundles over $S^4$}). 

\begin{theorem} \label{Gamma-Bundles}
A $\Gamma$-equivariant $\om{SU}(2)$-bundle $\wt{E}\ra S^4$
is determined up to isomorphism by the ordered triple
$([\wt{E}_N],[\wt{E}_S],k)$, where $k=c_2(\tilde{E})[S^4]$ is the
second Chern number of the underlying $\om{SU}(2)$-bundle and $[\wt{E}_N], [\wt{E}_S]$ denote the isomorphism classes of the $\Gamma$-representations
over the fixed points. Moreover,
for each pair $\alpha,\beta\in \om{Irr}^1(\Gamma,\HH)$ there is a constant $c\in \Z$ such that for each $k\equiv c \Mod{|\Gamma |}$ there exists
a $\Gamma$-equivariant $\om{SU}(2)$-bundle with invariants $(\alpha,\beta,k)$.
\end{theorem}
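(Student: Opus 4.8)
The plan is to classify $\Gamma$-equivariant $\om{SU}(2)$-bundles over $S^4$ by an \emph{equivariant clutching construction} adapted to the hemisphere decomposition of $S^4$. Write $S^4 = B_+ \cup_{S^3_{\mathrm{eq}}} B_-$, where $B_\pm$ are the closed hemispheres, $B_+ \ni N$ and $B_- \ni S$, glued along the equator $S^3_{\mathrm{eq}} = \{(x,0) : |x| = 1\}$, on which $\Gamma$ acts freely with quotient $Y_\Gamma$. With the induced action each hemisphere is equivariantly diffeomorphic to the unit ball in $\C^2$ with the standard linear $\Gamma$-action, and hence equivariantly contracts onto its pole. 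By equivariant homotopy invariance of equivariant principal bundles (valid since $\Gamma$ is compact), any $\Gamma$-equivariant $\om{SU}(2)$-bundle $\wt{E}\to S^4$ therefore satisfies $\wt{E}|_{B_+}\cong \pi_N^*\wt{E}_N$ and $\wt{E}|_{B_-}\cong \pi_S^*\wt{E}_S$, where $\pi_{N/S}$ collapses the hemisphere to its pole and $\wt{E}_{N/S}$, a two-dimensional unitary $\Gamma$-representation with trivialised determinant (equivalently, a conjugacy class of homomorphisms $\Gamma\to\om{SU}(2)$), is the fibre at the corresponding fixed point. Restricting these trivialisations to $S^3_{\mathrm{eq}}$ and comparing yields a $\Gamma$-equivariant clutching isomorphism $\Phi$ between $\pi_N^*\wt{E}_N$ and $\pi_S^*\wt{E}_S$ over $S^3_{\mathrm{eq}}$, and $\wt{E}$ is recovered, up to isomorphism, by clutching along $\Phi$; moreover $\wt{E}_\Phi\cong\wt{E}_{\Phi'}$ exactly when $\Phi$ and $\Phi'$ differ by a homotopy together with pre- and post-composition by $\Gamma$-equivariant automorphisms of $\pi_N^*\wt{E}_N|_{B_+}$ and $\pi_S^*\wt{E}_S|_{B_-}$.

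The next step is to analyse the clutching data. Over $S^3_{\mathrm{eq}}$ the bundles $\pi_N^*\wt{E}_N$ and $\pi_S^*\wt{E}_S$ descend to the flat $\om{SU}(2)$-bundles $P_{N},P_{S}\to Y_\Gamma$ associated with the fibre representations, and both are trivialisable because every $\om{SU}(2)$-bundle over the $3$-manifold $Y_\Gamma$ is trivial. Hence equivariant clutching functions exist (so at least one $\wt{E}$ realises any prescribed pair $([\wt{E}_N],[\wt{E}_S])$), and the set of clutching functions is a torsor under the gauge group of $P_N$, whose group of components is $[Y_\Gamma,\om{SU}(2)]=[Y_\Gamma,S^3]\cong\Z$, detected by degree. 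The $\Gamma$-equivariant automorphisms of $\pi_N^*\wt{E}_N$ over $B_+$ restrict to the equator so as to be equivariantly homotopic, through the contraction of $B_+$, to a constant automorphism $c\in\om{Aut}_\Gamma(\wt{E}_N)\subseteq\om{SU}(2)$; and every such $c$ descends to a parallel gauge transformation of $P_N$ that lies in the identity component, since $\om{Aut}_\Gamma(\wt{E}_N)$ is either connected or has its nontrivial component represented by the central element $-I$, which acts as a constant. Consequently the hemisphere automorphisms act trivially on components of clutching functions, and the set of isomorphism classes of $\Gamma$-equivariant $\om{SU}(2)$-bundles with fixed pair of fibre representations is canonically a torsor under $\Z$.

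It remains to compute $c_2$ along this torsor. Choosing unitary identifications $\wt{E}_N\cong\wt{E}_S\cong\C^2$ (possible since both have trivialised $\Lambda^2$), the clutching function becomes a $\Gamma$-equivariant map $\wt\Phi\colon S^3_{\mathrm{eq}}\to\om{SU}(2)$ which is the clutching function of the underlying non-equivariant bundle, so $c_2(\wt{E})=\pm\deg(\wt\Phi)\in\pi_3\om{SU}(2)=\Z$ with the sign fixed by the chosen orientation of $S^4$. Moving one step along the $\Z$-torsor multiplies $\Phi$ by a degree-one gauge transformation $g$ of $P_N$, hence multiplies $\wt\Phi$ by $g\circ p$, where $p\colon S^3\to Y_\Gamma$ is the orientation-preserving $|\Gamma|$-fold covering; since degree is additive for pointwise multiplication of maps into the group $\om{SU}(2)=S^3$ and multiplicative under composition with $p$, the value of $\deg\wt\Phi$ — and therefore of $c_2$ — changes by exactly $|\Gamma|$. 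Thus on the $\Z$-torsor of isomorphism classes with fixed $([\wt{E}_N],[\wt{E}_S])$ the invariant $c_2$ is an injective affine function whose image is $\{c+|\Gamma|\,m:m\in\Z\}$, where $c$ is the value of $c_2$ on any one such bundle. Injectivity of $c_2$ on each such torsor, together with the evident fact that $([\wt{E}_N],[\wt{E}_S])$ is an isomorphism invariant, shows that the triple $([\wt{E}_N],[\wt{E}_S],k)$ is a complete invariant, and the description of the image yields the realisation statement.

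The main obstacle I anticipate is the bookkeeping in the second step: one must verify carefully that the hemisphere automorphisms do not permute the components of the clutching functions (so that the $\Z$-torsor is not collapsed) while every component is genuinely realised, and one must keep track of the $\om{SU}(2)$-structure — in particular the trivialised determinant — throughout, so that ``clutching function'' consistently means a map into $\om{SU}(2)$ rather than $\om{U}(2)$. Once these points are settled, the degree computation in the third step is routine, using only the additivity and composition-multiplicativity of mapping degree.
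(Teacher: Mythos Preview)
Your proof is correct and shares the paper's overall strategy: equivariant clutching over the hemisphere decomposition, triviality of $\om{SU}(2)$-bundles over the $3$-manifold $Y_\Gamma$ to produce clutching data, and identification of $c_2$ with the degree of the clutching map. The one place where the arguments diverge in presentation is in showing that equivariant homotopy classes of clutching maps are parametrized by a single residue class modulo $|\Gamma|$. You descend to $Y_\Gamma$, regard the clutching functions as a torsor under the gauge group of the flat bundle $P_N$, identify $\pi_0$ of that gauge group with $\Z$ via degree, and then check that hemisphere automorphisms act trivially on components because they equivariantly contract to centralizer elements, which are constant (hence degree-zero) gauge transformations. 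The paper instead works upstairs on $S^3$: it chooses auxiliary equivariant maps $g\colon S^3\to\om{Sp}(1)^\alpha$ and $h\colon S^3\to\om{Sp}(1)^\beta$ (whose existence is equivalent to the triviality of $P_N,P_S$) and ``untwists'' each equivariant clutching map $f\colon S^3\to\om{Sp}(1)^{(\alpha;\beta)}$ to $\tau(f)=g^{-1}fh$, which is $\Gamma$-invariant and hence descends to $Y_\Gamma$; this yields directly the bijection $[S^3,\om{Sp}(1)^{(\alpha;\beta)}]_\Gamma\cong [Y_\Gamma,\om{Sp}(1)]\cong\Z$ together with the formula $\deg f = |\Gamma|\deg(\ovl{\tau(f)}) + (\deg g - \deg h)$. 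Your torsor argument is a clean abstract repackaging of the same content; the paper's untwisting is more hands-on and has the side benefit of the explicit formula $c\equiv \deg g-\deg h \pmod{|\Gamma|}$, which it later exploits to compute Chern--Simons invariants of the flat connections.
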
 

Let $\wt{E}\ra S^4$ be a $\Gamma$-equivariant $\om{SU}(2)$-bundle
with invariants $(\alpha_+,\alpha_-,k)$. Let $E=\tau^*(E)\ra \R\times S^3$
and $F = E/\Gamma \ra \R\times Y_\Gamma$. Then we may reverse the above
procedure to produce a map from the invariant instantons in $\wt{E}$
to the instantons over $\R\times Y_\Gamma$ with limiting flat connection
$\alpha_\pm$ and relative Chern number $k/|\Gamma |$. More is true,
the $\Gamma$-equivariant gauge transformations in $\wt{E}$ correspond
to gauge transformations in $F$ that approach $\alpha_\pm$-harmonic
gauge transformations on the ends. Let $b_0\in Y_\Gamma$ denote a basepoint
and let $\tilde{b}\in S^3$ be a lift. 
Let  
$\wt{\MC{M}}_{\Gamma,\infty}(\wt{E})\subset \wt{\MC{B}}_{\wt{E},\Gamma}$
be the moduli space of invariant instantons in $\wt{E}$ framed at
$(0,\wt{b})$. 
Then the above discussion yields a bijective correspondence
\[ \wt{\MC{M}}_\Gamma(\wt{E})\cong \wt{\MC{M}}_z(\alpha_-,\alpha_+) , \]
where $z$ is the relative homotopy class corresponding to the relative
Chern number $k/|\Gamma |$ and the latter space is framed
at $(0,b)\in \R\times Y_\Gamma$. Recall that the end-point maps
$e_\pm:\wt{\MC{M}}_z(\alpha_-,\alpha_+)\ra \alpha_\pm$ were defined
by parallel transport of the framing to $\pm\infty$. Under the above
bijection this corresponds to parallel transport of the framing
along the great circle through the basepoint $(\tilde{b},0)\in S^4$ and
the poles $N,S=(0,\pm1)$. The equivariant ADHM correspondence, which we turn
to next, describes the instanton moduli space with fixed framing at
$N$ and we then see that this corresponds to the fiber
$e_+^{-1}(*)$. If we change the orientation in $Y_\Gamma$ we may precompose
with the map $(t,x)\mapsto (-t,x)$ in $\R\times Y_\Gamma$ to see that
the ADHM correspondence will describe the fiber $e_-^{-1}(*)$ instead.

\subsection{The ADHM Description of Instantons}
We first recall the non-equivariant ADHM classification of
$\om{SU}(2)$-instantons over $S^4$ \cite[Sec.~3.3]{DK90}.
This construction describes the various moduli spaces of
instantons over $S^4$ in terms of linear algebraic data.
In the following we regard $\C^2\cup \{\infty\}=S^4$ where the orientation
is determined by the complex orientation on $\C^2$. 
Recall that an $\om{SU}(2)$-bundle over $S^4$, or any other closed, oriented
$4$-manifold, is determined up to isomorphim by the second
Chern number $k=c_2(E)[S^4]\in \Z$. The bundle can only support an
ASD connection if $k\geq 0$ and in case $k=0$ every ASD connection
is flat. 

For a fixed integer $k\geq 1$ let $\Har$ be a Hermitian
vector space of dimension $k$ and let $E_\infty$ be a $2$-dimensional complex vector space with symmetry group $\om{SU}(2)$.
Define the complex vector space
\[ M \coloneqq  \om{Hom}(\C^2\otimes \Har,\Har)\oplus \om{Hom}(\Har,E_\infty)\oplus \om{Hom}(E_\infty,\Har). \]
By identifying $\C^2 \otimes \Har =\Har\oplus \Har$ we may write an element $m\in M$ in terms of its components $m=(\tau_1,\tau_2,\pi,\sigma)$ where
$\tau_1,\tau_2\in \om{End}(\Har)$, $\pi\colon \Har\ra E_\infty$ and $\sigma\colon E_\infty\ra \Har$.
Let $U(\Har)$, the group of unitary automorphisms of $\Har$, act on $M$
by 
\[ g\cdot (\tau_1,\tau_2,\pi,\sigma)=(g\tau_1g^{-1},g\tau_2g^{-1},\pi g^{-1},g\sigma). \] 
Set $\mathfrak{u}(\Har)\coloneqq \om{Lie}(U(\Har))$ and define 
$\mu=(\mu_\C,\mu_\R):M\ra \om{End}(\Har)\oplus \mathfrak{u}(\Har)$ by
\begin{align} \label{ADHM-eq}
\mu_\C(\tau_1,\tau_2,\pi,\sigma)=&
[\tau_1,\tau_2]+\sigma \pi \\
\mu_\R(\tau_1,\tau_2,\pi,\sigma)=&[\tau_1,\tau_1^*]+[\tau_2,\tau_2^*]
+\sigma \sigma^*-\pi^*\pi. \nonumber 
\end{align}
These maps are equivariant when $U(\Har)$ acts by conjugation on
$\om{End}(\Har)$ and $\mathfrak{u}(\Har)$. 

Given $(\tau_1,\tau_2,\pi,\sigma)\in M$
we may for each $z=(z_1,z_2)\in \C^2$ form the sequence
\begin{equation} \label{ADHM-sequence}
\begin{tikzcd} \Har \arrow{r}{A_z} & \C^2\otimes \Har\oplus E_\infty \arrow{r}{B_z} & \Har
\end{tikzcd}
\end{equation}
where 
\[ A_z= \left(\begin{array}{c}
\tau_1-z_1 \\ \tau_2-z_2 \\  \pi \end{array} \right)
\; \mbox{ and }\; B_z= \left( \begin{array}{ccc}
-(\tau_2-z_2) & (\tau_1-z_1) & \sigma \end{array} \right)   .\]
The condition $\mu_\C(\tau_1,\tau_2,\pi,\sigma)=0$ is equivalent to
$B_z\circ A_z=0$ for all $z\in \C^2$. 
Define $U_{reg}\subset M$ to be the open set consisting of the 
$(\tau_1,\tau_2,\pi,\sigma)$ for which $A_z$ is injective and $B_z$ is surjective for all $z\in \C^2$.
From the sequence \eqref{ADHM-sequence} we may form a bundle homomorphism
$R$ between the trivial bundles over $\C^2$ with fibers $\C^2\otimes \Har$ and
$\C^2\otimes \Har\oplus E_\infty$, respectively, by $R_z=(A_z,B_z^*)$.
This map extends naturally to a bundle homomorphism 
\[ \begin{tikzcd} \gamma\oplus \gamma \oplus \cdots \oplus\gamma \arrow{rr}{R} \arrow{dr} & {} & \underline{\C^2\otimes \Har\oplus E_\infty} \arrow{dl} \\
{} & \C^2\cup \{\infty\} =S^4, & {} \end{tikzcd} \]
where $\gamma$ is the $\om{SU}(2)$-bundle with $c_2(\gamma)[S^4]=-1$
and we take $k$ copies of $\gamma$ on the left hand side. This fact is
most easily understood using quaternions and the model $\HH P^1\cong S^4$
(see \cite{Atiyah79}). 
If $m=(\tau_1,\tau_2,\pi,\sigma)\in U_{reg}\cap \mu_\C^{-1}(0)$ it follows that $R$ is everywhere injective and we obtain an $\om{SU}(2)$-bundle
\[ E \coloneqq \om{Im}(R)^\perp \cong \om{Coker}(R)  \]
with $c_2(E)[S^4]=k$, and the fiber over the point at $\infty$ is the space
$E_\infty$ we started with. This bundle carries a natural connection $A$ obtained by orthogonal projection from the product connection in the trivial bundle. This connection is ASD provided $\mu_\R(m)=0$ as well.   

The above construction associates a pair $(E,A)$ consisting of
an $\om{SU}(2)$-bundle $E\ra S^4$ with $c_2(E)=k$ and an ASD connection
$A$ in $E$ to each element $m\in U_{reg}\cap \mu^{-1}(0)$. If $m$ is replaced
by $g\cdot m$ for $g\in U(\Har)$ the resulting pair $(E',A')$ is equivalent
to $(E,A)$ by a bundle isomorphism $\rho^g:E\ra E'$ fixing the
fiber at $\infty$. The content of the ADHM classification is that this
construction produces all instantons in the $\om{SU}(2)$-bundle over
$S^4$ with Chern number $k$.  

\begin{theorem}(The ADHM-Correspondence)
The assignment $(\tau_1,\tau_2,\pi,\sigma)\mapsto (E,A)$ descends to an equivalence
\[ (U_{reg}\cap \mu^{-1}(0))/U(\Har) \cong \MC{M}_\infty(k)  \]
where $\MC{M}_\infty(k)$ denotes the moduli space of gauge equivalences
classes of instantons framed at $\infty$ in the bundle of
Chern number $k$ over $S^4=\C^2\cup \{\infty\}$. \end{theorem}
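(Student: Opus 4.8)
The plan is to produce an explicit inverse to the ADHM map and to verify that the two are mutually inverse; the substantial point, and the main obstacle, will be showing that \emph{every} framed instanton on $S^4$ arises from ADHM data.

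First I would record that the forward map is well-defined on the quotient. For $m=(\tau_1,\tau_2,\pi,\sigma)\in U_{reg}\cap\mu^{-1}(0)$ the discussion preceding the statement already shows that $R=(A_z,B_z^*)$ is fibrewise injective, that $E=\om{Coker}(R)$ is an $\om{SU}(2)$-bundle with $c_2(E)[S^4]=k$ carrying a canonical fibre $E_\infty$ at infinity, and that the projected connection $A$ is ASD once $\mu_\R(m)=0$; so $m\mapsto(E,A)$ with the framing is a genuine framed instanton. Replacing $m$ by $g\cdot m$ for $g\in U(\Har)$ conjugates $R$ by $g$ on the $\C^2\otimes\Har$ slots while fixing $E_\infty$, producing an isometric bundle isomorphism that carries $A$ to the new connection and preserves the framing; hence the class in $\MC{M}_\infty(k)$ depends only on the $U(\Har)$-orbit, and the assignment descends to a map $(U_{reg}\cap\mu^{-1}(0))/U(\Har)\ra\MC{M}_\infty(k)$.

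Next I would construct the inverse, following \cite{DK90}. Given a framed instanton $(E,A)$ on the round $S^4$, form the coupled Dirac operator $D_A^-\colon\Gamma(S^-\otimes E)\ra\Gamma(S^+\otimes E)$. The Weitzenb\"ock formula together with $F_A^+=0$ and the positivity of the scalar curvature forces the positive-chirality $A$-harmonic spinors to vanish, so $D_A^-$ is surjective and $\Har_A:=\ker D_A^-$ has complex dimension $k$ by the index theorem, carrying an $L^2$ Hermitian structure. The four ADHM components are then recovered as canonical operations on $\Har_A$: the operators $\tau_1,\tau_2$ come from the affine coordinates on $S^4\setminus\{\infty\}=\C^2$ together with the Green's operator of $D_A^-(D_A^-)^*$, while $\pi$ and $\sigma$ come from evaluation and coevaluation at $\infty$ against the framing space $E_\infty$. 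I would then check that this quadruple lands in $U_{reg}\cap\mu^{-1}(0)$: the sequence \eqref{ADHM-sequence} at a point $z$ computes the fibre $E_z$, so its exactness in the middle with rank-two cohomology is exactly the $U_{reg}$ condition, the relation $\mu_\C=0$ in \eqref{ADHM-eq} is compatibility of the coordinate operators with the holomorphic structure induced by $A$, and $\mu_\R=0$ is compatibility with the Hermitian metric. A change of framing changes the data by $U(\Har)$, so this yields a map $\MC{M}_\infty(k)\ra(U_{reg}\cap\mu^{-1}(0))/U(\Har)$.

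Finally I would verify that the two maps compose to the identity in both directions. Starting from ADHM data, the cokernel bundle carries a tautological family of $A$-harmonic spinors parametrised by $\Har$, and unwinding \eqref{ADHM-sequence} identifies $\ker D_A^-$ with $\Har$ and returns the original $(\tau_1,\tau_2,\pi,\sigma)$ up to $U(\Har)$. Starting from an instanton, one must show that orthogonally projecting the product connection onto $\om{Coker}(R)$, with $R$ assembled from $\Har_A$, gives a connection framed-gauge-equivalent to $A$; this is the classical ADHM uniqueness step and follows from an ASD connection being determined up to gauge by the kernel of its coupled Dirac operator. The hard part throughout is this surjectivity and completeness direction --- the vanishing theorem for positive-chirality harmonic spinors, the identification of all four pieces of linear data inside $\ker D_A^-$, and the monad computation of the cokernel bundle --- which I would not reproduce but cite from \cite{DK90} or \cite{Atiyah79}. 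An alternative route I might take instead is the Penrose--Ward transform: realize $(E,A)$ as a holomorphic bundle on $\C P^3$ trivial on the twistor lines, restrict to a $\C P^2$, and use Barth's monad description of stable bundles trivial on a line, with the quaternionic reality condition on twistor space encoding the $\om{SU}(2)$-structure; the essential difficulty is unchanged but the technology becomes the Beilinson spectral sequence and the reality conditions.
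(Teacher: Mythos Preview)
Your proposal is consistent with the paper's treatment: the paper does not give its own proof of this theorem but simply refers to \cite[Section~3.3]{DK90}, noting only the one fact it needs later, namely that $\Har$ is recovered as $\ker D_A^*$ as in \eqref{Harmonic-ADHM}. Your sketch follows exactly the Donaldson--Kronheimer approach (forward construction already in the text, inverse via the coupled Dirac operator and Weitzenb\"ock vanishing, monad reconstruction), and you yourself defer the hard steps to \cite{DK90} and \cite{Atiyah79}; so there is nothing to compare---you are outlining the same proof the paper cites.
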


For the proof of the theorem and additional details we refer to \cite[Section~3.3]{DK90}. A part of the proof involves giving an inverse procedure to the construction sketched above. 
We will need one fact from this inverse procedure. Given an
$\om{SU}(2)$-bundle $E\ra S^4$ with $c_2(E)[S^4]=k$ and ASD connection
$A$, the space $\Har$ is recovered by
\begin{equation} \label{Harmonic-ADHM}
\om{Ker}[D_A^*\colon \Gamma(S^-\otimes E)\ra \Gamma(S^+\otimes E)],
\end{equation}
where $S^\pm$ are the complex spinor bundles associated with the
unique spin structure on $S^4$ and $D_A^*$ is the formal adjoint
of the Dirac operator twisted by the connection $A$.   

Let $\Gamma\subset \om{SU}(2)$ be a finite subgroup. Then
according to \cite{Austin95} the ADHM correspondence is natural with respect
to the standard linear action $\Gamma\times \C^2\ra \C^2$ and
the corresponding suspended action on $\C^2\cup\{\infty\}=S^4$. This means
that if $E\ra S^4$ is a $\Gamma$-equivariant bundle with a $\Gamma$-invariant
ASD connection $A$, then the spaces $E_\infty$, $\Har$ are
complex $\Gamma$-representations and $m=(\tau_1,\tau_2,\pi,\sigma)\in U_{reg}\cap \mu^{-1}(0)$ associated with $(E,A)$ belongs to the subspace $M^\Gamma$
of invariants. Conversely, if $\Har$ and $E_\infty$ are
$\Gamma$-representations and $m\in U_{reg}\cap \mu^{-1}(0)\cap M^\Gamma$
then the associated bundle $E$ naturally obtains the structure of
a $\Gamma$-equivariant bundle such that the connection $A$ is invariant.  

As we have seen the $\Gamma$-equivariant $\om{SU}(2)$-bundles over $S^4$
are determined up to isomorphism by the triple of invariants $(\alpha,\beta,k)$
as in Theorem \ref{Gamma-Bundles}. We need to know which representation
$\Har$ that produces invariant instantons in the bundle with data
$(\alpha,\beta,k)$. The relevant formula is obtained by an application
of the equivariant index theorem to the twisted Dirac operator in \eqref{Harmonic-ADHM}. Let $R(\Gamma)$ denote the complex representation ring of $\Gamma$ and let $\eps:R(\Gamma)\ra \Z$ denote the augmentation. Furthermore, let as before $Q$ denote the $2$-dimensional
representation associated with the inclusion $\Gamma\subset\om{SU}(2)$.  

\begin{lemma} \label{Austin-Equation}
Let $E\ra S^4$ be a $\Gamma$-equivariant $\om{SU}(2)$-bundle
with invariants $([E_N],[E_S],c_2(E)[S^4])=(\alpha,\beta,k)$ and set
\[ \Har =\om{Ind}[D_A^*\colon \Gamma(S^-\otimes E)\ra \Gamma(S^+\otimes E)]\in R(\Gamma) \] 
where $A$ is a $\Gamma$-invariant connection in $E$.
Then we have the following relations in the representation ring $R(\Gamma)$:
\begin{align*} 
(2-Q)\Har &= \alpha-\beta  \\
\eps(\Har) &= k.
\end{align*}
\end{lemma}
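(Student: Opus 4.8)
The plan is to compute the $\Gamma$-equivariant index of the twisted Dirac operator on $S^4$ using the equivariant Atiyah--Singer index theorem (the $G$-index theorem, or more conveniently the Lefschetz fixed-point formula), exploiting the fact that the $\Gamma$-action on $S^4 = \C^2 \cup \{\infty\}$ has exactly two fixed points $N$ and $S$. First I would recall that the index $\operatorname{Ind}[D_A^*]\in R(\Gamma)$ can be evaluated by pairing its character against each $\gamma\in\Gamma$; for $\gamma = 1$ this recovers the ordinary numerical index $-\operatorname{ind}(D_A)$, which by the Atiyah--Singer formula on $S^4$ (where $\hat{A}(S^4)=0$ and the curvature term gives $c_2(E)[S^4]$ up to sign conventions) equals $k$, establishing $\eps(\Har)=k$ as soon as sign conventions are pinned down against \eqref{Harmonic-ADHM}.

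The substantive relation $(2-Q)\Har = \alpha - \beta$ I would obtain from the two-fixed-point Lefschetz formula: for $\gamma \neq 1$ the Lefschetz number of $\gamma$ acting on the kernel minus cokernel of $D_A$ is a sum of local contributions at $N$ and $S$, each of the form (character of $\gamma$ on $E_N$ or $E_S$) divided by (a determinant built from the eigenvalues of $\gamma$ acting on the tangent space at the fixed point). Since the $\Gamma$-action near each pole is the standard linear action of $\Gamma \subset \mathrm{SU}(2)$ on $\C^2$ (at $N$) and its conjugate/complexification at $S$, the denominator is exactly the factor that, when cleared, corresponds to multiplication by $(2-Q)$ in $R(\Gamma)$ — this is precisely the Euler-class-type expression $\bigl(2 - [\Gamma\curvearrowright\C^2]\bigr)$ evaluated via characters. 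Matching the numerators to $[E_N]=\alpha$ and $[E_S]=\beta$ then yields the identity $(2-Q)\cdot\chi_{\Har}(\gamma) = \chi_\alpha(\gamma) - \chi_\beta(\gamma)$ for all $\gamma$, hence the claimed equation in $R(\Gamma)$. (One must be slightly careful that $2-Q$ is a zero-divisor in $R(\Gamma)$ only after restricting to the identity, so the formula genuinely determines $\Har$ only up to the kernel of multiplication by $2-Q$; but as an identity it holds on the nose, which is all that is asserted.)

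The main obstacle I anticipate is bookkeeping of orientations and signs: the excerpt has already flagged that $S^4$ here carries the opposite of the standard orientation, that $D_A^*$ rather than $D_A$ appears in \eqref{Harmonic-ADHM}, and that the ADHM space $\Har$ has dimension $k$ — so one needs the index of $D_A^*$ (equivalently $-\operatorname{ind} D_A$ with appropriate spinor-bundle conventions) to come out with the correct sign so that both $\eps(\Har)=k$ and the equivariant refinement are consistent. A clean way to sidestep much of this is to note that the result is essentially Austin's (it is quoted from \cite{Austin95}), so the proof can proceed by citing the equivariant index theorem in the form Austin uses and simply verifying that the fixed-point data $(\alpha,\beta,k)$ of Theorem \ref{Gamma-Bundles} feed into the Lefschetz formula as described; alternatively, one can derive the non-equivariant statement $\eps(\Har)=k$ directly from the ADHM description (the dimension of $\Har$ is $k$ by construction of the monad) and then promote it equivariantly by the fixed-point computation. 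I would take the latter route for $\eps(\Har)=k$ and the former for $(2-Q)\Har=\alpha-\beta$, keeping the computation of the tangent-space weights at $N$ and $S$ — namely that they are the weights of $Q$ and of $Q$ composed with the antipodal/conjugation identification — as the one genuinely delicate point to get right.
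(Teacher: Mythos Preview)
Your approach is essentially the same as the paper's: apply the Lefschetz fixed-point formula at the two poles for $\gamma\neq 1$, and the ordinary index theorem for $\gamma=1$. The paper carries this out on the $\HH P^1$ model, identifying the spinor bundles explicitly as $S^+=\gamma$ and $S^-=\tilde\gamma$.

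One point you have underspecified: you describe the local contribution at each fixed point as ``(character of $\gamma$ on $E_N$ or $E_S$) divided by (a determinant \ldots)'', but this omits the spinor bundle factors in the numerator. The fixed-point formula for $D_A^*\colon \Gamma(S^-\otimes E)\to\Gamma(S^+\otimes E)$ has numerator $(\chi_{S^-_x}(g)-\chi_{S^+_x}(g))\,\chi_{E_x}(g)$, not just $\chi_{E_x}(g)$. The paper computes that at each pole the pair $(S^+_x,S^-_x)$ is (trivial, $Q$) or ($Q$, trivial), so the spinor factor is $\pm(2-\chi_Q(g))$. Meanwhile the denominator is $\om{ch}_g(\lambda_{-1}(T_xS^4)_\C)=(2-\chi_Q(g))^2$, since $(T_xS^4)_\C\cong 2Q$. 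One power cancels, and only then does the denominator become $(2-\chi_Q(g))$ as you anticipated. Without tracking the spinor contribution this cancellation is invisible and the formula would not come out right, so this is the step where your ``delicate point'' actually lives, not merely in the tangent-space weights. Your remark that the tangent representation at $S$ is ``$Q$ composed with the antipodal/conjugation identification'' is on the right track: the paper verifies that both $T_NS^4$ and $T_SS^4$ are isomorphic to $Q_\R$ as real $\Gamma$-representations, which is what makes the denominators match.
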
     

These formulas are given in \cite[Sec.~4.3]{Austin95}. We have included
a proof in Appendix \ref{B-IndexCalc}. The following lemma ensures that
the above two equations determine the virtual representation $\Har$
uniquely.

\begin{lemma} \label{R(G)-equation}
Let $\Gamma\subset \om{SU}(2)$ be a finite subgroup and let $Q$ be the
canonical $2$-dimensional representation of $\Gamma$. Then the kernel
of the map $R(\Gamma)\ra R(\Gamma)$ given by $V\mapsto (2-Q)V$ is given by
$\{nR:n\in \Z\}$ where $R\cong \C[\Gamma]$ denotes the regular representation.
\end{lemma}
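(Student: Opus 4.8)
The plan is to argue entirely with characters. Since $\Gamma$ is finite, $R(\Gamma)$ injects into the ring of complex-valued class functions on $\Gamma$, and a virtual representation vanishes if and only if its character does. For $V\in R(\Gamma)$ the character of $(2-Q)V$ evaluated at $g\in\Gamma$ is $(2-\chi_Q(g))\chi_V(g)$, where $\chi_Q(g)=\om{tr}(g)$ with $g$ regarded as an element of $\om{SU}(2)$. Hence $(2-Q)V=0$ in $R(\Gamma)$ if and only if $(2-\chi_Q(g))\chi_V(g)=0$ for every $g\in\Gamma$.

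The first step is the elementary observation that, for $g\in\om{SU}(2)$, one has $\chi_Q(g)=2$ precisely when $g$ is the identity: an element of $\om{SU}(2)$ of trace $2$ and determinant $1$ has characteristic polynomial $(\lambda-1)^2$, hence, being diagonalizable, equals the identity matrix. Since $|\chi_Q(g)|\le 2$ always, it follows that $2-\chi_Q(g)\neq 0$ for all $g\neq e$, while of course $2-\chi_Q(e)=0$. Combined with the previous reduction, $(2-Q)V=0$ if and only if $\chi_V$ is supported at the identity, i.e.\ $\chi_V(g)=0$ for all $g\neq e$.

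The second step identifies the class functions supported at $e$ that lie in the lattice $R(\Gamma)$. If $\chi_V$ vanishes off $e$, then the multiplicity of an irreducible $R_i$ in $V$ is $\langle\chi_V,\chi_{R_i}\rangle=\tfrac{1}{|\Gamma|}\chi_V(e)\,\overline{\chi_{R_i}(e)}=\tfrac{\chi_V(e)}{|\Gamma|}\dim_\C R_i$. Taking $R_i=R_0$ the trivial representation shows $n:=\chi_V(e)/|\Gamma|$ is an integer, and then $V=n\sum_i(\dim_\C R_i)\,R_i=nR$ since the regular representation decomposes as $R\cong\bigoplus_i R_i^{\oplus\dim_\C R_i}$. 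Conversely, for any $n\in\Z$ the character of $(2-Q)\cdot nR$ is $g\mapsto n(2-\chi_Q(g))\chi_R(g)$, which is identically zero, since it vanishes at $g\neq e$ because $\chi_R(g)=0$ there and at $g=e$ because $2-\chi_Q(e)=0$; hence $nR$ lies in the kernel. This yields the claimed equality $\ker=\{nR:n\in\Z\}$.

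There is no real obstacle here: the argument is a short character computation resting on the description of $\om{SU}(2)$-conjugacy classes. The only point requiring a moment's care is the passage from ``$\chi_V$ is a complex multiple of $\chi_R$'' to ``$V$ is an integer multiple of $R$'', which is handled by reading off the multiplicity of the trivial representation as above.
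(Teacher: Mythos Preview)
Your proof is correct and follows essentially the same approach as the paper: reduce to characters, use that $\chi_Q(g)=2$ only at the identity to conclude $\chi_V$ is supported at $e$, then verify via the multiplicity of the trivial representation that $\chi_V(e)/|\Gamma|$ is an integer so that $V=nR$. The only minor differences are cosmetic---you give a direct trace/diagonalizability argument where the paper invokes faithfulness of $Q$, and you explicitly check the converse inclusion $nR\in\ker$, which the paper leaves implicit.
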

\begin{proof} Assume that $(2-Q)V=0$ in $R(\Gamma)$. In terms of the
associated characters this means that $(2-\chi_Q(g))\chi_V(g)=0$
for all $g\in \Gamma$. Since $Q$ is a faithful representation of
$\Gamma$ it follows that $\chi_Q(g)\neq \chi_Q(1)=2$ for all $g\neq 1$,
from which we deduce that $\chi_V(g)=0$ for all $g\neq 1$. There exist
representations $U,W$ of $\Gamma$ such that $V=U-W$ in $R(\Gamma)$.
Therefore, 
\[ \chi_V(1)/|\Gamma | = (\chi_V,1_\Gamma)=(\chi_U,1_\Gamma)-(\chi_W,1_\Gamma) = \om{dim}_\C U^\Gamma-\om{dim}_\C W^\Gamma \in \Z , \]
where $1_\Gamma$ is the character of the trivial representation and
$(\cdot,\cdot)$ is the Hermitian inner product on characters.
As the character of the regular representation $R$ is given by
$\chi_R(1)=|\Gamma|$ and $\chi_R(g)=0$ for $g\neq 1$, we conclude
that for $m=\chi_V(1)/|\Gamma|$ we have
$\chi_V = m\chi_R$, and hence $V=mR$ in $R(\Gamma)$.  
\end{proof}

Recall that $R(\Gamma)$ is, as an abelian group, freely generated by the isomorphism classes of the irreducible representations of $\Gamma$. 
Therefore, if we label the irreducible
representations by $R_0,\cdots,R_m$, then every element in $R(\Gamma)$ may
be expressed uniquely as a linear combination $\sum_{i=0}^m n_i[R_i]$ for
$n_i\in \Z$. Such a sum is called a virtual representation and in the
case all $n_i\geq 0$ we call it an actual representation since
it determines the representation $\bigoplus_i n_iR_i$ up to
isomorphism.   

We are now in a position to state the equivariant ADHM correspondence.
This is a more precise version of \cite[Lemma~4.3]{Austin95}.

\begin{theorem} \label{Equivariant-ADHM} (The Equivariant ADHM Correspondence) Let $E$ be a $\Gamma$-equivariant
$\om{SU}(2)$-bundle over $S^4$ with $([E_N],[E_S],c_2(E)[S^4])=(\alpha,\beta,k)$. Let $\Har\in R(\Gamma)$ be the unique element satisfying
$(2-Q)\Har = \alpha-\beta$ and $\eps(\Har)=k$. Then if $\Har$ is
an actual representations the moduli space $\wt{\MC{M}}_{\Gamma,\infty}(E)$ of
$\Gamma$-invariant instantons modulo $\Gamma$-equivariant gauge transformations
framed at $\infty = N$ is given by
\[ (U_{reg}\cap \mu^{-1}(0) \cap M^\Gamma) / U(\Har)^\Gamma , \]
where $U(\Har)^\Gamma$ is the group of unitary $\Gamma$-equivariant
automorphisms of $\Har$ and
\[ M^\Gamma = \om{Hom}_\Gamma(Q\otimes \Har,\Har)\oplus \om{Hom}_\Gamma(\Har,E_\infty)\oplus \om{Hom}_\Gamma(E_\infty,\Har)  .\]
If $\Har$ is not an actual representation, the moduli space is
empty.
\end{theorem}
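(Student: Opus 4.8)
The plan is to obtain the statement by restricting the non-equivariant ADHM correspondence (the theorem quoted just above Lemma \ref{Austin-Equation}) to its $\Gamma$-fixed locus, using the naturality of the ADHM construction under the suspended action. Before doing that I would record the algebraic input: by Lemma \ref{R(G)-equation} the map $X\mapsto(2-Q)X$ on $R(\Gamma)$ has kernel $\Z\cdot R$ with $\eps(R)=|\Gamma|$, so the pair of equations $(2-Q)X=\alpha-\beta$, $\eps(X)=k$ has at most one solution $\Har$, and (given that $E$ exists, so $k$ lies in the congruence class of Theorem \ref{Gamma-Bundles}) exactly one. This $\Har$ is a priori only a virtual representation, and the dichotomy in the theorem is exactly the dichotomy ``$\Har$ actual'' versus ``$\Har$ not actual''.

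The emptiness clause comes first and is short. Suppose $E$ carried a $\Gamma$-invariant ASD connection $A$. By the inverse ADHM construction the vector space $\om{Ker}[D_A^{*}\colon\Gamma(S^-\otimes E)\ra\Gamma(S^+\otimes E)]$ of \eqref{Harmonic-ADHM} is an \emph{actual} complex $\Gamma$-representation; since $A$ is ASD and $c_2(E)=k\geq 0$, the Weitzenb\"ock vanishing gives $\om{Ker}(D_A)=0$, so this kernel represents the equivariant Dirac index, and Lemma \ref{Austin-Equation} yields $(2-Q)[\om{Ker}D_A^{*}]=\alpha-\beta$ and $\eps([\om{Ker}D_A^{*}])=k$. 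By the uniqueness just noted, $[\om{Ker}D_A^{*}]=\Har$, so $\Har$ is actual. Contrapositively, if $\Har$ is not actual then $\wt{\MC{M}}_{\Gamma,\infty}(E)=\emptyset$.

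Now assume $\Har$ is actual. I would fix once and for all a Hermitian $\Gamma$-module in the class $\Har$ and a Hermitian $\Gamma$-module $E_\infty$ in the class $\alpha$; together with the standard action on $\C^2$ these pin down $M^\Gamma$ and the group $U(\Har)^\Gamma$. The forward map sends $m\in U_{reg}\cap\mu^{-1}(0)\cap M^\Gamma$ to the pair $(E_m,A_m)$ of the ADHM construction, which by naturality under the suspended $\Gamma$-action (as in \cite{Austin95}) is a $\Gamma$-equivariant bundle with $\Gamma$-invariant ASD connection, together with the canonical identification $(E_m)_N=E_\infty$, i.e.\ a framing at $N$. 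One then checks that $(E_m)$ has invariants $(\alpha,\beta,k)$: the fiber at $N$ is $E_\infty$, hence of class $\alpha$; $c_2(E_m)=\dim_\C\Har=\eps(\Har)=k$; and the fiber at $S$ is $\alpha-(2-Q)\Har=\beta$, seen either by evaluating the ADHM sequence \eqref{ADHM-sequence} at the two fixed points and comparing, or by feeding $(E_m,A_m)$ back into Lemma \ref{Austin-Equation} once one knows the ADHM space $\Har$ is the Dirac index of the output instanton. By Theorem \ref{Gamma-Bundles} this forces $E_m\cong E$ equivariantly, so $A_m$ transports to a $\Gamma$-invariant framed ASD connection in $E$, and the construction descends along $U(\Har)^\Gamma$ since $g\cdot m$ and $m$ already give equivalent framed instantons via $\rho^g$.

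For the bijection I would argue as follows. If $m,m'\in M^\Gamma$ map to the same $\Gamma$-equivariant framed gauge class, then by the \emph{non-equivariant} ADHM bijection there is a unique $g\in U(\Har)$ with $g\cdot m=m'$; because the $\Gamma$-action on $\Har$ is recovered functorially from the instanton via \eqref{Harmonic-ADHM}, $g$ intertwines the two $\Gamma$-structures, hence $g\in U(\Har)^\Gamma$ — this gives injectivity of the induced map $(U_{reg}\cap\mu^{-1}(0)\cap M^\Gamma)/U(\Har)^\Gamma\ra\wt{\MC{M}}_{\Gamma,\infty}(E)$. Surjectivity follows by running the inverse ADHM construction on an arbitrary $\Gamma$-invariant framed ASD connection in $E$, noting the resulting data is $\Gamma$-invariant by naturality and that its $\Har$ agrees (after an equivariant conjugation into our fixed model, legitimate by uniqueness) with the $\Har$ we fixed. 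The step I expect to be the real work is making ``$\Gamma$-naturality of ADHM'' fully precise at the level needed here: in particular, checking that the framing at $N$ produced by the construction is exactly the one used to define $\wt{\MC{M}}_{\Gamma,\infty}(E)$, and pinning down the south-pole fiber representation, since that is what certifies the output is the prescribed bundle $E$ and not some other $\Gamma$-equivariant bundle with the same $\alpha$ and $k$.
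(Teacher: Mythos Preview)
Your proposal is correct and follows essentially the same line as the paper. Note that the paper does not supply a formal proof of this theorem: it states the result as a more precise version of \cite[Lemma~4.3]{Austin95}, and the preceding paragraphs sketch exactly the argument you give --- naturality of the ADHM construction under the suspended $\Gamma$-action (attributed to Austin), the identification of $\Har$ via the equivariant Dirac index (Lemma~\ref{Austin-Equation}), and the uniqueness coming from Lemma~\ref{R(G)-equation}. Your write-up makes the logical dependencies more explicit than the paper's outline, in particular the emptiness clause via Weitzenb\"ock vanishing and the verification that the output bundle has the correct invariants $(\alpha,\beta,k)$ so that Theorem~\ref{Gamma-Bundles} identifies it with $E$; these are the right checks, and your closing remark about the framing at $N$ and the south-pole fiber is exactly where the care is needed.
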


Note that in the above statement $E_\infty = E_N$. 

\subsection{Determination of the Low-Dimensional Moduli Spaces}
The ADHM equations \eqref{ADHM-eq} are a particularly simple example of the hyper-K\"{a}hler moment map equations associated with an action of a Lie group on a hyper-K\"{a}hler manifold (see \cite{HKL87}). We briefly review the
set up in the linear case relevant for our purpose. 

Let $V$ be a (left) quaternionic vector space equipped with a compatible
real inner product $g\colon V\times V\ra \R$, i.e., multiplication by unit
quaternions are orthogonal. Let $\om{Sp}(V)$ denote the group of
$\HH$-linear orthogonal automorphisms of $V$. 
Define an $\om{Im}\HH\coloneqq \om{Span}_\R\{i,j,k\}$ valued
$2$-form $\omega\colon V\times V\ra \om{Im}\HH$ by the formula
\[ \omega(v,w)=ig(iv,w)+jg(jv,w)+kg(kv,w)   .\]
Note that the unique symplectic inner product $s:V\times V\ra \HH$ determined
by $g$ is given by $s=g-\omega$.

Suppose that $G$ is a compact Lie group acting on $V$ through
a homomorphism $\rho:G\ra \om{Sp}(V)$. The Lie algebra $\fg$ of $G$
acts on $V$ through the derivative $d\rho_1 :\fg\ra \om{sp}(V)=\om{Lie}(\om{Sp}(V))$.
A hyper-K\"{a}hler moment map for the action $G\times V\ra V$ is a $G$-equivariant map $\mu\colon V\ra \om{Hom}(\fg,\om{Im}\HH)$, where $G$ acts on the target through the adjoint representation, satisfying
\[ d\mu_x(v)=[\xi \mapsto \omega(\xi\cdot x,v)]\in \om{Hom}(\fg,\om{Im}\HH)  \]
for all $x\in V$ and $v\in V\cong T_xV$. A moment map is clearly unique up to
a constant and one may check that the unique moment map vanishing at
$0\in V$ is given by
\begin{equation} \label{HK-Moment-Map}
\mu(x) = [\xi\in \fg \mapsto \frac12 \omega(\xi\cdot x,x) \in \om{Im}\HH ] .
\end{equation} 

\begin{definition} For a pair $(V,\rho\colon G\ra \om{Sp}(V,g))$ as above we
define the hyper-K\"{a}hler quotient of $V$ by $G$ to be the space
\[ \mu^{-1}(0)/G   .\]
\end{definition} 

In general neither $\mu^{-1}(0)$ nor $\mu^{-1}(0)/G$ will be manifolds
due to singularities. However, if $U\subset V$ is an open $G$-invariant
subset on which $G$ acts freely, then $\mu^{-1}(0)\cap U$ and the quotient
by $G$ will be manifolds. Moreover, in this case the quotient inherits the structure of a hyper-K\"{a}hler manifold. 

\begin{example}(The ADHM equations) \label{Example-ADHM-eq}
Let $\Har$ and $E_\infty$ be a pair
of Hermitian vector space and define
\[ V = \om{Hom}(\C^2\otimes \Har , \Har)\oplus \om{Hom}(\Har,E_\infty)
\oplus \om{Hom}(E_\infty,\Har )  \]
equipped with the Hermitian metric induced from $\Har$ and $E_\infty$.
Using the standard basis of $\C^2$ an element of $V$ is written
$(\tau_1,\tau_2,\pi,\sigma)$. Let $G=U(\Har)$ act on $V$ by
$g\cdot (\tau_1,\tau_2,\pi,\sigma)=(g\tau_1 g^{-1},g\tau_2 g^{-1},\pi g^{-1},g\sigma)$. Define a quaternionic structure map $J\colon V\ra V$ by
\[ J(\tau_1,\tau_2,\pi,\sigma) = (-\tau_2^*,\tau_1^*,-\sigma^*,\pi^*).  \]
Then the action of $G$ commutes with
$J$ and therefore defines a homomorphism $\rho\colon G\ra \om{Sp}(V)$.  
If we use the trace inner product in $\mathfrak{u}(\Har)$ and the decomposition
$\om{Im}(\HH)\cong \R\oplus \C$ to identify
\[ \om{Hom}(\mathfrak{u}(\Har),\om{Im}\HH) \cong \mathfrak{u}(\Har)\otimes \om{Im}(\HH) \cong \mathfrak{u}(\Har)\oplus \mathfrak{u}(\Har)\otimes \C
\cong \mathfrak{u}(\Har)\oplus \om{End}_\C(\Har),
\]
the resulting moment map $\mu\colon V\ra \mathfrak{u}(\Har)\oplus \om{End}_\C (\Har)$ vanishing at $0$ is given by the ADHM equations in (\ref{ADHM-eq}). 
\end{example} 

Kronheimer \cite{Kronheimer89} used this construction in the following
situation. Let $\Gamma\subset \om{SU}(2)$ be a finite subgroup and
let $Q$ and $R$ denote the canonical representation and the
regular representation, respectively. Assume that $Q$ and $R$ are equipped with
$\Gamma$-invariant Hermitian metrics and fix a $\Gamma$-invariant
quaternionic structure $j:Q\ra Q$ compatible with the metric. Let
\[  P = Q\otimes \om{End}(R)\cong \om{Hom}(Q\otimes R,R) \]
equipped with the induced Hermitian metric. By combining the
quaternionic structure on $Q$ with the real structure $f\mapsto f^*$
on $\om{End}(R)$ one obtains a $\Gamma$-equivariant quaternionic structure
$J\colon P \ra P$ compatible with the Hermitian metric. As this structure
is preserved by the action of $\Gamma$, the subspace
$K= P^\Gamma$ carries the same structure. 

The group $U(R)$ acts on $P$ by conjugation in the second factor. This action preserves the quaternionic structure and the Hermitian metric. The subgroup $U(R)^\Gamma$ of $\Gamma$-equivariant unitary automorphisms preserves the subspace $K$ and therefore defines a homomorphism $U(R)^\Gamma \ra \om{Sp}(K)$. Here, the subgroup $U(1)$ of scalars is contained in the kernel so the homomorphism descends to a homomorphism $F\coloneqq U(R)^\Gamma/U(1)\ra \om{Sp}(K)$.   
Let  $\mu:K \ra \om{Hom}(\ff,\om{Im}\HH)$ be the unique hyper-K\"{a}hler moment vanishing at $0\in K$. In this situation Kronheimer
identified the hyper-K\"{a}hler quotient. 

\begin{lemma} \cite{Kronheimer89} \label{Kronheimer-Quotient}
There is a homeomorphism
$\mu^{-1}(0)/F \cong \C^2/\Gamma$
that restricts to an isometry 
\[ \frac{\mu^{-1}(0)-\{0\}}{F} \cong \frac{\C^2-\{0\}}{\Gamma}  .\]
\end{lemma}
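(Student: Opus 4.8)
The plan is to identify the hyper-Kähler quotient $\mu^{-1}(0)/F$ with the minimal resolution picture only implicitly; the cleanest route is to exhibit an explicit $F$-equivariant subspace of $K$ on which the moment map equations reduce to a single quadratic relation, and then to recognize the resulting quotient as $\C^2/\Gamma$ via invariant theory. First I would unwind the definitions: an element of $K = P^\Gamma = (Q\otimes \operatorname{End}(R))^\Gamma$ is the same as a pair of $\Gamma$-equivariant maps $(\tau_1,\tau_2)$ obtained by decomposing $Q\otimes\operatorname{End}(R)\cong \operatorname{End}(R)\oplus\operatorname{End}(R)$ with respect to a basis of $Q$ adapted to the quaternionic structure $j$; equivalently, $K$ is the space of $\Gamma$-equivariant endomorphisms of $R$ valued in $Q$. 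Since $R\cong\C[\Gamma]$ is the regular representation, Schur's lemma gives $\operatorname{End}_\Gamma(R)\cong\prod_i \operatorname{End}(\C^{d_i})$ over the irreducibles $R_i$, and $U(R)^\Gamma\cong\prod_i U(d_i)$; one then computes $K$ as a representation of $F = U(R)^\Gamma/U(1)$ using the McKay relation $Q\otimes R_i\cong\bigoplus_j a_{ij}R_j$. I expect $K$ to be exactly the "quiver variety" datum for the affine ADE quiver $\overline\Delta_\Gamma$ at the dimension vector $(d_i)$.

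Next I would write down the moment map $\mu\colon K\to\operatorname{Hom}(\ff,\operatorname{Im}\HH)$ explicitly from \eqref{HK-Moment-Map}, using the trace pairing on $\ff$ and the splitting $\operatorname{Im}\HH\cong\R\oplus\C$ to get real and complex components $\mu_\R,\mu_\C$; by the computation in Example \ref{Example-ADHM-eq} these are the ADHM-type expressions $\mu_\C = [\tau_1,\tau_2]$ and $\mu_\R = [\tau_1,\tau_1^*]+[\tau_2,\tau_2^*]$ restricted to the $\Gamma$-invariant part (there is no $\pi,\sigma$ here since $E_\infty$ is absent). The key structural observation is that $\Gamma\subset\operatorname{SU}(2)$ acts on $\C[x,y]=\operatorname{Sym}(Q^*)$ and $\C^2/\Gamma=\operatorname{Spec}\C[x,y]^\Gamma$; I would produce a point of $K$ out of the pair of multiplication operators by $x$ and $y$ on a suitable $\Gamma$-module and check that for this family the complex moment map equation $[\tau_1,\tau_2]=0$ is automatic (commuting multiplications) while the real equation cuts out, after quotienting by $F$, precisely the locus where the two operators are "balanced." Concretely I would follow Kronheimer: show the complex-symplectic quotient $\mu_\C^{-1}(0)/\!/F_\C$ is $\operatorname{Spec}\C[x,y]^\Gamma = \C^2/\Gamma$ by identifying the ring of $F_\C$-invariant functions on $\mu_\C^{-1}(0)$ with $\C[x,y]^\Gamma$ (this is a direct invariant-theory computation with the trace functions $\operatorname{tr}(w(\tau_1,\tau_2))$ for words $w$), and then invoke the Kempf–Ness/hyper-Kähler correspondence to pass from the GIT quotient to the hyper-Kähler quotient $\mu^{-1}(0)/F$.

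For the isometry statement on the complements of the origin I would argue that away from $0\in K$ the group $F$ acts freely — because a nonzero $\Gamma$-equivariant pair $(\tau_1,\tau_2)$ generating (in the appropriate sense) forces the stabilizer in $\prod U(d_i)/U(1)$ to be trivial, using that $R$ is the regular representation so all $R_i$ occur — hence $(\mu^{-1}(0)\setminus\{0\})/F$ is a smooth hyper-Kähler manifold. On the other side, $(\C^2\setminus\{0\})/\Gamma$ carries the flat metric descended from $\C^2$, which is hyper-Kähler, and the identification of underlying spaces above is compatible with the complex structures; since both are hyper-Kähler metrics in the same Kähler class inducing the same complex structure, a Bochner-type uniqueness (or simply matching the explicit flat metric Kronheimer computes) yields the isometry. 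The main obstacle, and the step I would spend the most care on, is the invariant-theoretic identification of $\C[\mu_\C^{-1}(0)]^{F_\C}$ with $\C[x,y]^\Gamma$: this requires knowing precisely which $\Gamma$-module the operators $\tau_1,\tau_2$ act on (the regular representation, via the McKay data) and checking that the natural map to $\C^2/\Gamma$ is a bijection on closed orbits and an isomorphism of coordinate rings — this is the technical heart of Kronheimer's theorem and I would either reproduce his argument or cite \cite{Kronheimer89} for it, since the lemma is explicitly attributed there.
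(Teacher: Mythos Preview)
The paper does not give its own proof of this lemma: it is stated with the citation \cite{Kronheimer89} attached and is used as a black-box input from Kronheimer's work. There is therefore nothing in the paper to compare your argument against. Your sketch is a reasonable outline of Kronheimer's original approach (identify the complex-symplectic quotient with $\operatorname{Spec}\C[x,y]^\Gamma$ via invariant theory, then pass to the hyper-K\"ahler quotient), and you yourself note at the end that the technical heart would be deferred to \cite{Kronheimer89}; that is exactly what the paper does from the outset.
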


\begin{remark} \label{Free-Action} A consequence of this is that
$F$ acts freely on $\mu^{-1}(0)-\{0\}$. \end{remark}   

The space $K$ and the group $F$ admit simple descriptions in terms of
the McKay graph $\ovl{\Delta}_\Gamma$ associated with $\Gamma$. Let
$R_0,\cdots,R_m$ be a complete set of representatives for 
$\om{Irr}(\Gamma,\C)$, where we take $R_0=\C$ to be the trivial
representation. The regular representation $R=\C[\Gamma]$ decomposes as
\[ R \cong \bigoplus_{i=0}^m n_iR_i,  \]
where $n_i = \om{dim}_\C R_i$. Recall from subsection \ref{Section-McKay} 
that a matrix
$A$ was defined by $Q\otimes R_i = \bigoplus_j a_{ij}R_j$, or equivalently
$a_{ij}=\om{dim}_\C\om{Hom}_\Gamma(Q\otimes R_i,R_j)$. Furthermore,
the McKay graph $\overline{\Delta}_\Gamma$ was defined by taking the
irreducible representations as vertices and an (unoriented) edge connecting
$R_i$ to $R_j$ precisely when $a_{ij}=a_{ji}=1$. Using this
\begin{align*}
K = \om{Hom}_\Gamma(Q\otimes R,R) &= \bigoplus_{i,j} \om{Hom}_\C (\C^{n_i},\C^{n_j})\otimes \om{Hom}_\Gamma (Q\otimes R_i,R_j) \\
&= \bigoplus_{i\to j \mbox{ in } \overline{\Delta}_\Gamma} 
\om{Hom}(\C^{n_i},\C^{n_j})
\end{align*}
where each edge is repeated twice, once with each orientation, 
and 
\[ F=U(R)^\Gamma/U(1)=U\left(\bigoplus_i n_iR_i\right)^\Gamma/U(1)=\left(\prod_i U(n_i)\right) /U(1) . \]
This has a simple graphical interpretation on the Dynkin graph $\overline{\Delta}_\Gamma$ corresponding to $\Gamma$. Attach the vector space
$\C^{n_i}$ to the vertex corresponding to $R_i$. Then an element of $K$ is an assignment of a linear map $f_{ji}:\C^{n_i}\ra \C^{n_j}$ whenever $a_{ij}=1$.
An element  $g=(g_i)_i\in \prod_i U(n_i)$ acts on $f=(f_{ji})_{i,j}\in K$
by $(g\cdot f)_{ji}=g_jf_{ji}g_i^{-1}$.   

\begin{example} For $\Gamma=O^*$ an element of $K=\om{Hom}_{O^*}(Q\otimes R,R)$ is given by assigning a linear map to each arrow in the following diagram
\[ \begin{tikzcd} {} & {} & {} & \C^2 \arrow[shift left]{d} & {} & {} & {} \\
\C \arrow[shift left]{r} & \C^2 \arrow[shift left]{r} \arrow[shift left]{l} & \C^3 \arrow[shift left]{r} \arrow[shift left]{l} &
\C^4 \arrow[shift left]{r} \arrow[shift left]{l} \arrow[shift left]{u} & \C^3 \arrow[shift left]{l} \arrow[shift left]{r} &
\C^2 \arrow[shift left]{r} \arrow[shift left]{l} & \C \arrow[shift left]{l} \end{tikzcd} \]
\end{example}

The next result is the key step in relating Kronheimer's hyper-K\"{a}hler
quotient to the space occuring in the equivariant ADHM correspondence.  
Recall that an $\om{SU}(2)$-representation $\alpha$ of $\Gamma$ is
either 
\begin{enumerate}[label=(\arabic*), ref=(\arabic*)]
\item of quaternionic type and irreducible as a complex
representation,
\item of complex type and splits $\alpha = \lambda\oplus \lambda^*$
with $\lambda \not\cong \lambda^*$ or
\item of real type and splits $\alpha =2\eta$ for some $\eta$ with $\eta\cong \eta^*$.
\end{enumerate} 
By the vertex or vertices corresponding to $\alpha$ in $\overline{\Delta}_\Gamma$ we will mean the vertex corresponding to $\alpha$ in case $(1)$, the vertices corresponding to $\lambda$ and $\lambda^*$ in case $(2)$ and the vertex corresponding to $\eta$ in case $(3)$. The following is a more refined
version of \cite[Lemma~4.6]{Austin95}. Austin gives no explicit proof,
but states that one may verify it by a case by case analysis. 

\begin{proposition} \label{R(G)-Adjacent-Solution}
Let $\Gamma\subset \om{SU}(2)$ be a finite subgroup
and let $\alpha$ and $\beta$ be $\om{SU}(2)$-representations that are adjacent
in the graph $\MC{S}_\Gamma$ (Definition \ref{Def-Adjacency}). 
Then the solution
$\Har$ of the equation $(2-Q)\Har = \alpha-\beta$ in $R(\Gamma)$ with
minimal $\eps(\Har)=k>0$ is given in the following way.
\begin{enumerate}[label=(\roman*),ref=(\roman*)]
\item Delete the vertex or vertices corresponding to $\beta$ in $\overline{\Delta}_\Gamma$. The resulting graph has one or two components.
Let $\Delta_\Har$ be the component containing $\alpha$.
\item Up to isomorphism there is a unique nontrivial finite subgroup $\Gamma'\subset \om{SU}(2)$ and an isomorphism of graphs 
$\phi\colon\Delta_{\Gamma'}\ra \Delta_\Har$. 
\item Let $S_0,S_1,\cdots,S_r$ be the irreducible representations of
$\Gamma'$, where $S_0$ is the trivial representation. Then
\[ \Har \coloneqq \bigoplus_{j=1}^r (\om{dim}_\C S_j) R_{\phi(j)}  .\]
\end{enumerate}
Furthermore, if $Q'=m_1S_{i_1}\oplus m_2S_{i_2}$ is the isotypical decomposition of the canonical representation of $\Gamma'$, then 
$\alpha=m_1R_{\phi(i_1)}\oplus m_2 R_{\phi(i_2)}$
is the isotypical decomposition of $\alpha$. In particular, $\phi$ takes
the vertex or vertices associated with $Q'$ to the vertex or vertices
corresponding to $\alpha$.   
\end{proposition}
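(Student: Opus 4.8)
The plan is to prove parts~(i)--(iii) and the Furthermore together, the main point becoming a short computation with the affine Cartan matrix of $\overline{\Delta}_\Gamma$. Write $R_0,\dots,R_m$ for the irreducible complex representations of $\Gamma$, $R_0$ trivial, and identify $R(\Gamma)$ with $\Z^{m+1}$ in this basis. Since $Q\otimes R_i=\bigoplus_j a_{ij}R_j$, the operator $V\mapsto(2-Q)V$ on $R(\Gamma)$ is multiplication by the symmetric matrix $C\coloneqq 2I-A$, that is, the affine Cartan matrix of $\overline{\Delta}_\Gamma$; by Lemma~\ref{R(G)-equation} (equivalently, by the theory of affine root systems) its kernel is $\Z\mathbf d$, where $\mathbf d=(\dim_\C R_i)_i$ is the vector of marks, normalised by $\mathbf d_0=1$. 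Parts~(i)--(ii) then reduce to two standard facts: removing the one or two vertices attached to the flat connection $\beta$ from the affine ADE diagram $\overline{\Delta}_\Gamma$ leaves a disjoint union of finite-type ADE diagrams, and every finite-type ADE diagram is $\Delta_{\Gamma'}$ for a unique $\Gamma'$ by the McKay correspondence; nonemptiness of $\Delta_\Har$, hence nontriviality of $\Gamma'$, follows from $\alpha\neq\beta$.

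For~(iii) and the Furthermore I would set $\Har_0\coloneqq\bigoplus_{j=1}^r(\dim_\C S_j)R_{\phi(j)}$ and compute $(2-Q)\Har_0=C\mathbf h_0$, where $\mathbf h_0$ is supported on $V(\Delta_\Har)$ with value $\dim_\C S_j$ at $\phi(j)$. At a vertex $\phi(i)\in V(\Delta_\Har)$: since $\Delta_\Har$ is an \emph{induced} subgraph of $\overline{\Delta}_\Gamma$ and $\mathbf h_0$ vanishes off $V(\Delta_\Har)$, the neighbours of $\phi(i)$ lying in $\beta$ contribute nothing, so $(C\mathbf h_0)_{\phi(i)}=\sum_{j=1}^r(2\delta_{ij}-a'_{ij})\dim_\C S_j$ — the value at $i$ of the finite Cartan matrix of $\Gamma'$ on the vector $(\dim_\C S_1,\dots,\dim_\C S_r)$. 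By the null-vector relation $C'\mathbf d'=0$ for the \emph{extended} diagram of $\Gamma'$, with $\mathbf d'=(1,\dim_\C S_1,\dots,\dim_\C S_r)$, this equals $-C'_{i0}=a'_{i0}$, and $a'_{i0}$ — the multiplicity of $S_0=\C$ in $Q'\otimes S_i$ — is the multiplicity of $S_i$ in $Q'$ because $Q'\cong(Q')^*$. Writing $Q'=m_1S_{i_1}\oplus m_2S_{i_2}$, this says that $C\mathbf h_0$ restricted to $V(\Delta_\Har)$ is the coefficient vector $m_1\mathbf e_{\phi(i_1)}+m_2\mathbf e_{\phi(i_2)}$. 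On the vertices of $\beta$, $C\mathbf h_0$ is $\le 0$, and it vanishes on all other vertices; hence $(2-Q)\Har_0=\bigl(m_1R_{\phi(i_1)}\oplus m_2R_{\phi(i_2)}\bigr)-\nu$ with $\nu\ge 0$ supported on the vertices of $\beta$.

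It remains to identify $\nu$ with $[\beta]$ and $m_1R_{\phi(i_1)}\oplus m_2R_{\phi(i_2)}$ with $[\alpha]$; this is the substance of both~(iii) and the Furthermore, the second identification being the Furthermore itself. Granting both, $(2-Q)\Har_0=\alpha-\beta$, so $\Har_0$ differs from the minimal-positive solution $\Har$ by an element of $\ker(2-Q)=\Z R$ (Lemma~\ref{R(G)-equation}); since $\eps(\Har_0)=\sum_{j\ge1}(\dim_\C S_j)^2=|\Gamma'|-1$ lies strictly between $0$ and $|\Gamma|$ — using that $\Delta_{\Gamma'}$ is a proper subdiagram of $\overline{\Delta}_\Gamma$ — the least positive value in $\eps(\Har_0)+\Z|\Gamma|$ is $\eps(\Har_0)$, so $\Har_0=\Har$, which is~(iii). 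Finally, in $[\alpha]=m_1\mathbf e_{\phi(i_1)}+m_2\mathbf e_{\phi(i_2)}$ the representations $R_{\phi(i_1)},R_{\phi(i_2)}$ are automatically distinct, or coincide with common multiplicity $2$, exactly as $S_{i_1},S_{i_2}$ are for $Q'$; so this is the isotypical decomposition of $\alpha$, and $\phi$ carries the vertex or vertices of $Q'$ onto those of $\alpha$.

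The step I expect to be the main obstacle is precisely the identification of $\nu$ with $[\beta]$ and of $m_1R_{\phi(i_1)}\oplus m_2R_{\phi(i_2)}$ with $\alpha$: one must control exactly which vertices of $\Delta_\Har=\Delta_{\Gamma'}$ are adjacent in $\overline{\Delta}_\Gamma$ to the removed vertices of $\beta$, and with which marks. The uniform route I would try is to analyse the connected subdiagram $\Xi$ of $\overline{\Delta}_\Gamma$ spanned by $\Delta_\Har$ together with the adjacent vertices of $\beta$: being a connected proper subdiagram of an affine ADE diagram it is of finite ADE type, obtained from $\Delta_{\Gamma'}$ by attaching one or two leaves, and the constraint $C\mathbf d=0$ localised on $\Xi$ should force the $\beta$-vertices onto the $\phi$-images of the vertices of $Q'$ with multiplicities $m_1,m_2$ — so that $m_1R_{\phi(i_1)}\oplus m_2R_{\phi(i_2)}$ is the unique vertex of $\MC{S}_\Gamma$ lying in $\Delta_\Har$ and adjacent to $\beta$, hence equal to $\alpha$ by Definition~\ref{Def-Adjacency} and the tree structure of $\overline{\Delta}_\Gamma/(\iota)$ in Lemma~\ref{Tree-Lemma}. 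If this turns out to be awkward in some family, the fallback is the finite case-by-case verification over the five families using the explicit quotient graphs and lists of $1$-dimensional quaternionic representations in Appendix~A, which is the route Austin indicates; the rest of the argument — the null-vector bookkeeping of the second paragraph — is routine.
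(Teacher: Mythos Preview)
The paper gives no proof of this proposition; it simply records that Austin states one may verify it by a case-by-case analysis over the five families. Your Cartan-matrix framework is therefore a genuine addition: identifying $(2-Q)$ with $C=2I-A$ and computing $(C\mathbf h_0)|_{V(\Delta_\Har)}$ via the null-vector relation $C'\mathbf d'=0$ for $\overline{\Delta}_{\Gamma'}$ is correct and cleanly isolates the two identifications you flag as the real content.

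There is one concrete error. You claim $\eps(\Har_0)=\sum_{j\ge 1}(\dim_\C S_j)^2=|\Gamma'|-1$, but $\eps(R_{\phi(j)})=\dim_\C R_{\phi(j)}$, and the graph isomorphism $\phi$ need not preserve dimensions. In the paper's own example (Example following the proposition, $\Gamma=O^*$, $\alpha=\rho_4$, $\beta=\rho_5$) one has $\Gamma'=D_4^*$ with $|\Gamma'|-1=15$, whereas $\eps(\Har_0)=\dim_\C\Har=24$. The minimality argument is easily repaired: $\Har_0$ is an actual representation supported on $V(\Delta_\Har)$, which omits the vertices of $\beta$, so $\Har_0-R$ has a strictly negative coefficient there and is not actual. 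This is exactly the argument the paper uses in the proof of Corollary~\ref{R(G)-Sol-Corollary}.

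Your proposed uniform route via the subdiagram $\Xi$ is vaguer than the rest and, as stated, does not seem to go through. In the same $O^*$ example, the vertex $\rho_5$ of $\beta$ is adjacent in $\overline{\Delta}_\Gamma$ to $\rho_7\in\Delta_\Har$, while $\phi$ sends the $Q'$-vertex $\tau_1$ of $D_4^*$ to $\rho_4$; thus $\Xi\cong D_7$ rather than $\overline{\Delta}_{\Gamma'}=\wt D_6$, and the attached $\beta$-leaf does not land on the $\phi$-image of the $Q'$-vertex. So the identification $P=\alpha$, $\nu=\beta$ really does appear to require the case analysis you offer as fallback, which is precisely what the paper (and Austin) do; your null-vector bookkeeping remains a useful uniform reduction.
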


\begin{remark} The isotypical decomposition of the canonical representation
$Q$ of a finite subgroup $\Gamma\subset \om{SU}(2)$ is $Q=Q$ for
$\Gamma$ non-cyclic and $Q=\rho\oplus \rho^*$ for some $1$-dimensional
representation $\rho$ for $\Gamma$ cyclic. \end{remark}

\begin{corollary} \label{R(G)-Sol-Corollary} 
Let $\Gamma\subset \om{SU}(2)$ be a finite subgroup
and let $\alpha\neq \beta$ be $\om{SU}(2)$-representations that
are not adjacent in $\MC{S}_\Gamma$. Let $\alpha=\alpha_0,\alpha_1,\cdots,\alpha_s=\beta$ be the vertices of the minimal edge path connecting
$\alpha$ to $\beta$ in $\MC{S}_\Gamma$ and let
$\Har_i\in R(\Gamma)$ be the minimal positive solution of the equation
$(2-Q)\Har_i = \alpha_i-\alpha_{i+1}$ for $0\leq i\leq s-1$. Then
$\Har \coloneqq \bigoplus_{i=0}^{s-1}\Har_i$ is the minimal positive
solution $(2-Q)\Har = \alpha-\beta$ in $R(\Gamma)$. \end{corollary}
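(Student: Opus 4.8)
The plan is to show first that $\Har=\bigoplus_{i=0}^{s-1}\Har_i$ is an actual representation solving the equation, and then to see that it has the smallest possible augmentation by exhibiting an irreducible representation with multiplicity zero in $\Har$. Since consecutive vertices of a minimal edge path are by definition adjacent in $\MC{S}_\Gamma$, Proposition \ref{R(G)-Adjacent-Solution} applies to each pair $(\alpha_i,\alpha_{i+1})$, so every $\Har_i$ is an actual representation with $\eps(\Har_i)=k_i>0$. Consequently $\Har$ is an actual representation, and a telescoping sum gives
\[ (2-Q)\Har=\sum_{i=0}^{s-1}(2-Q)\Har_i=\sum_{i=0}^{s-1}(\alpha_i-\alpha_{i+1})=\alpha-\beta, \]
with $\eps(\Har)=\sum_{i=0}^{s-1}k_i>0$. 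So $\Har$ is a positive solution, and the whole content of the statement is its minimality.

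For the minimality I would reduce to a statement about a single irreducible. By Lemma \ref{R(G)-equation} every solution of $(2-Q)X=\alpha-\beta$ is of the form $\Har+nR$ with $n\in\Z$, where $R\cong\C[\Gamma]$ is the regular representation. Since $R=\bigoplus_j(\dim_\C R_j)R_j$ has strictly positive multiplicity at every irreducible, $\Har+nR$ is an actual representation for all $n\ge 0$, whereas for $n<0$ it fails to be one as soon as there is a single irreducible $R_v$ that occurs in $\Har$ with multiplicity zero. As $\eps(\Har+nR)=\eps(\Har)+n\,|\Gamma|$, it therefore suffices to produce one irreducible $R_v$ with multiplicity zero in $\Har$; then $\Har$ is the actual-representation solution of least augmentation, as claimed.

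The natural candidate is an irreducible $R_v$ attached to a vertex $v$ of $\beta=\alpha_s$ in $\ovl{\Delta}_\Gamma$. By Proposition \ref{R(G)-Adjacent-Solution}(i)--(iii) the representation $\Har_i$ is a non-negative combination of the irreducibles attached to the vertices of $\Delta_{\Har_i}$, the connected component of the graph obtained from $\ovl{\Delta}_\Gamma$ by deleting the vertex or vertices of $\alpha_{i+1}$ that contains $\alpha_i$; in particular any irreducible not attached to a vertex of $\Delta_{\Har_i}$ has multiplicity zero in $\Har_i$. Hence $R_v$ has multiplicity zero in $\Har=\bigoplus_i\Har_i$ provided $v$ lies in none of the $\Delta_{\Har_i}$, i.e.\ provided for each $i$ the vertex or vertices of $\alpha_{i+1}$ disconnect $\alpha_i$ from $v$ in $\ovl{\Delta}_\Gamma$. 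For $i=s-1$ this is immediate, since $v$ is then one of the deleted vertices. For $i<s-1$ I would pass to the quotient $\ovl{\Delta}_\Gamma/(\iota)$, which by Lemma \ref{Tree-Lemma} is a tree apart from the exceptional family $\Gamma=C_l$ with $l$ odd: in the tree, Definition \ref{Def-Adjacency} forces the unique edge path from $\alpha_i$ to $\beta$ to be the concatenation of the segments joining consecutive $\alpha_j$, hence to pass through $\alpha_{i+1}$, so that removing $\alpha_{i+1}$ separates $\alpha_i$ from $\beta$; lifting this separation back to $\ovl{\Delta}_\Gamma$ yields the required disconnection, and the lone cyclic case is read off directly from the diagram in Lemma \ref{Tree-Lemma}. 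Combining, $R_v$ has multiplicity zero in $\Har$, and the reduction in the previous paragraph finishes the proof.

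I expect the last step to be the real obstacle: reconciling adjacency in $\MC{S}_\Gamma$, which is phrased on the quotient graph $\ovl{\Delta}_\Gamma/(\iota)$, with a separation statement on $\ovl{\Delta}_\Gamma$ itself requires spelling out carefully how the separation lifts, keeping track of whether an $\iota$-orbit of vertices has size one or two, and the cyclic family has to be excepted from the tree argument. Everything else is formal manipulation with Lemma \ref{R(G)-equation} and Proposition \ref{R(G)-Adjacent-Solution}; alternatively, the separation claim could simply be verified case by case against the explicit graphs collected in Appendix A.
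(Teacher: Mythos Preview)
Your proposal is correct and follows essentially the same route as the paper: telescoping to verify $\Har$ is a solution, reduction via Lemma~\ref{R(G)-equation} to showing some irreducible has multiplicity zero in $\Har$, and the choice of an irreducible attached to $\beta=\alpha_s$ as the witness. The paper packages the separation argument slightly more efficiently by observing the nesting $\Delta_{\Har_0}\subset\Delta_{\Har_1}\subset\cdots\subset\Delta_{\Har_{s-1}}$ (from the construction and Lemma~\ref{Tree-Lemma}), so that it suffices to note $\beta\notin\Delta_{\Har_{s-1}}$, which is immediate; this avoids your separate case $i<s-1$ and the lifting discussion, but the underlying content is the same.
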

\begin{proof} The calculation
\[ (2-Q)\Har = \sum_{i=0}^{s-1} (2-Q)\Har_i = \sum_{i=0}^{s-1} \alpha_i-\alpha_{i+1} = \alpha_0-\alpha_s = \alpha-\beta  \]
shows that $\Har$ solves the relevant equation.
By Lemma \ref{R(G)-equation} every other solution is given by
$\Har + nR$ for some $n\in \Z$, where $R$ is the regular representation.
To see that $\Har$ is the minimal positive solution, it therefore suffices to show that $\Har-R$ is not positive, i.e., not an actual representation. 
Let $\Delta_i = \Delta_{\Har_i}$ be
the subgraph of $\ovl{\Delta}_\Gamma$ described in the above proposition.
By the construction of these graphs and Lemma \ref{Tree-Lemma} we
conclude that $\Delta_0\subset \Delta_1\subset \cdots \subset \Delta_{s-1}$.
This implies that the representation $\Har$ is only supported on the
vertices of $\Delta_{s-1}$, which by construction does not contain the
vertices corresponding to $\beta = \alpha_s$. Hence, in the isotypical
decomposition $\Har = \bigoplus_{i=0}^n k_iR_i$ there exists some
$i$ with $k_i=0$. We therefore conclude that $\Har-R$ is not an actual
representation. \end{proof}     

Let $\Gamma$, $\alpha$, $\beta$, $\Har$, $\Gamma'$ and $\phi\colon \Delta_{\Gamma'}\ra \Delta_\Har$ be defined as in Proposition 
\ref{R(G)-Adjacent-Solution} and set
\[ N = M^\Gamma= \om{Hom}_\Gamma(\Har\otimes Q,\Har) \oplus \om{Hom}_\Gamma(\Har,E_\infty) \oplus \om{Hom}_\Gamma(E_\infty,\Har), \]
as in Theorem \ref{Equivariant-ADHM}. This space is endowed with
a quaternionic structure and compatible metric preserved by the action
of $G\coloneqq U(\Har)^\Gamma$ as in Example \ref{Example-ADHM-eq}. Note that
$E_\infty=\alpha$.
Let $R'$ and $Q'$ denote the regular and canonical representation
of $\Gamma'$ respectively and set
\[ K = \om{Hom}_{\Gamma'}(Q'\otimes R',R') \;\mbox{ and } \;\;
F = U(R')^{\Gamma'}/U(1)  \]
as in Kronheimer's construction. The following is a more precise version
of \cite[Lemma~4.7]{Austin95}.  

\begin{proposition} In the above situation there is an isometric isomorphism
$f:N\ra K$ of quaternionic vector spaces and an isomorphism of groups
$\tau:G\ra F$ such that $f(g\cdot n) = \tau(g)f(n)$ for all
$n\in N$ and $g\in G$. 
\end{proposition}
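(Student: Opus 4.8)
The plan is to compute $N$ and $K$ explicitly by decomposing the representations into isotypic pieces, read the two answers off the McKay graphs, and then verify that the resulting linear identification is compatible with the metrics, the quaternionic structures and the group actions. The organising observation is that if we set $\Har'':=\bigoplus_{j\geq 1}(\om{dim}_\C S_j)S_j$, so that $R'\cong S_0\oplus\Har''$, then $\Har''$ ``is'' $\Har$ and $Q'$ ``is'' $E_\infty=\alpha$ under the vertex correspondence $\phi$: the rule $\bigoplus_{j\geq 1}V_j\otimes S_j\mapsto\bigoplus_{j\geq 1}V_j\otimes R_{\phi(j)}$ on multiplicity spaces carries $\Har''$ to $\Har$ and, by the last clause of Proposition \ref{R(G)-Adjacent-Solution} (together with the fact that $Q'$ contains no trivial summand, so $i_1,i_2\geq 1$), carries $Q'=m_1S_{i_1}\oplus m_2S_{i_2}$ to $\alpha=m_1R_{\phi(i_1)}\oplus m_2R_{\phi(i_2)}$; moreover, since $\phi$ is an isomorphism of the induced subgraphs $\Delta_{\Gamma'}\cong\Delta_\Har$, the adjacency numbers agree: $a_{\phi(j)\phi(j')}=a'_{jj'}$ for all $j,j'\geq 1$.

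First I would expand $K=\om{Hom}_{\Gamma'}(Q'\otimes R',R')$ along $R'=S_0\oplus\Har''$. Because $\om{Hom}_{\Gamma'}(Q'\otimes S_0,S_0)=\om{Hom}_{\Gamma'}(Q',S_0)=0$, this gives
\[
K\cong\om{Hom}_{\Gamma'}(Q'\otimes\Har'',\Har'')\oplus\om{Hom}_{\Gamma'}(Q',\Har'')\oplus\om{Hom}_{\Gamma'}(Q'\otimes\Har'',S_0),
\]
and $\om{Hom}_{\Gamma'}(Q'\otimes\Har'',S_0)\cong\om{Hom}_{\Gamma'}(\Har'',(Q')^*)\cong\om{Hom}_{\Gamma'}(\Har'',Q')$ since $\om{SU}(2)$-representations are self-dual. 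Comparing with $N=\om{Hom}_\Gamma(Q\otimes\Har,\Har)\oplus\om{Hom}_\Gamma(\Har,E_\infty)\oplus\om{Hom}_\Gamma(E_\infty,\Har)$ and recalling $E_\infty=\alpha$, the adjacency identity matches the first summands, $\om{Hom}_{\Gamma'}(\Har'',Q')\cong\om{Hom}_\Gamma(\Har,\alpha)$ matches the second, and $\om{Hom}_{\Gamma'}(Q',\Har'')\cong\om{Hom}_\Gamma(\alpha,\Har)$ matches the third; the last two are seen by reading off isotypic decompositions and using that $\om{dim}_\C\om{Hom}_{\Gamma'}(Q',S_{i_a})$ equals $m_a$, the multiplicity of $R_{\phi(i_a)}$ in $\alpha$, for $a=1,2$. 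This produces the $\C$-linear isomorphism $f\colon N\to K$, and the dimension count $\om{dim}_\C K=2|\Gamma'|=\om{dim}_\C N$ is a useful check.

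Next I would supply the group isomorphism and upgrade $f$. By Schur's lemma $U(R')^{\Gamma'}=\prod_k U(n'_k)$; since $n'_0=1$, the subgroup $U(1)$ of scalars projects isomorphically onto the $U(n'_0)$-factor, so $U(R')^{\Gamma'}$ is the internal direct product of this $U(1)$ and $\prod_{j\geq 1}U(n'_j)=U(\Har)^\Gamma$, whence $F=U(R')^{\Gamma'}/U(1)\cong U(\Har)^\Gamma=G$; this is $\tau$. For the metrics one fixes the invariant Hermitian metrics on $\Har,E_\infty,R',Q'$ compatibly with the isotypic correspondences above (each is unique up to a positive scalar on each isotypic piece), and then the induced trace metrics on the Hom-spaces, as well as the adjoint operations entering the two quaternionic structures, match, so $f$ is an isometry. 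Finally one inspects the quaternionic structures: the ADHM structure $J$ preserves $\om{Hom}(Q\otimes\Har,\Har)$, acting there by the invariant quaternionic structure $J_Q$ on $Q$ together with adjoints, and interchanges $\om{Hom}(\Har,E_\infty)$ with $\om{Hom}(E_\infty,\Har)$ via adjoints; transported to $K$, Kronheimer's structure likewise preserves $\om{Hom}_{\Gamma'}(Q'\otimes\Har'',\Har'')$, acting by $J_{Q'}$ together with adjoints, and interchanges $\om{Hom}_{\Gamma'}(Q'\otimes\Har'',S_0)$ with $\om{Hom}_{\Gamma'}(Q',\Har'')$ via adjoints. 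Under $f$ these patterns correspond summand by summand, so it only remains to see that the identification of the first summands intertwines $J_Q$ with $J_{Q'}$; as these structures are exactly what realise $a_{ij}=a_{ji}$ as canonical edge-reversal isomorphisms and $\phi$ respects edges, one fixes the unitary identifications on one orientation of each edge of the tree $\Delta_\Har\cong\Delta_{\Gamma'}$ and propagates them by edge-reversal. The relation $f(g\cdot n)=\tau(g)f(n)$ is then checked one summand at a time from the defining formulas.

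The main obstacle is this last point — making precise the compatibility of $f$ with the quaternionic structures, i.e. the interplay between the invariant quaternionic structure on the canonical representation and the McKay-graph identifications. It is probably cleanest to split off the case where $\Gamma'$ is non-cyclic (so $Q'$, hence $E_\infty=\alpha$, is irreducible) from the cyclic cases ($Q'=\rho\oplus\rho^*$, or $Q'=2\chi$ when $\Gamma'=C_2$), in which the off-diagonal summands take a slightly different form. The purely linear part — the isomorphisms $f$ and $\tau$ — is, by contrast, routine bookkeeping once the dictionary $R'\cong S_0\oplus\Har''\leftrightarrow(\Har,\alpha)$ has been established.
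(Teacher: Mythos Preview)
Your proposal is correct and follows essentially the same route as the paper: decompose $K$ along $R'=S_0\oplus S'$ (your $\Har''$), use self-duality of $Q'$ to rewrite $\om{Hom}_{\Gamma'}(Q'\otimes S',S_0)\cong\om{Hom}_{\Gamma'}(S',Q')$, match the three summands to those of $N$ via the graph isomorphism $\phi$ and the multiplicity correspondence for $Q'\leftrightarrow\alpha$, read off $\tau$ from the isotypic splitting of $U(R')^{\Gamma'}$, and make $f$ quaternionic by choosing unit basis vectors on one orientation of each edge and propagating via $J$. The paper proceeds identically, with only cosmetic differences (it writes $S'$ for your $\Har''$ and does not single out the cyclic-$\Gamma'$ case; the isotypical-decomposition argument already covers it uniformly).
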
 
\begin{proof} Let $S$ be the regular representation of $\Gamma'$ and
write $S=S_0\oplus S'$, where $S_0$ denotes the trivial representation. Then 
\[ K = \om{Hom}_{\Gamma'}(Q'\otimes S,S)
= \om{Hom}_{\Gamma'}(Q'\otimes S',S')\oplus \om{Hom}_{\Gamma'}(Q',S')
\oplus \om{Hom}_{\Gamma'}(S',Q')  \]
since $\om{Hom}_{\Gamma'}(Q',S_0)=0$ and $\om{Hom}_{\Gamma'}(Q'\otimes S',S_0)
\cong \om{Hom}_{\Gamma'}(S',Q')$ as $Q'$ is self-dual. 
We will now match these components with the components of $N$. Let
$n_i = \om{dim}_\C S_i$. Recall that the graph $\Delta_{\Gamma'}$ is the
graph obtained from $\overline{\Delta}_{\Gamma'}$ by deleting the
vertex corresponding to $S_0$. Using part $(iii)$ of the above proposition
we can write $\Har = \oplus_{i=1}^r n_i R_{\phi(i)}$ where
$\phi:\Delta_{\Gamma'}\ra \Delta_{\Har}$ is the isomorphism of graphs. 
Then
\begin{align} \label{Decomp1}
 &\om{Hom}_\Gamma(Q\otimes \Har,\Har) =
\bigoplus_{i\to j \mbox{ in } \Delta_{\Gamma'}} \om{Hom}_\C(\C^{n_i},\C^{n_j})\otimes \om{Hom}_\Gamma(Q\otimes R_{\phi(i)},R_{\phi(j)})  \\
& \cong \bigoplus_{i\to j \mbox{ in } \Delta_{\Gamma'}}\om{Hom}_\C(\C^{n_i},\C^{n_j})\otimes \om{Hom}_{\Gamma'}(Q'\otimes S_i,S_j) =\om{Hom}_\Gamma(Q'\otimes S',S'). \nonumber 
\end{align}
To make this an $\HH$-linear isometric isomorphism we specify the
isomorphisms
\begin{equation}\label{Hlinear}
\om{Hom}_\Gamma(Q\otimes R_{\phi(i)},R_{\phi(j)})\cong 
\C\cong \om{Hom}_{\Gamma'}(Q'\otimes S_i,S_j)
\end{equation}
for $i$ adjacent to $j$ in $\Delta_{\Gamma'}$ as follows. 
Choose an arbitrary orientation on the edges in $\Delta_{\Gamma'}$. Then
for each positive arrow $i\to j$ in $\Delta_{\Gamma'}$ we pick basis vectors
$u_{ij}\in \om{Hom}_\Gamma(Q\otimes R_{\phi(i)},R_{\phi(j)})$
and $v_{ij}\in \om{Hom}_{\Gamma'}(Q'\otimes S_i,S_j)$ of unit length.
Then $u_{ji}=Ju_{ij}\in \om{Hom}_\Gamma(Q\otimes R_{\phi(j)},R_{\phi(i)})$
and $v_{ji}=Jv_{ij}\in \om{Hom}_{\Gamma'}(Q'\otimes S_j,S_i)$
are also unit basis vectors. We then define the isomorphisms in
(\ref{Hlinear}) by sending $u_{ij}$ to $v_{ij}$
for each $i\to j$ in $\Delta_{\Gamma'}$. 

Next, by the final part of the above proposition we know that if
$Q'=m_1S_{i_1}\oplus m_2S_{i_2}$ is the isotypical decomposition, then
$\alpha=E_\infty = m_1R_{\phi(i_1)}\oplus m_2R_{\phi(i_2)}$ is the
isotypical decomposition of $\alpha$. We may therefore identify
\begin{align} \label{Decomp2}
&\om{Hom}_{\Gamma'}(S',Q')=\bigoplus_{j=1}^2
\om{Hom}_\C(\C^{n_{i_j}},\C^{m_j})\otimes
\om{Hom}_{\Gamma'}(S_{i_j},S_{i_j})   \\
&\cong \bigoplus_{j=1}^2 \om{Hom}_\C(\C^{n_{i_j}},\C^{m_j})\otimes
\om{Hom}_\Gamma(R_{\phi(i_j)},R_{\phi(i_j)})\cong \om{Hom}_\Gamma(\Har,E_\infty) \nonumber
\end{align}
and similarly $\om{Hom}_{\Gamma'}(Q',S')\cong \om{Hom}_\Gamma(E_\infty,\Har)$.
Once again one should match unit vectors carefully to ensure that
the direct sum of these isomorphisms is an $\HH$-linear isometry.
The direct sum of these isomorphisms gives the required $\HH$-linear
isometry $N\cong K$.

The (orthogonal) decomposition $\Har = \bigoplus_{i=1}^r n_iR_{\phi(i)}$ induces an isomorphism $G = U(\Har)^\Gamma \cong \prod_{i=1}^r U(n_i)$
and this group only acts on the matrix components in the decompositions
\eqref{Decomp1} and \eqref{Decomp2}. In the latter case it only acts
on factors coming from $\Har$. Moreover, 
\[ F \cong \left( \prod_{i=0}^r U(n_i) \right)/U(1)
\cong \prod_{i=1}^r U(n_i)  \]
as $n_0=1$, and the action can again be seen to be on the matrix component
in \eqref{Decomp1} and \eqref{Decomp2}, and in the latter case only on
the factors coming from $S'$. Therefore, the isomorphism
\[ F \cong \prod_{i=1}^r U(n_i) \cong G   \]
does the trick and the proof is complete. 
\end{proof} 

\begin{proposition} \label{Moduli-S4-Prop}
Let $\Gamma\subset \om{SU}(2)$ be a finite subgroup
and let $\alpha$ and $\beta$ be $\om{SU}(2)$-representations
adjacent in the graph $\MC{S}_\Gamma$. Let $E\ra S^4$
be the $\Gamma$-equivariant $\om{SU}(2)$-bundle with
$[E_N]=\alpha$, $[E_S]=\beta$ and minimal $k=c_2(E)[S^4]>0$. Then the moduli
space $\wt{\MC{M}}_{\Gamma,\infty}(E)$ of $\Gamma$-invariant instantons in $E$ framed at $N=\infty$ is given by 
\[ (\C^2-\{0\})/\Gamma'\cong \R\times S^3/\Gamma'   \]
where the finite subgroup $\Gamma'\subset \om{SU}(2)$ is determined as in Proposition \ref{R(G)-Adjacent-Solution}.
\end{proposition}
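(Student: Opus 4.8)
The plan is to simply concatenate the three mechanisms already assembled: the equivariant ADHM correspondence (Theorem \ref{Equivariant-ADHM}), the isometric identification of the pair $(N,G)$ with Kronheimer's pair $(K,F)$ furnished by the preceding proposition, and Kronheimer's computation of the hyper-K\"{a}hler quotient (Lemma \ref{Kronheimer-Quotient}). The one nontrivial point, which I will isolate below, is the identification of the ADHM regular locus with the complement of the origin in the moment-map level set.

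First I would check that Theorem \ref{Equivariant-ADHM} applies. By Proposition \ref{R(G)-Adjacent-Solution}\,(iii) the element $\Har\in R(\Gamma)$ solving $(2-Q)\Har=\alpha-\beta$ with minimal positive $\eps(\Har)=k$ is the honest representation $\bigoplus_{j=1}^r(\dim_\C S_j)R_{\phi(j)}$, so the theorem gives
\[ \wt{\MC{M}}_{\Gamma,\infty}(E)\cong (U_{reg}\cap \mu^{-1}(0)\cap M^\Gamma)/U(\Har)^\Gamma , \]
with $M^\Gamma$, $U(\Har)^\Gamma$ and $E_\infty=\alpha$ as there. Since the ADHM moment map $\mu$ is $\Gamma$-equivariant and $\Gamma$ acts trivially on $N:=M^\Gamma$, its restriction $\mu|_N$ takes values in $\mathfrak{u}(\Har)^\Gamma\otimes\om{Im}\HH$, and by Example \ref{Example-ADHM-eq} it is exactly the hyper-K\"{a}hler moment map $\mu_N$ for the residual action of $G:=U(\Har)^\Gamma$ on $N$ that vanishes at the origin (restriction of a moment map to a linear hyper-K\"{a}hler subspace is the moment map for the commutant action). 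Hence $\mu^{-1}(0)\cap M^\Gamma=\mu_N^{-1}(0)$.

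Next I would invoke the preceding proposition: it supplies an isometric isomorphism $f\colon N\to K$ of quaternionic vector spaces and a group isomorphism $\tau\colon G\to F$ with $f(g\cdot n)=\tau(g)f(n)$. Because $f$ preserves the metric and quaternionic structure and is $\tau$-equivariant, differentiating and plugging into the formula \eqref{HK-Moment-Map} shows that $f$ carries $\mu_N$ to Kronheimer's moment map $\mu_K$ on $K$ (under the Lie algebra identification induced by $\tau$); in particular $f$ restricts to a $\tau$-equivariant homeomorphism $\mu_N^{-1}(0)\to\mu_K^{-1}(0)$ and descends to a homeomorphism $\mu_N^{-1}(0)/G\cong\mu_K^{-1}(0)/F$, which equals $\C^2/\Gamma'$ by Lemma \ref{Kronheimer-Quotient}. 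Restricting to the open $G$-invariant subset $U_{reg}$ and its image gives
\[ \wt{\MC{M}}_{\Gamma,\infty}(E)\cong (f(U_{reg}\cap N)\cap \mu_K^{-1}(0))/F . \]
It therefore remains to prove that $f(U_{reg}\cap N)\cap\mu_K^{-1}(0)=\mu_K^{-1}(0)-\{0\}$; granting this, Remark \ref{Free-Action} says $F$ acts freely there and Lemma \ref{Kronheimer-Quotient} identifies the quotient with $(\C^2-\{0\})/\Gamma'$, which in turn is homeomorphic to $\R\times S^3/\Gamma'$ via the $\Gamma'$-equivariant diffeomorphism $(t,x)\mapsto e^tx$.

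The heart of the matter — and the step where I expect the real work — is thus the claim that a $\Gamma$-invariant solution of the ADHM equations in $N$ is regular precisely when it is nonzero. One inclusion is elementary: $A_z$ fails to be injective at $z=0$ as soon as $\Har\neq 0$, so $0\notin U_{reg}$. For the converse I would use the Uhlenbeck-type stratification of $\mu^{-1}(0)$: a nonzero non-regular point of $\mu_N^{-1}(0)$ decomposes as a $\Gamma$-invariant honest instanton of charge $0\le k'<k$ together with a nonempty $\Gamma$-invariant set of concentration points. If those points avoid the poles $N$ and $S$, the honest part is an invariant instanton on a $\Gamma$-equivariant $\om{SU}(2)$-bundle with invariants $(\alpha,\beta,k')$; since $k'\equiv k\Mod{|\Gamma|}$ by Theorem \ref{Gamma-Bundles}, minimality of $k$ forces $k'=0$, so the background is flat and $\alpha=\beta$, contradicting that $\alpha$ and $\beta$ are distinct vertices of $\MC{S}_\Gamma$. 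The case where concentration points sit at $N$ or $S$ requires a parallel but more careful bookkeeping of how the fixed-point representations change under bubbling; this is the delicate point, handled in Austin's treatment only by a case-by-case inspection parallel to the one underlying Proposition \ref{R(G)-Adjacent-Solution} and Corollary \ref{R(G)-Sol-Corollary}.
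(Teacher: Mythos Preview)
Your overall strategy matches the paper's: reduce via the equivariant ADHM correspondence to $(U_{reg}\cap\mu_N^{-1}(0))/G$, transport to Kronheimer's setup via the preceding proposition, and invoke Lemma \ref{Kronheimer-Quotient}. The paper also verifies explicitly that $\mu|_N=\mu_N$, as you do.

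The divergence is in the key step $U_{reg}\cap\mu_N^{-1}(0)=\mu_N^{-1}(0)\setminus\{0\}$. You propose a geometric argument via Uhlenbeck bubbling: a non-regular ADHM datum should correspond to an ideal instanton, and then minimality of $k$ together with the congruence from Theorem \ref{Gamma-Bundles} rules out bubbling away from the poles. But you leave bubbling at the poles unresolved, deferring to a case-by-case inspection. This is a genuine gap: at a fixed point the fibre representation can change under bubbling, so the congruence $k'\equiv k\pmod{|\Gamma|}$ no longer applies directly, and you would need a separate analysis of which triples $(\alpha',\beta',k')$ can arise---which is not obviously easier than what you are trying to prove. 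You are also implicitly using the nontrivial identification of non-regular ADHM solutions with ideal instantons.

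The paper bypasses all of this with a short linear-algebraic argument using only what is already on the table. Given $m=(\tau_1,\tau_2,\pi,\sigma)\in\mu_N^{-1}(0)\setminus U_{reg}$, pick $z\in\C^2$ and $0\neq h\in\om{Ker}A_z$. From $\mu_\R(m)=0$ one checks $A_z^*A_z=B_zB_z^*$, hence $\om{Ker}A_z=\om{Ker}B_z^*$; thus $h$ is a simultaneous eigenvector of $\tau_1,\tau_2,\tau_1^*,\tau_2^*$ lying in $\om{Ker}\pi\cap\om{Ker}\sigma^*$. The $\Gamma$-equivariance relations \eqref{Tau-Action} show each $\gamma\cdot h$ has the same property (with rotated eigenvalues), so the $\Gamma$-invariant span $L=\om{Span}_\C\{\gamma\cdot h:\gamma\in\Gamma\}$ is stable under all of $\tau_i,\tau_i^*$ and annihilated by $\pi,\sigma^*$. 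The orthogonal splitting $\Har=L\oplus L^\perp$ is then preserved by the ADHM data, and the element $g\in U(\Har)^\Gamma$ given by a scalar $\lambda\neq 1$ on $L$ and the identity on $L^\perp$ fixes $m$. Transporting via $\tau\colon G\to F$, this contradicts the freeness of the $F$-action on $\mu_K^{-1}(0)\setminus\{0\}$ (Remark \ref{Free-Action}), forcing $m=0$. This handles all cases uniformly with no appeal to compactness or bubbling.
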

\begin{proof} We continue to use the notation of the two preceding
propositions. According to the equivariant ADHM correspondence the
moduli space $\wt{\MC{M}}_{\Gamma,\infty}(E)$ is given by
\begin{equation} \label{Space-ADHM}
 \frac{\mu^{-1}(0)\cap U_{reg}\cap M^\Gamma}{U(\Har)^\Gamma},
\end{equation}
where in the above notation $N=M^\Gamma$ and $G=U(\Har)^\Gamma$. 
Let $\mu_N\colon N\ra \fg\otimes \om{Im}\HH$ and $\mu_K\colon K\ra \ff\otimes \om{Im}\HH$ be the unique hyper-K\"{a}hler
moment maps vanishing at $0$ associated with the action of
$G$ on $N$ and $F$ on $K$. Due to the uniqueness of the moment map it
follows that the isomorphisms $f\colon N\ra K$ and $\psi\colon G\ra F$ of the previous proposition induce a homeomorphism $\mu_N^{-1}(0)\cong \mu_K^{-1}(0)$
equivariant along the map $\psi$. The group $F$ acts freely on
$\mu_K^{-1}(0)-\{0\}$ (see Remark \ref{Free-Action}), so by Lemma
\ref{Kronheimer-Quotient} we obtain isometries of hyper-K\"{a}hler $4$-manifolds
\[ \frac{\mu_N^{-1}(0)-\{0\}}{G} \cong \frac{\mu_K^{-1}(0)-\{0\}}{F}
\cong \frac{\C^2-\{0\}}{\Gamma'}  .\]
To complete the proof we have to show that the left hand side
coincides with the space in (\ref{Space-ADHM}). This will be achieved by
proving that
\begin{enumerate}[label=(\arabic*),ref=(\arabic*)]
\item $\mu_N$ is the restriction of $\mu$ to $N=M^\Gamma\subset M$,
so that $\mu_N^{-1}(0) = \mu^{-1}(0)\cap N$ and
\item $U_{reg}\cap \mu_N^{-1}(0)=\mu_N^{-1}(0)-\{0\}$.
\end{enumerate}
We may decompose an element $m\in M^\Gamma$ as
\[ m = (\tau_1,\tau_2,\pi,\sigma)\in \om{Hom}_\Gamma(Q\otimes \Har,\Har)\oplus
\om{Hom}_\Gamma(\Har,E_\infty)\oplus \om{Hom}_\Gamma(E_\infty,\Har), \]
where we regard $Q=\C^2$, as earlier. Here $\pi$ and $\sigma$ are $\Gamma$-equivariant, while $\tau_1,\tau_2$ satisfy the properties
(see \cite[Eq.~2.2]{Kronheimer89})
\begin{equation} \label{Tau-Action}
\begin{array}{c} \gamma^{-1}\tau_1 \gamma = a\tau_1+b\tau_2 \\
\gamma^{-1}\tau_2 \gamma = -\overline{b}\tau_1+\overline{a}\tau_2
                    \end{array} 
\;\; \mbox{ for all } \;\;
\gamma = \left( \begin{array}{cc} a & b \\ -\overline{b} & \overline{a}
\end{array} \right) \in \Gamma \subset \om{SU}(2),
\end{equation} 
where on the left hand side $\gamma$ is regarded as an isometry $\Har\ra \Har$.
Using the above description and the moment map equations \eqref{ADHM-eq}
\begin{align*}
\mu_\C(\tau_1,\tau_2,\pi,\sigma)=&
[\tau_1,\tau_2]+\sigma \pi \\
\mu_\R(\tau_1,\tau_2,\pi,\sigma)=&[\tau_1,\tau_1^*]+[\tau_2,\tau_2^*]
+\sigma \sigma^*-\pi^*\pi \end{align*}
one easily verifies that $\mu_\R$ and $\mu_\C$ map $N=M^\Gamma$ into
$\mathfrak{u}(\Har)^\Gamma$ and $\om{End}_\Gamma(\Har)$, respectively. By the
uniqueness of the moment map it follows that $\mu_N$ is the restriction
of $\mu$ to $N=M^\Gamma$. This proves the first point.

Before we proceed to the second point recall that $U_{reg}\subset M$ was defined to be the set of $m=(\tau_1,\tau_2,\pi,\sigma)$ for which 
\begin{align*} A_z &=(\tau_1-z_1,\tau_2-z_2,\pi)^t\colon \Har\oplus \Har\oplus E_\infty \ra \Har \\
B_z&=(-(\tau_2-z_2),\tau_1-z_1,\sigma)\colon \Har\oplus\Har\oplus E_\infty \ra \Har
\end{align*}
were injective and surjective, respectively, for all $z=(z_1,z_2)\in \C^2$
(see \eqref{ADHM-sequence}). Observe that the equation $\mu_\R(m)=0$ may be rewritten as
\[ \tau_1\tau_1^*+\tau_2\tau_2^*+\sigma\sigma^*=\tau_1^*\tau_1+\tau_2^*\tau_2+\pi^*\pi, \]
or equivalently $B_0B_0^*=A_0^*A_0$. From this it follows that
\[ \om{Ker}(A_0)=\om{Ker}(A_0^*A_0)=\om{Ker}(B_0B_0^*)=\om{Ker}(B_0^*). \]
Hence, $A_0$ is injective if and only if $B_0$ is surjective.
In view of the easily verified fact that for any $(z_1,z_2)\in \C^2$ we have
$\mu(\tau_1-z_1,\tau_2-z_2,\pi,\sigma)=\mu(\tau_1,\tau_2,\pi,\sigma)$, the
above argument applies to show that if $m\in \mu^{-1}(0)$, then for
any $z\in \C^2$, $A_z$ is injective if and only if $B_z$ is surjective.   

We may now verify the second claim as follows. As $U_{reg}\cap \mu_N^{-1}(0)\subset \mu_N^{-1}(0)-\{0\}$, it suffices to show that
if $m=(\tau_1,\tau_2,\pi,\sigma) \in \mu_N^{-1}(0)-U_{reg}$ then
$m=0$. To this end assume that $m\in \mu_N^{-1}(0)-U_{reg}$. In view of the
above considerations this means that there exists $z= (z_1,z_2)\in \C^2$
and $h\neq 0\in \Har$ such that $A_z(h)=0$. Then $h$ belongs to $\om{Ker}\pi$ and is a common eigenvector for 
$\tau_1,\tau_2$ with eigenvalues $z_1,z_2$ respectively. Using the
equations in \eqref{Tau-Action} we deduce that $\gamma\cdot h$ is a common eigenvector for $\tau_1,\tau_2$ with eigenvalues
\[ \left( \begin{array}{c} z_1^\gamma \\ z_2^\gamma \end{array} \right)\coloneqq
\left( \begin{array}{cc} a & b \\ -\overline{b} & \overline{a}
\end{array} \right) \left( \begin{array}{c} z_1 \\ z_2 \end{array} \right)
\;\; \mbox{ for all } \;\; 
\gamma = \left( \begin{array}{cc} a & b \\ -\overline{b} & \overline{a}
\end{array} \right) \in \Gamma \subset \om{SU}(2)  .\]
Let $L \coloneqq \om{Span}_\C \{\gamma h:\gamma \in \Gamma\}$. This is a nontrivial $\Gamma$-invariant subspace of $\Har$ satisfying
$\tau_i(L)\subset L$ for $i=1,2$ and $\pi(L)=0$. Moreover, since
$\gamma\cdot h \in \om{Ker}(A_{(z_1^\gamma,z_2^\gamma)})=\om{Ker}(B_{(z_1^\gamma,z_2^\gamma)}^*)$, it follows that
$\tau_i^*(\gamma h)=\overline{z_i^\gamma} h$ for $i=1,2$ and
$\sigma^*(\gamma h) = 0$. Therefore, $\tau_i^*(L)\subset L$ for $i=1,2$
and $\sigma^*(L)=0$ as well. Hence, 
$\tau_i(L^\perp)\subset L^\perp$ for $i=1,2$ and $\sigma(E_\infty)\subset L^\perp$.

To summarize, we have proved that there is an orthogonal splitting of 
$\Gamma$-modules $\Har = L\oplus L^\perp$ such that $\tau_i = \tau_i'\oplus \tau_i''$ preserves the splitting for $i=1,2,$, $\pi(L)=0$ and $\om{Im}(\sigma)\subset L^\perp$. Pick $\lambda\neq 1\in U(1)$ and define
$g\in U(\Har)^\Gamma$ by setting $g = \lambda \om{id}$ on $L$ and
$g = \om{id}$ on $L^\perp$. Then $(g\tau_1g^{-1},g\tau_2g^{-1},\pi g^{-1},g\sigma)=(\tau_1,\tau_2,\pi,\sigma)$, so as $g\neq 1$ and
$U(\Har)^\Gamma$ acts freely on $\mu_N^{-1}(0)-\{0\}$ we conclude that
$m = 0$ as desired.  
\end{proof}

\begin{proof}[Proof of Proposition \ref{Moduli-Result}] Given an
adjacent pair $\alpha,\beta\in \MC{C}$ let $E\ra S^4$ be the unique
$\Gamma$-equivariant $\om{SU}(2)$-bundle with $[E_N]=\alpha$ and $[E_S]=\beta$
and minimal $c_2(E)[S^4]>0$. Then by the above proposition
the moduli space $\wt{\MC{M}}_{\Gamma,\infty}(E)$ of $\Gamma$-invariant instantons framed at $\infty =N$ may be identified with $\R\times Y_{\Gamma'}$ for
some other finite subgroup $\Gamma'\subset \om{SU}(2)$. Using the correspondence between invariant instantons in $E\ra S^4$ and instantons
over $\R\times \overline{Y}_\Gamma$, taking the framing and orientation
reversal into account, one obtains
\[\R\times Y_{\Gamma'} \cong e_{-}^{-1}(*) \subset \wt{\MC{M}}_z(\alpha,\beta), \]
where $e_-\colon \wt{\MC{M}}_z(\alpha,\beta)\ra \alpha$ is the end-point map and $z$ is the homotopy class from $\alpha$ to $\beta$ corresponding to the relative Chern number $\hat{c}_2=c_2(E)/|\Gamma |$. According to 
\cite{Austin95} the natural translation action on the left hand side corresponds to the translation action on the right hand side. Hence,
\[ Y_{\Gamma'}\cong e_{-}^{-1}(*) \subset \wt{\MC{M}}_z^0(\alpha,\beta).  \]
Since the end-point map is $\om{SO}(3)$-equivariant it follows that
$\wt{\MC{M}}_z^0(\alpha,\beta) = e_{-}^{-1}(*)\cdot \om{SO}(3)$ is compact.
Therefore, $\wt{\MC{M}}_z^0(\alpha,\beta)=\ovl{\MC{M}}_z(\alpha,\beta)$,
and as $\om{gr}_z(\alpha,\beta)=\om{dim}(e_{-}^{-1}(*))+1=4$, we conclude
that $z$ is indeed the unique homotopy class with $-2\leq \om{gr}_z(\alpha,\beta)\leq 5$. 

To complete the proof we have to verify that $\ovl{\MC{M}}_z(\alpha,\beta)$
is empty  when $\alpha$ and $\beta$ are not adjacent in $\MC{S}_\Gamma$. Let
$\alpha = \alpha_0,\alpha_1,\cdots,\alpha_s=\beta$ be the vertices of
the minimal edge path connecting $\alpha$ to $\beta$ in $\MC{S}_\Gamma$.
Let $z_i$ denote the unique homotopy class connecting $\alpha_i$ to
$\alpha_{i+1}$ with $\om{gr}_{z_i}(\alpha_i,\alpha_{i+1})=4$ for
each $0\leq i<s$. Let $\Har_i$ be the minimal positive solution of
$(2-Q)\Har_i = \alpha_i-\alpha_{i+1}$ for $0\leq i<s$. By Corollary
\ref{R(G)-Sol-Corollary} the minimal positive solution of
$(2-Q)\Har = \alpha-\beta$ is given by $\Har = \bigoplus_{i=0}^{s-1}\Har_i$.
Put $k = \eps(\Har)$ and let $E^l\ra S^4$ denote the $\Gamma$-equivariant
$\om{SU}(2)$-bundle with invariants $([E^l_N],[E^l_S],c_2(E^l)[S^4])=(\alpha,\beta,k+|\Gamma|l)$ for $l\in \Z$. The representation $\Har_l$ used in 
the description of the moduli space $\wt{\MC{M}}_{\Gamma,\infty}(E^l)$
in the equivariant ADHM correspondence is then given by
$\Har_l = \Har + lR$, where $R$ is the regular representation. The dimension
of the moduli space increases linearly with $l$ so, as $\Har_0 = \Har$ is the
minimal positive solution of $(2-Q)\Har = \alpha-\beta$, we conclude that
$\wt{\MC{M}}_{\Gamma,\infty}(E^l)$ is empty for $l<0$. This space
corresponds to the fiber $e_-^{-1}(*)\subset \wt{\MC{M}}_w(\alpha,\beta)$ where
$w=z_0*z_1*\cdots *z_{s-1}$. By Theorem \ref{Moduli-Theorem} part $(a)$ we
have
\[ \om{gr}_w(\alpha,\beta)=\sum_{i=0}^{s-1} \om{gr}_{z_i}(\alpha_i,\alpha_{i+1}) = 4s >4 ,\]
since by assumption $s>1$. Consequently, the fiber  $e_-^{-1}(*)\subset \wt{\MC{M}}_z(\alpha,\beta)$ where $z$ is the unique homotopy class with $\om{gr}_z(\alpha,\beta)=0,4$, corresponds to $\wt{\MC{M}}_{\Gamma,\infty}(E^l)$
for some $l<0$ and must therefore be empty. This implies that the whole
moduli space must be empty and the proof is complete. 
\end{proof}

\subsection{The Structure of 
\texorpdfstring{$DCI(\ovl{Y}_\Gamma)$}{DCIYG}}
In this section we determine the complexes $DCI(\ovl{Y}_\Gamma)$
explicitly. Let $\Gamma\subset \om{SU}(2)$ be a finite subgroup and
let $\MC{C}$ denote the set of gauge equivalence classes of flat connections in the trivial $\om{SU}(2)$-bundle over $\ovl{Y}_\Gamma$. In this section
we make the convention that the relative and absolute gradings 
\[ \om{gr}\colon \MC{C}\times \MC{C}\ra \Z/8 \;\; \mbox{ and } \;\;
j\colon \MC{C}\ra \Z/8,  \]
introduced after Theorem \ref{Moduli-Theorem}, are always defined with respect to the orientation $\ovl{Y}_\Gamma$ of $S^3/\Gamma$. Furthermore, 
the vertices of the graph $\MC{S}_\Gamma$, introduced in Definition \ref{Def-Adjacency}, i.e., the set of isomorphism classes of $\om{SU}(2)$-representations of $\Gamma$, will be identified freely with the set
$\MC{C}$.  

\begin{lemma} \label{Indexing-Lemma}
For any pair $\alpha,\beta \in \MC{C}$ that are adjacent in $\MC{S}_\Gamma$ 
it holds true that $\om{gr}(\alpha,\beta)=4$. For each $\alpha\in \MC{C}$ let
$p(\alpha)$ denote the number of edges in the minimal edge path connecting the
trivial representation $\theta$ to $\alpha$. Then $j(\alpha)\equiv 4p(\alpha)\Mod{8}$ for all $\alpha\in \MC{C}$.
\end{lemma}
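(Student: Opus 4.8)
The plan is to obtain both statements as formal consequences of Proposition \ref{Moduli-Result} together with the axioms for the relative grading recorded in Theorem \ref{Moduli-Theorem}(a); no new geometric input is needed.

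For the first statement, let $\alpha,\beta\in\MC{C}$ be adjacent in $\MC{S}_\Gamma$ and let $z$ be the unique homotopy class with $-2\le \om{gr}_z(\alpha,\beta)\le 5$. By Proposition \ref{Moduli-Result} the fiber $e_-^{-1}(*)\subset\ovl{\MC{M}}_z(\alpha,\beta)$ is identified with $Y_{\Gamma'}$ for some finite subgroup $\Gamma'\subset\om{SU}(2)$, and is in particular a closed $3$-manifold. Since $\om{SO}(3)$ acts transitively on the critical orbit $\alpha$ and $e_-$ is $\om{SO}(3)$-equivariant, $\ovl{\MC{M}}_z(\alpha,\beta)$ is (the total space of) a fiber bundle over $\alpha$ with fiber $e_-^{-1}(*)$, so by the dimension formula \eqref{Dim-Formula} we get $3=\dim e_-^{-1}(*)=\dim\ovl{\MC{M}}_z(\alpha,\beta)-\dim\alpha=\om{gr}_z(\alpha,\beta)-1$. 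Hence $\om{gr}_z(\alpha,\beta)=4$, and therefore $\om{gr}(\alpha,\beta)=4$ in $\Z/8$. (This computation is in fact already carried out inside the proof of Proposition \ref{Moduli-Result}, so it may simply be quoted.)

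For the second statement, fix $\alpha$ and let $\theta=\alpha_0,\alpha_1,\dots,\alpha_{p(\alpha)}=\alpha$ be the vertices of the minimal edge path connecting $\theta$ to $\alpha$ in $\MC{S}_\Gamma$; such a path exists and is unique by Lemma \ref{Tree-Lemma}. Each consecutive pair $\alpha_i,\alpha_{i+1}$ is adjacent, so by the first part $\om{gr}(\alpha_{i+1},\alpha_i)=4$ in $\Z/8$. Using $j(\alpha)=\om{gr}(\alpha,\theta)=\om{gr}(\alpha_{p(\alpha)},\alpha_0)$ and applying the additivity of $\om{gr}$ under concatenation of paths (Theorem \ref{Moduli-Theorem}(a), reduced mod $8$) repeatedly, we telescope:
\[ j(\alpha)=\sum_{i=0}^{p(\alpha)-1}\om{gr}(\alpha_{i+1},\alpha_i)=4p(\alpha)\Mod{8}, \]
which is the claim.

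I do not anticipate a genuine obstacle: all the geometric content — namely that the fiber $e_-^{-1}(*)$ over an adjacent pair is $3$-dimensional — is already supplied by Proposition \ref{Moduli-Result}, and the rest is bookkeeping with the grading axioms. The one minor point to keep in mind is that $\om{gr}(-,-)$ is antisymmetric only up to sign, but this is irrelevant here since $-4\equiv 4\Mod{8}$, so the chosen orientation of each edge of the path does not affect the computation.
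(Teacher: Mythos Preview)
Your proof is correct and follows essentially the same approach as the paper: both use Proposition \ref{Moduli-Result} to identify the fiber $e_-^{-1}(*)$ as a $3$-manifold, apply the dimension formula \eqref{Dim-Formula} to get $\om{gr}_z(\alpha,\beta)=4$, and then telescope along the minimal edge path using additivity of the grading. Your extra remarks about the fiber-bundle structure and the antisymmetry of $\om{gr}$ are fine but not needed for the argument.
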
 
\begin{proof} Suppose that $\alpha$ and $\beta$ are adjacent in $\MC{S}_\Gamma$ and let $z$ be the unique homotopy class with $-2\leq \om{gr}_z(\alpha,\beta)\leq 5$. Then according to Theorem \ref{Moduli-Result} the fiber of $e_-\colon \ovl{\MC{M}}_z(\alpha,\beta)\ra \alpha$ has dimension $3$. 
Using the dimension formula \eqref{Dim-Formula} we deduce that
\[ \om{gr}(\alpha,\beta) = \om{dim} \overline{\MC{M}}_{z}(\alpha,\beta)
-\om{dim}(\alpha)+1 = \om{dim}e_-^{-1}(*)+1 = 4  \]
proving the first assertion. For the second let $\alpha\in \MC{C}$
be arbitrary and let $\theta = \alpha_0,\alpha_1,\cdots,\alpha_s = \alpha$
denote the vertices in the minimal edge path connecting $\theta$ to
$\alpha$ in $\MC{S}_\Gamma$. Then $p(\alpha)=s$ and by the additivity of the grading
\[ j(\alpha)= \om{gr}(\alpha,\theta) = \sum_{i=0}^{s-1} \om{gr}(\alpha_{i+1},\alpha_{i})=4s \in 4\Z/8 . \]
Consequently, $j(\alpha)\equiv 4p(\alpha) \Mod{8}$ as required. \end{proof}

To determine the differentials in $DCI(\overline{Y}_\Gamma)$ we need the following result, which is contained in the proof of \cite[Lemma~5.1]{Austin95}.

\begin{lemma} \label{Degree-Austin}
Let $\alpha,\beta \in \MC{C}$ be adjacent in $\MC{S}_\Gamma$ with
$\beta$ irreducible, let $\Har\in R(\Gamma)$ be the solution of
$(2-Q)\Har = \alpha-\beta$
given by Proposition \ref{R(G)-Adjacent-Solution} and let $\Gamma'\subset \om{SU}(2)$ be the corresponding finite subgroup.  
Let $z$ be the unique homotopy class with
$\om{gr}_z(\alpha,\beta)=4$. Then the degree of the map
\[ e_{-}^{-1}(*)=Y_{\Gamma'} \ra \beta = \om{SO}(3)  \]
is up to a sign given by $2\dim_{\C}\Har/|\Gamma'|$.
\end{lemma}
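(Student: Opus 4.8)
The plan is to make the map $e_+|_{e_-^{-1}(*)}$ explicit via the equivariant ADHM correspondence, lift it to a map between universal covers, and identify the resulting degree with the second Chern number of the relevant $\Gamma$-equivariant bundle. First I would unwind the map. By Theorem~\ref{Moduli-Result} together with Proposition~\ref{Moduli-S4-Prop} and the correspondence between instantons on $\R\times\ovl{Y}_\Gamma$ and $\Gamma$-invariant instantons on $\Gamma$-equivariant $\om{SU}(2)$-bundles over $S^4$, the fiber $e_-^{-1}(*)$ is identified with $Y_{\Gamma'}=S^3/\Gamma'$, where $\Gamma'$ is the subgroup of Proposition~\ref{R(G)-Adjacent-Solution}; under this identification the framing is pinned at the pole $N$ of $S^4$, so that $e_-$ is constant, while $e_+$ sends a $\Gamma$-invariant instanton on the bundle $E\ra S^4$ with invariants $([E_N],[E_S],c_2(E)[S^4])=(\alpha,\beta,k)$ to the parallel transport of the fixed framing at $N$ to the opposite pole $S$ along the distinguished arc, regarded as a point of the orbit $\beta\cong\om{SO}(3)$. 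Here $k=\eps(\Har)=\dim_\C\Har$ by Lemma~\ref{Austin-Equation} and the fact that $\Har$ is an actual representation, and I use that $\beta$ is irreducible, so its stabilizer in $\om{SO}(3)$ is trivial and the universal cover is $\wt{\beta}=\om{SU}(2)$.

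Next I would reduce to a self-map of $S^3$. Since $S^3$ is simply connected, the composite $S^3\ra S^3/\Gamma'=Y_{\Gamma'}\xrightarrow{e_+}\beta\cong\om{SO}(3)$ lifts along the double cover $\wt{\beta}=\om{SU}(2)\ra\beta$ to a map $\wt{f}\colon S^3\ra\om{SU}(2)$. Pushing the fundamental class around the commuting square formed by $\wt{f}$, $e_+$, the $|\Gamma'|$-fold covering $S^3\ra Y_{\Gamma'}$ and the double covering $q\colon\om{SU}(2)\ra\om{SO}(3)$, and using $q_*[\om{SU}(2)]=2[\om{SO}(3)]$ and the covering relation for $S^3\ra Y_{\Gamma'}$, gives $2\om{deg}(\wt{f})=|\Gamma'|\om{deg}(e_+)$. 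It therefore remains to show $\om{deg}(\wt{f})=\pm\dim_\C\Har$, equivalently $\om{deg}(\wt{f})=\pm k$.

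The identity $\om{deg}(\wt{f})=\pm k$ is the computational heart of the argument, and is essentially the content of the proof of \cite[Lemma~5.1]{Austin95}. The idea is to pass to the ADHM normal form supplied by Proposition~\ref{Moduli-S4-Prop}: under the identification $\wt{\MC{M}}_{\Gamma,\infty}(E)\cong(\C^2-\{0\})/\Gamma'$ one writes $\wt{f}$, on the lift $S^3\subset\C^2-\{0\}$, as an explicit map into $\om{SU}(2)$ assembled from the ADHM data $(\tau_1,\tau_2,\pi,\sigma)$ — the fiber $E_S$ being computed from the sequence~\eqref{ADHM-sequence} at $z=0$, together with the parallel transport to $N$ — and then, comparing with the standard model of the charge-$1$ bundle $\gamma$ over $\HH P^1\cong S^4$, reads off the degree as $\pm k$; the sign is immaterial since only $|n_{\alpha\beta}|$ enters $DCI(\ovl{Y}_\Gamma)$. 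Combining this with the previous step yields $\om{deg}(e_+|_{e_-^{-1}(*)})=2\om{deg}(\wt{f})/|\Gamma'|=\pm 2\dim_\C\Har/|\Gamma'|$, as claimed. I expect this last step to be the main obstacle: the two reductions are formal consequences of correspondences already in place, but identifying precisely how the pole-to-pole parallel transport is represented in the ADHM/Kronheimer coordinates, and extracting its degree, calls for a concrete hands-on computation.
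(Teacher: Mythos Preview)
The paper does not actually prove this lemma: it simply records that the result is ``contained in the proof of \cite[Lemma~5.1]{Austin95}'' and follows it with a remark about the sign ambiguity. So your proposal is not competing against a proof in the paper so much as filling in a sketch of what Austin's argument must accomplish.

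Your reduction is correct and well organised. The commuting square with the $|\Gamma'|$-fold cover $S^3\to Y_{\Gamma'}$ and the double cover $\om{SU}(2)\to\om{SO}(3)$ yields exactly $2\om{deg}(\wt f)=|\Gamma'|\om{deg}(e_+)$, so the lemma indeed collapses to $\om{deg}(\wt f)=\pm k=\pm\dim_\C\Har$. You correctly flag this last identity as the substantive step and correctly point to Austin for it; making the pole-to-pole parallel transport explicit in ADHM coordinates and reading off its degree is precisely the computation carried out in the proof of \cite[Lemma~5.1]{Austin95}. In short, both you and the paper ultimately defer to Austin, but your covering-degree argument makes the passage from $\om{deg}(\wt f)$ to the stated formula $2\dim_\C\Har/|\Gamma'|$ transparent.
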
  

\begin{remark} We are not aware of a procedure to pin down the orientation
of $Y_{\Gamma'}$ in the above statement. For this reason we are only
able to determine the degree up to a sign. As we will see, this inaccuracy
will not affect our calculations in any relevant way. \end{remark} 

For $i=0,1$ let $\MC{C}^i = \{\alpha\in \MC{C}:j(\alpha)\equiv 4i\mod{8} \}$
such that $\MC{C}=\MC{C}^0\cup \MC{C}^i$ is a disjoint union. Let
us fix generators $b_\alpha\in H_0(\alpha)$ for all $\alpha\in \MC{C}$,
and $t_\alpha \in H_2(\alpha)$ for all reducible $\alpha$ and
$t_\alpha:=b_\alpha u \in H_3(\alpha)$ for all irreducible $\alpha$.   

\begin{definition} \label{n-alpha-beta-Def}
Let $\MC{C}^{irr}\subset \MC{C}$ be the subset of
irreducibles. For each adjacent pair $\alpha,\beta\in \MC{C}$ with
$\beta\in \MC{C}^{irr}$ define $n_{\beta\alpha}$ to be the integer
$2\om{dim}_C\Har /|\Gamma '|$ where $\Har\in R(\Gamma)$ is the minimal
positive solution of the equation $(2-Q)\Har = \alpha-\beta$. If
$\alpha$ and $\beta$ are not adjacent in 
$\MC{S}_\Gamma$ we define $n_{\beta\alpha}=0$.
\end{definition} 

\begin{theorem} \label{Structure-DCI}
Let $\Gamma\subset \om{SU}(2)$ be a finite subgroup.
Then the multicomplex $(DCI(\overline{Y}_\Gamma)_{*,*},\{\dd^r\}_{r\geq 0})$
is given by
\[ DCI_{8s,*}=\bigoplus_{\alpha\in \MC{C}^0} H_*(\alpha) \mbox{ and }
   DCI_{8s+4,*}=\bigoplus_{\beta\in \MC{C}^1} H_*(\beta)  \]
for all $s\in \Z$ and $DCI_{s,*}=0$ otherwise. The differentials
are given by $\dd^r=0$ for $r\neq 4$ and 
$\dd^4\colon DCI_{4s,0}\ra DCI_{4(s-1),3}$ is determined on generators by
\[ \dd^4(b_\beta) = \sum_{\alpha\in \MC{C}^{irr}} n_{\alpha\beta} t_{\alpha} \]
where the integers $n_{\alpha\beta}$ are defined above. 
\end{theorem}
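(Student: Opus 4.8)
The plan is to assemble the statement directly from results already established in this section; the theorem requires no new geometric or algebraic input, only a careful matching of conventions. The grading part will come from Lemma~\ref{Indexing-Lemma} together with Definition~\ref{Def-DCI}, and the differential part from Lemma~\ref{Differential-Degree-Lemma}, Theorem~\ref{Moduli-Result}, Lemma~\ref{Degree-Austin} and Definition~\ref{n-alpha-beta-Def}.

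First I would check that the standing hypothesis of Definition~\ref{Def-DCI} holds, i.e.\ that $\om{gr}(\alpha,\beta)\equiv 0\Mod{4}$ for every pair $\alpha,\beta\in\MC{C}$. By Lemma~\ref{Indexing-Lemma}, $\om{gr}(\alpha,\beta)=j(\alpha)-j(\beta)\equiv 4p(\alpha)-4p(\beta)\Mod{8}$, which is divisible by $4$. Hence $DCI(\ovl{Y}_\Gamma)$ is the multicomplex of Definition~\ref{Def-DCI}, so $\dd^r=0$ for $r\neq 4$ and $DCI_{s,t}=\bigoplus_{j(\alpha)\equiv s}H_t(\alpha)$. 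The same lemma gives $j(\alpha)\equiv 4p(\alpha)\Mod{8}$, so $j(\alpha)\in\{0,4\}$ in $\Z/8$ and $DCI_{s,*}=0$ unless $s\equiv 0$ or $4\Mod{8}$; combining this with the periodicity $DCI_{s,*}\cong DCI_{s+8,*}$ and the definitions of $\MC{C}^0$ and $\MC{C}^1$ yields the claimed description of the bigraded module.

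It remains to pin down $\dd^4$. Since $DCI_{*,t}$ is concentrated in $0\le t\le 3$ and $H_3(\gamma)\neq 0$ only when $\gamma$ is irreducible, the bidegree $(-4,3)$ of $\dd^4$ leaves $DCI_{4s,0}\to DCI_{4(s-1),3}$ as the only possibly nonzero part, with components $H_0(\beta)\to H_3(\alpha)$ indexed by pairs with $\alpha\in\MC{C}^{irr}$ and $j(\beta)-j(\alpha)=4$. Fix such a pair. If $\alpha$ and $\beta$ are non-adjacent in $\MC{S}_\Gamma$, then $\ovl{\MC{M}}_z(\beta,\alpha)$ is empty by Theorem~\ref{Moduli-Result}, the fiber product vanishes, and the component equals $0=n_{\alpha\beta}$. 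If $\alpha$ and $\beta$ are adjacent, then $\om{gr}(\beta,\alpha)=4$ by Lemma~\ref{Indexing-Lemma}, so Lemma~\ref{Differential-Degree-Lemma} identifies the component $H_0(\beta)\to H_3(\alpha)$ with multiplication by $\deg\big(e_+|_{e_-^{-1}(*)}\colon e_-^{-1}(*)\to\alpha\big)$; by Theorem~\ref{Moduli-Result} this fiber is $Y_{\Gamma'}$, with $\Gamma'$ the finite subgroup attached by Proposition~\ref{R(G)-Adjacent-Solution} to the minimal positive solution $\Har$ of $(2-Q)\Har=\beta-\alpha$, and by Lemma~\ref{Degree-Austin} the degree equals $\pm\, 2\dim_\C\Har/|\Gamma'|=\pm n_{\alpha\beta}$, the last equality being Definition~\ref{n-alpha-beta-Def}. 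Summing over $\alpha\in\MC{C}^{irr}$ gives $\dd^4(b_\beta)=\sum_\alpha(\pm n_{\alpha\beta})t_\alpha$, which is the asserted formula. The only delicate point is the sign coming from Lemma~\ref{Degree-Austin}: as the remark following that lemma notes, the orientation of $Y_{\Gamma'}$ is not determined, so the formula is read up to these signs, and (with $\frac12\in R$) this ambiguity is immaterial for the later homology calculations. I expect the one piece of genuine bookkeeping to be keeping the conventions consistent throughout — the direction of the homotopy class $z$, which of the two endpoint fibers the equivariant ADHM correspondence describes in the orientation-reversed case $\ovl{Y}_\Gamma$, and the sign convention in the equation $(2-Q)\Har=\beta-\alpha$ relative to Definition~\ref{n-alpha-beta-Def}.
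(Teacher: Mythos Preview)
Your proposal is correct and follows essentially the same approach as the paper's proof, which is a terse assembly of Lemma~\ref{Indexing-Lemma} (graded module structure), Theorem~\ref{Moduli-Result} (vanishing for non-adjacent pairs), and Lemmas~\ref{Differential-Degree-Lemma} and~\ref{Degree-Austin} (the degree computation). You give more detail than the paper and correctly flag the sign ambiguity from Lemma~\ref{Degree-Austin}, which the paper acknowledges in the subsequent remark but silently absorbs into the statement.
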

\begin{proof} The description of $DCI(Y,E)$ as a graded module is a simple
consequence of the definition and Lemma \ref{Indexing-Lemma}. By the final
part of Theorem \ref{Moduli-Result} the moduli space 
$\ovl{\MC{M}}_z(\alpha,\beta)$, where $z$ is the homotopy class with
$-2\leq \om{gr}_z(\alpha,\beta)\leq 5$, is empty unless $\alpha$ and
$\beta$ are adjacent in $\MC{S}_\Gamma$. The
statement concerning the differentials therefore follows from
Lemma \ref{Differential-Degree-Lemma} and Lemma \ref{Degree-Austin}.  \end{proof} 

We will end this section by giving a few applications of Proposition
\ref{R(G)-Adjacent-Solution} and show how we can easily extract both
the grading and the integers $n_{\alpha\beta}$ provided we know where
the one dimensional quaternionic representations are placed in the
graph $\overline{\Delta}_{\Gamma}$. All the necessary information is
contained in Appendix $A$. 

\begin{example}($\Gamma=O^*$) There are four $1$-dimensional quaternionic
representations; two of real type $\theta,\eta$ and two of quaternionic
type $\alpha$, $\beta$. The graph $\overline{\Delta}_{O^*}$ is shown
below.

\begin{center}
\begin{tikzpicture}[inner sep=0.8, scale = 1.7]
\node (1) at (0,0) [circle,draw] {$1$};
\node (2) at (1,0) [circle,draw] {$2$};
\node (3) at (2,0) [circle,draw] {$3$};
\node (4) at (3,0) [circle,draw] {$4$};
\node (5) at (4,0) [circle,draw] {$3$};
\node (6) at (5,0) [circle,draw] {$2$};
\node (7) at (6,0) [circle,draw] {$1$};
\node (8) at (3,0.7) [circle,draw] {$2$};

\node at (0,0.4) {$\theta$}; \node at (1,0.4) {$\alpha$};
\node at (5,0.4) {$\beta$}; \node at (6,0.4) {$\eta$}; 
\path
(1) edge (2)
(2) edge (3)
(3) edge (4)
(4) edge (5)
(5) edge (6)
(6) edge (7)
(4) edge (8);
\end{tikzpicture}
\end{center}
The integers are the dimensions of the corresponding irreducible complex representations and we have labeled the vertices corresponding to the $1$-dimensional quaternionic representation. Here there are no representations
of complex type, in particular $\overline{\Delta}_{\Gamma}=\overline{\Delta}_\Gamma/(\iota)$. One may immediately conclude that the
indexing is given by $j(\theta)=j(\beta)=0$ and $j(\alpha)=j(\eta)=4$.
To determine the integer $n_{\beta\eta}$ we need to solve the equation
$(2-Q)\Har = \eta-\beta$ in $R(O^*)$. According to the recipe given
in Proposition \ref{R(G)-Adjacent-Solution} we first delete the vertex
corresponding to $\beta$ and let $\Delta_{\Har}$ be the component containing
$\eta$; that is, $\Delta_\Har$ is just the single vertex corresponding to
$\eta$. We then recognize this graph as $\Delta_{C_2}$, from which we deduce that $\Gamma'=C_2$ and $\Har=\frac12 \eta$. Therefore, $n_{\beta\eta}=2\om{dim}_\C \Har /|\Gamma'| = 2/2=1$. The situation with $\theta$ and $\alpha$ is
completely symmetric so $n_{\alpha\theta}=1$ as well.

The case of $n_{\beta\alpha}$ is more interesting.
If we delete $\alpha$, the component $\Delta_\Har$ containing $\beta$ is given by
\begin{center}
\begin{tikzpicture}[inner sep=0.8, scale = 1.7]
\node (3) at (2,0) [circle,draw] {$1$};
\node (4) at (3,0) [circle,draw] {$2$};
\node (5) at (4,0) [circle,draw] {$2$};
\node (6) at (5,0) [circle,draw] {$2$};
\node (7) at (6,0) [circle,draw] {$1$};
\node (8) at (3,0.7) [circle,draw] {$1$};

\path

(3) edge (4)
(4) edge (5)
(5) edge (6)
(6) edge (7)
(4) edge (8);
\end{tikzpicture}
\end{center}
We recognize this as the Dynkin graph $D_6$ which corresponds to the group
$D_4^*$ of order $16$. We find the integral weights of $\Har$, as given in the above graph, by knowledge of the dimensions of the irreducible representations of $D_4^*$ (see Appendix $A.2$). From this we calculate  
\[ \om{dim}_\C\Har=3\cdot 1+4\cdot 2+2\cdot 1+3\cdot 2+2\cdot 2+1\cdot 1=24 \]
and hence $n_{\beta\alpha}=2\cdot 24/16 = 3$. By symmetry
$n_{\alpha\beta}=3$ as well. 
\end{example} 

The calculation of $n_{\alpha\theta}$ and $n_{\beta\eta}$ in the above example
generalizes.

\begin{lemma} Let $\Gamma\subset \om{SU}(2)$ be a finite subgroup and let
$Q$ be the canonical representation. Suppose $\rho$ is a $1$-dimensional
complex representation of real type and that $\rho\otimes Q$ is irreducible
so that $\eta=2\rho$ and $\beta = \rho\otimes Q$
correspond to a pair of distinct vertices in $\MC{S}_\Gamma$. Then
$n_{\beta\eta}=1$. \end{lemma}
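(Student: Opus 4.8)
The plan is to observe that $\rho$ itself solves the equation defining the virtual representation $\Har$ in Definition \ref{n-alpha-beta-Def}. Indeed,
\[
(2-Q)\rho \;=\; 2\rho - Q\otimes\rho \;=\; \eta - \beta
\]
since $\eta = 2\rho$ and $\beta \cong \rho\otimes Q$. So $\rho$ is a solution of $(2-Q)\Har = \eta-\beta$, it is an actual (indeed irreducible) representation, and I claim it is the minimal positive one. By Lemma \ref{R(G)-equation} every solution has the form $\rho + nR$ with $n\in\Z$ and $R$ the regular representation; for $n<0$ the coefficient of any irreducible $R_i\not\cong\rho$ in $\rho+nR$ is $-n\dim_\C R_i<0$, so $\rho+nR$ is not an actual representation. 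Hence $\Har=\rho$ is the minimal positive solution and $\dim_\C\Har = 1$.

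Next I would confirm that $\eta$ and $\beta$ are genuinely adjacent in $\MC{S}_\Gamma$, which is needed before Proposition \ref{R(G)-Adjacent-Solution} can be invoked. Because $Q\otimes\rho\cong\beta$ is irreducible, the vertex $v_\rho$ of $\overline{\Delta}_\Gamma$ is joined by a single edge to $v_\beta$ and to no other vertex, i.e.\ $v_\rho$ has degree $1$. Since $\rho$ is of real type and $\beta$ of quaternionic type, both $v_\rho$ and $v_\beta$ are fixed by the involution $\iota$; they are distinct (they have complex dimensions $1$ and $2$), so they descend to distinct vertices of $\overline{\Delta}_\Gamma/(\iota)$ joined by a single edge that meets no intermediate vertex. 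By Definition \ref{Def-Adjacency} this makes $\eta=2\rho$ and $\beta$ adjacent in $\MC{S}_\Gamma$, so Proposition \ref{R(G)-Adjacent-Solution} applies with ``$\alpha$'' taken to be $\eta$.

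Finally I apply Proposition \ref{R(G)-Adjacent-Solution}. Deleting $v_\beta$ from $\overline{\Delta}_\Gamma$ isolates $v_\rho$ (its only edge went to $v_\beta$), so the component $\Delta_\Har$ containing $v_\rho$ is the single vertex $\{v_\rho\}$; equivalently, from part (iii) of that proposition, writing $\rho = \bigoplus_{j=1}^r(\dim_\C S_j)R_{\phi(j)}$ forces $r=1$ and $\dim_\C S_1=1$. In either formulation $\Gamma'$ is the nontrivial finite subgroup of $\om{SU}(2)$ whose Dynkin diagram is the single vertex $A_1$ — equivalently, whose irreducible representations are all $1$-dimensional — which is $\Gamma'=C_2$, of order $2$. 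Therefore
\[
n_{\beta\eta} \;=\; \frac{2\dim_\C\Har}{|\Gamma'|} \;=\; \frac{2\cdot 1}{2} \;=\; 1 ,
\]
as claimed. The only mildly delicate points are the bookkeeping ones — checking that ``minimal positive'' picks out $\rho$ rather than some $\rho+nR$, and matching the abstract output $\Gamma'$ of Proposition \ref{R(G)-Adjacent-Solution} with $C_2$ — and neither is a real obstacle once the identity $(2-Q)\rho=\eta-\beta$ is spotted, which is the crux of the argument.
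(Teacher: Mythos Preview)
Your proof is correct and follows essentially the same approach as the paper: spot the identity $(2-Q)\rho=\eta-\beta$, identify $\Har=\rho$, recognize $\Delta_\Har$ as a single vertex so that $\Gamma'=C_2$, and compute $n_{\beta\eta}=2\cdot 1/2=1$. You fill in more detail than the paper's terse argument (explicitly checking minimality and adjacency), which is fine; note only the small typo in your minimality check --- the coefficient of $R_i\not\cong\rho$ in $\rho+nR$ is $n\dim_\C R_i$, not $-n\dim_\C R_i$, though your conclusion that this is negative for $n<0$ is of course correct.
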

\begin{proof} We trivially have $(2-Q)\rho = 2\rho -Q\otimes \rho = \eta-\beta$
so that $\Har = \rho$. This means that $\Delta_\Har$ consists of a single
vertex and that the associated subgroup is $\Gamma'=C_2$. It follows that
$n_{\beta\eta}=2\om{dim}_\C \Har/|\Gamma'|=2/2=1$. \end{proof}

We also consider the most involved case $\Gamma=D_n^*$. The $1$-dimensional
quaternionic representations and the graph $\overline{\Delta}_{D_n^*}$
are given in Appendix $A.2$. For $n$ even we label the quaternionic representations by
\[ \theta, \eta_1,\eta_2,\eta_3,\alpha_1,\alpha_2,\cdots,\alpha_{n/2} \]
and for $n$ odd by
\[ \theta,\eta,\alpha_1,\alpha_2,\cdots,\alpha_{(n-1)/2},\lambda . \]
Here $\theta, \eta$ and the $\eta_i$ are fully reducible, the $\alpha_i$
are irreducible and $\lambda$ is reducible.  

\begin{lemma} In the above situation we have $n_{\alpha_i,\alpha_{i+1}}=n_{\alpha_{i+1},\alpha_i}=2$ for all $i$. \end{lemma}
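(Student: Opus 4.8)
The plan is to extract everything from the extended McKay graph $\ovl{\Delta}_{D_n^*}=\wt{D}_{n+2}$, whose shape, vertex dimensions, and the positions of $\theta$, the $\eta$'s, the $\alpha_i$ and $\lambda$ are recorded in Appendix $A.2$. Recall its structure: a spine $v_1-v_2-\cdots-v_{n-1}$, where $v_j$ is the vertex of the $j$-th two-dimensional irreducible $\rho_j$ and carries McKay mark $\dim_{\C}\rho_j=2$, together with two forks $\{u_1,u_2\}$ at $v_1$ and $\{w_1,w_2\}$ at $v_{n-1}$ whose four vertices are the one-dimensional representations, of mark $1$. By Lemma \ref{Type-Criterion}, $\frac{1}{|D_n^*|}\sum_{g}\chi_{\rho_j}(g^2)=(-1)^j$, so the $\rho_j$ of quaternionic type---equivalently the irreducible flat connections---are exactly those with $j$ odd; hence $\alpha_i$ is the spine vertex $v_{2i-1}$. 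Thus $\alpha_i$ and $\alpha_{i+1}$ are $v_{2i-1}$ and $v_{2i+1}$, joined by the path $v_{2i-1}-v_{2i}-v_{2i+1}$ through the real-type vertex $v_{2i}$, which is not a vertex of $\MC{S}_\Gamma$; so $\alpha_i,\alpha_{i+1}$ are adjacent in $\MC{S}_\Gamma$ and Definition \ref{n-alpha-beta-Def} applies.

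To compute $n_{\alpha_{i+1},\alpha_i}$, with $\beta=\alpha_{i+1}$ the irreducible endpoint, I follow Proposition \ref{R(G)-Adjacent-Solution}: deleting the single vertex $v_{2i+1}$ from $\wt{D}_{n+2}$, the component $\Delta_\Har$ containing $\alpha_i=v_{2i-1}$ is $\{u_1,u_2,v_1,\dots,v_{2i}\}\cong D_{2i+2}$, so by the McKay table $\Gamma'\cong D_{2i}^*$ and $|\Gamma'|=8i$. By part $(iii)$ of that proposition, $\dim_{\C}\Har=\sum_{j\ge 1}(\dim_{\C}S_j)(\dim_{\C}R_{\phi(j)})$ is a pairing of two mark functions on the diagram $D_{2i+2}$. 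The values $\dim_{\C}R_{\phi(j)}$ are the ambient $D_n^*$-marks: $2$ on each of the $2i$ vertices $v_1,\dots,v_{2i}$ and $1$ on $u_1,u_2$. The values $\dim_{\C}S_j$ are the marks of $\Delta_{D_{2i}^*}=D_{2i+2}$, read off from $\wt{D}_{2i+2}$ by deleting the trivial vertex: $1$ at the chain end where the affine fork was broken, $2$ on the $2i-1$ remaining spine vertices, and $1$ at the two surviving fork tips. The graph isomorphism $\phi$ matches these (its relevant part being fixed by the last clause of Proposition \ref{R(G)-Adjacent-Solution}, which puts $Q'$ over $\alpha_i$; any residual automorphism ambiguity, as for $D_4$, leaves the sum unchanged), so $\dim_{\C}\Har=1\cdot 2+(2i-1)\cdot 4+1+1=8i$ and $n_{\alpha_{i+1},\alpha_i}=2\dim_{\C}\Har/|\Gamma'|=2$.

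The computation of $n_{\alpha_i,\alpha_{i+1}}$ is the mirror image: deleting $v_{2i-1}$, the component containing $\alpha_{i+1}=v_{2i+1}$ is $\{v_{2i},\dots,v_{n-1},w_1,w_2\}\cong D_{n-2i+2}$, so $\Gamma'\cong D_{n-2i}^*$, $|\Gamma'|=4(n-2i)$, and the same mark pairing gives $\dim_{\C}\Har=1\cdot 2+(n-2i-1)\cdot 4+1+1=4(n-2i)$, hence $n_{\alpha_i,\alpha_{i+1}}=2$. In fact both are instances of a uniform statement: if deleting the relevant spine vertex separates off a component isomorphic to $D_m$ containing the surviving irreducible, then $\Gamma'\cong D_{m-2}^*$ has order $4(m-2)$ and the mark pairing always evaluates to $4(m-2)$, giving $n=2$. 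The boundary cases $i=1$, and $i+1=n/2$ when $n$ is even, where one deletes a trivalent vertex ($v_1$ or $v_{n-1}$) and two fork tips split off as isolated vertices, still leave the relevant component equal to $D_m$, so the argument goes through verbatim.

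The one place demanding care is the bookkeeping in the two paragraphs above: one must keep apart the two mark functions living on the abstract diagram $D_m$---the restriction of the uniform "$2$ on spine vertices, $1$ on fork tips" pattern inherited from the ambient $\wt{D}_{n+2}$ on one hand, and the genuine marks of $D_m$ seen as $\Delta_{D_{m-2}^*}$ (which carry an extra "$1$" at the chain end where the affine fork was cut) on the other---and verify that $\phi$ aligns them as claimed; the arithmetic is then immediate. I would also, for completeness, recall from Appendix $A.2$ the precise placement of $\theta$ and the $\eta$'s (and, for $n$ odd, of $\lambda$ at a fork of $\wt{D}_{n+2}$) so that the phrase "in the above situation" is fully pinned down.
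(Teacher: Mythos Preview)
Your proof is correct and follows essentially the same approach as the paper's: identify the component $\Delta_{\Har}$ after deleting the appropriate vertex from $\ovl{\Delta}_{D_n^*}$, recognize it as $D_{2i+2}=\Delta_{D_{2i}^*}$, and compute $\dim_\C\Har$ as the pairing of the two mark functions. The only difference is that the paper invokes the left--right symmetry of $\ovl{\Delta}_{D_n^*}$ to reduce to the single computation of $n_{\alpha_{i+1},\alpha_i}$, whereas you carry out both directions explicitly (and then observe they are mirror images); your added bookkeeping on $\phi$ and the boundary cases is sound and slightly more detailed than the paper's treatment.
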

\begin{proof} 
Due to the symmetry in the graph $\overline{\Delta}_{D_n^*}$ it suffices to
show that $n_{\alpha_{i+1},\alpha_i}=2$ for all $i$. We therefore have
to solve $(2-Q)\Har = \alpha_i-\alpha_{i+1}$ for $\Har$. The relevant
portion of the graph $\overline{\Delta}_{D_n^*}$ is shown below
\begin{center}
\begin{tikzpicture}[inner sep=0.5mm, scale=1.5]
\node (1a) at (-0.6,0.4) [circle, draw] {$1$};
\node (1b) at (-0.6,-0.4) [circle, draw] {$1$};
\node (2) at (0,0) [circle, draw] {$2$};
\node (3) at (1,0) [circle, draw] {$2$};
\node (4) at (3,0) [circle, draw] {$2$};
\node (5) at (4,0) [circle, draw] {$2$};
\node (6) at (5,0) [circle, draw] {$2$};
\node at (3,0.5) {$\alpha_{i}$};
\node at (5,0.5) {$\alpha_{i+1}$};

\path
(1a) edge (2)
(1b) edge (2)
(2) edge (3)
(4) edge (5)
(5) edge (6);
\draw[dotted, thick] (1.4,0)--(2.6,0);
\draw[dotted,thick] (5.4,0)--(6.6,0);
\end{tikzpicture}
\end{center}
Note that $\alpha_i$ is the $2i-1$'th vertex labeled with $2$ from left
to right. Following the description in Proposition \ref{R(G)-Adjacent-Solution}
we delete the vertex corresponding to $\alpha_{i+1}$ and let
$\Delta_{\Har}$ be the component containing $\alpha_i$. This graph
has a total of $2(i+1)$ vertices so we recognize $\Delta_{\Har}=D_{2(i+1)}=\Delta_{D^*_{2i}}$. Thus $\Gamma'=D_{2i}^*$ of order $8i$. The weights of $\Har$ are therefore given by attaching the integer $2$ to each internal 
vertex (of degree $\geq 2$) and $1$ to the rest. This yields
\[ n_{\alpha_{i+1}\alpha_i} =2\om{dim}_\C \Har /|D_{2i}^*|
= 2(1+1+4(2i-1)+2)/8i=2  \]
as required.   
\end{proof} 
 
We note that for cyclic groups there are no irreducible 
$\om{SU}(2)$-representations, from which it follows that 
$DCI(\overline{Y}_{C_m})$ has trivial differential for each $m$.
There are now only a finite number of $n_{\alpha\beta}$ left to calculate. We state the calculations
of these below without proof and trust that the
interested reader will verify these for themselves.

For a finite subgroup $\Gamma\subset \om{SU}(2)$ we attach labels
to the edges in the graph $\MC{S}_\Gamma$ to express all the information
needed to set up $DCI(\overline{Y}_\Gamma)$. If $\alpha$ and $\beta$
are adjacent and both irreducible we attach the symbol $(n_{\alpha\beta}|n_{\beta\alpha})$ to the edge connecting $\alpha$ to $\beta$ such that
$n_{\alpha\beta}$ is closest to $\alpha$. If $\alpha$
and $\beta$ are adjacent and only $\alpha$ is irreducible we attach
$n_{\alpha\beta}$ to the edge.  

\begin{proposition} \label{Differential-Graphs}
The graphs $\MC{S}_\Gamma$ with labels for the finite
subgroups $\Gamma\subset \om{SU}(2)$ are given below.
\begin{center}
\begin{tikzpicture}
\node (1) at (0,0) {$\theta$};
\node (2) at (1,0) {$\alpha$};
\node (3) at (2,0) {$\beta$};
\node (4) at (0.5,-0.4) {$1$};
\node (5) at (1.5,-0.4) {$(3|4)$};

\draw (1)--(2)--(3);

\node (6) at (4,0) {$\theta$};
\node (7) at (5,0) {$\alpha$};
\node (8) at (6,0) {$\beta$};
\node (9) at (7,0) {$\eta$};
\node (10) at (4.5,-0.4) {$1$};
\node (11) at (5.5,-0.4) {$(3|3)$};
\node (12) at (6.5,-0.4) {$1$};

\draw (6)--(7)--(8)--(9);

\node (13) at (9,0) {$\theta$};
\node (14) at (10,0) {$\alpha$};
\node (15) at (11,0) {$\lambda$};
\node (16) at (9.5,-0.4) {$1$};
\node (17) at (10.5,-0.4) {$3$};

\draw (13)--(14)--(15);

\node (18) at (1,0.6) {$\MC{S}_{I^*}$};
\node (19) at (5.5,0.6) {$\MC{S}_{O^*}$};
\node (20) at (10,0.6) {$\MC{S}_{T^*}$};
\end{tikzpicture}
\end{center}

\begin{center}
\begin{tikzpicture}[scale = 0.9]
\node (1) at (0,0) {$\theta$};
\node (2) at (1,0) {$\lambda_1$};
\node (3) at (2,0) {$\lambda_2$};
\node (4) at (3.7,0) {$\lambda_{m-1}$};
\node (5) at (5,0) {$\eta$};
\draw (1)--(2)--(3); \draw (4)--(5);
\draw[dotted, thick] (3)--(4);

\node (6) at (7,0) {$\theta$};
\node (7) at (8,0) {$\lambda_1$};
\node (8) at (9,0) {$\lambda_2$};
\node (9) at (10.7,0) {$\lambda_{m-1}$};
\node (10) at (12,0) {$\lambda_m$};
\draw (6)--(7)--(8); \draw (9)--(10);
\draw[dotted, thick] (8)--(9);

\node (11) at (2.5,0.6) {$\MC{S}_{C_{2m}}$};
\node (12) at (9.5,0.6) {$\MC{S}_{C_{2m+1}}$};
\end{tikzpicture}
\end{center} 

\begin{center}
\begin{tikzpicture}
\node (27) at (2,0.6) {$\MC{S}_{D^*_{2m}}$};
\node (1) at (0,0) {$\alpha_1$};
\node (2) at (1,0) {$\alpha_2$};
\node (3) at (2.6,0) {$\alpha_{m-1}$};
\node (4) at (4,0) {$\alpha_m$};
\node (5) at (-1,0.4) {$\theta$};
\node (6) at (-1,-0.4) {$\eta_1$};
\node (7) at (5,0.4) {$\eta_2$};
\node (8) at (5,-0.4) {$\eta_3$};

\node (9) at (0.5,-0.4) {$(2|2)$};
\node (10) at (3.4,-0.4) {$(2|2)$};
\node (11) at (-0.4,0.4) {$1$};
\node (12) at (-0.4,-0.4) {$1$};
\node (13) at (4.5,0.4) {$1$};
\node (14) at (4.5,-0.4) {$1$};

\draw (5)--(1)--(2); \draw (6)--(1);
\draw (3)--(4)--(7); \draw (4)--(8);
\draw[dotted,thick] (2)--(3);
\end{tikzpicture}
\end{center} 

\begin{center}
\begin{tikzpicture}
\node (28) at (10,0.6) {$\MC{S}_{D^*_{2m+1}}$};

\node (15) at (8,0) {$\alpha_1$};
\node (16) at (9,0) {$\alpha_2$};
\node (17) at (10.6,0) {$\alpha_{m-1}$};
\node (18) at (12,0) {$\alpha_m$};
\node (19) at (13,0) {$\lambda$};
\node (20) at (7,0.4) {$\theta$};
\node (21) at (7,-0.4) {$\eta$};

\node (22) at (8.5,-0.4) {$(2|2)$};
\node (23) at (11.4,-0.4) {$(2|2)$};
\node (24) at (12.5,-0.4) {$2$};
\node (25) at (7.6,0.4) {$1$};
\node (26) at (7.6,-0.4) {$1$};

\draw (16)--(15)--(21); \draw (15)--(20);
\draw (17)--(18)--(19);
\draw[dotted, thick] (16)--(17);
\end{tikzpicture}
\end{center}
In all cases $\theta$ denotes the trivial representation. The letters
$\alpha$ and $\beta$ are reserved for representation of quaternionic type,
$\lambda$ for representations of complex type and $\eta$ for representations
of real type.   
\end{proposition}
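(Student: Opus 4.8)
The plan is to verify the labeled graphs $\MC{S}_\Gamma$ case by case, organizing the work around the three ingredients that feed into the labels: (i) the underlying adjacency graph of $\MC{S}_\Gamma$, determined purely combinatorially from the position of the $1$-dimensional quaternionic representations inside the quotient graph $\ovl{\Delta}_\Gamma/(\iota)$ via Definition \ref{Def-Adjacency}; (ii) the grading, governed by Lemma \ref{Indexing-Lemma}, which reduces to counting edges in minimal paths from $\theta$; and (iii) the integers $n_{\alpha\beta}$, which by Definition \ref{n-alpha-beta-Def} require, for each adjacent pair with at least one irreducible endpoint, solving $(2-Q)\Har=\alpha-\beta$ in $R(\Gamma)$ and computing $2\dim_\C\Har/|\Gamma'|$. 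For (iii) the machinery is already in place: Proposition \ref{R(G)-Adjacent-Solution} tells us to delete the vertex (or vertices) corresponding to $\beta$ from $\ovl{\Delta}_\Gamma$, take the component $\Delta_\Har$ containing $\alpha$, recognize it as a Dynkin diagram $\Delta_{\Gamma'}$, and read off $\dim_\C\Har$ as the sum of the marks $\dim_\C S_j$ on that subgraph.

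First I would dispose of the cyclic groups $\Gamma=C_m$: all irreducible $\om{SU}(2)$-representations here are of complex or real type, so $\MC{S}_{C_m}$ is a path (by Lemma \ref{Tree-Lemma} and the shape of $\ovl{\Delta}_{C_m}/(\iota)$) with no irreducible vertices, hence no labels, and the grading $j(\lambda_i)=4i\bmod 8$ is immediate from Lemma \ref{Indexing-Lemma}. Next I would handle the two infinite dihedral families $D_{2m}^*$ and $D_{2m+1}^*$. The adjacency structure of $\MC{S}_{D_n^*}$ is read off from $\ovl{\Delta}_{D_n^*}$ in Appendix A.2: the fully reducible/reducible vertices sit at the ends and the $\alpha_i$ form the central chain. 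The labels $n_{\alpha_i\alpha_{i+1}}=2$ are exactly the content of the last displayed lemma in the excerpt (where $\Gamma'=D_{2i}^*$, $|\Gamma'|=8i$, and $\dim_\C\Har=1+1+4(2i-1)+2=8i$), while the end labels $1$ attached to edges from $\theta$, $\eta$, $\eta_i$ come from the preceding lemma on $\rho$ with $\rho\otimes Q$ irreducible ($\Gamma'=C_2$, $\dim_\C\Har=1$, $|\Gamma'|=2$). The label $2$ on the edge $\alpha_m$–$\lambda$ in $\MC{S}_{D^*_{2m+1}}$ needs one extra instance of the deletion recipe, which I would carry out explicitly. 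Finally, for the three exceptional groups $T^*,O^*,I^*$ I would run the Proposition \ref{R(G)-Adjacent-Solution} procedure on the explicit graphs $\ovl{\Delta}_{T^*}$, $\ovl{\Delta}_{O^*}$, $\ovl{\Delta}_{I^*}$ (with their $\dim_\C R_i$ marks from Appendix A): the $O^*$ case is already fully worked in the Example preceding this proposition, giving labels $1$, $(3|3)$, $1$; the $T^*$ case gives labels $1$ and $3$ (deleting the appropriate vertex yields a subgraph of type $A_2=\Delta_{C_3}$ or similar, with $\dim_\C\Har$ computed against $|\Gamma'|$); and the $I^*$ case gives $1$ and $(3|4)$ by the same bookkeeping on $\wt{E}_8$.

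The main obstacle is purely the exceptional case $\Gamma=I^*$, and within it the computation of the asymmetric label $(3|4)$: here $\alpha$ and $\beta$ are the two irreducible vertices adjacent in $\MC{S}_{I^*}$, and one must solve $(2-Q)\Har=\alpha-\beta$ and also $(2-Q)\Har'=\beta-\alpha$ separately, deleting $\beta$ in one case and $\alpha$ in the other, obtaining two different subgroups $\Gamma'$ (and two different values of $\dim_\C\Har$) whose ratios $2\dim_\C\Har/|\Gamma'|$ come out $3$ and $4$ respectively. This requires knowing precisely which vertices of $\ovl{\Delta}_{I^*}$ carry the $1$-dimensional quaternionic representations, which I would extract from Appendix A, and then a short arithmetic check against the marks on $\wt{E}_8$. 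Everything else is a mechanical application of the already-proved Proposition \ref{R(G)-Adjacent-Solution}, the two auxiliary lemmas, and Lemma \ref{Indexing-Lemma}, so I would state the exceptional computations in a table and relegate the routine verifications to the reader, exactly as the paper proposes to do.
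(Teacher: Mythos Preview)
Your proposal is correct and follows essentially the same approach as the paper: the paper explicitly states that after the two auxiliary lemmas (handling the fully-reducible-to-irreducible edges and the $D_n^*$ chain edges) and the worked $O^*$ example, ``there are now only a finite number of $n_{\alpha\beta}$ left to calculate,'' and then presents the labeled graphs ``without proof,'' trusting the reader to run the deletion recipe of Proposition~\ref{R(G)-Adjacent-Solution} on the remaining edges. Your plan spells out this verification program in slightly more detail than the paper does, but the strategy is identical.
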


The notation for the $1$-dimensional quaternionic representations is
compatible with the notation in Appendix $A$.

\begin{example} \label{TOI-Multicomplexes} 
The multicomplexes $(DCI(\overline{Y}_\Gamma)_{*,*},\dd^4)$
for $\Gamma = T^*,O^*,I^*$ are shown below for $(s,t)$ with 
$0\leq s\leq 8$ and $0\leq t\leq 3$. 
\begin{center}
\begin{tikzpicture}
\draw[->] (-10.5,-0.5)--(-1,-0.5);
\draw[->] (-10.5,-0.5)--(-10.5,3.5);
\node at (-10.7,1) {$1$}; \node at (-10.7,2) {$2$};
\node at (-10.7,3) {$3$}; \node at (-10.7,0) {$0$};
\node at (-10,0) {$R^2$}; \node at (-10,2) {$R$};
\node at (-10,1) {$0$}; \node at (-10,3) {$0$};
\node at (-10,-0.7) {$0$};

\node at (-6,0) {$R$}; \node at (-6,2) {$0$};
\node at (-6,1) {$0$}; \node at (-6,3) {$R$};
\node at (-6,-0.7) {$4$}; 

\node at (-2,0) {$R^2$}; \node at (-2,2) {$R$};
\node at (-2,1) {$0$}; \node at (-2,3) {$0$};
\node at (-2,-0.7) {$8$};

\draw [->] (-2.4,0.4)--(-5.6,2.6);
\node[above] at (-3.7,1.5) {$(1,3)$};
\draw [->] (-6.4,0.4)--(-9.6,2.6);
\node[above] at (-7.7,1.5) {$(0)$};

\node at (-6,4) [rectangle, draw] {$DCI(\overline{Y}_{T^*};R)$};
\end{tikzpicture}
\end{center}

\begin{center}
\begin{tikzpicture}
\draw[->] (0,-0.5)--(9.5,-0.5);
\draw[->] (0,-0.5)--(0,3.5);

\node at (-0.2,1) {$1$}; \node at (-0.2,2) {$2$};
\node at (-0.2,3) {$3$}; \node at (-0.2,0) {$0$};

\node at (0.5,0) {$R^2$}; \node at (0.5,2) {$0$};
\node at (0.5,1) {$0$}; \node at (0.5,3) {$R$};
\node at (0.5,-0.7) {$0$};

\node at (4.5,0) {$R^2$}; \node at (4.5,2) {$0$};
\node at (4.5,1) {$0$}; \node at (4.5,3) {$R$};
\node at (4.5,-0.7) {$4$};

\node at (8.5,0) {$R^2$}; \node at (8.5,2) {$0$};
\node at (8.5,1) {$0$}; \node at (8.5,3) {$R$};
\node at (8.5,-0.7) {$8$}; 

\draw [->] (8.1,0.4)--(4.9,2.6);
\node[above] at (6.8,1.5) {$(1,3)$};
\draw [->] (4.1,0.4)--(0.9,2.6);
\node[above] at (2.8,1.5) {$(1,3)$};

\node at (4.5,4) [rectangle,draw] {$DCI(\overline{Y}_{O^*};R)$};
\end{tikzpicture}
\end{center}

\begin{center}
\begin{tikzpicture}
\draw[->] (10.5,-0.5)--(20,-0.5);
\draw[->] (10.5,-0.5)--(10.5,3.5);

\node at (10.3,1) {$1$}; \node at (10.3,2) {$2$};
\node at (10.3,3) {$3$}; \node at (10.3,0) {$0$};

\node at (11,0) {$R^2$}; \node at (11,2) {$0$};
\node at (11,1) {$0$}; \node at (11,3) {$R$};
\node at (11,-0.7) {$0$};

\node at (15,0) {$R$}; \node at (15,2) {$0$};
\node at (15,1) {$0$}; \node at (15,3) {$R$};
\node at (15,-0.7) {$4$}; 

\node at (19,0) {$R^2$}; \node at (19,2) {$0$};
\node at (19,1) {$0$}; \node at (19,3) {$R$};
\node at (19,-0.7) {$8$}; 

\draw [->] (18.6,0.4)--(15.4,2.6);
\node[above] at (17.3,1.5) {$(1,3)$};
\draw [->] (14.6,0.4)--(11.4,2.6);
\node[above] at (13.3,1.5) {$(4)$};

\node at (15,4) [rectangle, draw] {$DCI(\overline{Y}_{I^*};R)$};
\end{tikzpicture}
\end{center}
\end{example} 

\section{The Algebraic Construction of Equivariant Floer Homology}
The equivariant instanton Floer homology associated with the
trivial $\om{SU}(2)$-bundle over a closed oriented $3$-manifold $Y$
comes in three flavors: $I^+(Y)$, $I^-(Y)$, $I^\infty(Y)$, the positive, negative and Tate homology, respectively. These are all constructed algebraically
from the complex $\wt{CI}(Y)$, the associated action of $C_*^{gm}(\om{SO}(3);R)$ and the index filtration $\{F_p\wt{CI}(Y)\}_{p\in \Z}$. 

The construction proceeds as follows. First, following \cite[Appendix A]{Miller19}, we review the construction of four functors
$C^+_A$, $C^-_A$, $C^{(+,tw)}$ and $C^\infty_A$ from the category of right modules over a differential graded $R$-algebra $A$ to the category of differential
graded $R$-modules. In fact, $C^-_A(R)$ will carry the structure of
a differential graded algebra and the three functors take values in the
subcategory of left $C^-_A(R)$-modules. Under the assumptions that the
ground ring $R$ is a principal ideal domain and the algebra $A$ is
degreewise free over $R$, the four functors will be
exact and preserve quasi-isomorphisms. 

Next, we promote the functors to the category of filtered right $A$-modules.
If $M$ is a right $A$-module equipped with an increasing filtration 
\[ \cdots \subset F_pM\subset F_{p+1}M\subset \cdots \subset M  \]
of $A$-submodules, the exactness of the functors ensure that $C^\bullet_A(M)$  is naturally filtered by
$F_pC^\bullet_A(M) := C^\bullet_A(F_pM)$ for each$\bullet\in \{+,-,(+,tw),\infty\}$. However, this filtration may not be very well behaved. To remedy this issue we pass to the so-called full completion,
thereby obtaining four new functors $\hat{C}^\bullet_A$, $\bullet\in \{(+,-,(+,tw),\infty\}$ from the category of filtered right $A$-modules to the category of filtered left
$C^-_A(R)$-modules; to be specific,
\[ \hat{C}^\bullet_A(M) = \om{lim}_q\om{colim}_p C^\bullet_A(F_pM/F_qM)  .\]
For $A=C^{gm}_*(\om{SO}(3);R)$ the groups $I^\bullet(Y)$ for $\bullet\in \{+,-,\infty\}$ are obtained
by applying $\hat{C}^\bullet_A$ to $M=\wt{CI}(Y)$ equipped with
the index filtration and then passing to homology. These groups will then be modules over $H(C^-_A(R))$, which, provided $\frac12\in R$, is 
isomorphic to the polynomial algebra $R[U]$ with a single generator
$U$ of degree $-4$. The mod $8$ periodicity of $M$ will be preserved so that $I^\bullet_n\cong I^\bullet_{n+8}$ for all $n\in \Z$.

The above constructions are also functorial in a suitable sense depending on
$\bullet\in \{+,-,\infty\}$, in the differential graded algebra $A$. 
Moreover, this functoriality will preserve quasi-isomorphisms in the
situation of interest; that is, we may equally well calculate
$I^\bullet(Y)$ using $A=\Lambda_R[u]$ and $M=DCI(Y)$. When we proceed
to the calculations for binary polyhedral spaces in the next section, we
will indeed work in this context. 

\subsection{Conventions}
Throughout $R$ will be a fixed principal ideal domain.
A module or algebra will always mean an $R$-module or $R$-algebra, and
the same applies in the graded or differential graded setting. We will
work with homological gradings and differentials $\dd$ of degree $-1$.   
We follow the conventions on graded and differential graded (DG) modules given in \cite[Chapter ~VI]{MacLane75}. We briefly recall the most central
concepts. If $M$ and $N$ are DG modules then $M\otimes N$ and $\om{Hom}(M,N)$ are the DG modules given by
\[ (M\otimes N)_n = \bigoplus_{p+q=n}M_n\otimes N_q \; \mbox{ and } \;
\om{Hom}(M,N)_n = \prod_{q-p=n} \om{Hom}(M_p,N_q)  \]
with differentials $\dd_{M\otimes N}=\dd_M\otimes 1+1\otimes \dd_N$
and $\dd_{\om{Hom}(M,N)}=\om{Hom}(1,\dd_N)-\om{Hom}(\dd_M,1)$.
This means that $\dd_{M\otimes N}(a\otimes b) = \dd_Ma\otimes b +(-1)^{|a|}a\otimes \dd_Nb$ and $\dd_{\om{Hom}(M,N)}(f) = \dd_N\circ f-(-1)^{|f|}f\circ \dd_M$, where we write $|x|$ for the degree of an element in a graded module.
There is a natural interchange isomorphism $\tau:M\otimes N\cong N\otimes M$
given by $m\otimes n\mapsto (-1)^{|m||n|}n\otimes m$. 

For each integer $p\in \Z$ let $R[p]$ denote the complex with a single
copy of $R$ concentrated in degree $p$. For a DG module $M$ the shifted
complex $M[p]$ is defined to be $R[p]\otimes M$. Therefore, $M[p]_{n+p}=M_n$
for all $n\in \Z$ with differential $\dd_{M[p]} =(-1)^p \dd_M$. It is also
possible to define $M[p]=M\otimes R[p]$, but this will result in different
sign conventions.  

A differential graded algebra is a graded algebra $A$ equipped with a
differential satisfying the Leibniz rule $\dd_A(ab)=\dd_A(a)b+(-1)^{|a|}a\dd_A b$ for all $a,b\in A$. We will always assume that $A$ is unital, i.e., there
is a unit $1\in A_0$ for the multiplication. A left or right $A$-module is a graded left or right $A$-module equipped with a differential satisfying a corresponding Leibniz rule. This ensures that $H(M)$ is a graded
$H(A)$-module. 
If $M$ is a left or right $A$-module then so is $M[p]$. For $a\in A$ and
$x\in M[p]_n = M_{n-p}$ the induced product
is given by $a\cdot x = (-1)^{|a|p}ax$ in the left module context and by  $x\cdot a=xa$ in the right module context. This is a consequence of our left shifting (suspension) convention. 

We also mention that there is a natural adjunction isomorphism of DG modules
\[ \om{Hom}(M\otimes N,P) \cong \om{Hom}(M,\om{Hom}(N,P))  \]
given by $(f\colon M\otimes N\ra P)\mapsto (g\colon M\ra \om{Hom}(N,P))$ where
$f(m\otimes n)=g(m)(n)$. This isomorphism admits various natural generalizations to the case where $M$, $N$ and $P$ carry additional module
structures over various DG algebras, see \cite[Chapter~VI.8]{MacLane75}.
All the signs occurring in the upcoming theory can be deduced from
these conventions: the interchange morphism, the natural adjunction and
the left shift convention.

Finally, a homomorphism $f\colon M\ra N$ of DG modules will mean a degree $0$
chain map unless otherwise stated. We say
that $f$ is a quasi-isomorphism if the induced map in homology,
$H(f)\colon H(M)\ra H(N)$, is an isomorphism. 

\subsection{The Bar Construction and the Functors \texorpdfstring{$C^\pm_A$}{CpmA}} 
Let $A$ be a unital DG algebra and assume that $A$ is equipped with an augmentation $\eps\colon A\ra R$, that is, $\eps$ is a map of DG algebras
where $R$ is regarded as a DG algebra concentrated in degree $0$. 
The augmentation gives $R$ the structure of an $A$-bimodule.
We define $\bar{A} = A/R\cdot 1$ and note that there is a canonical
identification of $\bar{A}$ with the augmentation ideal $\om{Ker}(\eps)$.
Given a right $A$-module $M$ and a left $A$-module $N$ define
the bigraded module $B_{*,*}(M,A,N)$ by
\[ B_{p,q}(M,A,N) = (M\otimes \bar{A}^{\otimes p } \otimes N)_q . \]
We adopt the standard notation and write
\[ m[a_1|a_2|\cdots |a_p]n \coloneqq m\otimes a_1\otimes a_2 \otimes \cdots \otimes
a_p \otimes n \in B_{p,q}(M,A,N) .\]
Here, $p$ is the simplicial degree, $q=|m|+\left(\sum_{i=1}^p |a_i|\right) + |n|$ is the internal degree and $p+q$ is the total degree. Let
$\dd^s\colon B_{p,q}(M,A,N)\ra B_{p-1,q}(M,A,N)$ and $\dd^i\colon B_{p,q}(M,A,N)\ra B_{p,q-1}(M,A,N)$ act on $x=m[a_1|\cdots |a_p]n$ by
\begin{align*} \label{Bar-Differential}
\dd^s(x) =& \; ma_1[a_2|\cdots |a_p]n +\sum_{i=1}^{p-1}(-1)^i m[a_1|\cdots |a_ia_{i+1}|\cdots |a_p]n \\
\; +&(-1)^p m[a_1|\cdots |a_{p-1}]a_pn  \\
\dd^i(x) =& \; (-1)^p( \dd_M m[a_1|\cdots |a_p]n +\sum_{i=1}^p (-1)^{\eps_{i-1}}
m[a_1|\cdots |\dd a_i | \cdots |a_p]n\\
 +& \;(-1)^{\eps_p}m[a_1|\cdots |a_p]\dd_N n),
\end{align*} 
where $\eps_i= |m|+|a_1|+\cdots +|a_i|$. Note that $\dd^i$ is the usual tensor product differential in $M\otimes \bar{A}^{\otimes p}\otimes N$ twisted by a sign depending on the simplicial degree. We call $\dd^s$ the simplicial differential and $\dd^i$ the internal differential. It holds true that
$(\dd^i)^2=0$, $(\dd^s)^2=0$ and $\dd^i\dd^s+\dd^s\dd^i=0$. This ensures that the triple $(B_{*,*}(M,A,N),\dd^s,\dd^i)$ is a double complex.   

\begin{definition} \label{Def-Bar-Construction}
\cite[Appendix A]{GugenheimMay74}  
The bar construction $B(M,A,N)$ is the total complex associated
with the double complex $(B_{*,*}(M,A,N),\dd^s,\dd^i)$ introduced above, 
that is,
\[ B_k(M,A,N)=\bigoplus_{p+q=n} B_{p,q}(M,A,N) \; \mbox{ and } \;
\dd = \dd^s+\dd^i  .\]
\end{definition} 

The bar construction is functorial in the triple $(M,A,N)$. For a triple
$(f,g,h)\colon (M,A,N)\ra (M',A',N')$ where $g$ is a map of augmented DG algebras
and $f,h$ are chain maps linear along $g$, there is an induced map
$B(f,g,h)\colon B(M,A,N)\ra B(M',A',N')$ of DG modules. 
There is also a natural augmentation $B(M,A,N)\ra M\otimes_A N$ given by
$m[a_1|\cdots |a_p]n\mapsto 0$ for $p>0$ and $m[]n\mapsto m\otimes n$. 

\begin{remark} The DG module $B(M,A,A)$ inherits a right $A$-module
structure from the last factor. The augmentation $B(M,A,A)\ra M$ is a map of
right $A$-modules and should be regarded as an analogue of a projective
resolution of $M$. Moreover, $B(M,A,N)\cong B(M,A,A)\otimes_A N$,
so that $B(M,A,N)$ is a derived tensor product $M\otimes^L_A N$
in a certain sense. These statements are made precise in the language
of model categories in \cite[Section~10.2]{BMR14}.
\end{remark} 

Following \cite{Miller19} we introduce the notation
\[ BA \coloneqq  B(R,A,R) \;\; \mbox{ and } \;\; EA\coloneqq B(R,A,A) . \]
Furthermore, we write 
$[a_1|a_2|\cdots |a_p] \coloneqq 1_R[a_1|a_2|\cdots |a_p]1_R\in BA$ and similar
conventions apply for $B(R,A,N)$ and $B(M,A,R)$. 

\begin{lemma} \label{Co-Alg-Mod-Structure} 
For any right $A$-module $M$ and left $A$-module $N$ there is a natural map
of DG modules $\psi_{M,N}\colon B(M,A,N)\ra B(M,A,R)\otimes B(R,A,N)$ given by
\[ m[a_1|\cdots |a_p]n\mapsto \sum_{i=0}^p (-1)^{(p-i)\eps_i}m[a_1|\cdots |a_i]\otimes [a_{i+1}|\cdots |a_p]n , \]
where $\eps_i = |m|+|a_1|+\cdots +|a_i|$. 
The map $\psi_{R,R}\colon BA\ra BA\otimes BA$ along with the augmentation 
$BA\ra R$ give $BA$ the structure of a DG coalgebra. The map
$\psi_{M,R}$ gives $B(M,A,R)$ the structure of a right DG $BA$-comodule
and the map $\psi_{R,N}$ gives $B(R,A,N)$ the structure of a left DG $BA$-comodule.
\end{lemma}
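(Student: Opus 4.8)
The plan is to verify the three assertions in order: first that $\psi_{M,N}$ is a well-defined chain map, then the DG coalgebra axioms for $(BA,\psi_{R,R},\eps)$, and finally the comodule axioms; the coalgebra and comodule verifications will be formal once the chain-map statement and the sign conventions are in hand. Well-definedness is immediate: the formula for $\psi_{M,N}$ only deconcatenates the bar word $m[a_1|\cdots|a_p]n$ and never multiplies any of the letters $a_i$, so there is nothing to check against the quotient $\bar A = A/R\cdot 1$; it is visibly $R$-linear, and splitting at position $i$ sends $B_{p,q}(M,A,N)$ into $\bigoplus_i B_{i,*}(M,A,R)\otimes B_{p-i,*}(R,A,N)$ with total degree preserved.

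The heart of the proof is that $\psi_{M,N}$ commutes with $\dd = \dd^i + \dd^s$, which I would check for the two pieces separately. For the internal part $\dd^i$: since $\dd^i$ is the ordinary tensor-product differential on $M\otimes\bar A^{\otimes p}\otimes N$ twisted by $(-1)^p$ and $\psi_{M,N}$ is a sum of deconcatenations, the only issue is to see that the Koszul sign incurred when a differential acting on the right-hand block must pass the left-hand block matches the weight $(-1)^{(p-i)\eps_i}$ attached to the $i$-th summand of $\psi_{M,N}$, together with the fact that the twist splits compatibly as $(-1)^p = (-1)^i(-1)^{p-i}$ over the two factors $B(M,A,R)$ and $B(R,A,N)$; this is a short bookkeeping computation. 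For the simplicial part $\dd^s$ I would compare $\psi_{M,N}(\dd^s x)$ with $\dd^s(\psi_{M,N} x)$ summand by summand: a multiplication $a_ja_{j+1}$ taking place strictly inside the left half of a split matches a term of $\dd^s$ on $B(M,A,R)$, one strictly inside the right half matches a term of $\dd^s$ on $B(R,A,N)$, the term $ma_1[a_2|\cdots]n$ matches the leading term of $\dd^s$ on the left factor, and $m[\cdots|a_{p-1}]a_p n$ the trailing term on the right factor. The only contributions left over are the ``seam'' terms, in which after splitting at $j$ one would multiply the last letter $a_j$ of the left block into $1_R$ or the first letter $a_{j+1}$ of the right block into $1_R$; these vanish because $a_j\in\bar A = \om{Ker}(\eps)$ and $R$ is an $A$-module through $\eps$. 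This is precisely the classical argument that the deconcatenation coproduct is a chain map for the bar differential; as a conceptual cross-check, $\psi_{M,N}$ is the Alexander--Whitney diagonal approximation for the simplicial object underlying the bar construction.

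For the coalgebra structure, coassociativity $(\psi_{R,R}\otimes 1)\psi_{R,R} = (1\otimes\psi_{R,R})\psi_{R,R}$ holds because both sides equal the triple deconcatenation $\sum_{0\le i\le j\le p}\pm[a_1|\cdots|a_i]\otimes[a_{i+1}|\cdots|a_j]\otimes[a_{j+1}|\cdots|a_p]$, and the counit axiom follows since $\eps$ kills every bar word of positive simplicial degree and sends $[\,]$ to $1_R$, so that $(\eps\otimes 1)\psi_{R,R}$ and $(1\otimes\eps)\psi_{R,R}$ retain only the $i=0$, respectively $i=p$, summand and reduce to the canonical isomorphisms $R\otimes BA\cong BA\cong BA\otimes R$. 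Running the same two computations with the module $M$ (respectively $N$) carried along the left (respectively right) end of the bar word shows that $\psi_{M,R}$ makes $B(M,A,R)$ a right DG $BA$-comodule and $\psi_{R,N}$ makes $B(R,A,N)$ a left DG $BA$-comodule; naturality in the triple $(M,A,N)$ is inherited directly from the functoriality of $B(-,-,-)$. The one genuine difficulty anywhere in the argument is purely notational: keeping the three sources of signs---the interchange isomorphism $\tau$, the left-shift (suspension) convention, and the $(-1)^p$ twist built into $\dd^i$---consistent with the explicit weight $(-1)^{(p-i)\eps_i}$ in the statement; the combinatorial content is entirely standard.
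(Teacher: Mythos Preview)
The paper states this lemma without proof; it is treated as a standard fact about the bar construction (as it indeed is, going back to Eilenberg--Mac Lane and appearing in the cited reference \cite{GugenheimMay74}). Your proof sketch is correct and is exactly the standard verification: check compatibility with $\dd^i$ and $\dd^s$ separately, the latter hinging on the observation that the seam terms vanish because the $A$-module structure on $R$ factors through $\eps$ and the bar letters lie in $\bar A=\ker\eps$; then coassociativity and the counit and comodule axioms are the usual deconcatenation identities. Your remark that $\psi_{M,N}$ is the Alexander--Whitney map for the underlying simplicial object is a good conceptual sanity check and is the reason no one writes this computation out in full.
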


\begin{definition}\cite[Definition~A.5]{Miller19} Let $A$ be an augmented
DG algebra and let $M$ be a right $A$-module. Define the positive and 
negative $A$-chains of $M$ to be
\[ C^+_A(M) \coloneqq B(M,A,R) \;\; \mbox{ and } \;\; 
C^-_A(M)\coloneqq \om{Hom}_A(EA,M)  .\]
The homology of $C^\pm_A(M)$ is denoted by $H^\pm_A(M)$ and is called
the positive and negative $A$-homology of $M$, respectively. 
\end{definition}

The above constructions are functorial in the following sense. 
A homomorphism $f\colon A\ra B$ of augmented DG algebras induces maps
\[ f_*\colon C^+_A(f^{-1}M)\ra C^+_{B}(M)\;  \mbox{ and } \; 
f^*\colon C^-_{B}(M)\ra C^-_A(f^{-1}M) \]
where $M$ is a $B$-module and $f^{-1}M$ is the $A$-module obtained from $M$ by restriction along $f$. Furthermore, a homomorphism $g\colon M\ra N$ of $A$-modules induces maps
\[ g^+\colon C^+_A(M)\ra C^+_A(N) \; \mbox{ and } \; g^-\colon C^-_A(M)\ra C^-_A(N)  .\]
The following invariance result is proven in \cite[Theorem~A.1,A.2]{Miller19};
it relies on the standing assumption that $R$ is a PID. 

\begin{proposition} \label{Invariance} 
Let $f\colon A'\ra A$ is a homomorphism of augmented
DG algebras and let $g\colon M\ra M'$ be a homomorphism of $A$-modules. 
\begin{enumerate}[label=(\roman*),ref=(\roman*)]
\item If $A$ and $A'$ are degreewise free and $f$ is a quasi-isomorphism,
then $f_*$ and $f^*$ are quasi-isomorphisms.
\item If $A$ is degreewise free and $g$ is a quasi-isomorphism, then
$g^+$ and $g^-$ are quasi-isomorphisms. 
\end{enumerate}
\end{proposition}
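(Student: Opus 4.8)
The plan is to deduce all four statements from the filtration of the relevant bar construction by simplicial degree, handling the positive and negative chains separately. The one algebraic ingredient that uses the hypotheses is this: since the augmentation $\epsilon$ retracts the inclusion $R\cdot1\hookrightarrow A$, the augmentation ideal $\bar A$ is a direct summand of the free $R$-module $A$, so $\bar A$ --- and hence every tensor power $\bar A^{\otimes p}$ --- is degreewise free over the PID $R$; being also bounded below (the DG algebras occurring here are non-negatively graded) it is $K$-projective and $K$-flat over $R$. Consequently the Künneth short exact sequence holds, naturally in both variables, for $\bar A^{\otimes p}\otimes_R(-)$, and every acyclic complex of free $R$-modules is contractible (over a PID submodules of frees are free, so the short exact sequences of boundaries split).

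\emph{Positive chains.} Recall $C^+_A(M)=B(M,A,R)$ is the totalization of a double complex whose $p$-th column is $B_{p,*}(M,A,R)=M\otimes\bar A^{\otimes p}$, the $R$ factor being inert. I would filter by simplicial degree $p$; the columns vanish for $p<0$, so the filtration is exhaustive and bounded below and the associated spectral sequence converges to $H_*C^+_A(M)$. The maps $g^+$ and $f_*$ preserve simplicial degree and hence induce maps of spectral sequences; on $E^1_{p,*}=H_*(M\otimes\bar A^{\otimes p})$ these are $H_*(g\otimes\operatorname{id})$, respectively $H_*(\operatorname{id}\otimes\bar f^{\otimes p})$. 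Both are isomorphisms: for the first, apply the Künneth sequence to $\bar A^{\otimes p}\otimes_RM$ and $\bar A^{\otimes p}\otimes_RM'$ --- the outer terms are isomorphisms because $H_*(g)$ is, so the five lemma finishes it; for the second, note first that $\bar f\colon\bar{A'}\to\bar A$ is a quasi-isomorphism (the split short exact sequences $0\to R\cdot1\to A\to\bar A\to0$ and the five lemma), hence $\bar f^{\otimes p}$ is a quasi-isomorphism by iterating Künneth, and then $\operatorname{id}_M\otimes\bar f^{\otimes p}$ is a quasi-isomorphism by Künneth naturality in the other variable. The comparison theorem for convergent spectral sequences then forces $g^+$ and $f_*$ to be quasi-isomorphisms.

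\emph{Negative chains.} Write $C^-_A(M)=\operatorname{Hom}_A(EA,M)$ with $EA=B(R,A,A)$. Filtering $EA$ by simplicial degree exhibits it as an iterated extension, bounded below and exhaustive, of the right DG $A$-modules $\bar A^{\otimes p}\otimes_RA$; each of these is a free graded $A$-module carrying only the internal differential of the bounded-below free $R$-complex $\bar A^{\otimes p}$, hence is semifree over $A$, so $EA$ is semifree over $A$. Therefore $\operatorname{Hom}_A(EA,-)$ preserves quasi-isomorphisms, which is exactly the assertion about $g^-$. For $f^*$, since $f$ is a quasi-isomorphism of (degreewise-free) augmented DG algebras, restriction along $f$ is a triangulated equivalence $D(A)\to D(A')$ sending the left $A$-module $R$ to the left $A'$-module $R$ (because $\epsilon_A\circ f=\epsilon_{A'}$); it induces a canonical isomorphism $\operatorname{RHom}_A(R,M)\cong\operatorname{RHom}_{A'}(R,\operatorname{res}_fM)$, which under the semifree models $EA,EA'$ is represented precisely by $(Ef)^*=f^*$. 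Hence $f^*$ is a quasi-isomorphism. If one prefers to stay at chain level, one instead writes $\operatorname{Hom}_A(EA,M)=\lim_n\operatorname{Hom}_A(\operatorname{sk}_nEA,M)$ over the simplicial skeleta (each $\operatorname{sk}_nEA$ a finite complex of free $A$-modules, so the comparison map at level $n$ is a quasi-isomorphism by an induction on $n$ using contractibility of acyclic complexes of free $R$-modules) and then invokes the Milnor $\lim^1$-sequence, valid because the transition maps in both towers are degreewise surjective.

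The step I expect to be the main obstacle is the negative side. Since $C^-_A(M)=\operatorname{Hom}_A(EA,M)$ is an infinite product over the simplicial degree, the naive observation that $EA$ is termwise free over $A$ does not suffice to make $\operatorname{Hom}_A(EA,-)$ exact; one genuinely needs $EA$ to be semifree over $A$ --- equivalently, needs the $\lim^1$-term of its skeletal tower to vanish --- and this is where degreewise-freeness of $A$ over the PID $R$ together with boundedness below of the grading are used essentially. The positive side is, by comparison, routine once one has the Künneth sequence and the convergence of the simplicial-degree spectral sequence.
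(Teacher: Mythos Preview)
The paper does not actually prove this proposition: the sentence introducing it reads ``The following invariance result is proven in \cite[Theorem~A.1,A.2]{Miller19}; it relies on the standing assumption that $R$ is a PID.'' There is therefore no in-paper argument to compare your attempt against.

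Your argument is the standard one and is correct. The simplicial-degree filtration plus K\"unneth handles $C^+_A$ cleanly, and since that filtration is globally bounded below and exhaustive, convergence of the spectral sequence is automatic; over a PID the K\"unneth short exact sequence needs only that the complex and its boundaries be degreewise flat, and submodules of free modules over a PID are free, so no boundedness is needed on that side. For $C^-_A$, semifreeness of $EA$ is indeed the right mechanism, and you have correctly isolated the delicate point: to know that each $\bar A^{\otimes p}\otimes_R A$ is semifree over $A$ (equivalently, that $\bar A^{\otimes p}$ is $K$-projective over $R$, so that the quasi-isomorphism $\bar f^{\otimes p}$ becomes a homotopy equivalence and survives $\om{Hom}_R(-,M)$), one needs $\bar A$ bounded below. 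That hypothesis is not written into the proposition, but it is in force throughout the paper --- the proof of Lemma~\ref{Bounded-FloerFiltration} uses explicitly that $A$ and $BA$ are supported in non-negative degrees --- and is presumably among Miller's standing assumptions. Of the two routes you sketch for $f^*$, the skeletal $\varprojlim^1$ argument is the self-contained one and is likely close to what appears in the cited source; the derived-equivalence remark is correct but imports more than is needed here.
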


The DG modules $C^\pm_A(M)$ carry additional structure. First, we have
the following sequence of isomorphisms of DG modules
\[ C_A^-(R) = \om{Hom}_A(EA,\eps^{-1}(R))\cong
\om{Hom}_R(EA\otimes_A R,R)\cong \om{Hom}(BA,R),  \]
which identifies $C_A^-(R)$ with the $R$-dual of $BA$. The DG coalgebra structure of $BA$ dualizes to give $C_A^-(R)$ the structure of a DG algebra.
Next, for any right $A$-module $M$ the DG modules $C^\pm_A(M)$ obtain
left $C_A^-(R)$-module structures in the following way. Write
$BA^\vee =\om{Hom}_R(BA,R)=C_A^-(R)$, then the structure map
$C_A^-(R)\otimes C^+_A(M)\ra C^+_A(M)$ is given by the following composition
\begin{equation} \label{ModuleStructure1}
\begin{tikzcd} BA^\vee \otimes C^+_A(M) \arrow{r}{1\otimes \psi_M} &
BA^\vee\otimes C^+_A(M)\otimes BA  & {} \\
{} \arrow{r}{\tau\otimes 1} & C^+_A(M)\otimes BA^\vee \otimes BA  
\arrow{r}{1\otimes \om{ev}} & C^+_A(M)  \end{tikzcd} 
\end{equation} 
where $\psi_M=\psi_{M,R}$ is the comodule structure map of 
$C^+_A(M)=B(M,A,R)$ in Lemma \ref{Co-Alg-Mod-Structure}, $\tau$ is the interchange isomorphism $a\otimes b\mapsto (-1)^{|a||b|}b\otimes a$ and $\om{ev}\colon BA^\vee \otimes BA\ra R$ is the evaluation map. In the other case the structure map $C_A^-(R)\otimes C^-_A(M)\ra C_A^-(M)$
is given by the following composition
\begin{equation} \label{ModuleStructure2}
\begin{tikzcd} BA^\vee \otimes \om{Hom}_A(EA,M) \arrow{r}{\zeta} &
\om{Hom}_A(BA\otimes EA,M) \arrow{r}{\psi_{EA}^*} &
\om{Hom}_A(EA,M) \end{tikzcd}
\end{equation}
where $\zeta(f\otimes g)(a\otimes b)=(-1)^{|a||g|}f(a)\cdot g(b)$
and $\psi_{EA}^* = \om{Hom}_A(\psi_{R,A},1)$ is the dual of the
comodule structure map $\psi_{R,A}\colon EA\ra BA\otimes EA$.  

The properties of the functors $C^\pm_A$ that will be important for us
are summarized in the following theorem. Here the morphisms in the 
category of right or left modules over a DG algebra $A$ are taken to
be the $A$-linear chain maps of degree $0$.   

\begin{theorem} \label{Cpm-Structure-Theorem}
Let $A$ be a degreewise $R$-free augmented DG algebra.
Then the assignments 
\[ M\mapsto C^\pm_A(M) \; \mbox{ and } \; 
(g\colon M\ra M')\mapsto (g^\pm\colon C^\pm_A(M)\ra C^\pm_A(M')  \]
define functors $C^\pm_A$ from the category of right $A$-modules
to the category of left $C_A^-(R)$-modules. Moreover, these functors
preserve quasi-isomorphisms and short exact sequences. 
\end{theorem}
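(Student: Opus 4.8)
The plan is to verify each asserted property of $C^\pm_A$ in turn, leaning heavily on the already-established Proposition~\ref{Invariance} for the preservation of quasi-isomorphisms and on the explicit formulas \eqref{ModuleStructure1} and \eqref{ModuleStructure2} for the module structures. First I would check functoriality: for $C^+_A$ this is immediate since $B(-,A,R)$ is visibly functorial in the first variable, and a morphism $g\colon M\to M'$ of right $A$-modules induces $B(g,1,1)$, which one checks is $C_A^-(R)$-linear by inspecting the composition in \eqref{ModuleStructure1} — the comodule structure map $\psi_M$ is natural in $M$, and the remaining maps $\tau\otimes 1$ and $1\otimes\operatorname{ev}$ do not involve $M$ in the relevant factor, so naturality is formal. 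For $C^-_A$, a morphism $g\colon M\to M'$ induces $\operatorname{Hom}_A(EA,g)$, post-composition with $g$, which is a chain map of degree $0$; its $C_A^-(R)$-linearity follows again from inspecting \eqref{ModuleStructure2}, noting that $\zeta$ and $\psi_{EA}^*$ are natural in the target module $M$. Composition and identities are respected on the nose in both cases. That $C_A^-(R)$ is itself a DG algebra (so the phrase ``left $C_A^-(R)$-modules'' makes sense) was already recorded in the discussion preceding the theorem via the dualization of the DG coalgebra structure on $BA$; one should also verify that \eqref{ModuleStructure1} and \eqref{ModuleStructure2} are associative and unital actions, which is a diagram chase using the coassociativity of $\psi$ and the counit axiom — routine but slightly tedious on the $C^-$ side because of the $\operatorname{Hom}$-adjunction signs.

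Next, preservation of quasi-isomorphisms is exactly part~(ii) of Proposition~\ref{Invariance}, given the standing hypothesis that $A$ is degreewise $R$-free and $R$ is a PID; there is nothing to do beyond citing it, though I would remark that the maps $g^\pm$ produced there agree with the functorial maps just described, so the invariance statement transfers verbatim.

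Finally, preservation of short exact sequences. Suppose $0\to M'\to M\to M''\to 0$ is a short exact sequence of right $A$-modules. For $C^+_A = B(-,A,R)$ this is the most transparent case: $B_{p,q}(-,A,R) = (-\otimes\bar A^{\otimes p}\otimes R)_q$, and since $\bar A$ is degreewise free (hence flat) over $R$, tensoring with $\bar A^{\otimes p}$ is exact, so each bigraded piece stays short exact; exactness of the totalization follows because a short exact sequence of double complexes totalizes to a short exact sequence of complexes (the totalization is an exact functor, being a colimit of finite direct sums of the exact functors $B_{p,q}$). For $C^-_A = \operatorname{Hom}_A(EA,-)$ one argues that $EA = B(R,A,A)$ is, degreewise, a free right $A$-module — indeed $B_{p,*}(R,A,A) = \bar A^{\otimes p}\otimes A$ is free over $A$ since $\bar A^{\otimes p}$ is $R$-free — so $\operatorname{Hom}_A(EA,-)$ is exact degreewise, and again totalization (here an inverse limit of finite products, but the products are finite in each total degree because $EA$ is concentrated in non-negative simplicial degrees and bounded below, or one invokes that $\operatorname{Hom}_A(EA,-)$ is computed degree-by-degree) preserves the short exact sequence. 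I expect the main obstacle to be the bookkeeping on the $C^-$ side: one must be slightly careful that $\operatorname{Hom}_A(EA,-)$ genuinely commutes with the relevant (co)limits defining the total complex and that no completion is secretly needed here — this is fine precisely because we are working with the \emph{uncompleted} functors of this subsection, where $EA$ is degreewise finitely generated free over $A$ in each simplicial degree and the total-degree pieces are honest finite products; the completed versions, treated later, are where this would fail.
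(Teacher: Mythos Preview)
The paper does not give a proof of this theorem; it is stated as a summary of properties and implicitly attributed to \cite{Miller19}. So there is no ``paper's own proof'' to compare against, and your outline must be judged on its own.

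Your approach is sound and would work, with one correction on the $C^-$ side. The claim that ``the products are finite in each total degree because $EA$ is concentrated in non-negative simplicial degrees and bounded below'' is false in general: already for $A=\Lambda_R[u]$ with $|u|=3$ and $M$ unbounded, one has $\operatorname{Hom}_A(EA,M)_n\cong\prod_{p\ge 0}M_{n+4p}$, an infinite product (this is exactly formula \eqref{Iso--}). Likewise, $EA$ is \emph{not} degreewise finitely generated over $A$ unless $\bar A$ is degreewise finite rank. Fortunately neither finiteness claim is needed: since $EA=\bigoplus_{p\ge 0}\bar A^{\otimes p}\otimes A$ is a direct sum of free right $A$-modules (here $\bar A\cong\ker\epsilon$ is degreewise $R$-free as a submodule of a free module over the PID $R$, hence $\bar A^{\otimes p}$ is $R$-free), we have
\[
\operatorname{Hom}_A(EA,M)\cong\prod_{p\ge 0}\operatorname{Hom}_R(\bar A^{\otimes p},M),
\]
each factor is exact in $M$ because $\bar A^{\otimes p}$ is $R$-free, and arbitrary products of short exact sequences in $R\text{-Mod}$ are short exact. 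That is the whole argument; no finiteness is required and no completion issue arises. With this adjustment your proof goes through.
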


We will now treat the case $A=\Lambda_R[u]$ with $|u|=3$ in detail. These calculations will be important for later purposes and will serve as an excellent example of how the above constructions work. We should note
that all of these calculations easily generalize to the case where
the degree of $u$ is any odd number.  

We will write $R[U]$ for the graded polynomial algebra on a single generator
$U$ of even non-zero degree. This is also a coalgebra with counit given
by the identity in degree $0$ and diagonal determined by
\[ U^p\mapsto \sum_{i=0}^p U^i\otimes U^{p-i}   \]
where $U^0=1$. This structure is compatible with the algebra structure and turns $R[U]$ into a Hopf algebra, but this will not be important for us.  

\begin{lemma} \label{Orbit-Calc-Lemma}
Let $A=\Lambda_R[u]$ with $|u|=3$. Then there is an isomorphism
of DG coalgebras $BA\cong R[V]$ where $|V|=4$, and an isomorphism of
DG algebras $C_A^-(R)=BA^\vee\cong R[U]$ where $|U|=-4$. In particular, these
complexes have trivial differentials so that
$H^+_A(R)=R[V]$ and $H^-_A(R)=R[U]$. The first isomorphism is given by
\[ V^p\mapsto (-1)^{p(p+1)/2}[u|\cdots |u] \;\; \mbox{ (p times) }\]
for $p\geq 0$. The second is obtained by dualization of the first
where $U^p$ is determined by the evaluation $U^p(V^p)=1$ for all $p\geq 0$. Furthermore, the left $BA^\vee$-module structure of $BA$ is given by
$U^s\cdot V^p = V^{p-s}$, where we interpret $V^i=0$ for $i<0$. 
\end{lemma}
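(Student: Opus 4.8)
The plan is to compute $BA = B(R, \Lambda_R[u], R)$ directly from the definition of the bar construction and then dualize. First I would observe that since $A = \Lambda_R[u]$ has $\bar A = R\cdot u$ a free $R$-module on one generator $u$ in degree $3$, each $B_{p,*}(R,A,R) = \bar A^{\otimes p}$ is free of rank one, generated by $[u|u|\cdots|u]$ ($p$ copies) in internal degree $3p$, hence total degree $4p$. So $BA$ is free of rank one in each degree $4p$, $p\geq 0$, and zero otherwise. Next I would check the differential vanishes: the internal differential $\dd^i$ is zero because $\dd_A = 0$ on $\Lambda_R[u]$, and the simplicial differential $\dd^s$ on $[u|\cdots|u]$ produces terms of two kinds — the end terms $1\cdot u[u|\cdots|u]$ and $[u|\cdots|u]\cdot u$, which vanish because $R$ has the trivial module structure (so $1_R\cdot u = 0$ in $\bar A\otimes R$), and the internal terms $\pm[u|\cdots|u^2|\cdots|u]$, which vanish because $u^2 = 0$ in $\Lambda_R[u]$. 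Therefore $\dd = 0$ on $BA$, and $BA\cong R[V]$ as graded modules with $V$ in degree $4$, under $V^p\mapsto \pm[u|\cdots|u]$.

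The sign $(-1)^{p(p+1)/2}$ and the claim that this is a coalgebra isomorphism is where the bookkeeping lives. I would pin down the sign by insisting that $\psi_{R,R}\colon BA\to BA\otimes BA$ of Lemma \ref{Co-Alg-Mod-Structure} sends $V^p$ to $\sum_{i=0}^p V^i\otimes V^{p-i}$, matching the stated coalgebra structure on $R[V]$. Applying the explicit formula for $\psi_{R,R}$ to $[u|\cdots|u]$ gives $\sum_{i=0}^p (-1)^{(p-i)\cdot 3i}[u|\cdots|u]\otimes[u|\cdots|u]$ (the $i$-fold and $(p-i)$-fold strings); since $(-1)^{3i(p-i)} = (-1)^{i(p-i)}$, one needs a normalization $c_p$ with $c_i c_{p-i} = (-1)^{i(p-i)} c_p$ for all $i$, and $c_p = (-1)^{p(p+1)/2}$ is the unique such solution with $c_0 = c_1 = 1$ (this follows from $\binom{p}{2} = \binom{i}{2} + \binom{p-i}{2} + i(p-i)$). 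This is the main computational obstacle — the rest is routine — but it is a finite check.

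Having identified $BA\cong R[V]$ as a DG coalgebra, the identification $C_A^-(R) = \om{Hom}_A(EA, \eps^{-1}R)\cong \om{Hom}_R(BA, R) = BA^\vee$ is the sequence of isomorphisms already recorded in the text before Theorem \ref{Cpm-Structure-Theorem}; dualizing the coalgebra $R[V]$ gives the algebra $R[U]$ with $U$ the dual basis element to $V$ in degree $-4$, so $U^p$ is characterized by $U^p(V^q) = \delta_{pq}$, and in particular $U^p(V^p) = 1$. The multiplication $U^sU^t = U^{s+t}$ is exactly the transpose of the comultiplication $V^p\mapsto \sum V^i\otimes V^{p-i}$. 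Finally, for the left $BA^\vee$-module structure on $C^+_A(R) = BA$: plugging $M = R$ into the composition \eqref{ModuleStructure1}, the comodule map $\psi_{R,R}$ sends $V^p\mapsto\sum_i V^i\otimes V^{p-i}$, and after the interchange and evaluation of $U^s$ against the $BA$-factor $V^i$ one picks out the $i = s$ term, leaving $V^{p-s}$ (and $0$ when $s > p$). This gives $U^s\cdot V^p = V^{p-s}$ as claimed, completing the proof.
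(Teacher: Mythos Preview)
Your argument is correct and follows essentially the same route as the paper's proof: identify $BA$ degreewise, observe the differential vanishes, compute $\psi_{R,R}$ on $[u|\cdots|u]$, fix the sign normalization so that the coproduct becomes $V^p\mapsto\sum V^i\otimes V^{p-i}$, then dualize and read off the module structure. One small slip: with $c_p=(-1)^{p(p+1)/2}$ you have $c_1=-1$, not $c_1=1$, and the identity you want is $\tbinom{p+1}{2}=\tbinom{i+1}{2}+\tbinom{p-i+1}{2}+i(p-i)$ (equivalently the paper's $\tfrac{p(p+1)}{2}=(p-i)i+\tfrac{i(i+1)}{2}+\tfrac{(p-i)(p-i+1)}{2}$) rather than the $\tbinom{p}{2}$ version, though both binomial identities hold and either sign convention solves the functional equation.
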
  
\begin{proof} We have $A=\Lambda_R[u]$ so that $A_0=R$, $A_3=Ru$ and
$A_n=0$ otherwise. The augmentation is necessarily given by the identity
in degree $0$. Hence, $\ovl{A}=Ru$ concentrated in degree $3$. From the
definition of $B(R,A,R)$ we find that $BA_{p,3p}=R\cdot Z^p$ for $p\geq 0$, where $Z^p=[u|\cdots |u]$ with $u$ repeated $p$ times, and $B_{p,q}=0$ otherwise. It follows immediately that the differential vanishes identically. 
Using the formula for the coalgebra structure map in Lemma \ref{Co-Alg-Mod-Structure} we find that
\[ \psi_{R,R}(Z^p) = \sum_{i=0}^p (-1)^{(p-i)3i}Z^i\otimes Z^{p-i}  .\]
Define $V^p = (-1)^{p(p+1)/2}Z^p$ for $p\geq 0$. Then using the identity
\[ \frac{p(p+1)}{2} = (p-i)i + \frac{i(i+1)}{2}+\frac{(p-i)(p-i+1)}{2}
 \]
we deduce that the above formula transforms into
$\psi_{R,R}(V^p)= \sum_{i=0}^p V^i\otimes V^{p-i}$.
Hence, $BA\cong R[V]$ as DG coalgebras with trivial differentials. 

Define $U^p\in BA^\vee$ to be the dual of $V^p$, i.e., $U^p\colon BA\ra R$
has degree $-4p$ and acts by $U^p(V^p)=1$ and vanishes otherwise. The differential is still of course trivial. The product $U^s\cdot U^t$ has degree
$-4(s+t)$ and must therefore be an $R$-multiple of $U^{s+t}$. The
calculation
\[ (U^s\cdot U^t)(V^{s+t}) = (U^s\otimes U^t)\left( \sum_{i=0}^{s+t} V^i\otimes V^{s+t-i}\right) = U^s(V^s)U^t(V^t)=1  \]
shows that $U^s\cdot U^t = U^{s+t}$. Hence, $C_A^-(R)\cong BA^\vee \cong R[U]$
as DG algebras with trivial differentials. 
Finally, we need to check that the left module structure $BA^\vee\otimes BA\ra BA$ is given by $U^s\cdot V^p=V^{p-s}$. Using the definition we find
\[ U^s\cdot V^p = \sum_{i=0}^p (-1)^{|U^s||V^i|} V^i \cdot U^s(V^{p-i})
=\left\{ \begin{array}{cl} V^{p-s} & \mbox{ if } p\geq s \\
                           0 & \mbox{ otherwise.} \end{array} \right. \]
as required.   
\end{proof}   

We will also need explicit calculations of $H^\pm_A(M)$ when
$M=H_*(\alpha;R)$ for $\alpha = \om{SO}(3)/\om{SO}(2)$ or $\alpha=\om{SO}(3)$,
under the assumption $\frac12 \in R$. In other words, 
$M=R\oplus R[2]$ with the trivial $A$-module structure or
$M=A=\Lambda_R[u]$ (see \eqref{Eq-Orbit-Calc}). For this purpose we give concrete models for the complexes $C^\pm_A(M)$. Note that a right 
$\Lambda_R[u]$-module $M$ is simply a DG module equipped with a degree
$3$ map $u:M\ra M$ satisfying $u^2=0$ and $u\dd_M=\dd_Mu$ (because $A$ acts from the right).   

\begin{proposition} \label{Explicit-Cpm-Models}
Let $M$ be a right $A=\Lambda_R[u]$-module. Define
bigraded modules $D^\pm_{*,*}$ by
\[ D^+_{p,q} = \left\{ \begin{array}{cl} M_{q-3p} & p\geq 0 \\
                                         0 & p<0 \end{array} \right.
\; \mbox{ and } \; D^-_{p,q} =\left\{
 \begin{array}{cl} M_{q-3p} & p\leq  0 \\
                  0 & p>0 \end{array} \right.    .\]
Define $U\colon D^\pm_{p,q}\ra D^\pm_{p-1,q-3}$ to be the identity $M_{q-3p}\ra M_{q-3p}$ when $p\geq 1$ and $p\leq 0$, respectively, and otherwise to be
zero. Define
\begin{align*}
 \dd'=(-1)^{p+q+1}u\colon D^{\pm}_{p,q}&=M_{q-3p}\ra M_{q-3p+3}=D^\pm_{p-1,q}\\
\dd''=\dd_M\colon D^\pm_{p,q}&=M_{q-3p}\ra M_{q-3p-1}=D^\pm_{p,q-1},
\end{align*} 
in the range it makes sense. Then $(D^+_{*,*},\dd',\dd'')$ and $(D^-_{*,*},\dd',\dd'')$ are double complexes and there are natural isomorphisms of
DG $R[U]$-modules
\[ \om{Tot}^\oplus (D^+_{*,*},\dd',\dd'') \cong C^+_A(M) \;\; \mbox{ and } \;\;
  \om{Tot}^\Pi (D^-_{*,*},\dd',\dd'') \cong C^-_A(M)  .\]
\end{proposition}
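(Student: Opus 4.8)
The plan is to write down explicit basis-level comparison isomorphisms and then chase differentials, the $U$-action, and signs. Since $A=\Lambda_R[u]$ with $|u|=3$, the augmentation ideal $\bar A=A/R\cdot 1$ is a free $R$-module of rank one, concentrated in degree $3$ and generated by the class of $u$; hence $\bar A^{\otimes p}$ is free of rank one on $[u|\cdots|u]$ (degree $3p$), and $B_{p,q}(M,A,R)=(M\otimes\bar A^{\otimes p})_q$ is free on $\{m[u|\cdots|u]:m\in M_{q-3p}\}$. So $B_{p,q}(M,A,R)\cong M_{q-3p}=D^+_{p,q}$ as $R$-modules, and I would define $\Phi^+\colon\om{Tot}^\oplus(D^+_{*,*})\to B(M,A,R)=C^+_A(M)$ by $\Phi^+(m)=\eps_{p,q}\,m[u|\cdots|u]$ for $m\in D^+_{p,q}$, with signs $\eps_{p,q}\in\{\pm1\}$ to be chosen. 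Similarly $EA_{p,*}=B_{p,*}(R,A,A)$ is a free right $A$-module of rank one on the generator $[u|\cdots|u]1$, which has total degree $p+3p=4p$; hence $\om{Hom}_A(EA_{p,*},M)_n\cong M_{4p+n}$ via $\phi\mapsto\phi([u|\cdots|u]1)$, and since $\om{Hom}_A$ turns the direct sum $EA=\bigoplus_p EA_{p,*}$ into a product, $C^-_A(M)_n=\om{Hom}_A(EA,M)_n\cong\prod_{p\ge0}M_{4p+n}=\om{Tot}^\Pi(D^-_{*,*})_n$; call the resulting (sign-twisted) bijection $\Phi^-$.

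Next I would compute the bar differentials on $m[u|\cdots|u]$. Since $u^2=0$ in $\Lambda_R[u]$, every inner merging term of $\dd^s$ vanishes; of the two outer terms, $mu[u|\cdots|u]$ (right multiplication on $M$) and $(-1)^p m[u|\cdots|u]\,\eps(u)$, the latter is zero as $\eps(u)=0$; hence $\dd^s(m[u|\cdots|u])=(mu)[u|\cdots|u]$ with one fewer $u$. Since $\dd_Au=0$ and $\dd_R=0$, the internal differential collapses to $\dd^i(m[u|\cdots|u])=(-1)^p(\dd_Mm)[u|\cdots|u]$. Comparing with the prescribed $\dd'=(-1)^{p+q+1}u$ and $\dd''=\dd_M$ forces the recursions $\eps_{p-1,q}=(-1)^{p+q+1}\eps_{p,q}$ and $\eps_{p,q-1}=(-1)^{p}\eps_{p,q}$; a one-line check that the two routes from $\eps_{p,q}$ to $\eps_{p-1,q-1}$ agree shows they are compatible, hence solvable (for instance $\eps_{p,q}=(-1)^{pq+\lfloor p/2\rfloor}$). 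The same computation, together with $\dd_Mu=u\dd_M$ and $\dd_M^2=0$, also confirms directly that $(D^\pm_{*,*},\dd',\dd'')$ are genuine anti-commuting double complexes, so $\Phi^+$ is an isomorphism of double complexes and hence of their totalizations.

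For $C^-_A(M)=\om{Hom}_A(EA,M)$ one argues in parallel. Here $\dd^i$ vanishes identically on $EA$ (again since $\dd_Au=0$ and $\dd_R=0$), so the Hom-differential $\dd_M\circ\phi-(-1)^{|\phi|}\phi\circ\dd^s$ has vertical part $\dd_M\circ\phi$, matching $\dd''=\dd_M$, and horizontal part governed by $\dd^s$, which sends the degree-$4p$ generator of $EA_{p,*}$ to $\pm\bigl(\text{the degree-}4(p-1)\text{ generator of }EA_{p-1,*}\bigr)\cdot u$; transported through $\Phi^-$ this becomes right multiplication by $u$ up to sign, taking column $-(p-1)$ to column $-p$, i.e. the direction of $\dd'$. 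One then fixes the twist in $\Phi^-$ to reproduce $\dd'=(-1)^{p+q+1}u$ exactly, the consistency of the necessary recursions being the same computation as in the $C^+$ case; this makes $\Phi^-$ an isomorphism of the corresponding double complexes, hence of the totalizations $\om{Tot}^\Pi$.

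It remains to match the $R[U]$-module structures. By Lemma~\ref{Orbit-Calc-Lemma}, $C^-_A(R)\cong R[U]$ with $|U|=-4$ and $U$ dual to $V=-[u]$. Inserting this into the structure maps \eqref{ModuleStructure1} and \eqref{ModuleStructure2} and pairing the $BA$-factor of the comodule diagonals $\psi_{M,R}$, $\psi_{R,A}$ against $U$ extracts, from $m[u|\cdots|u]$ (resp. from a functional supported on $EA_{p,*}$), precisely the term obtained by deleting one $u$ (resp. by precomposing with deletion of one $u$), up to a sign; transported through $\Phi^\pm$ this is exactly the prescribed map $U\colon D^\pm_{p,q}\to D^\pm_{p-1,q-3}$, the identity of $M_{q-3p}$. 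Naturality in $M$ is immediate from the formulas for $\Phi^\pm$. The only real work is the sign bookkeeping: one must reconcile the $(-1)^p$ twist in $\dd^i$, the $(-1)^{(p-i)\eps_i}$ in the comodule diagonal, the interchange and evaluation signs in \eqref{ModuleStructure1}, the $-(-1)^{|f|}$ of the Hom-complex, and the signs from Lemma~\ref{Orbit-Calc-Lemma}, and see that a single choice of $\eps_{p,q}$ (resp. its $\Phi^-$-analogue) simultaneously intertwines $\dd'$, $\dd''$, and $U$ — using, if necessary, the leftover freedom of rescaling $\eps_{p,q}$ by a sign depending only on $p$, which changes $\dd'$ and $U$ in tandem while leaving $\dd''$ fixed. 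I expect this sign reconciliation to be the main, and essentially the only, obstacle.
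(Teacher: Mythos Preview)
Your proposal is correct and follows essentially the same route as the paper: identify $B_{p,q}(M,A,R)\cong M_{q-3p}$ and $\om{Hom}_A(EA,M)_n\cong\prod_{p\ge0}M_{n+4p}$ via the obvious generators, compute the bar differentials (exactly as you do, using $u^2=0$, $\eps(u)=0$, $\dd_Au=0$), and then fix a sign twist so that $\dd^s$, $\dd^i$, and the $U$-action match the prescribed $\dd'$, $\dd''$, $U$. The paper simply writes down the explicit solution $\eps_{p,q}=(-1)^{p(p+1)/2+p(q-3p)}$ for $C^+$ and uses the basis $V^p=(-1)^{p(p+1)/2}[u|\cdots|u]$ for $EA$ in the $C^-$ case, rather than setting up and solving your recursions, but the content is identical; in particular your recursions for $\dd'$ and $\dd''$ already force the $U$-compatibility (iterate the $\dd''$-recursion three times and compose with the $\dd'$-recursion), so no extra freedom is needed.
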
 
\begin{proof} We have $\bar{A}=R\cdot u$ with $u$ in degree $3$. Therefore,
$B_{p,q}=(M\otimes \bar{A}^{\otimes p})_q \cong M_{q-3p}\otimes \bar{A}^{\otimes p}_{3p} \cong M_{q-3p}$. The isomorphism is given by
$m\mapsto m[u|u|\cdots |u]$ with $u$ repeated $p$ times. For simplicity
of notation write this as $m\otimes Z^p$. Then, using the formulas for
the differentials, we find $\dd^s(m\otimes Z^p) = mu\otimes Z^{p-1}$
and $\dd^i(m\otimes Z^p)=(-1)^p \dd_Mm\otimes Z^p$. The action of
$U$ is given by first applying the comodule structure map to
$m\otimes Z^p$, and then evaluating $U$ on the right factors in the
resulting sum ($U$ has even degree so the interchange introduces no sign).
We have
\[ \psi_{M,R}(m\otimes Z^p) = \sum_{i=0}^p (-1)^{(p-i)(|m|+3i)}(m\otimes Z^i)\otimes Z^{p-i}  \]
so that $U\cdot (m\otimes Z^p)=(-1)^{|m|+p-1}(m\otimes Z^{p-1})\cdot U(Z^1)
= (-1)^{|m|+p}m\otimes Z^{p-1}$ as $U$ was defined to be the dual of
$V^1 = -Z^1$. It is now easy to check that when we introduce the
sign change $m\in D^+_{p,q}=M_{q-3p}\mapsto (-1)^{p(p+1)/2+p|m|}m\otimes Z^p$
the $U$ action and the differentials are given as in the proposition. 

In the second case, $EA=B(R,A,A)$ has generators over $R$ of the form
$Z^p = [u|u|\cdots |u]$ and $Z^p\otimes u = [u|u|\cdots |u]u$
in degrees $4p$ and $4p+3$ respectively. It is convenient as earlier
to introduce the sign twist $V^p = (-1)^{p(p+1)/2}Z^p$, since then the
differential is given by $\dd(V^p)=V^{p-1}\otimes u$ and
$\dd(V^p\otimes u)=0$ and the comodule structure map $\psi_{R,A}\colon EA\ra BA\otimes EA$ is given by
\[ \psi_{R,A}(V^p) = \sum_{i=0}^p V^i\otimes V^{p-i}   .\]
An element $f\in \om{Hom}_A(EA,M)_n$ is by $A$-linearity uniquely determined
by the elements $f(V^p)\in M_{n+4p}$ for $p\geq 0$. The differential is given
by
\[ (\dd f)(V^p) = \dd_M(f(V^p))-(-1)^{|f|}f(\dd V^p)
                = \dd_M(f(V^p))-(-1)^{|f|}f(V^{p-1})u  \]
and the action of $U$ is given by
\[ (U\cdot f)(V^p)=(U\otimes f)(\psi_{R,A}(V^p)) =(U\otimes f)(V\otimes V^{p-1}) = f(V^{p-1})  .\]
It is now easy to check that the complex $C^-_A(M)=\om{Hom}_A(EA,M)$
can be described as the product totalization of the double complex
given in the proposition. \end{proof}

For later reference we record the following facts from the above proof. 
The isomorphism $\om{Tot}^\oplus(D^+,\dd',\dd'')\cong C^+_A(M)$ is given by sending an element $m\in M_{q-3p}=D^+_{p,q}$ to
\begin{equation} \label{Iso-+} 
(-1)^{p(q-p)+p(p+1)/2}m\overbrace{[u|u|\cdots |u]}^{p} \in B(M,A,R)_{p,q}
\end{equation} 
and the isomorphism $C^-_A(M)\cong \om{Tot}^\Pi(D^-,\dd',\dd'')$ is given by sending an element $f\in \om{Hom}_A(EA,M)_n$ to
\begin{equation} \label{Iso--}
 ((-1)^{p(p+1)/2}f(\overbrace{[u|u|\cdots |u]}^p))_{p\geq 0} \in \prod_{p\geq 0} M_{n+4p} =\prod_{p\geq 0} D^-_{-p,n+p} .
\end{equation}  

It should be noted that there are other choices of signs possible. We
have chosen the signs that ensure that the action of $U$ corresponds to
the identity map. 
A part of the double complex $(D^+_{*,*},\dd',\dd'')$ corresponding
to a right $A$-module $M$ is shown below
\[ \begin{tikzcd} 
{} & \vdots \arrow{d} & \vdots \arrow{d} & \vdots \arrow{d} & {} \\
0 & M_2 \arrow[dotted]{dddl} \arrow{l} \arrow{d}{\dd_M} & M_{-1} \arrow[dotted]{dddl} \arrow{l}{u} \arrow{d}{\dd_M}
& M_{-4} \arrow{l}{-u} \arrow[dotted]{dddl} \arrow{d}{\dd_M} & \cdots \arrow{l} \\
0 & M_1 \arrow{l} \arrow{d}{\dd_M} & M_{-2} \arrow{d}{\dd_M} \arrow{l}{-u}
& M_{-5} \arrow{d}{\dd_M} \arrow{l}{u} & \cdots \arrow{l} \\
0 & M_0 \arrow{l} \arrow{d}{\dd_M} & M_{-3} \arrow{l}{u} \arrow{d}{\dd_M}
& M_{-6} \arrow{l}{-u} \arrow{d}{\dd_M} & \cdots \arrow{l} \\
0 & M_{-1} \arrow{l} \arrow{d} & M_{-4} \arrow{l}{-u} \arrow{d} &
M_{-7} \arrow{l}{u} \arrow{d} & \cdots . \arrow{l} \\
{} & \vdots & \vdots & \vdots & {} \end{tikzcd} \]
The dotted arrows represent the action of $U$. An element $x$ of
$C^+_A(M)_n$ can be represented by a sequence $(m_{n-4p})_{p\geq 0}$,
where almost all terms vanish. This element $x$ is a cycle if and only if
$\dd_Mm_{n-4p}=(-1)^n m_{n-4(p-1)}u$ for each $p\geq 0$. Each column
is a shifted copy of the original complex $M$. The double complex
$(D^-_{*,*},\dd',\dd'')$ is obtained by shifting the whole diagram from
the right half plane over to the left half plane. A more concise representation
of this double complex is given in the following diagram
\[ \begin{tikzcd} \cdots & M[-9[ \arrow{l} & M[-6] \arrow{l}[swap]{\pm u} 
& M[-3] \arrow{l}[swap]{\mp u} & M \arrow{l}[swap]{\pm u} 
& 0. \arrow{l} \end{tikzcd} \] 
Note that in this case the internal differential in $M[-3n]$, $n\geq 0$, 
is not twisted by a sign, and that the $\pm u$ notation means that
the sign depends on the internal degree. 

With these concrete models in hand it is easy to calculate the following.
\begin{corollary} \label{Orbit-Calc-Corollary}
We have $H^+_A(A)\cong R$, $H^-_A(A)\cong R[3]$, with
necessarily trivial $R[U]$-module structure, and
$H^+_A(R\oplus R[2])\cong R[W]$ with $|W|=2$, $H^-_A(R\oplus R[2])=R[Z][2]$
with $|Z|=-2$. The $R[U]$-module structures are determined by $U\cdot W^p = W^{p-2}$ and $U\cdot Z^p=Z^{p+2}$. 
\end{corollary}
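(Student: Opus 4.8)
The plan is to use the explicit double-complex models from Proposition \ref{Explicit-Cpm-Models} and simply compute the homology of the relevant totalizations. For each of the two right $A$-modules in question the internal differential $\dd_M = \dd''$ is zero (both $M = A = \Lambda_R[u]$ with its standard differential, which is zero, and $M = R\oplus R[2]$ with the trivial module structure have $\dd_M = 0$), so the only differential in the double complex is the horizontal one $\dd' = \pm u$. Thus $H^{\pm}_A(M)$ is computed columnwise (after taking homology in the $\dd''$-direction, which does nothing) via the horizontal complex whose $p$-th term is a shifted copy of $M$ and whose maps are $\pm u\colon M \to M$.

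First I would treat $M = A = \Lambda_R[u]$. Here $u\colon A\to A$ is injective on $A_0 = R$ (image $Ru = A_3$) and zero on $A_3$, so in the horizontal complex $\cdots \to A[-6] \xrightarrow{\pm u} A[-3] \xrightarrow{\pm u} A \to 0$ each consecutive pair of maps has the property that the kernel of one equals the image of the previous, except at the two extreme ends. A direct bookkeeping of degrees shows that for $C^+_A(A) = \om{Tot}^\oplus(D^+)$ all the copies of $R$ in positive internal degree cancel in pairs and one is left with a single $R$ in total degree $0$, giving $H^+_A(A)\cong R$; dually, for $C^-_A(A) = \om{Tot}^\Pi(D^-)$ one is left with a single $R$ in total degree $3$, giving $H^-_A(A)\cong R[3]$. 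Since $U$ acts by the vertical (dotted) identity maps and the surviving class sits in a single column, the $R[U]$-module structure is forced to be trivial in both cases. I would phrase this cancellation cleanly by noting that the horizontal complex of $R$-modules $\cdots \to A \xrightarrow{u} A \xrightarrow{u} A \to 0$ (with the appropriate shifts) is exact except at the far end, which is an elementary check on $\Lambda_R[u]$.

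Next I would treat $M = R\oplus R[2]$ with trivial $A$-action, so $u = 0$ on $M$. Then every horizontal map $\dd'$ in the double complex vanishes, and the totalization is just a direct sum (resp. product) of shifted copies of $M$ indexed by $p\ge 0$ (resp. $p\le 0$). Collecting degrees: $C^+_A(M)$ has one copy of $R$ in each total degree $4p$ and one copy of $R[2]$ contributing a copy of $R$ in each total degree $4p+2$, $p\ge 0$; so $H^+_A(M) \cong R[W]$ with $|W| = 2$, where $W^p$ is the generator living in the appropriate column. Since $U$ shifts the column index $p$ down by one and has degree $-4$, and $W$ has degree $2$ with $W$, $W^2$, $W^3,\dots$ occupying columns $0,1,2,\dots$ alternating between the two $R$-summands, one reads off $U\cdot W^p = W^{p-2}$. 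The negative case is entirely analogous with the product totalization, giving $H^-_A(M) = R[Z][2]$ with $|Z| = -2$ and $U\cdot Z^p = Z^{p+2}$; here one must be a little careful with the grading shift by $[2]$ and with which direction $U$ moves things, since the $-$ side lives in the left half-plane.

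The only mildly delicate points, and where I would spend the most care, are the sign/degree bookkeeping in the first case (verifying that the surviving homology class of $C^{\pm}_A(A)$ is exactly one copy of $R$ and identifying its total degree, rather than accidentally getting extra classes from a miscounted shift), and pinning down the precise exponent shift in the $U$-action formulas $U\cdot W^p = W^{p-2}$ and $U\cdot Z^p = Z^{p+2}$ — this requires tracking that $W$ and $Z$ are degree $\pm 2$ generators while $U$ has degree $\mp 4$, so $U$ must jump two powers, and then checking the direction matches the fact that $U$ acts as the identity-shift on columns in the model of Proposition \ref{Explicit-Cpm-Models}. Both are routine once the models are in hand, so I expect no real obstacle, only careful indexing.
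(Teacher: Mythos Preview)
Your proposal is correct and follows exactly the approach the paper intends: the paper gives no explicit proof, stating only that ``with these concrete models in hand it is easy to calculate the following,'' and your computation via the double complexes of Proposition~\ref{Explicit-Cpm-Models} is precisely that easy calculation. Your identification of the surviving classes and the $U$-action is accurate in each case.
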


\begin{remark} The notation $R[W]$ and $R[Z][2]$ in the above result is only used to effectively express the underlying graded modules and the action
of $R[U]$, it should not be interpreted to mean that $H^\pm_A(R\oplus R[2])$
carry any (co)algebra structure. \end{remark}    

\subsection{Tate Homology}
We will now proceed to the construction of $C^\infty_A(M)$.  

\begin{definition} \label{Def-Dualizing-Object} 
For an augmented $DG$ algebra $A$ define the dualizing object by
\[ D_A\coloneqq C^-_A(A)= \om{Hom}_A(EA,A)   .\]
\end{definition}

The dualizing object is a (left-left) $A-C^-_A(R)$-bimodule. The left
$C^-_A(R)$-module structure is the one already given in \eqref{ModuleStructure2},
while the left $A$-module structure is obtained from the second factor
in $D_A = \om{Hom}_A(EA,A)$, that is, 
$(a\cdot f)(x):=af(x)$ for all $a\in A$, $f\in \om{Hom}_A(EA,A)$ and $x\in EA$. This justifies the following definition. 

\begin{definition} For a right $A$-module $M$ define the twisted
positive $A$-chains of $M$ to be the $DG$ module
\[ C^{+,tw}_A(M) \coloneqq B(M,A,D_A)   .\]
The homology of this complex is denoted by $H^{+,tw}_A(M)$ and is called
the positive twisted $A$-homology of $M$.
\end{definition}

The complex $C^{+,tw}_A(M)$ carries a left $C^-_A(R)$-module structure
induced from the left $C^-_A(R)$-module structure of $D_A$.

\begin{definition} \label{Def-Norm-Map1} 
The norm map $N_M\colon C^{+,tw}_A(M)\ra C^-_A(M)$ is
for each right $A$-module $M$ defined to be the composite
\[ \begin{tikzcd} B(M,A,D_A) \arrow{r}{\eps} &
M\otimes_A D_A  \arrow{r}{\xi} & \om{Hom}_A(EA,M), \end{tikzcd} \]
where $\eps$ is the augmentation and $\xi$ is the adjoint of
\[ \begin{tikzcd} M\otimes_A(D_A\otimes EA) \arrow{r}{1\otimes \om{ev}} &
M\otimes_A A=M.  \end{tikzcd}  \]
Explicitly, for $m[a_1|\cdots |a_p]v\in B(M,A,D_A)$ and $x\in EA$,
\begin{equation} \label{Norm-Map-Def}
N_M(m[a_1|\cdots | a_p]v)(x) =\left\{ \begin{array}{cc} 
m\cdot v(x) & \mbox{ for } p=0 \\
0 & \mbox{ for } p>0. \end{array} \right.
\end{equation} 
\end{definition}  

\begin{lemma} The norm map $N_M\colon C^{+,tw}_A(M)\ra C^-_A(M)$ is
$C^-_A(R)$-linear and natural in $M$. \end{lemma}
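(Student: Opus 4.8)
The plan is to verify the two asserted properties of the norm map $N_M\colon C^{+,tw}_A(M)=B(M,A,D_A)\to C^-_A(M)=\om{Hom}_A(EA,M)$ directly from the explicit formula \eqref{Norm-Map-Def}, namely that $N_M$ is $C^-_A(R)$-linear and natural in $M$. Naturality should be the routine half: given an $A$-module map $g\colon M\to M'$, the induced maps on $B(M,A,D_A)$ and $\om{Hom}_A(EA,M)$ are $B(g,1,1)$ and $g_*$ respectively, and chasing a generator $m[a_1|\cdots|a_p]v$ through both composites gives, for $p=0$, the element $x\mapsto g(m)\cdot v(x) = g(m\cdot v(x))$ by $A$-linearity of $g$, and $0$ for $p>0$ on both sides; so the naturality square commutes on generators and hence everywhere. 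I would also remark in passing (or leave implicit, since it is part of \cite[Appendix~A]{Miller19}) that $N_M$ is a chain map, so that it does define a morphism in the relevant category.

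For $C^-_A(R)$-linearity, first I would recall that the left $C^-_A(R)=BA^\vee$-module structure on $C^{+,tw}_A(M)=B(M,A,D_A)$ is the one transported from the $BA$-comodule structure on $B(M,A,D_A)$ via the map $\psi_{M,D_A}$ of Lemma \ref{Co-Alg-Mod-Structure} (composed with the interchange and evaluation, exactly as in \eqref{ModuleStructure1}), while the structure on $C^-_A(M)=\om{Hom}_A(EA,M)$ is given by \eqref{ModuleStructure2} via $\zeta$ and the dual $\psi_{EA}^*$ of the comodule map $\psi_{R,A}\colon EA\to BA\otimes EA$. The key computation is then: take $\phi\in BA^\vee$ and a generator $y=m[a_1|\cdots|a_p]v\in B(M,A,D_A)$, compute $N_M(\phi\cdot y)$ and $\phi\cdot N_M(y)$ and show they agree as elements of $\om{Hom}_A(EA,M)$, i.e., agree after evaluating on $V^q$ (equivalently on an arbitrary $[a_1'|\cdots|a_q']a$). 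Because the formula \eqref{Norm-Map-Def} kills everything of positive simplicial degree, only the $p=0$ part of the comodule coproduct survives on the left-hand side, and dually only the $BA$-factor of $\psi_{R,A}$ landing in simplicial degree $0$ survives on the right-hand side; so on both sides one is left with pairing $\phi$ against $BA$ in simplicial degrees $0$ through $p$ (resp.\ $0$ through $q$) and multiplying by $m\cdot v(-)$. The content is that these two pairings produce the same $R$-linear functional.

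The main obstacle I expect is bookkeeping of signs: \eqref{ModuleStructure1}, \eqref{ModuleStructure2}, the coproduct formula in Lemma \ref{Co-Alg-Mod-Structure} (with its $(-1)^{(p-i)\eps_i}$), the interchange $\tau$, and the definition of $\zeta$ with its $(-1)^{|a||g|}$ all contribute, and one must check that the signs conspire so that $N_M(\phi\cdot y)=\phi\cdot N_M(y)$ on the nose rather than up to a unit depending on degrees. I would handle this by reducing to the only nontrivial case — pairing $\phi$ supported in a single simplicial degree $k$ with $y=m[a_1|\cdots|a_p]v$, where both sides vanish unless $k\le p$ — and then matching the surviving term of the coproduct $\psi_{M,D_A}(y)$ (the summand $m[a_1|\cdots|a_k]\otimes[a_{k+1}|\cdots|a_p]v$) against the surviving term of $\psi_{R,A}$ applied to a test element of $EA$, tracking $\eps_i$'s carefully; once signs match for this case, bilinearity extends it to all $\phi$. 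Since the statement is elementary and the cited appendix already sets up the module structures, the proof can be kept short: it suffices to record the explicit formula for the action on generators on both ends, apply \eqref{Norm-Map-Def}, and observe the coincidence. I would therefore present it as a direct verification on generators, relegating the sign check to a one-line remark that "the signs in \eqref{ModuleStructure1}, \eqref{ModuleStructure2} and Lemma \ref{Co-Alg-Mod-Structure} are arranged precisely so that this holds," and conclude.
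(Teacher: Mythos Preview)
Your treatment of naturality is fine and matches the paper. The gap is in the $C^-_A(R)$-linearity half: you have misidentified the module structure on $C^{+,tw}_A(M)=B(M,A,D_A)$. It is \emph{not} defined via a $BA$-comodule structure on $B(M,A,D_A)$ through $\psi_{M,D_A}$ ``exactly as in \eqref{ModuleStructure1}''. The paper states explicitly (just after the definition of $C^{+,tw}_A$) that the left $C^-_A(R)$-action on $B(M,A,D_A)$ is the one induced from the left $C^-_A(R)$-module structure of $D_A=C^-_A(A)$, i.e.\ $f\in C^-_A(R)$ acts on the last tensor factor by the structure \eqref{ModuleStructure2}. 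Concretely,
\[
f\cdot\bigl(m[a_1|\cdots|a_p]v\bigr)=(-1)^{|f|(p+|m|+|a_1|+\cdots+|a_p|)}\,m[a_1|\cdots|a_p](f\cdot v),
\]
so the simplicial degree $p$ is unchanged by the action. Note also that $\psi_{M,D_A}$ from Lemma \ref{Co-Alg-Mod-Structure} lands in $B(M,A,R)\otimes B(R,A,D_A)$, not $B(M,A,D_A)\otimes BA$, so your proposed recipe does not even typecheck against \eqref{ModuleStructure1}.

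Once one uses the correct action, the argument simplifies considerably compared to what you outlined: for $p>0$ both $N_M(f\cdot w)$ and $f\cdot N_M(w)$ vanish immediately from \eqref{Norm-Map-Def}, and the only work is the $p=0$ case. There one writes $w=m[\,]v$, sets $N_M(w)(x)=m\cdot v(x)$, expands $\psi_{R,A}(x)=\sum_i y_i\otimes z_i$, and compares $(f\cdot N_M(w))(x)=\sum_i(-1)^{|N_M(w)||y_i|}f(y_i)\,m\cdot v(z_i)$ with $N_M(f\cdot w)(x)=(-1)^{|f||m|}m\cdot(f\cdot v)(x)$; the two agree because $f(y_i)\neq 0$ forces $|f|=-|y_i|$. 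Your instinct that the verification reduces to a sign match on a single surviving term is right, but the ``surviving'' happens for a different reason than you said: it is the norm map killing positive simplicial degree, combined with the action living entirely in the $D_A$-factor, not a coproduct on $B(M,A,D_A)$.
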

\begin{proof} The naturality in $M$ is clear from the definition.
To verify the $C^-_A(R)$-linearity let $f\in C^-_A(R)$ and 
$w=m[a_1|\cdots|a_p]u \in C^{+,tw}_A(M)=B(M,A,D_A)$. Then
$f\cdot w = (-1)^{\eta}m[a_1|\cdots |a_p](f\cdot w)$ for
$\eta = p+|m|+|a_1|+\cdots +|a_p|$ and therefore
$N_M(f\cdot w) = 0 = f\cdot N_M(w)$ provided $p>0$. We may therefore
assume that $p=0$ such that $w = m[]u$. Let $v = N_M(w)$ and note that
$|v| = |m|+|u|$. Let $x\in EA$ and write $\psi_{R,A}(x) = \sum_i y_i\otimes z_i \in BA\otimes EA$ where $\psi_{R,A}$ is the map of Lemma \ref{Co-Alg-Mod-Structure}. Then by definition $(f\cdot v)(x)=(f\otimes v)(\psi_{R,A}(x))$,
which equals
\begin{equation}  \label{Norm-Map-Eq}
\sum_i (-1)^{|v||y_i|}f(y_i)v(z_i) = \sum_i (-1)^{|m||y_i|+|u||y_i|}m\cdot f(y_i)u(z_i) .
\end{equation}
On the other hand, $N_M(f\cdot m[]u) = (-1)^{|f||m|}N_M(m[](f\cdot u))$, and
by \eqref{Norm-Map-Def} this coincides with
\[ (-1)^{|f||m|} \sum_i m\cdot (f\otimes u)(y_i\otimes z_i)
= \sum_i (-1)^{|f||m|+|u||y_i|}m\cdot f(y_i)u(z_i) . \]
The final expression is equal to the final expression in \eqref{Norm-Map-Eq} because $f\colon BA\ra R$ so that $f(y_i)$ can only be nonzero if 
$|f|=-|y_i|$. Hence, $f\cdot N_M(w) = N_M(f\cdot w)$ and the proof is complete. 
\end{proof}   

The Tate complex
$C^\infty_A(M)$ will be defined to be the mapping cone complex of the norm map.
At a later stage we will also have to consider the cone of a map
of nonzero degree, so we fix our conventions in the following definition.

\begin{definition} \label{Cone-Definition}
Let $B$ be a $DG$ algebra and suppose that
$f\colon M\ra N$ is a degree $d$ chain map of left $B$-modules. This means that
\begin{enumerate}[label=(\alph*),ref=(\alph*)]
\item $f(M_n)\subset N_{n+d}$ for all $n\in \Z$,
\item $f(bm)=(-1)^{|b|d}bf(m)$ for all $b\in B$ and $m\in M$ and
\item $f\circ \dd_M =(-1)^{d}\dd_N\circ f$.
\end{enumerate} 
The left $B$-module $\om{Cone}(f)$ is defined by $\om{Cone}(f)_k = N_k\oplus M_{k-d-1}$ for all $k\in \Z$, with differential
\[ \dd_{\om{Cone}(f)} \cdot \left( \begin{array}{c} n \\ m \end{array} \right) =
\left( \begin{array}{cc} \dd_N & (-1)^df \\ 0 & (-1)^{d+1} \dd_M \end{array}
\right) \left( \begin{array}{c} n \\ m \end{array} \right) =
\left( \begin{array}{c} \dd_N n +(-1)^dfm \\ (-1)^{d+1} \dd_M m \end{array}
\right) \]
and $B$-module structure $b\cdot (n,m) = (bn,(-1)^{|b|(d+1)}bm)$.
\end{definition}

\begin{remark} If $B=R$ is concentrated in degree $0$ and $f$ has degree
$0$, then this definition agrees with \cite[p.~46]{MacLane75}. The above
definition is what we get when we regard $f$ as a degree $0$ map
$f\colon M[d]\ra N$ and take into account the sign twist in the $B$-action
upon shifting $M$.
\end{remark} 

The cone complex sits in a natural short exact sequence
\[ \begin{tikzcd} 0 \arrow{r} & N \arrow{r} & \om{Cone}(f) \arrow{r} & M[d+1]
\arrow{r} & 0 \end{tikzcd} \]
of $B$-modules, whose connecting homomorphism $\delta\colon H(M)_n\ra H_{n+d}(N)$
recovers $H(f)$ up to a sign. In particular, there is a natural exact
triangle in homology. 

\begin{definition} Let $M$ be a right $A$-module. Then the Tate complex
of $M$ is defined to be the $C^-_A(R)$-module
\[ C^\infty_A(M) \coloneqq \om{Cone}(N_M\colon C^{+,tw}_A(M)\ra C^-_A(M)) .\]
The homology of this complex is denoted by $H^\infty_A(M)$ and is called
the Tate homology of $M$. 
\end{definition} 

Given a map $g\colon M\ra M'$ of a right $A$-modules, there are induced maps
\[ g^{+,tw}\colon C^{+,tw}_A(M)\ra C^{+,tw}_A(M') \;\; \mbox{ and } \;\;
g^\infty\colon C^\infty_A(M)\ra C^\infty_A(M')  \]
of $C^-_A(R)$-modules. The key properties of the functors $C^{+,tw}_A$ and $C^\infty_A$ are summarized in the following theorem.

\begin{theorem} \label{Cptwinfty-Structure-Theorem}
Let $A$ be a degreewise $R$-free augmented DG algebra.
Then the assignments 
\[ (M\mapsto C^\bullet_A(M)) \; \mbox{ and } \; (g\colon M\ra M') \mapsto (g^\bullet\colon C^\bullet_A(M)\ra C^\bullet_A(M'))  \]
for $\bullet\in \{(+,tw),\infty\}$, define functors $C^{+,tw}_A$, $C^\infty_A$
from the category of right $A$-modules to the category of left
$C^-_A(R)$-modules. They are both exact and preserve quasi-isomorphisms.
Moreover, there is a natural exact triangle of $H^-_A(R)$-modules
\[ \begin{tikzcd} H^{+,tw}_A(M) \arrow{rr}{H(N_M)} & {} & H^-_A(M) \arrow{ld} \\
 {} & H^\infty_A(M). \arrow{ul}{[-1]} & {} \end{tikzcd} \] 
\end{theorem}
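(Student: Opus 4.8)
The plan is to establish the four claims in Theorem~\ref{Cptwinfty-Structure-Theorem} in the following order: functoriality, exactness of $C^{+,tw}_A$, exactness of $C^\infty_A$, preservation of quasi-isomorphisms, and finally the exact triangle.

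First I would verify functoriality. Given $g\colon M\ra M'$, the map $B(g,\om{id}_A,\om{id}_{D_A})$ furnishes $g^{+,tw}$, and naturality of the norm map (the lemma just proved) together with the functoriality of the cone construction (Definition~\ref{Cone-Definition}) produces a map of short exact sequences $0\to C^-_A(M)\to C^\infty_A(M)\to C^{+,tw}_A(M)[1]\to 0$ and hence $g^\infty$. That these are $C^-_A(R)$-linear is inherited from the $C^-_A(R)$-linearity of $N_M$ and from the $C^-_A(R)$-module structure on $D_A$ through the first bar-factor. Compatibility with composition and identities is immediate from the corresponding properties of $B(-,-,-)$ and of $\om{Cone}$.

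Next, exactness. A short exact sequence $0\to M'\to M\to M''\to 0$ of right $A$-modules is, in each bidegree, a short exact sequence of $R$-modules; since $\bar A^{\otimes p}\otimes D_A$ is a complex of free $R$-modules ($A$ is degreewise $R$-free, hence so is $\bar A$, and $D_A=\om{Hom}_A(EA,A)$ is built from these), tensoring preserves exactness, so $0\to B_{p,q}(M',A,D_A)\to B_{p,q}(M,A,D_A)\to B_{p,q}(M'',A,D_A)\to 0$ is exact, and totalization yields a short exact sequence $0\to C^{+,tw}_A(M')\to C^{+,tw}_A(M)\to C^{+,tw}_A(M'')\to 0$. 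Combined with the already-known exactness of $C^-_A$ (Theorem~\ref{Cpm-Structure-Theorem}) and the naturality of $N$, one gets a short exact sequence of mapping cones, i.e. exactness of $C^\infty_A$; here one uses the elementary fact that a degreewise-split-in-$R$ short exact sequence of maps of complexes induces a short exact sequence on cones. Since $C^{+,tw}_A$ is exact, the long exact homology sequence associated with $0\to M'\to M\to M''\to 0$ together with the five lemma reduces the quasi-isomorphism claim for $C^{+,tw}_A$ to showing: if $M$ is acyclic then so is $C^{+,tw}_A(M)=B(M,A,D_A)$. This is the step I expect to be the main obstacle.

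To handle it I would use the filtration of the double complex $B_{*,*}(M,A,D_A)$ by internal degree (equivalently, run the spectral sequence of the double complex taking $\dd^i$-homology first). Because $M$ is acyclic and $\bar A^{\otimes p}\otimes D_A$ is degreewise $R$-free over the PID $R$, the K\"unneth theorem gives $H_*(M\otimes \bar A^{\otimes p}\otimes D_A)=0$ for every $p$, so the $E_1$-page of this spectral sequence vanishes; one must check the spectral sequence converges to $H_*(C^{+,tw}_A(M))$, which follows from the fact that in each total degree only finitely many $B_{p,q}$ contribute (the simplicial degree is bounded below by $0$ and, in a fixed total degree, $q$ is bounded, hence $p$ is bounded) — the standard convergence criterion for first-quadrant-type double complexes. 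Hence $C^{+,tw}_A$ preserves quasi-isomorphisms; applying this and Proposition~\ref{Invariance}(ii) for $C^-_A$ to the cone, the five lemma gives the same for $C^\infty_A$. Finally, the exact triangle is precisely the homology long exact sequence of the short exact sequence $0\to C^-_A(M)\to C^\infty_A(M)=\om{Cone}(N_M)\to C^{+,tw}_A(M)[1]\to 0$ recorded after Definition~\ref{Cone-Definition}, with the connecting homomorphism identified with $H(N_M)$ up to sign as noted there; naturality in $M$ and $H^-_A(R)$-linearity of all three maps follow from the corresponding properties of the cone sequence and of $N_M$.
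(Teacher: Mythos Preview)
The paper gives no self-contained proof of this theorem; the essential facts about $C^{+,tw}_A$ are deferred to \cite[Theorem~A.1, Lemma~A.10]{Miller19} (see the proof of Lemma~\ref{Relative-Twisted-Lemma} for the relative version), and the corresponding statements for $C^\infty_A$ together with the exact triangle follow formally from the cone sequence, exactly as you outline. Your overall architecture is therefore the intended one. There are, however, two points that need correction.

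First, the assertion that $D_A=\om{Hom}_A(EA,A)$ is degreewise \emph{free} is not justified and is not what is needed. In a fixed degree, $D_A$ is a product of modules of the form $\om{Hom}_R((BA)_j,A_k)$ with $(BA)_j$ free; when $(BA)_j$ has infinite rank this is an infinite product of copies of $R$, which need not be free over a PID (e.g.\ $\Z^{\N}$ over $\Z$). What one actually has, and what suffices both for exactness of $M\mapsto B(M,A,D_A)$ and for the K\"unneth step, is that $D_A$ is degreewise \emph{flat} (equivalently torsion-free, over the PID $R$). This is what the paper records, citing \cite[Lemma~A.10]{Miller19}; you should invoke flatness rather than freeness.

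Second, your spectral sequence convergence argument is muddled. ``Filter by internal degree'' and ``take $\dd^i$-homology first'' are not the same thing: it is the filtration by the \emph{simplicial} degree $p$ whose associated graded carries only the internal differential $\dd^i$, and that is the filtration you want. More seriously, your stated reason for convergence --- that in each total degree only finitely many $B_{p,q}$ contribute --- is false in general: $D_A$ is typically unbounded below (already for $A=\Lambda_R[u]$ with $|u|=3$ one has $D_A\cong A\otimes R[\alpha]$ with $|\alpha|=-4$), so for fixed $p+q=n$ the simplicial index $p$ is not bounded above. The correct, and simpler, reason is that the simplicial filtration is exhaustive and bounded below ($p\ge 0$), hence complete Hausdorff, and the associated spectral sequence converges strongly by the standard criterion. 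With these two fixes your argument goes through.
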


Suppose $f\colon A\ra B$ is a quasi-isomorphism of augmented DG algebras.
For a given right $B$-module $M$ there is no induced map between
$C^\bullet_A(f^{-1}M)$ and $C^\bullet_B(M)$ for $\bullet\in \{(+,tw),\infty\}$.
Rather one has to introduce an intermediate object to compare them.

\begin{definition} \label{Def-Relative-twisted}
Let $f\colon A\ra B$ be a homomorphism of augmented
DG algebras. Define the relative dualizing object to be the
(left-left) $C^-_A(R)-B$-bimodule 
\[ D_f \coloneqq C^-_A(f^{-1}B)=\om{Hom}_A(EA,f^{-1}B),   \]
where $f^{-1}B$ is regarded as a (right-left) $A-B$-bimodule.

Given a right $B$-module $M$ define the relative positive twisted complex to be
$C^{+,tw}_f(M) \coloneqq B(M,B,D_f)$ equipped with the left $C^-_A(R)$-module structure induced from $D_f$.  
\end{definition}

The relative positive twisted complex is a functor from the
category of right $B$-modules to the category of left $C^-_A(R)$-modules.
Given a map $g\colon M\ra N$ of right $B$-modules the induced map
is given by $B(g,1,1)\colon B(M,B,D_f)\ra B(N,B,D_f)$.  

\begin{lemma} \label{Relative-Twisted-Lemma}
In the situation of the above definition, if $A$ and $B$
are degreewise free, the relative functor $C^{+,tw}_f$ preserves
quasi-isomorphisms and short exact sequences. \end{lemma}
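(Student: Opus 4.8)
The plan is to reduce the statement about $C^{+,tw}_f(M) = B(M,B,D_f)$ to the already-established properties of the (ordinary, non-relative) bar construction and the functors $C^\pm_A$. The key observation is that $C^{+,tw}_f(M)$ is built out of two pieces: the bar construction $B(M,B,-)$ in the third slot, and the relative dualizing object $D_f = C^-_A(f^{-1}B) = \om{Hom}_A(EA, f^{-1}B)$, which is obtained by applying the functor $C^-_A$ to the $A$-module $f^{-1}B$. Both of these ingredients already have good homological behaviour by results proven earlier in the excerpt (Theorem \ref{Cpm-Structure-Theorem} and the functoriality discussion for the bar construction), so the work is to assemble them correctly.

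First I would prove that $C^{+,tw}_f$ preserves short exact sequences. Let $0\ra M'\ra M\ra M''\ra 0$ be a short exact sequence of right $B$-modules. Since $D_f$ is a fixed $B$-module (independent of $M$), and since each $B_{p,q}(M,B,D_f) = (M\otimes \bar B^{\otimes p}\otimes D_f)_q$ is obtained from $M$ by tensoring over $R$ with the free (hence flat) $R$-modules $\bar B^{\otimes p}$ — here we use the standing hypothesis that $B$ is degreewise $R$-free, so $\bar B$ and all its tensor powers are degreewise $R$-free — the sequence $0\ra B_{p,q}(M',B,D_f)\ra B_{p,q}(M,B,D_f)\ra B_{p,q}(M'',B,D_f)\ra 0$ is exact for each $(p,q)$. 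Exactness is preserved under taking the direct-sum totalization, so $0\ra C^{+,tw}_f(M')\ra C^{+,tw}_f(M)\ra C^{+,tw}_f(M'')\ra 0$ is exact, and by construction the maps are $C^-_A(R)$-linear. This is the easy half and is essentially identical to the argument that the ordinary bar construction $B(-,A,N)$ is exact in its first variable.

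Second, for preservation of quasi-isomorphisms, let $g\colon M\ra M'$ be a quasi-isomorphism of right $B$-modules. I would use the standard filtration-of-the-bar-construction argument: filter $C^{+,tw}_f(M) = B(M,B,D_f)$ by simplicial degree, $F_pB(M,B,D_f) = \bigoplus_{p'\le p}B_{p',*}(M,B,D_f)$. This is an exhaustive, bounded-below (in each total degree) filtration, and the associated spectral sequence has $E^1$-page $E^1_{p,*} = H_*\big(M\otimes \bar B^{\otimes p}\otimes D_f,\ 1\otimes \dd^i\big)$. Because $\bar B^{\otimes p}$ is degreewise $R$-free and $R$ is a PID (so degreewise $R$-free modules are flat and the Künneth theorem applies with no correction term issues after splitting off the homology), the map induced by $g$ on $E^1$ is $H_*(g)\otimes 1\otimes 1$ tensored appropriately, hence an isomorphism. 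One subtlety: the internal differential on $B_{p,*}(M,B,D_f)$ involves the differential of $D_f$ as well, so I should phrase the $E^1$-computation as $H_*(M)\otimes \bar B^{\otimes p}\otimes_{?}$ more carefully — in fact the cleanest route is to first treat $D_f$ as a complex of $R$-modules, apply the Künneth/flatness argument to peel off $M$, and conclude $E^1(g)$ is an iso. A convergence argument then upgrades this to $H_*(g^{+,tw})$ being an iso; since the filtration is bounded below in each total degree (the simplicial degree $p$ contributes at least $p$ to the total degree once we account for the grading of $\bar B$ — here one needs $\bar B$ concentrated in positive degrees, or more generally that $B$ is connective/bounded appropriately; if $B$ is not connective one instead invokes that the filtration is complete and exhaustive and uses a conditional-convergence/comparison argument as in \cite{Boardman99}).

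The main obstacle I anticipate is precisely this convergence point: unlike the functors $C^+_A$ where the bar construction is totalized by direct sum and the filtration by simplicial degree is canonically bounded below in total degree whenever $A$ is connective, here $B$ need not be connective, and one must be careful that the simplicial filtration spectral sequence of $B(M,B,D_f)$ actually converges to its homology. The resolution is to note that this is the same bar-construction convergence issue already handled implicitly in the proof of Theorem \ref{Cpm-Structure-Theorem} (invariance of $C^+_A$), since $C^{+,tw}_f(M) = B(M,B,D_f)$ is literally an instance of the bar construction $B(-,B,N)$ with $N = D_f$; thus the argument for quasi-isomorphism invariance in the first variable of the ordinary bar construction applies verbatim, with $D_f$ playing the role of the fixed left-module coefficient. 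So in the write-up I would state the short-exact-sequence preservation directly, and for quasi-isomorphism preservation reduce to the already-cited invariance of $B(-,B,-)$ in its first argument (Proposition \ref{Invariance}(ii) together with the functoriality of the bar construction), remarking that the $C^-_A(R)$-linearity of all maps involved is immediate since the $C^-_A(R)$-action comes entirely through the fixed factor $D_f$ and commutes with $B(g,1,1)$.
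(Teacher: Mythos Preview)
Your overall strategy---reduce to the known invariance of the bar construction $B(-,B,N)$ in its first variable---is exactly what the paper does, and your remarks about $C^-_A(R)$-linearity are correct. However, there is a genuine gap in both halves of your argument: you never address the flatness of $D_f$.

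In the exactness argument you write that $B_{p,q}(M,B,D_f)=(M\otimes \bar B^{\otimes p}\otimes D_f)_q$ ``is obtained from $M$ by tensoring over $R$ with the free (hence flat) $R$-modules $\bar B^{\otimes p}$''. But $M$ is tensored with $\bar B^{\otimes p}\otimes D_f$, not with $\bar B^{\otimes p}$ alone; you need $D_f$ to be degreewise flat as well. The same issue recurs in your spectral-sequence argument for quasi-isomorphisms: to conclude that $g\otimes 1\otimes 1$ induces an isomorphism on $E^1$ you need the complex $\bar B^{\otimes p}\otimes D_f$ to consist of flat $R$-modules in each degree, and freeness of $\bar B^{\otimes p}$ does not by itself give this. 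Since $D_f=\om{Hom}_A(EA,f^{-1}B)$ is a Hom module, it is not \emph{a priori} free or flat.

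This is precisely the point the paper isolates: its proof consists of the observation that, because $A$ and $B$ are degreewise $R$-free and $R$ is a PID, the relative dualizing object $D_f$ is degreewise flat (citing \cite[Lemma~A.10]{Miller19} for the verification), after which the general invariance theorem for the bar construction \cite[Theorem~A.1]{Miller19} applies directly. Once you supply this flatness step, your argument goes through and coincides with the paper's.
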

\begin{proof} The assumptions that $A$ and $B$ are degreewise free along
with the fact that $R$ is PID ensure that the relative dualizing object
$D_f$ is degreewise flat. This is verified as in \cite[Lemma~A.10]{Miller19}.
It then follows from the invariance theorem \cite[Theorem~A.1]{Miller19}
that $C^{+,tw}_f=B(-,B,D_f)$ is an exact functor preserving quasi-isomorphisms.
\end{proof}   

\begin{proposition} \label{Refined-Invariance} 
Let $f\colon A\ra B$ be a homomorphism of augmented DG algebras   
and let $M$ be a right $B$-module. There is a relative norm map
$N_M^f\colon C^{+,tw}_f(M)\ra C^-_A(f^{-1}M)$ fitting into the following
commutative diagram, natural in $M$,
\[ \begin{tikzcd} C^{+,tw}_B(M) \arrow{r}{\beta_M} \arrow{d}{N_M^B} & 
C^{+,tw}_f(M) \arrow{d}{N^f_M} & C^{+,tw}_A(f^{-1}M) \arrow{d}{N^A_M} 
\arrow{l}[swap]{\alpha_M} \\
C^-_B(M) \arrow{r}{f^*} & C^-_A(f^{-1}M) & C^-_A(f^{-1}M). \arrow{l}[swap]{=}
\end{tikzcd} \]
All of the maps in the diagram are $C^-_A(R)$-linear, $C^-_B(R)$-linear
or linear along $f^*:C^-_B(R)\ra C^-_A(R)$, as appropriate.
 
Moreover, if $f$ is a quasi-isomorphism and $A$ and $B$ are degreewise
$R$-free, then all the horizontal maps in the diagram are quasi-isomorphisms.
\end{proposition}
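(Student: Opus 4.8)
The plan is to write the three maps down by hand, imitating the construction of the absolute norm map $N_M$ in Definition~\ref{Def-Norm-Map1}, to check that the two squares commute essentially by inspection, and then to deduce the quasi-isomorphism statement from the invariance properties of the two-sided bar construction recorded in Proposition~\ref{Invariance} and used in Lemma~\ref{Relative-Twisted-Lemma}. For $\beta_M$ I take the functoriality map $f^*\colon D_B=C^-_B(B)\ra C^-_A(f^{-1}B)=D_f$ of $C^-$ along $f$; one checks that $f^*$ is left $B$-linear for the module structures coming from the target factors, so $\beta_M\coloneqq B(1_M,1_B,f^*)$ is defined. For $\alpha_M$ I let $f_*\coloneqq C^-_A(f)\colon D_A=C^-_A(A)\ra C^-_A(f^{-1}B)=D_f$ be induced by $f$ regarded as a morphism of right $A$-modules $A\ra f^{-1}B$; together with $f\colon A\ra B$ on the middle factor and the identity $f^{-1}M\ra M$ (which is linear along $f$) on the left factor this forms an admissible triple, and I set $\alpha_M\coloneqq B(\mathrm{id}_M,f,f_*)$. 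Finally the relative norm map $N^f_M$ is defined by copying Definition~\ref{Def-Norm-Map1} with $A$ replaced by $B$ where appropriate: it is the composite of the augmentation $B(M,B,D_f)\ra M\otimes_B D_f$ with the adjoint of $1_M\otimes_B\om{ev}\colon M\otimes_B(D_f\otimes EA)\ra M\otimes_B f^{-1}B=f^{-1}M$; concretely $N^f_M(m[b_1|\cdots|b_p]v)$ vanishes for $p>0$ and equals $x\mapsto m\cdot v(x)$ for $p=0$. That $N^f_M$ lands in $\om{Hom}_A(EA,f^{-1}M)$, is a chain map, and is $C^-_A(R)$-linear and natural in $M$ is the same computation as in the lemma following Definition~\ref{Def-Norm-Map1}, which I would not repeat.

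For commutativity: both composites in the left square annihilate simplicial degree $>0$, and on a class $m[\,]v$ with $v\in D_B$ they both send it to the functional $x\mapsto m\cdot v(\iota(x))$, where $\iota=B(1,f,f)\colon EA\ra EB$; hence the left square commutes. Both composites in the right square likewise kill simplicial degree $>0$, and on $m[\,]d$ with $d\in D_A$ they send it to $x\mapsto m\cdot f(d(x))$, which is exactly $N^A_M(m[\,]d)$ since the right $A$-action on $f^{-1}M$ is the $B$-action pulled back along $f$. Naturality in $M$ is automatic because every map involved is built functorially in the left-hand module slot, and the prescribed linearities (over $C^-_A(R)$, over $C^-_B(R)$, or along $f^*$) are inherited from those of the building blocks $f_*$ and $f^*$ together with the $C^-_A(R)$-linearity of $N^f_M$.

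For the quasi-isomorphism claim: since $A$ and $B$ are degreewise free and $f$ is a quasi-isomorphism, Proposition~\ref{Invariance}(i) gives that $f^*\colon D_B\ra D_f$ is a quasi-isomorphism, and since $B(M,B,-)$ preserves quasi-isomorphisms over the degreewise free algebra $B$ (\cite[Theorem~A.1]{Miller19}), $\beta_M=B(1_M,1_B,f^*)$ is a quasi-isomorphism. For $\alpha_M$ I factor it, using the composition law for morphisms of triples, as
\[ B(f^{-1}M,A,D_A)\xrightarrow{\;B(1,1,f_*)\;}B(f^{-1}M,A,f^{-1}D_f)\xrightarrow{\;\gamma\;}B(M,B,D_f), \]
where $\gamma=B(\mathrm{id}_M,f,\mathrm{id})$ is the change-of-algebra map. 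The first arrow is a quasi-isomorphism because $f_*=C^-_A(f)$ is a quasi-isomorphism of complexes — $f$, regarded as a morphism of right $A$-modules $A\ra f^{-1}B$, is a quasi-isomorphism and $A$ is degreewise free, so apply Proposition~\ref{Invariance}(ii) — and $B(f^{-1}M,A,-)$ preserves quasi-isomorphisms. The second arrow $\gamma$ is the change-of-algebra map along the quasi-isomorphism $f$ with coefficient module $D_f$, which is degreewise flat by Lemma~\ref{Relative-Twisted-Lemma}. That $\gamma$ is a quasi-isomorphism I would establish by re-running the argument behind \cite[Theorem~A.1]{Miller19}: filter both bar constructions by simplicial degree, observe that $\gamma$ is filtered, and note that on the $E^1$-pages $\gamma$ becomes — via the Künneth theorem over the PID $R$, using flatness of $\bar A$, $\bar B$ and $D_f$ — a tensor power of the quasi-isomorphism $\bar A\ra\bar B$ smashed with the identity; a comparison of the resulting spectral sequences, which converge because the simplicial filtration is exhaustive and bounded below, then finishes the proof.

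I expect this last point to be the main obstacle: it is not literally Proposition~\ref{Invariance}, which is stated only for the coefficient modules $R$ and $A$ (i.e.\ for $C^\pm_A$), but rather requires invoking the underlying bar-construction invariance of \cite{Miller19} at the level of an arbitrary degreewise flat coefficient module $D_f$; the degreewise flatness supplied by Lemma~\ref{Relative-Twisted-Lemma} is exactly what makes the Künneth step go through. Everything else — checking the maps are well defined with the asserted linearities, and the two square identities — is sign bookkeeping and diagram chasing against the templates already in place for $N_M$ and for $f_*$, $f^*$.
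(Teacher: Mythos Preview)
Your proof is correct and follows essentially the same approach as the paper: the same maps $\beta_M=B(1,1,f^*)$, $\alpha_M=B(1,f,f_*)$, and $N^f_M$ are defined, commutativity is checked (the paper uses an expanded diagram with a middle row $M\otimes_B D_B\to M\otimes_B D_f\leftarrow f^{-1}M\otimes_A D_A$, you check directly on elements), and the quasi-isomorphism claim is reduced to \cite[Theorem~A.1]{Miller19} using degreewise flatness of $D_f$.

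The one place you work harder than necessary is the quasi-isomorphism of $\alpha_M$. You factor $\alpha_M$ as $B(1,1,f_*)$ followed by a change-of-algebra map $\gamma$ and argue for each piece separately, worrying that the second step is ``not literally Proposition~\ref{Invariance}''. The paper instead applies \cite[Theorem~A.1]{Miller19} directly to the triple $(1,f,h)$: that theorem already handles maps $B(f_1,f_2,f_3)$ where the algebra map $f_2$ and the coefficient maps $f_1,f_3$ are simultaneously quasi-isomorphisms between degreewise flat objects, so no factorization is needed. Your spectral-sequence sketch for $\gamma$ is essentially reproving a special case of that theorem.
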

\begin{proof} Define $g\colon D_B\ra D_f$ and $h\colon D_A\ra D_f$
to be the maps
\[ g = f^*\colon C^-_B(B)\ra C^-_A(f^{-1}B) \;\; \mbox{ and } \;\;
   h = f^-\colon C^-_A(A)\ra C^-_A(f^{-1}B) .\] 
Note that $D_A$ is a $C^-_A(R)-A$-bimodule, $D_f$ is a $C^-_A(R)-B$-bimodule
and $D_B$ is a $C^-_B(R)-B$-bimodule. Moreover, observe that
$g$ is a homomorphism along $(f^*,1)\colon (C^-_B(R),B)\ra (C^-_A(R),B)$ and
that $h$ is a homomorphism along $(1,f)\colon (C^-_A(R),A)\ra (C^-_A(R),B)$.
We may therefore define the upper horizontal maps in the diagram by
\begin{align*}
\beta_M\coloneqq B(1,1,g) &\colon B(M,B,D_B)\ra B(M,B,D_f)   \\
\alpha_M \coloneqq B(1,f,h) &\colon B(f^{-1}M,A,D_A)\ra B(M,B,D_f).
\end{align*}
It is clear by construction that these maps are natural in $M$. Furthermore, by the above observations $\beta_M$
is linear along $f^*\colon C^-_B(R)\ra C^-_A(R)$ and $\alpha_M$ is
$C^-_A(R)$-linear.
 
We define the relative norm map $N^f_M\colon B(M,B,D_f)\ra C^-_A(f^{-1}M)$ to be the composition of the augmentation
$B(M,B,D_f)\ra M\otimes_B D_f$ and the map $\kappa\colon M\otimes_B D_f \ra C^-_A(M)$  which is given as the adjoint of
\[ \begin{tikzcd} 
M\otimes_B (\om{Hom}_A(EA,f^{-1}B)\otimes EA) \arrow{r}{1\otimes \om{ev}} &
M\otimes_B f^{-1}B \cong f^{-1}M. \end{tikzcd} \]
Then $N^B_M$ is $C^-_B(R)$-linear while $N_M^f$ and $N_M^A$ are
$C^-_A(R)$-linear. Consider the diagram
\[ \begin{tikzcd} B(M,B,D_B) \arrow{d}{\eps} \arrow{r}{B(1,1,g)} & 
B(M,B,D_f) \arrow{d}{\eps} & B(f^{-1}M,A,D_A) \arrow{l}[swap]{B(1,f,h)}
\arrow{d}{\eps} \\
M\otimes_B D_B \arrow{r}{1\otimes_B g} \arrow{d}{\xi} & M\otimes_B D_f \arrow{d}{\kappa} & f^{-1}M\otimes_A D_A \arrow{l}[swap]{1\otimes_f h}
\arrow{d}{\xi} \\
\om{Hom}_B(EB,B) \arrow{r}{Ef^*} & \om{Hom}_A(EA,f^{-1}M) & 
\om{Hom}_A(EA,f^{-1}M), \arrow{l}[swap]{=} \end{tikzcd} \]
where $\eps$ denotes the augmentations and $\xi$ are the natural
maps in the definition of the norm map (see \ref{Def-Norm-Map1}). In particular, the outer compositions $\xi\circ \eps$ are the respective
norm maps and by definition $\kappa\circ \eps = N^f_M$. We wish
to verify that the diagram commutes. The two upper squares
commute by the naturality of the augmentation in the bar construction.
To verify that the lower squares commute, consider the diagram obtained
by passing to the adjoints of the vertical maps $\xi$, $\kappa$ and $\xi$ in the bottom half of the diagram. The resulting diagram is then seen to commute using the fact that each of the maps $\xi$, $\kappa$ and $\xi$ was defined as the adjoint of $1\otimes \om{ev}$ for a suitable evaluation. 
By omitting the middle row of the above diagram, we obtain the commutative diagram in the statement of the proposition.  

Finally, we need to verify that all the horizontal maps in the diagram
are quasi-isomorphisms provided that $f\colon A\ra B$ is a quasi-isomorphism of
degreewise $R$-free algebras. We already know that this is true for
$f^*\colon C^-_B(M)\ra C^-_A(f^{-1}M)$ by Proposition \ref{Invariance}.
For the upper horizontal maps, note first that
$g=f^*$ and $h=f^-$ are quasi-isomorphisms by the same result. Then, since $D_A$, $D_B$ and $D_f$ are degreewise flat, one concludes by 
\cite[Theorem~A.1]{Miller19} that $\beta_M=B(1,1,g)$ and $\alpha_M=B(1,f,g)$ are quasi-isomorphisms as well. This completes the proof. 
\end{proof}   

From this result one quickly obtains the required invariance result.
Nevertheless, we will need the more refined statement of the above proposition in the next section. 

\begin{corollary} 
Let $f\colon A\ra B$ be a quasi-isomorphism of degreewise
$R$-free augmented DG algebras. Then for every right $B$-module
$M$ there are natural isomorphisms of $H^-_A(R)\cong H^-_B(R)$ modules
\[ H^{+,tw}_A(f^{-1}M) \cong H^{+,tw}_B(M)\; \mbox{ and } \;
H^\infty_A(f^{-1}M) \cong H^\infty_B(M)  .\]
Moreover, these isomorphisms are compatible with the exact triangles
of Theorem \ref{Cptwinfty-Structure-Theorem}. 
\end{corollary}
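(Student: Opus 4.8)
The plan is to deduce the corollary directly from Proposition~\ref{Refined-Invariance} together with the functorial properties recorded in Theorem~\ref{Cptwinfty-Structure-Theorem}. First I would observe that Proposition~\ref{Refined-Invariance} provides, for a quasi-isomorphism $f\colon A\ra B$ of degreewise $R$-free augmented DG algebras and a right $B$-module $M$, a diagram of chain maps in which the two horizontal maps $\beta_M\colon C^{+,tw}_B(M)\ra C^{+,tw}_f(M)$ and $\alpha_M\colon C^{+,tw}_A(f^{-1}M)\ra C^{+,tw}_f(M)$ are quasi-isomorphisms, and these are compatible with the norm maps $N^B_M$, $N^f_M$, $N^A_M$ and with the map $f^*\colon C^-_B(M)\ra C^-_A(f^{-1}M)$, which is a quasi-isomorphism by Proposition~\ref{Invariance}. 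Passing to homology in the $(+,tw)$ row immediately yields the first isomorphism $H^{+,tw}_A(f^{-1}M)\cong H^{+,tw}_B(M)$ as the composite $H(\beta_M)^{-1}\circ H(\alpha_M)$, which is $H^-_A(R)\cong H^-_B(R)$-linear (with the module structures matched along $H(f^*)$) since $\alpha_M$ is $C^-_A(R)$-linear and $\beta_M$ is linear along $f^*$.

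Next I would form the cones. Let $C^\infty_f(M)\coloneqq\om{Cone}(N^f_M\colon C^{+,tw}_f(M)\ra C^-_A(f^{-1}M))$. The commutative squares of Proposition~\ref{Refined-Invariance} furnish maps of cones
\[ \begin{tikzcd} C^\infty_B(M) \arrow{r}{\om{Cone}(\beta_M,f^*)} & C^\infty_f(M) & C^\infty_A(f^{-1}M), \arrow{l}[swap]{\om{Cone}(\alpha_M,\mathrm{id})} \end{tikzcd} \]
and these are maps of $C^-_A(R)$-modules (resp. linear along $f^*$) by the functoriality of the cone construction in Definition~\ref{Cone-Definition}. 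Each fits into a morphism of the short exact mapping-cone sequences, so the five lemma (applied to the induced ladder of long exact sequences in homology) shows that both maps are quasi-isomorphisms once we know $\alpha_M$, $\beta_M$, $f^*$ and the identity on $C^-_A(f^{-1}M)$ are quasi-isomorphisms, which we do. Composing then gives $H^\infty_A(f^{-1}M)\cong H^\infty_B(M)$, again $H^-_A(R)\cong H^-_B(R)$-linearly.

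Finally I would check compatibility with the exact triangles. The triangle of Theorem~\ref{Cptwinfty-Structure-Theorem} is, by construction, the one associated with the mapping-cone sequence $C^-_A(M)\ra C^\infty_A(M)\ra C^{+,tw}_A(M)[1]$. The morphism of short exact sequences just described (and its analogue for $B$) induces, on homology, a commuting triple of long exact sequences whose three vertical maps are exactly the two isomorphisms built above and $H(f^*)$; extracting the relevant $\Z/8$-periodic segment gives a commuting morphism between the two exact triangles. The one genuinely delicate point — the ``main obstacle'' — is bookkeeping the module structures and the signs: one must confirm that the comparison isomorphisms intertwine the $H^-_A(R)$- and $H^-_B(R)$-actions via the ring isomorphism $H(f^*)$, and that the degree shifts in Definition~\ref{Cone-Definition} are respected by $\om{Cone}(\beta_M,f^*)$ and $\om{Cone}(\alpha_M,\mathrm{id})$. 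This is entirely formal given the $C^-_A(R)$-linearity statements already proved in Proposition~\ref{Refined-Invariance}, so I would state it and refer back rather than re-deriving the signs.
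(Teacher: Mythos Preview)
Your proposal is correct and follows essentially the same approach the paper intends: the paper states this corollary without an explicit proof, treating it as an immediate consequence of Proposition~\ref{Refined-Invariance}, and your argument via the intermediate object $C^\infty_f(M)=\om{Cone}(N^f_M)$ and the five lemma is precisely the one the paper later spells out in the completed setting (Proposition~\ref{Completed-Invariance-Algebra}).
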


A DG algebra $A$ is said to satisfy Poincar\'{e} duality of degree $d\in \Z$
provided there is a weak equivalence $A\simeq \om{Hom}(A,R[d])$ of
$A-A$-bimodules, where the right and left actions on the target are 
induced by the left action on $A$ and the trivial left
action on $R[d]$, respectively. Here, weak equivalence means that
there exist a finite number $A=X_0,X_1,\cdots,X_r=\om{Hom}(A,R[d])$
of $A$-bimodules and for each $0\leq i<r$ there is a quasi-isomorphism
$X_i\ra X_{i+1}$ or $X_{i+1}\ra X_i$ of $A$-bimodules. 
According to \cite[Theorem~A.19]{Miller19} there is under this hypothesis, 
an isomorphism $H^{+,tw}_A(M)\cong H^+_A(M)[n]$ of $H^-_A(R)$-modules.
We will give a different proof of this result for $A=\Lambda_R[u]$ with
$|u|=3$. This case is sufficient for the application to equivariant instanton Floer homology when $\frac12 \in R$.  

We will now consider these constructions for $A=\Lambda_R[u]$ with $|u|=3$
as we did for $C^\pm_A$. The Tate complex $C^\infty_A(M)$, even in this simple case, is big and difficult to handle explicitly. To remedy this we will
construct a much simpler and very explicit complex that also computes
the Tate homology $H^\infty_A(M)$. Indeed, we will show that if
we take one of the double complexes $(D^\pm_{*,*},\dd',\dd'')$ of Proposition
\ref{Explicit-Cpm-Models} and extend them in the natural way to the
whole plane, then a suitable totalization of this complex will do the
trick.    

\begin{lemma} Let $A=\Lambda_R[u]$ with $|u|=3$. Then there is an isomorphism of left
$A$-modules $A\otimes R[\alpha]\cong D_A$ where $|\alpha|=-4$. Under this
isomorphism the differential on $A\otimes R[\alpha]$ is given by
$\dd(1\otimes \alpha^p)=-u\otimes \alpha^{p+1}$ and $\dd(u\otimes \alpha^p)=0$
for $p\geq 0$, and the action of $R[U]=C^-_A(R)$ is given by
$U\cdot (a\otimes \alpha^p)=a\otimes \alpha^{p+1}$ for $a\in A$ and
$p\geq 0$. 

Furthermore, the map $\rho\colon R\ra A\otimes R[\alpha]$ of degree $3$
given by $1\mapsto u\otimes \alpha^0$ is an $A$-linear quasi-isomorphism.
\end{lemma}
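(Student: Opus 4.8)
The statement has two parts: an identification of the dualizing object $D_A = \om{Hom}_A(EA,A)$ with the explicit left $A$-module $A\otimes R[\alpha]$ carrying the stated differential and $R[U]$-action, and the claim that the degree-$3$ map $\rho$ is an $A$-linear quasi-isomorphism. The first part is essentially an instance of Proposition \ref{Explicit-Cpm-Models} applied to $M=A=\Lambda_R[u]$: there we computed $C^-_A(M)=\om{Hom}_A(EA,M)$ as $\om{Tot}^\Pi(D^-_{*,*},\dd',\dd'')$, where column $D^-_{-p,*}$ is a shifted copy of $M$ and the horizontal differential is $\pm u$. The plan is to take $M=A$ in that computation and read off what the double complex becomes; since $A_q = M_q$ is nonzero only for $q=0,3$, each column contributes just the two classes $1$ and $u$. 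I would then repackage the resulting bigraded module as $A\otimes R[\alpha]$ with $|\alpha|=-4$ (matching the $-4$ shift between consecutive columns in $D^-$), record that the horizontal differential $\dd'=\pm u$ becomes $\dd(1\otimes\alpha^p)=-u\otimes\alpha^{p+1}$ and $\dd(u\otimes\alpha^p)=0$ after absorbing the sign conventions of \eqref{Iso--}, and note that the $U$-action on $\om{Tot}^\Pi(D^-)$ (which shifts column index by one, i.e. acts as the identity map $M_{q-3p}\to M_{q-3p}$ in the appropriate spot) becomes $U\cdot(a\otimes\alpha^p)=a\otimes\alpha^{p+1}$. The $A$-module structure on $D_A$ comes from the second $A$-factor in $\om{Hom}_A(EA,A)$, so under the identification it is just $a'\cdot(a\otimes\alpha^p) = a'a\otimes\alpha^p$, i.e. the evident left $A$-module structure on $A\otimes R[\alpha]$ — this is exactly how the isomorphism is asserted to be $A$-linear.

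For the second part, consider $\rho\colon R\to A\otimes R[\alpha]$, $1\mapsto u\otimes\alpha^0$. $A$-linearity is automatic once one checks $\rho$ is well-defined as a chain map: $\dd(u\otimes\alpha^0)=0$ matches $\dd_R = 0$, and the degree is $3=|u|$ since $|\alpha^0|=0$. (Here $R$ is viewed as the $A$-module $R[-3]$ up to the shift convention, or one simply takes $\rho$ as a degree-$3$ $A$-linear map $R\to D_A$ as in Definition \ref{Cone-Definition}'s setup for maps of nonzero degree; I would be explicit about which.) For the quasi-isomorphism claim I would compute $H_*(A\otimes R[\alpha])$ directly from the differential $\dd(1\otimes\alpha^p)=-u\otimes\alpha^{p+1}$, $\dd(u\otimes\alpha^p)=0$. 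The complex is the direct product (or sum — I should check the totalization direction, but since $A$ is bounded each total degree receives only finitely many contributions, so $\om{Tot}^\Pi$ and $\om{Tot}^\oplus$ agree here) of pieces; in each total degree one sees that $\dd$ maps $R\langle 1\otimes\alpha^p\rangle$ isomorphically onto $R\langle u\otimes\alpha^{p+1}\rangle$ for every $p\geq 0$. Hence every $u\otimes\alpha^{p}$ with $p\geq 1$ is a boundary, every $1\otimes\alpha^p$ is a non-cycle, and the only surviving homology class is $[u\otimes\alpha^0]$ in degree $3$. Therefore $H_*(A\otimes R[\alpha])\cong R$ concentrated in degree $3$, and $\rho$ sends the generator $1\in R$ (in degree $0$, i.e. degree $3$ after the shift implicit in "$\rho$ has degree $3$") to precisely this generator, so $H_*(\rho)$ is an isomorphism.

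The one genuinely fiddly point — and the main obstacle — is bookkeeping the signs and the shift conventions so that the differential and $U$-action come out in exactly the asserted normalized form. The paper's conventions (left-shift suspension changing signs by $(-1)^p$, the $(-1)^{p(p+1)/2}$ twist in \eqref{Iso--}, the Leibniz signs in the bar differential, and the sign in $\dd' = (-1)^{p+q+1}u$ from Proposition \ref{Explicit-Cpm-Models}) all feed in, and getting $\dd(1\otimes\alpha^p)=-u\otimes\alpha^{p+1}$ with the minus sign and $U$ acting cleanly as the identity requires choosing the identification $\alpha^p \leftrightarrow$ (appropriately signed generator of $D^-_{-p,*}$) to exactly cancel these. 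I would handle this by mirroring the sign choice already made in the proof of Proposition \ref{Explicit-Cpm-Models}, where the twist was explicitly engineered so that $U$ acts by the identity map; everything else is then forced. The homological computation itself is trivial, so no real difficulty there; and $A$-linearity of both the identification and $\rho$ is immediate from unwinding that the relevant $A$-action lives entirely in the ``$A$'' tensor factor. I would present the first part as a corollary of Proposition \ref{Explicit-Cpm-Models} (stating the identification and deferring the sign verification to ``a direct check mirroring the proof of Proposition \ref{Explicit-Cpm-Models}'') and give the homology computation for the second part in two or three lines.
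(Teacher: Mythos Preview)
Your proposal is correct and follows essentially the same approach as the paper: apply Proposition \ref{Explicit-Cpm-Models} with $M=A$ to get the explicit model for $D_A=C^-_A(A)$, repackage the columns of $D^-_{*,*}$ as $A\otimes R[\alpha]$ with $|\alpha|=-4$, read off the differential and $R[U]$-action, and then observe directly that the only surviving homology class is $[u\otimes\alpha^0]$ in degree $3$, whence $\rho$ is a quasi-isomorphism. Your discussion of the sign bookkeeping and the finiteness of each total degree (so that $\om{Tot}^\Pi$ versus $\om{Tot}^\oplus$ is moot) is more careful than the paper's own exposition, but the underlying argument is the same.
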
 
\begin{proof} Using the explicit model of Theorem \ref{Explicit-Cpm-Models}
we see that 
\[ (D_A)_n = C^-_A(A)_n \cong \left\{ \begin{array}{cl} 
A_0 & \mbox{ for } n=4m, m\leq 0 \\
A_3 & \mbox {for } n=4m+3, m\leq 0 \\
0 & \mbox{ otherwise} \end{array} \right.  \]
and a quick check shows that these identifications are compatible with
the left $A$-module structure. The required isomorphism is therefore obtained
by letting $1\otimes \alpha^p$ and $u\otimes \alpha^p$ correspond to the 
generators $1\in A_0 \cong C^-_A(A)_{-4p}$ and $u\in A_3 =C^-_A(A)_{-4p+3}$, respectively, for $p\geq 0$. The same result shows that the differential
and $R[U]$-action are determined by the formulas $\dd(u\otimes \alpha^p)=0$, $\dd(1\otimes \alpha^p)=-u\otimes\alpha^{p+1}$ and $U\cdot (a\otimes \alpha^p)
=a\otimes \alpha^{p+1}$ for $a\in A$ and $p\geq 0$.

From this description we find that $\dd\colon (D_A)_{-4p}\ra (D_A)_{-4(p+1)-3}$
is an isomorphism for each $p\geq 0$. Therefore, the map
$\rho\colon R\ra A\otimes R[\alpha]$ of degree $3$ given by $1\mapsto u\otimes \alpha^0$ is an $A$-linear quasi-isomorphism. 
\end{proof}

In the following result we establish the isomorphism $H^{+,tw}_A(M)\cong H^+_A(M)[3]$ of $R[U]$-modules for each right $A$-module $M$. It also
contains the key ingredients needed to establish the simplified model for $C^\infty_A(M)$.  

\begin{lemma} \label{f-Homotopy-Lemma}
Let $A=\Lambda_R[u]$ with $|u|=3$ and let $M$ be a right
$A$-module. Then there is a quasi-isomorphism $f_M\colon C^+_A(M)\ra C^{+,tw}_A(M)$ of degree $3$ and a chain homotopy $s_M\colon C^+_A(M)\ra C^{+,tw}_A(M)$ of degree $0$ such that
\[ f_M\circ U-U\circ f_M = \dd \circ s_M-s_M\circ \dd  .\]
In particular, $H(f_M)\colon H^+_A(M)\ra H^{+,tw}_A(M)$ is a degree $3$
isomorphism of $R[U]$-modules. Furthermore, $f_M$ and $s_M$ are natural in the module $M$, and in terms
of the explicit models $C^+_A(M)\cong \om{Tot}^{\oplus}(D^+_{*,*},\dd',\dd'')$
and $C^-_A(M) \cong \om{Tot}^{\Pi}(D^-_{*,*},\dd',\dd'')$ of
Proposition \ref{Explicit-Cpm-Models} the compositions
$N_M\circ f_M$ and $N_M\circ s_M$ are given in degree $n$ by
\[ \begin{tikzcd} \bigoplus_{p\geq 0}M_{n-4p} \arrow{r}{\pi} & M_n \arrow{r}{(-1)^nu} & M_{n+3} \arrow{r}{\iota } & \prod_{q\geq 0} M_{n+3+4q} \\
\bigoplus_{p\geq 0}M_{n-4p} \arrow{r}{\pi} & M_{n} \arrow{r}{\iota}
& \prod_{q\geq 0} M_{n+4q}, & {} \end{tikzcd} \]
respectively.
\end{lemma}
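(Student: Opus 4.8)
The plan is to construct $f_M$ and $s_M$ directly on the explicit double-complex models and then verify the stated identities by a bookkeeping computation. Recall from the preceding lemma that $D_A\cong A\otimes R[\alpha]$ with $|\alpha|=-4$, and that the degree $3$ map $\rho\colon R\to A\otimes R[\alpha]$, $1\mapsto u\otimes\alpha^0$, is an $A$-linear quasi-isomorphism. Applying the bar construction $B(M,A,-)$, which by Theorem \ref{Cptwinfty-Structure-Theorem} (via Lemma \ref{Relative-Twisted-Lemma}) preserves quasi-isomorphisms, the induced map $B(M,A,\rho)\colon B(M,A,R)=C^+_A(M)\to B(M,A,D_A)=C^{+,tw}_A(M)$ is a degree $3$ quasi-isomorphism; this is the candidate for $f_M$. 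Naturality in $M$ is immediate since $\rho$ does not involve $M$. On the explicit model $C^+_A(M)\cong\operatorname{Tot}^\oplus(D^+_{*,*})$, an element $m\in M_{n-3p}$ in bidegree $(p,n)$ maps (up to the sign recorded in \eqref{Iso-+}) to $m[u|\cdots|u]$ with $p$ bars; under $f_M=B(1,1,\rho)$ this goes to $m[u|\cdots|u](u\otimes\alpha^0)\in B_{p,*}(M,A,D_A)$.

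Next I would pin down $U$-linearity up to homotopy. The obstruction to $f_M$ being strictly $U$-linear comes from the fact that the $R[U]$-action on $C^{+,tw}_A(M)$ is the one induced from the $U$-action on $D_A$ (which shifts $\alpha^p\mapsto\alpha^{p+1}$), whereas $f_M$ lands only in the $\alpha^0$-summand. The standard move is the comparison-of-resolutions argument: $U\circ f_M$ and $f_M\circ U$ are two maps $C^+_A(M)[\text{something}]\to C^{+,tw}_A(M)$ which agree after composing with the quasi-isomorphism structure, hence are chain homotopic, and one writes down the homotopy $s_M$ explicitly. Concretely, I expect $s_M$ to be the degree $0$ map which, on the $p$-bar summand, inserts one extra copy of $u$ into the last slot and pairs with $u\otimes\alpha^0$ — i.e. the ``contracting'' operator dual to the $U$-action; the identity $f_M U-Uf_M=\dd s_M-s_M\dd$ is then a direct check on generators $m[u|\cdots|u]$ using the formulas for $\dd^s$, $\dd^i$ on $B(M,A,D_A)$ and the differential $\dd(1\otimes\alpha^p)=-u\otimes\alpha^{p+1}$, $\dd(u\otimes\alpha^p)=0$. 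The passage to homology statement is then formal: on $H$, $H(f_M)$ is an isomorphism of graded modules intertwining the $U$-actions because the homotopy term vanishes.

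Finally I would compute the composites $N_M\circ f_M$ and $N_M\circ s_M$. By the defining formula \eqref{Norm-Map-Def}, $N_M$ kills all bar-summands $B_{p,*}$ with $p>0$, so $N_M\circ f_M$ and $N_M\circ s_M$ are determined by the $p=0$ part; the projection $\pi$ onto $M_n=D^+_{0,n}$ in the pictures is exactly this truncation. For $f_M$: the $0$-bar part of $f_M(m[])=m[](u\otimes\alpha^0)$ has $N_M(m[](u\otimes\alpha^0))(x)=m\cdot(u\otimes\alpha^0)(x)$, and unwinding the identification $D_A\cong A\otimes R[\alpha]$ together with the explicit model \eqref{Iso--} for $C^-_A(M)$ shows this is the element of $\prod_{q\geq 0}M_{n+3+4q}$ whose $q=0$ component is $(-1)^n m u$ (the sign coming from the $(-1)^p$ twist in $\dd^i$ and the sign conventions in \eqref{Iso-+}, \eqref{Iso--}) and whose higher components vanish — i.e. $\iota\circ((-1)^n u)\circ\pi$. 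For $s_M$: since $s_M$ has degree $0$ and preserves the $\alpha^0$-part, its $0$-bar component is just the identity $M_n\to M_n$ followed by the inclusion into $\prod_{q\geq 0}M_{n+4q}$, giving $\iota\circ\pi$. The main obstacle I anticipate is getting every sign right: the shift conventions $M[p]$, the $(-1)^p$ twist in the internal differential $\dd^i$ of the bar complex, the sign $(-1)^{p(q-p)+p(p+1)/2}$ in the isomorphism \eqref{Iso-+}, the sign $(-1)^{p(p+1)/2}$ in \eqref{Iso--}, and the differential sign $\dd(1\otimes\alpha^p)=-u\otimes\alpha^{p+1}$ all interact, and verifying $f_M U-Uf_M=\dd s_M-s_M\dd$ on the nose requires tracking these carefully through the (short, finite) bar-differential expansions.
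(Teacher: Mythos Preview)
Your overall strategy is exactly the paper's: define $f_M = B(1,1,\rho)$ using the degree-$3$ quasi-isomorphism $\rho\colon R\to D_A$, $1\mapsto u\otimes\alpha^0$, then exhibit an explicit homotopy $s_M$ and check the identity on generators $m[u|\cdots|u]$. The computation of $N_M\circ f_M$ and $N_M\circ s_M$ via the $p=0$ truncation is also exactly how the paper does it.

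The one place where your proposal goes wrong is the description of $s_M$. You write that $s_M$ ``inserts one extra copy of $u$ into the last slot and pairs with $u\otimes\alpha^0$''. That is not a degree-$0$ map: adding a bar-entry $u$ raises total degree by $4$, and pairing with $u\otimes\alpha^0$ adds another $3$. The paper's $s_M$ is simpler and is the only natural candidate: using the identification $D_A\cong A\otimes R[\alpha]$, one sets
\[
s_M\bigl(m[u|\cdots|u]\bigr)\;=\;m[u|\cdots|u]\,(1\otimes\alpha^0),
\]
i.e.\ the same bar-word with the degree-$0$ element $1\otimes\alpha^0\in D_A$ appended in the $N$-slot. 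With this $s_M$ the verification of $f_MU-Uf_M=\dd s_M-s_M\dd$ is a two-line computation using $\dd_{D_A}(1\otimes\alpha^0)=-u\otimes\alpha^1$ and $U\cdot(u\otimes\alpha^0)=u\otimes\alpha^1$: the internal differential hitting $1\otimes\alpha^0$ produces the $-Uf_M$ term, and the simplicial differential's last face (which multiplies the last bar-entry into the $D_A$-slot) produces the $f_MU$ term up to the appropriate sign. Your final computation of $N_M\circ s_M$ (``its $0$-bar component is just the identity $M_n\to M_n$'') is in fact consistent with this correct $s_M$, so you likely had the right object in mind but mis-described it.
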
  
\begin{proof} Recall that $C^+_A(M)=B(M,A,R)$ and $C^{+,tw}_A(M)=B(M,A,D_A)$.
It is convenient to introduce the notation
\begin{align*}
m\otimes V^p \coloneqq & (-1)^{p(p+1)/2}m[u|u|\cdots |u] \in B(M,A,R)   \\ 
m\otimes V^p\otimes \psi \coloneqq & (-1)^{p(p+1)/2}m[u|u|\cdots |u]\psi \in B(M,A,D_A)
\end{align*}
for $m\in M$ and $\psi\in D_A$, where $u$ is repeated $p$ times in both formulas. In terms of this notation the differentials and actions of $U\in R[U]$ of $B(M,A,R)$ and $B(M,A,D_A)$ are given by the formulas
\begin{align*}
U(m\otimes V^p)  =& \; (-1)^{|m|}m\otimes V^{p-1}   \\   
U(m\otimes V^p\otimes \psi) =& \; m\otimes V^p\otimes (U\psi) \\
\dd(m\otimes V^p) =& \; (-1)^p(\dd_M m +mu \otimes V^{p-1}) \\
\dd(m\otimes V^p\otimes \psi) =& \; (-1)^p(\dd_Mm \otimes V^p+mu\otimes V^{p-1})\otimes \psi  \\
 & \; + m\otimes (V^{p-1}\otimes u\psi +(-1)^{|m|}V^p\otimes \dd_{D_A}\psi).
\end{align*}
By the previous lemma there is an isomorphism $D_A\cong A\otimes R[\alpha]$
with $|\alpha|=-4$. To simplify the notation write
$a\alpha^p\coloneqq a\otimes \alpha^p$ for $a\in A$. There is also an 
$A$-linear quasi-isomorphism $\rho:R\ra D_A$ of degree $3$ given by $\rho(1)=u\alpha^0$. This map induces a quasi-isomorphism 
\[ f_M \coloneqq B(1,1,\rho)\colon B(M,A,R)\ra B(M,A,D_A)  \] 
of degree $3$, which in terms of the above notation
is given by $f_M(m\otimes V^p) = (-1)^{|m|}m\otimes V^p\otimes u\alpha^0$
(the sign is forced upon us by the fact that we regard $\rho$ as a map
of degree $3$). Using the above formulas we find that
\begin{equation}  \label{fMU-equation}
(f_M\circ U-U\circ f_M)(m\otimes V^p) = m\otimes V^{p-1}\otimes u\alpha^0
-(-1)^{|m|}m\otimes V^p\otimes u\alpha^1  .
\end{equation}  
Define $s_M\colon B(M,A,R)\ra B(M,A,D_A)$ of degree $0$ by $s_M(m\otimes V^p)=m\otimes V^p\otimes \alpha^0$. A straightforward calculation using the above formulas and the formula for the differential in $D_A\cong A\otimes R[\alpha]$ given in the previous lemma shows that
\begin{align*}
\dd s_M(m\otimes V^p)  =& \; (-1)^p(\dd_M m\otimes V^p\otimes \alpha^0+mu\otimes V^{p-1}\otimes \alpha^0)  \\
& \; +  m\otimes V^{p-1}\otimes u\alpha^0 -(-1)^{|m|}m\otimes V^p\otimes u\alpha^1 \\
s_M\dd (m\otimes V^p)  =& (-1)^p(\dd_Mm\otimes V^p\otimes \alpha^0+mu\otimes V^{p-1}\otimes \alpha^0).
\end{align*} 
Combining this with \eqref{fMU-equation} we obtain $f_M\circ U-U\circ f_M=\dd s_M-s_M\dd$ as required. 

Finally, we need to consider the compositions of $f_M$ and $s_M$ with the
norm map $N_M\colon C^{+,tw}_A(M)\ra C^-_A(M)$ in terms of the models
given in Proposition \ref{Explicit-Cpm-Models}. By \eqref{Iso-+}
$m\in M_{q-3p}=D^+_{p,q}$ corresponds to $(-1)^{|m|p}m\otimes V^p$,
which is mapped by $f_M$ to $(-1)^{|m|(p+1)}m\otimes V^p\otimes u\alpha^0$.
The norm map kills off all such elements with $p>0$ and sends
$m\otimes V^0\otimes u\alpha^0\mapsto mu\cdot \alpha^0$. This is the
functional that sends $V^0$ to $mu\in M_{q+3}$ and vanishes on $V^q$ for
$q>0$. By \eqref{Iso--} the identification $C^-_A(M)_n\cong \prod_{q\geq 0}M_{n+4q}$ is given by $\psi\mapsto (\psi(V^q))_{q\geq 0}$. We conclude that $N_M\circ f_M$ is given by the composition in the statement of the lemma. The
verification for $N_M\circ s_M$ is analogous.  
\end{proof} 
 
We need two simple but slightly technical results before we can reach
our goal.  

\begin{lemma} \label{Cone-Lemma1}
 Let $X$ and $Y$ be DG $R[U]$-modules and suppose that
$f\colon X\ra Y$ is a degree $d$ map of $R[U]$-modules up to homotopy; that
is, $f$ is a chain map of degree $d$ and there is a chain homotopy
$s\colon X\ra Y$ such that $fU-Uf = \dd_Ys+(-1)^{d}s\dd_X$. Let $\om{Cone}(f)$ be given
as in Definition \ref{Cone-Definition}, except that we redefine the
$R[U]$-module structure by setting
\[ U\cdot \left( \begin{array}{c} y \\ x \end{array} \right) =
\left( \begin{array}{cc} U & (-1)^{|s|}s \\ 0 & U \end{array} \right)
\left(\begin{array}{c} y \\ x \end{array} \right) =
\left( \begin{array}{c} Uy-(-1)^dsx \\ Ux \end{array} \right) \]
for all $(y,x)\in Y_n\oplus X_{n-d-1}=\om{Cone}(f)_n$. 
Then $\om{Cone}(f)$ is a DG $R[U]$-module and the maps in the natural
short exact sequence
\[ \begin{tikzcd} 0 \arrow{r} & Y \arrow{r} & \om{Cone}(f) \arrow{r} &
X[d+1] \arrow{r} & 0 \end{tikzcd}  \]
are (strict) maps of $R[U]$-modules.
\end{lemma}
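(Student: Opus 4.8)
The plan is to reduce the whole statement to a single identity. Since $R[U]$ is the polynomial algebra on the single generator $U$ of even degree, a DG $R[U]$-module structure on a DG $R$-module $W$ amounts to nothing more than an $R$-linear self-map $\mathcal U\colon W\to W$ raising degree by $|U|$ and satisfying $\mathcal U\circ\partial_W=\partial_W\circ\mathcal U$ (there is no sign, as $|U|$ is even), the action of $U^n$ then being $\mathcal U^n$. Thus the content of the first assertion is exactly that the operator $\mathcal U=\begin{pmatrix}U & (-1)^{|s|}s\\ 0 & U\end{pmatrix}$ on $\operatorname{Cone}(f)_n=Y_n\oplus X_{n-d-1}$ is a chain map. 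First I would check that $\mathcal U$ is homogeneous of degree $|U|$: homogeneity of the relation $fU-Uf=\partial_Ys+(-1)^ds\partial_X$ forces $|s|=d+|U|+1$, so $s$ carries $X_{n-d-1}$ into $Y_{n+|U|}$, and since $|U|$ is even one gets $(-1)^{|s|}=-(-1)^d$, consistent with the formula $\mathcal U(y,x)=(Uy-(-1)^dsx,\,Ux)$ recorded in the statement.

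Next I would compute $\mathcal U\circ\partial_{\operatorname{Cone}(f)}$ and $\partial_{\operatorname{Cone}(f)}\circ\mathcal U$ on a homogeneous element $(y,x)$, using $\partial_{\operatorname{Cone}(f)}(y,x)=(\partial_Yy+(-1)^dfx,\,(-1)^{d+1}\partial_Xx)$ from Definition \ref{Cone-Definition}. The $X$-components of the two composites are $(-1)^{d+1}U\partial_Xx$ and $(-1)^{d+1}\partial_XUx$, which agree because $U$ is a chain map on $X$. For the $Y$-components, the terms $U\partial_Yy$ and $\partial_YUy$ cancel (again $U$ a chain map on $Y$), and after clearing the overall sign $(-1)^d$ what remains is exactly the identity $fU-Uf=\partial_Ys+(-1)^ds\partial_X$ — that is, the hypothesis on $s$. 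Hence $\mathcal U$ is a chain map, and $\operatorname{Cone}(f)$ becomes a DG $R[U]$-module, with nothing further to check since a graded module over a polynomial ring is determined by the action of the generator.

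Finally, the maps $\iota\colon Y\to\operatorname{Cone}(f)$, $y\mapsto(y,0)$, and $\pi\colon\operatorname{Cone}(f)\to X[d+1]$, $(y,x)\mapsto x$, are already chain maps forming a short exact sequence (Definition \ref{Cone-Definition}); only $R[U]$-linearity is new. For $\iota$ it is immediate, the $s$-term of $\mathcal U$ acting on the vanishing $X$-component: $\mathcal U(y,0)=(Uy,0)=\iota(Uy)$. For $\pi$ one has $\pi(\mathcal U(y,x))=Ux$, and this equals the $R[U]$-action on $X[d+1]$, since by the left-shift convention $U\cdot x=(-1)^{|U|(d+1)}Ux=Ux$ because $|U|$ is even. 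Thus both are strict morphisms of $R[U]$-modules. The only delicate point in the argument is the sign bookkeeping in the middle step — in particular the interplay of $(-1)^d$, $(-1)^{|s|}$ and the shift sign $(-1)^{|U|(d+1)}$ — but once the parity $|s|\equiv d+1$ is noted everything collapses to the given homotopy relation, so I anticipate no genuine obstacle.
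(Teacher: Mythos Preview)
Your argument is correct and follows exactly the same route as the paper's own proof: reduce the claim to the single identity $\dd_{\om{Cone}(f)}\mathcal U=\mathcal U\,\dd_{\om{Cone}(f)}$, verify it componentwise using $\dd U=U\dd$ on $X$ and $Y$ together with the homotopy relation, and then observe that $\iota$ and $\pi$ visibly commute with $\mathcal U$. Your write-up is in fact more careful than the paper's, which leaves the sign and degree bookkeeping (the parity $|s|\equiv d+1$ and the shift sign on $X[d+1]$) entirely implicit.
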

\begin{proof} It suffices to check that $\dd_{\om{Cone}(f)}U=U\dd_{\om{Cone}(f)}$. This follows easily from the relations $\dd_XU=U\dd_X$, $\dd_YU=U\dd_Y$
and $fU-Uf = \dd_Ys+(-1)^{d}s\dd_X$. Next, the maps in the natural
exact sequence are given degreewise by the inclusion 
$Y_n\ra Y_n\oplus X_{n-d-1}$ and the projection $Y_n\oplus X_{n-d-1}\ra X_{n-d-1}$ and are seen to commute with the action of $U$.
\end{proof}   

\begin{lemma} \label{Cone-Lemma2}
 Let $f\colon X\ra Y$ be a degree $3$ map of DG $R[U]$-modules up
to homotopy, and let $g\colon Y\ra Z$ be a degree $0$
map of DG $R[U]$-modules. Then $g\circ f:X\ra Z$ is a degree $3$ map of 
$R[U]$-modules up to homotopy and there is a commutative diagram
\[ \begin{tikzcd}  0 \arrow{r} & Z \arrow{r} \arrow{d}{=} & \om{Cone}(g\circ f) \arrow{r} \arrow{d}{\om{Cone}(f,1)} &
X[4] \arrow{d}{f[1]} \arrow{r} & 0 \\
0 \arrow{r} & Z \arrow{r} & \om{Cone}(g) \arrow{r} & Y[1] \arrow{r} & 0
\end{tikzcd} \]
with short exact rows. If $\om{Cone}(g\circ f)$ is given the DG $R[U]$-module
structure of the previous lemma, then the maps $\om{Cone}(f,1)$ and $f[1]$ are maps of DG $R[U]$-modules up to homotopy and all the other maps in the diagram
are strict maps of DG $R[U]$-modules. \end{lemma}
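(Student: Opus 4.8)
The plan is to construct the horizontal maps in the diagram explicitly, using the homotopy $s$ witnessing that $f$ is a map of $R[U]$-modules up to homotopy, and then to verify the required commutativity and compatibility statements by direct but routine computation. First I would record that $g\circ f\colon X\ra Z$ is again a degree $3$ map of $R[U]$-modules up to homotopy: since $g$ is a strict map of $R[U]$-modules and a chain map of degree $0$, the composite $g\circ s\colon X\ra Z$ serves as the required homotopy, because
\[ (g\circ f)U-U(g\circ f) = g(fU-Uf) = g(\dd_Y s - s\dd_X) = \dd_Z(g s)-(g s)\dd_X, \]
using $g\dd_Y=\dd_Z g$ and $gU=Ug$. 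Thus $\om{Cone}(g\circ f)$ may be equipped with the twisted $R[U]$-module structure of Lemma \ref{Cone-Lemma1} via the homotopy $g\circ s$, and likewise $\om{Cone}(g)$ carries the (ordinary, untwisted) cone $R[U]$-module structure since $g$ is a strict $R[U]$-map.

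Next I would define the vertical map $\om{Cone}(f,1)\colon \om{Cone}(g\circ f)\ra \om{Cone}(g)$ degreewise on $\om{Cone}(g\circ f)_n = Z_n\oplus X_{n-4}$ by the matrix $\begin{pmatrix}1 & 0\\ 0 & f\end{pmatrix}$, i.e.\ $(z,x)\mapsto (z,f(x))\in Z_n\oplus Y_{n-1}=\om{Cone}(g)_n$, and $f[1]\colon X[4]\ra Y[1]$ to be the shift of $f$ (which has degree $3$, so $f[1]$ lands correctly after the degree-$4$ versus degree-$1$ shift). The rows are the natural short exact sequences attached to the respective cones by Lemma \ref{Cone-Lemma1} (for the top) and the analogous untwisted statement (for the bottom). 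Commutativity of the left square is immediate since the left vertical map is the identity on $Z$ and the inclusions are $(z)\mapsto (z,0)$ in both cases; commutativity of the right square is the statement that the projection $\om{Cone}(g\circ f)\ra X[4]$ followed by $f[1]$ equals $\om{Cone}(f,1)$ followed by the projection $\om{Cone}(g)\ra Y[1]$, which holds on the nose: both send $(z,x)\mapsto f(x)$.

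It then remains to check that $\om{Cone}(f,1)$ and $f[1]$ are chain maps and are $R[U]$-linear up to homotopy. That they commute with the differentials is a direct matrix computation using the cone differential of Definition \ref{Cone-Definition} and the chain-map property of $f$; the nontrivial point is the compatibility with the $U$-action. Here the bottom cone uses the untwisted $U$-action (so $U$ is diagonal), while the top cone uses the $U$-action twisted by $g\circ s$, so one finds
\[ \om{Cone}(f,1)\big(U\cdot(z,x)\big) - U\cdot \om{Cone}(f,1)(z,x) = \big(-(gs)(x) + s\text{-term}, 0\big), \]
and I expect the explicit homotopy witnessing $R[U]$-linearity up to homotopy of $\om{Cone}(f,1)$ to be the map $(z,x)\mapsto (s(x),0)$ (up to a sign dictated by the conventions of Definition \ref{Cone-Definition} and Lemma \ref{Cone-Lemma1}); similarly $f[1]$ is $R[U]$-linear up to homotopy with homotopy $s[1]$. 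Carefully tracking the signs arising from the degree-$3$ shift, the twist $(-1)^{|s|}$ in Lemma \ref{Cone-Lemma1}, and the $(-1)^d$, $(-1)^{d+1}$ factors in the cone differential is the one genuinely fiddly part; everything else is formal diagram-chasing. The main obstacle is therefore bookkeeping: producing the homotopies with the correct signs so that all four maps are simultaneously seen to be (strict or up-to-homotopy) maps of DG $R[U]$-modules, rather than any conceptual difficulty.
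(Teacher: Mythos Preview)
Your strategy matches the paper's exactly: define $\om{Cone}(f,1)$ by a $2\times 2$ matrix, check it is a chain map making the diagram commute, and then produce an explicit homotopy $v$ witnessing $R[U]$-linearity up to homotopy. However, several of your concrete formulas are wrong in ways that go beyond the sign bookkeeping you flag.

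First, with the paper's cone conventions the map $\om{Cone}(f,1)$ must be $(z,x)\mapsto (z,-f x)$; your version $(z,x)\mapsto(z,f x)$ is not a chain map, since comparing the $(1,2)$-entries of $\dd_{\om{Cone}(g)}\circ\om{Cone}(f,1)$ and $\om{Cone}(f,1)\circ\dd_{\om{Cone}(g\circ f)}$ forces the sign. Second, your commutator computation is incorrect: one finds
\[
\om{Cone}(f,1)\,U-U\,\om{Cone}(f,1)=\begin{pmatrix}0 & t\\ 0 & Uf-fU\end{pmatrix},
\]
with $t=g\circ s$, so the second component is $Uf-fU$, not zero. Third, and most importantly, your proposed homotopy $(z,x)\mapsto(s(x),0)$ does not even type-check: $s(x)\in Y$, not $Z$, so it cannot sit in the first component of $\om{Cone}(g)=Z\oplus Y[1]$. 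The correct homotopy, as the paper verifies by a matrix computation, is $v(z,x)=(0,s x)$, landing in the $Y$-component; then $\dd_{\om{Cone}(g)} v + v\,\dd_{\om{Cone}(g\circ f)}$ has entries $(0,g s)$ and $(0,s\dd_X-\dd_Y s)$, which match the commutator above precisely because $fU-Uf=\dd_Y s - s\dd_X$. So the outline is right, but the actual execution of step~4 needs to be redone with the homotopy in the correct slot.
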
 
\begin{proof} By assumption there is a chain homotopy $s\colon X\ra Y$ of
degree $0$ such that $fU-Uf = \dd s-s\dd$. Then for $h\coloneqq g\circ f$
and $t\coloneqq g\circ s$ we have $hU-Uh = \dd t-t\dd$ as well. This shows that $h$ is a map of $R[U]$-modules up to homotopy. By functoriality of the cone
construction we obtain a commutative diagram of DG modules with
short exact rows
\[ \begin{tikzcd}  0 \arrow{r} & Z \arrow{r} \arrow{d}{=} & \om{Cone}(g\circ f) \arrow{r} \arrow{d}{\om{Cone}(f,1)} &
X[4] \arrow{d}{f[1]} \arrow{r} & 0 \\
0 \arrow{r} & Z \arrow{r} & \om{Cone}(g) \arrow{r} & Y[1] \arrow{r} & 0.
\end{tikzcd} \]
Here, $\om{Cone}(f,1)\colon \om{Cone}(g\circ f)_n = Z_n\oplus X_{n-4}\ra Z_n\oplus Y_{n-1}=\om{Cone}(g)_n$ is given by $(z,x)\mapsto (z,-fx)$, and
$f[1]\colon X[4]_n = X_{n-4}\ra Y_{n-1}=Y[1]_n$ is given by $x\mapsto -fx$.
Equip $\om{Cone}(g\circ f)$ with the adjusted $R[U]$-module structure
given in the previous lemma. Then all the horizontal maps in the
diagram are strict maps of $R[U]$-modules. The right hand vertical map
is a map of $R[U]$-modules up to homotopy, so the only thing
we have to verify is that $\om{Cone}(f,1)$ is a map of $R[U]$-modules
up to homotopy. 

We claim that $v\colon \om{Cone}(h)\ra \om{Cone}(g)$ given by $v(z,x)=(0,sx)$
is the required homotopy. Indeed, using matrix notation for the various
maps, 
\begin{align*}
C(f,1)U-UC(f,1) &= \left(
\begin{array}{cc} 1& 0 \\ 0 & -f \end{array} \right)\left( \begin{array}{cc}
U & t \\ 0 & U \end{array} \right) -\left( \begin{array}{cc}
U & 0 \\ 0 & U \end{array} \right)\left(
\begin{array}{cc} 1& 0 \\ 0 & -f \end{array} \right) \\
&= \left( \begin{array}{cc} 0 & t \\ 0 & Uf-fU \end{array} \right) \\
\dd_{\om{Cone}(g)}v+v\dd_{\om{Cone}(h)} &=
\left( \begin{array}{cc} \dd_Z & g \\ 0 & -\dd_Y \end{array} \right)
\left( \begin{array}{cc} 0 & 0 \\ 0 & s \end{array} \right) +
\left( \begin{array}{cc} 0 & 0 \\ 0 & s \end{array} \right)
\left( \begin{array}{cc} \dd_Z & -h \\ 0 & \dd_X \end{array} \right) \\
&= \left( \begin{array}{cc} 0 & g\circ s \\ 0 & s\dd_X-\dd_Ys
\end{array} \right) , 
\end{align*} 
where $C(f,1)$ is shorthand for $\om{Cone}(f,1)$. Therefore, 
as $t= g\circ s$ and $\dd_Y s-s\dd_X s = fU-Uf$, we deduce that
$\om{Cone}(f,1)$ is $R[U]$-linear up to homotopy. 
\end{proof}  

Given a sequence of $R$-modules $\{D_s\}_{s\in \Z}$ we write
$\prod_{s\to -\infty} D_s$ for the submodule of $\prod_s D_s$ consisting
of the sequences $(x_s)_{s\in \Z}$ for which there exists $s_0$ such that $x_s=0$ for all $s\geq s_0$. Similarly, $\prod_{s\to\infty}D_s$ is
the submodule of $(x_s)_{s\in \Z}$ with $x_s=0$ for all sufficiently
small $s$. 

\begin{definition} \label{Totalization-Def}
Given a double complex $E=(D_{*,*},\dd',\dd'')$, define
$\om{Tot}^{\Pi,-\infty}(E)$ and $\om{Tot}^{\Pi,+\infty}(E)$ to be
the complexes that in degree $n$ are given by
\[ \prod_{s\to -\infty} D_{s,n-s} \; \mbox{ and } \; \prod_{s\to\infty}D_{s,n-s}  \]
respectively, with differential $\dd=\dd'+\dd''$. 
\end{definition} 

\begin{theorem} \label{Explicit-Cinfty-model}
Let $A=\Lambda_R[u]$ with $|u|=3$ and let $M$ be a right
DG $A$-module. Define $D^\infty_{p,q}=M_{q-3p}$ for all $p,q\in \Z$. Set
\begin{align*}
 \dd'=(-1)^{p+q+1}u\colon D^\infty_{p,q}&=M_{q-3p}\ra M_{q-3p+3}=D^\infty_{p-1,q}  \\
\dd''=\dd_M\colon D^\infty_{p,q}&=M_{q-3p}\ra M_{q-3p-1}=D^\infty_{p,q-1}
\end{align*}
and let $U\colon D^\infty_{p,q}=M_{q-3p}\ra M_{q-3p}=D^\infty_{p-1,q-3}$ be
the identity for all $p,q$. Then there is a quasi-isomorphism, natural in $M$,
\[ \om{Tot}^{\Pi,-\infty}(D_{*,*},\dd',\dd'')\ra C^\infty_A(M),  \]
which is a homomorphism of $R[U]$-modules up to homotopy.
\end{theorem}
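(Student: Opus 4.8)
The plan is to realize $C^\infty_A(M)$, up to a natural quasi-isomorphism, as the cone of the composite $N_M\circ f_M\colon C^+_A(M)\to C^-_A(M)$ rather than the cone of $N_M$ itself, and then to identify that cone with $\om{Tot}^{\Pi,-\infty}(D^\infty_{*,*},\dd',\dd'')$. Here $f_M\colon C^+_A(M)\to C^{+,tw}_A(M)$ is the degree $3$ quasi-isomorphism of Lemma \ref{f-Homotopy-Lemma}, which is a map of $R[U]$-modules up to the homotopy $s_M$, and $N_M\colon C^{+,tw}_A(M)\to C^-_A(M)$ is the norm map, which has degree $0$ and is strictly $C^-_A(R)$-linear, hence strictly $R[U]$-linear since $C^-_A(R)\cong R[U]$ with trivial differential by Lemma \ref{Orbit-Calc-Lemma}. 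Applying Lemma \ref{Cone-Lemma2} with $X=C^+_A(M)$, $Y=C^{+,tw}_A(M)$, $Z=C^-_A(M)$, $f=f_M$, $g=N_M$ produces a morphism $\om{Cone}(f_M,1)\colon \om{Cone}(N_M\circ f_M)\to \om{Cone}(N_M)=C^\infty_A(M)$, fitting into a map of short exact sequences whose outer vertical maps are $\om{id}_{C^-_A(M)}$ and $f_M[1]$. In this picture $\om{Cone}(N_M\circ f_M)$ carries the adjusted $R[U]$-module structure of Lemma \ref{Cone-Lemma1} built from the homotopy $N_M\circ s_M$, and $\om{Cone}(f_M,1)$ is a map of $R[U]$-modules up to homotopy.

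The first task is then to identify $\om{Cone}(N_M\circ f_M)$, as a DG $R[U]$-module natural in $M$, with $\om{Tot}^{\Pi,-\infty}(D^\infty_{*,*})$. Using the explicit models $C^+_A(M)\cong \om{Tot}^\oplus(D^+_{*,*})$ and $C^-_A(M)\cong \om{Tot}^\Pi(D^-_{*,*})$ of Proposition \ref{Explicit-Cpm-Models}, in degree $n$ one has $C^-_A(M)_n\cong \prod_{q\geq 0}M_{n+4q}$ and $C^+_A(M)_{n-4}\cong \bigoplus_{p\geq 0}M_{n-4-4p}$, so the degree $n$ part of $\om{Cone}(N_M\circ f_M)$ is $\prod_{q\geq 0}M_{n+4q}\oplus\bigoplus_{p\geq 0}M_{n-4-4p}$; upon setting $s=-q$ on the first summand and $s=p+1$ on the second this is exactly $\prod_{s\to -\infty}D^\infty_{s,n-s}=\prod_{s\to -\infty}M_{n-4s}$, since $\prod_{s\to -\infty}$ allows arbitrary terms for $s\le 0$ and only finitely many for $s\ge 1$. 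On each of the two summands the cone differential restricts to the internal differential $\dd''=\dd_M$ together with the vertical differential $\dd'=(-1)^{p+q+1}u$ coming from $D^\pm$, exactly as in Proposition \ref{Explicit-Cpm-Models}. The only remaining component is the connecting term $-(N_M\circ f_M)$, and by Lemma \ref{f-Homotopy-Lemma} this is, in degree $n-4$, the composite "project onto the $p=0$ summand, multiply by $(-1)^{n-4}u$, include as the $q=0$ factor", which matches precisely the single $\dd'$-component $D^\infty_{1,\ast}\to D^\infty_{0,\ast}$ crossing from $s=1$ to $s=0$ (the two signs $(-1)^{n+1}$ agree after a short check, using the formula $\dd'=(-1)^{p+q+1}u$). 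Likewise the adjusted $U$-action $U\cdot(z,x)=(Uz+(N_M\circ s_M)(x),Ux)$ of Lemma \ref{Cone-Lemma1} corresponds under this identification to the shift $s\mapsto s-1$: on the components with $s\le 0$ or $s\ge 2$ it is the identity map $U$ on $D^\pm$, while the $s=1$ component of $x$ (the $p=0$ part) is sent by the identity into the $s=0$ component of $z$ (the $q=0$ part), which is exactly the map $N_M\circ s_M$ of Lemma \ref{f-Homotopy-Lemma}. This yields a natural isomorphism of DG $R[U]$-modules $\om{Cone}(N_M\circ f_M)\cong\om{Tot}^{\Pi,-\infty}(D^\infty_{*,*})$.

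It remains to observe that $\om{Cone}(f_M,1)$ is a quasi-isomorphism. In the map of short exact sequences produced by Lemma \ref{Cone-Lemma2}, the two outer vertical maps are $\om{id}_{C^-_A(M)}$ and $f_M[1]$, both quasi-isomorphisms by Lemma \ref{f-Homotopy-Lemma}, so the five lemma applied to the induced long exact sequences in homology forces the middle map to be a quasi-isomorphism. Composing with the identification of the previous paragraph gives the desired natural quasi-isomorphism $\om{Tot}^{\Pi,-\infty}(D^\infty_{*,*})\to C^\infty_A(M)$, which is a homomorphism of $R[U]$-modules up to homotopy because $\om{Cone}(f_M,1)$ is, and natural in $M$ because $f_M$, $s_M$, $N_M$, the isomorphisms of Proposition \ref{Explicit-Cpm-Models} and the cone construction all are. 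The only step that is not a formal consequence of the two preceding cone lemmas is the sign and index bookkeeping in the middle paragraph — matching the cone's connecting differential and its adjusted $U$-action with the vertical differential $\dd'$ and the identity shift $U$ of the full-plane complex $D^\infty_{*,*}$ — and this is where the computations of $N_M\circ f_M$ and $N_M\circ s_M$ in Lemma \ref{f-Homotopy-Lemma} are essential.
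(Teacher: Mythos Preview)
Your proof is correct and follows essentially the same route as the paper: apply Lemma \ref{Cone-Lemma2} to the factorization $N_M\circ f_M$ to get a quasi-isomorphism $\om{Cone}(N_M\circ f_M)\to C^\infty_A(M)$ that is $R[U]$-linear up to homotopy, and then identify $\om{Cone}(N_M\circ f_M)$ with $\om{Tot}^{\Pi,-\infty}(D^\infty_{*,*})$ using the explicit models of Proposition \ref{Explicit-Cpm-Models} together with the formulas for $N_M\circ f_M$ and $N_M\circ s_M$ from Lemma \ref{f-Homotopy-Lemma}. The paper presents the five-lemma step before the identification, while you do the identification first, but the content is the same.
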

\begin{proof} By Lemma \ref{f-Homotopy-Lemma} there is a quasi-isomorphism
$f=f_M\colon C^+_A(M)\ra C^{+,tw}_A(M)$ and a homotopy $s=s_M\colon C^+_A(M)\ra C^{+,tw}_A(M)$ such that $fU-Uf=\dd s-s\dd$. We apply Lemma \ref{Cone-Lemma2} for
$f\colon C^+_A(M)\ra C^{+,tw}_A(M)$ and the norm map 
$N_M\colon C^{+,tw}_A(M)\ra C^-_A(M)$ to obtain the commutative
diagram with exact rows
\begin{equation} \label{Simplifying-Tate-Diagram}
\begin{tikzcd} 0\arrow{r} & C^-_A(M) \arrow{d}{=} \arrow{r} & \om{Cone}(N_M\circ f) \arrow{d}{\om{Cone}(f,1)} \arrow{r} & C^+_A(M)[4] \arrow{d}{f[1]}\arrow{r}  & 0 \\
0 \arrow{r} & C^-_A(M) \arrow{r} & C^\infty_A(M) \arrow{r} & C^{+,tw}_A(M)[1]
\arrow{r} & 0, \end{tikzcd}
\end{equation} 
where we recall that $C^\infty_A(M)=\om{Cone}(N_M)$. By the same result
$\om{Cone}(f,1)$ is a map of $R[U]$-modules up to homotopy. As both the
outer vertical maps are quasi-isomorphisms, it follows by passage to the long exact sequences in homology and the $5$-lemma that $\om{Cone}(f,1)$
is a quasi-isomorphism. Thus, $\om{Cone}(f,1)$ is the required map.
This quasi-isomorphism is natural in $M$, since both $f$ and $N_M$ are
natural transformations in $M$ and the cone construction is functorial.

The remaining task is to establish an isomorphism 
$\om{Cone}(N_M\circ f)\cong \om{Tot}^{\Pi,-\infty}(D^\infty_{*,*},\dd',\dd'')$. 
Using the identifications of Proposition \ref{Explicit-Cpm-Models} we obtain 
\[ \om{Cone}(N_M\circ f)_n = C^-_A(M)_n\oplus C^+_A(M)_{n-4} \cong \left( \prod_{q\geq 0} M_{n+4q} \right) \oplus \left( \bigoplus_{p\geq 0} M_{n-4(p+1)} \right) \]
for each $n\in \Z$. Moreover, by Lemma \ref{f-Homotopy-Lemma} $h\coloneqq N_M\circ f$ and $t\coloneqq N_M\circ s$ are given on $(x_p)_{p\geq 0}\in C^+_A(M)_n$ by
$h((x_p)_p)=((-1)^{|x_0|}x_0u,0,\cdots )$ and $t((x_p)_{p\geq 0})=(x_0,0,0,\cdots)$. Therefore, using the formulas
\[ \dd_{\om{Cone}(h)} = \left( \begin{array}{cc} \dd_{C^-_A(M)} & -h \\ 
0 & \dd_{C^+_A(M)} \end{array} \right) \; \mbox{ and } \; 
U = \left( \begin{array}{cc} U & t \\ 0 & U \end{array} \right), \]
as well as the description of the differentials and the $U$-action in terms
of the models for $C^\pm_A(M)$, we deduce that
$\om{Cone}(h) \cong \om{Tot}^{\Pi,-\infty}(D^\infty_{*,*},\dd',\dd'')$.    
\end{proof}

A part of the double complex $(D^\infty_{*,*},\dd',\dd'')$ is shown
below.
\[ \begin{tikzcd} 
{} & \vdots \arrow{d} & \vdots \arrow{d} & \vdots \arrow{d} & \vdots \arrow{d} & {}  \\
\cdots & M_8 \arrow[dotted]{dddl} \arrow{l} \arrow{d}{\dd_M} & M_{5} \arrow[dotted]{dddl} \arrow{l}{u} \arrow{d}{\dd_M}
& M_{2} \arrow{l}{-u} \arrow[dotted]{dddl} \arrow{d}{\dd_M} & M_{-1} \arrow[dotted]{dddl} \arrow{l}{u} \arrow{d}{\dd_M} & \cdots \arrow{l} \\
\cdots & M_7 \arrow{l} \arrow{d}{\dd_M} & M_{4} \arrow{d}{\dd_M} \arrow{l}{-u}
& M_{1} \arrow{d}{\dd_M} \arrow{l}{u} & M_{-2} \arrow{l}{-u} \arrow{d}{\dd_M} & \cdots \arrow{l} \\
\cdots & M_6 \arrow{l} \arrow{d}{\dd_M} & M_{3} \arrow{l}{u} \arrow{d}{\dd_M}
& M_{0} \arrow{l}{-u} \arrow{d}{\dd_M} & M_{-3} \arrow{l}{u} \arrow{d}{\dd_M} & \cdots \arrow{l} \\
\cdots & M_{5} \arrow{l} \arrow{d} & M_{2} \arrow{l}{-u} \arrow{d} &
M_{-1} \arrow{l}{u} \arrow{d} & M_{-4} \arrow{l}{-u} \arrow{d} & \cdots \arrow{l} \\
{} & \vdots & \vdots & \vdots & \vdots & {} \end{tikzcd}. \]
This is indeed the natural concatenation of the two double complexes
$D^+_{*,*}$ and $D^-_{*,*}$. 

\begin{corollary} Let $M$ be a right $A=\Lambda_R[u]$-module. Then
$U\colon H^\infty_A(M)\ra H^\infty_A(M)$ is an isomorphism.
\end{corollary}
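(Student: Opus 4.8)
The plan is to reduce the statement to the explicit model furnished by Theorem \ref{Explicit-Cinfty-model}. That theorem provides a quasi-isomorphism $\Phi\colon \MC{T}\ra C^\infty_A(M)$, natural in $M$ and a homomorphism of $R[U]$-modules up to homotopy, where $\MC{T}=\om{Tot}^{\Pi,-\infty}(D^\infty_{*,*},\dd',\dd'')$. Passing to homology, $H(\Phi)$ is then an isomorphism $H(\MC{T})\cong H^\infty_A(M)$ that strictly intertwines the two $U$-actions, so it suffices to prove that $U$ acts invertibly on $H(\MC{T})$; in fact I would show the stronger statement that $U$ is already an automorphism of the chain complex $\MC{T}$ itself.

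The key point is that in this model $U$ is nothing but an index shift. By construction $U\colon D^\infty_{p,q}\ra D^\infty_{p-1,q-3}$ is the identity map of the underlying module $M_{q-3p}$, so on $\MC{T}_n=\prod_{s\to-\infty}D^\infty_{s,n-s}$ --- the submodule of sequences $(x_s)_{s\in\Z}$ with $x_s\in D^\infty_{s,n-s}$ and $x_s=0$ for $s\gg 0$ --- multiplication by $U$ is the shift $(x_s)_s\mapsto (x_{s+1})_s$, once one notes that $U$ carries $D^\infty_{s,n-s}$ into position $s-1$ of $\MC{T}_{n-4}$. The first thing to verify is that this shift is genuinely a chain map: since $U$ decreases $p+q$ by $4$, the sign $(-1)^{p+q+1}$ occurring in $\dd'=(-1)^{p+q+1}u$ is left unchanged, and together with the relation $u\dd_M=\dd_Mu$ this gives $U\dd'=\dd'U$ and (trivially) $U\dd''=\dd''U$; hence $U$ is a degree $-4$ chain endomorphism of $\MC{T}$.

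The remaining point is that this shift is bijective on $\MC{T}$. Its two-sided inverse is the opposite shift $(y_s)_s\mapsto (y_{s-1})_s$, and here one only has to check that both directions preserve the ``eventually zero as $s\to+\infty$'' condition that cuts $\om{Tot}^{\Pi,-\infty}$ out of the full product: if $x_s=0$ for $s\geq s_0$ then $(x_{s+1})_s$ vanishes for $s\geq s_0-1$ and $(x_{s-1})_s$ vanishes for $s\geq s_0+1$. Composing the two shifts in either order recovers the original sequence, so they are mutually inverse endomorphisms of $\MC{T}$ (and a chain map that is invertible as a map of graded modules automatically has a chain-map inverse). Therefore $U$ is an automorphism of $\MC{T}$, so $U\colon H(\MC{T})\ra H(\MC{T})$ is an isomorphism, and transporting along the $R[U]$-equivariant isomorphism $H(\Phi)$ yields that $U\colon H^\infty_A(M)\ra H^\infty_A(M)$ is an isomorphism.

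I do not anticipate a genuine obstacle: all the real content has been front-loaded into the construction of the model in Theorem \ref{Explicit-Cinfty-model}, whose bigraded module $D^\infty_{*,*}$ is precisely the bi-infinite concatenation of the half-plane complexes $D^+_{*,*}$ and $D^-_{*,*}$, and it is exactly the passage to the whole plane that makes the periodicity operator invertible --- on $D^+$ alone the shift fails to be surjective and on $D^-$ alone it fails to be injective. The only mild care required is the bidegree and sign bookkeeping in the two verifications above, both of which are routine.
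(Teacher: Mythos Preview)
Your proposal is correct and follows essentially the same approach as the paper: the paper's proof is the single line ``$U\colon \om{Tot}^{\Pi,-\infty}(D^\infty_{*,*},\dd',\dd'')\ra\om{Tot}^{\Pi,-\infty}(D^\infty_{*,*},\dd',\dd'')$ is an isomorphism,'' and you have simply unpacked this, verifying explicitly that the shift is a chain map and that the eventually-zero condition is preserved in both directions. Your observation that the ``up to homotopy'' $R[U]$-linearity of $\Phi$ becomes strict on homology is the one point the paper leaves implicit, and you handle it correctly.
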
 
\begin{proof} $U\colon \om{Tot}^{\Pi,-\infty}(D^\infty_{*,*},\dd',\dd'')\ra\om{Tot}^{\Pi,-\infty}(D^\infty_{*,*},\dd',\dd'')$ is an isomorphism. \end{proof}

As in the cases $C^\pm_A$ we will need calculations for
$M=R$, $M=R\oplus R[2]$ and $M=\Lambda_R[u]$. This is easily achieved with 
the help of the above explicit model.

\begin{corollary}\label{Orbit-Calc-Tate-Corollary}
We have
\[ H^\infty_A(A)=0, \;\;  H^\infty_A(R) = R[T,T^{-1}] \; \mbox{ and } \;
H^\infty_A(R\oplus R[2])=R[S,S^{-1}],   \]
where $|T|=-4$, $|S|=-2$ and the $R[U]$-action is given by $U\cdot T^i = T^{i+1}$ and $U\cdot S^i=S^{i+2}$ for all $i\in \Z$. 
\end{corollary}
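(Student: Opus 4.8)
The plan is to compute the homology of the explicit model $\om{Tot}^{\Pi,-\infty}(D^\infty_{*,*},\dd',\dd'')$ from Theorem \ref{Explicit-Cinfty-model} directly in each of the three cases, using the concatenated-double-complex picture. In all three cases the internal differential $\dd''=\dd_M$ is trivial or very simple, so the total complex reduces to a question about the horizontal differential $\dd'=\pm u$ acting columnwise, and the $R[U]$-module structure is read off from the fact that $U$ is the identity map $D^\infty_{p,q}\to D^\infty_{p-1,q-3}$.

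\begin{proof}
We use the explicit model from Theorem \ref{Explicit-Cinfty-model}: $H^\infty_A(M)$ is the homology of $\om{Tot}^{\Pi,-\infty}(D^\infty_{*,*},\dd',\dd'')$, where $D^\infty_{p,q}=M_{q-3p}$, the vertical differential is $\dd''=\dd_M$, the horizontal differential is $\dd'=(-1)^{p+q+1}u$, and $U\colon D^\infty_{p,q}\to D^\infty_{p-1,q-3}$ is the identity.

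First take $M=A=\Lambda_R[u]$. Here $\dd_M=0$, and each row of the double complex is, up to signs, the two-step complex $\cdots\to A\xrightarrow{\pm u}A\xrightarrow{\pm u}A\to\cdots$ in which multiplication by $u$ sends the degree $0$ part isomorphically onto the degree $3$ part and kills the degree $3$ part. Thus on each fixed internal degree the horizontal complex is exact: a cycle in degree $n$ supported on the columns $D^\infty_{p,n-p}=A_{n-p-3p}=A_{n-4p}$ is a sequence $(a_p)$ with $a_p\in A_{n-4p}$; since $A_{n-4p}$ is nonzero only for $n-4p\in\{0,3\}$ and $u$ gives an isomorphism $A_0\to A_3$, one checks the chain complex $\prod_{p\to-\infty}D^\infty_{p,n-p}$ is acyclic. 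Hence $H^\infty_A(A)=0$.

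Next take $M=R$ concentrated in degree $0$. Then $\dd_M=0$ and $D^\infty_{p,q}=R$ exactly when $q=3p$, zero otherwise. So the total complex in degree $n$ is $\prod_{p\to-\infty}D^\infty_{p,n-p}$, which is nonzero only for the single value $p$ with $n-p=3p$, i.e.\ $n=4p$; for such $n$ it is one copy of $R$, and the horizontal differential $\dd'$ lands in $D^\infty_{p-1,n-p}=M_{n-p-3(p-1)}=M_{3-n/4}$, which vanishes since $M$ is concentrated in degree $0$ (and $n=4p\ne 12$ forces nothing special — one must also note the incoming differential from $D^\infty_{p+1,n-p}=M_{-n/4-3}$ is zero too). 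So $H^\infty_A(R)_n=R$ if $4\mid n$ and $0$ otherwise; writing $T$ for the generator in degree $-4$ gives $H^\infty_A(R)=R[T,T^{-1}]$, $|T|=-4$. The $U$-action is the identity $D^\infty_{p,4p}\to D^\infty_{p-1,4p-3}=M_{4p-3-3(p-1)}=M_{p}$... more carefully, $U$ shifts $(p,q)\mapsto(p-1,q-3)$, i.e.\ total degree by $-4$, and is an isomorphism between the two surviving copies of $R$, so $U\cdot T^i=T^{i+1}$.

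Finally take $M=R\oplus R[2]$ with trivial $A$-action ($u$ acts as zero, $\dd_M=0$). Now $D^\infty_{p,q}=M_{q-3p}$ is nonzero precisely when $q-3p\in\{0,2\}$, and $\dd'=\pm u=0$. So the total complex has zero differential, and in degree $n$ it picks up one copy of $R$ for each $p$ with $n-p-3p\in\{0,2\}$, i.e.\ $n=4p$ or $n=4p+2$; in every degree $n$ with $n$ even exactly one such $p$ (and one of the two summands) contributes, while for $n$ odd nothing contributes. Hence $H^\infty_A(R\oplus R[2])$ is a free $R$-module with one generator in each even degree; writing $S$ for the generator in degree $-2$ gives $R[S,S^{-1}]$, $|S|=-2$. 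As before $U$ is the identity shifting total degree by $-4$, hence $U\cdot S^i=S^{i+2}$.
\end{proof}

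The main obstacle is bookkeeping: in the $M=R$ and $M=A$ cases one must be careful that the product totalization $\om{Tot}^{\Pi,-\infty}$ (rather than a direct-sum or full-product totalization) is the right object, and that in the $M=A$ case the horizontal complex on each internal degree really is acyclic after passing to the $\prod_{p\to-\infty}$ truncation — a priori acyclicity of each column-pair could fail to give acyclicity of an infinite product, but here on each fixed internal degree only finitely many columns are nonzero, so the issue does not arise. The $R[U]$-module identification is then immediate from Theorem \ref{Explicit-Cinfty-model} since $U$ is literally an identity map between graded pieces. One can alternatively deduce these from Corollary \ref{Orbit-Calc-Corollary} and Corollary \ref{Orbit-Calc-Tate-Corollary}'s predecessors via the exact triangle of Theorem \ref{Cptwinfty-Structure-Theorem}, but the direct computation with the explicit model is cleaner and self-contained.
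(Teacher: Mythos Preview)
Your approach is exactly what the paper intends: it states the corollary without proof, saying only that it ``is easily achieved with the help of the above explicit model,'' and you carry out precisely that computation via $\om{Tot}^{\Pi,-\infty}(D^\infty_{*,*},\dd',\dd'')$.

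However, the $M=R$ case contains arithmetic errors that you should clean up. With $n=4p$ and $q=n-p=3p$, the target of $\dd'$ is $D^\infty_{p-1,3p}=M_{3p-3(p-1)}=M_3$, not $M_{3-n/4}$; likewise the source of the incoming differential is $D^\infty_{p+1,3p}=M_{3p-3(p+1)}=M_{-3}$, not $M_{-n/4-3}$. Both are zero for $M=R$, so your conclusion is correct, but the parenthetical about ``$n=4p\ne 12$'' is a symptom of the wrong formula (your $M_{3-n/4}$ would vanish except at $n=12$) and should be deleted. Similarly, in your $U$-action line you wrote $D^\infty_{p-1,4p-3}$ when you meant $D^\infty_{p-1,3p-3}=M_0$; you caught this yourself with ``more carefully,'' but the erroneous line should be removed. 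In the $M=A$ case, it would strengthen the argument to state explicitly that for each total degree $n$ at most one column is nonzero (as you correctly note later for the other cases), so that the $\Pi,-\infty$ totalization is not an issue and the pairing $A_0\xrightarrow{\pm u}A_3$ between degrees $4k$ and $4k-1$ gives acyclicity directly.
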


We note once again that our notation should not be interpreted to mean that
there is any product structure in $H^\infty_A(R\oplus R[2])$ or $H^\infty_A(R)$.      

\subsection{The Equivariant Instanton Floer Groups}
In this section we will give the definition of the various flavors
of equivariant instanton Floer homology. Before we can get to this we
have to recall a few definitions concerning filtered DG modules.
Let $M$ be a $DG$ module equipped with an increasing filtration
\[ \cdots \subset F_{p-1}M \subset F_pM \subset F_{p+1}M \subset \cdots
\subset M  \]
by DG submodules. The minimal filtration quotients $F_pM/F_{p-1}M$
enter many times in the following discussion, so we introduce the
simpler notation $\overline{F}_pM \coloneqq F_pM/F_{p-1}M$. 
The filtration gives rise to a homological spectral sequence with
\begin{equation} \label{FilteredSpectralSequence}
 E^1_{p,q} = H_{p+q}(\overline{F}_pM) \implies H_{p+q}(M) 
\end{equation}
(we will elaborate on this in the next section). However, without any additional information, this spectral sequence may fail to converge in any sense to the desired target $H(M)$. 

The following properties of a filtration are highly desirable in order to achieve any reasonable convergence properties of the above spectral sequence. 
The filtration $\{F_pM\}_p$ of $M$ is said to be \cite[Definition~2.1]{Boardman99}
\begin{enumerate}[label=(\roman*),ref=(\roman*)]
\item exhaustive if $\om{colim}_p F_pM = \bigcup_p F_pM = M$,
\item Hausdorff if $\om{lim}_p F_pM = \bigcap_p F_pM=0$ and
\item complete if $\om{Rlim}_p F_pM = 0$. 
\end{enumerate} 
Here $\om{Rlim}$ is the first (and only nonzero) right derived functor of 
$\om{lim}$ (see \cite[Section~1]{Boardman99} for additional details).
It is important to note that these limits are calculated degreewise. 
In particular, if the filtration is degreewise bounded below, that is,
for each degree $n$ there exists $p_0(n)$ such that $F_pM_n=0$ for
all $p\leq p_0$, then the filtration is automatically complete Hausdorff.
Similarly, if the filtration is degreewise bounded above then the filtration
is automatically exhaustive. 

\begin{definition}\label{Filtered-Completion} Let $M$ be a DG module
equipped with an increasing filtration $\{F_pM\}_p$ by DG submodules.
Then the full completion of $(M,\{F_pM\}_p)$ is defined to be the complex
\[ \hat{M} \coloneqq \om{lim}_q \om{colim}_p F_pM/F_qM   \]
equipped with the filtration $F_p\hat{M} \coloneqq \om{lim}_q F_pM/F_qM$.
\end{definition}

\begin{remark} The standard completion of a filtered module $M$ is
typically defined by $\om{lim}_q M/F_qM$ (see for instance \cite[2.7]{Boardman99}) and is naturally filtered by $\{\om{lim}_q F_pM/F_qM\}_p$. We have therefore chosen to use the terminology full
completion to distinguish it from this notion. The lemma below demonstrates
that the two notions coincide if the filtration of $M$ is exhaustive.  
\end{remark} 

A map $f\colon M\ra M'$ between filtered DG modules is filtered if
$f(F_pM)\subset F_pM'$ for all $p\in \Z$. There is therefore
a category of filtered DG modules and filtered maps. A sequence
$M'\ra M\ra M''$ of filtered DG modules and filtered maps is short
exact if $F_pM'\ra F_pM\ra F_pM''$ is short exact for each $p\in \Z$.   
The relevant properties of the full completion are summarized in the
following lemma. 

\begin{lemma} \label{Filtration-Lemma}
Let $M$ be a filtered DG module. Then the following holds true.  
\begin{enumerate}[label=(\roman*),ref=(\roman*)]
\item The filtration $F_p\hat{M}=\om{lim}_{q<p} F_pM/F_qM$ of $\hat{M}$ is exhaustive
and complete Hausdorff.
\item For each $q<p$ there is a natural isomorphism $F_pM/F_qM\ra F_p\hat{M}/F_q\hat{M}$. 
\item If the filtration of $M$ is exhaustive, then
$\hat{M} = \om{lim}_q M/F_qM$. 
\item If the filtration of $M$ is complete Hausdorff, then
$\hat{M}=\om{colim}_p F_pM = \bigcup_p F_pM$.
\end{enumerate}
The full completion $M\mapsto \hat{M}$ defines a functor from the
category of filtered modules to itself, which preserves short exact
sequences.  
\end{lemma}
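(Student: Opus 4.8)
The plan is to verify each of the five assertions in turn, treating the four numbered properties first and then the functoriality statement. Throughout I would work degreewise, since all the limits and colimits in sight are computed degreewise, and I would repeatedly exploit that for a fixed degree $n$ the inverse system $\{F_pM_n/F_qM_n\}_{q<p}$ stabilises: once $q$ is small enough to matter, the transition maps $F_pM_n/F_qM_n \to F_pM_n/F_{q-1}M_n$ are surjective with kernel $F_qM_n/F_{q-1}M_n$, so the system is Mittag-Leffler and $\om{Rlim}$ vanishes. That observation is the engine behind (i).

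For (i): exhaustiveness of $F_\bullet\hat M$ amounts to $\om{colim}_p F_p\hat M = \hat M$, which I would get by commuting the colimit over $p$ past the limit over $q<p$ — here one must be a little careful because the limit index depends on $p$, but fixing a degree $n$ and a large $p$ one sees $(F_p\hat M)_n = \om{lim}_q (F_pM/F_qM)_n$ and these assemble to $\hat M_n$ in the colimit, essentially by cofinality. For complete Hausdorff: $F_p\hat M/F_q\hat M \cong F_pM/F_qM$ by (ii) (proved independently below), so $\om{lim}_q F_p\hat M/F_q\hat M = \om{lim}_q F_pM/F_qM = F_p\hat M$, giving Hausdorffness trivially, and $\om{Rlim}_q F_p\hat M/F_q\hat M = \om{Rlim}_q F_pM/F_qM = 0$ by the Mittag-Leffler property above, giving completeness.

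For (ii): the natural map $F_pM/F_qM \to F_p\hat M/F_q\hat M = (\om{lim}_r F_pM/F_rM)/(\om{lim}_r F_qM/F_rM)$ I would show is an isomorphism by exhibiting an inverse, or more cleanly by applying $\om{lim}_r$ (for $r < q$) to the short exact sequences $0 \to F_qM/F_rM \to F_pM/F_rM \to F_pM/F_qM \to 0$ and using that the left-hand system has vanishing $\om{Rlim}$; the resulting $\om{lim}$–exact sequence reads $0 \to F_q\hat M \to F_p\hat M \to F_pM/F_qM \to 0$, which is exactly the claim. For (iii): if the filtration of $M$ is exhaustive then $\om{colim}_p F_pM/F_qM = M/F_qM$, so $\hat M = \om{lim}_q \om{colim}_p F_pM/F_qM = \om{lim}_q M/F_qM$. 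For (iv): if the filtration is complete Hausdorff then $F_qM_n = 0$ for $q \ll 0$ (degreewise), hence $F_pM_n/F_qM_n = F_pM_n$ eventually, so $(\om{colim}_p F_pM/F_qM)_n$ stabilises and $\hat M_n = \om{colim}_p F_pM_n = \bigcup_p F_pM_n$.

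Finally, functoriality: a filtered map $f\colon M \to M'$ induces compatible maps $F_pM/F_qM \to F_pM'/F_qM'$, hence a map $\hat M \to \hat M'$ respecting the filtrations $F_p\hat M$, so $M \mapsto \hat M$ is a functor on filtered DG modules. Preservation of short exact sequences follows because, for a short exact sequence $M' \to M \to M''$ of filtered modules, each $F_pM'/F_qM' \to F_pM/F_qM \to F_pM''/F_qM''$ is short exact, these are colimited over $p$ and then limited over $q$, the colimit is exact, and the limit is exact on this particular system because the kernel system $\{F_pM'/F_qM'\}$ is Mittag-Leffler (its transition maps are surjective), so $\om{Rlim}$ of the kernel term vanishes and the $\om{lim}$–sequence stays short exact. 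The main obstacle I anticipate is bookkeeping the interchange of $\om{colim}_p$ and $\om{lim}_q$ in (i) and in the exactness claim — one has to be disciplined about working degreewise and about the fact that the inner limit index ranges over $q<p$ — but no genuinely deep input is needed beyond the degreewise Mittag-Leffler condition and the standard six-term $\om{lim}$–$\om{Rlim}$ exact sequence.
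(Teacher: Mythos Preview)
Your argument for part (iv) contains a genuine error: you claim that ``if the filtration is complete Hausdorff then $F_qM_n = 0$ for $q \ll 0$ (degreewise)''. This is false. Complete Hausdorff means $\om{lim}_q F_qM = 0$ and $\om{Rlim}_q F_qM = 0$, which is strictly weaker than the filtration being degreewise bounded below. For a concrete counterexample, take $M = R[[x]]$ concentrated in degree $0$, filtered by $F_{-n}M = x^nR[[x]]$ for $n\geq 0$ and $F_pM = M$ for $p\geq 0$; this filtration is complete Hausdorff (indeed $M$ is the $x$-adic completion of itself), yet no $F_qM$ vanishes.

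The correct argument for (iv), which is what the paper does, runs as follows. Set $N = \om{colim}_p F_pM = \bigcup_p F_pM$, so that $\hat{M} = \om{lim}_q N/F_qM$. Apply the six-term $\om{lim}$--$\om{Rlim}$ sequence to the short exact sequences $0 \to F_qM \to N \to N/F_qM \to 0$: since the middle term is constant in $q$, one gets
\[ 0 \to \om{lim}_q F_qM \to N \to \om{lim}_q N/F_qM \to \om{Rlim}_q F_qM \to 0. \]
The hypotheses Hausdorff and complete kill the first and last terms respectively, yielding $N \cong \hat{M}$ as required. This is exactly the same $\om{lim}$--$\om{Rlim}$ machinery you deploy elsewhere in your plan, so the fix is entirely within your toolkit.

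More broadly, the paper's approach is to observe once and for all that $\hat{M}$ is the \emph{standard} completion of $N=\bigcup_p F_pM$ with its inherited filtration, and then to import (i) and (ii) wholesale from Boardman's treatment of standard completions (his Proposition~2.8) and the exactness statement from Atiyah--Macdonald. Your approach of proving these properties directly via Mittag-Leffler is perfectly valid and arguably more self-contained, but you should be aware that your sketch for exhaustiveness in (i) --- ``commuting the colimit over $p$ past the limit over $q<p$ \dots\ by cofinality'' --- is underspecified; the interchange of $\om{colim}_p$ and $\om{lim}_q$ requires justification, and the cleanest route is again to pass through $N$ and invoke the standard-completion result. (Also, a minor slip: in your Mittag-Leffler discussion the transition maps go $F_pM/F_{q-1}M \to F_pM/F_qM$, not the other way around, though your conclusion is unaffected.)
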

\begin{proof} Let $M$ be a filtered DG module and set $N=\om{colim}_p F_pM=\bigcup_p F_pM$ filtered by $F_pN = F_pM$. Then the full
completion of $M$ coincides with the standard completion of $N$, i.e., 
$\hat{M} = \om{lim}_q N/F_qN$. Therefore part (i) and (ii) follow from
the corresponding properties of the standard completion (see \cite[Proposition~2.8]{Boardman99}). If the filtration of $M$ is exhaustive,
then $\om{colim}_p F_pM/F_qM = M/F_qM$ for each $q$ and part (iii) easily follows. For part (iv) 
\[ \hat{M} = \om{lim}_q\om{colim}_p F_pM/F_qM = \om{lim}_q N/F_qN
=N = \om{colim}_p F_pM .\]

It is clear that $M\mapsto \hat{M}$ is functorial with respect to
filtered maps. The exactness of $\hat{M}$ follows from the exactness
of sequential colimits and exactness of the standard completion
in the category of $R$-modules (see for instance \cite[Corollary~10.3]{Atiyah-Macdonald}).   
\end{proof} 

\begin{remark}
If $B$ is a DG algebra and $M$ is a left or right $B$-module equipped
with a filtration by $B$-submodules the definition of the full completion
and the above lemma apply without change. The reason for this
is that quotients, limits and colimits are formed in the category
of graded modules, with the $B$-action and differentials carried along.
\end{remark} 

Recall from the previous section that for a degreewise $R$-free
augmented DG algebra $A$, four functors $C^\bullet_A$, $\bullet\in \{+,-,(+,tw),\infty\}$, from the category of right $A$-modules 
to the category of left $C^-_A(R)$-modules were defined. By Theorem 
\ref{Cpm-Structure-Theorem} and Theorem
\ref{Cptwinfty-Structure-Theorem} these are all exact and preserve quasi-isomorphisms. Therefore, if $M$ is a right $A$-module equipped with
a filtration $F_pM$ by $A$-submodules, $C^\bullet_A(M)$ carries
a natural filtration $F_pC^\bullet_A(M)\coloneqq  C^\bullet_A(F_pM)$ by
$C^-_A(R)$-submodules. Now, even if the initial filtration of
$M$ is exhaustive and complete Hausdorff, the resulting filtration
of $C^\bullet_A(M)$ may fail to satisfy these properties. This
motivates the following definition.

\begin{definition} Let $A$ be a degreewise $R$-free augmented DG algebra.
For $\bullet\in \{+,-,(+,tw),\infty\}$ define a functor 
$\hat{C}^\bullet_A$ from the category of filtered right $A$-modules
to the category of filtered left $C^-_A(R)$-modules by
\begin{align*}
\hat{C}^\bullet_A(M) \coloneqq & \om{lim}_q \om{colim}_p C^\bullet_A(F_pM/F_qM) 
\;\; \mbox{ filtered by } \; \\
F_p\hat{C}^\bullet_A(M)\coloneqq &\om{lim}_{q<p} C^\bullet_A(F_pM/F_qM) 
\;\;\;\;\;\;\;\;\; \mbox{ for all} \; p\in \Z.
\end{align*}
In other words, $\hat{C}^\bullet_A$ is the composition of the natural
extension of $C^\bullet_A$ to filtered $A$-modules followed by the
full completion functor. We denote the homology of $\hat{C}^\bullet_A(M)$
by $\hat{H}^\bullet_A(M)$. 
\end{definition} 

The various flavors of equivariant instanton Floer homology are
obtained by applying this construction to the framed Floer
complex. 

\begin{definition} \label{Def-Equivariant-Floer}
Let $A=C^{gm}_*(\om{SO}(3))$ and let $M=\wt{CI}(Y,E)$
be the framed Floer complex associated with an $\om{SO}(3)$-bundle $E$ over a
closed oriented three-manifold $Y$, equipped with the index filtration. 
For $\bullet\in \{+,-,(+,tw),\infty\}$ define
\[ CI^{\bullet}(Y,E) \coloneqq \hat{C}^\bullet_A(M) \;\; \mbox{ and } \;\;
I^\bullet(Y,E) \coloneqq H(CI^{\bullet}(Y,E)). \]
The groups $I^\bullet(Y,E)$, for $\bullet\in \{+,-,(+,tw),\infty\}$, are called the positive, negative, positive twisted and Tate equivariant
instanton Floer homologies of $(Y,E)$, respectively. These homology groups are modules over $H^-_A(R)$. 

For $A=\Lambda_R[u]$ and $M=DCI(Y,E)$ we similarly define the complexes
\[ DCI^\bullet(Y,E)= \hat{C}^\bullet_A(M) .\]
\end{definition}

\begin{remark} For $A=C_*(\om{SO}(3);R)$ and $M=R$ it holds true that
$H^-_A(R)\cong H^{-*}(\om{BSO}(3);R)$ (see \cite[Lemma~A.21]{Miller19}).
\end{remark}

The norm map $N_M\colon C^{+,tw}_A(M)\ra C^-_A(M)$ is a natural in $M$ and
therefore extends to a natural map $\hat{N}_M\colon \hat{C}^{+,tw}_A(M)\ra \hat{C}^-_A(M)$ for any filtered module $M$. The following simple lemma
ensures that $\hat{C}^\infty_A(M)$ is naturally isomorphic to
$\om{Cone}(\hat{N}_M)$.

\begin{lemma} \label{Completed-Exact-Triangle-Lemma}
Let $A$ be a degreewise $R$-free augmented DG algebra
and let $M$ be a filtered right $A$-module. Then there is a natural
(filtered) isomorphism of $C^-_A(R)$-modules
\[ \om{Cone}(\hat{N}_M) \cong \hat{C}^\infty_A(M) . \]
In particular, we have the following exact triangle of $H^-_A(R)$-modules. 
\[ \begin{tikzcd} \hat{H}^{+,tw}_A(M) \arrow{rr}{H(\hat{N}_M)} & {} & \hat{H}^-_A(M) \arrow{ld}{[0]} \\
 {} & \hat{H}^\infty_A(M) \arrow{ul}{[-1]} & {} \end{tikzcd} \]
\end{lemma}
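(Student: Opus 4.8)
The statement packages together two assertions: an isomorphism $\om{Cone}(\hat{N}_M)\cong \hat{C}^\infty_A(M)$ of filtered $C^-_A(R)$-modules, and the resulting exact triangle in homology. The plan is to deduce the first from the fact that the full completion functor $\widehat{(-)}$ (Definition \ref{Filtered-Completion}) commutes with the cone construction, and then to obtain the exact triangle formally from the short exact sequence attached to a cone together with the exactness of the completion functor.

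First I would unwind the definitions. For each pair $q<p$ we have, on the uncompleted level, $C^\infty_A(F_pM/F_qM)=\om{Cone}(N_{F_pM/F_qM})$ by the definition of $C^\infty_A$, and the norm map is natural in its module argument, so the maps $N_{F_pM/F_qM}$ assemble into a map of ind-pro-systems indexed by $(p,q)$. The key point is that $\om{Cone}$, being defined degreewise by a direct sum $N_k\oplus M_{k-d-1}$ with a differential and $C^-_A(R)$-action that are built entrywise (Definition \ref{Cone-Definition}), is an additive functor that commutes with the formation of $\om{colim}_p$ and $\om{lim}_q$; these are computed degreewise in the category of graded $R$-modules (or $C^-_A(R)$-modules), and both filtered colimits and arbitrary limits commute with finite direct sums. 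Hence
\[
\hat{C}^\infty_A(M)=\om{lim}_q\om{colim}_p\om{Cone}(N_{F_pM/F_qM})
\cong \om{Cone}\!\left(\om{lim}_q\om{colim}_p N_{F_pM/F_qM}\right)
=\om{Cone}(\hat{N}_M),
\]
and the same computation carried out with $F_pM$ in place of $M$ shows the isomorphism respects the filtrations $F_p\hat{C}^\infty_A(M)$ and $F_p\om{Cone}(\hat{N}_M)$. One must also check this isomorphism is $C^-_A(R)$-linear, which is immediate since the $C^-_A(R)$-action on a cone is entrywise and the transition maps in the systems are $C^-_A(R)$-linear (the norm map is $C^-_A(R)$-linear by the lemma following Definition \ref{Def-Norm-Map1}, and this is preserved under colimits and limits).

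For the exact triangle, I would invoke the short exact sequence of $C^-_A(R)$-modules
\[
\begin{tikzcd} 0 \arrow{r} & \hat{C}^-_A(M) \arrow{r} & \om{Cone}(\hat{N}_M) \arrow{r} & \hat{C}^{+,tw}_A(M)[1] \arrow{r} & 0 \end{tikzcd}
\]
coming from Definition \ref{Cone-Definition} applied to $\hat{N}_M\colon \hat{C}^{+,tw}_A(M)\ra \hat{C}^-_A(M)$ (a degree $0$ map). Passing to homology gives a long exact sequence whose connecting homomorphism recovers $H(\hat{N}_M)$ up to sign, as noted in the excerpt after Definition \ref{Cone-Definition}; rewriting this long exact sequence in triangular form, and using the identification $\hat{C}^\infty_A(M)\cong\om{Cone}(\hat{N}_M)$ from the first part, yields exactly the displayed triangle with the indicated degree shifts $[0]$ and $[-1]$. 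That all the objects here are genuinely $H^-_A(R)$-modules and the maps $H^-_A(R)$-linear follows because $\hat{C}^-_A(R)=C^-_A(R)$ — completion of the trivially filtered base ring changes nothing, and $R=F_pR/F_{q}R$ for appropriate $p,q$ — so each $\hat{C}^\bullet_A(M)$ is a DG $C^-_A(R)$-module and its homology an $H^-_A(R)$-module.

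\textbf{Main obstacle.} The only real subtlety is the interchange of $\om{Cone}$ with the double (co)limit $\om{lim}_q\om{colim}_p$. Commuting with the filtered colimit $\om{colim}_p$ is routine (finite direct sums and filtered colimits commute, and the cone differential is natural); commuting with $\om{lim}_q$ requires only that finite direct sums commute with products/limits, which is automatic, but one should be slightly careful that the cone differential and the $R[U]$- (resp. $C^-_A(R)$-) action really are induced termwise so that the isomorphism is one of complexes and not merely of graded modules. I would dispatch this by writing $\om{Cone}(\hat N_M)$ explicitly degreewise and matching it against $\hat{C}^\infty_A(M)$ degreewise, noting that all structure maps in sight (transition maps, differentials, module actions) are built from those on the $C^\bullet_A(F_pM/F_qM)$ by the evident entrywise formulas, so naturality of everything in the lemmas of Section 5 does the rest. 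Compatibility of this isomorphism with the subfiltrations $F_p(-)$ is then the same computation restricted to $F_pM$.
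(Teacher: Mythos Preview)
Your proposal is correct and follows essentially the same route as the paper's proof: the paper also writes out $C^\infty_A(F_pM/F_qM)$ degreewise as the direct sum $C^-_A(F_pM/F_qM)_n\oplus C^{+,tw}_A(F_pM/F_qM)_{n-1}$ with the cone's matrix differential, applies $\om{lim}_q\om{colim}_p$ degreewise, and identifies the result with $\om{Cone}(\hat N_M)$ using $\hat N_M=\om{lim}_q\om{colim}_p N_{p,q}$. Your abstract phrasing ``$\om{Cone}$ commutes with the double (co)limit because it is degreewise a finite direct sum'' together with the promised degreewise check is precisely this argument.
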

\begin{proof} For $q<p$ and $n$ there is a natural identification
\[ \frac{F_pC^\infty_A(M)}{F_qC^\infty_A(M)}_n \cong 
 C^\infty_A \left( \frac{F_pM}{F_qM} \right)_n =
 C^-_A\left( \frac{F_pM}{F_qM} \right)_n \oplus C^{+,tw}_A\left( \frac{F_pM}{F_qM} \right)_{n-1} . \]
In terms of the right hand side the differential is given by
\[ \left( \begin{array}{cc} \dd^-_{p,q} &  N_{p,q} \\
                             0 & -\dd^{+,tw}_{p,q}  \end{array} \right),  \]
where $\dd^-_{p,q}$ and $\dd^{+,tw}_{p,q}$ denote the differential
in the first and second summand respectively and $N_{p,q}$ denotes the
evident norm map. Applying $\om{lim}_q\om{colim}_p$ to this identification,
taking into account that this is calculated degreewise with differentials
and $A$-module structure carried along, we obtain
$\hat{C}^\infty_A(M)_n \cong \hat{C}^-_A(M)_n\oplus \hat{C}^{+,tw}_A(M)_{n-1}$
in each degree $n$. The differential on the right hand side is
given by
\[ \left( \begin{array}{cc} \om{lim}_q\om{colim}_p \dd^-_{p,q} & 
\om{lim}_q\om{colim}_p N_{p,q} \\ 0 & \om{lim}_q\om{colim}_p \dd^{+,tw}_{p,q} \end{array} \right). \]
Therefore, the right hand side is precisely $\om{Cone}(\hat{N}_M)$, since
by definition $\hat{N}_M = \om{lim}_q\om{colim}_p N_{p,q}$.  
\end{proof} 
  
To verify that $DCI^\bullet(Y,E)$ also calculates $I^\bullet(Y,E)$ when
$\frac12\in R$ we will establish the necessary invariance results for the functors $\hat{C}^\bullet_A$. The appropriate tool is the following lemma, which is a simple consequence of the Eilenberg-Moore comparison theorem of \cite[Theorem~5.5.11]{Weibel94}. For a filtered map $f\colon M\ra N$ between
filtered DG modules we use the notation $\overline{f}_p\colon \overline{F}_pM\ra \overline{F}_pN$ for the induced maps between the the minimal
filtration quotients. 

\begin{lemma} \label{Eilenberg-Moore-Comparison-Lemma}
Let $M$ and $N$ be DG modules equipped with
exhaustive and complete Hausdorff filtrations. Then if $f\colon M\ra N$
is a filtered map such that the induced map
$\overline{f}_p\colon \overline{F}_pM\ra \overline{F}_pN$ is a quasi-isomorphism for each $p\in \Z$, then $f$ is a quasi-isomorphism.  
\end{lemma}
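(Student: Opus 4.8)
The plan is to pass to the spectral sequences of the two filtered complexes and then apply the comparison principle. Since the filtration of $M$ is exhaustive, the spectral sequence \eqref{FilteredSpectralSequence} has $E^1_{p,q}(M) = H_{p+q}(\overline{F}_pM)$, and likewise for $N$; a filtered chain map induces a morphism of spectral sequences $E^r(f)\colon E^r(M)\to E^r(N)$ compatible with the differentials and with $H(f)$ on the abutments. The hypothesis is exactly the assertion that $E^1(f)$ is an isomorphism, since $E^1_{p,q}(f) = H_{p+q}(\overline{f}_p)$. A morphism of spectral sequences which is an isomorphism on one page is an isomorphism on every later page, because the $(r+1)$-st page is computed functorially as a subquotient of the $r$-th; hence $E^r(f)$, and therefore $E^\infty(f)$, is an isomorphism.

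It then remains to deduce that $H(f)$ itself is an isomorphism, and this is where the hypothesis that the filtrations of $M$ and $N$ are complete Hausdorff, and not merely exhaustive, enters: it is precisely the content of the Eilenberg-Moore comparison theorem \cite[Theorem~5.5.11]{Weibel94}, which I would invoke directly. The main obstacle is bookkeeping rather than mathematics: one must reconcile the notion of an exhaustive and complete Hausdorff increasing filtration in the sense of \cite[Definition~2.1]{Boardman99} (so that $\om{colim}_pF_pM = M$, $\om{lim}_pF_pM = 0$ and $\om{Rlim}_pF_pM = 0$, all computed degreewise) with the precise hypotheses under which Weibel's theorem is stated, which amounts to checking that the filtration induced on each homology group is exhaustive and that $H_n(M)$ is recovered as the (derived-limit-free) inverse limit of the $H_n(M)/F_qH_n(M)$.

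Alternatively, and perhaps more transparently, the same input yields a self-contained argument. For fixed $p$ and each $q<p$ the complex $F_pM/F_qM$ carries a finite filtration whose successive subquotients are the $\overline{F}_iM$ with $q<i\le p$, so finitely many applications of the five lemma to the long exact homology sequences show that $f$ induces a quasi-isomorphism $F_pM/F_qM\to F_pN/F_qN$. Completeness and the Hausdorff property give $F_pM \cong \om{lim}_q F_pM/F_qM$ and likewise for $N$, so the Milnor sequence relating $H_*$ of an inverse limit to the $\om{lim}$ and $\om{Rlim}$ of the $H_*$'s of the tower, together with the five lemma, shows $F_pM\to F_pN$ is a quasi-isomorphism for every $p$. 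Finally $\om{colim}_p$ is exact and the filtrations are exhaustive, so $H_*(M) = \om{colim}_p H_*(F_pM)\to \om{colim}_p H_*(F_pN) = H_*(N)$ is an isomorphism, which completes the proof.
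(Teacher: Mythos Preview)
Your proposal is correct and your first approach is exactly what the paper does: it simply cites the Eilenberg--Moore comparison theorem \cite[Theorem~5.5.11]{Weibel94} without further argument. Your alternative self-contained proof via finite filtrations, the Milnor $\om{lim}^1$ sequence, and exactness of filtered colimits is a valid and more explicit route that the paper does not spell out; it has the advantage of making transparent exactly where each of the three hypotheses (exhaustive, Hausdorff, complete) is used.
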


\begin{proposition} \label{Completed-Invariance-Module}
Let $A$ be a degreewise $R$-free augmented DG algebra.
Suppose $f\colon M\ra N$ is a filtered map of filtered right 
$A$-modules such that
$\overline{f}_p\colon \overline{F}_pM\ra \overline{F}_pN$ is a quasi-isomorphism for each $p\in \Z$. Then for $\bullet\in \{+,-,(+,tw),\infty\}$ the
induced map
\[ \hat{f}^\bullet\colon \hat{C}^\bullet_A(M)\ra \hat{C}^\bullet_A(N)  \]
is a (filtered) quasi-isomorphism of $C^-_A(R)$-modules. Furthermore, upon
passage to homology, these isomorphisms are compatible with the
exact triangle of Lemma \ref{Completed-Exact-Triangle-Lemma}.
\end{proposition}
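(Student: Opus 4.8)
The plan is to reduce the statement for $\hat{C}^\bullet_A$ to the already-established invariance properties of the uncompleted functors $C^\bullet_A$ (Theorem \ref{Cpm-Structure-Theorem} and Theorem \ref{Cptwinfty-Structure-Theorem}) together with the Eilenberg--Moore comparison Lemma \ref{Eilenberg-Moore-Comparison-Lemma}. The key observation is that the full completion functor, by Lemma \ref{Filtration-Lemma}(i), produces filtrations that are \emph{exhaustive and complete Hausdorff}, so Lemma \ref{Eilenberg-Moore-Comparison-Lemma} becomes applicable once we understand the induced maps on the minimal filtration quotients of $\hat{C}^\bullet_A(M)$ and $\hat{C}^\bullet_A(N)$.

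First I would identify the minimal filtration quotients. By Lemma \ref{Filtration-Lemma}(ii) applied to the filtered module $\hat{C}^\bullet_A(M)$, there is a natural isomorphism
\[ \overline{F}_p\hat{C}^\bullet_A(M) \;=\; \frac{F_p\hat{C}^\bullet_A(M)}{F_{p-1}\hat{C}^\bullet_A(M)} \;\cong\; \frac{F_pC^\bullet_A(M)}{F_{p-1}C^\bullet_A(M)} \;=\; \frac{C^\bullet_A(F_pM)}{C^\bullet_A(F_{p-1}M)}. \]
Since $C^\bullet_A$ is exact (it preserves short exact sequences), applying it to $0\to F_{p-1}M\to F_pM\to \overline{F}_pM\to 0$ yields $C^\bullet_A(F_pM)/C^\bullet_A(F_{p-1}M)\cong C^\bullet_A(\overline{F}_pM)$. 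Hence $\overline{F}_p\hat{C}^\bullet_A(M)\cong C^\bullet_A(\overline{F}_pM)$, naturally in $M$, and likewise for $N$; under these identifications the map $\overline{(\hat{f}^\bullet)}_p$ is identified with $C^\bullet_A(\overline{f}_p)$. By hypothesis $\overline{f}_p$ is a quasi-isomorphism, so by Theorem \ref{Cpm-Structure-Theorem} (for $\bullet\in\{+,-\}$) and Theorem \ref{Cptwinfty-Structure-Theorem} (for $\bullet\in\{(+,tw),\infty\}$) the map $C^\bullet_A(\overline{f}_p)$ is a quasi-isomorphism for every $p$. Applying Lemma \ref{Eilenberg-Moore-Comparison-Lemma} to the filtered map $\hat{f}^\bullet$, whose source and target both carry exhaustive complete Hausdorff filtrations, concludes that $\hat{f}^\bullet$ is a quasi-isomorphism. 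Its $C^-_A(R)$-linearity is inherited from the $C^-_A(R)$-linearity of $f^\bullet=C^\bullet_A(f)$ at each finite stage $C^\bullet_A(F_pM/F_qM)$ and is preserved under $\om{lim}_q\om{colim}_p$, since limits and colimits are computed in graded modules with the module structure carried along.

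For the compatibility with the exact triangle of Lemma \ref{Completed-Exact-Triangle-Lemma}, I would note that $\hat{C}^\infty_A(M)\cong\om{Cone}(\hat{N}_M)$ naturally, and that $\hat{f}^{+,tw}$ and $\hat{f}^-$ commute with $\hat{N}_M$, $\hat{N}_N$ because the norm map is a natural transformation of functors (this naturality is preserved at each finite stage and hence after applying $\om{lim}_q\om{colim}_p$). Thus $\hat{f}$ induces a morphism of mapping-cone short exact sequences, and passing to the long exact sequences in homology gives a morphism of exact triangles; by the five lemma (or just directly) all three maps being quasi-isomorphisms is consistent and the triangles are identified. The main obstacle I anticipate is purely bookkeeping: verifying carefully that the natural isomorphism in Lemma \ref{Filtration-Lemma}(ii) really identifies $\overline{F}_p\hat{C}^\bullet_A(M)$ with $C^\bullet_A(\overline{F}_pM)$ compatibly with the $C^-_A(R)$-module structure and with the maps induced by $f$, and that taking $\om{lim}_q\om{colim}_p$ does not disturb exactness---but this is exactly what Lemma \ref{Filtration-Lemma} (exactness of the full completion) and the exactness of $C^\bullet_A$ are designed to handle, so no genuinely new input is needed.
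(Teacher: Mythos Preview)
Your proposal is correct and follows essentially the same approach as the paper: identify the minimal filtration quotients via Lemma \ref{Filtration-Lemma}(ii) and exactness of $C^\bullet_A$, use that $C^\bullet_A$ preserves quasi-isomorphisms, and then apply the Eilenberg--Moore comparison Lemma \ref{Eilenberg-Moore-Comparison-Lemma} together with Lemma \ref{Filtration-Lemma}(i). The one point you leave implicit is the word ``filtered'' in the conclusion: the paper additionally observes that the same argument applies to each subcomplex $F_p\hat{C}^\bullet_A(M)\to F_p\hat{C}^\bullet_A(N)$, so the restrictions are quasi-isomorphisms as well; this is immediate from your setup but worth stating.
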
 
\begin{proof} Let $\bullet\in \{+,-,(+,tw),\infty\}$ and note that
the functors $C^\bullet_A$ and $\hat{C}^\bullet_A$ are covariant
in $M$. For simplicity of notation let $g=\hat{f}^\bullet\colon \hat{C}^\bullet_A(M)\ra \hat{C}^\bullet_A(N)$ denote the induced filtered map of $C^-_A(R)$-modules. Then by Lemma \ref{Filtration-Lemma} part (ii) and the exactness of $C^\bullet_A$, there is for each $p$ a commutative diagram
\[ \begin{tikzcd} C^\bullet_A(\overline{F}_pM) \arrow{r}{(\overline{f}_p)^\bullet} \arrow{d}{\cong} & C^\bullet_A(\overline{F}_pN) \arrow{d}{\cong}\\
\overline{F}_p\hat{C}^\bullet_A(M) \arrow{r}{\overline{g}_p} & 
\overline{F}_p\hat{C}^\bullet_A(N). \end{tikzcd} \]
By assumption $\overline{f}_p$ is a quasi-isomorphism, so since the functor
$C^\bullet_A$ preserves quasi-isomorphisms it follows that $\overline{f}_p^\bullet$ is a quasi-isomorphism. As the vertical maps are isomorphisms,
we conclude that $\overline{g}_p$ is a quasi-isomorphism. By Lemma
\ref{Filtration-Lemma} part (i) the filtrations of $\hat{C}^\bullet_A(M)$
and $\hat{C}^\bullet_A(N)$ are exhaustive and complete Hausdorff. We
conclude that $g=\hat{f}^\bullet$ is a quasi-isomorphism using the comparison result; Lemma \ref{Eilenberg-Moore-Comparison-Lemma}.
The lemma applies equally well to the filtered subcomplexes. Consequently,
the restrictions $g_p\colon F_p\hat{C}^\bullet_A(M)\ra F_p\hat{C}^\bullet(N)$
are also quasi-isomorphisms for all $p$. This shows that
$g$ is in fact a filtered quasi-isomorphism. 
\end{proof} 

Next we have to establish a similar invariance result for variations in
the DG algebra.

\begin{proposition} \label{Completed-Invariance-Algebra}
 Let $f\colon A\ra B$ be a quasi-isomorphism of degreewise
$R$-free augmented algebras and let $M$ be a filtered $B$-module.
Then there is for each $\bullet\in \{+,-,(+,tw),\infty\}$ a natural 
isomorphism of $H^-_A(R)\cong H^-_B(R)$-modules
\[ \hat{H}^\bullet_A(f^{-1}M)\cong \hat{H}^\bullet_B(M),  \]
compatible with the exact triangles of Lemma \ref{Completed-Exact-Triangle-Lemma}.
\end{proposition}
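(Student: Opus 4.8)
The plan is to upgrade the two unfiltered invariance statements, Proposition~\ref{Invariance} and Proposition~\ref{Refined-Invariance}, to the completed functors by rerunning the ``minimal filtration quotient plus full completion'' argument used in the proof of Proposition~\ref{Completed-Invariance-Module}. Write $\overline{F}_pM = F_pM/F_{p-1}M$, and note that restriction along $f$ commutes with subquotients, so $f^{-1}(F_pM/F_qM) = F_p(f^{-1}M)/F_q(f^{-1}M)$ for $q<p$; in particular $\overline{F}_p(f^{-1}M) = f^{-1}(\overline{F}_pM)$, and the isomorphism $H^-_A(R)\cong H^-_B(R)$ in the statement is the one induced by the quasi-isomorphism $f^*\colon C^-_B(R)\ra C^-_A(R)$ of Proposition~\ref{Invariance}. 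For $\bullet\in\{+,-\}$ the argument is then immediate: the maps $f_*\colon C^+_A(f^{-1}M)\ra C^+_B(M)$ and $f^*\colon C^-_B(M)\ra C^-_A(f^{-1}M)$ are natural in the $B$-module $M$, hence apply to every subquotient $F_pM/F_qM$ and, after $\om{lim}_q\om{colim}_p$, induce filtered maps $\hat{f}_*\colon\hat{C}^+_A(f^{-1}M)\ra\hat{C}^+_B(M)$ and $\hat{f}^*\colon\hat{C}^-_B(M)\ra\hat{C}^-_A(f^{-1}M)$, linear over (resp.\ along) $H^-_A(R)$. By Lemma~\ref{Filtration-Lemma}(ii) these reduce on the minimal filtration quotients to the maps of Proposition~\ref{Invariance}(i) applied to $\overline{F}_pM$, hence are quasi-isomorphisms; since the completed filtrations are exhaustive and complete Hausdorff by Lemma~\ref{Filtration-Lemma}(i), the comparison Lemma~\ref{Eilenberg-Moore-Comparison-Lemma} shows $\hat{f}_*$ and $\hat{f}^*$ are filtered quasi-isomorphisms.

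For $\bullet=(+,tw)$ I would introduce a completed relative complex $\hat{C}^{+,tw}_f(M)\coloneqq\om{lim}_q\om{colim}_p C^{+,tw}_f(F_pM/F_qM)$, which is legitimate because $C^{+,tw}_f=B(-,B,D_f)$ (Definition~\ref{Def-Relative-twisted}) is exact by Lemma~\ref{Relative-Twisted-Lemma}. Since the maps $\alpha_M,\beta_M$ of Proposition~\ref{Refined-Invariance} are natural in $M$ and filtered, they complete to filtered maps
\[ \hat{C}^{+,tw}_B(M)\xrightarrow{\hat{\beta}_M}\hat{C}^{+,tw}_f(M)\xleftarrow{\hat{\alpha}_M}\hat{C}^{+,tw}_A(f^{-1}M) \]
which on the minimal filtration quotients reduce to $\beta_{\overline{F}_pM}$ and $\alpha_{\overline{F}_pM}$, hence are quasi-isomorphisms by the same Lemma~\ref{Eilenberg-Moore-Comparison-Lemma} argument. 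Then $H(\hat{\alpha}_M)\circ H(\hat{\beta}_M)^{-1}$ is the required isomorphism $\hat{H}^{+,tw}_A(f^{-1}M)\cong\hat{H}^{+,tw}_B(M)$, compatible with the $H^-$-module structures by the linearity clauses of Proposition~\ref{Refined-Invariance}.

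For $\bullet=\infty$ I would complete the entire commuting diagram of Proposition~\ref{Refined-Invariance}: being a diagram of natural transformations of exact functors it stays commuting after the degreewise $\om{lim}_q\om{colim}_p$, linking $\hat{N}^B_M$, $\hat{N}^f_M$ and $\hat{N}^A_M$. Taking horizontal mapping cones, and identifying $\om{Cone}(\hat{N}^B_M)\cong\hat{C}^\infty_B(M)$ and $\om{Cone}(\hat{N}^A_M)\cong\hat{C}^\infty_A(f^{-1}M)$ via Lemma~\ref{Completed-Exact-Triangle-Lemma}, yields
\[ \hat{C}^\infty_A(f^{-1}M)\longrightarrow\om{Cone}(\hat{N}^f_M)\longleftarrow\hat{C}^\infty_B(M), \]
each map being built from two maps that are quasi-isomorphisms by the $-$ and $(+,tw)$ cases (together with $\hat{f}^*$, resp.\ the identity), hence each a quasi-isomorphism by the five lemma applied to the long exact sequences of the cones; composing gives $\hat{H}^\infty_A(f^{-1}M)\cong\hat{H}^\infty_B(M)$.

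Finally, each of these cone maps restricts to the relevant map of the canonical short exact sequences $0\ra\hat{C}^-_A(-)\ra\hat{C}^\infty_A(-)\ra\hat{C}^{+,tw}_A(-)[1]\ra0$ (and their relative analogue over $\om{Cone}(\hat{N}^f_M)$), so passing to long exact sequences shows the resulting isomorphisms are automatically compatible with the exact triangles of Lemma~\ref{Completed-Exact-Triangle-Lemma}, and similarly with the $H^-_A(R)$-module structures. The only part that is more than bookkeeping is the $\infty$ case: one must check that the ancillary relative objects $\hat{C}^{+,tw}_f(M)$ and $\om{Cone}(\hat{N}^f_M)$ carry filtrations that are exhaustive and complete Hausdorff, and that forming mapping cones commutes with the degreewise $\om{lim}_q\om{colim}_p$. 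Both follow exactly as in the proof of Lemma~\ref{Completed-Exact-Triangle-Lemma}, since all quotients, limits and colimits are formed in graded modules with differentials and module structures carried along; this is where I expect the real (if modest) work to lie.
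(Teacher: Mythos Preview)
Your proposal is correct and follows essentially the same route as the paper: handle $\bullet=+$ and $-$ by completing the natural maps $f_*,f^*$ and applying the Eilenberg--Moore comparison lemma on minimal filtration quotients; for $(+,tw)$ introduce the completed relative functor $\hat{C}^{+,tw}_f$ and complete the zigzag $\alpha_M,\beta_M$ from Proposition~\ref{Refined-Invariance}; for $\infty$ complete the full commuting square of norm maps, take cones of the norm maps, and invoke Lemma~\ref{Completed-Exact-Triangle-Lemma}. Your added remark that the only nontrivial bookkeeping is the commutation of cones with $\om{lim}_q\om{colim}_p$ (handled exactly as in Lemma~\ref{Completed-Exact-Triangle-Lemma}) is accurate and matches how the paper treats this step implicitly.
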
 
\begin{proof} We first consider the case $\bullet=+$. For each pair
$q<p$ there is by Proposition \ref{Invariance} a natural quasi-isomorphism 
\[ f_*\colon C^+_A(F_pf^{-1}M/F_qf^{-1}M)\ra C^+_B(F_pM/F_qM)  \]
linear along $f^*\colon C^-_B(R)\ra C^-_A(R)$. By applying $\om{lim}_q\om{colim}_p$
we obtain a filtered map $g=\hat{f}_*\colon \hat{C}^+_A(f^{-1}M)\ra \hat{C}^+_B(M)$
linear along $f^*:C^-_B(R)\ra C^-_A(R)$. 
By construction, the following diagram commutes for each $p$.
\[ \begin{tikzcd} C^+_A(\overline{F}_pf^{-1}M) \arrow{r}{\overline{(f_*)}_p}
\arrow{d}{\cong} & C^+_B(\overline{F}_pM) \arrow{d}{\cong} \\
\overline{F}_p\hat{C}^+_A(f^{-1}M) \arrow{r}{\overline{g}_p} &
\overline{F}_p\hat{C}^+_B(M) \end{tikzcd} \]
As the upper horizontal map is a quasi-isomorphism, so is the lower
horizontal map and we conclude that $g=\hat{f}_*$ is a filtered
quasi-isomorphism by Lemma \ref{Eilenberg-Moore-Comparison-Lemma}.
For $\bullet=-$ there is for each pair $q<p$ a natural quasi-isomorphism
\[ f^*\colon C^-_B(F_pM/F_qM)\ra C^-_A(F_pf^{-1}M/F_qf^{-1}M).  \]
By passing to the colimit and limit we obtain a filtered map
$\hat{f}^*\colon \hat{C}^-_B(M)\ra \hat{C}^-_A(f^{-1}M)$, linear along
$f^*\colon C^-_B(R)\ra C^-_A(R)$. The same argument as above shows that
this is a filtered quasi-isomorphism. 

For the two remaining cases recall that there is a relative functor
$C^{+,tw}_f$, see Definition \ref{Def-Relative-twisted}, from the category of right $B$-modules to the category of left $C^-_A(R)$-modules. By Lemma
\ref{Relative-Twisted-Lemma} this functor is exact and preserves quasi-isomorphism, so we may promote it to a functor $\hat{C}^{+,tw}_f$
between the corresponding filtered categories as we did with the others.
By passing to the colimit and then the limit in the commutative diagram
of Proposition \ref{Refined-Invariance} applied to $F_pM/F_qM$
for all $q<p$, we obtain a commutative diagram 
\[ \begin{tikzcd} \hat{C}^{+,tw}_B(M) \arrow{r}{\hat{\beta}_M} \arrow{d}{\hat{N}_M^B} & 
\hat{C}^{+,tw}_f(M) \arrow{d}{\hat{N}^f_M} & \hat{C}^{+,tw}_A(f^{-1}M) \arrow{d}{\hat{N}^A_M} 
\arrow{l}[swap]{\hat{\alpha}_M} \\
\hat{C}^-_B(M) \arrow{r}{\hat{f}^*} & \hat{C}^-_A(f^{-1}M) & \hat{C}^-_A(f^{-1}M) \arrow{l}[swap]{=}
\end{tikzcd} \]
of filtered DG modules over $C^-_A(R)$ or $C^-_B(R)$ as appropriate.
Furthermore, all the maps are filtered and $C^-_A(R)$-linear, $C^-_B(R)$-linear
or linear along $f^*\colon C^-_B(R)\ra C^-_A(R)$ as appropriate.
By the same proposition, as $f\colon A\ra B$ is a quasi-isomorphism of degreewise free DG algebras, all the horizontal arrows in the diagram applied
to $F_pM/F_qM$ are quasi-isomorphisms. This implies, by the now familiar
argument, that all the horizontal arrows in the above diagram are
filtered quasi-isomorphisms. In particular, there is a natural isomorphism
$\hat{H}^{+,tw}_A(f^{-1}M)\cong \hat{H}^{+,tw}_B(M)$. Moreover, we
obtain a zigzag of quasi-isomorphisms between the cones of the vertical maps
\[ \begin{tikzcd} \om{Cone}(\hat{N}^B_M) \arrow{r} & \om{Cone}(\hat{N}^f_M)
& \om{Cone}(\hat{N}^A_{f^{-1}M}) \arrow{l} \end{tikzcd}. \]
In view of Lemma \ref{Completed-Exact-Triangle-Lemma}, we obtain an
isomorphism $\hat{H}^\infty_A(f^{-1}M)\cong \hat{H}^\infty_B(M)$ linear along $H^-_A(R)\cong H^-_B(R)$ as well. The above
diagram also ensures that these isomorphisms are compatible with the corresponding exact triangles. 
\end{proof} 

The key application of these invariance results is the following.
 
\begin{corollary} \label{DCI-CI-Equivariant-Eq-Corollary}
Suppose that $\frac12\in R$. Then for each $\bullet\in \{+,-,(+,tw),\infty\}$ there is an isomorphism
$H(DCI^\bullet(Y,E))\cong I^\bullet(Y,E)$ of $H^-_A(R)\cong R[U]$-modules.
These isomorphisms are compatible with the corresponding exact triangles. 
\end{corollary}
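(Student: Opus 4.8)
The plan is to combine the two invariance results for the completed functors (Proposition~\ref{Completed-Invariance-Module} and Proposition~\ref{Completed-Invariance-Algebra}) with the explicit zigzag of filtered quasi-isomorphisms provided by Proposition~\ref{DCI-CI-Equivalence}. Recall that for $\frac12\in R$ that proposition yields a quasi-isomorphism $i\colon \Lambda_R[u]\to C^{gm}_*(\om{SO}(3);R)=:A$ of augmented DG algebras and a zigzag $DCI(Y,E)\xrightarrow{f} X \xleftarrow{g} \wt{CI}(Y,E)$ of $\Lambda_R[u]$-equivariant (via $i$) maps, all filtration preserving, such that the induced maps on every minimal filtration quotient $\overline{F}_p(-)$ are quasi-isomorphisms. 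Here $X$ is viewed as an $A$-module, so $DCI(Y,E)$ and $\wt{CI}(Y,E)$ are identified with $i^{-1}X$ and the respective source modules along $i$.

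First I would apply Proposition~\ref{Completed-Invariance-Algebra} to the quasi-isomorphism $i\colon \Lambda_R[u]\to A$ and the filtered $A$-module $X$: this gives, for each $\bullet\in\{+,-,(+,tw),\infty\}$, a natural isomorphism $\hat{H}^\bullet_{\Lambda_R[u]}(i^{-1}X)\cong \hat{H}^\bullet_A(X)$ of $H^-_{\Lambda_R[u]}(R)\cong H^-_A(R)\cong R[U]$-modules, compatible with the exact triangles of Lemma~\ref{Completed-Exact-Triangle-Lemma}. Next I would apply Proposition~\ref{Completed-Invariance-Module} twice, with $A=\Lambda_R[u]$: once to the filtered map $f\colon DCI(Y,E)\to i^{-1}X$ (whose $\overline{F}_p$-restrictions are quasi-isomorphisms) and once to $g\colon \wt{CI}(Y,E)\to i^{-1}X$ — wait, here $g$ is a map of $A$-modules, so I would instead first apply Proposition~\ref{Completed-Invariance-Algebra} to $i$ and the module $\wt{CI}(Y,E)$ itself to reduce to the $A$-side, then apply Proposition~\ref{Completed-Invariance-Module} over $A$ to $g\colon \wt{CI}(Y,E)\to X$. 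Either way one threads the zigzag: $H(DCI^\bullet(Y,E))=\hat H^\bullet_{\Lambda_R[u]}(DCI(Y,E))\cong \hat H^\bullet_{\Lambda_R[u]}(i^{-1}X)\cong \hat H^\bullet_A(X)\cong \hat H^\bullet_A(\wt{CI}(Y,E))=I^\bullet(Y,E)$, where the first and last isomorphisms come from $f$ and $g$ via Proposition~\ref{Completed-Invariance-Module} and the middle one from $i$ via Proposition~\ref{Completed-Invariance-Algebra}. Composing yields the desired $R[U]$-module isomorphism $H(DCI^\bullet(Y,E))\cong I^\bullet(Y,E)$.

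For compatibility with the exact triangles, I would observe that each of the three invariance isomorphisms just used is, by the respective propositions, compatible with the exact triangle of Lemma~\ref{Completed-Exact-Triangle-Lemma}; hence so is their composite. Concretely, one gets a commutative ladder between the two exact triangles
\[
\begin{tikzcd} \hat H^{+,tw}_A(\wt{CI}) \arrow{r} \arrow{d}{\cong} & \hat H^-_A(\wt{CI}) \arrow{r} \arrow{d}{\cong} & \hat H^\infty_A(\wt{CI}) \arrow{d}{\cong} \\ \hat H^{+,tw}_{\Lambda_R[u]}(DCI) \arrow{r} & \hat H^-_{\Lambda_R[u]}(DCI) \arrow{r} & \hat H^\infty_{\Lambda_R[u]}(DCI) \end{tikzcd}
\]
in which the vertical maps are the composites above and each square commutes by naturality of the norm map and the cone construction, as recorded in Lemma~\ref{Completed-Exact-Triangle-Lemma} and Propositions~\ref{Completed-Invariance-Module} and~\ref{Completed-Invariance-Algebra}.

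The only genuinely delicate point — and the one I would treat carefully rather than wave at — is the bookkeeping that the maps $f$ and $g$ from Proposition~\ref{DCI-CI-Equivalence} are $i$-equivariant, i.e.\ that after restriction along $i$ they are honest maps of $\Lambda_R[u]$-modules, and that they are filtered with the property that their $\overline{F}_p$-restrictions are quasi-isomorphisms; this is precisely what Proposition~\ref{DCI-CI-Equivalence} (as amplified in the remark following it) asserts, so the main work is just to quote it correctly and match the hypotheses of Propositions~\ref{Completed-Invariance-Module} and~\ref{Completed-Invariance-Algebra}. No further estimates are needed: all convergence and well-behavedness of filtrations is already built into the $\hat{C}^\bullet_A$ construction and Lemma~\ref{Filtration-Lemma}.
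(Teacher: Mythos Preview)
Your proposal is essentially the paper's argument: quote Proposition~\ref{DCI-CI-Equivalence} for the zigzag and the algebra map $i$, then feed everything into Propositions~\ref{Completed-Invariance-Module} and~\ref{Completed-Invariance-Algebra}. The paper's proof says exactly this in two sentences.

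There is one small bookkeeping slip you should clean up. You treat $X$ as an $A$-module (applying Proposition~\ref{Completed-Invariance-Algebra} to $X$, and later writing $\hat H^\bullet_A(X)$ in your chain), but Proposition~\ref{DCI-CI-Equivalence} only asserts that the zigzag consists of $\Lambda_R[u]$-equivariant maps; it does not say $X$ carries an $A$-module structure, nor that $g$ is $A$-linear. The clean threading is therefore to stay over $\Lambda_R[u]$ for the entire zigzag---apply Proposition~\ref{Completed-Invariance-Module} with $A=\Lambda_R[u]$ to both $f$ and $g$ to obtain $\hat H^\bullet_{\Lambda_R[u]}(DCI)\cong \hat H^\bullet_{\Lambda_R[u]}(X)\cong \hat H^\bullet_{\Lambda_R[u]}(i^{-1}\wt{CI})$---and only then invoke Proposition~\ref{Completed-Invariance-Algebra} on the one object that is genuinely an $A$-module, namely $\wt{CI}(Y,E)$, to pass from $\hat H^\bullet_{\Lambda_R[u]}(i^{-1}\wt{CI})$ to $\hat H^\bullet_A(\wt{CI})=I^\bullet(Y,E)$. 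Your ``wait'' mid-paragraph almost lands on this, but the displayed chain you write still routes through $\hat H^\bullet_A(X)$. Once you reroute, the compatibility with exact triangles goes through exactly as you say.
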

\begin{proof} As $\frac12\in R$ there are, according to
Proposition \ref{DCI-CI-Equivalence}, a quasi-isomorphism $i\colon \Lambda_R[u]\ra C^{gm}_*(\om{SO}(3))$ of DG algebras and a zigzag
\[ \begin{tikzcd} \wt{CI}(Y,E) \arrow{r}{f} & X & DCI(Y,E) \arrow{l}[swap]{g}
\end{tikzcd} \]
of filtered objects and filtered $\Lambda_R[u]$-homomorphisms
such that $\overline{f}_p$ and $\overline{g}_p$ are quasi-isomorphism for each $p\in \Z$. The required result now follows from Propositions
\ref{Completed-Invariance-Module} and \ref{Completed-Invariance-Algebra}.
\end{proof}

For $A=\Lambda_R[u]$ we can extend a number of the results from
the previous section.   

\begin{proposition} \label{Completed-Twisted-Tate-Simplification}
Let $A=\Lambda_R[u]$ and let $M$ be a filtered
$A$-module. Then there is a filtered quasi-isomorphism
$\hat{f}_M\colon \hat{C}^+_A(M)\ra \hat{C}^{+,tw}_A(M)$ of degree $3$
and a (filtered) homotopy $\hat{s}_M\colon \hat{C}^+_A(M)\ra \hat{C}^{+,tw}_A(M)$
such that $\hat{f}_MU-U\hat{f}_M=\dd \hat{s}_M-\hat{s}_M\dd$ for
$U\in R[U]\cong C^-_A(R)$. Moreover, there is a (filtered) quasi-isomorphism
\[ \om{Cone}(\hat{N}_M\circ \hat{f}) \ra \om{Cone}(\hat{N}_M)\cong \hat{C}^\infty_A(M),  \]
which is $R[U]$-linear up to homotopy, where the source is given the
$R[U]$-module structure determined by the homotopy $\hat{t}_M = \hat{N}_M\circ \hat{s}_M$ as in Lemma \ref{Cone-Lemma1}. Furthermore, the resulting isomorphism in homology is compatible with the associated exact triangles.  \end{proposition}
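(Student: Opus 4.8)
The plan is to obtain $\hat{f}_M$, $\hat{s}_M$ and the cone quasi-isomorphism by applying the completion functor $\om{lim}_q\om{colim}_p$ to the non-completed data already produced by Lemma \ref{f-Homotopy-Lemma}, Lemma \ref{Cone-Lemma2} and Theorem \ref{Explicit-Cinfty-model} at each stage $F_pM/F_qM$, and then to promote the quasi-isomorphism assertions to the filtered setting via the Eilenberg--Moore comparison (Lemma \ref{Eilenberg-Moore-Comparison-Lemma}). First I would note that Lemma \ref{f-Homotopy-Lemma} gives, for each pair $q<p$, a degree $3$ quasi-isomorphism $f_{F_pM/F_qM}\colon C^+_A(F_pM/F_qM)\ra C^{+,tw}_A(F_pM/F_qM)$ and a degree $0$ homotopy $s_{F_pM/F_qM}$ with $fU-Uf=\dd s-s\dd$, both \emph{natural} in the module argument, and hence compatible with the structure maps of the ind/pro system $\{F_pM/F_qM\}_{q<p}$. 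Applying $\om{lim}_q\om{colim}_p$ — computed degreewise, with differentials and the $R[U]$-action carried along exactly as in the proof of Lemma \ref{Completed-Exact-Triangle-Lemma} — yields the desired $\hat{f}_M\colon\hat{C}^+_A(M)\ra\hat{C}^{+,tw}_A(M)$ of degree $3$ and $\hat{s}_M$ of degree $0$ with $\hat{f}_MU-U\hat{f}_M=\dd\hat{s}_M-\hat{s}_M\dd$; since $F_p\hat{C}^\bullet_A(M)=\om{lim}_{q<p}C^\bullet_A(F_pM/F_qM)$, these maps are automatically filtered.

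Next I would check that $\hat{f}_M$ is a filtered quasi-isomorphism. Regarding it as a degree $0$ map $\hat{C}^+_A(M)[3]\ra\hat{C}^{+,tw}_A(M)$, both sides carry exhaustive and complete Hausdorff filtrations (Lemma \ref{Filtration-Lemma}(i)), and by Lemma \ref{Filtration-Lemma}(ii) the induced map on the $p$-th minimal filtration quotient is identified with (the shift of) $f_{\overline{F}_pM}$, which is a quasi-isomorphism by Lemma \ref{f-Homotopy-Lemma}. Lemma \ref{Eilenberg-Moore-Comparison-Lemma} then gives that $\hat{f}_M$ is a quasi-isomorphism, and applying the same argument to the subcomplexes $F_p\hat{C}^\bullet_A(M)$ — which again carry exhaustive complete Hausdorff filtrations with the same minimal quotients — shows it is a filtered quasi-isomorphism.

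For the cone statement I would invoke Lemma \ref{Cone-Lemma2} with $X=\hat{C}^+_A(M)$, $Y=\hat{C}^{+,tw}_A(M)$, $Z=\hat{C}^-_A(M)$, $f=\hat{f}_M$ (a degree $3$ map of $R[U]$-modules up to homotopy via $\hat{s}_M$) and $g=\hat{N}_M$ (a strict degree $0$ $R[U]$-linear map, since $N_M$ is $C^-_A(R)\cong R[U]$-linear and completion preserves linearity). This produces the commutative diagram with short exact rows
\[
\begin{tikzcd}
0 \arrow{r} & \hat{C}^-_A(M) \arrow{r}\arrow{d}{=} & \om{Cone}(\hat{N}_M\circ\hat{f}_M) \arrow{r}\arrow{d}{\om{Cone}(\hat{f}_M,1)} & \hat{C}^+_A(M)[4] \arrow{r}\arrow{d}{\hat{f}_M[1]} & 0 \\
0 \arrow{r} & \hat{C}^-_A(M) \arrow{r} & \hat{C}^\infty_A(M) \arrow{r} & \hat{C}^{+,tw}_A(M)[1] \arrow{r} & 0,
\end{tikzcd}
\]
where $\om{Cone}(\hat{N}_M\circ\hat{f}_M)$ carries the $R[U]$-module structure of Lemma \ref{Cone-Lemma1} built from $\hat{t}_M=\hat{N}_M\circ\hat{s}_M$, the identification $\om{Cone}(\hat{N}_M)\cong\hat{C}^\infty_A(M)$ is that of Lemma \ref{Completed-Exact-Triangle-Lemma}, and $\om{Cone}(\hat{f}_M,1)$ is $R[U]$-linear up to homotopy. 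The map $\om{Cone}(\hat{f}_M,1)$ is the one asserted. To see it is a filtered quasi-isomorphism, I would observe that $\om{Cone}(\hat{N}_M\circ\hat{f}_M)$ is itself the completion $\om{lim}_q\om{colim}_p\om{Cone}(N\circ f)$ of the cones at the stages $F_pM/F_qM$ — cones commute with these degreewise limits and colimits, and the homotopy-twisted $R[U]$-structure is carried along because $t=N\circ s$ is natural — hence it carries an exhaustive complete Hausdorff filtration whose $p$-th minimal quotient is $\om{Cone}(N_{\overline{F}_pM}\circ f_{\overline{F}_pM})$; on this quotient the map reads $\om{Cone}(f_{\overline{F}_pM},1)$, a quasi-isomorphism by Lemma \ref{Cone-Lemma2} together with the five lemma applied to $\overline{F}_pM$ just as in the proof of Theorem \ref{Explicit-Cinfty-model}. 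Lemma \ref{Eilenberg-Moore-Comparison-Lemma}, used for the whole complex and its filtered subcomplexes, then upgrades this to a filtered quasi-isomorphism. Compatibility with the exact triangles is immediate from the diagram: it is a map of short exact sequences of $R[U]$-modules with the identity in the left column, so passing to homology identifies the connecting triangle of the top row with the exact triangle of the bottom row, i.e. with that of Lemma \ref{Completed-Exact-Triangle-Lemma}.

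I expect the only delicate point to be the bookkeeping that $\om{lim}_q\om{colim}_p$ commutes with the cone construction and transports the homotopy-twisted $R[U]$-module structure of Lemma \ref{Cone-Lemma1}; this is routine but must be carried out carefully, and it proceeds verbatim as in the proof of Lemma \ref{Completed-Exact-Triangle-Lemma}, using that every map and homotopy appearing in Lemmas \ref{f-Homotopy-Lemma}, \ref{Cone-Lemma1} and \ref{Cone-Lemma2} is natural in $M$.
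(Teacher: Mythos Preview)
Your proposal is correct and follows essentially the same approach as the paper: apply $\om{lim}_q\om{colim}_p$ to the natural maps $f_{F_pM/F_qM}$, $s_{F_pM/F_qM}$ of Lemma \ref{f-Homotopy-Lemma}, then use Lemma \ref{Eilenberg-Moore-Comparison-Lemma} to upgrade the quasi-isomorphisms to the completed setting, and handle the cone statement by the same limit/colimit argument applied to the diagram \eqref{Simplifying-Tate-Diagram}. Your presentation is somewhat more explicit about why cones commute with $\om{lim}_q\om{colim}_p$ and about the filtered quasi-isomorphism on the cone level, but the substance is identical.
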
 
\begin{proof} Recall from Lemma \ref{f-Homotopy-Lemma} that there
is for each right $A$-module $N$ a quasi-isomorphism
$f_N\colon C^+_A(N)\ra C^{+,tw}_A(N)$ of degree $3$ 
and a homotopy $s_N\colon C^+_A(N)\ra C^{+,tw}_A(N)$ such that
$f_N  U-Uf_N=\dd s_N-s_N \dd$. Let $f_{p,q}$ and $s_{p,q}$ denote the
maps obtained for $N=F_pM/F_qM$ for $q<p$. By naturality we
may pass to the colimit over $p$ and then to the limit over $q$ to
obtain maps $\hat{f}_M\colon \hat{C}^+_A(M)\ra \hat{C}^{+,tw}_A(M)$
and $\hat{s}_M\colon \hat{C}^+_A(M)\ra \hat{C}^{+,tw}_A(M)$. As $f_{p,q}U-Uf_{p,q}=s_{p,q}\dd - \dd s_{p,q}$ is valid for each pair $q<p$,
it follows that
$\hat{f}_MU-U\hat{f}_M=\dd \hat{s}_M-\hat{s}_M\dd$. The fact that
$\hat{f}_M$ is a quasi-isomorphism is proved as before using
Lemma \ref{Eilenberg-Moore-Comparison-Lemma}.

The final statement follows by applying $\om{lim}_q\om{colim}_p$
to the diagram \eqref{Simplifying-Tate-Diagram} in the proof of
Theorem \ref{Explicit-Cinfty-model} for $F_pM/F_qM$ and another
application of Lemma \ref{Eilenberg-Moore-Comparison-Lemma}.
It is clear from this construction that one obtains an isomorphism
between the corresponding exact triangles in homology. 
\end{proof}

Combining the above proposition with Corollary \ref{DCI-CI-Equivariant-Eq-Corollary} we obtain the following. 

\begin{corollary} \label{Floer-Exact-Triangle-Corollary}
Suppose that $\frac12 \in R$. Then there is a degree $3$ isomorphism $I^+(Y,E)\cong I^{+,tw}(Y,E)$ of $R[U]$-modules and there is an exact triangle of $R[U]$-modules
\[ \begin{tikzcd} I^+(Y,E) \arrow{rr}{[3]} & {} & I^-(Y,E) \arrow{ld}{[0]} \\
         {} & I^\infty(Y,E) \arrow{lu}{[-4]} & {}, \end{tikzcd} \]
where the numbers specify the degrees of the maps. 
\end{corollary}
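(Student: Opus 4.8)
The plan is to assemble the statement from machinery already in place. By Corollary~\ref{DCI-CI-Equivariant-Eq-Corollary}, since $\frac12\in R$ there are isomorphisms $H(DCI^\bullet(Y,E))\cong I^\bullet(Y,E)$ of $H^-_A(R)\cong R[U]$-modules for $\bullet\in\{+,-,(+,tw),\infty\}$, compatible with the exact triangles of Lemma~\ref{Completed-Exact-Triangle-Lemma}; here we also use the identification $H^{-*}(\om{BSO}(3);R)\cong R[U]$ with $|U|=-4$, valid once $2$ is inverted. Thus it suffices to produce the asserted degree $3$ isomorphism and exact triangle for the completed functors applied to $A=\Lambda_R[u]$ ($|u|=3$) and $M=DCI(Y,E)$ with the index filtration, and then transport them along these isomorphisms.

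First I would invoke Proposition~\ref{Completed-Twisted-Tate-Simplification} for $M=DCI(Y,E)$. It supplies a filtered quasi-isomorphism $\hat{f}_M\colon \hat{C}^+_A(M)\ra \hat{C}^{+,tw}_A(M)$ of degree $3$ that is $R[U]$-linear up to a homotopy $\hat{s}_M$, with $\hat{f}_MU-U\hat{f}_M=\dd\hat{s}_M-\hat{s}_M\dd$. Passing to homology, the right-hand side is a boundary, so $H(\hat{f}_M)$ is a genuine strict degree $3$ isomorphism of $R[U]$-modules $H(DCI^+(Y,E))\ra H(DCI^{+,tw}(Y,E))$. Combining with Corollary~\ref{DCI-CI-Equivariant-Eq-Corollary} yields the degree $3$ isomorphism $I^+(Y,E)\cong I^{+,tw}(Y,E)$ of $R[U]$-modules.

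Next I would take the exact triangle of Lemma~\ref{Completed-Exact-Triangle-Lemma} for $M=DCI(Y,E)$,
\[ \begin{tikzcd} \hat{H}^{+,tw}_A(M) \arrow{rr}{H(\hat{N}_M)} & {} & \hat{H}^-_A(M) \arrow{ld}{[0]} \\ {} & \hat{H}^\infty_A(M) \arrow{ul}{[-1]} & {}, \end{tikzcd} \]
in which $H(\hat{N}_M)$ has degree $0$ (the norm map is a degree $0$ chain map). Transporting it along the isomorphisms of Corollary~\ref{DCI-CI-Equivariant-Eq-Corollary} gives an exact triangle relating $I^{+,tw}(Y,E)$, $I^-(Y,E)$ and $I^\infty(Y,E)$ with the same degree labels, and substituting $I^+(Y,E)$ for $I^{+,tw}(Y,E)$ via the degree $3$ isomorphism of the previous step produces the triangle in the statement. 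The only bookkeeping is with degrees: the map $I^+\ra I^-$ is $H(\hat{N}_M)\circ H(\hat{f}_M)$, of degree $0+3=3$; the map $I^-\ra I^\infty$ keeps degree $0$; and the connecting map $I^\infty\ra I^{+,tw}$ of degree $-1$, precomposed with the inverse of the degree $3$ isomorphism, becomes a map $I^\infty\ra I^+$ of degree $-1-3=-4$. These are exactly the labels $[3]$, $[0]$, $[-4]$.

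The step requiring the most care — essentially the only nontrivial one — is checking that all of these identifications are simultaneously $R[U]$-linear and mutually compatible. Concretely: the isomorphisms of Corollary~\ref{DCI-CI-Equivariant-Eq-Corollary} intertwine the two exact triangles (this is asserted in that corollary); the exact triangle of Lemma~\ref{Completed-Exact-Triangle-Lemma} is the homology realization of the cone short exact sequence $0\ra\hat{C}^-_A(M)\ra\om{Cone}(\hat{N}_M)\ra\hat{C}^{+,tw}_A(M)[1]\ra 0$, so replacing $\hat{C}^{+,tw}_A(M)$ by $\hat{C}^+_A(M)[3]$ through $\hat{f}_M$ (legitimate because $\hat{f}_M$ is $R[U]$-linear up to homotopy, hence strictly $R[U]$-linear on homology) shifts the connecting map by the additional $-3$; and $\hat{f}_M$ together with the connecting map remain compatible with the $R[U]$-actions after these substitutions. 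No argument beyond combining the cited results is needed.
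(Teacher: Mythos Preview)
Your proposal is correct and follows essentially the same route as the paper, which simply states the corollary as a consequence of Proposition~\ref{Completed-Twisted-Tate-Simplification} and Corollary~\ref{DCI-CI-Equivariant-Eq-Corollary}; you have accurately unpacked what that combination entails, including the degree bookkeeping. One small remark: Proposition~\ref{Completed-Twisted-Tate-Simplification} already packages the exact triangle with $I^+$ directly via the cone of $\hat{N}_M\circ\hat{f}_M$ (whose short exact sequence has $\hat{C}^+_A(M)[4]$ as the quotient), so you could bypass the substitution step through Lemma~\ref{Completed-Exact-Triangle-Lemma}, but your version is equally valid.
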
 
 
We will now restrict ourselves to the cases of interest, namely,
\[ (A,M)=(C^{gm}_*(\om{SO}(3)),\wt{CI}(Y,E)) \;\; \mbox{ or } \;\; 
   (A,M) = (\Lambda_R[u],DCI(Y,E)), \]
where $M$ is equipped with the index filtration. Recall from
Definition \ref{Def-DCI} and Definition \ref{FloerComplex} that $M$
was obtained from a multicomplex $(M_{*,*},\{ \dd^r\}_{r= 0}^5)$, and that the index filtration was defined to be the column filtration, i.e., $F_pM_n = \bigoplus_{s\leq p}M_{s,n-s}$. In both cases it holds true that $M_{s,t}=0$ for all $t>4$ and $t<0$ (for $M=DCI(Y,E)$ this is true for $t\geq 4$). This implies that the filtration is degreewise finite and therefore exhaustive and
complete Hausdorff. Explicitly, 
\begin{equation} \label{Finite-Filtration-Eq}
 F_pM_n = \left\{ \begin{array}{cl} M_n & \mbox{ for all } p\geq n \\
                                      0 & \mbox{ for all } p\leq n-5
                                      \end{array} \right.
\end{equation} 
The following lemma indicates why it is desirable to introduce the full completions of the various functors.  

\begin{lemma} \label{Bounded-FloerFiltration}
The filtration of $C^+_A(M)$, $F_pC^+_A(M)=C^+_A(F_pM)$, is degreewise bounded above, exhaustive and Hausdorff.
The filtration of $C^-_A(M)$ is degreewise bounded below and complete
Hausdorff. If $M\neq 0$ the filtration of $C^+_A(M)$ fails to be complete
and the filtration of $C^-_A(M)$ fails to be exhaustive. Therefore,
\[ \hat{C}^+_A(M) = \om{lim}_q C^+_A(M/F_qM)\;\; \mbox{ and } \;\; \hat{C}^-_A(M) = \om{colim}_p C^-_A(F_pM) .\]
\end{lemma}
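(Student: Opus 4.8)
The plan is to analyze the two filtrations of $C^\pm_A(M)$ separately, using the explicit bound \eqref{Finite-Filtration-Eq} on the index filtration of $M$, and then to read off the simplified descriptions of the full completions.

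First I would treat $C^+_A(M)=B(M,A,R)$. Since $F_pM_n = M_n$ for $p\geq n$ and $F_pM_n=0$ for $p\leq n-5$, and since the bar construction $B(-,A,R)$ is built out of tensor powers of $M$ and $\bar A$, one checks that in each total degree $k$ the submodule $F_pC^+_A(M)_k = B(F_pM,A,R)_k$ stabilizes to all of $C^+_A(M)_k$ once $p$ is large enough (roughly $p\geq k$, using that $\bar A$ is concentrated in positive degrees so only finitely many simplicial degrees contribute in a fixed total degree) and vanishes once $p$ is sufficiently negative. Hence the filtration is degreewise bounded above, which gives exhaustiveness, and it is Hausdorff because $\bigcap_p F_pC^+_A(M)_k = 0$ in each degree. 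It is \emph{not} complete when $M\neq 0$: the $p$-indexed tower $\{C^+_A(M)_k/F_pC^+_A(M)_k\}$ is eventually constant equal to $C^+_A(M)_k/C^+_A(M)_k=0$ from above but is not pro-zero, and one exhibits a nonvanishing $\om{Rlim}$ directly, or more simply observes that since the filtration is exhaustive, Lemma \ref{Filtration-Lemma}(iii) gives $\hat C^+_A(M) = \om{lim}_q C^+_A(M/F_qM)$, and this limit genuinely differs from $C^+_A(M)$ precisely because $C^+_A(-)$ does not commute with the relevant inverse limit (each $C^+_A(M/F_qM)$ is a proper quotient of $\hat C^+_A(M)$, not of $C^+_A(M)$). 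Dually, for $C^-_A(M) = \om{Hom}_A(EA,M)$, an element of degree $n$ is determined by its values on the $R$-basis of $EA$, which lies in a single simplicial degree per internal degree; combined with \eqref{Finite-Filtration-Eq} this shows $F_pC^-_A(M)_n = C^-_A(F_pM)_n = 0$ for $p$ small and stabilizes only in the limit $p\to\infty$, so the filtration is degreewise bounded below — hence complete Hausdorff by the remark after the definition of a well-behaved filtration — but fails to be exhaustive when $M\neq 0$. Then Lemma \ref{Filtration-Lemma}(iv), applied to the complete Hausdorff filtration of $C^-_A(M)$, yields $\hat C^-_A(M) = \om{colim}_p F_pC^-_A(M) = \om{colim}_p C^-_A(F_pM)$.

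The main obstacle will be the bookkeeping: one must argue carefully that in a fixed total degree only finitely many simplicial degrees $p$ contribute to $B_{*,*}(M,A,R)$ and to $\om{Hom}_A(EA,M)$, so that the combinatorial estimate $F_pM_n = M_n$ for $p\geq n$ propagates correctly through the bar and cobar constructions — in particular that $A$ (equivalently $\bar A$) has no negative-degree part and that $A_0$ contributes only via the unit, so that multiplying simplices does not push internal degrees down without bound. For $A = C^{gm}_*(\om{SO}(3))$ this uses $C^{gm}_n = 0$ for $n<0$, and for $A=\Lambda_R[u]$ it is immediate. Once that is in place, everything else follows formally from Lemma \ref{Filtration-Lemma}.

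Concretely, the write-up would proceed: (1) establish the degreewise bounds on $F_pM_n$ from \eqref{Finite-Filtration-Eq}; (2) push these through to $F_pC^+_A(M)_k$ and $F_pC^-_A(M)_n$ using the connectivity of $A$, concluding bounded-above resp.\ bounded-below; (3) deduce exhaustive $+$ Hausdorff resp.\ complete Hausdorff from the general facts recalled before Definition \ref{Filtered-Completion}; (4) for the failure of completeness of $C^+_A(M)$ and exhaustiveness of $C^-_A(M)$, note that completeness of $C^+_A(M)$ would force $C^+_A(M) = \om{lim}_q C^+_A(M/F_qM)$, contradicting that the natural map $C^+_A(M)\to\om{lim}_q C^+_A(M/F_qM)$ is not surjective when $M\neq 0$ (an explicit infinite "tail" in $\hat C^+_A(M)$ is not hit), and symmetrically for $C^-_A(M)$; and finally (5) invoke parts (iii) and (iv) of Lemma \ref{Filtration-Lemma} to obtain the two displayed formulas $\hat C^+_A(M) = \om{lim}_q C^+_A(M/F_qM)$ and $\hat C^-_A(M) = \om{colim}_p C^-_A(F_pM)$.
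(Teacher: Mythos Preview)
Your overall approach matches the paper's: decompose $C^\pm_A(M)$ using the bigrading of $M$ and the nonnegativity of $BA$, read off degreewise bounds, and invoke parts (iii) and (iv) of Lemma~\ref{Filtration-Lemma}. However, there is a genuine error in your treatment of $C^+_A(M)$ that you should fix.

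You claim that $F_pC^+_A(M)_k$ ``vanishes once $p$ is sufficiently negative,'' and relatedly that ``only finitely many simplicial degrees contribute in a fixed total degree.'' Both are false, precisely because of the periodicity $M_{s,t}\cong M_{s+8,t}$: in the decomposition
\[
C^+_A(M)_n \;=\; \bigoplus_{s+t+r=n} M_{s,t}\otimes BA_r,
\]
the index $s$ ranges over arbitrarily negative values (take $r$ large), and by periodicity $M_{s,t}$ does not eventually vanish as $s\to -\infty$. So $F_pC^+_A(M)_n$ is \emph{not} bounded below. This matters: if it were bounded below it would be complete Hausdorff, directly contradicting the non-completeness you set out to prove in step~(4). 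The correct argument for Hausdorff is simply that every element of the direct sum has finite support in $s$, so it lies outside $F_pC^+_A(M)_n$ for $p$ below the minimum $s$ occurring in its support. The paper makes exactly this decomposition explicit and then shows non-completeness by computing $\lim_q C^+_A(M/F_qM)_n$ as the corresponding \emph{product}, which differs from the direct sum precisely because the periodicity forces infinitely many nonzero summands. Your step~(4) gestures at this (``an explicit infinite tail''), but you should make the direct-sum-versus-product comparison concrete rather than leave it as an existence claim.
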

\begin{proof} We will use the underlying bigrading of the multicomplex
$M_{*,*}$. As a graded module (not DG) we have $C^+_A(M)=B(M,A,R)\cong M\otimes BA$. Therefore, in degree $n$
\begin{equation} \label{DirectSumC+}
 C^+_A(M)_n = \bigoplus_{u+r=n}M_u\otimes BA_r =\bigoplus_{s+t+r=n}M_{s,t}\otimes BA_r  ,
 \end{equation} 
and $F_pC^+_A(M)$ is obtained by imposing the condition $s\leq p$ in the
above right hand direct sum. First, notice that $A$ and hence $BA$ are
supported in nonnegative degrees. Then, as $M_{s,t}=0$ for $t<0$,
it follows $F_pC^+_A(M)_n=C^+_A(M)_n$ for $p\geq n$. This shows
that the filtration is degreewise bounded above and hence exhaustive.
It also follows from the above description that the filtration is Hausdorff.
On the other hand
\begin{equation} \label{Cpluss-Limit}
\lim_p C^+_A(M/F_pM)_n \cong \lim_p \bigoplus_{\substack{s+t+r=n \\ s> p}} M_{s,t}\otimes BA_r \cong \prod_{s+t+r=n} M_{s,t}\otimes BA_r .
\end{equation} 
This does not coincide with the direct sum in equation \eqref{DirectSumC+}
if $M\neq 0$, due to the periodicity $M_{s,t}\cong M_{s+8,t}$ for all $s,t$.
Thus, $C^+_A(M)$ fails to be complete if $M\neq 0$.  

Next, there is an isomorphism $C^-_A(M) = \om{Hom}_A(EA,M)\cong \om{Hom}(BA,M)$ of graded modules. In a fixed degree $n$, we find
\begin{equation} \label{DirectProductC-}
 C^-_A(M)_n = \om{Hom}(BA,M)_n = \prod_{s+t-r=n}\om{Hom}(BA_r,M_{s,t})
\end{equation} 
and as above $F_pC^-_A(M)$ is obtained by imposing the condition
$s\leq p$ in the right hand product. For the factor $\om{Hom}(BA_r,M_{s,t})$
to be nonzero it is necessary that $r\geq 0$ and $0\leq t\leq 4$, and therefore
$s=n+r-t\geq n-4$. Hence, $F_pC^-_A(M)_n=0$ for $p<n-4$ showing
that the filtration is degreewise bounded below, and thus complete
Hausdorff. Moreover, the above
also implies that the right hand product in \eqref{DirectProductC-} subject
to the condition $s\leq p$ is finite and therefore coincides with the
direct sum. Consequently, $\om{colim}_p F_pC^-_A(M)$ is given by
\begin{equation} \label{Cminus-Colimit}
\om{colim}_p \bigoplus_{\substack{s+t-r=n \\ s\leq p}}
\om{Hom}(BA_r,M_{s,t}) = \bigoplus_{s+t-r=n} \om{Hom}(BA_r,M_{s,t}),
\end{equation} 
which does not coincide with the product in \eqref{DirectProductC-} as
long as $M\neq 0$. This shows that the filtration fails to be exhaustive
if $M\neq 0$. 

The description of $\hat{C}^+_A(M)$ and $\hat{C}^-_A(M)$ given in the
statement is now a consequence of Lemma \ref{Filtration-Lemma} part
(iii) and (iv). 
\end{proof}

\begin{remark} \label{Remark-Cpm-Coincides-Miller}
Miller \cite[Appendix~A]{Miller19} defines completed versions
of $C^\pm_A$ using what he names the completed bar construction and
the finitely supported cobar construction. He defines the completed bar construction, $\hat{B}(M,A,N)$, to be the completion of $B(M,A,N)$ with respect to the filtration by internal degree, while the finitely supported cobar
construction $c\hat{B}(N,A,M)\subset \om{Hom}_A(B(N,A,A),M)$ is the
subcomplex consisting of those functionals
that vanish on $B_{p,q}(N,A,A)$ for all sufficiently large $p$. The above lemma, in particular equations \eqref{Cpluss-Limit} and
\eqref{Cminus-Colimit}, verifies that our $\hat{C}^+_A(M)$ and $\hat{C}^-_A(M)$
coincide with Miller's $\hat{B}(M,A,R)$ and $c\hat{B}(R,A,M)$
for $M=\wt{CI}(Y,E)$ and $M=DCI(Y,E)$ with $A$ as appropriate.
\end{remark}

The final part of this section is devoted to extending the concrete
models of Proposition \ref{Explicit-Cpm-Models} and Theorem \ref{Explicit-Cinfty-model} to the complex $DCI^\bullet(Y,E)$ for $\bullet\in \{+,-,\infty\}$. We will therefore assume that $\frac12 \in R$ throughout.  

\begin{theorem} \label{Explicit-DCI-Models}
Let $A=\Lambda_R[u]$ and let $(D^\pm_{*,*},\dd',\dd'')$
be the double complexes associated with $M=DCI(Y,E)$ in Proposition
\ref{Explicit-Cpm-Models}. 
Then there are isomorphisms of DG $R[U]$-modules
\[ DCI^+(Y,E) \cong \om{Tot}^\Pi (D^+_{*,*},\dd',\dd'')\;\;
\mbox{ and } \;\; DCI^-(Y,E) \cong \om{Tot}^\oplus (D^-_{*,*},\dd',\dd'') .\]
In particular, there are identifications 
\[ DCI^+(Y)_n \cong \prod_{p\geq 0} M_{n-4p} \;\; \mbox{ and } \;\;
DCI^-(Y)_n \cong \bigoplus_{p\geq 0} M_{n+4p}   \]
for each $n\in \Z$. An element $x\in DCI^\pm(Y)_n$ can therefore be expressed
by a sequence $x=(m_{n+4p})_p$ where we require $p\leq 0$ in the $+$
case and $p\geq 0$ and almost all $m_{n+4p}=0$ in the $-$ case. The
differential is in both cases given by
\[ \dd x = (\dd_M m_{n+4p}-(-1)^n m_{n+4(p-1)}u)_p  \]
and the action of $R[U]$ is determined by $U\cdot x = (m_{n+4(p+1)})_p$.
\end{theorem}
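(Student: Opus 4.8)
The plan is to transport the explicit models of Proposition~\ref{Explicit-Cpm-Models}, which are natural in the right $A$-module, through the limit and colimit defining $\hat{C}^\pm_A$, and then to make the resulting limit and colimit fully explicit using that $M=DCI(Y,E)$ is bounded in its internal grading: by \eqref{Eq-Orbit-Calc} one has $M_{s,t}=0$ unless $0\le t\le 3$, so each degree-$m$ piece $M_m=\bigoplus_{m-3\le s\le m}M_{s,\,m-s}$ is a finite direct sum, $F_qM_m=0$ for $q\le m-4$, and $F_qM_m=M_m$ for $q\ge m$.

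First I would apply Proposition~\ref{Explicit-Cpm-Models} to each subquotient $N=F_pM/F_qM$ (itself a right $A=\Lambda_R[u]$-module), obtaining isomorphisms $C^+_A(N)\cong \om{Tot}^\oplus(D^+_{*,*}(N),\dd',\dd'')$ and $C^-_A(N)\cong \om{Tot}^\Pi(D^-_{*,*}(N),\dd',\dd'')$, where $D^\pm_{p,q}(N)=N_{q-3p}$ in the appropriate half-plane; these are natural in $N$ and compatible with $\dd'$, $\dd''$ and the $R[U]$-action. By Lemma~\ref{Bounded-FloerFiltration} the index filtration on $M$ is exhaustive and the induced filtration on $C^-_A(M)$ is complete Hausdorff, so the definition of $\hat{C}^\bullet_A$ collapses to $\hat{C}^+_A(M)=\om{lim}_q C^+_A(M/F_qM)$ and $\hat{C}^-_A(M)=\om{colim}_p C^-_A(F_pM)$, with $\om{lim}$ and $\om{colim}$ formed degreewise and the differentials and module structures carried along.

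Next comes the main computation, in a fixed total degree $n$. On one side, $C^+_A(M/F_qM)_n=\bigoplus_{p\ge 0}(M/F_qM)_{n-4p}$, and $(M/F_qM)_{n-4p}$ vanishes as soon as $q\ge n-4p$, so for each $q$ this is a finite direct sum; as $q\to-\infty$ every coordinate stabilizes to $M_{n-4p}$, and since arbitrarily many coordinates are switched on as $q$ decreases, the inverse limit over the quotient maps is the full product $\prod_{p\ge 0}M_{n-4p}=\om{Tot}^\Pi(D^+_{*,*})_n$. On the other side, $C^-_A(F_pM)_n=\prod_{p'\ge 0}(F_pM)_{n+4p'}$, and $(F_pM)_{n+4p'}$ vanishes once $p<n+4p'-3$, so this product is in fact a finite direct sum; as $p\to\infty$ every coordinate stabilizes to $M_{n+4p'}$, so the colimit over the inclusions is $\bigoplus_{p'\ge 0}M_{n+4p'}=\om{Tot}^\oplus(D^-_{*,*})_n$. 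Since the finite-stage isomorphisms respect $\dd'$, $\dd''$ and $U$, passing to $\om{lim}_q$ and $\om{colim}_p$ gives the asserted isomorphisms of DG $R[U]$-modules; the descriptions of elements, of the differential $\dd x=(\dd_Mm_{n+4p}-(-1)^nm_{n+4(p-1)}u)_p$, and of the action $U\cdot x=(m_{n+4(p+1)})_p$ then drop out of the formulas for $\dd'$, $\dd''$ and $U$ recorded in Proposition~\ref{Explicit-Cpm-Models}.

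The anticipated obstacle is purely bookkeeping: one must keep straight which totalization (direct sum versus product) appears where, and check carefully that at each finite stage $q$ (respectively $p$) the totalization really is a \emph{finite} sum --- this is exactly where the boundedness of $DCI(Y,E)$ in the internal grading is used --- so that the limit produces a product and the colimit a direct sum rather than the reverse. With that finiteness in hand the rest is formal, and all signs are inherited verbatim from Proposition~\ref{Explicit-Cpm-Models}, so no new sign computation is required.
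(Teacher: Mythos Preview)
Your proposal is correct and follows essentially the same route as the paper: apply the explicit models of Proposition~\ref{Explicit-Cpm-Models} (naturally in the module), use Lemma~\ref{Bounded-FloerFiltration} to reduce $\hat{C}^+_A(M)$ and $\hat{C}^-_A(M)$ to a limit over $q$ and a colimit over $p$ respectively, and then compute these degreewise using the finiteness coming from $M_{s,t}=0$ unless $0\le t\le 3$. The paper carries out the last step by pointing back to the computations \eqref{Cpluss-Limit} and \eqref{Cminus-Colimit} already done inside the proof of Lemma~\ref{Bounded-FloerFiltration}, whereas you redo those computations inline, but the content is identical.
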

\begin{proof} The isomorphism
$C^+_A(M)\cong \om{Tot}^\oplus(D^+_{*,*},\dd',\dd'')$
of Proposition \ref{Explicit-Cpm-Models} yields in degree $n$
\[ C^+_A(M)_n \cong \bigoplus_{s\geq 0} M_{n-4s} =\bigoplus_{s\geq 0,t} M_{t,n-4s-t} \]
and $F_pC^+_A(M)$ is obtained by imposing the condition $t\leq p$ in the
right hand sum. Now the calculation of the limit $\om{lim}_p C^+_A(M)_n/F_pC^+_A(M)_n$ goes through as in equation \eqref{Cpluss-Limit} of the
previous proof to show that $\hat{C}^+_A(M)_n \cong \prod_{s\geq 0} M_{n-4s}$.

The other case is analogous and the calculation of the relevant
colimit proceeds as in equation \eqref{Cminus-Colimit} of the previous proof.
\end{proof}  

Note that the only difference between the 
above theorem and Proposition \ref{Explicit-Cpm-Models} is that we have interchanged the type of totalization we apply to the double complexes. 

We also obtain the following explicit model computing $I^\infty(Y,E)$. 

\begin{theorem} \label{Explicit-DCI-infty-model}
Let $A=\Lambda_R[u]$, let
$(D^\pm_{*,*},\dd',\dd'')$ and $(D^\infty_{*,*},\dd',\dd'')$ be the double
complexes associated with $M=DCI(Y,E)$ in the
above theorem and Theorem \ref{Explicit-Cinfty-model}, respectively. Let 
$\nu,\psi\colon \om{Tot}^\Pi(D^+,\dd',\dd'')\ra \om{Tot}^\oplus(D^-,\dd',\dd'')$
of degree $3$ and $0$ respectively, be given degreewise by the compositions
\[ \begin{tikzcd} 
\prod_{s\geq 0} M_{n-4s} \arrow{r}{\pi} & M_n \arrow{r}{(-1)^nu} &
M_{n+3} \arrow{r}{\iota} & \bigoplus_{t\geq 0} M_{n+3+4t} \\
\prod_{s\geq 0} M_{n-4s} \arrow{r}{\pi} & M_n \arrow{r}{\iota} &
\bigoplus_{t\geq 0} M_{n+4t}, & {} \end{tikzcd} \]
where $\pi$ and $\iota$ denote the projection and inclusion, respectively.
Then $\nu$ is a chain map satisfying $\nu U-U\nu = \dd \psi-\psi \dd$. Define
$\om{Cone}(\nu)$ with the $R[U]$-module structure adjusted by the homotopy
$\psi$ as in Lemma \ref{Cone-Lemma1}. Then there is an isomorphism
$R[U]$-modules $\om{Tot}^{\Pi,\infty}(D^\infty_{*,*},\dd',\dd'')\cong \om{Cone}(\nu)$ and a filtered quasi-isomorphism between this complex and 
$DCI^\infty(Y,E)$, which is $R[U]$-linear up to homotopy. Moreover, this quasi-isomorphism is compatible with the corresponding exact triangles
in homology.
\end{theorem}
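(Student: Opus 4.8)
The plan is to run the proof of Theorem~\ref{Explicit-Cinfty-model} in the filtered/completed setting and then translate the outcome through the explicit models of Theorem~\ref{Explicit-DCI-Models}. Proposition~\ref{Completed-Twisted-Tate-Simplification} already does most of the work: it supplies a filtered degree~$3$ quasi-isomorphism $\hat{f}_M\colon\hat{C}^+_A(M)\to\hat{C}^{+,tw}_A(M)$ and a filtered homotopy $\hat{s}_M$ with $\hat{f}_MU-U\hat{f}_M=\dd\hat{s}_M-\hat{s}_M\dd$, together with a filtered quasi-isomorphism $\om{Cone}(\hat{N}_M\circ\hat{f}_M)\to\om{Cone}(\hat{N}_M)\cong\hat{C}^\infty_A(M)=DCI^\infty(Y,E)$ that is $R[U]$-linear up to homotopy and compatible with the exact triangles, the source carrying the $R[U]$-structure twisted by $\hat{t}_M=\hat{N}_M\circ\hat{s}_M$ as in Lemma~\ref{Cone-Lemma1}. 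So the remaining task is purely one of identification: I would show that $\om{Cone}(\hat{N}_M\circ\hat{f}_M)$, with this twisted structure, is exactly $\om{Tot}^{\Pi,\infty}(D^\infty_{*,*},\dd',\dd'')$, and that under the models $\hat{N}_M\circ\hat{f}_M=\nu$ and $\hat{N}_M\circ\hat{s}_M=\psi$.

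For that identification I would use Lemma~\ref{f-Homotopy-Lemma}: for any right $A$-module $N$ the compositions $N_N\circ f_N$ and $N_N\circ s_N$ are the concrete maps $\bigoplus_{p\geq0}N_{\bullet-4p}\xrightarrow{\pi}N_\bullet\xrightarrow{(-1)^{\bullet}u}N_{\bullet+3}\xrightarrow{\iota}\prod_{q\geq0}N_{\bullet+3+4q}$ and $\bigoplus_{p\geq0}N_{\bullet-4p}\xrightarrow{\pi}N_\bullet\xrightarrow{\iota}\prod_{q\geq0}N_{\bullet+4q}$, naturally in $N$. Applying $\om{lim}_q\om{colim}_p$ to these formulas with $N=F_pM/F_qM$ and inserting the identifications $DCI^+(Y)_n\cong\prod_{s\geq0}M_{n-4s}$, $DCI^-(Y)_n\cong\bigoplus_{t\geq0}M_{n+4t}$ of Theorem~\ref{Explicit-DCI-Models}, the outer direct sum on the $C^+$ side turns into a product and the outer product on the $C^-$ side turns into a direct sum — the same interchange, forced by the mod~$8$ periodicity of $M$, that appears in Lemma~\ref{Bounded-FloerFiltration} and in the proof of Theorem~\ref{Explicit-DCI-Models}. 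Reading off the result gives $\hat{N}_M\circ\hat{f}_M=\nu$ and $\hat{N}_M\circ\hat{s}_M=\psi$; in particular $\nu$ is a degree-$3$ chain map and $\nu U-U\nu=\dd\psi-\psi\dd$ is obtained from $\hat{f}_MU-U\hat{f}_M=\dd\hat{s}_M-\hat{s}_M\dd$ by composing with the $R[U]$-linear (degree $0$) chain map $\hat{N}_M$.

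It then remains to spell out the cone. With the conventions of Definition~\ref{Cone-Definition} and the $\psi$-twisted $R[U]$-structure of Lemma~\ref{Cone-Lemma1} one has $\om{Cone}(\nu)_n=DCI^-(Y)_n\oplus DCI^+(Y)_{n-4}\cong\bigoplus_{t\geq0}M_{n+4t}\oplus\prod_{s\geq0}M_{n-4(s+1)}$; reindexing by $D^\infty_{s,n-s}=M_{n-4s}$ — nonpositive indices from the $DCI^-$-summand (a direct sum, hence finitely supported in that direction), positive indices from the $DCI^+$-summand (a product, hence unrestricted) — identifies this degreewise with $\om{Tot}^{\Pi,\infty}(D^\infty_{*,*})$. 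One checks, exactly as in the last paragraph of the proof of Theorem~\ref{Explicit-Cinfty-model}, that $\dd_{\om{Cone}(\nu)}$ matches $\dd'+\dd''$ and that the $\psi$-twisted $U$ matches the identity shift $U\colon D^\infty_{p,q}\to D^\infty_{p-1,q-3}$. Composing the isomorphism $\om{Tot}^{\Pi,\infty}(D^\infty)\cong\om{Cone}(\nu)=\om{Cone}(\hat{N}_M\circ\hat{f}_M)$ with the filtered quasi-isomorphism of Proposition~\ref{Completed-Twisted-Tate-Simplification} produces the desired map to $DCI^\infty(Y,E)$, and the compatibility with the exact triangles is inherited from that proposition together with Lemma~\ref{Cone-Lemma2}.

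I expect the only real friction to be bookkeeping rather than any conceptual point: verifying that the colimit–limit genuinely performs the direct-sum/product interchange on both sides at once (the periodicity of $M$ is what makes the product in \eqref{Cpluss-Limit} and the direct sum in \eqref{Cminus-Colimit} fail to collapse), and carrying the signs through consistently — the $(-1)^{p+q+1}$ in $\dd'$, the $(-1)^{\bullet}u$ in $\nu$, and the sign twists in the $R[U]$-action of Lemma~\ref{Cone-Lemma1} — via the identifications \eqref{Iso-+} and \eqref{Iso--}. Everything else is a formal transport of the already-established unfiltered statement along the full-completion functor.
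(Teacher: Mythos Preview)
Your proposal is correct and follows essentially the same route as the paper: invoke Proposition~\ref{Completed-Twisted-Tate-Simplification} to obtain the filtered quasi-isomorphism $\om{Cone}(\hat{N}_M\circ\hat{f}_M)\to DCI^\infty(Y,E)$, then identify $\hat{N}_M\circ\hat{f}_M$ and $\hat{N}_M\circ\hat{s}_M$ with $\nu$ and $\psi$ via Lemma~\ref{f-Homotopy-Lemma} and Theorem~\ref{Explicit-DCI-Models}, and finally unpack $\om{Cone}(\nu)$ as $\om{Tot}^{\Pi,\infty}(D^\infty)$ exactly as in Theorem~\ref{Explicit-Cinfty-model}. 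Your write-up is in fact more explicit than the paper's about the sum/product interchange and the sign bookkeeping, but the argument is the same.
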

\begin{proof} By Proposition \ref{Completed-Twisted-Tate-Simplification}
there is a filtered quasi-isomorphism $\hat{f}_M\colon \hat{C}^+_A(M)\ra \hat{C}^{+,tw}_A(M)$ of degree $3$ and a filtered homotopy
$\hat{s}_M:\hat{C}^+_A(M)\ra \hat{C}^{+,tw}_A(M)$ such that
$\hat{f}_MU-U\hat{f}_M=\dd \hat{s}_M -\hat{s}_M\dd$. Furthermore, there is
an induced filtered quasi-isomorphism $\om{Cone}(\hat{N}_M\circ \hat{f}_M)\ra \hat{C}^\infty_A(M)$, which is $R[U]$-linear up to homotopy when the cone
is given the adjusted $R[U]$-module structure determined by the homotopy
$\hat{t}_M=\hat{N}_M\circ \hat{s}_M$. In terms of the explicit models
of the above theorem $\hat{N}_M\circ \hat{f}_M$
and $\hat{N}_M\circ \hat{s}_M$ are precisely $\nu$ and $\psi$ as defined
in the statement. The verification of this is done just as in Lemma
\ref{f-Homotopy-Lemma}. The fact that
$\om{Cone}(\nu )\cong \om{Tot}^{\Pi,\infty}(D^\infty_{*,*},\dd',\dd'')$
is proved just as in Theorem \ref{Explicit-Cinfty-model}.
\end{proof}

\begin{remark} The proofs of Theorem \ref{Explicit-DCI-Models} and
Theorem \ref{Explicit-DCI-infty-model} go through without any essential
change with $M=\wt{CI}(Y,E)$, regarded as a $\Lambda_R[u]$-module, in
place of $M=DCI(Y,E)$. \end{remark} 

We may extract the following noteworthy consequences, showing in particular
that the framed homology groups $\wt{I}(Y,E)\coloneqq H(DCI(Y,E))\cong H(\wt{CI}(Y,E))$ may be recovered from the $R[U]$-action on $I^\pm(Y,E)$
up to extension. 

\begin{corollary} \label{U-action-Corollary} 
There are long exact sequences
\[ \begin{tikzcd} \tilde{I}_n(Y,E) \arrow{r} & I^+(Y,E)_n \arrow{r}{U} &
I^+(Y,E)_{n-4} \arrow{r} & \tilde{I}(Y,E)_{n-1} \\
\tilde{I}(Y,E)_{n-3} \arrow{r} & I^-(Y,E) \arrow{r}{U} & I^-(Y,E)_{n-4}
\arrow{r} & \tilde{I}(Y,E)_{n-4}. \end{tikzcd}  \]
Moreover, $U:I^\infty(Y,E)\ra I^\infty(Y,E)$ is
an isomorphism. \end{corollary}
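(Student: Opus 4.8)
The plan is to deduce all three statements directly from the explicit chain-level models established in Theorem \ref{Explicit-DCI-Models} and Theorem \ref{Explicit-DCI-infty-model}, together with the short exact sequences of cones. For the long exact sequence in the $+$ case, I would start from the observation that in the model $DCI^+(Y,E)_n\cong \prod_{p\geq 0}M_{n-4p}$ the operator $U$ is the shift $(m_{n+4p})_p\mapsto(m_{n+4(p+1)})_p$, which drops the top term $m_n$. Hence $U$ is surjective on the chain level with kernel consisting of the sequences supported in the single top slot $p=0$; this kernel is precisely a shifted copy of $M=DCI(Y,E)$ with its own differential $\dd_M$ (the $u$-component $(-1)^nm_{n+4(p-1)}u$ vanishes there because there is no $p=-1$ term). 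So there is a short exact sequence of $R$-complexes
\[
\begin{tikzcd} 0 \arrow{r} & M[\,0\,] \arrow{r} & DCI^+(Y,E) \arrow{r}{U} & DCI^+(Y,E)[4] \arrow{r} & 0, \end{tikzcd}
\]
and the associated long exact sequence in homology, using $H(M)=\tilde I(Y,E)$, gives the first displayed sequence (with the degree shifts dictated by the $[4]$ and $[1]$ appearing in the connecting map). The $-$ case is entirely analogous: in $DCI^-(Y,E)_n\cong\bigoplus_{p\geq 0}M_{n+4p}$ the operator $U$ is again a shift, now \emph{injective} with cokernel the top slot, yielding a short exact sequence $0\to DCI^-(Y,E)[4]\xrightarrow{U}DCI^-(Y,E)\to M[-3]\to 0$; here the $[-3]$ comes from the fact that the quotient slot sits in internal degree forcing a degree-$3$ shift in the model, matching the degrees in the second displayed sequence. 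One must just be a little careful tracking which slot is the kernel versus the cokernel and the precise suspension on $M$, but this is exactly the bookkeeping already set up in Theorem \ref{Explicit-DCI-Models}.

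For the final assertion, that $U\colon I^\infty(Y,E)\to I^\infty(Y,E)$ is an isomorphism, I would invoke Theorem \ref{Explicit-DCI-infty-model}: there is an $R[U]$-linear-up-to-homotopy quasi-isomorphism between $DCI^\infty(Y,E)$ and $\om{Tot}^{\Pi,-\infty}(D^\infty_{*,*},\dd',\dd'')$, and on the latter complex $U$ acts as the literal column shift $D^\infty_{p,q}=M_{q-3p}\to M_{q-3p}=D^\infty_{p-1,q-3}$, which is an honest isomorphism of chain complexes (it is bijective in each bidegree and commutes with $\dd'+\dd''$ by construction, since $D^\infty$ is $U$-periodic in the horizontal direction and the signs in $\dd'=(-1)^{p+q+1}u$ are invariant under the simultaneous shift $p\mapsto p-1$, $q\mapsto q-3$). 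An isomorphism of complexes induces an isomorphism on homology; since the comparison map to $DCI^\infty(Y,E)$ is an $R[U]$-module map up to homotopy and a quasi-isomorphism, and $H(DCI^\infty(Y,E))\cong I^\infty(Y,E)$ as $R[U]$-modules by Corollary \ref{DCI-CI-Equivariant-Eq-Corollary}, it follows that $U$ acts invertibly on $I^\infty(Y,E)$. Alternatively one could cite the Corollary immediately following Theorem \ref{Explicit-Cinfty-model} verbatim for $M=DCI(Y,E)$ and transport along the invariance isomorphism; both routes are short.

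The main obstacle, such as it is, is not conceptual but notational: getting the suspension degrees in the two long exact sequences exactly right. The cone/shift conventions fixed in the Conventions subsection (left shift, the sign $(-1)^p$ on $\dd_{M[p]}$, and the $B$-module twist $b\cdot x=(-1)^{|b|(d+1)}bx$ on cones) all feed into where the integers $n$, $n-1$, $n-3$, $n-4$ land, and one has to check that the kernel of $U$ on $DCI^+$ really contributes $\tilde I_n$ (not a shift of it) while the cokernel of $U$ on $DCI^-$ contributes $\tilde I(Y,E)_{n-3}$ and $\tilde I(Y,E)_{n-4}$ in the two displayed positions. I would handle this by writing out the degree of a generic element of the kernel/cokernel slot explicitly in terms of the bigrading $M_{s,t}$ with $0\leq t\leq 3$, exactly as in the proof of Lemma \ref{Bounded-FloerFiltration}, and reading off the connecting maps from the standard snake lemma; everything else is formal homological algebra that the earlier results have already licensed.
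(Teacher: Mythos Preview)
Your approach is correct and essentially identical to the paper's: both read $U$ as the shift operator on the explicit models of Theorem \ref{Explicit-DCI-Models}, identify its kernel/cokernel as a copy of $M$, and pass to the long exact sequence in homology (and for $I^\infty$, observe that $U$ is a chain-level isomorphism on the $D^\infty$ model). Your written shifts are slightly off---the paper's sequences are $0\to M\to D^+\xrightarrow{U}D^+[4]\to 0$ and $0\to D^-[-4]\xrightarrow{U}D^-\to M\to 0$, with no $[-3]$ on the cokernel (the $n-3$ in the statement comes from the connecting homomorphism, not an internal degree shift)---but you correctly flag this as bookkeeping, and the argument is otherwise the same.
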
 
\begin{proof} Let $M=DCI(Y,E)$ and $A=\Lambda_R[u]$. In terms of the
explicit models for $DCI^\pm(Y,E)=\hat{C}^\pm_A(M)$ the action of
$U\in R[U]$ are in degree $n$ given by the natural shifts
\[ U\colon \prod_{s\geq 0} M_{n-4s} \ra \prod_{s\geq 0} M_{n-4-4s}  \;\;
\mbox{ and } \;\; 
U\colon \bigoplus_{s\geq 0} M_{n+4s} \ra \bigoplus_{s\geq 0} M_{n-4+4s},  \]
respectively. Explicitly, $(x_s)_{s\geq 0}\mapsto (x_{s+1})_{s\geq 0}$ in the first case and
$(y_s)_{s\geq 0}\mapsto (0,y_0,y_1,\cdots)$ in the second. From this we
deduce that the first map is surjective with kernel $D^+_{0,*}\cong M$
and the second is injective with cokernel isomorphic to $D^-_{0,*}\cong M$.
There are therefore short exact sequences of chain complexes
\[ \begin{tikzcd} 
 0 \arrow{r} & M \arrow{r}  & D^+_* \arrow{r}{U} & D^+_{*}[4] \arrow{r} & 0 \\
 0 \arrow{r} & D^-_*[-4] \arrow{r}{U} & D^-_* \arrow{r} & M \arrow{r} & 0 \end{tikzcd} \]
giving rise to the stated long exact sequences in homology.  
The final statement is a consequence of the fact that the shift
\[ U\colon D^\infty_n = \prod_{s\to\infty} M_{n-4s} \ra \prod_{s\to\infty} M_{n-4-4s}=D^\infty_{n-4}  \]
is an isomorphism in each degree $n$ and therefore also an isomorphism
upon passage to homology. 
\end{proof} 

\begin{remark} The fact that $U\colon I^\infty(Y,E)\ra I^\infty(Y,E)$ is an isomorphism when $\frac12 \in R$ was first proved using a spectral sequence argument in \cite{Miller19}. 
\end{remark}

\subsection{The Index Spectral Sequences}
For $\bullet\in \{+,-,(+,tw),\infty\}$ the complexes $CI^\bullet(Y,E)$ and $DCI^\bullet(Y,E)$ introduced in Definition \ref{Def-Equivariant-Floer} come equipped with filtrations and therefore give rise to spectral sequences. 
These will be called index spectral sequences
as the filtrations are induced by the index filtrations of $\wt{CI}(Y,E)$
and $DCI(Y,E)$. The purpose of this section is to recall
some theory on spectral sequences
and then to establish the basic properties of the index spectral sequences.
We will mainly follow Boardman's paper \cite{Boardman99}, but we will
stick to homological notation.  
All of this will be very useful for our calculations in the next section.   

Recall that an (unrolled) exact couple $(A,E,i,j,k)$ consists of two sequences
of graded modules $(A_s)_s$ and $(E_s)_s$, indexed over $s\in \Z$,  
and homomorphisms $i = (i_s:A_{s-1}\ra A_s)_s$, $j=(j_s:A_s\ra E_s)_s$
and $k=(k_s:E_s\ra A_{s-1})_s$ such that the sequence
\[ \begin{tikzcd} A_{s-1} \arrow{r}{i_s} & A_s \arrow{r}{j_s} & E_s
\arrow{r}{k_s} & A_{s-1} \arrow{r}{i_s} & A_s \end{tikzcd} \]
is exact for each $s$. We will generally omit the lower index on the maps,
letting the source and target specify the map in question. 
In our situation the degrees of $(i,j,k)$ will
be $(0,0,-1)$. For each $r\geq 1$ and $s$ set
\begin{align*} 
Z^r_s &\coloneqq k^{-1} \om{Im}[i^{(r-1)}:A_{s-r}\ra A_{s-1} ]  \\
B^r_s &\coloneqq j \om{Ker}[i^{(r-1)}:A_s\ra A_{s+r-1} ]   \\
E^r_s &\coloneqq Z^r_s/B^r_s,
\end{align*}
where $i^{(r)}$ denotes the $r$-fold composition $i\circ \cdots \circ i$
for $r\geq 1$ and $i^{(0)}$ is the identity.    
Furthermore, put $Z^\infty_s \coloneqq \bigcap_r Z^r_s$, $B^\infty_s \coloneqq \bigcup_r B^r_s$
and $E^\infty_s \coloneqq Z^\infty_s/B^\infty_s$. For the later convergence theory it is also important to introduce the derived limit 
$RE^\infty_s \coloneqq \om{Rlim}_s Z^r_s$. There are differentials
$d^r\colon E^r_s\ra E^r_{s-r}$ for each $s$ and $r\geq 1$, defined in the following way. Let $x\in Z^r_s$ represent the class $[x]\in E^r_s$. Choose $y\in A_{s-r}$
such that $i^{(r-1)}(y) = k(x)$. Then $d^r([x])=[j(y)]\in E^r_{s-r}$.
The situation is illustrated in the following diagram
\begin{equation} \label{SS-Diff-Def}
\begin{tikzcd} {} & y\in A_{s-r} \arrow{rr}{i^{(r-1)}} \arrow{ld}{j} & {} & 
k(x)\in A_{s-1} & {} \\
j(y)\in E_{s-r} & {} & {} & {} & x\in E_s. \arrow{lu}{k} \end{tikzcd}
\end{equation}
One may then show that $d^r$ is well-defined, that
$d^r\circ d^r=0$ and that 
\[ E^{r+1}_s \cong \om{Ker}(d^r\colon E_s^r\ra E^r_{s-r})/\om{Im}(d^r\colon E^r_{s+r}\ra E^r_{s})  \]
for each $r\geq 1$ and $s$. In other words, $(E^r,d^r)_{r\geq 1}$
is a spectral sequence. We call it the spectral sequence associated with
the exact couple $(A,E,i,j,k)$. Every exact couple we will meet
is covered by the following example (see \cite[Section~9]{Boardman99}).    

\begin{example} \label{Exact-Couple-Filt-Complex} 
Let $C$ be a DG module equipped with an increasing filtration $\{F_sC\}_s$ by
DG submodules. For each $s\in \Z$ let 
$A_s = H(F_sC)$ and $E_s = H(F_sC/F_{s-1}C)$. Define $i\colon A_{s-1}\ra A_s$, $j\colon A_s\ra E_s$ and $k\colon E_s\ra A_{s-1}$ to be the maps induced
by the short exact sequence
\[ \begin{tikzcd} 0\arrow{r} & F_{s-1}C \arrow{r} & F_sC \arrow{r} & F_sC/F_{s-1}C \arrow{r} & 0 \end{tikzcd} \]
upon passage to homology. In particular, $k$ is the connecting homomorphism
of degree $-1$. Then $(A,E,i,j,k)$ is an exact couple.  
The spectral sequence associated with this exact couple is the spectral
sequence mentioned in \eqref{FilteredSpectralSequence}. If $B$ is a
DG algebra and $C$ is a $B$-module filtered by $B$-submodules, then
$A_s$, $E_s$ carry the structure of graded $H(B)$-modules, and the maps
$i,j,k$ are $H(B)$-linear. Moreover, this structure carries in a natural
way over to the whole spectral sequence. 
\end{example}  

Returning to the situation of an exact couple $(A,E,i,j,k)$ put
\begin{equation} \label{Limit-Def-Eq} 
A_\infty \coloneqq \om{colim}_s A_s ,\;\; A_{-\infty} \coloneqq \om{lim}_s A_s \;\; \mbox{ and } \;\; RA_{-\infty} \coloneqq \om{Rlim}_s A_s  .
\end{equation} 
For us the relevant target for the spectral sequence is the colimit
$A_\infty$, filtered by $F_sA_\infty \coloneqq \om{Im}(A_s\ra A_\infty)$.
For each $s\in \Z$ there is a natural inclusion $\phi_s\colon F_sA_\infty/F_{s-1}A_\infty\ra E^\infty_s$ given by the composition
\begin{equation} \label{Phi-Def-Equation}
\begin{tikzcd} F_sA_\infty/F_{s-1}A_\infty \arrow{r}{\cong} & \om{Im}j/B^\infty_s \arrow{r} & Z^\infty_s/B^\infty_s =E^\infty_s.  
\end{tikzcd}
\end{equation}
Here, the first isomorphism is given by sending $[x]\in F_sA_\infty/F_{s-1}A_\infty$, represented by $x\in F_sA_\infty$, to $[j(y)]\in \om{Im}j/B^\infty_s$, where $y\in A_s$ is a lift of $x$ along the surjection $A_s\ra F_sA_\infty$,
and the second map is induced by the inclusion $\om{Im}j=\om{Ker}k\subset Z^\infty_s$ (see \cite[Lemma~5.6]{Boardman99} and its proof). 

\begin{definition} \label{Def-Strong-Convergence}
The spectral sequence $(E^r,d^r)_{r\geq 1}$ associated
with the exact couple $(A,E,i,j,k)$ converges strongly to the
colimit $A_\infty$ if the filtration $F_sA_\infty = \om{Im}(A_s\ra A_\infty)$ of $A_\infty$ is exhaustive and complete Hausdorff and the map
$\phi_s\colon F_sA_\infty/F_{s-1}A_\infty\ra E^\infty_s$ is an isomorphisms for each $s\in \Z$. 
\end{definition}

Strong convergence is the ideal form of convergence. In theory, 
if we can can calculate
$E^\infty$ we would know the subquotients $F_sA_\infty/F_{s-1}A_\infty$
and provided we can solve the extension problems to determine
$F_sA_\infty/F_tA_\infty$ for each pair $s>t$, we can recover the target group
as
\[ A_\infty \cong \om{lim}_s \om{colim}_tF_tA_\infty/F_sA_\infty  .\]  

\begin{definition}\cite[Definition~5.10]{Boardman99} 
Let $(A,E,i,j,k)$ be an exact couple with associated
spectral sequence $(E^r,d^r)_{r\geq 1}$. If
$RA_{-\infty}=0$ and $A_{-\infty}=0$ (see \eqref{Limit-Def-Eq}), then the spectral sequence is said to converge conditionally to the colimit $A_\infty$  \end{definition}

In our situation of a filtered DG module and the corresponding exact couple described in Example \ref{Exact-Couple-Filt-Complex} 
we have the following result. 

\begin{theorem}\cite[Theorem~9.2]{Boardman99} \label{Cond-Convergence}
Let $C$ be a DG module
equipped with an increasing filtration $F_sC$ by DG submodules.
Let $(A_s,E_s,i,j,k)$ be the associated
exact couple with corresponding spectral sequence $(E^r,d^r)_{r\geq 1}$. 
If the filtration of $C$ is exhaustive and complete Hausdorff, then the
spectral sequence $(E^r,d^r)_{r\geq 1}$ converges conditionally to the
colimit $A_\infty = H(C)$. \end{theorem}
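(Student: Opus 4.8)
The plan is to reduce the statement to a direct application of the general conditional convergence criterion, Boardman's \cite[Theorem~9.2]{Boardman99}, which is precisely the result being cited. Concretely, I would argue that for the exact couple $(A_s,E_s,i,j,k)$ of Example~\ref{Exact-Couple-Filt-Complex} associated with a filtered DG module $C$, the hypotheses of the definition of conditional convergence (Definition following \cite[Definition~5.10]{Boardman99}) are met whenever the filtration $\{F_sC\}_s$ is exhaustive and complete Hausdorff. Recall that conditional convergence to the colimit $A_\infty$ means $A_{-\infty}=\om{lim}_s A_s=0$ and $RA_{-\infty}=\om{Rlim}_s A_s=0$, where $A_s=H(F_sC)$. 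So the whole task is to verify these two vanishing statements, plus to identify $A_\infty=\om{colim}_s H(F_sC)$ with $H(C)$.

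First I would handle the identification $A_\infty\cong H(C)$. Since homology commutes with filtered (sequential) colimits of chain complexes over a ring---directed colimits are exact---we get $\om{colim}_s H(F_sC)\cong H(\om{colim}_s F_sC)$. The exhaustiveness hypothesis says $\om{colim}_s F_sC=\bigcup_s F_sC=C$, so $A_\infty\cong H(C)$, as claimed in the target statement. This step is routine.

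Next, the vanishing of $A_{-\infty}$ and $RA_{-\infty}$. Here the key input is that $\om{lim}$ and $\om{Rlim}$ over the tower $\{F_sC\}_s$ of subcomplexes are computed degreewise, and the tower of subcomplexes $\cdots\subset F_{s-1}C\subset F_sC\subset\cdots$ has all transition maps \emph{injective}. The cleanest route is to invoke the standard long exact sequence (or short exact sequence of towers) relating $\om{lim}$/$\om{Rlim}$ of $\{H(F_sC)\}_s$ to $\om{lim}$/$\om{Rlim}$ of $\{F_sC\}_s$ together with the $\om{lim}$/$\om{Rlim}$ of the homology of the inverse limit; this is exactly the content of \cite[Section~1]{Boardman99} (the ``$\om{lim}$-$\om{Rlim}$ exact sequence'' and its homological variant). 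The hypotheses Hausdorff ($\om{lim}_s F_sC=\bigcap_s F_sC=0$) and complete ($\om{Rlim}_s F_sC=0$) feed directly into that machinery and force $A_{-\infty}=0$ and $RA_{-\infty}=0$. I would either cite \cite[Theorem~9.2]{Boardman99} for this implication directly---which is legitimate since that is the stated source---or, if a self-contained argument is wanted, spell out the degreewise $\om{Rlim}$ six-term exact sequence applied to $0\to F_{s}C\to C\to C/F_sC\to 0$, combined with the observation that $\om{lim}_s H_n(F_sC)$ and $\om{Rlim}_s H_n(F_sC)$ sit in an exact sequence with $H_n(\om{lim}_s F_sC)=H_n(0)=0$ and $\om{Rlim}_s$-type terms that vanish by completeness.

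The main obstacle---really the only subtlety---is keeping straight which of the four limit/derived-limit conditions ($\om{colim}$ exhaustive, $\om{lim}=0$ Hausdorff, $\om{Rlim}=0$ complete, and the degreewise nature of all of these) is responsible for each conclusion, and making sure the bookkeeping with the indexing degree $s$ versus the homological degree $n$ is correct: all the colimit/limit/Rlim constructions here are taken over $s$ while everything is graded by $n$, and one must check the vanishing in each fixed internal degree $n$ separately. Since the paper has already set up (in the paragraph on exhaustive/Hausdorff/complete filtrations) that these limits are computed degreewise, this is a matter of citing that remark. Given that the theorem is quoted verbatim from Boardman, the honest and efficient proof is: observe $A_\infty\cong H(C)$ by exactness of filtered colimits, then invoke \cite[Theorem~9.2]{Boardman99}. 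I would write it that way, with a one-line indication of why the degreewise Hausdorff and complete hypotheses give conditional convergence, and leave the detailed $\om{Rlim}$ manipulations to the reference.
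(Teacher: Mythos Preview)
Your proposal is correct and in fact goes further than the paper does: the paper gives no proof of this theorem at all, simply citing \cite[Theorem~9.2]{Boardman99} as a black box. Your sketch of why exhaustiveness gives $A_\infty\cong H(C)$ and why complete Hausdorff forces $A_{-\infty}=RA_{-\infty}=0$ is accurate and matches Boardman's argument, but the paper itself is content to defer entirely to the reference.
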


In the presence of conditional convergence there are two remaining obstructions
to achieving strong convergence; namely, the vanishing of the group
$RE^\infty$ and another group $W$. We refer to  \cite[Lemma~8.5]{Boardman99}
for the general definition of $W$, but we note that if the exact
couple is constructed from a filtered complex, then $W$ is isomorphic to the kernel of the canonical interchange map (see \cite{HR19})
\[ \kappa\colon \om{colim}_p\om{lim}_q H(F_pC/F_qC)\ra \om{lim}_q\om{colim}_p H(F_pC/F_qC). \]
We then have the following convergence theorem. 

\begin{theorem} \cite[Theorem~8.2]{Boardman99} \label{Convergence-Theorem}
Let $(A,E,i,j,k)$
be an exact couple and assume that the associated spectral sequence
$(E^r,d^r)_{r\geq 1}$ converges conditionally to the colimit $A_\infty$. Then
if $W=0$ and $RE^\infty=0$ the spectral converges strongly to the
colimit $A_\infty$. \end{theorem}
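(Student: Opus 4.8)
The plan is to verify directly, following Boardman's analysis in \cite[\S\S5--8]{Boardman99}, the two requirements of Definition \ref{Def-Strong-Convergence}: that the filtration $F_sA_\infty=\om{Im}(A_s\ra A_\infty)$ of $A_\infty=\om{colim}_sA_s$ is exhaustive and complete Hausdorff, and that the comparison map $\phi_s\colon F_sA_\infty/F_{s-1}A_\infty\ra E^\infty_s$ of \eqref{Phi-Def-Equation} is an isomorphism for every $s$. Exhaustiveness holds by construction, and injectivity of each $\phi_s$ is the formal diagram chase \cite[Lemma~5.6]{Boardman99} that uses no convergence hypothesis. So the work lies in establishing completeness, the Hausdorff property, and surjectivity of $\phi_s$, and it is here that conditional convergence together with $W=0$ and $RE^\infty=0$ must be brought in.

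For the filtration statements, I would introduce $K_s=\ker(A_s\ra A_\infty)$ and use the short exact sequence $0\ra K_s\ra A_s\ra F_sA_\infty\ra 0$ of inverse systems indexed by $s$. The six-term $\lim_s$--$\om{Rlim}_s$ exact sequence, combined with the conditional convergence hypotheses $\lim_sA_s=A_{-\infty}=0$ and $\om{Rlim}_sA_s=RA_{-\infty}=0$, immediately gives $\om{Rlim}_sF_sA_\infty=0$ (completeness) and an injection $\lim_sF_sA_\infty\hookrightarrow\om{Rlim}_sK_s$. The vanishing of this last group---hence the Hausdorff property---is one of the two points that genuinely needs the extra hypotheses: unwinding the definition of $W$ (in the filtered-complex case the kernel of the interchange map $\kappa$ described before the statement, in general \cite[Lemma~8.5]{Boardman99}) identifies $\om{Rlim}_sK_s$ with a subquotient of $W$.

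The core of the argument is surjectivity of $\phi_s$. From \eqref{Phi-Def-Equation} one has $\om{Im}(\phi_s)=\om{Im}(j)/B^\infty_s$ inside $E^\infty_s=Z^\infty_s/B^\infty_s$, so, chasing exactness in the couple, surjectivity is equivalent to the statement that $\bigcap_r\om{Im}\!\big(i^{(r-1)}\colon A_{s-r}\ra A_{s-1}\big)$ meets $\ker(i\colon A_{s-1}\ra A_s)$ only in $0$. I would attack this through the Milnor $\lim_r$--$\om{Rlim}_r$ sequences for the decreasing tower of cycles $(Z^r_s)_r$ (whose limit is $Z^\infty_s$ and whose derived limit is $RE^\infty_s$ by definition) and the increasing tower of boundaries $(B^r_s)_r$ (whose colimit is $B^\infty_s$): passing to the limit in $r$ produces an exact sequence in which $E^\infty_s$, $RE^\infty_s$, and the intersection above all occur, and feeding in $RE^\infty=0$ together with $W=0$ collapses it, showing that no cycles survive past those already in $\om{Im}(j)$. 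Combining the three pieces yields the two conditions of Definition \ref{Def-Strong-Convergence}, hence strong convergence.

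I expect the main obstacle to be exactly this last bookkeeping: organizing the interaction of the towers $(Z^r_s)_r$, $(B^r_s)_r$, their limits and derived limits, across the two independent limiting processes---in the page index $r$ and in the filtration index $s$---and pinning down precisely which subquotients of $RE^\infty$ and of $W$ obstruct surjectivity of $\phi_s$ and the Hausdorff property. This is the technical heart of Boardman's Theorem~8.2; it demands care with the order of limits and repeated use of the Milnor exact sequence, but introduces no idea beyond that.
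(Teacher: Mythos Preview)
The paper does not supply a proof of this theorem at all: it is stated with the citation \cite[Theorem~8.2]{Boardman99} and no proof environment follows. The result is quoted as an external fact and the paper moves directly on to Proposition~\ref{W-REinfty-Conditions}. So there is nothing in the paper to compare your proposal against.

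Your sketch is a reasonable outline of how Boardman's argument proceeds, and nothing in it is wrong in spirit. But since the paper treats this as a black-box citation, a proof here is unnecessary for the paper's purposes; if you want to include one for completeness you should simply cite and follow Boardman directly rather than reconstruct it.
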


The following result gives a few standard criteria for the vanishing of
$RE^\infty$ and $W$ in the situation of a filtered complex. 

\begin{proposition} \label{W-REinfty-Conditions} 
Let $C$ be a filtered complex with associated
exact couple $(A,E,i,j,k)$ and spectral sequence $(E^r,d^r)_{r\geq 1}$. 
\begin{enumerate}[label=(\roman*),ref=(\roman*)]
\item If the filtration of $C$ is degreewise bounded below, then
$RE^\infty=0$ and $W=0$. 
\item If the filtration of $C$ is degreewise bounded above then $W=0$.
\item If the spectral sequence degenerates at some finite stage; that is,
$E^{r_0}=E^{r_0+1}=\cdots = E^\infty$ for some finite $r_0\geq 1$, then
$RE^\infty=0$ and $W=0$. 
\end{enumerate}
\end{proposition}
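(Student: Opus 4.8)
\textbf{Proof plan for Proposition \ref{W-REinfty-Conditions}.}

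The plan is to treat the three parts separately, in each case reducing the claim to the vanishing of the relevant derived limits. Recall from Example \ref{Exact-Couple-Filt-Complex} that $A_s = H(F_sC)$, $E_s = H(F_sC/F_{s-1}C)$, and that $RE^\infty_s = \om{Rlim}_r Z^r_s$. The key general fact I will use is that a $\om{lim}^1$ (i.e.\ $\om{Rlim}$) term over a tower of modules vanishes as soon as the tower is \emph{eventually constant in each degree}, or more generally satisfies the Mittag--Leffler condition; and that $RE^\infty$ and $W$ are both built from such towers, so degreewise boundedness of the filtration forces them to stabilize.

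For part (i), suppose the filtration of $C$ is degreewise bounded below: for each $n$ there is $s_0(n)$ with $F_sC_n = 0$ for $s \le s_0$. Fix a total degree. Then $A_s = H(F_sC)$ vanishes in that degree for $s$ sufficiently negative, and the tower $(Z^r_s)_r$ defining $RE^\infty_s$ is eventually constant: indeed $Z^r_s = k^{-1}\om{Im}[i^{(r-1)}\colon A_{s-r}\ra A_{s-1}]$, and once $s - r \le s_0$ the group $A_{s-r}$ (in the appropriate degree) is zero, so $\om{Im}[i^{(r-1)}] = 0$ and $Z^r_s$ stabilizes to $\om{Ker}(k) = \om{Im}(j)$. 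A tower that is eventually constant in each degree has vanishing $\om{Rlim}$, so $RE^\infty = 0$. For $W$, I will use the characterization $W \cong \om{Ker}(\kappa)$ where $\kappa\colon \om{colim}_p\om{lim}_q H(F_pC/F_qC)\ra \om{lim}_q\om{colim}_p H(F_pC/F_qC)$; degreewise boundedness below means that for fixed $p$ and fixed degree, $H(F_pC/F_qC)$ is constant for $q \ll 0$, so the inner limit over $q$ is attained, and one checks $\kappa$ is then an isomorphism, hence $W = 0$. (Alternatively, one can cite \cite[Theorem~7.4]{Boardman99}, which records exactly that degreewise boundedness below gives both vanishings.)

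For part (ii), suppose the filtration is degreewise bounded above: $F_sC_n = C_n$ for $s$ large. Then in each degree the tower $(H(F_pC/F_qC))_p$ stabilizes as $p \to \infty$ (for fixed $q$), so $\om{colim}_p$ is attained and the interchange map $\kappa$ is again an isomorphism in each degree; thus $W = 0$. Note this does \emph{not} force $RE^\infty = 0$, which is why only $W$ is claimed here. For part (iii), if $E^{r_0} = E^{r_0+1} = \cdots = E^\infty$, then for $r \ge r_0$ the differentials $d^r$ vanish and the tower $(Z^r_s)_{r}$ has the property that the successive quotients $Z^r_s/Z^{r+1}_s$ (which inject into $E^r_s$ via $B^r$, roughly) are eventually trivial; more cleanly, $E^r_s = E^\infty_s$ for $r \ge r_0$ already gives that $\{B^r_s\}$ and $\{Z^r_s\}$ each stabilize after finitely many steps in the relevant quotient, so $\om{Rlim}_r Z^r_s = 0$, i.e.\ $RE^\infty = 0$; and a parallel argument (or a direct appeal to \cite[Lemma~8.5]{Boardman99}) gives $W = 0$.

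The main obstacle I anticipate is part (iii): the honest content of ``degenerates at a finite stage $\implies RE^\infty = 0$'' requires a slightly careful bookkeeping argument showing that the tower $(Z^r_s)_r$ is pro-isomorphic to a constant tower once $r \ge r_0$ — one must account for the fact that $Z^r_s$ itself need not literally stabilize even though $E^r_s$ does, because $B^r_s$ may keep growing; the resolution is that $Z^r_s$ and $B^r_s$ grow ``in lockstep,'' and Mittag--Leffler for $(Z^r_s)_r$ follows because the images $\om{Im}(Z^{r'}_s \to Z^r_s)$ stabilize. For $W$ in part (iii) I would simply invoke \cite[Lemma~8.5]{Boardman99} rather than re-derive it. Everything else is a routine unwinding of the $\om{Rlim}$ exact sequence and the Mittag--Leffler criterion, so I would not belabor it.
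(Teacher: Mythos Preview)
The paper does not give a proof of this proposition; it is stated without argument as a list of standard criteria, with the surrounding discussion drawing on Boardman \cite{Boardman99}. So there is no proof in the paper to compare against, and your outline is essentially the standard argument one would give.

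One correction to part (iii): your worry that ``$Z^r_s$ itself need not literally stabilize even though $E^r_s$ does'' is misplaced. Degeneration at stage $r_0$ means $d^r = 0$ for all $r \ge r_0$. Since $\om{Ker}(d^r\colon E^r_s \to E^r_{s-r}) = Z^{r+1}_s/B^r_s$ inside $E^r_s = Z^r_s/B^r_s$, the vanishing of $d^r$ forces $Z^{r+1}_s = Z^r_s$; dually, $\om{Im}(d^r\colon E^r_{s+r} \to E^r_s) = B^{r+1}_s/B^r_s$, so $B^{r+1}_s = B^r_s$ as well. Hence the tower $(Z^r_s)_r$ is literally constant for $r \ge r_0$, and $RE^\infty_s = \om{Rlim}_r Z^r_s = 0$ is immediate --- no Mittag--Leffler or pro-isomorphism bookkeeping is needed. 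Your arguments for parts (i) and (ii) via the interchange map $\kappa$ are correct; the point in each case is simply that one of the two indexing directions becomes eventually constant degreewise, which trivializes the interchange.
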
 

We may now apply this theory to the filtered complexes
$CI^\bullet(Y,E)$ for $\bullet\in \{+,-,(+,tw),\infty\}$. This is
a good point to explain that the mod $8$ periodicity of $\wt{CI}(Y,E)$
carries over to the complexes $CI^\bullet(Y,E)$. Let $M=\wt{CI}(Y,E)$
and let $A=C^{gm}_*(\om{SO}(3))$ or $A=\Lambda_R[u]$. Then the periodicity
may be expressed as an isomorphism $M\cong M[8]$ of $A$-modules. Moreover,
this isomorphism is compatible with the filtration in the sense that
$F_{p+8}M \cong (F_pM)[8]$ for all $p\in \Z$. There is therefore an
induced isomorphism
\begin{align*}
 CI^\bullet(Y,E)&=\; \om{lim}_q \om{colim}_p C^\bullet_A(F_{p+8}M/F_{q+8}M) \\ 
  &\cong \;  \om{lim}_q\om{colim}_p C^\bullet_A(F_pM/F_qM)[8]  
  =CI^\bullet(Y,E)[8]. 
\end{align*} 
This isomorphism is seen to be filtered in the same sense:
$F_pCI^\bullet(Y,E)_n \cong F_{p+8}CI^\bullet(Y,E)_{n+8}$ for all $p$ and
$n$.

\begin{theorem} \label{Index-SS-Theorem}
Let $A=C_*^{gm}(\om{SO}(3))$. There is a conditionally convergent spectral sequence of $H^-_A(R)$-modules
\[ E^1_{s,t} = \bigoplus_{j(\alpha)\equiv s} H^\bullet_{\om{SO}(3)}(\alpha)_t
\implies I^\bullet(Y,E)_{s+t}  \]
for each $\bullet\in \{+,-,(+,tw),\infty\}$, where
\[ H^\bullet_{\om{SO}(3)}(\alpha) \coloneqq H^\bullet_A(C^{gm}_*(\alpha)) \]
for each critical orbit $\alpha\in \MC{C}$. Moreover, the spectral sequences
are periodic in the sense that there are isomorphisms
$E_{s,t}^r\cong E_{s+8,t}^r$ for all $s,t,r$. These isomorphisms commute
with the differentials and the $H^-_A(R)$-action and they are compatible
with the target in the sense that
\[ \begin{tikzcd} 
\overline{F}_sI^\bullet_{s+t} \arrow{r}{\cong} \arrow{d} &
\overline{F}_{s+8}I^\bullet_{s+t+8} \arrow{d}  \\
E^\infty_{s,t} \arrow{r}{\cong} & E^\infty_{s+8,t}    \end{tikzcd} \]
commutes for all $s,t$.

The spectral sequence for $I^-(Y,E)$ converges strongly, while the 
spectral sequence for $I^+(Y,E)$ converges strongly provided $RE^\infty=0$.  
\end{theorem}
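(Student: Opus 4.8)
The plan is to apply the general spectral sequence machinery of Boardman recalled above, specifically Theorem \ref{Cond-Convergence} and Theorem \ref{Convergence-Theorem}, together with the concrete features of the filtered complexes $CI^\bullet(Y,E) = \hat{C}^\bullet_A(M)$ established earlier in this section. First I would identify the $E^1$-page. By Lemma \ref{Filtration-Lemma} part (ii), the minimal filtration quotient $\overline{F}_s\hat{C}^\bullet_A(M)$ is naturally isomorphic to $C^\bullet_A(\overline{F}_sM)$, and since the index filtration of $M=\wt{CI}(Y,E)$ satisfies $\overline{F}_sM = \bigoplus_{j(\alpha)\equiv s} C^{gm}_*(\alpha)$ and $C^\bullet_A$ preserves direct sums, we get $\overline{F}_s\hat{C}^\bullet_A(M) \cong \bigoplus_{j(\alpha)\equiv s} C^\bullet_A(C^{gm}_*(\alpha))$. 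Passing to homology and invoking the definition $H^\bullet_{\om{SO}(3)}(\alpha) := H^\bullet_A(C^{gm}_*(\alpha))$ gives $E^1_{s,t} = H_{s+t}(\overline{F}_s\hat{C}^\bullet_A(M)) = \bigoplus_{j(\alpha)\equiv s} H^\bullet_{\om{SO}(3)}(\alpha)_t$ as required, with $H^-_A(R)$-module structure and $H^-_A(R)$-linear differentials coming from Example \ref{Exact-Couple-Filt-Complex} applied to the $C^-_A(R)$-module $CI^\bullet(Y,E)$.

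Next I would address conditional convergence. By Lemma \ref{Filtration-Lemma} part (i), the filtration $F_p\hat{C}^\bullet_A(M) = \om{lim}_{q<p} C^\bullet_A(F_pM/F_qM)$ is exhaustive and complete Hausdorff for every $\bullet\in \{+,-,(+,tw),\infty\}$. Theorem \ref{Cond-Convergence} then immediately yields conditional convergence of the associated spectral sequence to $A_\infty = H(CI^\bullet(Y,E)) = I^\bullet(Y,E)_{s+t}$, in all four cases. For the periodicity isomorphism $E^r_{s,t}\cong E^r_{s+8,t}$, I would use the filtered isomorphism $CI^\bullet(Y,E)\cong CI^\bullet(Y,E)[8]$ with $F_pCI^\bullet(Y,E)_n\cong F_{p+8}CI^\bullet(Y,E)_{n+8}$ established just before the theorem statement; a filtered isomorphism of DG modules induces an isomorphism of the entire associated exact couple, hence of every page and every differential, and since the shift by $8$ is $H^-_A(R)$-linear the periodicity commutes with the module action. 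Compatibility with the target in the displayed square follows because $\phi_s$ (from \eqref{Phi-Def-Equation}) is constructed naturally from the exact couple, so the isomorphism of exact couples carries $\phi_s$ to $\phi_{s+8}$.

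Then I would treat the two specific convergence claims. For $I^-(Y,E)$: by Lemma \ref{Bounded-FloerFiltration} the filtration of $C^-_A(M)$ is degreewise bounded below, and this property is inherited by $\hat{C}^-_A(M) = CI^-(Y,E)$ since by the same lemma $\hat{C}^-_A(M) = \om{colim}_p C^-_A(F_pM)$ and in each fixed degree $n$ one has $F_pC^-_A(M)_n = 0$ for $p < n-4$, a bound that persists after taking the colimit. By Proposition \ref{W-REinfty-Conditions} part (i), degreewise bounded below forces $RE^\infty = 0$ and $W = 0$; combined with the already-established conditional convergence, Theorem \ref{Convergence-Theorem} gives strong convergence. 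For $I^+(Y,E)$: here the filtration of $C^+_A(M)$ is degreewise bounded above (again Lemma \ref{Bounded-FloerFiltration}), a property inherited by $\hat{C}^+_A(M) = \om{lim}_q C^+_A(M/F_qM)$ in the sense that in each degree the filtration is exhaustive and stabilizes from above. By Proposition \ref{W-REinfty-Conditions} part (ii), degreewise bounded above gives $W = 0$. Since conditional convergence already holds, Theorem \ref{Convergence-Theorem} shows that strong convergence follows precisely from the remaining hypothesis $RE^\infty = 0$, which is the stated conditional claim.

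I expect the main obstacle to be bookkeeping rather than anything deep: one must verify carefully that passing through the $\om{lim}_q\om{colim}_p$ construction in $\hat{C}^\bullet_A$ genuinely preserves the boundedness properties of the filtration degreewise — this is exactly the content of equations \eqref{Cpluss-Limit} and \eqref{Cminus-Colimit} in the proof of Lemma \ref{Bounded-FloerFiltration}, so it is already available. A secondary point requiring care is checking that the shift-by-$8$ isomorphism is genuinely filtered and $H^-_A(R)$-linear at the chain level (not merely on homology), so that it induces an isomorphism of exact couples; this is spelled out in the paragraph preceding the theorem, so it can be cited directly. Beyond these, the argument is a routine assembly of the cited results, with the only genuinely open piece being the $RE^\infty=0$ hypothesis in the $I^+$ case, which the statement correctly leaves as a proviso.
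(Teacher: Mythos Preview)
Your proposal is correct and follows essentially the same route as the paper's proof: identify the minimal filtration quotients via Lemma~\ref{Filtration-Lemma}(ii), invoke Lemma~\ref{Filtration-Lemma}(i) together with Theorem~\ref{Cond-Convergence} for conditional convergence, use the filtered periodicity isomorphism to obtain the periodicity of the spectral sequence, and then appeal to Lemma~\ref{Bounded-FloerFiltration} and Proposition~\ref{W-REinfty-Conditions} for the strong convergence claims. Your extra care in checking that the degreewise boundedness survives the passage from $C^\pm_A(M)$ to $\hat{C}^\pm_A(M)$ is a point the paper leaves implicit, but it is handled by the same equations \eqref{Cpluss-Limit} and \eqref{Cminus-Colimit} you cite.
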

\begin{proof} Let $M=\wt{CI}(Y,E)$ equipped with the index filtration.
For each $\bullet\in \{+,-,(+,tw),\infty\}$ the complex $CI^\bullet(Y,E)=\hat{C}^\bullet_A(M)$ carries
by construction an exhaustive and complete Hausdorff filtration with
minimal filtration quotients
\[ \overline{F}_p\hat{C}^\bullet_A(M)\cong C^\bullet_A(\overline{F}_pM). \]
There is therefore by Theorem
\ref{Cond-Convergence} a conditionally convergent spectral sequence
\[ E^1_{s,t} = H^\bullet_A(\overline{F}_sM)_{s+t} \implies \hat{H}^\bullet_A(M)_{s+t} = I^\bullet(Y,E)_{s+t}  .\]
From the definition of $\wt{CI}(Y,E)$ (Definition \ref{FloerComplex}) we may identify
\[ \overline{F}_sM_{s+t} = \bigoplus_{j(\alpha)\equiv s} C^{gm}_t(\alpha) \]
so that $E^1_{s,t} \cong \bigoplus_{j(\alpha)\equiv s} H_{\om{SO}(3)}(\alpha)_t$
in the notation of the statement. 

Next, the periodicity isomorphisms
$F_p\hat{C}^\bullet_A(M)_n \cong F_{p+8}\hat{C}^\bullet_A(M)_{n+8}$,
compatible with the differentials and the $C^-_A(R)$-structure, induce a morphism of the associated exact couple. That is, there are isomorphisms
$A_s \cong A_{s+8}$, $E_s\cong E_{s+8}$ each $s$, compatible with the structure maps. It is then a straightforward exercise to verify
that this gives rise to the periodicity in the spectral sequence, compatible
with the target, as stated. 

Finally, for the convergence we know by Lemma \ref{Bounded-FloerFiltration}
that the filtration of $\hat{C}^+_A(M)$ is degreewise bounded above and that the filtration of $\hat{C}^-_A(M)$ is degreewise bounded below. This
implies by Proposition \ref{W-REinfty-Conditions} and Theorem
\ref{Convergence-Theorem} that the spectral sequence associated
with $I^-(Y,E)$ converges strongly and that the spectral sequence
associated with $I^+(Y,E)$ converges strongly provided $RE^\infty=0$.
\end{proof} 

Provided $\frac12\in R$, these spectral sequences can equally well be constructed from $M=DCI(Y,E)$ equipped with the index filtration and $A=\Lambda_R[u]$. This is what we will assume when we calculate
these groups for binary polyhedral spaces in the next section. Furthermore,
in this setting the calculation of the groups
\[ H^\bullet_{\om{SO}(3)}(C^{gm}_*(\alpha)) \cong H^\bullet_{\Lambda_R[u]}(H_*(\alpha))  \]
for each $\bullet\in \{+,-,(+,tw),\infty\}$, are contained in Lemma
\ref{Orbit-Calc-Lemma}, Corollary \ref{Orbit-Calc-Corollary} and Corollary \ref{Orbit-Calc-Tate-Corollary}. From these calculations it becomes
clear that the spectral sequences for $I^+$ and $I^-$ are contained
in the upper and lower half-planes respectively, while the spectral
sequence for $I^\infty$ is a whole-plane spectral sequence.  

\section{Calculations for Binary Polyhedral Spaces}
We will from this point require that $2\in R$ is invertible. The universal
example to have in mind is $R=\Z[\frac12]$. Our main aim is to give
explicit calculations of the $H^-_{\Lambda_R[u]}(R)=R[U]$-modules
\[ I^+(\overline{Y}_\Gamma,E), \;\;  I^-(\overline{Y}_\Gamma,E) \;\; \mbox{ and } \;\; I^\infty(\overline{Y}_\Gamma,E)   \]
for each finite subgroup $\Gamma\subset \om{SU}(2)$, where $E\ra \overline{Y}_\Gamma$ is the trivial $\om{SU}(2)$-bundle. 
To simplify the notation slightly we will omit the reference to the bundle in the notation from this point. In the final part we also include
the calculations of $I^\bullet(Y_\Gamma)$ for $\bullet\in \{+,-,\infty\}$,
omitting some details. 

The two results Theorem \ref{Structure-DCI} and Proposition \ref{Differential-Graphs} give a concrete description of the Donaldson model $\om{DCI}(\overline{Y}_\Gamma)$ in all cases. The key tools needed in the calculations are the index spectral sequence in combination with the explicit models for $DCI^\pm$ of Theorem \ref{Explicit-Cpm-Models}. In the $I^+$ case, we will see that the index spectral sequence immediately
degenerates, and we are left with solving an extension problem in the
category of $R[U]$-modules. In the $I^-$ case there are a number of
differentials in the index spectral sequence, but we will in each
case verify that the spectral sequence degenerates on some finite
page $E^{r_0}$. The extension problems in this case are neatly solved
by observing that $E^\infty_s$ is a free $R[U]$-module for all $s$. 

In \cite[Corollary~8.7]{Miller19} Miller calculates $I^\infty(Y)$ for all rational homology spheres. This, of course, includes $\overline{Y}_\Gamma$,
so our calculation of this group should only be regarded as a verification
of his result. In addition, we observe that the norm map vanishes in
homology so that the exact triangle relating the three groups splits up
into short exact sequences.       

Throughout this section we let $A=\Lambda_R[u]$ with $|u|=3$. Given
a finite subgroup $\Gamma\subset \om{SU}(2)$ we will always write
$\MC{C}$ for the set of critical orbits, or equivalently, the set of $1$-dimensional quaternionic representations of $\Gamma$. Recall that
for each $\alpha\in \MC{C}$ we have fixed generators
$b_\alpha \in H_0(\alpha)$, $t_\alpha = b_\alpha\cdot u\in H_3(\alpha)$
if $\alpha$ is irreducible and $t_\alpha\in H_2(\alpha)$ if $\alpha$
is reducible. 

\subsection{The Case \texorpdfstring{$I^+$}{Ipluss}}
In this section we will calculate $I^+(\overline{Y}_\Gamma)$ as an
$R[U]$-module for all finite subgroups $\Gamma\subset \om{SU}(2)$.
Even though our arguments do not explicitly rely on the index spectral
sequence, we note that the motivation for the upcoming arguments
is the following observation. 

\begin{lemma} \label{Spectral-Degeneracy-Lemma}
Let $M=DCI(\overline{Y}_\Gamma)$ for a
finite subgroup $\Gamma\subset \om{SU}(2)$. Then the index spectral sequence
\[ E^1_{s,t}=\bigoplus_{j(\alpha)\equiv s}H_A^+(\alpha)_t \implies I^+(\overline{Y}_\Gamma)_{s+t} \]
degenerates at the $E^1$ page, that is, all differentials are trivial and
$E^1=E^\infty$. \end{lemma}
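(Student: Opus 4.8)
The plan is to exploit the extremely rigid grading pattern of $M = DCI(\overline{Y}_\Gamma)$. By Theorem \ref{Structure-DCI} combined with Proposition \ref{Differential-Graphs}, the bigraded module $M_{*,*}$ is nonzero only for $s \equiv 0, 4 \Mod 8$, and in those columns it is concentrated in internal degrees $t \in \{0, 3\}$: one has $H_*(\alpha) = R \oplus R[3]$ for irreducible $\alpha$ and $H_*(\alpha) = R$ (in degree $0$) for fully reducible $\alpha$ — there being no reducible orbits here since we work over a ring with $\frac12 \in R$ and the only representations are of real or quaternionic type. Now I would compute the $E^1$-page. By the identification in the proof of Theorem \ref{Index-SS-Theorem} we have $E^1_{s,t} = \bigoplus_{j(\alpha) \equiv s} H^+_A(\alpha)_t$, and by Corollary \ref{Orbit-Calc-Corollary} we have $H^+_A(A) = R$ concentrated in degree $0$, while $H^+_A(R) = R[V]$ with $|V| = 4$ (taking the $M = R \oplus R[2]$ statement of that corollary and noting that for fully reducible orbits we instead have the trivial module $M = R$, whose $C^+_A$-homology is simply $R[V]$ — alternatively this is Lemma \ref{Orbit-Calc-Lemma} applied with trivial $u$-action). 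So each $E^1_{s,t}$ is supported in internal degrees $t \equiv 0 \Mod 4$.

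The key observation is then purely about degrees. The $r$-th differential $d^r \colon E^r_{s,t} \to E^r_{s-r, t+r-1}$ has bidegree $(-r, r-1)$. For this to be possibly nonzero we need both $E^r_{s,t} \neq 0$ and $E^r_{s-r,t+r-1} \neq 0$. Since every page $E^r$ is a subquotient of $E^1$, a necessary condition is $E^1_{s,t} \neq 0$ and $E^1_{s-r,t+r-1} \neq 0$. The first forces $t \equiv 0 \Mod 4$; the second forces $t + r - 1 \equiv 0 \Mod 4$, i.e. $r \equiv 1 \Mod 4$. Simultaneously, the first forces $s \equiv 0 \Mod 4$ and the second forces $s - r \equiv 0 \Mod 4$, i.e. $r \equiv 0 \Mod 4$. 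These two congruences on $r$ are incompatible, so $d^r = 0$ for all $r \geq 1$, whence $E^1 = E^\infty$.

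The main (and essentially only) obstacle is bookkeeping: I must make sure the orbit-level computations $H^+_A(H_*(\alpha))$ are invoked correctly for each of the three possible orbit types, and in particular that the internal grading of $H^+_A(\alpha)$ really is concentrated in degrees $\equiv 0 \Mod 4$ in every case (this uses $|u| = 3$ so that $|V| = 4$). Given that, the degree argument above is immediate and self-contained, and there is nothing further to check — the degeneration is forced by the arithmetic of the gradings, exactly as in the introduction's remark that $DCI(\overline{Y}_\Gamma)_{s,t}$ is nonzero only when $4 \mid s$. I would also remark that this is the source of the comment in Section 6's overview that "the index spectral sequence immediately degenerates," leaving only an $R[U]$-module extension problem for the actual computation of $I^+$.
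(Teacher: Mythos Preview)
Your overall strategy---a congruence argument on the bidegrees---is exactly the paper's approach, but there is a factual error that breaks your version of it. You assert that ``there being no reducible orbits here since we work over a ring with $\frac12 \in R$ and the only representations are of real or quaternionic type.'' This is false: the coefficient ring has nothing to do with the representation theory of $\Gamma$, and several of the binary polyhedral groups do have complex-type (i.e.\ reducible) orbits. Concretely, $T^*$ has the reducible $\lambda$, the cyclic groups $C_m$ have the $\lambda_i$, and $D^*_{2m+1}$ has a reducible $\lambda$ (see Proposition~\ref{Differential-Graphs} and Appendix~A). For such $\alpha$ one has $H^+_A(\alpha)\cong R[W_\lambda]$ with $|W_\lambda|=2$ by Corollary~\ref{Orbit-Calc-Corollary}, so $E^1_{s,2}$ can be nonzero. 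Your claim that $t\equiv 0\Mod 4$ is therefore unjustified, and the congruence $r\equiv 1\Mod 4$ you derive from it is not valid.

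The repair is immediate and is precisely what the paper does: one only needs that $H^+_A(\alpha)$ vanishes in \emph{odd} degrees for every orbit type (true for all three cases by Lemma~\ref{Orbit-Calc-Lemma} and Corollary~\ref{Orbit-Calc-Corollary}), together with $j(\alpha)\equiv 0\Mod 4$. Then $E^1_{s,t}=0$ unless both $s$ and $t$ are even, and since $d^r$ has bidegree $(-r,r-1)$ with $r$ and $r-1$ of opposite parity, the target always has at least one odd coordinate. Replace your $\Mod 4$ constraint on $t$ by this parity observation and the argument goes through.
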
 
\begin{proof} From the calculations in Lemma \ref{Orbit-Calc-Lemma} and   Corollary \ref{Orbit-Calc-Corollary}
we see that $H^+_A(\alpha)$ vanishes in odd degrees for all types of
orbits $\alpha$. Then, as $j(\alpha)\equiv 0\Mod{4}$ for all $\alpha\in \MC{C}$, it follows that $E^1_{s,t}=0$ if $s$ or $t$ is odd. Since the differential
$d^r$ has bidegree $(-r,r-1)$ we conclude that $d^r=0$ for all $r\geq 1$
as required. 
\end{proof} 

From this one may determine $I^+(\overline{Y}_\Gamma)$ as an $R$-module. 
However, the structure as an $R[U]$-module is more subtle and this is
the reason we spell out our argument more directly in terms of the
filtration of the chain complex $DCI^+(\overline{Y}_\Gamma)$.

Let $\Gamma\subset \om{SU}(2)$ be a finite subgroup and
let $M=DCI(\overline{Y}_\Gamma)$. Let $\MC{C}^{irr}\subset \MC{C}$ denote the
subset of free critical orbits or equivalently irreducible flat connections.
Define $M^{irr}\subset M$ to be the submodule generated by $\MC{C}^{irr}$, i.e.,
\[ M^{irr}_{s,t} = \bigoplus_{\substack{\alpha\in \MC{C}^{irr} \\ j(\alpha)\equiv s}} 
H_{t}(\alpha) \;\; \mbox{ and } \;\; M^{irr}_n = \bigoplus_{s+t=n} M^{irr}_{s,t} .\] 
The following lemma is the key tool in resolving our extension problems in
the category of $R[U]$-modules. 

\begin{lemma} \label{Lambda-Lemma}
There is an $R$-linear map $\psi\colon M\ra M$ of degree $-4$ satisfying
\begin{enumerate}[label=(\roman*),ref=(\roman*)]
\item  $\psi(x)\cdot u = (-1)^{|x|}\dd_M x$ for all $x\in M$,
\item  $\om{Im}(\psi)\subset M^{irr}$,
\item  $\psi(\om{Im}(u))=0$ and
\item  $\psi( \om{Im}\dd_M)=0$. 
\end{enumerate}
\end{lemma}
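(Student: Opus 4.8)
The map $\psi$ should be defined componentwise on each summand $H_*(\alpha)$ using the explicit description of $DCI(\overline{Y}_\Gamma)$ from Theorem \ref{Structure-DCI}. Recall that the only nonzero part of the differential is $\dd^4\colon DCI_{4s,0}\ra DCI_{4(s-1),3}$, which sends $b_\beta\in H_0(\beta)$ to $\sum_{\alpha\in \MC{C}^{irr}} n_{\alpha\beta} t_\alpha$ with $t_\alpha = b_\alpha\cdot u$. So I would define $\psi$ to be zero on every graded piece except on $DCI_{4s,0}=\bigoplus_{j(\beta)\equiv 4s} H_0(\beta)$, where I set $\psi(b_\beta) = \sum_{\alpha\in \MC{C}^{irr}} n_{\alpha\beta}\, b_\alpha \in DCI_{4(s-1),0} \subset M^{irr}$. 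Since $b_\beta$ has degree $j(\beta)$ and $b_\alpha$ has degree $j(\alpha) = j(\beta)-4$, this $\psi$ has degree $-4$, and property (ii) is immediate from the fact that the target lands in the span of the $b_\alpha$ with $\alpha$ irreducible.

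The verification of (i) is then a direct computation: for $x = b_\beta$ we have $\psi(b_\beta)\cdot u = \sum_\alpha n_{\alpha\beta} b_\alpha u = \sum_\alpha n_{\alpha\beta} t_\alpha = \dd_M b_\beta = \dd^4 b_\beta$, and since $|b_\beta|=j(\beta)\equiv 0\Mod 4$ the sign $(-1)^{|b_\beta|}$ is $+1$, so the identity holds on these generators. On all other homogeneous pieces of $M$, both sides of (i) vanish: $\psi$ is zero by definition, and $\dd_M$ is zero there because the only nonzero component of the differential emanates from $DCI_{4s,0}$. By $R$-linearity this establishes (i) on all of $M$.

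For (iii): the image of $u\colon M\ra M$ consists of the $t_\alpha = b_\alpha u$ for $\alpha$ irreducible, living in degrees $\equiv 3\Mod 4$ (more precisely in $DCI_{4s,3}$), together with zero on reducible and fully reducible orbits since $u$ annihilates those. As $\psi$ vanishes on everything outside $DCI_{4s,0}$, and $\om{Im}(u)$ meets $DCI_{4s,0}$ only in $0$, we get $\psi(\om{Im}(u))=0$. Property (iv) follows similarly: $\om{Im}(\dd_M)=\om{Im}(\dd^4)\subset DCI_{4(s-1),3}$, which again lies in degrees where $\psi$ is identically zero, so $\psi(\om{Im}\dd_M)=0$.

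\textbf{Main obstacle.} There is essentially no hard analytic or algebraic obstacle here — the lemma is a bookkeeping statement that exploits how sharply concentrated the differential of $DCI(\overline{Y}_\Gamma)$ is (bidegree exactly $(-4,3)$, originating only in internal degree $0$). The one point requiring a little care is making sure the degree conventions line up: one must check that $\psi$ as defined genuinely shifts total degree by $-4$ using $j(\beta)-j(\alpha)=4$ for adjacent $\alpha,\beta$ (Lemma \ref{Indexing-Lemma}), and that the sign $(-1)^{|x|}$ in (i) is consistent with our left-shift sign conventions — but since all relevant source degrees are multiples of $4$, the sign is always trivial and no subtlety arises. I would also remark that $\psi$ need not (and will not) be a chain map or $R[U]$-linear; it is merely the $R$-linear "division by $u$" operator that (i) makes precise, and that is all that is needed for the extension-problem arguments in the $I^+$ calculation that follows.
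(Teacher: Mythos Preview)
Your proof is correct and is essentially the same as the paper's: the paper defines $\psi$ on $M_{4s}$ as $u^{-1}\circ\dd_M$ (using that $u\colon M_{4s-4}^{irr}\to M_{4s-1}$ is an isomorphism) and zero elsewhere, which is exactly your explicit formula $\psi(b_\beta)=\sum_{\alpha\in\MC{C}^{irr}}n_{\alpha\beta}b_\alpha$; the paper in fact records this formula immediately after the lemma as equation \eqref{Psi-Formula}. The verifications of (i)--(iv) via degree considerations are identical in both.
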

\begin{proof} By Theorem \ref{Structure-DCI} the only possibly nontrivial
differentials in $M$ are $\dd_M\colon M_{4s}\ra M_{4s-1}$ for $s\in \Z$. The same result also shows that $u\colon M_{4s-4}^{irr}\ra M_{4s-1}^{irr}=M_{4s-1}$ is an isomorphism for each $s$.
We may therefore define $\psi\colon M_{4s}\ra M_{4s-4}^{irr}\subset M_{4s-4}$ by $u^{-1}\circ \dd_M$ and set $\psi=0\colon M_{t}\ra M_{t-4}$ in all other degrees. The relation $\psi(x)\cdot u = (-1)^{|x|}\dd_Mx$ is then satisfied for
all $x\in M$ and by construction $\om{Im}(\psi)\subset M^{irr}$.
For (iii) and (iv) it is then sufficient to note that $\om{Im}(u)$ and 
$\om{Im}(\dd_M)\subset M^{irr}$ are supported in the degrees $M_{4s-1}$ for $s\in \Z$.
\end{proof}

\begin{remark} It is not difficult to show that the conditions
(i) and (ii) determine the map $\psi$ uniquely, but we will not use
this explicitly. \end{remark} 

Recall from Corollary \ref{Explicit-DCI-Models} that for $M=DCI(\ovl{Y}_\Gamma)$ there is an identification $DCI^+(\ovl{Y}_\Gamma)_n \cong  \prod_{p\geq 0} M_{n-4p}$ for each $n$. 
Moreover, the differential is given by
\[ \dd (m_{n-4p})_{p\geq 0} = (\dd_M m_{n-4p}-(-1)^nm_{n-4(p+1)}u)_{p\geq 0}. \]
The point of the map $\psi$ should now be clear; given $m\in M_n$
the element $(\psi^p(x))_{p\geq 0} \in \prod_{p\geq 0}M_{n-4p}$
gives an explicit extension of $m$ to a cycle in $DCI^+(Y)$.

Using the description of the differentials in Proposition
\ref{Structure-DCI} we note that $\psi$ is given on a generator
$b_\alpha\in H_0(\alpha)\subset M_{4p}$ by
\begin{equation} \label{Psi-Formula} 
 \psi(b_\alpha) = u^{-1}\left( \sum_{\beta\in \MC{C}^{irr}} n_{\beta\alpha} t_\beta \right) = \sum_{\beta\in \MC{C}^{irr}} n_{\beta\alpha} b_\beta 
\end{equation}
(the integers $n_{\beta\alpha}$ were defined in Definition \ref{n-alpha-beta-Def}).

Recall from Lemma \ref{Bounded-FloerFiltration} that
\[ DCI^+(\overline{Y}_\Gamma) = \hat{C}_A^+(M)=\om{lim}_p C^+_A(M/F_pM), \]
filtered by $F_p\hat{C}_A^+(M)=\hat{C}_A^+(F_pM) = \om{lim}_{q< p} C^+_A(F_pM/F_qM)$. The concrete model given in Corollary \ref{Explicit-DCI-Models} applies equally well to these subcomplexes. Recall that we use the notation
$\overline{F}_p M = F_pM/F_{p-1}M$. 

\begin{lemma} \label{Technical-Extension-Lemma}
Let $M=DCI(\overline{Y}_\Gamma)$ be equipped with the index
filtration. Then $F_{4p}M=F_{4p+t}M$ for all $p\in \Z$ and $0\leq t\leq 3$. 
Moreover, there are identifications for $s\in \Z$ and $0\leq t\leq 3$
\begin{align} \label{Degree-Description}
C^+_A(\overline{F}_{4p}M)_{4s+t} &  \cong
\left\{ \begin{array}{cc} M_{4p+t} & \mbox{ for } s\geq p \\
                          0 & \mbox{ for } s<p \end{array} \right.  \\ 
\hat{C}^+_A(F_{4p}M)_{4s+t} & \cong \left\{ \begin{array}{cc} \prod_{q\geq 0} M_{4(p-q)+t} &
\mbox{ for } s\geq p \\
\prod_{q\geq 0} M_{4(s-q)+t} & \mbox{ for } s<p. \end{array} \right. \nonumber
\end{align}
Define $\zeta\colon C^+_A(\overline{F}_{4p}M)\ra \hat{C}^+_A(F_{4p}M)$
for $x\in M_{4p+t}\cong C^+(\overline{F}_{4p}M)_{4s+t}$, $s\geq p$, by
\[ \zeta(x) = (\psi^q(x))_{q\geq 0} \in \prod_{q\geq 0} M_{4(p-q)+t}
= \hat{C}^+_A(F_{4p}M), \]
where $\psi$ is the map of Lemma \ref{Lambda-Lemma}, and otherwise to be zero. Then $\zeta$ is a chain map splitting the
exact sequence of DG $R$-modules (i.e., $\pi\circ \zeta=1$)
\[ \begin{tikzcd} 0 \arrow{r} & \hat{C}^+_A(F_{4p-1}M) \arrow{r}{i} &
\hat{C}^+_A(F_{4p}M) \arrow{r}{\pi} & C^+_A(\overline{F}_{4p}M)
\arrow{r} \arrow[bend right]{l}[above]{\zeta} & 0. \end{tikzcd} \]
 \end{lemma}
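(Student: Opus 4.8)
The plan is to verify the claimed identifications degree-by-degree using the concrete model of Corollary~\ref{Explicit-DCI-Models}, then check that $\zeta$ is a chain map, and finally confirm exactness of the sequence together with the splitting relation $\pi\circ\zeta=1$. First I would establish $F_{4p}M=F_{4p+t}M$ for $0\le t\le 3$: by Theorem~\ref{Structure-DCI}, $DCI(\overline{Y}_\Gamma)_{s,*}=0$ unless $4\mid s$, so the column filtration picks up no new columns as $p$ increases through $4p,4p+1,4p+2,4p+3$. Next, for the first identification in \eqref{Degree-Description}, recall $C^+_A(N)_n\cong\bigoplus_{q\ge 0}N_{n-4q}$ from Corollary~\ref{Explicit-DCI-Models}; applying this to $N=\overline{F}_{4p}M$, which as a bigraded module is concentrated in column $s=4p$ with $\overline{F}_{4p}M_{4p,t}=\bigoplus_{j(\alpha)\equiv 4p}H_t(\alpha)$, one sees only the single summand with $n-4q=4p+t$ survives, and it is nonzero precisely when $s\ge p$ (writing $n=4s+t$). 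The second identification follows the same way from $\hat{C}^+_A(F_{4p}M)_n\cong\prod_{q\ge 0}(F_{4p}M)_{n-4q}$, tracking which columns $\le 4p$ contribute.

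The substantive step is checking that $\zeta$ is a chain map. Using the differential formula $\dd(m_{n-4q})_{q\ge 0}=(\dd_M m_{n-4q}-(-1)^n m_{n-4(q+1)}u)_{q\ge 0}$ from Corollary~\ref{Explicit-DCI-Models}, I would compute $\dd\zeta(x)$ for $x\in M_{4p+t}$ and compare with $\zeta(\dd_{\overline{F}} x)$. The $q$-th component of $\dd\zeta(x)$ is $\dd_M(\psi^q x)-(-1)^{n}\psi^{q+1}(x)\cdot u$; by property (i) of Lemma~\ref{Lambda-Lemma} we have $\psi^{q+1}(x)\cdot u=(-1)^{|\psi^q x|}\dd_M(\psi^q x)$, and since $|\psi^q x|=n-4q\equiv n\pmod 2$, the two terms cancel — so $\dd\zeta(x)=0$ in the relevant range. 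On the other side, the internal differential on $\overline{F}_{4p}M$ vanishes because $DCI_{*,t}$ for $0\le t\le 3$ has $\dd^4$ as the only nonzero differential and $\dd^4$ drops the column index, hence is zero on a single minimal filtration quotient; so $\zeta(\dd_{\overline{F}}x)=0$ as well. (One also checks that $\zeta$ applied to an element landing outside $s\ge p$ is consistent with the convention that $\zeta=0$ there; this is where properties (iii) and (iv) of $\psi$ are used, since $\psi$ kills $\om{Im}(u)$ and $\om{Im}(\dd_M)$, ensuring the ``tail'' behaviour matches.)

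Finally, exactness of $0\to\hat{C}^+_A(F_{4p-1}M)\xrightarrow{i}\hat{C}^+_A(F_{4p}M)\xrightarrow{\pi}C^+_A(\overline{F}_{4p}M)\to 0$ follows from the exactness of $C^+_A$ (Theorem~\ref{Cpm-Structure-Theorem}) applied to $0\to F_{4p-1}M\to F_{4p}M\to\overline{F}_{4p}M\to 0$, combined with the fact that the completion functor $\hat{C}^+_A=\om{lim}_q\om{colim}_p C^+_A(-)$ preserves short exact sequences (Lemma~\ref{Filtration-Lemma}; note $C^+_A(\overline{F}_{4p}M)$ is already degreewise finite so no further completion is needed on the quotient, which is why it appears undecorated). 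The relation $\pi\circ\zeta=1$ is immediate from the explicit formulas: $\pi$ reads off the $q=0$ component, and $\zeta(x)$ has $q=0$ component $\psi^0(x)=x$. I expect the main obstacle to be bookkeeping the degree shifts correctly — in particular making sure the case split $s\ge p$ versus $s<p$ in \eqref{Degree-Description} is handled consistently on both the source and target of $\zeta$, and verifying that the sign $(-1)^n$ in the differential interacts correctly with property (i) of $\psi$ across all four residues of $t$. None of this is deep, but it is the kind of computation where an off-by-one in the filtration index or a sign error would invalidate the splitting.
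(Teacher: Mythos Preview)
Your overall plan matches the paper's, but the verification that $\zeta$ is a chain map contains a genuine gap in the case $s>p$, $t=0$.

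First, your claim that the differential on $C^+_A(\overline{F}_{4p}M)$ vanishes is incorrect. You correctly observe that $\dd_M=0$ on $\overline{F}_{4p}M$, but the differential on $C^+_A(N)$ is $\dd'+\dd''$, and the horizontal part $\dd'=\pm u$ survives. Concretely, for $x\in M_{4p}\cong C^+_A(\overline{F}_{4p}M)_{4s}$ with $s>p$, one has $\dd x=\pm x\cdot u\in M_{4p+3}\cong C^+_A(\overline{F}_{4p}M)_{4(s-1)+3}$, which is nonzero in general. So $\zeta(\dd x)\neq 0$ here.

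Second, your formula ``the $q$-th component of $\dd\zeta(x)$ is $\dd_M(\psi^q x)-(-1)^n\psi^{q+1}(x)\cdot u$'' is only valid when the reindexing $\prod_{q\geq 0}M_{4(p-q)+t}$ is stable under passing from degree $n$ to $n-1$. This holds for $t\geq 1$ (both degrees have the same ``$s$''), but for $t=0$ the target degree is $4(s-1)+3$, so the leading-zero count drops from $s-p$ to $s-1-p$ and the indexing shifts by one. In the full model $\prod_{r\geq 0}(F_{4p}M)_{n-4r}$, the element $\zeta(x)$ has components $0$ for $r<s-p$ and $\psi^{r-(s-p)}(x)$ for $r\geq s-p$; applying the differential, the component at $r=s-p-1$ is $\dd_M(0)-(-1)^n x\cdot u=\mp xu$, which your computation misses. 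Thus $\dd\zeta(x)=(\pm xu,0,0,\ldots)\neq 0$ when $s>p$, $t=0$.

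The paper handles this case explicitly: it computes $\dd\zeta(x)=(x\cdot u,0,0,\ldots)$ and then $\zeta(\dd x)=\zeta(x\cdot u)=(\psi^q(xu))_{q\geq 0}=(xu,0,0,\ldots)$, the last equality using property~(iii) of Lemma~\ref{Lambda-Lemma}, namely $\psi(\operatorname{Im}u)=0$. This is precisely where that property is needed --- not in the vague ``tail behaviour'' you allude to in your parenthetical. Once this case is handled, the rest of your outline (identifications, exactness, $\pi\circ\zeta=1$) is fine and agrees with the paper.
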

\begin{proof} The fact that $F_{4p+t}M=F_{4p}M$ for $0\leq t\leq 3$
follows from the structure theorem \ref{Structure-DCI}. From the same
result one concludes that $F_{4p}M_n = M_n$ for $n\leq 4p+3$ and
$F_{4p}M_n=0$ for $n\geq 4(p+1)$. Therefore,
$\overline{F}_{4p}M_n$ coincides with $M_n$ for $4p\leq n\leq 4p+3$ and
vanishes otherwise. Using this, the given degreewise formulas for $C^+_A(\overline{F}_{4p}M)$ and $\hat{C}^+_A(F_{4p}M)$ follow from Proposition \ref{Explicit-Cpm-Models} and Corollary \ref{Explicit-DCI-Models}, respectively.

Next we have to verify that $\zeta$ is a chain map. The differential
in $C^+_A(\overline{F}_{4p}M)$ is given by  $u\colon M_{4p}\ra M_{4p+3}$ in
degree $4s$ for $s>p$ and is otherwise $0$. The differential in 
$\hat{C}^+_A(F_{4p}M)$ is given in degree $4s+t$ by
\[ \dd( (m_{4(s-q)+t})_{q\geq 0}) =(\dd_M m_{4(s-q)+t}-(-1)^tm_{4(s-q-1)+t}u)_{q\geq 0}, \]
where if $s>p$ it is assumed that $m_{4(s-q)+t}=0$ for $q\leq s-p$. In view
of this formula and the fact that $\psi(x)\cdot u = (-1)^{|x|}\dd_Mx$
for all $x\in M$ by Lemma \ref{Lambda-Lemma}, we see that for
$x\in M_{4p+t}=C^+_A(\overline{F}_{4p}M)_{4s+t}$, $s\geq p$,
\[ \dd\circ \zeta(x) = \dd(\psi^qx)_{q\geq 0} = \left\{ \begin{array}{cl}
0 & \mbox{ if } (s,t)=(p,0) \mbox{ or } 1\leq t\leq 3 \\
(x\cdot u,0,0,\cdots ) & \mbox{ if } s>p, t=0 \end{array} \right. \]
In the first case we also have $\dd x=0$ and hence $\zeta\circ \dd(x)=0$, 
while in the second case we have
$\dd x = x\cdot u$ so that $\zeta\circ \dd(x)=(\psi^q(x\cdot u))_{q\geq 0}
= (x\cdot u,0,0,\cdots)$ as $\psi$ vanishes on $\om{Im}u$ by
Lemma \ref{Lambda-Lemma}. In all other degrees $\zeta$ vanishes, so we
may conclude from the above that $\zeta$ is a chain map. 

Finally, the projection $\pi\colon \hat{C}^+_A(F_{4p}M)\ra C^+_A(\overline{F}_{4p}M)$ is in terms of the formulas of \eqref{Degree-Description} given by the
projection $\prod_{q\geq 0}M_{4(p-q)+t}\ra M_{4p+t}$ in degree
$4s+t$ for $s\geq p$ and otherwise $0$. From this description it is
clear that $\pi\circ \zeta=1$ and the proof is complete.  
\end{proof}

We need one final lemma describing the interaction between the map
$\zeta$ and the action of $R[U]$.

\begin{lemma} \label{Technical-U-Lemma}
In the situation of the above lemma the map
$\zeta\colon C^+_A(\overline{F}_{4p}M)\ra \hat{C}^+_A(F_{4p}M)$ satisfies
$\zeta(Ux)=U\zeta(x)$ for each $x$ with $|x|\neq 4p$. Moreover, in degree
$4p$ we have the following commutative diagram
\[ \begin{tikzcd} C^+_A(\overline{F}_{4p}M)_{4p} \arrow{r}{\zeta} \arrow{d}{\psi} & \hat{C}^+_A(F_{4p}M)_{4p} \arrow{r}{U} & 
\hat{C}^+_A(F_{4p}M)_{4(p-1)} \arrow{d}{=} \\
C^+_A(\overline{F}_{4(p-1)}M)_{4(p-1)} \arrow{rr}{\zeta} & {} & 
\hat{C}^+_A(F_{4(p-1)}M)_{4(p-1)}. \end{tikzcd} \]
\end{lemma}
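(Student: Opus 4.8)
The plan is to prove both assertions by direct inspection of the explicit chain-level models — Corollary~\ref{Explicit-DCI-Models} and Proposition~\ref{Explicit-Cpm-Models} — together with the descriptions of $\zeta$ and of the filtration quotients already assembled in Lemma~\ref{Technical-Extension-Lemma}. No genuinely new idea is needed: the content of the lemma is the bookkeeping of the index shifts relating $C^+_A(\overline{F}_{4p}M)$, $\hat{C}^+_A(F_{4p}M)$ and $\hat{C}^+_A(F_{4(p-1)}M)$, so I will simply assemble the ingredients in the order I would use them and then read off the two verifications.

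The first step is to record how $U\in R[U]\cong C^-_A(R)$ acts on the two complexes in the coordinates of \eqref{Degree-Description}. On $C^+_A(\overline{F}_{4p}M)$ a homogeneous element of degree $4s+t$ (with $s\geq p$, $0\leq t\leq 3$) is an element $x\in M_{4p+t}$ occupying the single summand $D^+_{s-p,*}$ of the totalization; since $U\colon D^+_{s',*}\to D^+_{s'-1,*}$ is the identity for $s'\geq 1$ and zero for $s'=0$, the action of $U$ on $C^+_A(\overline{F}_{4p}M)$ is the identity $M_{4p+t}\to M_{4p+t}$ when $s>p$ and is zero when $s=p$. On $\hat{C}^+_A(F_{4p}M)$, in degree $4s+t$ the identification with $\prod_{q\geq 0}M_{4(p-q)+t}$ from \eqref{Degree-Description} is obtained from the description $\prod_{q\geq 0}M_{n-4q}$ of Corollary~\ref{Explicit-DCI-Models} by the reindexing $q\mapsto q-(s-p)$ that discards the $s-p$ forced leading zero coordinates; since $U$ acts on the latter description by deleting the first coordinate (Corollary~\ref{U-action-Corollary}), it follows after this reindexing that $U$ on $\hat{C}^+_A(F_{4p}M)$ is the identity of $\prod_{q\geq0}M_{4(p-q)+t}$ in each degree $4s+t$ with $s>p$ and is the honest shift $(y_q)_q\mapsto(y_{q+1})_q$ in degree $4p+t$. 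The second step is simply to note, directly from the construction of $\psi$ in Lemma~\ref{Lambda-Lemma}, that $\psi\colon M_m\to M_{m-4}$ vanishes whenever $4\nmid m$, and that $\zeta(x)=(\psi^q x)_q$ on each summand where it is nonzero.

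Granting these, the first assertion is a short case check on a homogeneous $x$ of degree $4s+t\neq 4p$. If $s<p$ then $x=0$ and there is nothing to prove. If $s>p$ then $Ux=x$, and both $\zeta(Ux)$ and $U\zeta(x)$ equal $(\psi^q x)_q$, by the ``identity'' description of $U$ just recorded. If $s=p$ — in which case $t\in\{1,2,3\}$, since $|x|\neq 4p$ — then $Ux=0$, while $U\zeta(x)=(\psi^{q+1}x)_q$; but $\psi(x)=0$ because $4\nmid 4p+t$, so $U\zeta(x)=0=\zeta(Ux)$. For the square in degree $4p$ I would take $x\in M_{4p}=C^+_A(\overline{F}_{4p}M)_{4p}$ and compute both composites: the clockwise one is $U\zeta(x)=U\bigl((\psi^q x)_q\bigr)=(\psi^{q+1}x)_q$ by the shift description of $U$ at $s=p$, and the counter-clockwise one is $\zeta(\psi x)=(\psi^q(\psi x))_q=(\psi^{q+1}x)_q$, using that $\psi x\in M^{irr}\subset M_{4(p-1)}=C^+_A(\overline{F}_{4(p-1)}M)_{4(p-1)}$. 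Under the canonical identification of the degree-$4(p-1)$ parts of $\hat{C}^+_A(F_{4p}M)$ and $\hat{C}^+_A(F_{4(p-1)}M)$ — both literally equal to $\prod_{q\geq0}M_{4(p-1-q)}$, because $F_{4p}M$ and $F_{4(p-1)}M$ coincide in all degrees $\leq 4(p-1)+3$ — these two sequences are the same element, which is exactly the claimed commutativity, and this identification is the vertical map labelled ``$=$'' in the diagram.

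The step I expect to require the most care is pinning down this last identification and, with it, the precise effect of $U$ after the reindexing $q\mapsto q-(s-p)$: it is exactly here that the ``identity versus shift'' dichotomy for $U$ on $\hat{C}^+_A(F_{4p}M)$ originates, and it is also the reason the square commutes only after invoking the vertical equality on its right edge and not on the nose. Once these coordinates are fixed precisely as in \eqref{Degree-Description}, every remaining verification is a one-line substitution, and I would present it as such rather than expanding the routine manipulations.
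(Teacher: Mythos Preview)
Your proof is correct and follows essentially the same route as the paper's: both arguments split into the cases $s<p$, $s>p$, and $s=p$ (with $t\in\{1,2,3\}$ versus $t=0$), identify the $U$-action as the identity after the reindexing in the first nontrivial case and as the shift in the last, and use that $\psi$ vanishes off degrees divisible by $4$ together with $U\zeta(x)=(\psi^{q+1}x)_q=\zeta(\psi x)$ to handle the remaining case. Your exposition of the reindexing and the right-edge identification is slightly more explicit than the paper's, but the underlying argument is the same.
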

\begin{proof} In degree $n$ for $n<4p$ the map $\zeta$ vanishes, so the first
assertion is clear in this case. In degree $4s+t$ with $s>p$ and $0\leq t\leq 3$ we have, using the formulas of Lemma \ref{Technical-Extension-Lemma},
\[ C^+_A(\overline{F}_{4p}M)_{4s+t}=M_{4p+t} \;\; \mbox{ and } \;\;
\hat{C}^+_A(F_{4p}M)_{4s+t}=\prod_{q\geq 0} M_{4(p-q)+t}.  \]
It then follows from Proposition \ref{Explicit-Cpm-Models} and
Corollary \ref{Explicit-DCI-Models} that the $U$ action is given by
the identity in these degrees. Therefore, $U\circ \zeta=\zeta\circ U$ in this case as well. 

For $x\in M_{4p+t}=C^+_A(\overline{F}_{4p}M)_{4p+t}$, where $0\leq t\leq 3$,
we have $Ux=0$, while
\[ U(\zeta(x))=U((\psi^qx)_{q\geq 0})=(\psi^{q+1}x)_{q\geq 0}
= (\psi^q(\psi x))_{q\geq 0} =\zeta(\psi(x)),  \]
where we regard $\psi x \in M_{4(p-1)+t}=C^+_A(\ovl{F}_{4(p-1)}M)_{4(p-1)+t}$.
This shows that the given diagram commutes in degree $4p+t$ for
$0\leq t\leq 3$. However, in the proof of Lemma \ref{Lambda-Lemma}
it was shown that $\psi\colon M_{4p+t}\ra M_{4(p-1)+t}$ vanishes for
$1\leq t\leq 3$. Therefore $\zeta\circ U=U\circ \zeta$ in degree
$4p+t$ for $1\leq t\leq 3$ as well. This completes the proof. 
\end{proof} 

To describe $I^+(\overline{Y}_\Gamma)$ and later $I^-(\overline{Y}_\Gamma)$, it is convenient to introduce the following definition.

\begin{definition} For an $R[U]$-module $X$ define the mod $8$ periodic
$R[U]$-modules $X^{\Pi,8}$ and $X^{\oplus,8}$ degreewise by
\[ X^{\Pi,8}_n = \prod_{s\in \Z} X_{n+8s} \;\; \mbox{ and } \;\;
  X^{\oplus,8}_n = \bigoplus_{s\in \Z} X_{n+8s}  .\]
The maps $U\colon X^{\Pi,8}_n \ra X^{\Pi,8}_{n-4}$ and $U\colon X^{\oplus,8}_n\ra X^{\oplus,8}_{n-4}$ are defined to be the product and direct sum over
the maps $U:X_{n+8s}\ra X_{n-4+8s}$ for $s\in \Z$, respectively.
\end{definition}

Let $\Gamma\subset \om{SU}(2)$ be a finite subgroup. Given $\alpha\in \MC{C}$
we will use the shorthand notation 
$H^+_A(\alpha) \coloneqq H^+_A(H_*(\alpha))$.  
Write $\MC{C}=\MC{C}^{irr}\cup \MC{C}^{red}\cup \MC{C}^{f.red}$ for the decomposition of the critical orbits into irreducible, reducibles and fully reducibles.
For each $\eta\in \MC{C}^{f.red}$, $\lambda\in \MC{C}^{red}$ and $\alpha\in \MC{C}^{irr}$ introduce variables $V_\eta$, $W_\lambda$ and a generator $g_\alpha$ so that
\[ H^+_A(\eta) = R[V_\eta], \;\;\;\; H^+_A(\lambda)= R[W_\lambda] \;\; \mbox{ and } \;\; H^+_A(\alpha) = R\cdot g_\alpha, \]
where $|V_\eta|=4$, $|W_\lambda|=2$ and $|g_\alpha|=0$. This is justified by the calculations of Lemma \ref{Orbit-Calc-Lemma} and Corollary 
\ref{Orbit-Calc-Corollary}. In the following we regard the relative grading 
$j:\MC{C}\ra \Z/8$ as taking values in $\{0,4\}$. 
  
\begin{theorem} \label{IPluss-1}
The positive equivariant instanton Floer homology $I^+(\ovl{Y}_\Gamma)$ associated with the trivial $\om{SU}(2)$-bundle over $\overline{Y}_\Gamma$ is given by
\[ \left[ 
\left( \bigoplus_{\alpha\in \MC{C}^{irr}}R\cdot g_\alpha[j(\alpha)] \right) \oplus
\left(\bigoplus_{\lambda\in \MC{C}^{red}}R[W_\lambda][j(\lambda)] \right) \oplus
\left( \bigoplus_{\eta\in \MC{C}^{f.red}}R[V_\eta][j(\eta)] \right) \right]^{\Pi,8}. \]
The $R[U]$-module structure is determined by 
\begin{align*}
 U\cdot V^p_\eta &= \left\{ \begin{array}{cc} V^{p-1}_\eta & \mbox{ for } p>0 \\ \sum_{\rho\in C^{irr}} n_{\rho\eta} g_\rho & \mbox{ for } p=0 
\end{array} \right.  \\
U\cdot W_\lambda^p &= \left\{ \begin{array}{cc} W^{p-2}_\lambda & \mbox{ for } p>0 \\ \sum_{\rho\in C^{irr}} n_{\rho\lambda} g_\rho & \mbox{ for } p=0 
\end{array} \right.   \\
U\cdot g_\alpha &= \sum_{\rho\in C^{irr}} n_{\rho\alpha} g_\rho,
\end{align*}
where we interpret $W_\beta^{-1}=0$ and the integers $n_{\rho_1\rho_2}$
are given in Proposition \ref{Differential-Graphs} for each pair $(\rho_1,\rho_2) \in \MC{C}$.    
\end{theorem}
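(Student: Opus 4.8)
The plan is to exploit the concrete model for $DCI^+(\ovl{Y}_\Gamma)$ from Corollary \ref{Explicit-DCI-Models} together with the splitting constructed in Lemma \ref{Technical-Extension-Lemma}. Write $M=DCI(\ovl{Y}_\Gamma)$ with its index filtration. By Lemma \ref{Spectral-Degeneracy-Lemma} the index spectral sequence degenerates at $E^1$, so as an $R$-module $I^+(\ovl{Y}_\Gamma)$ is the direct product over $p$ of the associated graded pieces $H^+_A(\ovl{F}_pM)$, and by Theorem \ref{Index-SS-Theorem} together with the orbit computations of Lemma \ref{Orbit-Calc-Lemma} and Corollary \ref{Orbit-Calc-Corollary} these are exactly $R[V_\eta]$, $R[W_\lambda]$, $R\cdot g_\alpha$ for the fully reducible, reducible and irreducible orbits respectively, each placed in the bidegree dictated by $j$. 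The mod $8$ periodicity isomorphism $CI^+(Y,E)\cong CI^+(Y,E)[8]$ of Theorem \ref{Index-SS-Theorem} accounts for the $(-)^{\Pi,8}$ operation: since the filtration of $\hat C^+_A(M)$ is degreewise bounded above (Lemma \ref{Bounded-FloerFiltration}), in each fixed degree the underlying module is the \emph{product} over the periodic copies, which is precisely the definition of $X^{\Pi,8}$.

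The substantive part is the $R[U]$-module structure. First I would use the filtration $F_{4p}\hat C^+_A(M)=\hat C^+_A(F_{4p}M)$ and the short exact sequences of Lemma \ref{Technical-Extension-Lemma}, together with their $\zeta$-splittings, to build an explicit isomorphism (of graded $R$-modules) between $I^+(\ovl{Y}_\Gamma)$ and the stated direct product, by induction up the filtration: the chain-level splitting $\zeta$ identifies $H\bigl(\hat C^+_A(F_{4p}M)\bigr)$ with $H\bigl(\hat C^+_A(F_{4(p-1)}M)\bigr)\oplus H^+_A(\ovl F_{4p}M)$, and passing to the limit gives the product decomposition. The point of insisting on the chain-level splitting $\zeta$, rather than merely an $R$-module splitting in homology, is that Lemma \ref{Technical-U-Lemma} tells us exactly how $\zeta$ interacts with $U$: it is strictly $U$-equivariant except in the ``bottom'' degree $4p$ of each block, where $U\circ\zeta=\zeta\circ\psi$. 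Feeding this through the identification, the action of $U$ on a generator $V_\eta^p$, $W_\lambda^p$ or $g_\alpha$ sitting in filtration level $4p$ (i.e. the classes with $p=0$ in the block indexing) is computed by applying $\psi$ and then re-expressing the result via \eqref{Psi-Formula}, namely $\psi(b_\alpha)=\sum_{\rho\in\MC{C}^{irr}} n_{\rho\alpha} b_\rho$, while on the higher classes $U$ is the obvious internal shift $V_\eta^p\mapsto V_\eta^{p-1}$, $W_\lambda^p\mapsto W_\lambda^{p-2}$. This yields exactly the three displayed formulas, with the convention $W_\lambda^{-1}=0$ coming from the fact that $H^+_A(R\oplus R[2])$ is concentrated in even internal degrees $\le$ the top one, so $U^2$ annihilates the bottom generator there (Corollary \ref{Orbit-Calc-Corollary}).

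Two bookkeeping points need care. One is matching the generators $b_\beta\in H_0(\beta)$ appearing in $\psi$ with the generators $g_\beta$ of $H^+_A(\beta)=R\cdot g_\beta$ for $\beta$ irreducible, and likewise checking that the $p=0$ classes of $R[V_\eta]$ and $R[W_\lambda]$ are represented at the chain level by the cycles $\zeta(b_\eta)$, $\zeta(b_\lambda)$ with $b_\eta\in H_0(\eta)\subset M$; this is immediate from the explicit description of the isomorphisms \eqref{Iso-+} and the definition of $\zeta$. The other is the grading shift: a class coming from $H^+_A(\alpha)$ with $j(\alpha)=s$ lands in $\ovl F_sI^+$, hence contributes the summand $H^+_A(\alpha)[s]=R\cdot g_\alpha[j(\alpha)]$, etc.; keeping the internal gradings $|V_\eta|=4$, $|W_\lambda|=2$, $|g_\alpha|=0$ straight is routine but must be done to see that $U$, of degree $-4$, indeed carries a $p=0$ generator of a reducible or fully reducible block into the degree-$0$ part of the irreducible blocks one filtration step down, which is where $\sum_\rho n_{\rho\bullet}g_\rho$ lives. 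The main obstacle I anticipate is organizing the inductive identification of $I^+(\ovl{Y}_\Gamma)$ with the product so that the $U$-action is transported correctly through \emph{all} blocks simultaneously — i.e. verifying that the local statement of Lemma \ref{Technical-U-Lemma} globalizes without cross-terms — but since the filtration quotients are genuinely split at the chain level and $\psi$ already lands in $M^{irr}$ and kills $\mathrm{Im}(u)$ and $\mathrm{Im}(\dd_M)$ (Lemma \ref{Lambda-Lemma}), no such cross-terms can appear, and the computation closes. $\qed$
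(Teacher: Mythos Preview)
Your proposal is correct and follows essentially the same approach as the paper: chain-level splitting via $\zeta$ from Lemma \ref{Technical-Extension-Lemma}, inductive decomposition of $\hat{C}^+_A(F_{4p}M)$ into a product of the quotients $C^+_A(\overline{F}_{4q}M)$, passage to the limit/colimit to obtain $\hat{C}^+_A(M)\cong\prod_p C^+_A(\overline{F}_{4p}M)$, and then reading off the $U$-action from Lemma \ref{Technical-U-Lemma} and the formula \eqref{Psi-Formula} for $\psi$. The paper organizes the $U$-action computation through an explicit commutative diagram at a fixed degree $n$, but this is exactly the globalization step you anticipate and handle in the same way.
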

\begin{proof} Let as usual $M=DCI(\overline{Y}_\Gamma)$ be equipped with
the index filtration. By Lemma \ref{Technical-Extension-Lemma} we have $F_{4p}M=F_{4p+t}M$ for $0\leq t\leq3 $, and the map 
$\zeta\colon C^+_A(\overline{F}_{4p}M)\ra \hat{C}^+_A(F_{4p}M)$
along with the inclusion $i\colon \hat{C}^+_A(F_{4(p-1)}M)\ra \hat{C}^+_A(F_{4p}M)$ define a chain isomorphism 
\[ i+\zeta:\hat{C}^+_A(F_{4(p-1)}M)\oplus C^+_A(\overline{F}_{4p}M) \cong \hat{C}^+_A(F_{4p}M)  \]
of $DG$ $R$-modules for each $p$. By induction we obtain isomorphisms
\[ \hat{C}^+_A(F_{4p}M) \cong \hat{C}^+_A(F_{4(p-r-1)}M)\oplus \left(
\bigoplus_{q=0}^r C^+_A(\overline{F}_{4(p-q)}M) \right) \]
for each $p$ and $r\geq 1$. Since the filtration $F_p\hat{C}^+_A(M)=\hat{C}^+_A(F_pM)$ is degreewise bounded above and complete Hausdorff, it follows
by first passing to the limit over $r$ and then to the colimit over $p$
that we obtain an isomorphism of DG $R$-modules
\[ \hat{C}^+_A(M) \cong \prod_{p\in \Z} C^+_A(\overline{F}_{4p}M) . \]
Hence, as products commutes with homology,
\begin{equation} \label{Product-Iso-I+}
I^+(\overline{Y}_\Gamma) = \hat{H}^+_A(M)\cong \prod_{p\in \Z} H^+_A(\overline{F}_{4p}M)  .
\end{equation} 
Write $\MC{C}=\MC{C}^0\cup \MC{C}^1$ where
$\MC{C}^i = \{\alpha\in \MC{C}: j(\alpha)\equiv 4i \Mod{8}\}$ for $i=0,1$.
Then
\[ H^+_A(\overline{F}_{4p}M)= \left\{ \begin{array}{cc}
 \bigoplus_{\alpha\in \MC{C}^0}H^+_A(\alpha)[4p] & \mbox{ if } p\equiv 0 \Mod{2}
  \\ \bigoplus_{\alpha\in \MC{C}^1}H^+_A(\alpha)[4p] & \mbox{ if }
  p\equiv 1 \Mod{2}, \end{array} \right.  \]
and we may simplify the expression in \eqref{Product-Iso-I+} 
\begin{equation} \label{Product-Iso-I+2}
\prod_{p\in \Z} H^+_A(\overline{F}_{4p}M)
= \prod_{p\in \Z} \bigoplus_{\alpha\in \MC{C}} H^+_A(\alpha)[8p+j(\alpha)]
= \left( \bigoplus_{\alpha\in \MC{C}} H^+_A(\alpha)[j(\alpha)] \right)^{\Pi,8}. 
\end{equation} 
By replacing each $H^+_A(\alpha)$ with $R[V_\alpha]$, $R[W_\alpha]$
or $R\cdot g_\alpha$ according to whether $\alpha$ is fully reducible,
reducible or irreducible we obtain the additive statement of the theorem.

To finish the proof we need to determine the action of $U$. Returning
to the notation of equation \eqref{Product-Iso-I+}, for a fixed degree
$n$, we have $I^+(\overline{Y}_\Gamma)_n = \hat{H}^+_A(F_{4p}M)_n$
for each $p$ with $4p>n$. This is a consequence of the formula \eqref{Degree-Description} given in Lemma \ref{Technical-Extension-Lemma}. Fix the minimal $p$ with $4p>n$ and consider the commutative diagram
\[ \begin{tikzcd} 
I^+(\overline{Y}_\Gamma)_n \arrow{d}{U} & \hat{H}^+_A(F_{4p}M)_n \arrow{d}{U} \arrow{l}{\cong} & \prod_{q:4q\leq n} H^+_A(\overline{F}_{4q}M)_n
\arrow{d}{U'} \arrow{l}{\phi} \\
I^+(\overline{Y}_\Gamma)_{n-4} & \hat{H}^+_A(F_{4p}M)_{n-4} \arrow{l}{\cong}
& \prod_{q:4q\leq n} H^+_A(\overline{F}_{4(q-1)}M)_{n-4} \arrow{l}{\phi}.
\end{tikzcd} \] 
Here, $\phi$ is the isomorphism obtained by taking the limit, as 
$j\to \infty$, over the maps
\[ ([x_q])_{j\leq q\leq n/4}\mapsto \sum_{j\leq q\leq n/4} [\zeta(x_q)] , \]
where $x_q\in C^+_A(\overline{F}_{4q}M)$ represents $[x_q]\in H^+_A(\overline{F}_{4q}M)$ and $U'$ is the map that forces the right
rectangle to commute. Our task is to determine $U'$. By Lemma \ref{Technical-U-Lemma}
we have for $y\in H^+_A(\overline{F}_{4q}M)_n$ that
$H(\zeta)\circ U(y)=U\circ H(\zeta)(y)$ provided $n>4q$, while if
$n=4q$, and $y=[x]$ then $U\circ H(\zeta)(y) = [\psi x]$. It follows
that $U'=\prod U$ if $n$ is not divisible by $4$. Otherwise, if
$4|n$ so that $n=4(p-1)$, then $U'=\tau+\prod U$, where
$\tau=([x_q])_{q\leq p-1}=[\psi(x_{p-1})]$. Therefore, in the description
\[ I^+(\overline{Y}_\Gamma)= \prod_{p\in \Z} \bigoplus_{\alpha\in \MC{C}}
H^+_A(\alpha)[8p+j(\alpha)]  \]
of \eqref{Product-Iso-I+2}, the action of  $U$ is given by taking the product over the internal $R[U]$-module structure of each factor $H^+_A(\alpha)$ and adding the correction
term $\tau$. For each $p$, $\tau$ only affects the terms
$H^+_A(\alpha)[8p+j(\alpha)]_{8p+j(\alpha)}=H^+_A(\alpha)_0$. In terms of
our generators this correspond to $V^0_\alpha$ if $\alpha$ is fully reducible,
$W_\alpha^0$ if $\alpha$ is reducible and $g_\alpha$ if $\alpha$ is
irreducible. Now, we may require that these generators
correspond to $b_\alpha \in H_0(\alpha)\subset M_{4q} \cong C^+_A(\overline{F}_{4q}M)=H^+_A(\overline{F}_{4q}M)_{4q}$ under the identifications of
Lemma \ref{Technical-Extension-Lemma}. Therefore, using the explicit
formula \eqref{Psi-Formula} for $\psi$, we obtain
\[ U\cdot V^0_\eta = \sum_{\rho\in \MC{C}^{irr}} n_{\rho\eta}g_\rho \]
and similarly for the other types of generators. This completes the
proof. 
\end{proof} 

\begin{example} For $Y =\overline{Y}_{O^*}$ there are two irreducibles
$\alpha$, $\beta$, the trivial connection $\theta$ and a fully
reducible $\eta$. The grading is given by $j(\theta)=j(\beta)=0$
and $j(\alpha)=j(\eta)=4$. Furthermore, $n_{\alpha\beta}=n_{\beta\alpha}=3$
and $n_{\alpha\theta}=n_{\beta\eta}=1$. This is obtained from
Proposition \ref{Differential-Graphs}. The above theorem gives
\[ I^+(Y)= \left( R[V_\theta]\oplus R\cdot g_\beta \oplus R[V_\eta][4]
\oplus R\cdot g_\alpha[4] \right)^{\Pi,8}  .\]
This means that
\[ I^+(Y)_n = \left\{ \begin{array}{cc} 
R\{g_\beta\}\oplus \prod_{i\geq 0} R\{V_\theta^{2i},V_{\eta}^{2i-1}\} & n\equiv 0\Mod{8} \\
R\{g_\alpha\} \oplus \prod_{i\geq 0} R\{V_\theta^{2i+1},V_\eta^{2i}\} & n\equiv 4\Mod{8} \\
0 & \mbox{ otherwise. } \end{array} \right. \]
For $x=(r_{2i}V^{2i}_\eta)_{i\geq 0} \in I^+(Y)_4$, $r_{2i}\in R$, we have 
$U\cdot x = r_0g_\alpha + (r_{2i}V^{2i-1}_\eta)_{i\geq 1}\in I^+(Y)_0$.
For $x=rg_\alpha$, $r\in R$, we have $U\cdot x=3rg_\beta$. 
\end{example}  

\subsection{The Case \texorpdfstring{$I^-$}{Iminus}}
In this section we will calculate $I^-(\ovl{Y}_\Gamma)$ for all
finite subgroups $\Gamma\subset \om{SU}(2)$. In contrast to the
calculations in the previous section, there will be a number of nontrivial
differentials in the index spectral sequence. Nevertheless, the spectral sequence will still stabilize after a finite number of steps in each
case.

\begin{lemma} \label{Non-trivial-Differential}
Let $\Gamma\subset \om{SU}(2)$ be a finite subgroup. Then
the only possibly nontrivial differentials in the index spectral sequence
\[ E^1_{s,t}=\bigoplus_{j(\alpha)\equiv s}H^-_A(\alpha)_t \implies I^-(\overline{Y}_\Gamma)_{s+t} \]
are $d^{4r}\colon E^{4r}_{4(s+r),-4(r-1)}\ra E^{4r}_{4s,3}$ for $s$ and $r\geq 1$. 
\end{lemma}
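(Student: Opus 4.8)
The strategy is to locate, degree by degree, where the $E^1$-page can possibly be nonzero and then to observe that the bidegrees of the differentials $d^r$ force almost all of them to vanish. First I would recall from Lemma~\ref{Orbit-Calc-Lemma} and Corollary~\ref{Orbit-Calc-Corollary} the computation of $H^-_A(\alpha)$ for each type of orbit: for $\alpha$ irreducible $H^-_A(\alpha)\cong R[3]$, hence supported in degree $3$; for $\alpha$ reducible $H^-_A(\alpha)\cong R[Z][2]$, supported in the even degrees $2, 0, -2, -4, \dots$ (i.e. degrees $\equiv 2\Mod 4$ together with degrees $\equiv 0\Mod 4$); and for $\alpha$ fully reducible $H^-_A(\alpha)\cong R[U]$, supported in the nonpositive degrees divisible by $4$. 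Since the absolute grading satisfies $j(\alpha)\in\{0,4\}\subset\Z/8$ by Lemma~\ref{Indexing-Lemma}, we get $E^1_{s,t}=\bigoplus_{j(\alpha)\equiv s}H^-_A(\alpha)_t$, which is zero unless $4\mid s$. Combining this with the support computation: in the column $s$ (with $4\mid s$) the group $E^1_{s,t}$ can be nonzero only for $t=3$ (contribution from irreducibles) or $t\le 2$ with $t$ even — and more precisely, tracking the degree-$4$ shifts in the explicit model, only for $t\equiv 0\Mod 4$ with $t\le 0$ or $t=2$.

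Next I would run the bidegree bookkeeping for the differentials. The differential $d^r$ has bidegree $(-r, r-1)$, so it maps $E^r_{s,t}\to E^r_{s-r,t+r-1}$. For a nonzero differential we need both source and target to be (potentially) nonzero, so in particular both $s$ and $s-r$ must be divisible by $4$; this already forces $4\mid r$, say $r=4\rho$ with $\rho\ge 1$. Given $4\mid r$, the target degree is $t+r-1\equiv t-1\Mod 4$, so if the source has $t\equiv 0\Mod 4$ the target has $t'\equiv 3\Mod 4$, and conversely. Checking against the support description: the only way to have a nonzero source with $t\not\equiv 3\Mod 4$ landing in a nonzero target with $t'\equiv 3\Mod 4$ is to start from a fully-reducible/reducible contribution in degree $t = -4(\rho-1)$ (a degree $\equiv 0\Mod 4$, necessarily $\le 0$ once $\rho\ge 1$) and land in degree $t' = -4(\rho-1)+4\rho-1 = 3$, which is exactly the irreducible contribution. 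Any differential starting from the $t=3$ (irreducible) row would have to land in degree $3+r-1\equiv 3\Mod 4$, i.e. again in a $t\equiv 3$ row of some column, but the source row $t=3$ is a one-dimensional $R$-summand killed on the $E^1$-page already by the internal differential analysis, or more simply its target degree $3+r-1 = r+2 > 3$ lies outside the support of $E^r$ in that column; either way it vanishes. This pins the surviving differentials to $d^{4\rho}\colon E^{4\rho}_{?, -4(\rho-1)}\to E^{4\rho}_{?, 3}$, and matching first indices ($s = (s-4\rho)+4\rho$) gives the stated form $d^{4r}\colon E^{4r}_{4(s+r),-4(r-1)}\to E^{4r}_{4s,3}$ after renaming.

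The main obstacle I anticipate is not any single computation but keeping the degree conventions straight: the shift conventions ($C[n]_{n+i}=C_i$, left shift with sign twists), the fact that $H^-_A$ of a reducible orbit is $R[Z][2]$ with $U$ \emph{raising} the $Z$-degree (so as a graded module it is concentrated in degrees $2,0,-2,\dots$ but the explicit double-complex model of Proposition~\ref{Explicit-Cpm-Models} realizes it via degree-$(-4)$ shifts), and the interplay of the periodicity $E^r_{s,t}\cong E^r_{s+8,t}$ with the indexing. I would handle this by working throughout with the explicit model $C^-_A(M)\cong\om{Tot}^\Pi(D^-_{*,*},\dd',\dd'')$ from Theorem~\ref{Explicit-DCI-Models}, where the columns of $D^-$ are shifted copies $M[-3n]$, $n\ge 0$, of $M=DCI(\ovl Y_\Gamma)$, and $M$ itself is concentrated in bidegrees $(4s,t)$ with $0\le t\le 3$ by Theorem~\ref{Structure-DCI}; this makes the support of each $\overline F_p\hat C^-_A(M)$, hence of $E^1$, completely transparent and reduces the whole argument to the elementary bidegree count sketched above. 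A brief remark confirming that these are the only \emph{a priori} possibly nonzero differentials (without yet asserting they are actually nonzero) completes the lemma as stated.
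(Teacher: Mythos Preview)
Your approach is correct and essentially the same as the paper's: identify the support of $E^1_{s,t}$ using Lemma~\ref{Orbit-Calc-Lemma}, Corollary~\ref{Orbit-Calc-Corollary}, and the fact that $j(\alpha)\equiv 0\Mod 4$, then run the bidegree count for $d^r$ to see that the only differentials with both ends in the support are the stated ones.

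Two minor inaccuracies worth cleaning up, though neither affects the argument. First, your ``more precisely'' refinement of the support to ``$t\equiv 0\Mod 4$ with $t\le 0$ or $t=2$'' is wrong: for reducible $\alpha$ one has $H^-_A(\alpha)\cong R[Z][2]$ with $|Z|=-2$, which is nonzero in \emph{all} even degrees $\le 2$, including $t=-2,-6,\dots$; your earlier, coarser statement ``$t\le 2$ with $t$ even'' is the correct one and is all you use. Second, the remark that the $t=3$ row ``is a one-dimensional $R$-summand killed on the $E^1$-page already by the internal differential analysis'' is not right (nothing forces it to die at $E^1$; indeed the later analysis in the paper shows these classes can persist for many pages); the correct reason, which you also give, is simply that a differential out of $t=3$ lands in degree $r+2\ge 6>3$, outside the support.
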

\begin{proof} As $j(\alpha)\equiv 0\mod{4}$ for each $\alpha\in \MC{C}$,
we have $E^1_{s,*}=0$ unless $4|s$. By Lemma \ref{Orbit-Calc-Lemma} and
Corollary \ref{Orbit-Calc-Corollary} the $R[U]$-module $H^-_A(\alpha)$
is given by $R[U]$ with $|U|=-4$, $R[Z][2]$ with $|Z|=-2$ or $R[3]$ depending on whether $\alpha$ is fully reducible, reducible or irreducible, respectively. This means that $E^1_{4s,t}=0$ for $t\geq 4$ and
odd $t\leq 2$. Since the differential $d^r$ has bidegree $(-r,r-1)$
we deduce that it can only be nonzero if it lands in the bidegree
$(4s,3)$ for some $s$. The only such differentials that also begin
in a possibly nonzero group are $d^{4r}\colon E^{4r}_{4(s-r),-4(r-1)}\ra E^{4r}_{4s,3}$. \end{proof}

Our first task will be to give an explicit formula for the nontrivial
differentials. 
Fix a finite subgroup $\Gamma\subset \om{SU}(2)$ and let
$M=DCI(\overline{Y}_\Gamma)$ be equipped with the index filtration.
Recall that by Lemma \ref{Bounded-FloerFiltration}
\[ DCI^-(\overline{Y}_\Gamma) =\om{colim}_p C^-_A(F_pM) =\bigcup_p C^-_A(F_pM).\]
In the following we will use the fact from Lemma \ref{Technical-Extension-Lemma} that $F_{4p+t}M=F_{4p}M$ for all $p\in \Z$ and $0\leq t\leq 3$
without further mention.   

\begin{lemma} \label{Technical-Degree-Lemma}
For all $p,s\in \Z$ and $0\leq t\leq 3$ we have
\begin{align*}
C^-_A(F_{4p}M)_{4s+t} & \cong \left\{ \begin{array}{cc}
\bigoplus_{0\leq q\leq p-s} M_{4(s+q)+t} & \mbox{ if } s\leq p \\
0 & \mbox{ if } s>p \end{array} \right.  \\
C^-_A(\overline{F}_{4p}M)_{4s+t} & \cong 
\left\{ \begin{array}{cc} M_{4p+t} & \mbox{ if } s\leq p \\
                           0 & \mbox{ if } s>p \end{array} \right.
\end{align*} 
The differential in $C^-_A(\overline{F}_{4p}M)$ is given by
$u\colon M_{4p}\ra M_{4p+3}$ in degree $4s$ with $s\leq p$, and vanishes otherwise.
The differential in $C^-_A(F_{4p}M)$ is given by
\[ \dd((x_q)_{0\leq q\leq p-s}) =
(\dd_Mx_q-(-1)^tx_{q-1}u)_{0\leq q\leq p-s+1}   \]
for $(x_q)_{0\leq q\leq p-s}\in \bigoplus_{0\leq q\leq p-s} M_{4(s+q)+t}
=C^-_A(F_{4p}M)_{4s+t}$ where $s\leq p$, and we interpret $x_{-1}=0=x_{p-s+1}$.
\end{lemma}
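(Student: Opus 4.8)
The plan is to unwind the definitions of $C^-_A$ and the full completion in terms of the explicit models already established, and to carry the bigrading of the multicomplex $M = DCI(\overline{Y}_\Gamma)$ through these identifications. First I would recall from Proposition \ref{Explicit-Cpm-Models} that for any right $A$-module $N$ there is an isomorphism $C^-_A(N) \cong \operatorname{Tot}^\Pi(D^-_{*,*},\dd',\dd'')$, where $D^-_{p,q} = N_{q-3p}$ for $p \le 0$ and the horizontal differential is $\pm u$ while the vertical one is $\dd_N$; in degree $n$ this reads $C^-_A(N)_n \cong \prod_{q \ge 0} N_{n+4q}$. I would then apply this with $N = \overline{F}_{4p}M = F_{4p}M/F_{4p-1}M$ and with $N = F_{4p}M$ separately. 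The crucial input from Theorem \ref{Structure-DCI} (already used in Lemma \ref{Technical-Extension-Lemma}) is that $F_{4p}M_n = M_n$ for $n \le 4p+3$ and $F_{4p}M_n = 0$ for $n \ge 4(p+1)$; hence $\overline{F}_{4p}M$ is concentrated in degrees $4p \le n \le 4p+3$ and coincides with $M$ there. Feeding this into the product $\prod_{q\ge 0}(\overline{F}_{4p}M)_{4s+t+4q}$ for $0 \le t \le 3$, exactly one index $q$ survives (namely $q = p-s$, provided $s \le p$), giving $C^-_A(\overline{F}_{4p}M)_{4s+t} \cong M_{4p+t}$ when $s \le p$ and $0$ otherwise. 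The differential claim then follows from the $\dd'$-part of the totalization: the internal differential $\dd_M$ vanishes on $\overline{F}_{4p}M$ except from degree $4p$ to $4p+3$, so in $C^-_A$-degree $4s$ it is $u\colon M_{4p}\to M_{4p+3}$ and zero elsewhere (the sign absorbed into the generator conventions fixed in Proposition \ref{Explicit-Cpm-Models}).

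For the second isomorphism I would do the analogous computation with $N = F_{4p}M$. Here $C^-_A(F_{4p}M)_{4s+t} \cong \prod_{q\ge 0}(F_{4p}M)_{4(s+q)+t}$, and the constraint $(F_{4p}M)_n = 0$ for $n \ge 4(p+1)$ combined with $(F_{4p}M)_n = M_n$ for $n \le 4p+3$ shows that the factors are nonzero precisely for $0 \le q \le p-s$ (and the product is finite, hence a direct sum) when $s \le p$, and the whole thing vanishes when $s > p$. This gives $C^-_A(F_{4p}M)_{4s+t} \cong \bigoplus_{0\le q\le p-s} M_{4(s+q)+t}$. The differential formula is then read off directly from the totalization differential $\dd = \dd' + \dd''$ of Proposition \ref{Explicit-Cpm-Models}: writing a chain as $(x_q)_{0\le q\le p-s}$ with $x_q \in M_{4(s+q)+t}$, the vertical part contributes $\dd_M x_q$ in slot $q$ and the horizontal part contributes $\mp u$ applied to $x_{q-1}$ landing one slot over; matching the indexing of the model (where, after the sign twist, the horizontal differential $\dd'$ restricted to these degrees is multiplication by $(-1)^t u$ on $M_{q-3p}$) yields $\dd((x_q)_q) = (\dd_M x_q - (-1)^t x_{q-1} u)_{0\le q\le p-s+1}$ with the boundary conventions $x_{-1} = 0 = x_{p-s+1}$.

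The main obstacle I expect is bookkeeping rather than conceptual: one must be careful that the colimit description $DCI^-(\overline{Y}_\Gamma) = \operatorname{colim}_p C^-_A(F_pM)$ from Lemma \ref{Bounded-FloerFiltration} is compatible with the per-$p$ model above, and that the sign in $\dd'$ comes out as $-(-1)^t$ consistently with the sign $(-1)^n$ appearing in Corollary \ref{Explicit-DCI-Models} (note $n = 4s+t \equiv t \pmod 2$, so $(-1)^n = (-1)^t$, which is the reconciliation point). I would also double-check the edge behavior at $q = p-s$ versus $q = p-s+1$ so that the differential genuinely lands in $C^-_A(F_{4p}M)_{4(s-1)+t}$, i.e. that the index range extends by exactly one when the degree drops by one. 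Once the degreewise identifications and differential formulas are in place for $F_{4p}M$ and $\overline{F}_{4p}M$, there is nothing left to prove — the statement of the lemma is precisely the output of substituting the structural vanishing ranges from Theorem \ref{Structure-DCI} into the explicit totalization models. So the write-up is short: invoke Proposition \ref{Explicit-Cpm-Models} and Corollary \ref{Explicit-DCI-Models}, cite the degree bounds on $F_{4p}M$, and record which factors survive.
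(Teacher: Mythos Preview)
Your proposal is correct and takes essentially the same approach as the paper: the paper's proof is a single sentence invoking Proposition~\ref{Explicit-Cpm-Models} together with the degree bounds $F_{4p}M_n = M_n$ for $n \le 4p+3$ and $F_{4p}M_n = 0$ otherwise, which is exactly the computation you spell out. One minor remark: the detour through Lemma~\ref{Bounded-FloerFiltration} and the completed model in Theorem~\ref{Explicit-DCI-Models} is unnecessary here, since the statement concerns the uncompleted $C^-_A(F_{4p}M)$ and Proposition~\ref{Explicit-Cpm-Models} already applies directly.
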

\begin{proof} This is a simple consequence of Proposition \ref{Explicit-Cpm-Models} using the fact that $F_{4p}M_n=M_n$ for $n\leq 4p+3$ and
$F_{4p}M_n=0$ otherwise. \end{proof}

For each $\eta\in \MC{C}^{f.red}$, $\lambda\in \MC{C}^{red}$ and $\alpha\in \MC{C}^{irr}$ we introduce variables $U_\eta, Z_{\lambda}$ and a generator
$h_\alpha$. Then, according to Lemma \ref{Orbit-Calc-Lemma} and
Corollary \ref{Orbit-Calc-Corollary},
\[ H^-_A(\eta) = R[U_\eta], \;\;\;\; H^-_A(\lambda)=R[Z_\lambda][2] \;\;
\mbox{ and } \;\; H^-_A(\alpha) = R\cdot h_\alpha \]
where $|U_\eta|=-4$, $|Z_\lambda|=-2$ and $|h_\alpha|=3$.

\begin{lemma} \label{Description-Differentials} 
Let $\{ E^r,d^r\}_{r\geq 1}$ be the index spectral sequence with
\[ E^1_{s,t}=\bigoplus_{j(\alpha)\equiv s}H^-_A(\alpha)_t \implies I^-_{s+t}(\overline{Y}_\Gamma)  .\]
Then, for each $r\geq 1$ and $s$, $E^{4r}_{4s,-4(r-1)}=E^1_{4s,-4(r-1)}$
is a free $R$-module with generators
\[ \{U^{r-1}_\eta : \eta\in \MC{C}^{f.red} \ni j(\eta)\equiv 4s\} \cup 
\{Z^{2r-1}_\lambda : \lambda\in \MC{C}^{red} \ni j(\lambda)\equiv 4s \} \]
and $E^1_{4s,3}$ is a free $R$-module with generators
$\{ h_\alpha : \alpha\in \MC{C}^{irr} \ni j(\alpha)\equiv 4s\}$.
Let $p_r:E^1_{4s,3}\ra E^{4r}_{4s,3}$ denote the natural surjection. Then in terms of these generators the differential $d^r\colon E^{4r}_{4(s+r),-4(r-1)}\ra E^{4r}_{4s,3}$ is determined by 
\begin{align*} 
d^{4r}(U^{r-1}_\eta) &= p_r\left(\sum_{(\alpha_1,\cdots,\alpha_r)} n_{\alpha_1\eta}\left(\prod_{i=1}^{r-1}n_{\alpha_{i+1}\alpha_{i}}\right)\cdot h_{\alpha_r}\right)  \\
d^{4r}(W^{2r-1}_\lambda)&= p_r\left(\sum_{(\alpha_1,\cdots,\alpha_r)} n_{\alpha_1\lambda}\left(\prod_{i=1}^{r-1}n_{\alpha_{i+1}\alpha_{i}}\right)\cdot h_{\alpha_r}\right),
\end{align*}
where the sums are taken over tuples $(\alpha_1,\cdots,\alpha_r)\in (\MC{C}^{irr})^r$ for which $(\eta,\alpha_1,\cdots,\alpha_r)$, respectively
$(\lambda,\alpha_1,\cdots,\alpha_r)$, form a path in the
graph $\MC{S}_\Gamma$ (see Definition \ref{Def-Adjacency}).
\end{lemma}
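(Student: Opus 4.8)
The strategy is to work directly with the explicit chain-level models of $C^-_A(F_{4p}M)$ and $C^-_A(\overline{F}_{4p}M)$ from Lemma \ref{Technical-Degree-Lemma}, compute the exact couple of the index filtration explicitly, and then unwind the recursive definition of the higher differentials $d^{4r}$ in a spectral sequence of a filtered complex. Recall from \eqref{SS-Diff-Def} that $d^{4r}$ is computed by lifting a class through $i^{(4r-1)}$: given a cycle $x$ representing a class in $E^{4r}_{4(s+r),-4(r-1)}$, one views $x$ as (the image of) an actual cycle in $C^-_A(F_{4(s+r)}M)$ whose boundary becomes divisible by $i^{4r-1}$ — i.e. whose boundary, computed in $C^-_A(F_{4(s+r)}M)$, actually lands in the subcomplex $C^-_A(F_{4s}M)$ — and then one reads off the homology class of that boundary in $E^1_{4s,3}$, composed with the projection $p_r$. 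So the first step is to identify, for $\eta$ fully reducible with $j(\eta)\equiv 4(s+r)$, a specific chain in $C^-_A(F_{4(s+r)}M)$ lifting the generator $U^{r-1}_\eta\in H^-_A(\overline F_{4(s+r)}M)$ and to iteratively apply the differential formula $\dd((x_q)_q) = (\dd_M x_q - (-1)^t x_{q-1}u)_q$ of Lemma \ref{Technical-Degree-Lemma}.

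\textbf{Key steps.} First I would fix the generator: by Lemma \ref{Orbit-Calc-Lemma} and Corollary \ref{Orbit-Calc-Corollary}, $H^-_A(\eta) = R[U_\eta]$ with $U_\eta$ in degree $-4$, and under the identification $H^-_A(\overline F_{4(s+r)}M)_{4(s-1)} \cong H^-_A(\bigoplus_\alpha H_*(\alpha))$, the class $U^{r-1}_\eta$ is represented by $b_\eta \in H_0(\eta) = M_{4(s+r)}$ placed in the component $C^-_A(\overline F_{4(s+r)}M)_{4(s-1)} \cong M_{4(s+r)}$ (the $q=0$ slot of degree $4s-4$, using that $4(s-1) \le 4(s+r)$). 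Because $\dd_M b_\eta = 0$ for fully reducibles ($\eta$ has trivial differential out of it) — more precisely $\dd_M$ restricted to $H_0(\eta)$ vanishes since $\eta$ is not irreducible — this is genuinely a cycle in $C^-_A(F_{4(s+r)}M)$, viewed via the splitting with all other slots zero. Second, I would apply the full differential of $C^-_A(F_{4(s+r)}M)$ and track how the $u$-action couples successive columns: the term $-(-1)^t x_{q-1}u$ pushes $b_\eta$ up to $b_\eta\cdot u$; but $b_\eta u = 0$ since $u$ acts by zero on $H_*(\eta)$ for $\eta$ reducible or fully reducible (by \eqref{Eq-Orbit-Calc} and the discussion of the $\Lambda_R[u]$-action on $DCI$). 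So this lift is already a cycle in the big complex and $d^r$ vanishes? No — the subtlety is that one must instead lift $U^{r-1}_\eta$ all the way through $i^{4r-1}$, meaning one needs a chain $\xi \in C^-_A(F_{4(s+r)}M)_{4(s-1)}$ with $\dd\xi \in C^-_A(F_{4s}M)$; here $\xi$ will be supported across columns $q=0,1,\dots,r-1$, with $q$-th component obtained by applying $\psi$-like corrections, and the computation of $\dd\xi$ telescopes precisely against the differentials $\dd_M$ whose components are the $n_{\alpha\beta}$ from Theorem \ref{Structure-DCI}. Third, I would unwind $\dd_M$ composed $r$ times: since $\dd_M(b_\beta) = \sum_{\alpha\in\MC{C}^{irr}} n_{\alpha\beta} t_\alpha$ and $t_\alpha = b_\alpha u$ with $u$ invertible $H_0^{irr}\to H_3^{irr}$, iterating produces exactly the sum over paths $(\eta,\alpha_1,\dots,\alpha_r)$ in $\MC{S}_\Gamma$ weighted by $n_{\alpha_1\eta}\prod_{i=1}^{r-1}n_{\alpha_{i+1}\alpha_i}$, landing in $H_3(\alpha_r) \ni h_{\alpha_r}$. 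The reducible case $\lambda$ is identical except the starting generator sits in $H^-_A(\lambda) = R[Z_\lambda][2]$, and $Z^{2r-1}_\lambda$ is represented by $b_\lambda$ in the appropriate slot; since $j(\lambda)\equiv j(\eta)$ issues aside, the same telescoping yields the stated formula. Finally, I would confirm the domain-and-range identifications: $E^{4r}_{4s,-4(r-1)} = E^1_{4s,-4(r-1)}$ because Lemma \ref{Non-trivial-Differential} shows no earlier differential can hit or leave these bidegrees, and likewise $E^1_{4s,3}$ surjects onto $E^{4r}_{4s,3}$ via $p_r$ because the only differentials into bidegree $(4s,3)$ are the $d^{4r'}$ for $r' < r$, whose images one quotients out.

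\textbf{Main obstacle.} The delicate point is the bookkeeping of signs and of which column-index $q$ each piece of the lift $\xi$ occupies — specifically, verifying that the chain supported over the columns $q=0,\dots,r-1$ has boundary landing in $F_{4s}M$ (so that the lift through $i^{4r-1}$ genuinely exists) and that the residual boundary component in the top column is precisely the $r$-fold iterate of $\dd_M$, with no stray signs from the $(-1)^t$ and $(-1)^{p+q+1}$ twists in the differentials of Proposition \ref{Explicit-Cpm-Models} and Lemma \ref{Technical-Degree-Lemma}. I expect the combinatorial identity — that the coefficient extracted equals the sum over honest edge-paths in $\MC{S}_\Gamma$, using that $n_{\alpha\beta}\ne 0$ iff $\alpha,\beta$ adjacent and $\beta$ irreducible (Definition \ref{n-alpha-beta-Def}), so a path through a reducible interior vertex contributes zero automatically — to be routine once the chain-level lift is set up correctly. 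I would handle the signs by working throughout with the $V$-twisted bases of Lemma \ref{Orbit-Calc-Lemma} in which $U$ acts by the identity, exactly as was done to streamline the proofs of Proposition \ref{Explicit-Cpm-Models} and Lemma \ref{f-Homotopy-Lemma}, so that the only signs entering are the explicit $(-1)^t$ in the column-coupling term, which telescope cleanly.
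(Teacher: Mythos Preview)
Your approach is essentially the same as the paper's: both construct an explicit chain-level lift of the generator through the filtration and read off the residual boundary as an iterate of $\dd_M$, which unwinds to the path sum via formula \eqref{Psi-Formula}. The paper streamlines the construction by directly invoking the map $\psi$ of Lemma \ref{Lambda-Lemma}: for $x\in\om{Ker}(u)\subset M_{4(s+r)}$ representing the generator, it sets $y=(\psi^{r-q}x)_{q=1}^{r-1}$ and computes $\dd(y)=(\dd_M\psi^{r-1}x,0,\dots,0,-\dd_M x)$, so that $d^{4r}(x)=p_r(\dd_M\psi^{r-1}x)$ --- which is exactly the telescoping lift you describe, just packaged via the already-available $\psi$ rather than rebuilt from scratch. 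One small correction: the total degree of $U^{r-1}_\eta\in E^1_{4(s+r),-4(r-1)}$ is $4(s+1)$, not $4(s-1)$, so your lift lives in $C^-_A(F_{4(s+r)}M)_{4(s+1)}$.
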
 
\begin{proof} Before we introduce the explicit generators we will work
out a formula for the differentials in terms of the complex $M$.
By Lemma \ref{Non-trivial-Differential} we have
$E^{4r}_{4(s+r),-4(r-1)}=E^1_{4(s+r),-4(r-1)}=
H^-_A(\overline{F}_{4(s+r)}M)_{4(s+1)}$ and by Lemma \ref{Technical-Degree-Lemma} the latter group may be identified with
$\om{Ker}(u)\subset M_{4(s+r)}$.
Recall from diagram \eqref{SS-Diff-Def} that the differential $d^{4r}$
is defined by first applying the connecting homomorphism
$\delta\colon H^-_A(\overline{F}_{4(s+r)}M)\ra H^-_A(F_{4(s+r)-1}M)$, then lifting along the map
$H^-_A(F_{4s}M)\ra H^-_A(F_{4(s+r)-1}M)$ and then pushing down along
$H^-_A(F_{4s}M)\ra H^-_A(\overline{F}_{4s}M)$. On the chain level
we have the following diagram
\[ \begin{tikzcd} C^-_A(\overline{F}_{4(s+r)}M)_{4(s+1)} \arrow{d}{\cong}
\arrow[dashrightarrow]{r}{\delta} & C^-_A(F_{4(s+r)-1}M)_{4s+3} \arrow{d}{\cong} &
C^-_A(F_{4s}M)_{4s+3} \arrow{l}[swap]{\iota} \arrow{d}{\cong} \\
M_{4(s+r)} \arrow[dashrightarrow]{r}{\delta} & 
\bigoplus_{q=0}^{r-1}M_{4(s+q)+3}
& M_{4s+3} \arrow{l}[swap]{\iota}. \end{tikzcd} \] 
The arrows correspond to the connecting homomorphism $\delta$ are
dashed as they are not well-defined before passing to homology. 
Given $x\in \om{Ker}u\subset M_{4(s+r)}$ the
element $\delta(x)$ is represented on the chain level by
$(0,0,\cdots,0,\dd_Mx)$. As $\iota$ is the inclusion of the first summand,
this element does not lift along $\iota$. Let $\psi:M\ra M$ be the map
of Lemma \ref{Lambda-Lemma} and let
\[ y\coloneqq (\psi^{r-q}x)_{q=1}^{r-1}\in \bigoplus_{q=1}^{r-1}M_{4(s+q)} \cong C^-_A(F_{4(s+r-1)}M)_{4(s+1)}. \]
Then $\dd(y) = (\dd_M \psi^{r-1}x,0,0,\cdots,0,-\dd_M x)$, which implies
that $\iota[\dd_M \psi^{r-1}x]=[(0,\cdots,0,\dd_M x)]$ in homology. By
Lemma \ref{Technical-Degree-Lemma} we have the following sequence of
isomorphisms
\[ M_{4s+3} \cong C^-_A(F_{4s}M)_{4s+3} \cong C^-_A(\ovl{F}_{4s}M)_{4s+3}
\cong H^-_A(\ovl{F}_{4s}M)_{4s+3}\cong E^1_{4s,3} .\]
Moreover, according to Lemma \ref{Non-trivial-Differential}, there
are no outgoing differentials from $E^r_{4s,3}$ for $r\geq 1$. We conclude that
$d^{4r}(x)$ is given by the image of $\dd_M\psi^{r-1}x$ along
the surjection $E^1_{4s,3}\ra E^{4r}_{4s,3}$ for each
$x\in E^{4r}_{4(s+r),-4(r-1)}\subset M_{4(s+r)}$. In other words, we have the
following commutative diagram
\begin{equation} \label{Differential-Diagram}
\begin{tikzcd} E^{4r}_{4(s+r),-4(r-1)} \arrow{d}{d^{4r}} \arrow{r}{=} & 
E^1_{4(s+r),-4(r-1)} \arrow{r}{\cong} & \om{Ker}(u) \subset M_{4(s+r)}
\arrow{d}{\dd_M\circ \psi^{(r-1)}} \\
E^{4r}_{4s,3} & E^1_{4s,3} \arrow{l}[swap]{p_r}  \arrow{r}{\cong } 
& M_{4s+3}. \end{tikzcd} 
\end{equation}

Now, $\om{Ker}u\subset M_{4(s+r)}$ is freely generated by
$b_\eta\in H_0(\eta)$, $b_\lambda\in H_0(\lambda)$ for $\eta\in \MC{C}^{f.red}$ and $\lambda\in \MC{C}^{red}$ with $j(\eta)\equiv j(\lambda)\equiv 4(s+r)$. It is important to note that under the top horizontal identification in 
\eqref{Differential-Diagram} we may, and will, require that these
correspond to the generators $U^{r-1}_\eta$ and $W^{2r-1}_\lambda$ in $E^{1}_{4(s+r),-4(r-1)}$. Similarly, $M_{4s+3}$ is freely generated by $t_\alpha=b_\alpha\cdot u\in H_3(\alpha)$
for $\alpha\in \MC{C}^{irr}$ with $j(\alpha)\equiv 4s$,
and under the lower horizontal isomorphism we require that these 
correspond to the generators $h_\alpha\in E^1_{4s,3}$.
Recall from equation \eqref{Psi-Formula} that
\[ \psi(b_\eta)=u^{-1}\dd_M(b_\eta) = \sum_{\alpha\in\MC{C}^{irr}} u^{-1}(n_{\alpha\eta}t_\alpha)=\sum_{\alpha\in \MC{C}^{irr}} n_{\alpha\eta}b_\alpha. \]
and similarly for $\psi(b_\lambda)$. Taking into account the
fact that $n_{\alpha\beta}=0$ whenever $\alpha$ and $\beta$ are not adjacent
in the graph $\MC{S}_\Gamma$, we obtain
\[ \dd_M \psi^{r-1}(b_\eta) = \sum_{(\alpha_1,\cdots,\alpha_r)}
(n_{\alpha_1\eta}n_{\alpha_2\alpha_2}\cdots n_{\alpha_r\alpha_{r-1}})\cdot
t_{\alpha_r}, \]
where the sum is taken over $(\alpha_1,\cdots,\alpha_r)\in (\MC{C}^{irr})^r$
for which $(\eta,\alpha_1,\cdots,\alpha_r)$ forms an edge path in
$\om{S}_\Gamma$. An analogous formula holds for $b_\lambda$, so in view
of diagram \eqref{Differential-Diagram} the proof is complete.    
\end{proof} 

By combining the above lemma with Theorem \ref{Differential-Graphs} we obtain
complete control over all the differentials in the spectral sequences.
The following result takes care of all the extension problems we will meet.

\begin{lemma} \label{Extension-Lemma}
Let $\Gamma\subset \om{SU}(2)$ be a finite subgroup and
let $(E^r,d^r)_{r\geq 1}$ be the index spectral sequence with
\[ E^1_{s,t} = \bigoplus_{j(\alpha)\equiv s} H^-_A(\alpha)_t \implies
I^-(\overline{Y}_\Gamma)_{s+t}  .\]
Assume that $E^\infty_s$ is a free $R[U]$-module for 
$s=0,4$. Then there is an isomorphism of $R[U]$-modules
\[ I^-(\overline{Y}_\Gamma) \cong (E^\infty_0\oplus E^\infty_4)^{\oplus,8}  .\]
\end{lemma}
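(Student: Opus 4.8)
The plan is to extract $I^-(\overline{Y}_\Gamma)$ from the spectral sequence by solving the (a priori nontrivial) extension problem using the freeness hypothesis on $E^\infty_s$. First I would recall the general setup: by Theorem \ref{Index-SS-Theorem} the spectral sequence converges strongly (the filtration of $\hat{C}^-_A(M)$ is degreewise bounded below by Lemma \ref{Bounded-FloerFiltration}), so the filtration $F_sI^-(\overline{Y}_\Gamma) = \om{Im}(A_s\to A_\infty)$ is exhaustive and complete Hausdorff and the natural maps $\phi_s\colon \overline{F}_sI^-(\overline{Y}_\Gamma)\to E^\infty_s$ are isomorphisms. Since $j$ takes values in $\{0,4\}$ we have $E^\infty_s = 0$ unless $4|s$, and the mod $8$ periodicity $E^r_{s,t}\cong E^r_{s+8,t}$ from Theorem \ref{Index-SS-Theorem} (compatible with the target) means it suffices to understand $\overline{F}_0I^-$ and $\overline{F}_4I^-$, all other graded pieces being periodic shifts of these. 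So the graded module underlying $I^-(\overline{Y}_\Gamma)$ is, after reassembling the filtration pieces over all $p\in\Z$, the module $(E^\infty_0\oplus E^\infty_4)^{\oplus,8}$; the $\oplus$ rather than $\Pi$ here is forced by Lemma \ref{Bounded-FloerFiltration} which gives $\hat{C}^-_A(M) = \om{colim}_p C^-_A(F_pM) = \bigcup_p C^-_A(F_pM)$, so each element lies in some $F_pC^-_A(M)$ and only finitely many periodic copies contribute in each degree.

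Next I would address the $R[U]$-module structure, which is the real content. Because $U$ has degree $-4$ and the filtration steps happen in multiples of $4$, the action of $U$ on $\overline{F}_{4p}I^- \cong E^\infty_{4p}$ could a priori have a "correction term" dropping the filtration level, exactly as happened in the proof of Theorem \ref{IPluss-1}. The key observation is that the associated graded of $I^-$ with respect to the index filtration is $E^\infty = \bigoplus_p E^\infty_{4p}$ as an $H^-_A(R) = R[U]$-module (the $R[U]$-action on the spectral sequence is compatible with the target by Example \ref{Exact-Couple-Filt-Complex} and Theorem \ref{Index-SS-Theorem}), and each $E^\infty_{4p}$ is a free $R[U]$-module by hypothesis. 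The plan is then: choose $R[U]$-module generators of each $E^\infty_{4p}$, lift them to homogeneous elements of $I^-(\overline{Y}_\Gamma)$ of the same degree, and use freeness to see that the resulting map from the free $R[U]$-module on these generators to $I^-(\overline{Y}_\Gamma)$ is an isomorphism. Concretely, if $\{\overline{x}_i\}$ is an $R[U]$-basis of $E^\infty_{4p}$ with $\overline{x}_i$ homogeneous, pick lifts $x_i \in F_{4p}I^-$; then the claim is that $\{x_i\}_{i,p}$ together with their $U$-translates give an $R$-basis of $I^-(\overline{Y}_\Gamma)$, which follows by a standard filtration-induction argument using strong convergence (complete Hausdorff and exhaustive) since each graded piece is free on the images $\overline{x}_i$. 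Freeness is what kills the potential extension: $U^n \overline{x}_i \neq 0$ for all $n$, so the lifts cannot collapse, and there is no room for a filtration-lowering correction to create new relations — any such term would have to be expressible in terms of the $x_j$ of lower filtration, which is exactly what a free presentation accommodates.

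I would then assemble these observations: the isomorphism $I^-(\overline{Y}_\Gamma)\cong (E^\infty_0\oplus E^\infty_4)^{\oplus,8}$ of $R$-modules upgrades to an isomorphism of $R[U]$-modules because the chosen generators and their $U$-orbits match on both sides — on the right side $U$ acts by the internal free $R[U]$-structure on each $E^\infty_{4p}\cong E^\infty_0$ or $E^\infty_4$, and on the left side the filtration-induction shows $U$ acts the same way up to the filtration, but freeness removes the ambiguity. The mod $8$ periodicity isomorphism of the spectral sequence, compatible with the target and the $R[U]$-action (Theorem \ref{Index-SS-Theorem}), identifies $E^\infty_{4p}$ with $E^\infty_0$ for $p$ even and $E^\infty_4$ for $p$ odd, giving exactly the $(\,\cdot\,)^{\oplus,8}$ bookkeeping.

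The main obstacle I anticipate is making the passage from "associated graded is free $R[U]$-module" to "$I^-$ itself is that free $R[U]$-module" fully rigorous in the completed/colimit setting — one must be careful that the filtration-induction terminates correctly in each fixed total degree, which it does because by \eqref{Finite-Filtration-Eq} and Lemma \ref{Bounded-FloerFiltration} the filtration on $DCI^-$ is degreewise bounded below, so in each degree only finitely many filtration levels are nonzero and the argument is genuinely finite. A secondary subtlety is checking that the lifts of $R[U]$-generators can be chosen compatibly with the action of $U$ across filtration jumps (i.e. that $U x_i$ differs from the lift of $U\overline{x}_i$ by something of strictly lower filtration), but this is automatic from the $R[U]$-linearity of the filtration and is precisely the kind of routine diagram-chase I would not spell out in full. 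Everything else is bookkeeping with the explicit generators $U_\eta, Z_\lambda, h_\alpha$ introduced before Lemma \ref{Description-Differentials}.
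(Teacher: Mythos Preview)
Your proposal is correct and follows essentially the same approach as the paper: use strong convergence, exploit freeness of $E^\infty_s$ over $R[U]$ to split the filtration, use the degreewise bounded-below property to make the argument finite in each degree, and invoke periodicity to reduce to $s=0,4$. The paper's presentation is slightly more streamlined in that it directly splits the short exact sequences $0\to F_{s-1}I\to F_sI\to E^\infty_s\to 0$ in the category of $R[U]$-modules (free implies projective), rather than separately establishing the $R$-module isomorphism and then arguing about the $U$-action via lifted generators; but your ``lift generators and check the induced map is an isomorphism by filtration-induction'' argument is exactly the content of that splitting, so the difference is purely expository.
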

\begin{proof} Recall that the index spectral sequence is periodic
in the sense that $E^r_{s,t}\cong E^r_{s+8,t}$, or equivalently $E^r_s[8]\cong E^r_{s+8}$, for all $s,t\in \Z$ and $r\in \N\cup \{\infty\}$. Therefore,
the assumption that $E_0^\infty$ and $E_4^{\infty}$ are free $R[U]$-modules
implies that $E^\infty_s$ is a free $R[U]$-module for each $s\in \Z$, as $E^1_s=0$ for all $s$ not divisible by $4$.  

Write $I=I^-(\overline{Y}_\Gamma)$. By Theorem \ref{Index-SS-Theorem}
$(E^r,d^r)_{r\geq 1}$ is a spectral sequence
of $R[U]$-modules that converges strongly to $I$. In other words,
$I$ carries an exhaustive and complete Hausdorff filtration
$\{F_sI\}_s$ of $R[U]$-submodules and there is an isomorphism $E^\infty_s\cong \overline{F}_sI$ of $R[U]$-modules for each $s\in \Z$. In view
of the fact that $E^\infty_s$ is free over $R[U]$, the short exact
sequence
\[ \begin{tikzcd} 0\arrow{r} & F_{s-1}I \arrow{r}{\iota_s} & F_sI 
\arrow{r}{\pi_s} & \overline{F}_sI\cong E^\infty_s \arrow{r} & 0 
\end{tikzcd} \]
splits in the category of $R[U]$-modules for each $s\in \Z$. Hence, $F_{s-1}I\oplus \overline{F}_sI\cong F_sI$ and inductively
\[ F_{s-1}I\oplus \left( \bigoplus_{t=0}^{r} \overline{F}_{s+t}I \right) \cong F_{s+r}I  \]
for $s\in \Z$ and $r\geq 1$. These are all isomorphisms of $R[U]$-modules.
By Lemma \ref{Technical-Degree-Lemma} we have $E^1_{s,t}=0$ for all
$s$ and $t\geq 4$. Therefore, for fixed $n\in \Z$, $(E^1_s)_n = E^1_{s,n-s}=0$
for all $s\leq n-4=:s_0$. Hence, $(\overline{F}_sI)_n=0$ for all $s\leq s_0$.
Since the filtration of $I$ is complete Hausdorff it follows that
$(F_sI)_n=0$ for all $s\leq s_0$. Consequently, using the above isomorphisms,
we obtain
\[ I_n = \om{colim}_{s\geq s_0} (F_sI)_n \cong \om{colim}_{s\geq s_0}
\bigoplus_{i=s_0}^s (\overline{F}_sI)_n \cong \bigoplus_s (E^\infty_s)_n \]
for each $n\in \Z$. These isomorphisms piece together to an isomorphism 
$I\cong \bigoplus_s E^\infty_s$ of $R[U]$-modules. Finally, we may exploit the periodicity of $E^\infty$
and the fact that $E^\infty_s=0$ unless $4|s$ to simplify:
\[ I \cong \bigoplus_s E^\infty_s \cong \bigoplus_{s} (E^\infty_{8s}\oplus E^{\infty}_{8s+4}) \cong \bigoplus_s (E^\infty_0[8s]\oplus E^\infty_4[8s])
= (E^\infty_0\oplus E^\infty_4)^{\oplus,8} \]
and the proof is complete. 
\end{proof}    

It is not possible to give a uniform result for the calculation of $I^-(\ovl{Y}_\Gamma)$ for all the subgroups $\Gamma\subset \om{SU}(2)$ simultaneously.
We will therefore treat the simplest cases $\Gamma=I^*,O^*,T^*,C_m$ first and treat the more involved case of binary dihedral groups afterwards.

We have the following table over the flat connections extracted from 
Appendix $A$. The notation is compatible with Proposition \ref{Differential-Graphs}. 
\[ \def\arraystretch{1.1} 
\begin{array}{|c|c|c|c|}
\hline
\Gamma & \MC{C}^{f.red} & \MC{C}^{irr} & \MC{C}^{red} \\ \hline
C_{2m} & \theta,\eta & {} & \lambda_1,\cdots, \lambda_{m-1} \\ \hline
C_{2m+1} & \theta & {} & \lambda_1,\cdots,\lambda_m \\ \hline
I^* & \theta & \alpha, \beta & {} \\ \hline
O^* & \theta,\eta & \alpha,\beta  & {} \\ \hline
T^* & \theta & \alpha & \lambda \\ \hline  \end{array} \]

\begin{theorem} \label{IMinus-1}
The negative equivariant instanton Floer homology
associated with the trivial $\om{SU}(2)$-bundle over $\overline{Y}_\Gamma$
for $\Gamma=C_{2m},C_{2m+1}$ is given by
\begin{align*}
I^-(\overline{Y}_{C_{2m}}) &= \left( (R[U_\theta]\oplus R[U_\eta][4m]\oplus
\bigoplus_{i=1}^{m-1} R[Z_{\lambda_i}][4i+2] \right)^{\oplus,8}  \\
I^-(\overline{Y}_{C_{2m+1}}) &= \left( R[U_\theta]\oplus \bigoplus_{i=1}^m R[Z_{\lambda_i}][4i+2] \right)^{\oplus,8},
\end{align*}
while for $\Gamma=I^*,O^*,T^*$ we have $I^-(\overline{Y}_\Gamma)=(X_\Gamma)^{\oplus, 8}$ where $X_\Gamma$ is the $R[U]$-submodule
of $P_\Gamma$ generated by $G_\Gamma$ specified in the following table.     
\[ \def\arraystretch{1.3} 
\begin{array}{|c|c|c|}
\hline
\Gamma & P_\Gamma & G_\Gamma \\ \hline
I^* & R[U_\theta] & \{U_\theta^2\} \\ \hline
O^* & R[U_\theta]\oplus R[U_\eta][4] & \{U_\theta^1,U_\eta^1 \} \\ \hline
T^* & R[U_\theta]\oplus R[Z_\lambda][2] & \{U_\theta^1,Z_\lambda^0,3U_\theta^0-Z^1_\lambda\} \\ \hline \end{array} \]
\end{theorem}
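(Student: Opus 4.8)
\textbf{Proof strategy for Theorem \ref{IMinus-1}.}
The plan is to apply the index spectral sequence of Theorem \ref{Index-SS-Theorem} together with the computations of $H^-_A(\alpha)$ in Lemma \ref{Orbit-Calc-Lemma} and Corollary \ref{Orbit-Calc-Corollary}, the description of the differentials from Lemma \ref{Description-Differentials}, and the extension result of Lemma \ref{Extension-Lemma}. For each of the five families in the table one first writes down the $E^1$ page: by Lemma \ref{Non-trivial-Differential} it is concentrated in columns $4|s$, with $E^1_{4s,t}$ assembled from copies of $R[U_\eta]$, $R[Z_\lambda][2]$ and $R[3]$ according to the type of the flat connections $\alpha$ with $j(\alpha)\equiv 4s$; the placement of these connections and the integers $n_{\alpha\beta}$ are read off from Proposition \ref{Differential-Graphs} and the table preceding the theorem. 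By periodicity $E^r_s\cong E^r_{s+8}$ it suffices to analyse the two columns $s=0$ and $s=4$.

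\textbf{The cyclic cases.} For $\Gamma=C_m$ there are no irreducibles, so the target $R[3]$-summands are absent, hence by Lemma \ref{Non-trivial-Differential} all differentials vanish and $E^1=E^\infty$. Each $E^\infty_s$ is then a direct sum of free $R[U]$-modules $R[U_\theta]$, $R[U_\eta]$, $R[Z_\lambda][2]$ placed in the appropriate degrees $j(\cdot)$ (these are $0$, $4m$, and $4i$ for $\lambda_i$ in the even case, shifted by the grading conventions; the extra $[4i+2]$ in the statement comes from the internal degree $2$ of the generator $Z_\lambda$). Since each $E^\infty_s$ is free over $R[U]$, Lemma \ref{Extension-Lemma} applies verbatim and gives the stated formulas as $(E^\infty_0\oplus E^\infty_4)^{\oplus,8}$.

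\textbf{The exceptional cases.} For $\Gamma=I^*,O^*,T^*$ one must actually run the differentials. Using $\MC{S}_\Gamma$ from Proposition \ref{Differential-Graphs}: for $I^*$ the path is $\theta-\alpha-\beta$ with labels $1$ and $(3|4)$, so the only nonzero differential is $d^4\colon E^4_{4s+4,0}\to E^4_{4s,3}$, which by Lemma \ref{Description-Differentials} sends $U^0_\theta\mapsto n_{\alpha\theta}h_\alpha=h_\alpha$; then $d^8$ sends $U^1_\theta\mapsto n_{\alpha\theta}n_{\beta\alpha}h_\beta=4h_\beta$, and since $2\in R^\times$ this kills $h_\beta$ as well, while $U^0_\theta$ and $U^1_\theta$ are removed; the survivors are $U^p_\theta$ for $p\ge 2$, i.e.\ the $R[U]$-submodule of $R[U_\theta]$ generated by $U^2_\theta$. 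For $O^*$ the path $\theta-\alpha-\beta-\eta$ with labels $1,(3|3),1$ gives $d^4(U^0_\theta)=h_\alpha$, $d^4(U^0_\eta)=h_\beta$ (from the symmetric end), and $d^8(U^1_\theta)=3h_\beta$, $d^8(U^1_\eta)=3h_\alpha$; again since $3$ and $2$ are units the classes $h_\alpha,h_\beta$ and $U^0_\theta,U^0_\eta,U^1_\theta,U^1_\eta$ all die (one must check the rank of the $2\times2$ matrix $\bigl(\begin{smallmatrix}0&3\\3&0\end{smallmatrix}\bigr)$ is full over $R$, which holds as $3\in R^\times$ is false in general but $\det=-9$ is a unit iff $3$ is; here it is enough that the differential is injective with free cokernel, which one verifies directly), leaving the submodule of $R[U_\theta]\oplus R[U_\eta][4]$ generated by $U^1_\theta,U^1_\eta$. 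For $T^*$ the path $\theta-\alpha-\lambda$ with labels $1$ and $3$: $\lambda$ is reducible so contributes $R[Z_\lambda][2]$; $d^4(U^0_\theta)=h_\alpha$ and $d^4(Z^1_\lambda)=n_{\alpha\lambda}h_\alpha=3h_\alpha$, so on $E^4_{\cdot,3}$ the image is $R\cdot h_\alpha$ and $h_\alpha$ dies; among the sources, $U^0_\theta$ and $Z^1_\lambda$ span a rank-$2$ free module mapping onto $R\cdot h_\alpha$, so the kernel is the rank-$1$ free submodule generated by $3U^0_\theta-Z^1_\lambda$, and no higher differentials occur (the path has length $2$). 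One concludes $E^\infty_s$ is free over $R[U]$ in each case, with the generators listed in the table, and Lemma \ref{Extension-Lemma} finishes the argument.

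\textbf{Main obstacle.} The only real work is the bookkeeping for $O^*$ and $T^*$: one must carefully track which generators of the (free) $E^r_{\cdot,3}$ are hit, confirm that the relevant matrices of $n_{\alpha\beta}$'s have the right rank over $R$ given only $\tfrac12\in R$ (this is where invertibility of $2$, and more precisely of the specific determinants $4$, $9$, $3$ appearing, must be used — or rephrased so that only injectivity with free cokernel is needed, which is the genuinely delicate point), and verify that the resulting $E^\infty$ is free over $R[U]$ so that Lemma \ref{Extension-Lemma} is applicable. Once freeness of $E^\infty$ is established the extension problem is automatic, so the freeness check is the crux.
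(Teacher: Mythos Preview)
Your overall strategy is the same as the paper's and is correct for the cyclic, $I^*$, and $T^*$ cases. The issue is your treatment of $O^*$, and the related worry in your ``main obstacle'' paragraph about needing $3$ or $9$ to be invertible.

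For $O^*$ the grading is $j(\theta)=j(\beta)=0$, $j(\alpha)=j(\eta)=4$, so on the $E^4$-page one has
\[
d^4\colon R\{U^0_\theta\}=E^4_{8s,0}\longrightarrow E^4_{8s-4,3}=R\{h_\alpha\},\qquad
d^4\colon R\{U^0_\eta\}=E^4_{8s+4,0}\longrightarrow E^4_{8s,3}=R\{h_\beta\},
\]
with $d^4(U^0_\theta)=n_{\alpha\theta}h_\alpha=h_\alpha$ and $d^4(U^0_\eta)=n_{\beta\eta}h_\beta=h_\beta$. Both coefficients are $1$, so these are \emph{isomorphisms}, and hence $E^5_{*,3}=0$. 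By Lemma \ref{Non-trivial-Differential} every higher differential $d^{4r}$ must land in $E^{4r}_{*,3}$, which is now zero; thus there is \emph{no} $d^8$ to compute. Your assertion that $U^1_\theta,U^1_\eta$ ``die'' via $d^8$ is therefore wrong (and indeed inconsistent with your own conclusion that they generate the surviving module). The $3$'s on the edge $\alpha$--$\beta$ simply never enter, and no invertibility of $3$ or $9$ is required anywhere in the $O^*$ calculation.

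More generally, your ``main obstacle'' is misidentified. The only invertibility used in the whole theorem is that of $4=2^2$ in the $I^*$ case, where $d^8(U^1_\theta)=4h_\beta$ must be an isomorphism. For $T^*$ the map $d^4\colon R\{U^0_\theta,Z^1_\lambda\}\to R\{h_\alpha\}$, $(a,b)\mapsto a+3b$, is surjective with free kernel $R\{3U^0_\theta-Z^1_\lambda\}$ regardless of whether $3$ is a unit. So the delicate point you flag does not exist; once you observe that in each exceptional case the $d^4$ (or $d^4$ followed by $d^8$ for $I^*$) exhausts the $E^{*}_{*,3}$-groups, the spectral sequences collapse and freeness of $E^\infty_s$ is immediate.
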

\begin{proof} For $\Gamma=I^*,O^*,T^*,C_n$ we will write $\MC{C}=\MC{C}_\Gamma$
for the set of flat connections and $(E^r,d^r)_{r\geq 1}$ will denote the corresponding index spectral sequence with
\[ E^1_{s,t} = \bigoplus_{j(\alpha)\equiv s}H^-_A(\alpha)_t\implies I^-(\overline{Y}_\Gamma)_{s+t} .\]
In each case we will calculate $E^\infty$ and observe that $E^\infty_0$
and $E^\infty_4$ are free $R[U]$-modules. By Lemma \ref{Extension-Lemma}
this is enough to conclude that $I^-(\overline{Y}_\Gamma)=(E_0^\infty \oplus E_4^\infty)^{\oplus,8}$. In the following we will make consistent use
of Lemma \ref{Non-trivial-Differential}, Theorem \ref{Differential-Graphs} and Lemma \ref{Description-Differentials} that together determine all the
differentials explicitly.  

$\mathbf{C_n}$. As $\MC{C}^{irr}=\emptyset$ there are no nontrivial
differentials in the spectral sequence so that $E^1=E^\infty$. For $n=2m$ we have $\MC{C}=\{\theta,\eta,\lambda_1,\cdots,\lambda_{m-1}\}$ with $\theta$, $\eta$ fully reducible and the $\lambda_i$ reducible. The grading is given by $j(\lambda_i)\equiv 4i$ and $j(\eta)\equiv 4m$. This gives
\[ E^\infty_0\oplus E^\infty_4 = R[U_\theta]\oplus R[U_\eta][j(\eta)]
\oplus \left( \bigoplus_{i=1}^{m-1} R[Z_{\lambda_i}][j(\lambda_i)+2] \right).\]
For $n=2m+1$ we have $\MC{C}=\{\theta,\lambda_1,\cdots,\lambda_m\}$ with
$\theta$ fully reducible and the $\lambda_i$ reducible. The grading is
given by $j(\lambda_i) \equiv 4i$. Hence, 
\[ E^\infty_0\oplus E^\infty_4 = R[U_\theta] \oplus \left( \bigoplus_{i=1}^m R[Z_{\lambda_i}][j(\lambda_i)+2] \right) .\]  
In both cases these are free $R[U]$-modules ($R[Z_\lambda]$ is freely generated by $\{Z_\lambda^0,Z_\lambda^1\}$). The stated results are obtained by applying
$X\mapsto X^{\oplus,8}$ to the above formulas for $E_0^\infty\oplus E^\infty_4$.

$\mathbf{I^*}$. In this case we have $\MC{C}=\{\theta,\alpha,\beta\}$ with
$\alpha$ and $\beta$ irreducible. The grading is given by $j(\theta)=j(\beta)=0$ and $j(\alpha)=4$. The first nontrivial differential in the index spectral sequence is
$d^4\colon E^{4}_{8s,0}=R\{ U^0_\theta\} \ra R\{ h_\alpha\}=E^4_{8s-4,3}$
and is given by $d^4(U^0_\theta) = n_{\alpha\eta}h_\alpha = h_\alpha$. Therefore, $E^\infty_{8s+4,3}=0=E^\infty_{8s,0}$. The next differential is
$d^8\colon E^8_{8s,-4}=R\{ U_\theta^1\} \ra R\{ h_\beta\}= E^8_{8(s-1),3}$
and is given by $d^8(U^1_\theta)=n_{\beta\alpha}n_{\alpha\eta}h_\beta
=4h_\beta$. As $2\in R$ is invertible, this is an isomorphism and we conclude that $E^\infty_{8s,3}=0$ and $E^\infty_{8s,-4}=0$. There are no more nontrivial 
differentials in the spectral sequence. Therefore, $E^\infty_4=0$ and $E^\infty_0 \subset R[U_\theta]$ is the free $R[U]$-submodule generated by $U_\theta^2$ as required. 

$\mathbf{O^*}$. Here we have $\MC{C}=\{\theta,\eta,\alpha,\beta\}$ with
$\theta$, $\eta$ fully reducible and $\alpha,\beta$ irreducible. The grading
is $j(\theta)=j(\beta)=0$ and $j(\alpha)=j(\eta)=4$. The first nontrivial
differentials are
\begin{align*}
d^4\colon & E^4_{8s+4,0}=R\{ U^0_\eta\} \ra R\{ h_\beta\} =E^4_{8s,3}  \\
d^4\colon & E^4_{8s,0}=R\{ U^0_\theta\} \ra R\{ h_\alpha\}= E^4_{8s-4,3} .
\end{align*}
These are both isomorphisms as $n_{\beta\eta}=1=n_{\alpha\theta}$. This implies
that $E^\infty_{8s,3}=E^5_{8s,3}=0$ and $E^\infty_{8s-4,3}=E^5_{8s-4,3}=0$. There can therefore be no more nontrivial differentials. 
Hence, $E^\infty_0\subset R[U_\theta]$ and $E^\infty_4\subset R[U_\eta][4]$
are the free $R[U]$-submodules generated by $U_\theta^1$ and $U_\eta^1$,
respectively. 

$\mathbf{T^*}$. In this case $\MC{C}=\{\theta,\alpha,\lambda\}$ with
$\theta$ fully reducible, $\alpha$ irreducible and $\lambda$ reducible.
The grading is $j(\theta)=j(\lambda)=0$ and $j(\alpha)=4$. The first
nontrivial differential is $d^4\colon E^4_{8s,0}=R\{Z_\lambda^1,U_\theta^0\}\ra R\{h_\alpha\}=E^4_{8s-4,3}$ and is determined by
$d^4(U^0_\theta)=n_{\alpha\theta}h_\alpha = h_\alpha$ and
$d^4(Z^1_\lambda) = n_{\alpha\lambda}h_\alpha = 3h_\alpha$. This map
is surjective with kernel $R\{3U^0_\theta-Z^1_\lambda\}$. Therefore,
$E^\infty_{8s-4,3}=E^4_{8s-4,3}=0$ and there are no more nontrivial differentials. We find $E^\infty_4=0$, $(E^\infty_{0})_0= R\{3U^0_\theta-Z^1_\lambda\}$
and $(E^\infty_0)_n=(R[U_\theta]\oplus R[Z_\lambda][2])_n$ in all other
degrees $n$. It now suffices to observe that $E^\infty_0$ is indeed the free
submodule of $R[U_\theta]\oplus R[Z_\lambda][2]$ generated by
$\{Z_\lambda^0,3U^0_\theta-Z^1_\lambda,U_\theta^1\}$. This completes
the final case and the proof. \end{proof}  

We will now consider the binary dihedral groups $D_m^*$. It is necessary to
partition the calculations into cases depending on the residue of $m$
mod $4$. We have the following table over the flat connections 
\[ \def\arraystretch{1.1} 
\begin{array}{|c|c|c|c|}
\hline
{} & \MC{C}^{f.red} & \MC{C}^{irr} & \MC{C}^{red} \\ \hline
D_{4n}^* & \theta,\eta_1,\eta_2,\eta_3 & \alpha_1,\cdots,\alpha_{2n} &
{} \\ \hline
D_{4n+1}^* & \theta,\eta & \alpha_1,\cdots,\alpha_{2n} & \lambda \\ \hline
D_{4n+2}^* & \theta,\eta_1,\eta_2,\eta_3 & \alpha_1,\cdots,\alpha_{2n+1} &
{} \\ \hline
D_{4n+3}^* & \theta,\eta & \alpha_1,\cdots,\alpha_{2n+1} & \lambda \\
\hline \end{array} \]
in agreement with Appendix $A$ and Proposition \ref{Differential-Graphs}. 
In all cases the grading of the irreducibles are given by
$j(\alpha_i)\equiv 4i \mod{8}$. For $D^*_{4n+2}$ and $D^*_{4n+3}$ all
the fully reducibles and reducibles satisfy $j(\rho)=0$. For
$D^*_{4n}$ we have $j(\theta)=j(\eta_1)=0$, $j(\eta_2)=j(\eta_3)=4$, while
for $D^*_{4n+1}$ we have $j(\theta)=j(\eta)=0$ and $j(\lambda)=4$. For
the convenience of the reader we include the relevant diagrams for
$S_{D^*_m}$ from Proposition \ref{Differential-Graphs}. 

\begin{center}
\begin{tikzpicture}
\node (27) at (2,0.6) {$\MC{S}_{D^*_{2m}}$};

\node (1) at (0,0) {$\alpha_1$};
\node (2) at (1,0) {$\alpha_2$};
\node (3) at (2.6,0) {$\alpha_{m-1}$};
\node (4) at (4,0) {$\alpha_m$};
\node (5) at (-1,0.4) {$\theta$};
\node (6) at (-1,-0.4) {$\eta_1$};
\node (7) at (5,0.4) {$\eta_2$};
\node (8) at (5,-0.4) {$\eta_3$};

\node (9) at (0.5,-0.4) {$(2|2)$};
\node (10) at (3.4,-0.4) {$(2|2)$};
\node (11) at (-0.4,0.4) {$1$};
\node (12) at (-0.4,-0.4) {$1$};
\node (13) at (4.5,0.4) {$1$};
\node (14) at (4.5,-0.4) {$1$};

\draw (5)--(1)--(2); \draw (6)--(1);
\draw (3)--(4)--(7); \draw (4)--(8);
\draw[dotted,thick] (2)--(3);
\end{tikzpicture}
\end{center}
\begin{center}
\begin{tikzpicture} 
\node (28) at (10,0.6) {$\MC{S}_{D^*_{2m+1}}$};

\node (15) at (8,0) {$\alpha_1$};
\node (16) at (9,0) {$\alpha_2$};
\node (17) at (10.6,0) {$\alpha_{m-1}$};
\node (18) at (12,0) {$\alpha_m$};
\node (19) at (13,0) {$\lambda$};
\node (20) at (7,0.4) {$\theta$};
\node (21) at (7,-0.4) {$\eta$};

\node (22) at (8.5,-0.4) {$(2|2)$};
\node (23) at (11.4,-0.4) {$(2|2)$};
\node (24) at (12.5,-0.4) {$2$};
\node (25) at (7.6,0.4) {$1$};
\node (26) at (7.6,-0.4) {$1$};

\draw (16)--(15)--(21); \draw (15)--(20);
\draw (17)--(18)--(19);
\draw[dotted, thick] (16)--(17);
\end{tikzpicture}
\end{center}

\begin{theorem} \label{IMinus-2}
The negative instanton Floer homology associated with
the trivial $\om{SU}(2)$-bundle over $\overline{Y}_\Gamma$ for
$\Gamma=D^*_m$ is given by $I^-(\overline{Y}_{D_m^*})=(X_m)^{\oplus,8}$,
where $X_m$ is the $R[U]$-submodule of $P_m$ generated by $G_m$
specified in the following tables.
\[ \def\arraystretch{1.3}
\begin{array}{|c|c|}
\hline m & P_m \\ \hline
4n & R[U_\theta]\oplus R[U_{\eta_1}]\oplus R[U_{\eta_2}][4]\oplus
R[U_{\eta_3}][4] \\ \hline
4n+1 & R[U_\theta] \oplus R[U_\eta] \oplus R[Z_\lambda][6] \\ \hline
4n+2 & R[U_\theta]\oplus R[U_{\eta_1}]\oplus R[U_{\eta_2}]\oplus R[U_{\eta_3}]
\\ \hline
4n+3 &  R[U_\theta]\oplus R[U_\eta]\oplus R[Z_\lambda][2] \\ \hline
\end{array} \] 

\[ \def\arraystretch{1.3}
\begin{array}{|c|c|}
\hline m & G_m \\ \hline
4n & \{ U^0_{\theta}-U_{\eta_1}^0, U^0_{\eta_2}-U^0_{\eta_3},U_\theta^n,U_{\eta_2}^n\} \\ \hline 
4n+1 & \{U_\theta^0-U_\eta^0,Z^0_\lambda,U_\theta^n,Z^{2n+1}_\lambda \}
\\ \hline
4n+2 & \{U^0_\theta-U_{\eta_1}^0,U^0_{\eta_2}-U^0_{\eta_3},
U_\theta^n-U_{\eta_2}^n, U_\theta^{n+1}\} \\ \hline
4n+3 & \{U^0_\theta-U^0_\eta,Z^0_\lambda,2U^n_{\theta}-Z_\lambda^{2n+1},U_\theta^{n+1} \}  \\ \hline \end{array} \]

\end{theorem}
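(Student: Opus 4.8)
The plan is to run the same machinery as in the proof of Theorem \ref{IMinus-1}: for each residue class $m \equiv 0,1,2,3 \Mod 4$, compute the $E^\infty$ page of the index spectral sequence
\[
E^1_{s,t} = \bigoplus_{j(\alpha)\equiv s} H^-_A(\alpha)_t \implies I^-(\overline{Y}_{D_m^*})_{s+t},
\]
verify that $E^\infty_0$ and $E^\infty_4$ are free $R[U]$-modules, and then invoke Lemma \ref{Extension-Lemma} to conclude $I^-(\overline{Y}_{D_m^*}) \cong (E^\infty_0 \oplus E^\infty_4)^{\oplus,8}$, identifying $X_m = E^\infty_0 \oplus E^\infty_4$ with the submodule of $P_m$ generated by $G_m$. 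First I would assemble the input data: the list of flat connections from the table preceding the statement, their gradings, and the labeled graphs $\MC{S}_{D^*_{2m}}$, $\MC{S}_{D^*_{2m+1}}$ from Proposition \ref{Differential-Graphs}, which record that $n_{\alpha_i\alpha_{i+1}} = n_{\alpha_{i+1}\alpha_i} = 2$ along the chain of irreducibles, $n_{\alpha_1\rho} = 1$ for $\rho$ the fully reducible neighbors $\theta,\eta,\eta_1$, and the terminal labels $1$ or $2$ at the other end ($\eta_2,\eta_3$ for even $m$; $\lambda$ for odd $m$, where $n_{\alpha_m\lambda} = 2$).

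The core computation is to feed these into the differential formula of Lemma \ref{Description-Differentials}: the nonzero differentials are $d^{4r}\colon E^{4r}_{4(s+r),-4(r-1)} \to E^{4r}_{4s,3}$, and on a generator $U^{r-1}_\rho$ (or $Z^{2r-1}_\lambda$) sitting over a fully reducible (resp. reducible) orbit $\rho$, the image is $p_r$ of a sum over edge paths $(\rho,\alpha_1,\dots,\alpha_r)$ in $\MC{S}_\Gamma$ of $n_{\alpha_1\rho}\prod_{i=1}^{r-1} n_{\alpha_{i+1}\alpha_i}$ times $h_{\alpha_r}$. Because the graph $\MC{S}_{D^*_m}$ is a path with the reducibles/fully-reducibles hanging off the two ends, for each length $r$ there is essentially a unique such path starting from each $\rho$, so the coefficient is a clean product of $1$'s and $2$'s; concretely $d^{4r}$ hitting $h_{\alpha_r}$ from the ``$\theta$-side'' contributes $1\cdot 2^{r-1}$, and from the ``$\eta$-side'' the same, while for the group $D^*_{4n+1}$ and $D^*_{4n+3}$ the reducible $\lambda$ at the far end contributes a factor $2$ at the last step. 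Since $2 \in R^\times$, each such differential is (up to unit) surjective onto the relevant free rank-one piece, and one bookkeeps which copies of $h_{\alpha_i}$ get killed, which combinations of the $U_\rho^p$, $Z_\lambda^q$ survive, and at which page the sequence degenerates — this is where the four residue cases genuinely diverge, because the number $2n$ or $2n+1$ of irreducibles and the parity of the gradings $j(\alpha_i)\equiv 4i$ determine exactly how the towers telescope. The surviving classes in degree $0$ assemble into $G_m$: the ``difference'' generators like $U_\theta^0 - U_{\eta_1}^0$ come from the two fully reducibles mapping to the same $h_{\alpha_1}$ with equal coefficient, and the ``capped tower'' generators like $U_\theta^n$ or $2U_\theta^n - Z_\lambda^{2n+1}$ record where the last nonzero differential truncates the polynomial tower.

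The main obstacle I anticipate is not any single hard idea but the sheer bookkeeping: keeping the four cases straight, tracking the mod-$8$ periodicity correctly through the gradings $j(\alpha_i) \equiv 4i \Mod 8$ (so that for even $m$ the ``middle'' irreducible $\alpha_n$ lands in $s \equiv 0$ or $4$ depending on parity of $n$), and verifying that the stated generating sets $G_m$ really do generate \emph{free} $R[U]$-modules of the claimed rank inside $P_m$ — one must check the generators are $R[U]$-independent and that their $R[U]$-span is exactly $\ker(d)/\im(d)$ in every internal degree, not just in degree $0$. I would organize this by handling $D^*_{4n}$ in full detail (four fully reducibles, $2n$ irreducibles, the cleanest symmetric case), then indicate the modifications for $D^*_{4n+2}$ (same orbit types, one extra irreducible, shifting which pieces survive), and finally $D^*_{4n+1}$ and $D^*_{4n+3}$ (where the reducible $\lambda$ replaces two of the fully reducibles and contributes the factor-$2$ at the tail), trusting the reader to complete the parallel arguments as the authors do for the $n_{\alpha\beta}$ computations earlier. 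The freeness of $E^\infty_0, E^\infty_4$ — needed to apply Lemma \ref{Extension-Lemma} — follows in each case from the explicit description of the surviving classes together with the fact that $R[Z_\lambda]$ is free on $\{Z_\lambda^0, Z_\lambda^1\}$ and each truncated $U_\rho$-tower is free on its finitely many bottom classes together with $U_\rho^{n+1}$ or $U_\rho^n$.
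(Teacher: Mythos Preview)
Your proposal is correct and matches the paper's approach: compute the differentials via Lemma \ref{Description-Differentials}, verify finite-page degeneration with $E^\infty_0$ and $E^\infty_4$ free over $R[U]$, and apply Lemma \ref{Extension-Lemma}. One point of care: your phrase ``essentially a unique such path'' glosses over the fact that backtracking edge paths in $\MC{S}_{D^*_m}$ do exist for $r \geq 2$; the paper handles this by an explicit induction on $r$, using the inductive hypothesis to conclude that $p_r(h_{\alpha_i}) = 0$ for every $\alpha_i$ reachable in fewer than $r$ steps from either end, so that only the straight-line path of length $r$ contributes a nonzero term. The paper also groups the cases as $\{4n, 4n+1\}$ and $\{4n+2, 4n+3\}$ (by the number of irreducibles) rather than by orbit type as you suggest; this slightly streamlines the ``meeting in the middle'' at $r = n+1$ in the latter pair, where the differentials from the $\theta$-end and the $\eta$-end (or $\lambda$-end) collide on the same $h_{\alpha_{n+1}}$ and produce the extra kernel element $U_\theta^n - U_{\eta_2}^n$ or $2U_\theta^n - Z_\lambda^{2n+1}$.
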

\begin{proof} 
Observe first that the modules $P_m$ in the above table
coincides with the $R[U]$-submodule of $E^1_0\oplus E^1_4$ in the index
spectral sequence generated by the reducibles and fully reducibles.
Furthermore, the submodule generated by $G_m$ is free in each case. 
Therefore, following the same procedure as in the proof of Theorem \ref{IMinus-1}, it will be sufficient to show that $E^\infty_0\oplus E^\infty_4$ is the $R[U]$-submodule of $P_m$ generated by $G_m$ in each
case.  

For $m=4n,4n+1$ we have
\[ E^1_{8s,3}=R\{h_{\alpha_2},h_{\alpha_4},\cdots ,h_{\alpha_{2n}}\}
\mbox{ and } E^1_{8s+4,3}=R\{h_{\alpha_1},h_{\alpha_3},\cdots ,h_{\alpha_{2n-1}} \}  \]
for each $s\in \Z$. For $m=4n+2,4n+3$ we have the same formulas except that
one generator $h_{\alpha_{2n+1}}$ is adjoined to the latter group. To simplify the notation put $h_i\coloneqq h_{\alpha_i}$
for $1\leq i\leq 2n+1$. For each $r\geq 1$ let $p_r\colon E^1_{4s,3}=E^4_{4s,3}\ra E^{4r}_{4s,3}$ denote the natural surjection. 

Consider first the cases $m=4n,4n+1$. To unify the notation slightly
we write $(U_1,U_2,U_3,U_4)=(U_\theta,U_{\eta_1},U_{\eta_2},U_{\eta_3})$ for $m=4n$ and $(U_1,U_2,Z)=(U_\theta,U_\eta,Z_\lambda)$ for $m=4n+1$. We claim that for $1\leq r\leq n$ the differentials
\begin{align*}
 f^r\coloneqq d^{4r}\colon & E^{4r}_{8s,-4(r-1)}\ra E^{4r}_{8s-4r,3} \\
 g^r\coloneqq d^{4r}\colon E^{4r}_{8s+4,-4(r-1)}\ra E^{4r}_{8s-4(r-1),3}
\end{align*}
are given by the formulas  
\begin{align*}
f^r(U_1^{r-1})&=f^r(U_2^{r-1})=2^{r-1}p_r(h_r) \\
g^r(U_3^{r-1})&=g^r(U_4^{r-1})=2^{r-1}p_r(h_{2n-r+1}) \\
g^r(Z^{2r-1})&=2^rp_r(h_{2n-r+1}).
\end{align*} 
We verify this by induction on $r$. Let $1\leq r\leq n-1$ and assume that
the statement is true for all $i$ with $1\leq i<r$. This implies in particular
that $p_r(h_i)=0$ for $1\leq i<r$ and $2n-r+1<i\leq 2n$. By Lemma
\ref{Description-Differentials} we have
\[ f^r(U_1^{r-1}) = p_r\left(\sum_{(\beta_1,\cdots,\beta_r)} (n_{\beta_1\theta}n_{\beta_2\beta_1}\cdots n_{\beta_r\beta_{r-1}})\cdot h_{\beta_r}\right), \]
where the sum runs over all $(\beta_1,\cdots,\beta_r)\in (\MC{C}^{irr})^r$ for which 
$(\theta,\beta_1,\cdots \beta_r)$ forms an edge path in $\MC{S}_\Gamma$.
As such a path has length $r$, it must, in view of the graphs $\MC{S}_{D_m^*}$ shown above, terminate at some vertex $\alpha_i$ with $i\leq r$. By the inductive hypothesis $p_r(h_{\alpha_i})=0$ for $i<r$, so the only nonzero term in the formula corresponds to the path $(\alpha_1,\alpha_2,\cdots,\alpha_r)$.
As $n_{\alpha_{i+1}\alpha_i}=2$ for each $i$ and $n_{\alpha_1\theta}=1$
we conclude that $f^r(U_1^{r-1})=2^{r-1}p_r(h_r)$. The other formulas
follow by essentially identical arguments, we only note that the additional
factor $2$ picked up in the last formula follows from the fact that
$n_{\alpha_{2n}\lambda}=2$. This completes the inductive step and the
claim is verified.

The above formulas for the differentials imply, as $2\in R$ is invertible, that the generators
$h_i$, $1\leq i\leq 2n$, of $E^1_{8s,3}$ and $E^1_{8s+4,3}$ are killed
off two by two until we reach $E^{4n+1}_{8s,3}=0$, $E^{4n+1}_{8s+4,3}=0$.
Therefore, by Lemma \ref{Non-trivial-Differential}, $E^{4n+1}=E^\infty$. Moreover, we find $E^\infty_{8s,-4(r-1)}=\om{Ker}(f^r)=R\{(U_1^{r-1}-U_2^{r-1})\}$ for $1\leq r\leq n$, and
similarly
\[ E^\infty_{8s+4,-4(r-1)}= \left\{ \begin{array}{cl} R\{(U_3^{r-1}-U_4^{r-1})\} & \mbox{ for } m=4n \\
0 & \mbox{ for } m=4n+1 \end{array} \right. \]
for $1\leq r\leq n$. 
In all other degrees we have $E^1_{s,t}=E^\infty_{s,t}$.
We therefore see that $E^\infty_0$ is freely generated as an $R[U]$-module
by $\{(U_1^0-U_2^0),U_1^n\}$ for both $m=4n$ and $m=4n+1$, while
$E^0_4$ is freely generated by $\{(U_3^0-U_4^0),U_3^n\}$ for $m=4n$ and
by $\{Z_\lambda^0,Z_\lambda^{2n+1}\}$ for $m=4n+1$. We have thus completed
the cases $m=4n,4n+1$. 

Consider now the cases $m=4n+2,4n+3$. As earlier we write 
$(U_1,U_2,U_3,U_4)=(U_\theta,U_{\eta_1},U_{\eta_2},U_{\eta_3})$ for $m=4n+2$
and $(U_1,U_2,Z)=(U_\theta,U_\eta,Z_\lambda)$.
Here $U^{r-1}_{i},Z^{2r-1}\in E^1_{8s,-4(r-1)}$ for each $i$ and $r\geq 1$.
For $1\leq r\leq n+1$ the differentials
\[ d^{4r}\colon E^{4r}_{8s,-4(r-1)}\ra E^{4r}_{8s-4r,3} \]
are given by the formulas
\begin{align*}
d^{4r}(U_1^{r-1})&=d^{4r}(U_2^{r-1})=2^{r-1}p_r(h_r) \\
d^{4r}(U_3^{r-1})&=d^{4r}(U_4^{r-1})=2^{r-1}p_r(h_{2n+2-r}) \\
d^{4r}(Z^{2r-1})&=2^rp_r(h_{2n+2-r}). 
\end{align*} 
This may be verified by induction in exactly the same way as in the cases $n=4m,4m+1$.
We conclude that $E^{4n+5}_{8s,3}=0=E^{4n+5}_{8s+4,3}$ and hence
$E^{4n+5}=E^\infty$. In addition, for $0\leq r\leq n-1$ we obtain
\[ E^\infty_{8s,-4r}=\om{Ker}(d^{4(r+1)})= \left\{ \begin{array}{ll}
R\{(U_1^{r}-U_2^{r}),(U_3^{r}-U_4^{r}) \} & \mbox{if } m=4n+2 \\
R\{ (U_1^{r}-U_2^{r}) \} & \mbox{if } m=4n+3 \end{array} \right. \]
Furthermore, for $m=4n+2$ we have $d^{4(n+1)}(U_i^n)=2^np_{n+1}(h_{n+1})$
for each $1\leq i\leq 4$, and for $m=4n+3$ we have
\[ 2d^{4(n+1)}(U_i^{n})=d^{4(n+1)}(Z^{2n+1})=2^{n+1}p_{n+1}(h_{2n+1}) \]
for $i=1,2$. From this we deduce that
\[ E^\infty_{8s,-4n} = \left\{ \begin{array}{ll}
R\{(U_1^n-U_2^n),(U_3^n-U_4^n),(U_1^n-U_3^n) \} & \mbox{if } m=4n+2 \\
R\{ (U_1^n-U_2^n),(2U_1^{n+1}-Z^{2n+1}) \} & \mbox{if } m=4n+3 \end{array} \right. \]
In all other degrees we have $E^1_{s,t}=E^\infty_{s,t}$. For $m=4n+2$
we conclude that $E^\infty_0$ is freely generated by
$\{U_1-U_2,U_3-U_4,U_1^n-U_3^n,U_1^{n+1}\}$ and
for $m=4n+3$ we conclude that $E^\infty_0$ is freely generated by $((U_1^0-U_2^0),Z_\lambda^0,2U_1^n-Z^{2n+1},U_1^{n+1}\}$. In both cases
$E^\infty_4=0$. This completes the final two cases and hence the proof.  
\end{proof}  

\subsection{The Case \texorpdfstring{$I^\infty$}{Iinfty}} 
This is the simplest calculation. Let $\Gamma\subset \om{SU}(2)$ be a finite
subgroup and write as usual $M=DCI(\overline{Y}_\Gamma)$ and
$\MC{C}$ for the set of flat connections. Introduce a variable
$T_\eta$ for each $\eta\in \MC{C}^{f.red}$ and a variable $S_\lambda$
for each $\lambda\in \MC{C}^{red}$, where $|T_\eta|=-4$, $|S_\lambda|=-2$.
Then according to Corollary \ref{Orbit-Calc-Tate-Corollary} we have
$H^\infty_A(\alpha)=0$ for $\alpha$ irreducible, $H^\infty_A(\lambda)=R[S_\lambda,S_\lambda ^{-1}]$ for $\lambda$ reducible
and $H^\infty_A(\eta) = R[T_\eta,T_\eta ^{-1}]$ for $\eta$ fully reducible.
The $R[U]$-module structure is given by $U\cdot T^i_\eta=T^{i+1}_\eta$
and $U\cdot S^i_\lambda = S^{i-2}_\lambda$ for each $i\in \Z$. 

\begin{definition} For an $R[U]$-module $X$ we define the mod $8$ periodic
$R[U]$-module $X^{\Pi_\infty,8}$ degreewise by
\[ X^{\Pi_\infty,8}_n = \prod_{s\to \infty} X_{n+8s}  \]
and define $U\colon X^{\Pi_\infty,8}_n\ra X^{\Pi_\infty,8}_{n-4}$
to be the product over $U\colon X_{n+8s}\ra X_{n-4+8s}$ for $s\in \Z$. 
\end{definition}

\begin{theorem} \label{Iinfty-1}
The Tate equivariant instanton Floer homology
associated with the trivial $\om{SU}(2)$-bundle over
$\overline{Y}_\Gamma$ is given by
\[ I^\infty(\overline{Y}_\Gamma) =  \left[ \left( \bigoplus_{\eta\in \MC{C}^{f.red}} R[T_\eta,T_\eta^{-1}][j(\eta)]\right) \oplus \left( \bigoplus_{\lambda\in \MC{C}^{red}} R[S_\lambda,S_\lambda^{-1}][j(\lambda)] \right) \right]^{\Pi_\infty,8}  .\]
\end{theorem}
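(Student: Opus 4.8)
The strategy is to run the index spectral sequence for $I^\infty(\ovl{Y}_\Gamma)$ and show it degenerates immediately, then solve the resulting extension problems, exactly mirroring the structure of the proofs of Theorem~\ref{IPluss-1}, Theorem~\ref{IMinus-1} and Theorem~\ref{IMinus-2}, but with the extra simplification that there are \emph{no} differentials at all. First I would invoke Theorem~\ref{Index-SS-Theorem} with $A=\Lambda_R[u]$ and $M=DCI(\ovl{Y}_\Gamma)$ to obtain the conditionally convergent spectral sequence
\[ E^1_{s,t} = \bigoplus_{j(\alpha)\equiv s} H^\infty_A(\alpha)_t \implies I^\infty(\ovl{Y}_\Gamma)_{s+t}. \]
By Corollary~\ref{Orbit-Calc-Tate-Corollary}, $H^\infty_A(\alpha)=0$ for $\alpha$ irreducible, while $H^\infty_A(\eta)=R[T_\eta,T_\eta^{-1}]$ (degree $-4$) for $\eta$ fully reducible and $H^\infty_A(\lambda)=R[S_\lambda,S_\lambda^{-1}]$ (degree $-2$) for $\lambda$ reducible. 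In particular $H^\infty_A(\alpha)$ is concentrated in even degrees for all types of orbits. Since $j(\alpha)\equiv 0\Mod 4$ for every $\alpha\in\MC{C}$ by Lemma~\ref{Indexing-Lemma}, we get $E^1_{s,t}=0$ whenever $s$ or $t$ is odd; as $d^r$ has bidegree $(-r,r-1)$, this forces $d^r=0$ for all $r\geq 1$, so $E^1=E^\infty$. (This is the exact analogue of Lemma~\ref{Spectral-Degeneracy-Lemma} and Lemma~\ref{Non-trivial-Differential}, and is the one place a short argument is needed.)

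Next I would handle convergence and the extension problems. Because the $I^\infty$ spectral sequence is a whole-plane spectral sequence the target is not recovered merely from strong convergence; instead I would argue directly on the chain level using the explicit model of Theorem~\ref{Explicit-DCI-infty-model}, or more simply observe that the full completion $\hat{C}^\infty_A(M)$ is built columnwise. Following the pattern of Lemma~\ref{Technical-Extension-Lemma} and the proof of Theorem~\ref{IPluss-1}: since the only possibly nonzero differential of $M=DCI(\ovl{Y}_\Gamma)$ lands in degrees $\equiv 3 \Mod 4$ and $H^\infty_A(\text{irreducible})=0$, the map $\psi$ of Lemma~\ref{Lambda-Lemma} together with the column structure of $D^\infty_{*,*}$ yields a splitting
\[ \hat{C}^\infty_A(M) \cong \prod_{p\to\infty} C^\infty_A(\ovl{F}_{4p}M) \]
as DG $R$-modules (the $-\infty$ versus $+\infty$ choice of product being dictated by the Tate totalization $\om{Tot}^{\Pi,-\infty}$ in Theorem~\ref{Explicit-Cinfty-model} and the direction of the index filtration), compatibly with the $R[U]$-action up to the correction term coming from $\psi$. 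But $\psi$ vanishes on all classes of $H^\infty_A$ coming from reducible or fully reducible orbits in the degrees where $U$ could hit them — indeed $\om{Im}(\psi)\subset M^{irr}$ and $H^\infty_A(\text{irr})=0$ — so there is in fact \emph{no} correction term, and the $R[U]$-module structure is the product of the internal ones. Passing to homology, $H^\infty_A(\ovl{F}_{4p}M)=\bigoplus_{j(\alpha)\equiv 4p}H^\infty_A(\alpha)[4p]$, and reindexing $p$ over $p\equiv 0,1\Mod 2$ and shifting by $j(\alpha)$ gives
\[ I^\infty(\ovl{Y}_\Gamma) \cong \Big(\bigoplus_{\alpha\in\MC{C}}H^\infty_A(\alpha)[j(\alpha)]\Big)^{\Pi_\infty,8}, \]
which is the asserted formula once $H^\infty_A(\alpha)$ is written as $R[T_\eta,T_\eta^{-1}]$, $R[S_\lambda,S_\lambda^{-1}]$ or $0$.

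The main obstacle is being careful about which totalization/completion is in force and hence whether the product runs $p\to+\infty$ or $p\to-\infty$: the $\infty$-case mixes the $+$ behaviour (completed, so products) and the $-$ behaviour (colimit, so sums) of Lemma~\ref{Bounded-FloerFiltration}, and Theorem~\ref{Explicit-DCI-infty-model} records that the relevant double complex is totalized with $\om{Tot}^{\Pi,\infty}$; tracing this through the reindexing to land on $\prod_{s\to\infty}$ rather than $\prod_{s}$ or $\bigoplus_s$ is the delicate bookkeeping step. Everything else — the vanishing of differentials, the absence of a $\psi$-correction, and the identification of the graded pieces — is immediate from the cited orbit computations and the structure theorem for $DCI(\ovl{Y}_\Gamma)$. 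I would also remark, as the paper does, that the $R[T,T^{-1}]$ and $R[S,S^{-1}]$ notation is only shorthand for the underlying graded $R[U]$-module and carries no ring structure, and note that this recovers Miller Eismeier's computation \cite[Corollary~8.7]{Miller19} of $I^\infty$ for rational homology spheres.
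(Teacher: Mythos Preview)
Your degeneration argument for the index spectral sequence is correct and matches the paper verbatim. The divergence begins at the extension step, where you and the paper take genuinely different routes.

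The paper does \emph{not} attempt a chain-level splitting. Instead it observes that degeneration at $E^1$ forces $RE^\infty=0$ and $W=0$ via Proposition~\ref{W-REinfty-Conditions}(iii), so the spectral sequence converges strongly despite being whole-plane. (Your remark that ``the target is not recovered merely from strong convergence'' is therefore off: strong convergence is exactly what lets you rebuild the target from the $E^\infty$-page once the extensions are solved.) To solve those extensions the paper exploits a structural feature specific to the Tate case: each $E^\infty_{4s}$ is a direct sum of modules $R[T_\eta,T_\eta^{-1}]$ and $R[S_\lambda,S_\lambda^{-1}]$, which are localizations of free $R[U]$-submodules at $\{U^i\}_{i\geq 0}$. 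Since $U$ acts invertibly on $I^\infty(\ovl{Y}_\Gamma)$ and on every $F_sI$ by Corollary~\ref{U-action-Corollary}, one first splits on the free submodule and then extends the section uniquely to the localization. This gives $F_sI/F_tI\cong\bigoplus_{q=t+1}^s E^\infty_q$ as $R[U]$-modules, and then $I\cong\om{lim}_t\om{colim}_s F_sI/F_tI$ settles the $\prod_{s\to-\infty}$ versus $\prod_{s\to\infty}$ bookkeeping automatically.

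Your alternative---pushing the $\zeta$/$\psi$ machinery from the $I^+$ proof through to $\hat C^\infty_A$---is plausible in spirit, and your observation that the $\psi$-correction lands in $M^{irr}$ with $H^\infty_A(\text{irr})=0$ is the right reason the $R[U]$-structure untwists. But there is a real gap: you never establish the analogue of Lemma~\ref{Technical-Extension-Lemma} for $C^\infty_A$. That lemma was specific to $C^+_A$ (a right-half-plane double complex with a clean degreewise description of $\hat C^+_A(F_{4p}M)$), and the corresponding statement for the $\om{Tot}^{\Pi,\infty}$ model of Theorem~\ref{Explicit-DCI-infty-model} would need its own proof, including the verification that $\zeta$ is a chain map in this setting and that the induced product decomposition has the correct $\prod_{p\to\infty}$ shape. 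You flag this last point as ``delicate bookkeeping'' but do not resolve it. The paper's localization argument sidesteps all of this: it needs no new chain-level lemma, and the direction of the product falls out of the completion formula $\om{lim}_t\om{colim}_s$.
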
 
\begin{proof} Let $(E^r,d^r)_{r\geq 1}$ be the index spectral sequence
with
\[ E^1_{s,t} = \bigoplus_{j(\alpha)\equiv s}H^\infty_A(\alpha)_t \implies
I^\infty(\overline{Y}_\Gamma)_{s+t}  .\]
From the fact that $j(\alpha)\equiv 0 \Mod{4}$ for each $\alpha\in \MC{C}$
and that $H^\infty_A(\alpha)$ is concentrated in even degrees for all
types of orbits, we conclude that $E^1_{s,t}=0$ for all $(s,t)$ with
$s$ odd or $t$ odd. It follows that there are no nontrivial differentials in the spectral sequence so that $E^1=E^\infty$. This implies by Proposition \ref{W-REinfty-Conditions} part (iii) and Theorem \ref{Cond-Convergence} that the spectral sequence converges strongly.

Write $I=I^\infty(\overline{Y}_\Gamma)$. To resolve the extension problems observe first that $E^1_{4s}$ is a (shifted) direct sum of $R[U]$-modules
$R[T_\eta,T_\eta^{-1}]$ and $R[S_\lambda,S_\lambda^{-1}]$.
These contain free $R[U]$-submodules $R[T_\eta]$ and
$R[S_\lambda]$ (freely generated by $\{S^0_\lambda,S^1_\lambda\}$), respectively, and the whole module is the localization of these
free submodules in the multiplicatively closed subset $\{U^i\}_{i\geq 0} \subset R[U]$. By Corollary \ref{U-action-Corollary}
$U\colon I\ra I$ is an isomorphism, and the same is valid for the $R[U]$-submodule $F_sI$ for each $s\in \Z$. We may therefore for each $s$ construct a 
splitting of the short exact sequence
\[ \begin{tikzcd} 0 \arrow{r} & F_{s-1}I \arrow{r} & F_sI \arrow{r} &
\overline{F}_sI \cong E^\infty_s \arrow{r} \arrow[bend right]{l}{\xi_s} & 0 
\end{tikzcd} \]
by defining $\xi$ to be a section on the free submodule and then extending
to the whole module using the universal property of localization.
By induction we obtain an isomorphism
\[ F_sI/F_tI \cong \bigoplus_{q = t+1}^{s} E^\infty_q  \]
of $R[U]$-modules for each pair $s>t$. Since the filtration of $I$ is exhaustive and complete Hausdorff we obtain, exploiting the periodicity, 
\[ I \cong \om{lim}_t \om{colim}_s F_sI/F_tI \cong \prod_{s\to-\infty} E^\infty_s \cong \prod_{s\to -\infty} (E^\infty_0[-8s]\oplus E^\infty_4[-8s])
\]
and the final term is precisely 
$(E^\infty_0\oplus E^\infty_4)^{\Pi_\infty,8}=(E^1_0\oplus E^1_4)^{\Pi_\infty,8}$. The proof is completed by observing that
\[ E^1_0\oplus E^1_4 = \left( \bigoplus_{\eta\in \MC{C}^{f.red}} R[T_\eta,T_\eta^{-1}][j(\eta)] \right) \oplus
\left( \bigoplus_{\lambda\in \MC{C}^{red}} R[S_\lambda,S_\lambda^{-1}][j(\lambda)] \right). \]    
\end{proof} 

\begin{proposition} For each finite subgroup $\Gamma\subset \om{SU}(2)$
the homology norm map 
\[ H(N)\colon I^+(\overline{Y}_\Gamma)_n\ra I^-(\overline{Y}_\Gamma)_{n+3} \] vanishes. In particular, the exact triangle of $R[U]$-modules of Corollary \ref{Floer-Exact-Triangle-Corollary} splits into a short exact sequence
\[ \begin{tikzcd} 0 \arrow{r} & I^-(\overline{Y}_\Gamma) \arrow{r} &
I^\infty(\overline{Y}_\Gamma) \arrow{r} & I^+(\overline{Y}_\Gamma)[4]
\arrow{r} & 0 \end{tikzcd}  .\]
\end{proposition}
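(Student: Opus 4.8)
The plan is to show that the homology norm map $H(N)\colon I^+(\overline{Y}_\Gamma)_n\ra I^-(\overline{Y}_\Gamma)_{n+3}$ vanishes by exploiting the explicit chain-level models established in Theorem \ref{Explicit-DCI-Models} and Theorem \ref{Explicit-DCI-infty-model}, together with the concrete calculations of $I^\pm$ and $I^\infty$ from Theorems \ref{IPluss-1}, \ref{IMinus-1}, \ref{IMinus-2} and \ref{Iinfty-1}. First I would recall that, by Theorem \ref{Explicit-DCI-infty-model}, the norm map at the homology level is computed by the degree $3$ map $\nu\colon \om{Tot}^\Pi(D^+,\dd',\dd'')\ra \om{Tot}^\oplus(D^-,\dd',\dd'')$, which in degree $n$ factors as $\prod_{s\geq 0}M_{n-4s}\xrightarrow{\pi}M_n\xrightarrow{(-1)^nu}M_{n+3}\xrightarrow{\iota}\bigoplus_{t\geq 0}M_{n+3+4t}$ with $M=DCI(\overline{Y}_\Gamma)$. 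The key structural observation is that by Theorem \ref{Structure-DCI}, $M$ is concentrated in bidegrees $(s,t)$ with $4\mid s$ and $0\leq t\leq 3$, so that $M_n\neq 0$ only for $n\equiv 0,1,2,3\Mod 4$ with the piece in degree $n$ being $\bigoplus_{j(\alpha)\equiv n\bmod 4\cdot(\text{parity})}H_{n\bmod 4}(\alpha)$; in particular $M_n$ for $n\equiv 0\Mod 4$ is spanned by the $b_\alpha$'s and $M_{n}$ for $n\equiv 3\Mod 4$ is spanned by the $t_\alpha=b_\alpha u$ for $\alpha$ irreducible.

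The main point is then to trace through what $\nu$ does to cycles. A cycle in $DCI^+(\overline{Y}_\Gamma)_n\cong\prod_{p\geq 0}M_{n-4p}$ of degree $n$ is a sequence $(m_{n-4p})_{p\geq 0}$ with $\dd_M m_{n-4p}=(-1)^n m_{n-4(p-1)}u$; the map $\nu$ sends it to $((-1)^n m_n u, 0,0,\dots)$. For this to be nonzero in homology one needs $m_n u\neq 0$, which (by the description above) forces $n\equiv 0\Mod 4$ and $m_n=\sum_\alpha c_\alpha b_\alpha\in M_n$ with $m_n u=\sum_{\alpha\ \text{irr}}c_\alpha t_\alpha\neq 0$. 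But the cycle condition at $p=1$ reads $\dd_M m_n=(-1)^n m_{n-4}u$, i.e. $\sum_{\beta\ \text{irr}}\big(\sum_\alpha c_\alpha n_{\beta\alpha}\big)t_\beta=(-1)^n m_{n-4}u$, which already lands in $M_{n-1}$; meanwhile $\nu(m)$ lives in degree $n+3\equiv 3\Mod 4$ in $M$, so $\nu(m)=(-1)^n m_n u\in M_{n+3}$ is a boundary precisely when there is $w\in M_{n+4}\cong\bigoplus_\alpha H_0(\alpha)$ with $\dd_M w=m_n u$. I would then argue this equation is always solvable: by Lemma \ref{Lambda-Lemma}, the map $\psi\colon M\ra M$ satisfies $\psi(x)u=(-1)^{|x|}\dd_M x$ and $u\colon M^{irr}_{4s}\ra M_{4s+3}$ is an isomorphism in every relevant degree, so lifting $m_n u$ to a boundary reduces to lifting $m_n u\in M_{n+3}$ through $\dd_M\colon M_{n+4}\ra M_{n+3}$; this is where the cycle condition and the structure of the graphs $\MC{S}_\Gamma$ enter. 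Concretely, the cycle condition forces $m_n$ to be such that $\sum_\alpha c_\alpha b_\alpha$ is killed by the composite encoding $\dd_M$ at the next stage, and unwinding this via Lemma \ref{Description-Differentials} and the explicit integers $n_{\alpha\beta}$ from Proposition \ref{Differential-Graphs} should show $m_n u$ lies in $\om{Im}(\dd_M)$.

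Rather than grinding through the chain-level bookkeeping, I would prefer the cleaner route of comparing the two computed answers directly. Since $\frac12\in R$, Corollary \ref{Floer-Exact-Triangle-Corollary} gives the exact triangle with maps of degrees $3$, $0$, $-4$; the norm map $H(N)$ is the degree $3$ arrow $I^+\ra I^-$. I would compute its cokernel and kernel using the explicit module structures: from Theorem \ref{Iinfty-1}, $I^\infty(\overline{Y}_\Gamma)$ is a product over $\eta\in\MC{C}^{f.red}$ and $\lambda\in\MC{C}^{red}$ of (shifted) $R[U,U^{-1}]$-type modules, and by Corollary \ref{U-action-Corollary} the map $U$ on $I^\infty$ is an isomorphism; comparing ranks degreewise, $I^-(\overline{Y}_\Gamma)$ as computed in Theorems \ref{IMinus-1} and \ref{IMinus-2} injects into $I^\infty$ via the $[0]$-arrow, and the quotient is exactly $I^+(\overline{Y}_\Gamma)[4]$ as computed in Theorem \ref{IPluss-1}. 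A degreewise dimension (or $R[U]$-module) count, using that $I^+$ is a product of modules generated in even degrees by $g_\alpha,W_\lambda^p,V_\eta^p$ and $I^-$ is a direct sum of free $R[U]$-modules and $R[Z_\lambda]$'s, matches the subquotient structure of $I^\infty$ on the nose, which forces the connecting map $I^+[4]\ra I^-$ of degree $-1$ in the rotated triangle (equivalently $H(N)$ up to sign) to be zero. The main obstacle I anticipate is making the degreewise comparison airtight \emph{as $R[U]$-modules} rather than just as graded $R$-modules: one must check the $U$-action on $I^-$, on $I^\infty$, and on $I^+$ are compatible under inclusion/projection, which requires invoking the compatibility of the exact triangle with the $R[U]$-structure from Corollary \ref{Floer-Exact-Triangle-Corollary} and the explicit $U$-formulas in Theorems \ref{IPluss-1} and \ref{IMinus-1}; the splitting into a short exact sequence then follows formally once $H(N)=0$.
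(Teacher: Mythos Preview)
Your proposal would eventually get there, but you are working far too hard. The paper's proof is a one-line parity argument: the calculations of Theorem~\ref{IPluss-1}, Theorem~\ref{IMinus-1} and Theorem~\ref{IMinus-2} show that $I^+(\overline{Y}_\Gamma)$ and $I^-(\overline{Y}_\Gamma)$ are both concentrated in \emph{even} degrees. Since $H(N)$ has degree $3$, it is automatically zero. You never explicitly isolate this observation, and it renders both of your approaches unnecessary.

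Concretely, in Theorem~\ref{IPluss-1} every generator $g_\alpha$, $W_\lambda^p$, $V_\eta^p$ sits in even degree (the shifts $j(\alpha)\in\{0,4\}$ preserve parity), and in Theorems~\ref{IMinus-1} and~\ref{IMinus-2} the module $I^-$ is built out of $R[U_\eta]$, $R[Z_\lambda]$ and their shifts by elements of $\{0,2,4,6\}$, again all even. That is the entire content.

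Your chain-level route has an actual gap as stated: you assert that ``unwinding this via Lemma~\ref{Description-Differentials} and the explicit integers $n_{\alpha\beta}$ \ldots\ should show $m_n u$ lies in $\om{Im}(\dd_M)$,'' but you do not carry this out, and in fact it need not be a boundary in $M$ itself---what you need is that $((-1)^n m_n u,0,0,\ldots)$ is a boundary in the larger complex $DCI^-$, which is exactly the statement that $I^-$ vanishes in degree $n+3$. Your second route (rank-matching $I^\pm$ against $I^\infty$) is logically sound but, as you yourself flag, requires checking $R[U]$-module compatibilities that are delicate and entirely avoidable. The parity observation subsumes both arguments.
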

\begin{proof} The calculations of Theorem \ref{IPluss-1}, Theorem \ref{IMinus-1} and Theorem \ref{IMinus-2} show that
$I^+(\ovl{Y}_\Gamma)$ and $I^-(\ovl{Y}_\Gamma)$ are concentrated
in even degrees. Therefore, as the norm map has degree $3$, it
must vanish. \end{proof}   

\subsection{Calculations for \texorpdfstring{$Y_\Gamma$}{YGamma}}
In this section we explain, omitting some details, the necessary
modifications needed to calculate the equivariant instanton Floer
groups for $Y_\Gamma$, that is, $S^3/\Gamma$ equipped with the standard
orientation inherited from $S^3$. The tools needed to handle this
are contained in \cite[Theorem~7.10]{Miller19}. First, the given result
states that there is an isomorphism $I^\infty(Y_\Gamma)\cong I^\infty(\overline{Y}_\Gamma)$. Second, in the proof it is verified
that $DCI(Y_\Gamma)\cong DCI(\overline{Y}_\Gamma)^{\vee}$ and from this it 
is not hard to express $DCI(Y_\Gamma)$ as the totalization of a multicomplex.

\begin{lemma} Let $i\colon \MC{C}\ra \Z/8$ denote the grading for $Y_\Gamma$
and let $j\colon \MC{C}\ra \Z/8$ denote the grading for $\overline{Y}_\Gamma$. Then
\[ i(\alpha) = \left\{ \begin{array}{ll} j(\alpha)-3 & \mbox{ if } \alpha\in \MC{C}^{irr} \\ j(\alpha)-2 & \mbox{ if } \alpha\in \MC{C}^{red} \\
j(\alpha) & \mbox{ if } \alpha\in \MC{C}^{f.red}  \end{array} \right. \]
Moreover, $DCI(Y_\Gamma)= \om{Tot}^{oplus}(DCI(Y_\Gamma)_{*,*},\{\dd^r\}_{r=1}^{4})$ where
\[ DCI(Y_\Gamma)_{s,t} =\bigoplus_{i(\alpha)\equiv s} H_t(\alpha),  \]
$\dd^2=0$ and $\dd^1,\dd^3,\dd^4$ are given for
$b_\alpha\in H_0(\alpha) \subset DCI(Y_\Gamma)_{4s+1,0}$, $\alpha\in \MC{C}^{irr}$, by 
\begin{align*}  
\dd^1(b_\alpha)&= \sum_{\eta\in \MC{C}^{f.red}} n_{\alpha\eta} b_\eta  \\
\dd^3(b_\alpha)&= \sum_{\lambda \in \MC{C}^{red}}n_{\alpha\lambda} t_\lambda \\ 
\dd^4(b_\alpha)&= \sum_{\beta\in \MC{C}^{irr}} n_{\alpha\beta} t_\beta  . 
\end{align*}
and vanish otherwise.   
\end{lemma}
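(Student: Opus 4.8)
The proof rests on two facts from \cite[Theorem~7.10]{Miller19}: that $DCI(Y_\Gamma)\cong DCI(\overline{Y}_\Gamma)^\vee$ as differential graded $\Lambda_R[u]$-modules, and that this duality intertwines the gradings. The first thing I would do is make the duality $(-)^\vee$ explicit on the level of the multicomplex description of $DCI(\overline{Y}_\Gamma)$ given in Theorem \ref{Structure-DCI}. Since $R$ is a PID and each $H_t(\alpha)$ is a free $R$-module (of rank $1$ or $2$), the $R$-dual $C^\vee = \om{Hom}_R(C,R)$ of a bounded complex of free modules is again such a complex, with $(C^\vee)_n = \om{Hom}_R(C_{-n},R)$ and the dual differential. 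For an orbit $\alpha$, dualizing $H_*(\alpha)$ with its $\Lambda_R[u]$-action interchanges the roles of $b_\alpha$ and $t_\alpha$: in the fully reducible case $H_*(\alpha)=R$ is self-dual in degree $0$; in the reducible case $R\oplus R[2]$ dualizes to $R\oplus R[-2]$, so the generator in homological degree $2$ goes to a generator in degree $-2$; in the irreducible case $\Lambda_R[u]=R\oplus R[3]$ dualizes to $R\oplus R[-3]$. This immediately forces the grading shift formula for $i$: since $j(\alpha)$ is (by Lemma \ref{Indexing-Lemma}) the grading of the class in the multicomplex bidegree $(j(\alpha),0)$ containing $b_\alpha$, and the dual complex reverses total degree while placing the surviving generator of $H_*(\alpha)^\vee$ in bidegree $(i(\alpha), -k)$ where $k$ is the degree of the top class of $H_*(\alpha)$, one reads off $i(\alpha) = j(\alpha) - 3, j(\alpha)-2, j(\alpha)$ in the irreducible, reducible, and fully reducible cases respectively. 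I would present this bookkeeping carefully but briefly, since it is the crux of the grading statement.

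Next I would identify the differentials. In $DCI(\overline{Y}_\Gamma)$ the only nonzero differential is $\dd^4\colon DCI_{4s,0}\ra DCI_{4(s-1),3}$ with components $\dd^4_{\alpha\beta}(b_\beta) = n_{\alpha\beta}t_\alpha$ for $\alpha$ irreducible adjacent to $\beta$ (Theorem \ref{Structure-DCI}). Under $R$-duality a component $\dd^4_{\alpha\beta}\colon H_0(\beta)\ra H_3(\alpha)$ transposes to a map $H_3(\alpha)^\vee \ra H_0(\beta)^\vee$, i.e.\ after the identifications above a map from the generator dual to $t_\alpha$ (which sits in the $b$-slot of the dualized orbit $\alpha$, since dualizing swaps the two generators) to the generator dual to $b_\beta$ (which sits in the $t$-slot of the dualized orbit $\beta$). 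The multidegree of the transposed map is determined by $i(\alpha) - i(\beta)$. Here one has to split into cases according to the types of $\alpha$ and $\beta$: when $\beta$ is fully reducible the shift is $i(\alpha)-i(\beta) = (j(\alpha)-3) - j(\beta) = 1$ (using $\om{gr}(\alpha,\beta)=4$ for adjacent orbits), giving $\dd^1$; when $\beta$ is reducible the shift is $(j(\alpha)-3)-(j(\beta)-2) = -1$, but reindexing so that the source sits in $DCI(Y_\Gamma)_{4s+1,0}$ and the target in $H_2$ of the dual reducible orbit this becomes a map of multidegree $3$, giving $\dd^3$; when $\beta$ is irreducible one gets $\dd^4$; and $\dd^2 = 0$ because no adjacency in $\MC{S}_\Gamma$ produces a shift forcing bidegree $(-2,1)$ (this follows from Lemma \ref{Indexing-Lemma} together with the fact that all gradings on reducibles/fully reducibles and irreducibles differ by the listed constants). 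The coefficient in each case is just $n_{\alpha\beta}$, transported unchanged by transposition; and since the original differential on $b_\beta$ was a sum over adjacent irreducibles $\alpha$, the dual differential on $b_\alpha$ (for $\alpha$ irreducible) is a sum over the adjacent $\beta$'s of each type, organized exactly into the three formulas $\dd^1, \dd^3, \dd^4$ in the statement.

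Finally I would confirm that $DCI(Y_\Gamma)$ really is the totalization of this multicomplex: the duality sends the column filtration of $DCI(\overline{Y}_\Gamma)$ to a filtration of $DCI(\overline{Y}_\Gamma)^\vee$, and after the grading relabelling this is the column filtration of the bigraded module $DCI(Y_\Gamma)_{s,t} = \bigoplus_{i(\alpha)\equiv s} H_t(\alpha)$; the differentials $\dd^r$ assemble to $\dd = \sum_r \dd^r$ on the totalization, matching the dual differential of $DCI(\overline{Y}_\Gamma)$. The main obstacle I anticipate is purely the sign- and index-bookkeeping in the case analysis for $\dd^1,\dd^3,\dd^4$: one must be scrupulous about the left-shift conventions of Section 5 (the sign $(-1)^p$ on shifted complexes and on module actions) when transposing and reindexing, and must check that the ``$n_{\alpha\beta}$'' appearing is the same integer — not $n_{\beta\alpha}$ — which follows from the symmetry $n_{\alpha\beta}$ vs.\ $n_{\beta\alpha}$ recorded in Proposition \ref{Differential-Graphs} but should be stated carefully. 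Everything else is a direct unwinding of the cited isomorphism $DCI(Y_\Gamma)\cong DCI(\overline{Y}_\Gamma)^\vee$.
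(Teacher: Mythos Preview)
Your approach is correct and is exactly what the paper has in mind: the lemma is stated there without proof, with only the remark that $DCI(Y_\Gamma)\cong DCI(\overline{Y}_\Gamma)^\vee$ (from \cite[Theorem~7.10]{Miller19}) makes it ``not hard'' to extract the multicomplex description, so your proposal is the intended unpacking. Two small bookkeeping points to tighten: the grading formula $i(\alpha)=j(\alpha)-\dim\alpha$ comes from the a~priori dualization formula $i(\alpha)\equiv -j(\alpha)-\dim\alpha$ only because $j(\alpha)\in\{0,4\}$ in $\Z/8$ (Lemma~\ref{Indexing-Lemma}), so $-j(\alpha)\equiv j(\alpha)$---say this explicitly; and your shift computation for reducible $\beta$ should read $(j(\alpha)-3)-(j(\beta)-2)\equiv 4-1=3$ directly, using $j(\alpha)-j(\beta)\equiv 4$ for adjacent orbits, rather than $-1$ followed by a reindexing.
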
 

The patterns seen in the calculations of $I^{\pm}(\overline{Y}_\Gamma)$
are reversed for $I^{\pm}(Y_\Gamma)$. Compare the following with
Lemma \ref{Spectral-Degeneracy-Lemma} and Lemma \ref{Non-trivial-Differential}.

\begin{lemma} Let $(E^r,d^r)_{r\geq 1}$ be the index spectral sequence with
\[ E^1_{s,t} = \bigoplus_{i(\alpha)\equiv s} H^-_A(\alpha)_t \implies I^-(Y_\Gamma)_{s+t}  .\]
Then the spectral sequence immediately degenerates, i.e., $E^1=E^\infty$.
  
Let $(E^r,d^r)_{r\geq 1} $ be the index spectral sequence with
\[ E^1_{s,t} =\bigoplus_{i(\alpha)\equiv s} H^+_A(\alpha)_t \implies
I^+(Y_\Gamma)_{s+t} .\]
Then the only possibly nontrivial differentials are $d^{2r+1}\colon E^{2r+1}_{4s+1,0} \ra E^{2r+1}_{4s-2r,2r}$ for $r\geq 0$ and $s\in \Z$. \end{lemma}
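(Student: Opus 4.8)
The plan is to run the same degeneration analysis for the two index spectral sequences attached to $Y_\Gamma$ as was done for $\overline{Y}_\Gamma$ in Lemma~\ref{Spectral-Degeneracy-Lemma} and Lemma~\ref{Non-trivial-Differential}, using the grading formula for $i\colon\MC{C}\ra\Z/8$ from the previous lemma together with the orbit-wise computations of $H^\pm_A(\alpha)$ in Lemma~\ref{Orbit-Calc-Lemma} and Corollary~\ref{Orbit-Calc-Corollary}. The essential new feature is that the grading for $Y_\Gamma$ no longer lands in $\{0,4\}$: irreducibles are shifted to odd values $i(\alpha)\equiv 4i-3$, reducibles to $i(\alpha)\equiv 4i-2$, and fully reducibles stay at $i(\alpha)\equiv 0\Mod 4$. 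This is precisely what changes the vanishing pattern of $E^1_{s,t}$ and hence which differentials can survive for degree reasons.

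First I would treat $I^-(Y_\Gamma)$. Recall that $H^-_A(\eta)=R[U_\eta]$ is concentrated in degrees $\equiv 0\Mod 4$, that $H^-_A(\lambda)=R[Z_\lambda][2]$ is concentrated in degrees $\equiv 2\Mod 4$, and that $H^-_A(\alpha)=R[3]$ is concentrated in degree $3$ (and its periodic translates, but in the present $\Z$-graded unrolled set-up it is literally $R$ in degree $3$). Combining this with the grading shifts: a fully reducible orbit contributes to $E^1_{s,t}$ only when $s\equiv 0\Mod 4$ and $t\equiv 0\Mod 4$; a reducible orbit contributes only when $s\equiv 2\Mod 4$ and $t\equiv 2\Mod 4$; an irreducible orbit contributes only when $s\equiv 1\Mod 4$ and $t=3$. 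In every case $E^1_{s,t}\neq 0$ forces $s+t\equiv 0\Mod 4$. Since $d^r$ has bidegree $(-r,r-1)$, it changes $s+t$ by $-1$, so it must vanish on $E^1$; hence $E^1=E^\infty$. I would spell this out exactly as in the proof of Lemma~\ref{Non-trivial-Differential}, just with the three congruence classes $\{0\Mod 4\},\{2\Mod 4\},\{1\Mod 4,\ t=3\}$ replacing the two classes $\{0,4\}$ there.

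Next I would treat $I^+(Y_\Gamma)$. Now use $H^+_A(\eta)=R[V_\eta]$ concentrated in degrees $\equiv 0\Mod 4$, $H^+_A(\lambda)=R[W_\lambda]$ concentrated in degrees $\equiv 2\Mod 4$ (shifted by the reducible grading shift of $-2$, so landing in $s+t\equiv 0\Mod 4$ again), and $H^+_A(\alpha)=R\cdot g_\alpha$ in degree $0$ (shifted by $-3$, so $s+t\equiv 1\Mod 4$). Thus $E^1_{s,t}$ is supported where $s+t\equiv 0\Mod 4$ coming from the reducibles and fully reducibles, and where $s+t\equiv 1\Mod 4$ coming from the irreducibles, the latter only in the precise spot $s\equiv 1\Mod 4$, $t=0$. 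A differential $d^r$ can therefore only be nonzero when its source has $s+t\equiv 1\Mod 4$ and its target has $s+t\equiv 0\Mod 4$, i.e.\ the source must come from an irreducible orbit, so the source bidegree is $(4s+1,0)$ for some $s$; then the target bidegree is $(4s+1-r,r-1)$, which must be one of the even-$s$-even-$t$ slots supporting a reducible or fully reducible contribution. Matching congruences forces $4s+1-r\equiv 0$ or $2\Mod 4$ and $r-1$ in the correspondingly allowed residue, which (after reindexing) is exactly the statement $d^{2r+1}\colon E^{2r+1}_{4s+1,0}\ra E^{2r+1}_{4s-2r,2r}$ for $r\geq 0$; I would check the arithmetic of the bidegree $(-(2r+1),2r)$ and confirm $4s+1-(2r+1)=4s-2r$ and the vertical shift $0\mapsto 2r$.

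The only mildly delicate point — and the step I expect to require the most care rather than genuine difficulty — is keeping the bookkeeping of the grading shifts consistent with the conventions in force, in particular the fact that $H^+_A(\alpha)$ for $\alpha$ irreducible sits in degree $0$ while $H^-_A(\alpha)$ sits in degree $3$, so that the $-3$ shift in $i(\alpha)$ versus $j(\alpha)$ interacts differently with the $+$ and $-$ functors; one must also remember that for $\alpha$ reducible the $-2$ shift combines with $H^+_A(\lambda)$ in even degrees to still land in $s+t\equiv 0\Mod 4$, which is what makes the $+$-case differentials have odd length $2r+1$ rather than length divisible by $4$ as in the $-$-case for $\overline{Y}_\Gamma$. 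With those shifts tracked carefully, both statements follow by the same bidegree-counting argument used repeatedly in the preceding section, and no transversality or moduli input beyond Theorem~\ref{Structure-DCI} and the orbit computations is needed.
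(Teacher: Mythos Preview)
Your approach is exactly the paper's: a pure bidegree/parity count using the grading shifts $i(\alpha)=j(\alpha)-3,-2,0$ together with the orbit computations of $H^\pm_A$. There is, however, a small arithmetic slip you should fix. For a reducible $\lambda$ you write that $H^-_A(\lambda)=R[Z_\lambda][2]$ is concentrated in degrees $\equiv 2\Mod 4$ and that $H^+_A(\lambda)=R[W_\lambda]$ is concentrated in degrees $\equiv 2\Mod 4$; in fact both are supported in \emph{all} even degrees (e.g.\ $Z_\lambda^0$ sits in degree $2$, $Z_\lambda^1$ in degree $0$, etc.). Consequently the reducible columns contribute at $s\equiv 2\Mod 4$ and $t\equiv 0\Mod 2$, so the correct statement is only that $s+t$ is \emph{even} for all non-irreducible contributions, not $s+t\equiv 0\Mod 4$.

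For the $I^-$ case this changes nothing: ``$s+t$ even everywhere'' is already enough to kill all differentials, and the paper argues exactly this way. For the $I^+$ case your overclaim hides one residual possibility: a differential with \emph{target} at an irreducible spot $(4s'+1,0)$. Since $H^+_A$ is nonnegatively graded, the source would sit at $t=1-r$, forcing $r=1$ and source $(4s'+2,0)$ in a reducible column. This $d^1$ is not excluded by parity alone; it vanishes because $d^1$ is induced by the multicomplex differential $\dd^1$ of $DCI(Y_\Gamma)$, which by the preceding lemma is nonzero only on $b_\alpha$ for $\alpha\in\MC{C}^{irr}$ and hence is identically zero on the reducible columns. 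The paper's own proof phrases this step tersely (``a nontrivial differential must start in $E^r_{4s+1,0}$'') without spelling out why it cannot \emph{end} there, so you are in good company, but your write-up should include this one extra sentence.
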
 
\begin{proof} In the first case recall that $H^-_A(\alpha) = R\cdot h_\alpha$
with $|h_\alpha|=3$ for $\alpha\in \MC{C}^{irr}$, $H^-_A(\lambda) = R[Z_\lambda][2]$, $|Z_\lambda|=-2$ for $\lambda\in \MC{C}^{red}$ and
$H^-_A(\eta) = R[U_\eta]$, $|U_\eta|=-4$ for $\eta\in \MC{C}^{f.red}$.
By the above lemma $i(\rho)\equiv 0 \Mod{2}$ for each $\rho\in \MC{C}^{ref}\cup \MC{C}^{f.red}$ and $i(\alpha)\equiv 1\Mod{4}$ for each $\alpha\in \MC{C}^{irr}$. Therefore $E^1_{s,t}$ can only be nonzero if the total degree $s+t$ is even. As each differential $d^r$ reduces the total degree by one, they must all
vanish and the first assertion follows. 

In the second case, $H^+_A(\eta) = R[V_\eta]$, $|V_\eta|=4$, $H^+_A(\lambda)=R[W_\lambda]$, $|W_\lambda|=2$ and
$H^+_A(\alpha)=R\cdot g_\alpha$, $|g_\alpha|=0$, where
$\eta\in \MC{C}^{f.red}$, $\lambda\in \MC{C}^{red}$ and $\alpha\in \MC{C}^{irr}$. Due to the indexing mentioned above the only possibly nontrivial module
$E^1_{s,t}$ with $s+t$ odd are $E^1_{4s+1,0}$. This module is freely generated by $g_\alpha$ for $\alpha\in \MC{C}^{irr}$ satisfying $i(\alpha)\equiv 4s+1 \Mod{8}$. Therefore, a nontrivial differential must start in $E^r_{4s+1,0}$ for
some $r$ and $s$. As $E^r_{s,t}=0$ for each $t>0$ when $s$ is odd, we
conclude that the only possibly nontrivial differentials are
$d^{2r+1}\colon E^{2r+1} _{4s+1,0} \ra E^{2r+1}_{4s-2r,2r}$.
\end{proof}   

For the calculation of $I^-(Y_\Gamma)$ one is therefore left with solving
an extension problem. In this case the extension problem is slightly simpler. We only state the conclusion. 
Note that in the statement below we are using the grading function
$j\colon \MC{C}\ra \Z/8$ for $\overline{Y}_\Gamma$, regarded as taking
values in $\{0,4\}$.  

\begin{theorem} \label{Imin-StandardOrientation}
The negative equivariant instanton Floer homology $I^-(Y_\Gamma)$
associated with the trivial $\om{SU}(2)$-bundle over $Y_\Gamma$ is
given by
\[ \left[ \left( \bigoplus_{\alpha\in \MC{C}^{irr}} R\cdot h_\alpha'[j(\alpha)] \right) \oplus \left( \bigoplus_{\lambda\in \MC{C}^{red}} R[Z_\lambda][j(\lambda)] \right) \oplus \left( \bigoplus_{\eta\in \MC{C}^{f.red}}R[U_\eta][j(\eta)] \right) \right]^{\oplus,8} , \]
where $|h_\alpha'|=0$, $|Z_\lambda|=-2$ and $|U_\eta|=-4$. The $R[U]$-module
structure is determined by $U\cdot U_\eta^p = U_\eta^{p+1}$, $U\cdot Z_\lambda^p = Z_\lambda^{p+2}$ for $p\geq 0$ and
\[ U\cdot h_\alpha' = \left( \sum_{\beta\in \MC{C}^{irr}} n_{\alpha\beta}h_\beta' \right) +\left( \sum_{\lambda\in \MC{C}^{red}} n_{\alpha\lambda}Z^0_\lambda
\right) + \left( \sum_{\eta\in \MC{C}^{f.red}} n_{\alpha\eta} V^0_\eta \right).
\]
\end{theorem}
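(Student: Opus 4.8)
The plan is to follow the exact template established in the proofs of Theorem \ref{IPluss-1} and Theorem \ref{Iinfty-1}, but now working with the complex $M=DCI(Y_\Gamma)$ rather than $DCI(\overline{Y}_\Gamma)$, using the explicit multicomplex structure recorded in the first lemma of this subsection and the grading relation between $i$ and $j$. The starting point is the identification $DCI^-(Y_\Gamma)=\hat{C}^-_A(M)$, which by Lemma \ref{Bounded-FloerFiltration} equals $\om{colim}_p C^-_A(F_pM)$, together with the concrete column-filtration analysis. Since the index spectral sequence for $I^-(Y_\Gamma)$ degenerates at $E^1$ (the preceding lemma), the only remaining work is to resolve the extension problems in the category of $R[U]$-modules, exactly as in Lemma \ref{Extension-Lemma}; indeed one should check that $E^\infty_s=E^1_s$ is a free $R[U]$-module for each $s$, which follows from Lemma \ref{Orbit-Calc-Lemma} and Corollary \ref{Orbit-Calc-Corollary} since $H^-_A(\eta)=R[U_\eta]$ and $H^-_A(\lambda)=R[Z_\lambda][2]$ are free and $H^-_A(\alpha)=R\cdot h_\alpha$ is free of rank one. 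So as an $R[U]$-module $I^-(Y_\Gamma)$ is a direct sum over all critical orbits $\alpha$ of shifted copies of $H^-_A(\alpha)$, where the shift is by $i(\alpha)$; combined with the mod $8$ periodicity this gives the $(\ \cdot\ )^{\oplus,8}$ form stated, after re-expressing $i(\alpha)$ in terms of $j(\alpha)$ via the grading lemma (the $-3$, $-2$, $0$ shifts conspire with the internal degrees of $h_\alpha$, $Z_\lambda$, $U_\eta$ to produce the uniform shift by $j$).

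The substantive part is to pin down the $R[U]$-action, in particular the formula for $U\cdot h'_\alpha$. Here I would mimic the argument in the proof of Theorem \ref{IPluss-1}: build, for $M=DCI(Y_\Gamma)$, the analogue of the splitting map $\zeta$ of Lemma \ref{Technical-Extension-Lemma} and the correction term $\tau$ coming from the map $\psi$ of Lemma \ref{Lambda-Lemma}. However, for $Y_\Gamma$ the differential $\dd_M$ has three nonzero components $\dd^1,\dd^3,\dd^4$ issuing from $H_0(\alpha)$ for $\alpha$ irreducible (rather than the single $\dd^4$ in the $\overline{Y}_\Gamma$ case), so the analogue of $\psi=u^{-1}\circ\dd_M$ must be redefined: the relevant statement is that multiplication by $u$ gives an isomorphism onto the degree-$3$ part only on the irreducible summands, so the map carrying the $U$-action must instead be read off directly from the colimit description of $C^-_A(F_pM)$ using Proposition \ref{Explicit-Cpm-Models} and the shift-by-$4$ structure. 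Concretely, in the model $DCI^-(Y_\Gamma)_n\cong\bigoplus_{p\ge 0}M_{n+4p}$ the action of $U$ is the shift $(y_p)\mapsto(0,y_0,y_1,\dots)$, and tracing a class $h'_\alpha$ represented by $b_\alpha\in H_0(\alpha)$ through this shift produces, after one step, the element $\dd_M b_\alpha=\dd^1 b_\alpha+\dd^3 b_\alpha+\dd^4 b_\alpha$, which by the grading lemma is exactly $\sum_{\eta}n_{\alpha\eta}b_\eta+\sum_\lambda n_{\alpha\lambda}t_\lambda+\sum_\beta n_{\alpha\beta}t_\beta$; identifying $b_\eta$ with $U^0_\eta$, $t_\lambda$ with $Z^0_\lambda$, and $t_\beta$ with $h'_\beta$ (noting $t_\beta=b_\beta\cdot u$ and the degree bookkeeping) yields the stated formula.

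The main obstacle I anticipate is the sign and degree bookkeeping in translating between the two grading conventions and the three differential components. In the $\overline{Y}_\Gamma$ proof the single component $\dd^4\colon DCI_{4s,0}\to DCI_{4(s-1),3}$ made the reindexing transparent; here $\dd^1$ lowers the column index by $1$ and raises internal degree by $0$, $\dd^3$ by $3$ and $0$, and $\dd^4$ by $4$ and $3$ — wait, more precisely each $\dd^r$ has bidegree $(-r,r-1)$, so one must carefully check that all three land in total degree one less and that after passing to $\hat{C}^-_A$ they all contribute to the single $U$-shift, with the correct signs coming from the conventions of Definition \ref{Cone-Definition} and Proposition \ref{Explicit-Cpm-Models}. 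A secondary point is verifying that the submodule generated by the listed generators (implicitly the image of the $b_\alpha$, $t_\lambda$, $b_\eta$) is genuinely free and that no hidden extensions occur; but since $E^1=E^\infty$ with free $E^\infty_s$, Lemma \ref{Extension-Lemma}'s argument applies verbatim once adapted to the $\oplus,8$ periodicity, so this should be routine. I would therefore present the proof as: (1) invoke degeneration and freeness to get the additive statement via the adapted Lemma \ref{Extension-Lemma}; (2) use the explicit model of Theorem \ref{Explicit-DCI-Models} (in its $DCI^-$ form) to compute the $U$-action on the distinguished generator $h'_\alpha$, reading off the three differential components; (3) assemble, citing the grading lemma to convert $i$ to $j$.
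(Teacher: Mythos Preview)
Your overall plan to mimic Theorem~\ref{IPluss-1} is the right one and is what the paper intends (the paper itself omits the argument, saying only ``we state the conclusion''). However, there is a genuine error in your step~(1). You claim that $H^-_A(\alpha)=R\cdot h_\alpha$ is ``free of rank one'' over $R[U]$; it is not. As an $R[U]$-module it is $R[U]/(U)$, a torsion module concentrated in a single degree. Consequently $E^\infty_s=E^1_s$ is \emph{not} free for the columns $s\equiv 1\pmod 4$ carrying the irreducible orbits, and Lemma~\ref{Extension-Lemma} does not apply. Indeed, if all $E^\infty_s$ were free that lemma would force the $R[U]$-structure to be the honest direct sum, giving $U\cdot h'_\alpha=0$---directly contradicting the formula you are trying to prove. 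The non-freeness at the irreducible columns is precisely the obstruction that produces the twisted $U$-action.

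Your step~(2) has the right target but the mechanism you describe is off. In the model $DCI^-(Y_\Gamma)_n\cong\bigoplus_{p\ge 0}M_{n+4p}$ the map $U$ is the pure shift $(y_0,y_1,\dots)\mapsto(0,y_0,y_1,\dots)$; it never produces $\dd_M b_\alpha$. What you must do, exactly as in Lemmas~\ref{Technical-Extension-Lemma} and~\ref{Technical-U-Lemma}, is build a chain-level section $\zeta\colon C^-_A(\overline{F}_pM)\to C^-_A(F_pM)$ of the filtration quotient and then measure the failure of $\zeta$ to commute with $U$. The correction term is governed by an analogue of the map $\psi$ of Lemma~\ref{Lambda-Lemma}; here the relevant structure is that the only nonzero part of $\dd_M$ in $DCI(Y_\Gamma)$ emanates from $H_0(\alpha)$ for $\alpha$ irreducible (via $\dd^1,\dd^3,\dd^4$), and $b_\alpha\cdot u=t_\alpha$. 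Working this out yields $U\cdot h'_\alpha$ equal to the image of $\dd_M b_\alpha$ under the appropriate identifications, which is the stated sum. So drop the appeal to Lemma~\ref{Extension-Lemma} entirely, construct $\zeta$ explicitly for $M=DCI(Y_\Gamma)$, and read off the $U$-defect; the index bookkeeping via the $i/j$ relation is then exactly as you outline.
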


To calculate $I^+(Y_\Gamma)$ one may proceed as in Lemma \ref{Description-Differentials} to determine the differentials in the spectral sequence
explicitly and then resolve the resulting extension problem. However,
one can avoid this rather technical endeavor by using the following result.
If $M$ is a DG $R[U]$-module we write $M^\vee=\om{Hom}(M,R)$ for
the dual complex. Our convention is that
$(\dd_{M^{\vee}}f)(x) = (-1)^{|f|+1}f(\dd_M x)$ and $(U\cdot f)(x) = f(Ux)$
for $f\in M^\vee =\om{Hom}(M_n,R)$. The content of the following lemma,
which applies more generally, is stated in the proof of \cite[Theorem~7.10]{Miller19}.

\begin{lemma} There is an isomorphism of DG $R[U]$-modules
\[ DCI^+(Y_\Gamma) \cong DCI^-(\overline{Y}_\Gamma)^\vee  .\]
\end{lemma}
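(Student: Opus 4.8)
The plan is to combine three ingredients: the duality $DCI(Y_\Gamma)\cong DCI(\overline{Y}_\Gamma)^\vee$ cited from \cite[Theorem~7.10]{Miller19}, the explicit model for $DCI^-(\overline{Y}_\Gamma)$ from Theorem \ref{Explicit-DCI-Models}, and a general compatibility statement between the finitely-supported cobar construction (i.e. $\hat{C}^-_A$) and linear duality. Write $A = \Lambda_R[u]$, $M = DCI(\overline{Y}_\Gamma)$, and $M^\vee = DCI(Y_\Gamma)$ with the index filtration. The key point is that dualizing a complex interchanges limits with colimits and direct sums with direct products, which is exactly the discrepancy between the defining formulas for $\hat{C}^+_A$ and $\hat{C}^-_A$: recall from Lemma \ref{Bounded-FloerFiltration} that $\hat{C}^+_A(M^\vee) = \om{lim}_p C^+_A(M^\vee/F_pM^\vee)$ while $\hat{C}^-_A(M) = \om{colim}_p C^-_A(F_pM)$.

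\textbf{Key steps.} First I would record the duality on filtered complexes: the isomorphism $M^\vee \cong M^{\vee}$ is filtered in the appropriate sense, namely $F_p(M^\vee)_n$ is (canonically) the annihilator of $F_{-p-?}M$ -- here one must be careful to identify the exact reindexing of the filtration coming from the grading shift in the first Lemma of Section 5.4 (the shift by $3$ for irreducibles, $2$ for reducibles, $0$ for fully reducibles). Because the index filtration on $M$ is degreewise finite (equation \eqref{Finite-Filtration-Eq}), in each fixed total degree $n$ the modules $F_pM_n$ are finitely generated free $R$-modules that stabilize in both directions, so $R$-linear dualization commutes with the finite truncations and identifies $(F_pM/F_qM)^\vee$ with a subquotient of $M^\vee$. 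Second, I would establish the purely algebraic comparison: for a right $A$-module $N$ that is degreewise finitely generated free over $R$, there is a natural isomorphism $C^+_A(N^\vee)\cong C^-_A(N)^\vee$ of DG $R[U]$-modules. This is visible from the concrete models of Proposition \ref{Explicit-Cpm-Models}: $C^+_A(N^\vee) = \om{Tot}^\oplus(D^+_{*,*}(N^\vee))$ has columns $(N^\vee)[-3p]$ for $p\ge 0$ with horizontal map $\pm u^\vee$, whereas $C^-_A(N)^\vee$ is the $R$-dual of $\om{Tot}^\Pi(D^-_{*,*}(N))$, which in each degree is a finite product hence equals the dual of a finite direct sum; matching the signs of $\dd'$, $\dd''$ and the $U$-action against the conventions for $M^\vee$ stated just above the lemma gives the isomorphism. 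Third, I would pass to the full completion: apply $\om{lim}_q\om{colim}_p$ to the first comparison with $N = F_pM/F_qM$, use that $R$-duality turns the colimit-then-limit defining $\hat{C}^-_A(M)$ into the limit-then-colimit defining $\hat{C}^+_A(M^\vee)$ (again using degreewise finiteness so that $(\,\cdot\,)^\vee$ is exact and commutes with the relevant (co)limits), and conclude
\[
DCI^+(Y_\Gamma) = \hat{C}^+_A(M^\vee) \cong \hat{C}^-_A(M)^\vee = DCI^-(\overline{Y}_\Gamma)^\vee
\]
as DG $R[U]$-modules, where the $R[U]$-action on the right is the one dual to the $U$-action, i.e. $(U\cdot f)(x) = f(Ux)$, matching the stated convention.

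\textbf{Main obstacle.} The routine part is the sign bookkeeping in step two -- matching $\dd' = (-1)^{p+q+1}u$ and $\dd_M$ against the dual-complex sign $(\dd_{M^\vee}f)(x) = (-1)^{|f|+1}f(\dd_Mx)$ -- which is tedious but mechanical. The genuine difficulty is step three: $R$-linear dualization is only exact and compatible with (co)limits because the complexes are degreewise finitely generated free (so $\om{Hom}(-,R)$ is exact on them and $(\om{colim})^\vee = \om{lim}((-)^\vee)$, $(\bigoplus)^\vee = \prod((-)^\vee)$), and one must verify that the subquotients $F_pM/F_qM$ retain this finiteness in each degree -- which they do by \eqref{Finite-Filtration-Eq}, since $M_{s,t}$ is a finitely generated free $R$-module (each $H_t(\alpha)$ is free of rank $\le 1$) and vanishes outside a bounded range of $t$. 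Once that finiteness is in hand the interchange $\om{lim}_q\om{colim}_p \leftrightarrow \om{colim}_q\om{lim}_p$ under duality is formal, and the filtered enhancement (that the isomorphism respects the two index filtrations up to the grading reindexing) follows by naturality of all the maps involved.
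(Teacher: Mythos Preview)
Your approach is correct in outline but substantially more circuitous than the paper's, and the extra machinery you invoke is precisely what the paper has already packaged into Theorem~\ref{Explicit-DCI-Models}. The paper's proof is a single degreewise computation: writing $M=DCI(\overline{Y}_\Gamma)$ and using the explicit \emph{completed} models $DCI^-(\overline{Y}_\Gamma)_n\cong\bigoplus_{p\ge 0}M_{n+4p}$ and $DCI^+(Y_\Gamma)_n\cong\prod_{p\ge 0}M^\vee_{n-4p}$ from Theorem~\ref{Explicit-DCI-Models}, one simply dualizes the first to get
\[
DCI^-(\overline{Y}_\Gamma)^\vee_n=\om{Hom}\Bigl(\bigoplus_{p\ge 0}M_{-n+4p},R\Bigr)\cong\prod_{p\ge 0}M^\vee_{n-4p}\cong DCI^+(Y_\Gamma)_n,
\]
and then checks that the differentials and $U$-action match. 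No $\om{lim}/\om{colim}$ manipulation and no filtration reindexing are needed, because the completion has already been absorbed into the totalization models.

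Your route---establishing $C^+_A(N^\vee)\cong C^-_A(N)^\vee$ for bounded $N$, then completing via duality of $\om{lim}$ and $\om{colim}$---works, but the ``main obstacle'' you flag is real and not fully dispatched. The annihilator filtration $\om{Ann}(F_pM)\subset M^\vee$ is \emph{not} equal to $F_q(M^\vee)$ for a single $q$, because the bigrading shift $s\mapsto -s-d(\alpha)$ depends on the orbit type through $d(\alpha)=\dim\alpha\in\{0,2,3\}$. One must argue that the two filtrations are interleaved (each $\om{Ann}(F_pM)$ lies between $F_{-p-3}(M^\vee)$ and $F_{-p}(M^\vee)$, say) so that the resulting completed objects agree. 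This is straightforward but is exactly the bookkeeping that the paper's direct use of Theorem~\ref{Explicit-DCI-Models} sidesteps. Your step~2 is essentially the same observation the paper uses; the difference is that you apply it to $F_pM/F_qM$ and then pass to the limit, whereas the paper applies it once to the already-completed description. What your approach buys is a more conceptual statement (duality intertwines $\hat{C}^+_A$ and $\hat{C}^-_A$ in general under finiteness hypotheses); what the paper's approach buys is brevity.
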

\begin{proof} 
Let $M=DCI(\overline{Y}_\Gamma)$ and recall that $M^\vee\cong DCI(Y_\Gamma)$.
Using the explicit models for $DCI^\pm$ given in Theorem 
\ref{Explicit-DCI-Models} we find
\begin{align*}
DCI^-(\overline{Y}_\Gamma)^\vee_n = \om{Hom}\left(\bigoplus_{p\geq 0} M_{-n+4p},R \right) \cong & \prod_{p\geq 0} \om{Hom}(M_{-n+4p},R)   \\
= & \prod_{p\geq 0} M^\vee_{n-4p}\cong DCI^+(Y_\Gamma)_n  .
\end{align*} 
It is a straightforward calculation to check that these isomorphisms piece together to give an isomorphism of DG $R[U]$-modules taking into account that the $\Lambda_R[u]$-module structure on $M^\vee$ is given by 
$(f\cdot u)(x) = (-1)^{|u||x|}f(xu) = (-1)^{|x|}f(xu)$.
\end{proof} 

\begin{theorem} There is an isomorphisms of $R[U]$-modules
\[ I^+(Y_\Gamma) \cong I^-(\overline{Y}_\Gamma)^\vee  \]
for each finite subgroup $\Gamma\subset \om{SU}(2)$. 
\end{theorem}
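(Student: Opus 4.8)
The plan is to deduce this statement from the chain-level isomorphism $DCI^+(Y_\Gamma)\cong DCI^-(\overline{Y}_\Gamma)^\vee$ established in the preceding lemma, together with a universal-coefficients argument identifying the homology of a dual complex with the dual of the homology. First I would recall that by definition $I^+(Y_\Gamma)=H(DCI^+(Y_\Gamma))$ and, by Corollary \ref{DCI-CI-Equivariant-Eq-Corollary} (valid since $\tfrac12\in R$), this agrees with the equivariant Floer group computed from $\wt{CI}(Y_\Gamma)$; similarly $I^-(\overline{Y}_\Gamma)=H(DCI^-(\overline{Y}_\Gamma))$. So it suffices to produce an isomorphism of $R[U]$-modules
\[
H\big(DCI^-(\overline{Y}_\Gamma)^\vee\big)\cong H\big(DCI^-(\overline{Y}_\Gamma)\big)^\vee .
\]

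For this I would invoke the universal coefficient theorem for cohomology over the PID $R$: for a complex $C$ of $R$-modules that is degreewise free (or at least degreewise projective), there is a natural short exact sequence $0\to \operatorname{Ext}^1_R(H_{n-1}(C),R)\to H^n(\operatorname{Hom}(C,R))\to \operatorname{Hom}(H_n(C),R)\to 0$. The complex $DCI^-(\overline{Y}_\Gamma)$ is, by the explicit model of Theorem \ref{Explicit-DCI-Models}, a direct sum of shifted copies of $M=DCI(\overline{Y}_\Gamma)$, hence degreewise free over $R$ since each $H_t(\alpha)$ is free; so UCT applies. The key point making the $\operatorname{Ext}$ term vanish is that, by the explicit calculations in Theorem \ref{IMinus-1} and Theorem \ref{IMinus-2}, $I^-(\overline{Y}_\Gamma)$ is in every degree a direct sum of copies of $R$ (the modules $P_\Gamma$, $P_m$ are free $R$-modules degreewise, and the submodules generated by $G_\Gamma$, $G_m$ are free $R[U]$-submodules, hence free and in particular projective over $R$ in each degree). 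Therefore $\operatorname{Ext}^1_R(H_{n-1},R)=0$ and the UCT map $H^n(\operatorname{Hom}(C,R))\to \operatorname{Hom}(H_n(C),R)$ is an isomorphism. One must check that the sign conventions for $\dd_{M^\vee}$ and the $U$-action on $M^\vee$ fixed just before the lemma (namely $(U\cdot f)(x)=f(Ux)$) are exactly those under which this UCT iso is $R[U]$-linear — this is a routine but necessary bookkeeping step, and the natural pairing $\langle f,x\rangle=f(x)$ makes it transparent that $\langle U\cdot f,x\rangle=\langle f,Ux\rangle$ descends to homology.

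Stringing these together: $I^+(Y_\Gamma)=H(DCI^+(Y_\Gamma))\cong H(DCI^-(\overline{Y}_\Gamma)^\vee)\cong H(DCI^-(\overline{Y}_\Gamma))^\vee = I^-(\overline{Y}_\Gamma)^\vee$, all as $R[U]$-modules. The main obstacle I anticipate is not any of the homological algebra — UCT and the freeness input are clean — but rather pinning down that the grading and $R[U]$-module structure conventions align, i.e. that the dual $DCI^-(\overline{Y}_\Gamma)^\vee$ as defined (with $(\dd_{M^\vee}f)(x)=(-1)^{|f|+1}f(\dd_Mx)$) is genuinely the complex whose homology is $\operatorname{Hom}$ of the homology, and that no degree shift is introduced. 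A secondary point to state carefully is that taking $\operatorname{Hom}(-,R)$ does not commute with the infinite direct sum appearing in the model for $DCI^-$; but since we pass through the explicit lemma $DCI^+(Y_\Gamma)\cong DCI^-(\overline{Y}_\Gamma)^\vee$ — which already absorbs the product-vs-sum subtlety via $\operatorname{Hom}(\bigoplus,R)\cong\prod\operatorname{Hom}$ — this is handled, and one only needs the finitely-supported UCT statement in each fixed degree $n$, where $DCI^-(\overline{Y}_\Gamma)_{-n}$ and its dual are finite direct sums of free $R$-modules (bounded because the index filtration is degreewise finite, cf. \eqref{Finite-Filtration-Eq}). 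Hence the argument reduces, degree by degree, to the classical UCT with vanishing $\operatorname{Ext}$, completing the proof.
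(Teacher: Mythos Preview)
Your proposal is correct and follows essentially the same route as the paper: invoke the preceding chain-level lemma $DCI^+(Y_\Gamma)\cong DCI^-(\overline{Y}_\Gamma)^\vee$, apply the universal coefficient theorem (valid since $DCI^-(\overline{Y}_\Gamma)$ is degreewise $R$-free), and kill the $\operatorname{Ext}$ term using the degreewise $R$-freeness of $I^-(\overline{Y}_\Gamma)$ established in Theorems~\ref{IMinus-1} and~\ref{IMinus-2}. One small slip: your parenthetical claim that $DCI^-(\overline{Y}_\Gamma)_{-n}$ is a \emph{finite} direct sum is false (the model $\bigoplus_{p\ge 0} M_{n+4p}$ is infinite because of the $8$-periodicity of $M$), but this is harmless since the UCT over a PID requires only degreewise projectivity, not finite generation.
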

\begin{proof} Since $R$ is a principal ideal domain and the complex
$DCI^-(\overline{Y}_\Gamma)$ is degreewise free over $R$ (use the
explicit model and the fact that $DCI(\overline{Y}_\Gamma)$ is degreewise free), there is a universal coefficent theorem. In view of the
above lemma, this takes the form of a short exact sequence
\[ \begin{tikzcd}  \om{Ext}^1_R(I_{n-1}^-(\overline{Y}_\Gamma),R)
\arrow{r} & I^+_{-n}(Y_\Gamma) \arrow{r} & \om{Hom}(I^-(\overline{Y}_\Gamma)_n,R) \end{tikzcd} \] 
for each $n\in \Z$. By Theorem \ref{IMinus-1} and Theorem \ref{IMinus-2}
we know that $I^-(\overline{Y}_\Gamma)$ is degreewise free over $R$.
Consequently, the first term in the above sequence vanishes and we
are lift with an isomorphism 
$I^+(Y_\Gamma)\cong (I^-(\overline{Y})_\Gamma)^\vee$ of $R[U]$-modules
as required.   
\end{proof}  

Note that the calculations of Theorem \ref{Imin-StandardOrientation}
and Theorem \ref{IPluss-1} are compatible with the duality
$I^+(\overline{Y}_\Gamma)\cong (I^-(Y_\Gamma))^\vee$ as well.

\appendix

\section{Character Tables, Dynkin Graphs and Flat Connections}
This appendix contains various useful facts concerning the binary polyhedral
groups. Of particular importance are their complex representation theory,
the corresponding McKay graphs and complete lists of the $1$-dimensional
quaternionic representations.  

\subsection{Cyclic Groups}
The complex representation theory of cyclic groups are of course very well
known, so we only include a brief summary for completeness. Let $C_l = \langle g:g^l=1\rangle$ be the cyclic group of order $l$. The irreducible
complex representations are $\rho_k\colon C_l\ra U(1)$ for $0\leq k<l$, where
$\rho_k(g) = e^{\frac{2\pi i k}{l}}$. If we interpret the indices modulo
$l$ we have $\rho_k\otimes \rho_{k'}\cong \rho_{k+k'}$ and
$\rho_k^*\cong \rho_{-k}$.    
We embed $C_l\subset \om{SU}(2)$ through the representation $\rho_1\oplus \rho_{-1}$. 
 
Suppose that $l$ is odd. Then all of the $\rho_k$, $1\leq k<l$, are of complex
type, so the $1$-dimensional quaternionic representations are given by
$\theta = 2\rho_0$ and $\lambda_k = \rho_k\oplus \rho_{-k}$ for $1\leq k <l/2$. 
The McKay graph $\overline{\Delta}_{C_l}\cong \wt{A}_{l-1}$ takes the form

\begin{center}
\begin{tikzpicture}[scale=1.7]
\node (1) at (-0.6,0) [circle, draw, fill=magenta] {} ;
\node (2a) at (0,0.4) [circle, draw, fill = magenta] {};
\node (2b) at (0,-0.4) [circle, draw, fill=magenta] {};
\node (3a) at (1,0.4) [circle, draw, fill = magenta] {};
\node (3b) at (1,-0.4) [circle, draw, fill=magenta] {};
\node (4a) at (2,0.4) [circle, draw, fill = magenta] {};
\node (4b) at (2,-0.4) [circle, draw, fill=magenta] {};
\node (5a) at (4,0.4) [circle, draw, fill = magenta] {};
\node (5b) at (4,-0.4) [circle, draw, fill=magenta] {};

\path
(1) edge[dashed,out=80,in =180] (2a)
(1) edge[dashed, out = 280, in =180] (2b)
(2a) edge (3a)
(2b) edge (3b)
(3a) edge (4a)
(3b) edge (4b)
(5a) edge[out = 0, in = 0] (5b);
\draw[dotted, thick] (2.3,0.4)--(3.7,0.4);
\draw[dotted, thick] (2.3,-0.4)--(3.7,-0.4);
\draw[darkgray] (-1,0)--(4.5,0);
\end{tikzpicture}
\end{center}
where the line indicates the involution symmetry. The quotient graph $\ovl{\Delta}_{C_l}/(\iota)$ is given by 

\begin{center}
\begin{tikzpicture}[inner sep=0.43mm, scale=1.7]
\node (1) at (0,0) [circle,draw,fill=magenta] {$\theta$} ;
\node (2) at (1,0) [circle,draw,fill=magenta] {$\lambda_1$} ;
\node (3) at (2,0) [circle,draw,fill=magenta] {$\lambda_2$} ;
\node (4) at (4,0) [circle,draw,fill=magenta] {$\lambda_*$} ;
\node (5) at (5,0) [circle,draw,fill=magenta] {$\lambda_*$} ;
\node (6) at (6,0) [circle,draw,fill=magenta] {$\lambda_k$} ;

\path
(1) edge (2)
(2) edge (3)
(4) edge (5)
(5) edge (6);
\draw[dotted] (2.3,0)--(3.7,0);
\draw (6) to [out=290, in = 270] (6.8,0) to [out=90,in = 70] (6);
\end{tikzpicture}
\end{center}  

Suppose that $l$ is even. Then $\rho_{l/2}$ is of real type, so the flat
connections are given by $\theta = 2\rho_0$, $\eta = 2\rho_{k/2}$ and
$\lambda_k = \rho_k\oplus \rho_{-k}$ for $1\leq k<l/2$. The McKay graph
takes the form

\begin{center}
\begin{tikzpicture}[scale=1.7]
\node (1) at (-0.6,0) [circle, draw, fill=lime] {} ;
\node (2a) at (0,0.4) [circle, draw, fill = lime] {};
\node (2b) at (0,-0.4) [circle, draw, fill=lime] {};
\node (3a) at (1,0.4) [circle, draw, fill = lime] {};
\node (3b) at (1,-0.4) [circle, draw, fill=lime] {};
\node (4a) at (2,0.4) [circle, draw, fill = lime] {};
\node (4b) at (2,-0.4) [circle, draw, fill=lime] {};
\node (5a) at (4,0.4) [circle, draw, fill = lime] {};
\node (5b) at (4,-0.4) [circle, draw, fill=lime] {};
\node (6) at (4.6,0) [circle, draw, fill=lime] {};

\path
(1) edge[dashed,out=80,in =180] (2a)
(1) edge[dashed, out = 280, in =180] (2b)
(2a) edge (3a)
(2b) edge (3b)
(3a) edge (4a)
(3b) edge (4b)
(5a) edge[out = 0, in = 100] (6)
(5b) edge[out = 0, in = -100] (6);
\draw[dotted, thick] (2.3,0.4)--(3.7,0.4);
\draw[dotted, thick] (2.3,-0.4)--(3.7,-0.4);
\draw (-1,0)--(5,0);
\end{tikzpicture}
\end{center}

with quotient graph

\begin{center}
\begin{tikzpicture}[inner sep=0.43mm, scale=1.7]
\node (1) at (0,0) [circle,draw,fill=lime] {$\theta$} ;
\node (2) at (1,0) [circle,draw,fill=lime] {$\lambda_1$} ;
\node (3) at (2,0) [circle,draw,fill=lime] {$\lambda_2$} ;
\node (4) at (4,0) [circle,draw,fill=lime] {$\lambda_*$} ;
\node (5) at (5,0) [circle,draw,fill=lime] {$\lambda_k$} ;
\node (6) at (6,0) [circle,draw,fill=lime] {$\eta$} ;

\path
(1) edge (2)
(2) edge (3)
(4) edge (5)
(5) edge (6);
\draw[dotted] (2.3,0)--(3.7,0);
\end{tikzpicture}
\end{center}

\subsection{Binary Dihedral Groups}
A presentation for the binary dihedral group $D_n^*$, $n\geq 2$, is given by
\[ D_n^* = \langle a,x \mid a^n=x^2, axa=x\rangle  .\]
Note that $a^{2n}=a^nx^2 = xa^{-n}x = xx^{-2}x=1$ follows from the two
relations. Each element may be expressed uniquely
in the form $a^kx^{\eps}$ for $0\leq k<2n$ and $0\leq \eps\leq 1$. The group may be realized within $\om{Sp}(1)\subset \HH$ by taking $a = e^{\pi i/n}$ and $x= j$. In this realization $a^n=x^2=-1$ so this element is central.

\begin{lemma} We have $[D_n^*,D_n^*]=\cyc{a^2}$ so that
\[ D_n^*/[D_n^*,D_n^*]\cong 
\left\{\begin{array}{cc}\Z/4 & \mbox{ if } n\equiv 1\Mod{2}\\
        \Z/2\times \Z/2 & \mbox{ if } n\equiv 0\Mod{2}
        \end{array} \right.   \]
\end{lemma}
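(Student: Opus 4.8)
The plan is to pin down the commutator subgroup $[D_n^*,D_n^*]$ explicitly as $\cyc{a^2}$, and then read off the two possibilities for the abelianization from the parity of $n$. First I would exploit the defining relation $axa=x$: multiplying on the left by $a^{-1}$ gives $xa=a^{-1}x$, hence $xax^{-1}=a^{-1}$, so conjugation by $x$ inverts $a$. From this the commutator $[x,a]=xax^{-1}a^{-1}=a^{-2}$ lies in $[D_n^*,D_n^*]$, which already shows $\cyc{a^2}\subseteq [D_n^*,D_n^*]$.

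For the reverse inclusion I would check that $\cyc{a^2}$ is normal — it is invariant under conjugation by $a$ trivially, and $xa^2x^{-1}=(xax^{-1})^2=a^{-2}\in\cyc{a^2}$ — and then observe that in the quotient $D_n^*/\cyc{a^2}$ the images $\bar a,\bar x$ satisfy $\bar x\bar a\bar x^{-1}=\bar a^{-1}=\bar a$, so the quotient is generated by two commuting elements and is therefore abelian. Hence $[D_n^*,D_n^*]\subseteq\cyc{a^2}$, and equality follows.

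It then remains to identify $D_n^*/[D_n^*,D_n^*]=D_n^*/\cyc{a^2}$. Since every element of $D_n^*$ is uniquely of the form $a^kx^{\eps}$ with $0\le k<2n$ and $\eps\in\{0,1\}$, the element $a$ has order $2n$, so $a^2$ has order $n$ and the quotient has order $4n/n=4$. In the quotient $\bar a$ has order dividing $2$ and $\bar x$ satisfies $\bar x^2=\bar a^n$. If $n$ is even, then $\bar a^n=(\bar a^2)^{n/2}=1$, so $\bar x^2=1$; as the quotient has order $4$ and is generated by two elements of order (dividing) $2$, it must be $\Z/2\times\Z/2$. If $n$ is odd, then $\bar a^n=\bar a$, so $\bar x^2=\bar a$, whence $\bar x$ has order $4$ and generates the whole order-$4$ group, giving $\Z/4$.

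The argument is entirely elementary; the only points requiring a little care are the normality of $\cyc{a^2}$ and the bookkeeping that $|a|=2n$ (which one may instead simply quote from the faithful realization $a=e^{\pi i/n}$, $x=j$ in $\om{Sp}(1)$ recorded just above the lemma), after which the case split on the parity of $n$ is immediate.
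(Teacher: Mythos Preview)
Your proof is correct and follows essentially the same route as the paper: both show $a^{\pm 2}\in[D_n^*,D_n^*]$ by a direct commutator computation, observe that $\cyc{a^2}$ is normal with abelian quotient of order $4$ to get equality, and then split on the parity of $n$ via the order of $\bar x$ (using $\bar x^2=\bar a^n$). Your write-up is slightly more explicit (you verify normality and track the order of $\bar a$ carefully), but there is no substantive difference in method.
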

\begin{proof} Using the fact that $xa^k=a^{-k}x$ one sees that
every commutator is of the form $a^k$. Moreover, $axa^{-1}x^{-1}=a^2$ from which it follows that $\cyc{a^2}\subset [D_n^*,D_n^*]$. As $\cyc{a^2}$ is normal with quotient of order $4$ and thus abelian, we must have equality.
If $n$ is odd $x$ has order $4$ modulo
$\cyc{a^2}$, so the quotient is cyclic If $n$ is even every element has order $2$ mod $\cyc{a^2}$, so the quotient is isomorphic to 
$\Z/2\times \Z/2$. \end{proof}

\begin{lemma} The conjugacy classes of $D_n^*$ are given by
$\{1\}, \{-1\}$, $\{a^j,a^{-j}\}$ for $1\leq j<n$,
\[ \{x,a^2x,\cdots,xa^{2n-2}x\} \;\;  \mbox{ and }  \;\; 
 \{ax,a^3x,\cdots,a^{2n-1}x\} .\]
 \end{lemma}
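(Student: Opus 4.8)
The plan is to work directly in the quaternionic realization $a = e^{\pi i/n}$, $x = j$ inside $\om{Sp}(1)\subset \HH$, and to compute the conjugacy classes by hand using the two normal forms $a^k$ and $a^kx$ for elements. First I would record the basic commutation relations: from $axa = x$ one gets $xa = a^{-1}x$, and more generally $xa^k = a^{-k}x$, which also gives $a^k x a^{-k} = a^{2k}x$ and $(a^k x)a^j(a^kx)^{-1} = a^{-j}$ for all $j,k$. These identities, together with $a^{2n} = 1$ and the fact that $a^n = x^2 = -1$ is central, are all the input needed.

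Next I would split the elements into the two families and conjugate each. For a power $a^j$: conjugation by $a^k$ fixes it, and conjugation by $a^k x$ sends it to $a^{-j}$; hence the conjugacy class of $a^j$ is $\{a^j, a^{-j}\}$. This is a single point when $a^j = a^{-j}$, i.e. $2j \equiv 0 \Mod{2n}$, which happens exactly for $j \equiv 0$ and $j \equiv n$, giving the singleton classes $\{1\}$ and $\{-1\} = \{a^n\}$; for $1 \le j < n$ the class is the genuine two-element set $\{a^j, a^{-j}\}$, and these exhaust the remaining powers since $a^j$ and $a^{2n-j}$ coincide as classes. For an element $a^j x$: conjugation by $a^k$ gives $a^k (a^j x) a^{-k} = a^{j+2k} x$, so the $\cyc{a^2}$-conjugates already sweep out all $a^{j'}x$ with $j' \equiv j \Mod 2$; and conjugation by $a^k x$ gives $(a^kx)(a^jx)(a^kx)^{-1}$, which one checks also lands among the $a^{j'}x$ with $j'\equiv j \Mod 2$ (using $x a^j x = a^{-j}x^2 = -a^{-j}$ and $-1 = a^n$, with $n$ absorbed into the parity count appropriately — this is the one spot needing a small case check on the parity of $n$, but the resulting class is the same $\Mod 2$ coset either way). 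Therefore the reflection elements fall into exactly two classes, $\{x, a^2x, \dots, a^{2n-2}x\}$ and $\{ax, a^3x, \dots, a^{2n-1}x\}$, each of size $n$.

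Finally I would sanity-check the count: the proposed classes have total size $1 + 1 + 2(n-1) + n + n = 4n = |D_n^*|$, confirming that the list is complete and the classes are disjoint. I expect the only mildly delicate point to be the conjugation of a reflection element by another reflection element, where the central element $a^n = -1$ enters and one must be careful that it does not move the element out of its parity coset — but since $a^n x$ has parity $n \bmod 2$ relative to $x$, and conjugation by reflections composes two such shifts, the net effect is an even shift, so the two coset classes are preserved. I would present this as a short direct computation rather than invoking any general structure theory for $D_n^*$.
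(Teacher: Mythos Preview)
Your proposal is correct and takes essentially the same approach as the paper: both compute the classes directly from the relations $xa^k=a^{-k}x$ and $a^kxa^{-k}=a^{2k}x$, then finish with the cardinality count $1+1+2(n-1)+n+n=4n$. The only cosmetic difference is that the paper skips your explicit check that conjugating a reflection by a reflection preserves the parity coset and instead relies entirely on exhaustion for the last class; incidentally, that check goes through uniformly --- one finds $(a^kx)(a^jx)(a^kx)^{-1}=a^{2k-j}x$, so no case split on the parity of $n$ is actually needed.
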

\begin{proof} The elements $1,-1$ are central so $\{1\}$, $\{-1\}$ are
conjugacy classes. From the calculation $(a^sx)a^j(a^sx)^{-1}=a^{-j}$, valid for any $s$ and $j$, it follows that $\{a^j,a^{-j}\}$, for $1\leq j<n$, are conjugacy classes.
Next, $a^jxa^{-j}=a^{2j}x$ from which we deduce that
that $\{x,a^2x,\cdots,a^{2n-2}x\}$ is a conjugacy class. Finally,
from the relation $a^j(ax)a^{-j}=a^{2j+1}x$ we deduce that $\{ax,a^3x,\cdots,a^{2n-1}x\}$
is at least a portion of a conjugacy class. However, since we have already
exhausted every element of the group, we conclude that this must in fact
be the whole conjugacy class and that we have found all the conjugacy classes. 
\end{proof}  

\begin{proposition} The irreducible representations of $D_n^*$
consist of the four $1$-dimensional representations associated with
the four distinct homomorphisms $D_n^*/[D_n^*,D_n^*]\ra \{\pm1,\pm i\}$
and the $2$-dimensional representations $\tau_k\colon D_n^*\ra \om{GL}_2(\C)$, for $1\leq k<n$, given by
\[ \tau_k(a)=\left( \begin{array}{cc} \xi^k & 0 \\
                        0 & \xi^{-k} \end{array} \right) \;\;
\mbox{ and } \;\; \tau_k(x)=\left( \begin{array}{cc} 0 & (-1)^k \\
                         1 & 0 \end{array} \right) \]
where $\xi = e^{\pi i /n}$.
\end{proposition}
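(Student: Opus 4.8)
The plan is to verify the classification of irreducible representations of $D_n^*$ by a direct character-theoretic count, exploiting the presentation and conjugacy class data already established in the two preceding lemmas. First I would record that $D_n^*$ has order $4n$ and, by the conjugacy class lemma, exactly $n+3$ conjugacy classes: $\{1\}$, $\{-1\}$, the $n-1$ classes $\{a^j,a^{-j}\}$ for $1\le j<n$, and the two classes of reflections. Hence there are precisely $n+3$ isomorphism classes of irreducible complex representations, and the sum of the squares of their dimensions must equal $4n$.

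Next I would produce the candidate representations. The four $1$-dimensional ones are pulled back from the abelianization $D_n^*/[D_n^*,D_n^*]$, which by the first lemma has order $4$; composing the quotient map with the four characters of a group of order $4$ (cyclic $\Z/4$ with values in $\{\pm1,\pm i\}$ when $n$ is odd, or $\Z/2\times\Z/2$ with values in $\{\pm1\}$ when $n$ is even) gives four distinct $1$-dimensional representations. Then I would check directly from the presentation $a^n=x^2$, $axa=x$ that the assignments $\tau_k(a)=\mathrm{diag}(\xi^k,\xi^{-k})$, $\tau_k(x)=\begin{pmatrix}0&(-1)^k\\1&0\end{pmatrix}$ with $\xi=e^{\pi i/n}$ define homomorphisms: one verifies $\tau_k(a)^n=(-1)^k I=\tau_k(x)^2$ and $\tau_k(x)\tau_k(a)\tau_k(x)=\tau_k(a)^{-1}$, so indeed $\tau_k(a)\tau_k(x)\tau_k(a)=\tau_k(x)$. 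This gives $2$-dimensional representations for $k=1,\dots,n-1$ (the values $k=0$ and $k=n$ would be reducible or not give the reflection relation correctly, so they are excluded), a total of $n-1$ of them.

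Then I would show these $n-1$ two-dimensional representations are irreducible and pairwise non-isomorphic. Irreducibility: a $\tau_k$-invariant line would be an eigenline of $\tau_k(a)$; the eigenlines are the coordinate axes (since $\xi^k\ne\xi^{-k}$ for $1\le k<n$), but $\tau_k(x)$ swaps these axes, so no line is invariant. Non-isomorphism and distinctness from the $1$-dimensionals: compute characters. The character of $\tau_k$ is $\chi_k(a^j)=\xi^{jk}+\xi^{-jk}=2\cos(\pi jk/n)$, $\chi_k(-1)=\tau_k(a^n)$ has trace $2(-1)^k$, and $\chi_k$ vanishes on both reflection classes (the matrices $\tau_k(a^jx)$ are antidiagonal, hence traceless). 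Distinctness of $\chi_1,\dots,\chi_{n-1}$ follows because $2\cos(\pi k/n)$, $k=1,\dots,n-1$, are distinct. Finally the dimension count closes the argument: $4\cdot 1^2+(n-1)\cdot 2^2=4+4n-4=4n=|D_n^*|$, so together with the $n+3 = 4+(n-1)$ count matching the number of conjugacy classes, these are all the irreducibles.

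The main obstacle, such as it is, will be the bookkeeping around parity of $n$: one must be careful that the four $1$-dimensional characters are genuinely distinct and correctly described in both cases ($\Z/4$ versus $\Z/2\times\Z/2$), and that $\tau_k(x)^2 = (-1)^k I$ is consistent with $a^n = x^2$ mapping to $(-1)^k I$ under $\tau_k$ — i.e. that the relation $a^n=x^2$ is respected rather than just $a^{2n}=1$. Everything else is a routine finite-group character computation, so I would present the homomorphism verification and the orthogonality/dimension count compactly rather than in full detail.
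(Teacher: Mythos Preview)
Your proposal is correct and follows the same overall architecture as the paper: exhibit the four $1$-dimensional characters and the $n-1$ two-dimensional $\tau_k$, verify irreducibility and mutual inequivalence, then close with the dimension count $4+4(n-1)=4n=|D_n^*|$. The one substantive difference is how irreducibility is established: the paper computes the character norm $\|\chi_k\|^2$ directly via the trigonometric identity $\sum_{j=1}^{n-1}4\cos^2(\pi kj/n)=2(n-2)+4$, whereas you argue geometrically that the eigenlines of $\tau_k(a)$ are swapped by $\tau_k(x)$. Your argument is slightly more elementary (no sum evaluation needed) but relies on $\xi^k\neq\xi^{-k}$, which is exactly the constraint $1\le k<n$; the paper's norm computation is uniform and doubles as a sanity check on the character values. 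You also include the explicit homomorphism verification and the conjugacy-class count, which the paper leaves implicit; these are welcome additions rather than a different route.
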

\begin{proof} The statement about the $1$-dimensional representations is clear. Let $\chi_k$ denote the character
of $\tau_k$. Then $\chi_k(a^j)=2\cos(\pi jk/n)$,
$\chi_k(1)=2$, $\chi_k(-1)=2(-1)^k$ and $\chi_k$ vanishes
on the remaining conjugacy classes. Hence, 
\begin{align*}
||\chi_k||^2 = \frac1{4n}(8+2\sum_{j=1}^{n-1}4\cos^2(\pi kj/n) )) &= \frac1{4n}(8+4\sum_{j=1}^{n-1}(\cos(2\pi jk/n)+1)) \\
& =\frac1{4n}(8+4(n-2))=1,
\end{align*} 
which shows that each $\tau_k$ is irreducible. The representations 
$\tau_k$ are inequivalent for $1\leq k<n$ because the values 
$\chi_k(a)=2\cos(\pi k/n)$ are distinct for $1\leq k<n$. The sum of the 
squares of the dimensions of the irreducible representations found is $4+(n-1)4=4n=|D_n^*|$, so conclude that we have found all of them.
\end{proof}

The representation corresponding to the inclusion $D_n^*\subset \om{Sp}(1)=\om{SU}(2)$ is $\tau_1$. We denote the four representations
corresponding to the homomorphisms $(D_n^*)^{ab}\ra \{\pm1,\pm i\}$ by
$\rho_j$ for $0\leq j\leq 3$, where we take $\rho_0$ to be the trivial representation
and fix $\rho_1$ by the requirement $\tau_1\otimes \tau_1 = \tau_2\oplus \rho_1\oplus \rho_0$. Then $\rho_0,\rho_1$ are of real type, while if $n$ is odd $\rho_2\cong \rho_3^*,\rho_3$ are of
complex type and if $n$ is even $\rho_2$ and $\rho_3$ are of real type. 

The McKay graph $\overline{\Delta}_{D_n^*}\cong \wt{D}_{n+2}$ takes the form

\begin{center} \label{Dn-graph}
\begin{tikzpicture}[inner sep=0.5mm, scale=1.7]
\node (1a) at (-0.6,0.4) [circle, draw, fill=cyan] {$1$};
\node (1b) at (-0.6,-0.4) [circle, draw, fill=cyan] {$1$};
\node (2) at (0,0) [circle, draw, fill=cyan] {$2$};
\node (3) at (1,0) [circle, draw, fill=cyan] {$2$};
\node (4) at (3,0) [circle, draw, fill=cyan] {$2$};
\node (5) at (4,0) [circle, draw, fill=cyan] {$2$};
\node (6a) at (4.6,0.4) [circle, draw, fill=cyan] {$1$};
\node (6b) at (4.6,-0.4) [circle, draw, fill=cyan] {$1$};

\path
(1a) edge (2)
(1b) edge (2)
(2) edge (3)
(4) edge (5)
(5) edge (6a)
(5) edge (6b);
\draw[dotted, thick] (1.4,0)--(2.6,0);
\end{tikzpicture}
\end{center}
where we have included the dimensions of the representations in the graph.
For $D_n^*$ there are $n-1$ vertices in the middle corresponding to
$\tau_1,\cdots,\tau_{n-1}$ from left to right. On the left hand side 
we have $\rho_0,\rho_1$ and on the right hand side we have $\rho_2,\rho_3$.  

\begin{lemma} The representation $\tau_k$ is quaternionic
for $k$ odd and real for $k$ even. \end{lemma}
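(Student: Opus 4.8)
The plan is to compute the Frobenius--Schur indicator $\nu(\tau_k) = \frac{1}{|D_n^*|}\sum_{g\in D_n^*}\chi_k(g^2)$ and show it equals $(-1)^k$; by Lemma \ref{Type-Criterion} this forces $\tau_k$ to be quaternionic for $k$ odd and real for $k$ even. The first step is to split the sum according to the two kinds of group elements, using the presentation $D_n^* = \cyc{a,x \mid a^n = x^2,\ axa = x}$: the $2n$ powers $a^j$ with $0\le j<2n$, and the $2n$ elements $a^j x$. For $g = a^j$ one has $g^2 = a^{2j}$ and $\chi_k(a^m) = \xi^{mk}+\xi^{-mk}$ for every integer $m$ (immediate from the diagonal form of $\tau_k(a)$), so $\sum_j\chi_k(a^{2j}) = \sum_{j=0}^{2n-1}(\xi^{2k})^j + \sum_{j=0}^{2n-1}(\xi^{-2k})^j$; since $\xi^{2k} = e^{2\pi i k/n}\neq 1$ for $1\le k<n$ and $(\xi^{2k})^{2n}=1$, both geometric sums vanish. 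For $g = a^j x$, the relation $xa = a^{-1}x$ (a consequence of $axa = x$) gives $(a^j x)^2 = a^j(xa^j)x = a^j a^{-j}x^2 = x^2 = a^n = -1$, the central element, so $\chi_k(g^2) = \chi_k(-1) = 2(-1)^k$ for all $2n$ such $g$. Adding the two contributions and dividing by $|D_n^*| = 4n$ yields $\nu(\tau_k) = \frac{4n(-1)^k}{4n} = (-1)^k$.

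An alternative, and arguably cleaner, route is to exhibit the structure map directly. Set $J \coloneqq \tau_k(x) = \bigl(\begin{smallmatrix} 0 & (-1)^k \\ 1 & 0 \end{smallmatrix}\bigr)$ and define a conjugate-linear map $s\colon \C^2\to\C^2$ by $s(v) = J\bar v$. A one-line matrix computation (using that $J$ has real entries and $\tau_k(a)$ is diagonal) shows that $s$ commutes with both $\tau_k(a)$ and $\tau_k(x)$, hence is $D_n^*$-equivariant, while $s^2(v) = J\overline{J\bar v} = J^2 v = (-1)^k v$. By Lemma \ref{Irr-Decomp-C} this places $\tau_k$ in $\om{Irr}(D_n^*,\C)_\R$ when $k$ is even and in $\om{Irr}(D_n^*,\C)_\HH$ when $k$ is odd.

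There is no genuine obstacle here; the only points needing care are the identity $(a^j x)^2 = -1$, which rests on rewriting $xa^j = a^{-j}x$, and (in the first approach) the vanishing of the geometric sums, which uses exactly the hypothesis $1\le k<n$ so that $\xi^{2k}\neq 1$. I would present the structure-map argument in the paper, since it is self-contained, avoids running over the conjugacy-class table, and makes the alternation $(-1)^k$ manifest.
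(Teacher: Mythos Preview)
Your first argument, computing the Frobenius--Schur indicator by splitting over $\{a^j\}$ and $\{a^jx\}$, is exactly the paper's proof, only with the geometric-sum step spelled out where the paper simply asserts $\sum_{j}\chi_k(a^{2j})=0$. Your second argument, exhibiting the conjugate-linear intertwiner $s(v)=\tau_k(x)\bar v$ directly, is not in the paper; it trades the character-sum computation for a short matrix check and has the advantage of producing the actual real or quaternionic structure rather than merely certifying its existence. Either version is fine here.
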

\begin{proof}
We use the criterion of Lemma \ref{Type-Criterion}.
Since $\sum_{j=0}^{2n-1}\chi_k(a^{2j})=0$
and $(xa^j)^2=x^2=-1$ for each $j$, we obtain
$\sum_{g\in D_n^*} \chi_k(g^2) =4n(-1)^k$ and the result follows. \end{proof}

Let $\theta = 2\rho_0$ and $\alpha_k = \tau_{2k-1}$ for $1\leq k\leq n/2$.
For $n$ even the $1$-dimensional quaternionic representations are given by
\[ \theta, \eta_1 = 2\rho_1, \eta_2 = 2\rho_2,\eta_3=2\rho_3, \alpha_1,\cdots,\alpha_{n/2}  \]
and the quotient graph $\ovl{\Delta}_{D^*_n}/(\iota)$ is given by
\begin{center}
\begin{tikzpicture}[inner sep=0.43mm, scale=1.7]
\node (1a) at (-0.6,0.4) [circle, draw, fill=yellow] {$\theta$};
\node (1b) at (-0.6,-0.4) [circle, draw, fill=yellow] {$\eta_1$};
\node (2) at (0,0) [circle, draw, fill=yellow] {$\alpha_1$};
\node (3) at (1,0) [circle, draw, fill=yellow] {};
\node (4) at (2,0) [circle,draw, fill=yellow] {$\alpha_3$};
\node (5) at (4,0) [circle, draw, fill=yellow] {};
\node (6) at (5,0) [circle, draw, fill=yellow] {$\alpha_k$};
\node (7a) at (5.6,0.4) [circle, draw, fill=yellow] {$\eta_2$};
\node (7b) at (5.6,-0.4) [circle, draw, fill=yellow] {$\eta_3$};

\path
(1a) edge (2)
(1b) edge (2)
(2) edge (3)
(3) edge (4)
(5) edge (6)
(6) edge (7a)
(6) edge (7b);
\draw[dotted, thick] (2.4,0)--(3.6,0);
\end{tikzpicture}
\end{center}
For $n$ odd the $1$-dimensional quaternionic representations are given by
\[ \theta, \eta = 2\rho_1, \alpha_1,\cdots,\alpha_{(n-1)/2},\lambda = \rho_2\oplus \rho_3  \]
and the corresponding quotient graph is given by

\begin{center}
\begin{tikzpicture}[inner sep=0.43mm, scale=1.7]
\node (1a) at (-0.6,0.4) [circle, draw, fill=yellow] {$\theta$};
\node (1b) at (-0.6,-0.4) [circle, draw, fill=yellow] {$\eta_1$};
\node (2) at (0,0) [circle, draw, fill=yellow] {$\alpha_1$};
\node (3) at (1,0) [circle, draw, fill=yellow] {};
\node (4) at (2,0) [circle,draw, fill=yellow] {$\alpha_3$};
\node (5) at (4,0) [circle, draw, fill=yellow] {$\alpha_k$};
\node (6) at (5,0) [circle, draw, fill=yellow] {};
\node (7) at (6,0) [circle, draw, fill=yellow] {$\lambda$};

\path
(1a) edge (2)
(1b) edge (2)
(2) edge (3)
(3) edge (4)
(5) edge (6)
(6) edge (7);
\draw[dotted, thick] (2.4,0)--(3.6,0);
\end{tikzpicture}
\end{center}

\subsection{The Binary Tetrahedral Group}
Let $Q=\{\pm1,\pm i,\pm j, \pm k\}\subset \om{Sp}(1)$ be the quaternion
group (note that $Q\cong D_2^*$).   
Then $T^*$ may be realized in $\om{Sp}(1)\subset \HH$ as
\[ T^* =Q \cup \{\frac12 (\eps_0 1+\eps_1 i+\eps_2 j+\eps_3 k) : \eps_r=\pm 1,
 \; 0\leq r\leq 3 \}  .\]
We have $[T^*,T^*]=Q$ so as $|T^*|=24$, it follows 
that $(T^*)^{ab}=T^*/[T^*,T^*]\cong \Z/3$. 

The character table of $T^*$ is given below. Here $\xi=e^{2\pi i/3}$
is a primite third root of unity. 

\[ \begin{array}{|c|c|c|c|c|c|c|c||c|}
\hline
  {}    &  1  &  2  &  3a &  3b &  4  &  6a &  6b & \mbox{type}  \\ \hline
 \rho_1 &  1  &  1  &  1  &  1  &  1  &  1  &  1 & \R   \\ \hline
 \rho_2 & 1 & 1 & \xi & \xi^2 & 1 & \xi & \xi^2  & \C   \\ \hline
 \rho_2^* & 1 & 1 & \xi^2 & \xi & 1 & \xi^2 & \xi & \C \\   \hline
 \rho_3 & 2 & -2 & -\xi & -\xi^2 & 0 & \xi & \xi^2 & \C  \\ \hline
 \rho_3^* & 2 & -2 & -\xi^2 & -\xi & 0 & \xi^2 & \xi & \C \\ \hline
 \rho_4 & 2 & -2 & -1 & -1 & 0 & 1 & 1   & \HH \\ \hline 
 \rho_5 & 3 & 3 & 0 & 0 & -1 & 0 & 0 & \R  \\ \hline
\end{array} \]
The type of the representation in the right hand column is calculated
using Lemma \ref{Type-Criterion} and the following table. The first row
gives a representative for each conjugacy class and the second row
gives the conjugacy class in which the square of this element belongs.
Here $x=(1+i+j+k)/2\in T^*$.

\[ \begin{array}{|c|c|c|c|c|c|c|c|}
\hline
{} & 1 & 2 & 3a & 3b & 4 & 6a & 6b  \\ \hline
\mbox{rep} & 1 & -1 & -x & -x^* & i & x & x^*  \\ \hline
\mbox{sm} & 1 & 1 & 3b & 3a & 2 & 3b & 3a  \\ \hline 
\end{array}  \]

The canonical representation given by $T^*\inj \om{SU}(2)$ corresponds
to the irreducible character $\rho_4$. From this one may calculate
the McKay graph $\overline{\Delta}_{T^*}\cong \wt{E}_6$ to be

\begin{center}
\begin{tikzpicture}[inner sep=0.43mm, scale = 1.7]
\node (1) at (0,0) [circle,draw,fill=yellow] {$\rho_1$};
\node (2) at (1,0) [circle,draw,fill=yellow] {$\rho_4$};
\node (3) at (2,0) [circle,draw,fill=yellow] {$\rho_5$};
\node (4a) at (2.7,0.3) [circle,draw,fill=yellow] {$\rho_3$};
\node (4b) at (2.7,-0.3) [circle,draw,fill=yellow] {$\rho_3^*$};
\node (5a) at (3.4,0.6) [circle,draw,fill=yellow] {$\rho_2$};
\node (5b) at (3.4,-0.6) [circle,draw,fill=yellow] {$\rho_2^*$};
\path
(1) edge (2)
(2) edge (3)
(3) edge (4a)
(3) edge (4b)
(4a) edge (5a)
(4b) edge (5b);
\end{tikzpicture}
\end{center}
From the type calculation given in the character table and Proposition \ref{Irr-Decomp-H},
we see that the $1$-dimensional quaternionic representations are
$\theta = 2\rho_1$, $\alpha = \rho_4$ and $\lambda = \rho_2\oplus \rho_2^*$.
The quotient graph $\ovl{\Delta}_{T^*}/(\iota)$ then takes the form

\begin{center}
\begin{tikzpicture}[inner sep=0.8mm, scale = 1.7]
\node (1) at (0,0) [circle, draw,fill=yellow] {$\theta$};
\node (2) at (1,0) [circle,draw,fill=yellow] {$\alpha$};
\node (3) at (2,0) [circle,draw,fill=yellow] {};
\node (4) at (3,0) [circle,draw,fill=yellow] {};
\node (5) at (4,0) [circle,draw,fill=yellow] {$\lambda$};
\path
(1) edge (2)
(2) edge (3)
(3) edge (4)
(4) edge (5);
\end{tikzpicture}
\end{center}

\subsection{The Binary Octahedral Group}
Let $Q\subset T^*\subset \om{Sp}(1)$ be as in the previous section. 
Then $O^*$ may be realized as 
\[  O^* = T^*\cup \{ (u+v)/\sqrt2: u, v\in Q\;,\; u\neq \pm v  \}\subset \om{Sp}(1)    .\]
We have $[O^*,O^*]=T^*$, so as $|O^*|=48$ it follows that
$(O^*)^{ab}=O^*/[O^*,O^*]\cong \Z/2$.

The character table of $O^*$ is given below

\[ \begin{array}{|c|c|c|c|c|c|c|c|c||c|}
\hline
{} & 1 & 2 & 3 & 4a & 4b & 6 & 8a & 8b & \mbox{type} \\ \hline
\rho_1 & 1 & 1 & 1 & 1 & 1 & 1 & 1 & 1 & \R \\ \hline
\rho_2 & 1 & 1 & 1 & 1 & -1 & 1 & -1 & -1 & \R \\ \hline
\rho_3 & 2 & 2 & -1 & 2 & 0 & -1 & 0 & 0 & \R \\ \hline
\rho_4 & 2 & -2 & -1 & 0 & 0 & 1 & \sqrt2 & -\sqrt2 & \HH \\ \hline
\rho_5 & 2 & -2 & -1 & 0 & 0 & 1 & -\sqrt2 & \sqrt2 & \HH \\ \hline
\rho_6 & 3 & 3 & 0 & -1 & -1 & 0 & 1 & 1 & \R \\ \hline 
\rho_7 & 3 & 3 & 0 & -1 & 1 & 0 & -1 & -1 & \R \\ \hline
\rho_8 & 4 & -4 & 1 & 0 & 0 & -1 & 0 & 0  & \HH \\ \hline
\end{array} \]

Let $x=(1+i+j+k)/2$, as in the previous section, let $y=(1+i)/\sqrt2$ and let
$z = (i+j)/\sqrt2$. The following table contains the information needed
for the type calculation given in the above right hand column.  

\[ \begin{array}{|c|c|c|c|c|c|c|c|c|c|}
\hline 
{} & 1 & 2 & 3 & 4a & 4b & 6 & 8a & 8b  \\ \hline
\mbox{rep} & 1 & -1 & -x & i & z & x & y & -y  \\ \hline
\mbox{sm} & 1 & 1 & 3 & 2 & 2 & 3 & 6 & 6  \\ \hline
\end{array} \]

The canonical representation given by the inclusion
$O^*\inj \om{SU}(2)$ corresponds to $\rho_4$. From this one obtains
the McKay graph $\overline{\Delta}_{O^*}\cong \wt{E}_7$.

\begin{center}
\begin{tikzpicture}[inner sep = 0.43mm, scale = 1.7]
\node (1) at (0,0) [circle,draw,fill=cyan] {$\rho_1$};
\node (2) at (1,0) [circle,draw,fill=cyan] {$\rho_4$};
\node (3) at (2,0) [circle,draw,fill=cyan] {$\rho_6$};
\node (4) at (3,0) [circle,draw,fill=cyan] {$\rho_8$};
\node (5) at (4,0) [circle,draw,fill=cyan] {$\rho_7$};
\node (6) at (5,0) [circle,draw,fill=cyan] {$\rho_5$};
\node (7) at (6,0) [circle,draw,fill=cyan] {$\rho_2$};
\node (8) at (3,0.7) [circle,draw,fill=cyan] {$\rho_3$};
\path
(1) edge (2)
(2) edge (3)
(3) edge (4)
(4) edge (5)
(5) edge (6)
(6) edge (7)
(4) edge (8);
\end{tikzpicture}
\end{center}

Using the by now standard method one finds the $1$-dimensional quaternionic
representation to be $\theta = 2\rho_1$, $\alpha=\rho_4$, $\beta = \rho_5$
and $\eta = 2\rho_2$. In this case there are no irreducible representations
of complex type, so the quotient graph is simply

\begin{center}
\begin{tikzpicture}[inner sep = 0.8mm, scale = 1.7]
\node (1) at (0,0) [circle,draw,fill=cyan] {$\theta$};
\node (2) at (1,0) [circle,draw,fill=cyan] {$\alpha$};
\node (3) at (2,0) [circle,draw,fill=cyan] {};
\node (4) at (3,0) [circle,draw,fill=cyan] {};
\node (5) at (4,0) [circle,draw,fill=cyan] {};
\node (6) at (5,0) [circle,draw,fill=cyan] {$\beta$};
\node (7) at (6,0) [circle,draw,fill=cyan] {$\eta$};
\node (8) at (3,0.7) [circle,draw,fill=cyan] {};
\path
(1) edge (2)
(2) edge (3)
(3) edge (4)
(4) edge (5)
(5) edge (6)
(6) edge (7)
(4) edge (8);
\end{tikzpicture}
\end{center}

\subsection{The Binary Icosahedral Group}
Let $\phi=(1+\sqrt5)/2=2\cos(\pi/5)$ be the golden ratio (thus $\phi^2=\phi+1$) and let $S$ be the set of quaternions
obtained by even coordinate permutations of
\[ (\eps_0 i+ \eps_1 \phi^{-1}j+\eps_2 \phi k)/2  \]
where $\eps_r\in \{\pm 1\}$ for $0\leq r\leq 2$. Note that there are
$12=|A_4|$ even permutations and $8=2^3$ sign combinations so
$|S|=12\cdot 8=96$. The binary icosahedral group may be realized in
$\om{Sp}(1)$ as $I^*=T^*\cup S$. It is well-known that $I^*$ is a perfect
group, that is, $[I^*,I^*]=I^*$ or equivalently $(I^*)^{ab}=0$.
The binary polyhedral space $Y_{I^*}=S^3/I^*$ is also called the
Poincar\'{e} sphere and is an integral homology sphere. 

The character table of $I^*$ is given below.

\[ \begin{array}{|c|c|c|c|c|c|c|c|c|c||c|}
\hline
{} & 1 & 2 & 3 & 4 & 5a & 5b & 6 & 10a & 10b & \mbox{type} \\ \hline
\rho_1 & 1 & 1 & 1 & 1 & 1 & 1 & 1 & 1 & 1 & \R \\ \hline
\rho_2 & 2 & -2 & -1 & 0 & \phi^{-1} & -\phi & 1 & \phi & -\phi^{-1} & \HH \\ \hline
\rho_3 & 2 & -2 & -1 & 0 & -\phi & \phi^{-1} & 1 & -\phi^{-1} & \phi & \HH \\ \hline
\rho_4 & 3 & 3 & 0 & -1 & -\phi^{-1} & \phi & 0 & \phi & -\phi^{-1} & \R \\ \hline
\rho_5 & 3 & 3 & 0 & -1 & \phi & -\phi^{-1} & 0 & -\phi^{-1} & \phi & \R \\ \hline
\rho_6 & 4 & 4 & 1 & 0 & -1 & -1 & 1 & -1 & -1 &  \R \\ \hline 
\rho_7 & 4 & -4 & 1 & 0 & -1 & -1 & -1 & 1 & 1 & \HH \\ \hline
\rho_8 & 5 & 5 & -1 & 1 & 0 & 0 & -1 & 0 & 0  & \R \\ \hline
\rho_9 & 6 & -6 & 0 & 0 & 1 & 1 & 0 & -1 & -1 & \HH \\ \hline
\end{array} \]

Put
\[ u = (\phi\cdot 1+\phi^{-1}\cdot i+j)/2 \in S \]
and let $x\in T^*$ be defined as earlier. We then have the following
table for the type calculation in the above right hand column. 

\[ \begin{array}{|c|c|c|c|c|c|c|c|c|c|c|}
\hline 
{} & 1 & 2 & 3 & 4 & 5a & 5b & 6 & 10a & 10b  \\ \hline
\mbox{rep} & 1 & -1 & -x & i & u^2 & -u & x & u & -u^2  \\ \hline
\mbox{sm} & 1 & 1 & 3 & 2 & 5b & 5a & 3 & 5a & 5b  \\ \hline
\end{array} \]

The character corresponding to the canonical representation 
$I^*\inj \om{Sp}(1)=\om{SU}(2)$ is $\rho_2$. From this one may show
that the McKay graph $\overline{\Delta}_{I^*}\cong \wt{E}_8$ is given by

\begin{center}
\begin{tikzpicture}[inner sep=0.43mm, scale = 1.6]
\node (1) at (0,0) [circle,draw,fill=lime] {$\rho_1$};
\node (2) at (1,0) [circle,draw,fill=lime] {$\rho_2$};
\node (3) at (2,0) [circle,draw,fill=lime] {$\rho_4$};
\node (4) at (3,0) [circle,draw,fill=lime] {$\rho_7$};
\node (5) at (4,0) [circle,draw,fill=lime] {$\rho_8$};
\node (6) at (5,0) [circle,draw,fill=lime] {$\rho_9$};
\node (7) at (6,0) [circle,draw,fill=lime] {$\rho_6$};
\node (8) at (7,0) [circle,draw,fill=lime] {$\rho_3$};
\node (9) at (5,0.7) [circle,draw,fill=lime] {$\rho_5$};
\path
(1) edge (2)
(2) edge (3)
(3) edge (4)
(4) edge (5)
(5) edge (6)
(6) edge (7)
(7) edge (8)
(6) edge (9);
\end{tikzpicture}
\end{center} 

The $1$-dimensional quaternionic
representations are $\theta = 2\rho_1$, $\alpha = \rho_2$ and $\beta = \rho_3$.
Once again there are no irreducible representations of complex type, so
the quotient graph is simply

\begin{center}
\begin{tikzpicture}[inner sep=0.8mm, scale = 1.6]
\node (1) at (0,0) [circle,draw,fill=lime] {$\theta$};
\node (2) at (1,0) [circle,draw,fill=lime] {$\alpha$};
\node (3) at (2,0) [circle,draw,fill=lime] {};
\node (4) at (3,0) [circle,draw,fill=lime] {};
\node (5) at (4,0) [circle,draw,fill=lime] {};
\node (6) at (5,0) [circle,draw,fill=lime] {};
\node (7) at (6,0) [circle,draw,fill=lime] {};
\node (8) at (7,0) [circle,draw,fill=lime] {$\beta$};
\node (9) at (5,0.7) [circle,draw,fill=lime] {};
\path
(1) edge (2)
(2) edge (3)
(3) edge (4)
(4) edge (5)
(5) edge (6)
(6) edge (7)
(7) edge (8)
(6) edge (9);
\end{tikzpicture}
\end{center}

\section{Equivariant \texorpdfstring{$\om{SU}(2)$}{SU2}-Bundles over \texorpdfstring{$S^4$}{S4} and the
Chern-Simons Invariant}
In this appendix we give a proof of the classification of
$\Gamma$-equivariant $\om{SU}(2)$-bundles over $S^4$ and
apply the equivariant index theorem to the twisted Dirac
operators $D_A\colon \Gamma(S^+\otimes E)\ra \Gamma(S^-\otimes E)$
to obtain a proof of the equations in Lemma \ref{Austin-Equation}. 
As a byproduct of this we obtain a simple way to calculate the Chern-Simons
invariants of the flat connections in the trivial
$\om{SU}(2)$-bundle over the binary polyhedral spaces.
In the final part we also show that this invariant
can in a natural way be related to the algebraic Chern class of the holonomy representation associated with the flat connection.

\subsection{Conventions on Quaternions}
It will be convenient to employ quaternions in the the upcoming
theory, so in this section we fix our conventions. We will regard
$\HH$ as the $4$-dimensional real algebra with standard basis
$(1,i,j,k)$ and multiplication rules
\[ i^2=j^2=k^2=-1 \mbox{ and } ij=-ji = k  .\]
The involution determined by $1,i,j,k\mapsto 1,-i,-j,-k$ is written as
$q\mapsto q^*$. 
Our convention is that a quaternionic vector space is a left
$\HH$-module and that the standard quaternionic structure on $\HH^n$
is given by left multiplication. We write $M_n(F)$ for the algebra
of $n\times n$ matrices with entries in $F=\R,\C,\HH$. The standard identification
\[ M_n(\HH)\cong \om{End}_\HH(\HH^n)  \]
is then given by $A\mapsto (x\mapsto xA^*)$, that is, we multiply the
conjugate transpose of $A$ with $x$, regarded as a row vector, from the right.
To avoid confusion we will always write $q\cdot x\coloneqq xq^*$ for the left
action of $q\in \om{Sp}(1)$ on $x\in \HH$ and always omit the dot
when we mean actual multiplication. 

We define the standard inclusion by $\C=\om{Span}_\R(1,i)\subset \HH$
and the standard identification $\C^2\cong \HH$ by $(z,w)\mapsto z+wj$.
Note that the complex orientation on $\C^2$ corresponds to the
standard orientation on $\HH\cong \R^4$ in which $(1,i,j,k)$ is a positive
basis.

With these conventions in place we fix the isomorphism
$\om{Sp}(1)\cong \om{SU}(2)$ by requiring the following diagram to
commute
\[ \begin{tikzcd} \om{Sp}(1) \arrow{d}{\cong}  \arrow{r}{\subset} & \HH \arrow{r}{\cong} \arrow{d}
& \om{End}_\HH(\HH) \arrow{d}{\subset}  \\
\om{SU}(2) \arrow{r}{\subset} & M_2(\C) \arrow{r}{\cong} & \om{End}_\C(\C^2).
\end{tikzcd} \] 
Explicitly,
\[ q = z+wj\in \om{Sp}(1) \mapsto \left( \begin{array}{cc} z^* & w^* \\
 -w & z  \end{array} \right) .\]
These conventions ensure that the standard left action
$\om{SU}(2)\times \C^2\ra \C^2$ corresponds to the above defined
left action $\om{Sp}(1)\times \HH\ra \HH$. We define the orientation
of $\om{Sp}(1)=\om{SU}(2)$ by requiring that $(i,j,k)\in \om{Im}\HH=\mathfrak{sp}(1)$ is a positive basis. The orbit map
$\om{Sp}(1)\ra S^3\subset \HH$ given by $q\mapsto q\cdot 1=1q^*=q^*$
is orientation reversing. Since our work in this appendix is very
sensitive to orientations, we will for this reason distinguish between
the group $\om{Sp}(1)=\om{SU}(2)$ and the unit sphere in the
representation space $S^3\subset \HH=\C^2$. 

\subsection{Classification of \texorpdfstring{$\Gamma$}{Gamma}-equivariant 
\texorpdfstring{$\om{SU}(2)$}{SU2}-bundles over
\texorpdfstring{$S^4$}{S4}}
Let $\Gamma\subset \om{Sp}(1)=\om{SU}(2)$ be a finite subgroup.
Recall that $\Gamma$ acts on $S^3$ by restriction of the linear action
on $\HH=\C^2$. By regarding $S^4\subset \HH\oplus \R$ the suspended
action $\Gamma\times S^4\ra S^4$ takes the form $\gamma\cdot (x,t)=(\gamma\cdot x,1)$. The fixed points of this action are
$N \coloneqq (0,1)$ and $S \coloneqq (0,-1)$.

Let $\om{Rep}^1(\Gamma,\HH)$ denote the set of isomorphism classes
of $1$-dimensional quaternionic representations of $\Gamma$ and let
$\om{Vec}^1_\Gamma(S^4,\HH)$ denote the set of isomorphism classes
of $\Gamma$-equivariant quaternionic line bundles over $S^4$.
If $V$ is a $1$-dimensional quaternionic representation of $\Gamma$
we write $[V]$ for its isomorphism class in $\om{Rep}^1(\Gamma,\HH)$.
The same applies to $\Gamma$-equivariant bundles and $\om{Vec}^1_\Gamma(S^4,\HH)$. Define
\[ \phi\colon \om{Vec}^1_\Gamma(S^4,\HH)\ra \om{Rep}^1(\Gamma,\HH)\times \om{Rep}^1(\Gamma,\HH)\times \Z  \]
by $[E]\mapsto ([E_N],[E_S],c_2(E)[S^4])$. Our aim is to show that this map
is injective and identify the image. 
We will need the following two well-known results for our work.

\begin{lemma} \label{SP(1)-triv}
Let $Y$ be a closed $3$-manifold. Then every
$\om{Sp}(1)=\om{SU}(2)$-bundle over $Y$ is trivial.
\end{lemma}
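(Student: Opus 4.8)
The plan is to use obstruction theory together with the fact that $\om{SU}(2)$ is $2$-connected. Since an $\om{SU}(2)$-bundle over a space $Y$ is classified by a homotopy class of maps $Y\ra \om{BSU}(2)$, and $\om{BSU}(2)$ has $\pi_i(\om{BSU}(2))\cong \pi_{i-1}(\om{SU}(2))$, which vanishes for $i\le 3$ (as $\om{SU}(2)\cong S^3$ is $2$-connected), the first potentially nonzero homotopy group is $\pi_4(\om{BSU}(2))\cong \pi_3(S^3)\cong \Z$. The obstructions to trivializing the bundle, i.e.\ to nullhomotoping the classifying map, live in $H^{i+1}(Y;\pi_i(\om{SU}(2)))$. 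The only possibly nonzero one is the primary obstruction in $H^4(Y;\pi_3(\om{SU}(2)))\cong H^4(Y;\Z)$, which measures the obstruction to extending a trivialization from the $3$-skeleton to the $4$-skeleton. As $Y$ is a closed $3$-manifold, it has the homotopy type of a $3$-dimensional CW complex, so $H^4(Y;\Z)=0$ and this obstruction vanishes.

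First I would make precise that every $\om{SU}(2)$-bundle over a $3$-complex $Y$ admits a trivialization: build the trivialization cell-by-cell over the skeleta $Y^{(0)}\subset Y^{(1)}\subset Y^{(2)}\subset Y^{(3)}=Y$. A trivialization extends over the $0$- and $1$-cells freely (the fiber is connected), over the $2$-cells because $\pi_1(\om{SU}(2))=0$, and over the $3$-cells because $\pi_2(\om{SU}(2))=0$. (Equivalently, one invokes that the classifying map $Y\ra \om{BSU}(2)$ factors through the $3$-skeleton of $\om{BSU}(2)$, which is a point.) The relevant input is just the low-dimensional homotopy of $S^3$, which is standard. Alternatively, one can cite that the second Chern class $c_2\in H^4(Y;\Z)=0$ is a complete invariant for $\om{SU}(2)$-bundles over $4$-complexes, hence a fortiori over $3$-complexes.

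The main subtlety — really the only thing requiring a word of care — is ensuring $Y$ genuinely has the homotopy type of a $3$-dimensional CW complex. This is a standard fact: a closed topological $3$-manifold (or a smooth one, which is all that is needed here) is triangulable, hence a finite simplicial complex of dimension $3$; even without triangulability in the topological category, every closed $n$-manifold has the homotopy type of an $n$-dimensional CW complex. I would simply state this with a reference, since in the applications in this paper $Y=Y_\Gamma$ is a smooth manifold and triangulability is immediate. With that in hand the obstruction-theoretic argument closes the proof immediately; no calculation is involved.

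I expect no serious obstacle: the result is elementary and the only judgement call is how much of the CW-approximation and obstruction-theory background to spell out versus cite. I would keep it brief, citing a standard source (e.g.\ Steenrod's \emph{Topology of Fibre Bundles} or Husemoller) for the statement that $\om{SU}(2)$-bundles over a complex of dimension $\le 3$ are trivial, and noting the homotopy groups $\pi_1(S^3)=\pi_2(S^3)=0$ as the reason.
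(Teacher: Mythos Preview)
Your proposal is correct and essentially the same as the paper's proof. The paper argues via cellular approximation: $Y$ is a finite CW complex of dimension $\le 3$, $B\om{Sp}(1)=\HH P^\infty$ has a CW structure with cells only in dimensions $4k$, so any classifying map $Y\ra \HH P^\infty$ is homotopic to one landing in the $3$-skeleton, which is a point. You frame the argument via obstruction theory but explicitly note this cellular-approximation variant in your parenthetical remark, so the two differ only in packaging.
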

\begin{proof} Every closed $3$-manifold admits the structure of a finite
$CW$ complex with cells of dimension $\leq 3$. We have $B\om{Sp}(1)=\HH P^\infty$ and this space has a $CW$ structure with a single cell
in each dimension $n = 4k$, $k\geq 0$. By cellular approximation
every map $f\colon Y\ra \HH P^\infty$ is homotopic to a map with image in the
$3$-skeleton $(\HH P^\infty)^3=*$. This implies that every $\om{Sp}(1)$-bundle must be trivial. \end{proof}

\begin{lemma} \label{Degree-Sum}
Let $Y$ be a closed oriented $3$-manifold and let
$f,g\colon Y\ra \om{Sp}(1)$ be a pair of continuous maps. Define
$h\colon Y\ra \om{Sp}(1)$ by $h(y)=f(y)g(y)$ for $y\in Y$. Then
\[ \om{deg}(h)=\om{deg}(f)+\om{deg}(g)   .\]
\end{lemma}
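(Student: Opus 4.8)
The statement is the familiar additivity of degree for pointwise products of maps into a Lie group, specialized to $G=\om{Sp}(1)$ and a closed oriented $3$-manifold $Y$ (noting $\dim Y = 3 = \dim \om{Sp}(1)$, so the degree is defined). The cleanest route is to reduce everything to the behavior of the fundamental class of $\om{Sp}(1)$ under the multiplication map. First I would set up the key algebraic fact: since $H^3(\om{Sp}(1);\Z)\cong \Z$ and $H^*(\om{Sp}(1);\Z)$ is an exterior algebra on the degree-$3$ generator, the K\"unneth formula gives $H^3(\om{Sp}(1)\times \om{Sp}(1);\Z)\cong H^3(\om{Sp}(1))\oplus H^3(\om{Sp}(1))$, with the two summands pulled back along the projections $\pr_1,\pr_2$. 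The multiplication map $\mu\colon \om{Sp}(1)\times \om{Sp}(1)\ra \om{Sp}(1)$ therefore satisfies $\mu^*(\omega) = \pr_1^*(\omega) + \pr_2^*(\omega)$, where $\omega\in H^3(\om{Sp}(1);\Z)$ is the generator; this is the standard consequence of the fact that $\mu$ restricted to either factor $\om{Sp}(1)\times\{1\}$ or $\{1\}\times\om{Sp}(1)$ is the identity (use the unit axiom), which pins down the coefficients of the two summands to be $1$.

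Next I would express $h$ as a composite. Writing $(f,g)\colon Y\ra \om{Sp}(1)\times \om{Sp}(1)$ for the product map, we have $h = \mu\circ (f,g)$. Then in $H^3(Y;\Z)$,
\[
h^*(\omega) = (f,g)^*\mu^*(\omega) = (f,g)^*\bigl(\pr_1^*\omega + \pr_2^*\omega\bigr) = f^*(\omega) + g^*(\omega).
\]
Now pair with the fundamental class $[Y]\in H_3(Y;\Z)$ (which exists since $Y$ is closed oriented), or equivalently evaluate the cohomology degree. By definition of degree, $f^*(\omega) = \om{deg}(f)\,\omega_Y$ in the sense that $\langle f^*\omega, [Y]\rangle = \om{deg}(f)\langle \omega, [\om{Sp}(1)]\rangle$, and similarly for $g$ and $h$. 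Here I should be slightly careful about the normalization: fix the generator $\omega$ and the orientation classes so that $\langle \omega,[\om{Sp}(1)]\rangle$ is the same nonzero integer throughout (it can be taken to be $1$). Applying $\langle -,[Y]\rangle$ to the displayed identity gives $\om{deg}(h) = \om{deg}(f) + \om{deg}(g)$ after dividing by $\langle\omega,[\om{Sp}(1)]\rangle$.

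The main (and only real) obstacle is the computation $\mu^*(\omega)=\pr_1^*(\omega)+\pr_2^*(\omega)$, i.e.\ verifying that both coefficients are exactly $1$ and there is no extra cross term. This is handled entirely by the K\"unneth decomposition together with the unit axioms: restricting $\mu$ along the inclusion $\iota_1\colon \om{Sp}(1)\hookrightarrow \om{Sp}(1)\times\om{Sp}(1)$, $q\mapsto (q,1)$, yields the identity map, so $\iota_1^*\mu^*(\omega)=\omega$, which forces the $\pr_1$-coefficient to be $1$; symmetrically for $\pr_2$; and there is no degree-$3$ cross term in $H^3(\om{Sp}(1)\times\om{Sp}(1))$ since $H^1$ and $H^2$ of $\om{Sp}(1)$ vanish. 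Once this is in hand the rest is formal naturality of cup products and evaluation against fundamental classes. An alternative, if one prefers a differential-topological argument consistent with the degree formula via signed preimage counts quoted earlier in the paper, is to pick a regular value $q_0\in \om{Sp}(1)$ of $h$, analyze $h^{-1}(q_0) = \{y : f(y)g(y)=q_0\}$, and use a transversality/fiber-product argument with the multiplication map; but the cohomological argument above is shorter and avoids genericity subtleties, so that is the one I would write up.
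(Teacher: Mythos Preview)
Your argument is correct and essentially the same as the paper's: both factor $h$ through the multiplication map and use that $\mu$ restricted to either factor is the identity to pin down the effect on degree-$3$ (co)homology. The only cosmetic difference is that the paper works in homology (computing $\mu_*$ on $H_3$ via the inclusions $\iota_1,\iota_2$) while you work dually in cohomology (computing $\mu^*$ on $H^3$ via K\"unneth and the projections); the content is identical.
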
 
\begin{proof} The map $h$ is given by the
following composition
\[ \begin{tikzcd} Y \arrow{r}{\Delta} & Y\times Y \arrow{r}{f\times g}
& \om{Sp}(1)\times \om{Sp}(1) \arrow{r}{\mu} & \om{Sp}(1), \end{tikzcd} \]
where $\Delta$ is the diagonal map and $\mu$ is the multiplication map.
Let $\pi_i\colon \om{Sp}(1)\times \om{Sp}(1)\ra \om{Sp}(1)$ for $i=1,2$
be the two projections and let $\iota_i\colon \om{Sp}(1)\ra \om{Sp}(1)\times \om{Sp}(1)$ for $i=1,2$ the two inclusions given by $\iota_1(a)=(a,1)$ and $\iota_2(a)=(1,a)$ for $a\in \om{Sp}(1)$. Then $\pi_i\circ \iota_i=\om{id}$ for $i=1,2$ and $\pi_i \circ \iota_j$
is constant for $i\neq j$. Clearly, $H_3(\om{Sp}(1)\times \om{Sp}(1))\cong \Z\oplus \Z$ and from this we deduce that
\[ q\coloneqq ((\pi_1)_*,(\pi_2)_*)\colon H_3(\om{Sp}(1)\times \om{Sp}(1))\ra H_3(\om{Sp}(1))\oplus H_3(\om{Sp}(1))  \]
is an isomorphism with inverse $(\iota_1)_*+(\iota_2)_*$. Furthermore,
\[ \mu_*\circ ((\iota_1)_*+(\iota_2)_*)=(\mu\circ \iota_1)_*+(\mu\circ \iota_2)_* =\om{id}_*+\om{id}_*. \]
These considerations imply that the following diagram commutes
\[ \begin{tikzcd} H_3(Y) \arrow{r}{(f,g)_*\circ \Delta_*} \arrow{rd}[swap]{(f_*,g_*)}
& H_3(\om{Sp}(1)\times \om{Sp}(1)) \arrow{d}{\cong}
\arrow{r}{\mu_*} & H_3(\om{Sp}(1)) \\
{} & H_3(\om{Sp}(1))\oplus H_3(\om{Sp}(1)) \arrow{ru}[swap]{+} & {}.
\end{tikzcd} \]
Since the upper composition is $h_*\colon H_3(Y)\ra H_3(\om{Sp}(1))$ by definition, it follows that 
\[ \om{deg}(h) [\om{Sp}(1)] =h_*([Y])=f_*([Y])+g_*([Y])
= (\om{deg}(f)+\om{deg}(g))[\om{Sp}(1)], \]
where $[Y]\in H_3(Y)$ and $[\om{Sp}(1)]\in H_3(\om{Sp}(1))$ are the fundamental
classes. Hence, $\om{deg}(h)=\om{deg}(f)+\om{deg}(g)$ as required.  
\end{proof}   

Let $\alpha\in \om{Rep}^1(\Gamma,\HH)$ and choose a homomorphism
$\rho_\alpha\colon \Gamma\ra \om{Sp}(1)$ representing $\alpha$. Write
$\om{Sp}(1)^\alpha$ for the group $\om{Sp}(1)$ equipped with the
$\Gamma$-action given by $\gamma\cdot q = \rho_\alpha(\gamma)q$. If $X$ and
$Y$ are $\Gamma$-manifolds we write $[X,Y]_\Gamma$ for the set
of $\Gamma$-equivariant homotopy classes of equivariant maps. 

\begin{lemma} \label{Classification-Lemma}
There exist $\Gamma$-equivariant maps
$S^3\ra \om{Sp}(1)^\alpha$. Moreover, for any pair $g,g'$ of such maps
it holds true that $\om{deg}g\equiv \om{deg}g' \Mod{|\Gamma |}$. \end{lemma}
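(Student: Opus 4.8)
The plan is to deduce existence from an associated-bundle argument and the congruence from a degree computation for the covering $p\colon S^3\to Y_\Gamma$, using only Lemma~\ref{SP(1)-triv} and Lemma~\ref{Degree-Sum}.

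First I would set up the existence statement. Form the $\Gamma$-space $F_\alpha\coloneqq S^3\times \om{Sp}(1)$ with $\Gamma$-action $\gamma\cdot(x,q)=(\gamma x,\rho_\alpha(\gamma)q)$ and the commuting right $\om{Sp}(1)$-action $(x,q)\cdot r=(x,qr)$. Since $\Gamma$ acts freely on $S^3$, it acts freely on $F_\alpha$, and the quotient $P_\alpha\coloneqq F_\alpha/\Gamma$ is a principal $\om{Sp}(1)$-bundle over $Y_\Gamma=S^3/\Gamma$. The key observation is that $\Gamma$-equivariant maps $g\colon S^3\to\om{Sp}(1)^\alpha$ are exactly the sections of $P_\alpha$: because $\Gamma$ acts freely on $S^3$, every point of $P_\alpha$ over $[x]$ has a unique representative of the form $[x,q]$, which makes the pullback $p^*P_\alpha$ canonically $\Gamma$-equivariantly trivial $\cong F_\alpha$; a section $\bar s$ of $P_\alpha$ pulls back to a $\Gamma$-equivariant section of $p^*P_\alpha$, necessarily of the form $x\mapsto(x,g(x))$, and equivariance forces $g(\gamma x)=\rho_\alpha(\gamma)g(x)$, while conversely such a $g$ defines a section. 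By Lemma~\ref{SP(1)-triv} the bundle $P_\alpha$ is trivial, hence admits a section, so equivariant maps $S^3\to\om{Sp}(1)^\alpha$ exist.

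Next I would handle the congruence. Given two equivariant maps $g,g'\colon S^3\to\om{Sp}(1)^\alpha$, define $h\colon S^3\to\om{Sp}(1)$ pointwise by $h(x)=g'(x)^{-1}g(x)$; a one-line computation using $g(\gamma x)=\rho_\alpha(\gamma)g(x)$ and the analogous identity for $g'$ gives $h(\gamma x)=h(x)$, so $h$ is $\Gamma$-invariant and factors as $h=\bar h\circ p$ for some $\bar h\colon Y_\Gamma\to\om{Sp}(1)$. Since $g(x)=g'(x)h(x)$ pointwise, Lemma~\ref{Degree-Sum} (with $Y=S^3$, and interpreting $\om{deg}$ of these maps as the degree of the underlying maps to $\om{Sp}(1)$) yields $\om{deg}(g)=\om{deg}(g')+\om{deg}(h)$. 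As $p\colon S^3\to Y_\Gamma$ is a $|\Gamma|$-sheeted covering of closed oriented $3$-manifolds, $\om{deg}(p)=\pm|\Gamma|$, so multiplicativity of degree under composition gives $\om{deg}(h)=\om{deg}(\bar h)\,\om{deg}(p)\in|\Gamma|\Z$, and therefore $\om{deg}(g)-\om{deg}(g')\in|\Gamma|\Z$, as claimed.

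The only genuinely delicate points are bookkeeping: spelling out the bijection between equivariant maps and sections of $P_\alpha$ (which hinges on freeness of the $\Gamma$-action on $S^3$ to get the equivariant triviality of $p^*P_\alpha$), and being careful with orientations so the ambiguous sign in $\om{deg}(p)=\pm|\Gamma|$ plays no role in the mod-$|\Gamma|$ conclusion. Everything else is a direct invocation of the two cited lemmas, so I expect no serious obstacle.
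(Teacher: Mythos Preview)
Your proof is correct and follows essentially the same approach as the paper. Both arguments reduce existence to triviality of the associated principal $\om{Sp}(1)$-bundle $S^3\times_\Gamma \om{Sp}(1)^\alpha\to Y_\Gamma$ via Lemma~\ref{SP(1)-triv}, and both obtain the congruence by forming the quotient $h(x)=g'(x)^{-1}g(x)$ (the paper writes $f(x)^*g(x)$, which is the same since $q^{-1}=q^*$ in $\om{Sp}(1)$), observing it descends to $Y_\Gamma$, and invoking Lemma~\ref{Degree-Sum} together with the fact that the covering $S^3\to Y_\Gamma$ has degree $|\Gamma|$.
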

\begin{proof} Since $\Gamma$ acts freely on $S^3$ the set of equivariant
maps $S^3\ra \om{Sp}(1)^\alpha$ is in natural bijection with the set of sections of the associated principal $\om{Sp}(1)$-bundle 
\[ S^3\times_\Gamma \om{Sp}(1)^\alpha\ra S^3/\Gamma. \]
According to Lemma \ref{SP(1)-triv} this bundle must be trivial, and hence admits global sections. This proves the first assertion. 

For the second assertion assume that $f,g\colon S^3\ra \om{Sp}(1)^\alpha$ is a pair of equivariant maps. We may then form the
map $h\colon S^3\ra \om{Sp}(1)$ given by $h(x)=f(x)^*g(x)$. This map
is equivariant when $\Gamma$ acts trivially on $\om{Sp}(1)$ and therefore
descends to a map $\bar{h}\colon S^3/\Gamma\ra \om{Sp}(1)$. Since the covering
map $S^3\ra S^3/\Gamma$ has degree $|\Gamma |$, we deduce that
$\om{deg}h = |\Gamma | \om{deg}(\bar{h}) \equiv 0 \Mod{|\Gamma |}$.
Finally, by Lemma \ref{Degree-Sum} and the fact that
$\om{deg}f^*=-\om{deg}f$ we obtain $\om{deg}h=\om{deg}(g)-\om{deg}f$ and the
proof is complete. \end{proof} 

Let $\alpha,\beta \in \om{Rep}^1(\Gamma,\HH)$ have representatives
$\rho_\alpha,\rho_\beta\colon \Gamma\ra \om{Sp}(1)$. Let $\HH(\alpha)$ and
$\HH(\beta)$ denote $\HH$ equipped with the $\Gamma$-action determined
by $\rho_\alpha$ and $\rho_\beta$, respectively. 
Furthermore, write $\om{Sp}(1)^{(\alpha;\beta)}$ for the group $\om{Sp}(1)$ equipped with the $\Gamma$-action $\gamma\cdot q = \rho_\alpha(\gamma)q\rho_\beta(\gamma)^*$.

\begin{proposition} \label{Clutching-Lemma}
The set of isomorphism classes of $\Gamma$-equivariant
$\om{Sp}(1)$-bundles $E\ra S^4$ with $[E_N]=\alpha$ and $[E_S]=\beta$
is in natural bijection with
\[ [S^3,\om{Sp}(1)^{(\alpha;\beta)}]_\Gamma  .\]
\end{proposition}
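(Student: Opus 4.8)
The plan is to use the standard clutching construction adapted to the equivariant setting. Decompose $S^4 = D^4_N \cup_{S^3} D^4_S$ into the closed northern and southern hemispheres, glued along the equatorial $S^3\subset \HH=\C^2$, on which $\Gamma$ acts freely by restriction of the linear action. The hemispheres $D^4_N$ and $D^4_S$ are equivariantly contractible onto the fixed points $N$ and $S$ respectively, so any $\Gamma$-equivariant $\om{Sp}(1)$-bundle $E\ra S^4$ restricts to an equivariantly trivial bundle over each hemisphere, with a preferred (up to equivariant homotopy) trivialization once we fix isomorphisms $E_N\cong \HH(\alpha)$ and $E_S\cong \HH(\beta)$ of $\Gamma$-representations. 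The overlap $S^3$ then carries a transition function; after trivializing, this is an equivariant map $S^3\ra \om{Aut}_\HH(\HH) = \om{Sp}(1)$, where the equivariance is with respect to the $\Gamma$-action $\gamma\cdot q = \rho_\alpha(\gamma)\,q\,\rho_\beta(\gamma)^*$ coming from twisting by the two trivializations — that is, a map into $\om{Sp}(1)^{(\alpha;\beta)}$.

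First I would make this correspondence precise in both directions: given such an equivariant clutching function $g\colon S^3\ra \om{Sp}(1)^{(\alpha;\beta)}$, form $E_g := (D^4_N\times \HH) \sqcup (D^4_S\times \HH)/\!\sim$ where $(x,v)_N\sim (x, g(x)\cdot v)_S$ for $x\in S^3$, and check that the diagonal $\Gamma$-action (linear on the base, via $\rho_\alpha$ on the $D^4_N$-patch, via $\rho_\beta$ on the $D^4_S$-patch) is well-defined precisely because of the twisting in $\om{Sp}(1)^{(\alpha;\beta)}$, and that the $\om{Sp}(1)$-structure (metric, fixed trivialization of $\Lambda^2_\C$) is preserved. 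Conversely, given $E$ with the stated fiber representations, choosing equivariant trivializations over the two hemispheres produces such a $g$. Next I would verify that two clutching functions give isomorphic $\Gamma$-equivariant bundles if and only if they are equivariantly homotopic: the ``if'' direction is the usual homotopy-of-clutching-functions argument run equivariantly (using that $D^4\times[0,1]$ deformation retracts equivariantly onto $D^4\times\{0\}$, which holds here because the retraction can be taken along the fixed point); the ``only if'' direction uses that any equivariant bundle isomorphism restricted to each hemisphere, composed with the chosen trivializations, gives equivariant self-maps of the trivial bundles, i.e. equivariant maps into $\om{Sp}(1)$ with the appropriate twisted actions, which one slides into a homotopy between the clutching functions. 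The ambiguity in the choice of trivialization over $D^4_N$ (resp. $D^4_S$) — an equivariant map $D^4_N\ra \om{Sp}(1)^{\alpha}$, equivalently its restriction to the fixed point $N$, so really just an element of the centralizer of $\rho_\alpha(\Gamma)$ — changes $g$ by left (resp. right) multiplication by a constant, which does not affect the equivariant homotopy class.

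The main obstacle I expect is bookkeeping the equivariance of the twisted actions carefully and confirming that the clutching dictionary is compatible with \emph{all} the structure (the quaternionic/$\om{Sp}(1)$ structure, not just the underlying topological bundle), together with the routine but slightly delicate point that equivariant triviality over each hemisphere holds and that the space of equivariant trivializations is connected enough to not introduce extra classes. None of this is deep — it is the equivariant refinement of \cite{Atiyah79}-style clutching — but the signs and the direction of the actions ($\rho_\alpha$ on one side, $\rho_\beta^*$ on the other, matching the conventions fixed in the ``Conventions on Quaternions'' subsection) need to be tracked so that $\om{Sp}(1)^{(\alpha;\beta)}$, and not some variant, appears. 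Once the dictionary is in place, the statement follows immediately. (This proposition, combined with Lemma~\ref{Classification-Lemma} applied to compute $[S^3,\om{Sp}(1)^{(\alpha;\beta)}]_\Gamma$ via the degree invariant modulo $|\Gamma|$ together with the relation $c_2(E_g)[S^4] = \om{deg}(g)$ from non-equivariant clutching, will yield Theorem~\ref{Gamma-Bundles}.)
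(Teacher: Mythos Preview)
Your proposal is correct and follows essentially the same equivariant clutching argument as the paper. The only substantive difference is in how the equivariant trivializations over the two caps are produced: the paper takes the open cover $U_N=S^4\setminus\{S\}$, $U_S=S^4\setminus\{N\}$, chooses a $\Gamma$-invariant connection, and parallel-transports along radial geodesics from the poles (then cites Segal's equivariant bundle homotopy theorem for independence of choices), whereas you appeal directly to equivariant contractibility of the closed hemispheres. Both routes are standard and equivalent.

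One small slip worth fixing: the ambiguity in the choice of equivariant trivialization over $D^4_N$ is an equivariant map into $\om{Sp}(1)$ with the \emph{conjugation} action $\gamma\cdot q=\rho_\alpha(\gamma)\,q\,\rho_\alpha(\gamma)^{-1}$, i.e.\ $\om{Sp}(1)^{(\alpha;\alpha)}$, not $\om{Sp}(1)^{\alpha}$ (which carries the left-translation action). Your conclusion that restriction to the fixed point lands in the centralizer of $\rho_\alpha(\Gamma)$, and that the resulting constant multiplication does not change the equivariant homotopy class of the clutching map, is nevertheless correct.
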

\begin{proof} This is well-known in the non-equivariant setting, so we
will only include the details necessary to adapt the usual proof
to the equivariant setting.
 
Let $E$ be a $\Gamma$-equivariant $\om{Sp}(1)$-bundle
over $S^4$ (regarded as a vector bundle) with $[E_N]=\alpha$ and $[E_S]=\beta$.
Let $U_N = S^4-\{S\}$ and $U_S=S^4-\{N\}$. Let $A$ be
a $\Gamma$-invariant $\om{Sp}(1)$-connection in $E$. Then, using parallel transport along radial geodesic from $S$ and $N$, one obtains equivariant trivializations
$U_S\times E_S\cong E|_{U_S}$ and $U_N\times E_N\cong E|_{U_N}$. By
making suitable choices of unit basis vectors in $E_S$ and $E_N$,
we may identify $E_S\cong \HH(\beta)$ and $E_N\cong \HH(\alpha)$. The transition function between these trivializations is an equivariant map $U_N\cap U_S\ra \om{Hom}_\HH(E_S,E_N)$ taking values in the subset of isometries. This subset may be identified with $\om{Sp}(1)^{(\alpha;\beta)}$ through
\[ \om{Sp}(1)^{(\alpha;\beta)}\inj \om{Hom}_\HH(\HH(\beta),\HH(\alpha))
\cong \om{Hom}_\HH(E_S,E_N),  \]
where we recall that by convention the first map is given by 
$q\mapsto r_{q^*}$. Let $t\colon S^3\ra S^3(\alpha,\beta)$ denote the restriction of this map to the middle sphere. One may verify that the
$\Gamma$-equivariant homotopy class of this map is independent of the trivializations chosen. We have therefore constructed one direction of
the equivalence.

The inverse may be defined as follows. Let 
$f\colon S^3\ra \om{Sp}(1)^{(\alpha;\beta)}$
represent a homotopy class in $[S^3,\om{Sp}(1)^{(\alpha;\beta)}]_\Gamma$.
Let $p\colon U_N\cap U_S\ra S^3$ be the equivariant projection onto the middle
sphere. Using $f$ we may therefore construct an equivariant bundle
$E$ by gluing $U_S\times \HH(\beta)$ and $U_N\times \HH(\alpha)$ along
$\psi\colon U_N\cap U_S\times \HH(\beta)\ra U_S\cap U_N\times \HH(\alpha)$ given by $\psi(x,v) = (x,f(p(x))\cdot v)=(x,vf(p(x))^*)$. The equivariance of $f$ and $p$ ensure that the actions of $\Gamma$ on $U_N\times \HH(\alpha)$ and $U_S\times \HH(\beta)$ match over the intersection $U_N\cap U_S$. The resulting bundle is therefore a $\Gamma$-equivariant quaternionic line bundle. One may verify that the isomorphism class of this bundle is independent of
the representative $f$ chosen using the equivariant
bundle homotopy theorem of \cite[Prop. 1.3]{Segal68}.

The verification of the fact that these two constructions are mutual inverses proceeds just as in the non-equivariant case. \end{proof} 

Recall that we regard $S^4\subset \HH\oplus \R$. Let $u\colon S^4-\{0\}\cong \HH$ be stereographic projection from the north pole. We give $\HH$ the
standard orientation and orient $S^4$ by requiring $u$ to be
orientation preserving. Moreover, we orient the middle sphere $S^3\subset S^4$
by requiring that the restriction $u|_{S^3}:S^3\cong S^3\subset \HH$ preserves
orientation. 

\begin{proposition} \label{Homotopy-Eq}
Let $\alpha,\beta\in \om{Rep}^1(\Gamma,\HH)$ have
representatives $\rho_\alpha,\rho_\beta\colon \Gamma\ra \om{Sp}(1)$. Then
the natural map
\[ [S^3,\om{Sp}(1)^{(\alpha;\beta)}]_\Gamma \ra [S^3,\om{Sp}(1)^{(\alpha;\beta)}] \]
is injective. In other words, two equivariant maps $f,g\colon S^3\ra \om{Sp}(1)^{(\alpha;\beta)}$ are homotopic if and only if they are equivariantly homotopic.
Moreover, if we identify $[S^3,\om{Sp}(1)^{(\alpha;\beta)}]\cong \Z$ through
$[f]\mapsto \om{deg}(f)$, there is a constant $c\in \Z$
so that the image is $\{ n\in \Z: n\equiv c\mod{|\Gamma |} \}$.   
\end{proposition}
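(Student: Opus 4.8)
The plan is to prove the two assertions separately. For the injectivity of the forgetful map $[S^3,\om{Sp}(1)^{(\alpha;\beta)}]_\Gamma\ra [S^3,\om{Sp}(1)^{(\alpha;\beta)}]$, first I would reduce to a statement about degrees. The target group $\om{Sp}(1)$ is connected and $\pi_3(\om{Sp}(1))\cong\Z$ via the degree, so $[S^3,\om{Sp}(1)^{(\alpha;\beta)}]\cong\Z$ via $[f]\mapsto\om{deg}(f)$; this is the standard (non-equivariant) identification and needs no comment. Thus it suffices to show that two equivariant maps $f,g\colon S^3\ra\om{Sp}(1)^{(\alpha;\beta)}$ of the \emph{same degree} are equivariantly homotopic. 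To see this, form the pointwise ``difference'' $h\colon S^3\ra\om{Sp}(1)$, $h(x)=f(x)^*g(x)$. Using the explicit $\Gamma$-action $\gamma\cdot q=\rho_\alpha(\gamma)q\rho_\beta(\gamma)^*$, one checks directly that $h$ is $\Gamma$-equivariant for the \emph{trivial} action on the target, hence descends to $\bar h\colon S^3/\Gamma\ra\om{Sp}(1)$. By Lemma \ref{Degree-Sum} (applied with $f^*$ in place of $f$) and $\om{deg}f^*=-\om{deg}f$ we get $\om{deg}h=\om{deg}g-\om{deg}f=0$, and since $S^3/\Gamma$ is a closed oriented $3$-manifold and $\om{deg}h=|\Gamma|\om{deg}\bar h$, we conclude $\om{deg}\bar h=0$. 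Then $\bar h$ is null-homotopic, i.e.\ homotopic through maps $S^3/\Gamma\ra\om{Sp}(1)$ to the constant map $1$; pulling back along $S^3\ra S^3/\Gamma$ gives a $\Gamma$-equivariant (trivial-action) null-homotopy $H_t$ of $h$ with $H_0=h$, $H_1\equiv 1$. Finally $(x,t)\mapsto f(x)H_t(x)$ is a $\Gamma$-equivariant homotopy from $g$ to $f$ in $\om{Sp}(1)^{(\alpha;\beta)}$ — equivariance here uses that left translation by $f(x)$ intertwines the two actions, which is again a direct check from the formula. This gives injectivity.

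For the description of the image, I would argue that it is a nonempty coset of $|\Gamma|\Z$ inside $\Z$. Nonemptiness: by Lemma \ref{Classification-Lemma} (or directly from Lemma \ref{SP(1)-triv} applied to the associated $\om{Sp}(1)$-bundle over $S^3/\Gamma$) the set $[S^3,\om{Sp}(1)^{(\alpha;\beta)}]_\Gamma$ is nonempty, so pick one equivariant map $f_0$ and set $c=\om{deg}(f_0)$. To see the image equals $c+|\Gamma|\Z$, I would first show $|\Gamma|\Z$ acts transitively on it in the following sense: if $f$ is any equivariant map and $m\in\Z$, one can realize an equivariant map of degree $\om{deg}(f)+m|\Gamma|$ by multiplying $f$ pointwise by the pullback to $S^3$ of a degree-$m$ map $S^3/\Gamma\ra\om{Sp}(1)$ (such exists since $S^3/\Gamma$ is a closed oriented $3$-manifold, so $[S^3/\Gamma,\om{Sp}(1)]\cong\Z$ by obstruction theory / $\pi_3\om{Sp}(1)\cong\Z$), using Lemma \ref{Degree-Sum} again to compute the degree; the resulting product is equivariant because the second factor has trivial-action equivariance. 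Hence every integer $\equiv c\Mod{|\Gamma|}$ is in the image. Conversely, if $f,g$ are both equivariant then the argument above shows $\om{deg}h=\om{deg}g-\om{deg}f$ is divisible by $|\Gamma|$, so every element of the image is $\equiv c\Mod{|\Gamma|}$. Combining, the image is exactly $\{n\in\Z:n\equiv c\Mod{|\Gamma|}\}$.

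The main obstacle I expect is not any single hard step but keeping the equivariance bookkeeping honest across the two nontrivial actions: the source $S^3$ carries the restriction of the linear $\Gamma$-action, the target $\om{Sp}(1)^{(\alpha;\beta)}$ carries the twisted conjugation action, and the ``difference'' and ``product'' constructions must be checked to be equivariant for the correct actions (trivial on the target for $h$, twisted for the product $f\cdot(\text{pullback})$). All of these are short computations from $\gamma\cdot q=\rho_\alpha(\gamma)q\rho_\beta(\gamma)^*$, but they are the crux of why the proof works, so I would write them out carefully. The input that $[\,\cdot\,,\om{Sp}(1)]\cong\Z$ for closed oriented $3$-manifolds and that degree-$0$ self-maps of $S^3$ (and of $S^3/\Gamma$ into $\om{Sp}(1)$) are null-homotopic is standard and can be cited; the covering-space degree identity $\om{deg}h=|\Gamma|\om{deg}\bar h$ and Lemma \ref{Degree-Sum} do the rest.
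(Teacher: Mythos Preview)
There is a genuine gap in both halves of your argument, and it is the same gap each time: the ``difference'' $h(x)=f(x)^*g(x)$ is \emph{not} $\Gamma$-invariant for the trivial action on the target. A direct computation from $f(\gamma\cdot x)=\rho_\alpha(\gamma)f(x)\rho_\beta(\gamma)^*$ (and likewise for $g$) gives
\[
h(\gamma\cdot x)=\rho_\beta(\gamma)\,f(x)^*\rho_\alpha(\gamma)^*\rho_\alpha(\gamma)\,g(x)\,\rho_\beta(\gamma)^*=\rho_\beta(\gamma)\,h(x)\,\rho_\beta(\gamma)^*,
\]
so $h$ is equivariant for the \emph{conjugation} action by $\rho_\beta$, not the trivial one. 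Consequently $h$ does not descend to $S^3/\Gamma$, and you cannot conclude that a degree-$0$ $h$ is equivariantly null-homotopic just from $[S^3/\Gamma,\om{Sp}(1)]\cong\Z$. (Arguing that degree-$0$ equivariant maps into $\om{Sp}(1)^{(\beta;\beta)}$ are equivariantly null-homotopic is precisely the special case $\alpha=\beta$ of the proposition you are trying to prove.) The same problem kills your surjectivity step: if $k\colon S^3\ra\om{Sp}(1)$ is the pullback of a degree-$m$ map on $S^3/\Gamma$, then $(fk)(\gamma\cdot x)=\rho_\alpha(\gamma)f(x)\rho_\beta(\gamma)^*k(x)$, which equals $\rho_\alpha(\gamma)(fk)(x)\rho_\beta(\gamma)^*$ only when $k(x)$ commutes with $\rho_\beta(\gamma)$ for all $\gamma$ --- generally false.

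The paper repairs this by first choosing, via Lemma~\ref{Classification-Lemma}, equivariant maps $g\colon S^3\ra\om{Sp}(1)^\alpha$ and $h\colon S^3\ra\om{Sp}(1)^\beta$ (left-multiplication actions only), and then sending $f$ to $\tau(f)(x)=g(x)^*f(x)h(x)$. Now the left $\rho_\alpha$ is absorbed by $g^*$ and the right $\rho_\beta^*$ is absorbed by $h$, so $\tau(f)$ genuinely lands in $\om{Sp}(1)^\theta$ and descends to $S^3/\Gamma$. This two-sided untwisting sets up a \emph{bijection} $[S^3,\om{Sp}(1)^{(\alpha;\beta)}]_\Gamma\cong[S^3/\Gamma,\om{Sp}(1)]\cong\Z$, and Lemma~\ref{Degree-Sum} gives $\om{deg}f=|\Gamma|\om{deg}\overline{\tau(f)}+(\om{deg}g-\om{deg}h)$, from which both injectivity and the coset description of the image follow at once. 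Your overall strategy (reduce to degrees, pass to the quotient, use Lemma~\ref{Degree-Sum}) is right; the missing idea is that you must strip off $\rho_\alpha$ and $\rho_\beta$ \emph{separately} on the two sides, which requires the auxiliary sections of Lemma~\ref{Classification-Lemma}.
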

\begin{proof} 
By Lemma \ref{Classification-Lemma} we can find equivariant maps
$g\colon S^3\ra \om{Sp}(1)^\alpha$ and $h\colon S^3\ra \om{Sp}(1)^\beta$. 
Given an equivariant map $f\colon S^3\ra \om{Sp}(1)^{(\alpha;\beta)}$ define 
$\tau(f)=\tau_{g,h}(f)\colon S^3\ra \om{Sp}(1)^\theta$ to be the composition
\[  \begin{tikzcd} S^3\arrow{r}{(h,f,g)} & \om{Sp}(1)^\alpha\times 
\om{Sp}(1)^{(\alpha;\beta)}\times \om{Sp}(1)^\beta \arrow{r}{\mu} & \om{Sp}(1)^\theta, \end{tikzcd} \]
where $\om{Sp}(1)^\theta$ denotes $\om{Sp}(1)$ equipped with the
trivial $\Gamma$-action and $\mu$ is the equivariant map $(x,y,z) \mapsto x^*yz$. Observe that if $f'\colon S^3\ra \om{Sp}(1)^{(\alpha;\beta)}$
is another equivariant map, then $f$ and $f'$ are equivariantly homotopic
if and only if $\tau(f)$ and $\tau(f')$ are equivariantly homotopic.
Indeed, if $f_t\colon S^3\ra \om{Sp}(1)^{(\alpha;\beta)}$ is an equivariant homotopy between $f$ and $f'$,
then $\tau(f_t)$ is an equivariant homotopy between $\tau(f)$ and $\tau(f')$
and the same argument applies for the converse using $\tau_{g^{-1},h^{-1}}$
in place of $\tau = \tau_{g,h}$.

Since the $\Gamma$-action on $\om{Sp}(1)^\theta$ is trivial, we obtain a bijection
\[ [S^3,\om{Sp}(1)^\theta]_\Gamma \cong [S^3/\Gamma,\om{Sp}(1)]\cong \Z,  \]
where we give $S^3/\Gamma$ the orientation induced from $S^3$ and the second isomorphism is given by $[u]\mapsto \om{deg}(u)$. Given an equivariant map $f\colon S^3\ra \om{Sp}(1)^{(\alpha;\beta)}$
let $\overline{\tau(f)}\colon S^3/\Gamma\ra \om{Sp}(1)$ denote the map
induced by $\tau(f)$. The above work then amounts to the fact that the map
$[S^3,\om{Sp}(1)^{(\alpha;\beta)}]_\Gamma \ra \Z$ given by 
$f\mapsto \om{deg}(\overline{\tau(f)})$
is a bijection. To complete the proof we will relate this degree
to the degree of $f$. First, since $S^3\ra S^3/\Gamma$ has degree $|\Gamma|$,
we deduce that $|\Gamma |\om{deg}(\ovl{\tau(f)})=\om{deg}\tau(f)$.
Using Lemma \ref{Degree-Sum} and the definition $\tau(f)=\mu\circ (g,f,h)$
one finds that
\[ \om{deg}(\tau(f)) = \om{deg}h-\om{deg}g+\om{deg}(f) . \]
Hence, $\om{deg}(f) = (\om{deg}g-\om{deg}h)+|\Gamma |\om{deg}(\overline{\tau(f)})$.
We therefore have the following commutative diagram 
\begin{equation} \label{Diagram-Constant-c} 
\begin{tikzcd} 
 {[} S^3,\om{Sp}(1)^{(\alpha;\beta)}]_\Gamma \arrow{r} \arrow{d}{\cong}
& {[} S^3,\om{Sp}(1)^{(\alpha;\beta)}] \arrow{d}{\cong} \\
\Z \arrow{r}{m} & \Z ,\end{tikzcd}
\end{equation} 
where the vertical maps are given by $[f]\mapsto \om{deg}(\overline{\tau(f)})
$ and $[f]\mapsto \om{deg}f$, from left to right, and 
$m(n) = |\Gamma |n+(\om{deg}g-\om{deg}h)$. 
Since $m$ is injective we conclude that the upper horizontal map is injective.
The final assertion follows from the fact that $\om{deg}g-\om{deg}h$
is independent of $g$ and $h$ by Lemma \ref{Classification-Lemma}. 
\end{proof} 

\begin{theorem} \label{Classification of Equivariant Bundles over $S^4$}
Let $\Gamma\subset \om{Sp}(1)$ be a finite subgroup acting
on $S^4\subset \HH\oplus R$ by $\gamma\cdot (x,t)=(\gamma\cdot x,t)=(x\gamma^*,t)$. Then the map 
\[ \phi\colon \om{Vec}^1_\Gamma(S^4,\HH)\ra \om{Rep}^1(\Gamma,\HH)\times \om{Rep}^1(\Gamma,\HH)\times \Z \]
given by $[E]\mapsto ([E_N],[E_S],c_2(E)[S^4])$ is injective.
Furthermore, for each pair $\alpha,\beta\in \om{Rep}^1(\Gamma,\HH)$
there is a constant $c\in \Z$ such that $(\alpha,\beta,k)\in \om{Im}\phi$
if and only if $k\equiv c \Mod{|\Gamma |}$.
\end{theorem}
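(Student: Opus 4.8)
The plan is to package the three preceding propositions into a single statement. The classification problem for $\Gamma$-equivariant $\om{SU}(2)$-bundles over $S^4$ with prescribed fixed-point data $(\alpha,\beta)$ is, by Proposition \ref{Clutching-Lemma}, the same as computing the set $[S^3,\om{Sp}(1)^{(\alpha;\beta)}]_\Gamma$ via the equivariant clutching construction over the decomposition $S^4 = U_N\cup U_S$. Proposition \ref{Homotopy-Eq} then identifies this set with a residue class in $\Z$: the forgetful map $[S^3,\om{Sp}(1)^{(\alpha;\beta)}]_\Gamma \ra [S^3,\om{Sp}(1)^{(\alpha;\beta)}] \cong \Z$ is injective with image $\{n : n\equiv c\Mod{|\Gamma|}\}$, where $c = \om{deg}g-\om{deg}h$ for any choice of equivariant maps $g\colon S^3\ra\om{Sp}(1)^\alpha$ and $h\colon S^3\ra\om{Sp}(1)^\beta$ (this constant is well-defined modulo $|\Gamma|$ by Lemma \ref{Classification-Lemma}).

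First I would recall that $\phi$ restricted to the fibre over $(\alpha,\beta)$ factors as
\[ \phi^{-1}(\{\alpha\}\times\{\beta\}\times\Z) \;\cong\; [S^3,\om{Sp}(1)^{(\alpha;\beta)}]_\Gamma \;\hookrightarrow\; [S^3,\om{Sp}(1)^{(\alpha;\beta)}] \;\cong\; \Z, \]
where the first bijection is Proposition \ref{Clutching-Lemma} and the second injection is Proposition \ref{Homotopy-Eq}. The composite sends a bundle $E$ to $\om{deg}(t_E)$, where $t_E\colon S^3\ra\om{Sp}(1)^{(\alpha;\beta)}$ is the restriction of the transition function to the equatorial $S^3$. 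Second, I would check that $\om{deg}(t_E)$ is, up to sign conventions fixed in the preceding subsection on orientations, precisely $c_2(E)[S^4]$; this is the standard fact that the clutching degree of an $\om{SU}(2)$-bundle over $S^4$ equals its second Chern number, and it is orientation-compatible because we oriented $S^4$ and the equator exactly so that stereographic projection is orientation preserving. Consequently, under the identification $[S^3,\om{Sp}(1)^{(\alpha;\beta)}]\cong\Z$ by degree, the invariant $k = c_2(E)[S^4]$ \emph{is} the integer attached to $E$, so injectivity of $\phi$ follows from injectivity of the forgetful map, and the image over $(\alpha,\beta)$ is exactly the residue class $\{k : k\equiv c\Mod{|\Gamma|}\}$ produced by Proposition \ref{Homotopy-Eq}.

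For the global injectivity claim: two bundles $E, E'$ with $\phi[E]=\phi[E']$ have the same fixed-point data, hence live in the same fibre of the projection to $\om{Rep}^1(\Gamma,\HH)^2$, and then the argument of the previous paragraph applies verbatim to conclude $[E]=[E']$. The statement about the image is then just the fibrewise statement assembled over all pairs $(\alpha,\beta)$, with the constant $c = c(\alpha,\beta)$ depending on the pair exactly as in diagram \eqref{Diagram-Constant-c}.

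The main obstacle is the orientation bookkeeping in the identification $\om{deg}(t_E) = c_2(E)[S^4]$: one must be careful that the orientation of $S^4$ used here (the one making stereographic projection from the north pole orientation preserving, i.e. the \emph{opposite} of the one appearing in the instanton correspondence of Section 4), the orientation of the equatorial $S^3$, and the identification $\om{Sp}(1)\cong\om{SU}(2)$ together produce the sign that makes the clutching degree equal to $+c_2$ rather than $-c_2$. I expect this to require carefully invoking the conventions fixed in the quaternion subsection — in particular that the orbit map $\om{Sp}(1)\ra S^3$, $q\mapsto q^*$, is orientation reversing, which is why $\om{Sp}(1)$ and the representation sphere $S^3\subset\HH$ are deliberately kept distinct. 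Everything else is a direct citation of Propositions \ref{Clutching-Lemma} and \ref{Homotopy-Eq} together with Lemmas \ref{SP(1)-triv}, \ref{Degree-Sum}, and \ref{Classification-Lemma}.
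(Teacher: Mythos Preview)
Your proposal is correct and takes essentially the same approach as the paper: the paper's proof is a two-sentence citation of Proposition \ref{Clutching-Lemma} and Proposition \ref{Homotopy-Eq} together with the standard identification of $c_2(E)[S^4]$ with the degree of the equatorial transition function. Your version is simply a more detailed unpacking of that same argument, with extra (and appropriate) care about the orientation conventions that the paper leaves implicit.
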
 
\begin{proof} The follows from Proposition \ref{Clutching-Lemma} and
Proposition \ref{Homotopy-Eq} in view of the fact that
$c_2(E)[S^4]$ coincides with the degree of the transition function
$t\colon S^3\ra \om{Sp}(1)\subset \om{Hom}_\HH(E_S,E_N)$.
\end{proof} 

The proof of Proposition \ref{Homotopy-Eq} gives a formula for the constant
$c$ that will be useful later. 

\begin{corollary} \label{c-Corollary} 
The constant $c$ in the above theorem may be taken to be
$c=\om{deg}g-\om{deg}h$ for any choices of equivariant maps 
$g\colon S^3\ra \om{Sp}(1)^\alpha$ and $h\colon S^3\ra \om{Sp}(1)^\beta$.
\end{corollary}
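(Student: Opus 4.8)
The statement to be proved is Corollary \ref{c-Corollary}, namely that the constant $c$ appearing in Theorem \ref{Classification of Equivariant Bundles over $S^4$} may be taken to be $c = \om{deg}g - \om{deg}h$ for \emph{any} choice of equivariant maps $g\colon S^3\ra \om{Sp}(1)^\alpha$ and $h\colon S^3\ra \om{Sp}(1)^\beta$. The plan is to simply read this off from the proof of Proposition \ref{Homotopy-Eq}, which already did all the real work. First I would recall the commutative diagram \eqref{Diagram-Constant-c} produced there: the set $[S^3,\om{Sp}(1)^{(\alpha;\beta)}]_\Gamma$ maps to $[S^3,\om{Sp}(1)^{(\alpha;\beta)}]\cong \Z$ (via degree), and after identifying the equivariant set with $\Z$ via $[f]\mapsto \om{deg}(\overline{\tau(f)})$, the comparison map becomes $m\colon \Z\ra \Z$, $m(n) = |\Gamma|n + (\om{deg}g - \om{deg}h)$. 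Hence the image of the forgetful map is exactly $\{\,k\in\Z : k\equiv \om{deg}g-\om{deg}h \pmod{|\Gamma|}\,\}$.

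Next I would connect this to Theorem \ref{Classification of Equivariant Bundles over $S^4$}: by Proposition \ref{Clutching-Lemma}, the equivariant bundles $E$ with $[E_N]=\alpha$, $[E_S]=\beta$ are classified by $[S^3,\om{Sp}(1)^{(\alpha;\beta)}]_\Gamma$, and under this identification $c_2(E)[S^4]$ equals the degree of the transition function $t\colon S^3\ra \om{Sp}(1)^{(\alpha;\beta)}$, i.e.\ the image of $[E]$ under the forgetful map to $[S^3,\om{Sp}(1)^{(\alpha;\beta)}]\cong\Z$. Combining these two identifications, the set of integers $k$ realized as $c_2(E)[S^4]$ for some such $E$ is precisely $\om{Im}(m) = \{\,k : k\equiv \om{deg}g-\om{deg}h \pmod{|\Gamma|}\,\}$. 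Comparing with the statement of the theorem, which asserts this realizable set has the form $\{\,k : k\equiv c\pmod{|\Gamma|}\,\}$, forces $c\equiv \om{deg}g-\om{deg}h\pmod{|\Gamma|}$, so we may take $c = \om{deg}g - \om{deg}h$ outright. Finally I would note that this expression is independent of the choices of $g$ and $h$: by Lemma \ref{Classification-Lemma}, any two equivariant maps $S^3\ra\om{Sp}(1)^\alpha$ have degrees congruent mod $|\Gamma|$, and likewise for $\om{Sp}(1)^\beta$, so $\om{deg}g-\om{deg}h$ is well-defined mod $|\Gamma|$, which is all that matters for $c$.

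There is essentially no obstacle here — the corollary is a bookkeeping consequence of formulas already established in the proof of Proposition \ref{Homotopy-Eq}. The only point requiring a sentence of care is making sure the orientation conventions are consistent across the three identifications (the orientation on $S^3\subset S^4$ via stereographic projection, the orientation on $S^3/\Gamma$ induced from $S^3$, and the sign conventions in $\om{deg}f^* = -\om{deg}f$ and in the clutching construction $\psi(x,v)=(x,vf(p(x))^*)$); but all of these were already fixed in the preceding discussion, so I would just cite them rather than re-derive anything. The proof will therefore be short: recall diagram \eqref{Diagram-Constant-c}, identify its bottom map's image with the set of realizable second Chern numbers via Propositions \ref{Clutching-Lemma} and \ref{Homotopy-Eq}, and conclude $c = \om{deg}g-\om{deg}h$.
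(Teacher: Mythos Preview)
Your proposal is correct and is exactly the approach the paper takes: the paper does not give a separate proof of the corollary but simply states that the formula $c=\om{deg}g-\om{deg}h$ is read off from the proof of Proposition \ref{Homotopy-Eq}, specifically from the commutative diagram \eqref{Diagram-Constant-c} and the identification of $c_2(E)[S^4]$ with the degree of the transition function. Your remark about independence of the choices of $g$ and $h$ via Lemma \ref{Classification-Lemma} is also implicit in the paper's treatment.
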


\subsection{Application of the Equivariant Index Theorem}
Let $E\ra S^4$ be a $\Gamma$-equivariant $\om{SU}(2)$ bundle and let
$S^+$ and $S^-$ be the complex spinor bundles associated with the unique
spin structure on $S^4$. It turns out that this spin structure
is naturally $\Gamma$-equivariant. This means that the spinor bundles
carry the structure of $\Gamma$-equivariant bundles and that
the Clifford multiplication map $TS^4\ra \om{Hom}_\C(S^+,S^-)$ is
a map of equivariant bundles. Therefore, if $A$ is a $\Gamma$-invariant connection in $E$ the twisted Dirac operator
\[ D_A\colon \Gamma(S^+\otimes E)\ra \Gamma(S^-\otimes E)  \]
is a $\Gamma$-equivariant elliptic operator. The non-equivariant index
of such an operator takes values in the $K$-theory of a point
$K(*)=\Z$. In the $\Gamma$-equivariant setting it takes values in
the $\Gamma$-equivariant $K$-theory of a point, namely,
$K_\Gamma(*)=R(\Gamma)$; the complex representation ring of $\Gamma$.
It is this index we will calculate in this section.

For this purpose it is more convenient to use the model $\HH P^1$ for
$S^4$. This is the identification space $\HH^2-\{0\}/\sim $ where
$(x,y)\sim (qx,qy)$ for $q\in \HH-\{0\}$. The equivalence class
of $(x,y)$ in $\HH P^1$ is denoted by $[x:y]$. There are two canonical charts
\[ \HH \cong U = \{[x:y]\in \HH P^1: x\neq 0\} \; \mbox{ and } \; \HH \cong V =\{[x:y]\in \HH P^1: y\neq 0\}\]
given by $z\mapsto [1:z]$ and $w\mapsto [w:1]$, respectively.
The transition function $\HH-\{0\}\ra \HH-\{0\}$ (in either direction)
is given by $q\mapsto q^{-1}$. This map is orientation preserving so
we define the orientation of $\HH P^1$ by requiring that both of the canonical
charts are positive, where $\HH=\C^2$ has the standard orientation.

The tautological quaternionic line bundle $\gamma\ra \HH P^1$ is defined
to be
\[ \gamma \coloneqq \{ ([x:y],(z,w))\in \HH P^1\times \HH^2 : \exists q\in \HH-\{0\}\ni
(z,w)=(qx,qy) \}  .\]
Let the trivial bundle $\underline{\HH}^2 =\HH P^1\times \HH^2$ carry the
standard symplectic inner product, i.e., $(z_1,z_2)\cdot (w_1,w_2)=z_1w_1^*+z_2w_2^*$. Then $\wt{\gamma} \coloneq \gamma^\perp$ is
another quaternionic line bundle. We give $\gamma$ and $\tilde{\gamma}$
the connections obtained from the product connection in $\underline{\HH}^2$
by orthogonal projection and equip them with the symplectic inner products
inherited from the trivial bundle $\underline{\HH}^2$.  

The standard action $\om{Sp}(2)\times \HH^2\ra \HH^2$ descends to a
transitive action $\om{Sp}(2)\times \HH P^1\ra \HH P^1$. The diagonal
action on $\HH P^1\times \HH^2$ preserves the subbundles $\gamma,\tilde{\gamma}\subset \underline{\HH}^2$ and therefore give these the structure of
$\om{Sp}(2)$-equivariant bundles over $\HH P^1$. The connections
are invariant under this action.

The unique spin structure on $\HH P^1$ may now be realized explicitly
with $S^+=\gamma$, $S^-=\wt{\gamma}$ and Clifford multiplication
$\chi\colon T\HH P^1 \cong \om{Hom}_\HH(\gamma,\wt{\gamma})$ defined in the
following way. Let $\iota\colon \gamma\ra \underline{\HH}^2$ denote the inclusion and $\pi\colon \underline{\HH}^2\ra \wt{\gamma}$ the projection.
Given $p\in \HH P^1$, $v\in T_p\HH P^1$ and
$w\in \gamma_p$ choose a local smooth section $s$ of $\gamma$ with
$s(p)=w$. Then $\chi(v)(w) = \pi(d(\iota(s)))_p$, where $d$ is the
product connection in $\underline{\HH}^2$. In suitable trivializations
of $\gamma$ and $\tilde{\gamma}$ over $\HH\cong U\subset \HH P^1$, the map
$\chi_q\colon T_q\HH=\HH \ra \om{Hom}_\HH(\HH,\HH)$ for $q\in \HH$ takes the
form  $v\mapsto r_v\colon \HH\ra \HH$, where $r_v(w)=wv$ is right multiplication. Finally, we note that this spin structure is $\om{Sp}(2)$-equivariant.

\begin{remark} With our orientation conventions it holds true
that $c_2(\gamma)[\HH P^1]=-1$. This can be verified, as suggested in \cite[p.~85]{UhlenbeckFreed91}, as follows. First check
that the transition function restricted to $S^3\subset \HH \cong U =\HH P^1-\{[0:1]\}$ is a diffeomorphism and hence has degree $\pm 1$. Then
explicitly calculate the connection and curvature forms in the chart
$\HH\cong U$ and conclude that the connection of $\gamma$ is self-dual.
From these two pieces of information the assertion follows.
As $\gamma\oplus \wt{\gamma}$ is trivial it follows that $c_2(\tilde{\gamma})[\HH P^1]=1$ as well.  
\end{remark}   
 
Let $\Gamma\subset \om{Sp}(1)$ be a finite subgroup. We define
a $\Gamma$-action on $\HH P^1$ by restricting the $\om{Sp}(2)$-action
along $\Gamma\subset \om{Sp}(1)\subset \om{Sp}(2)$, where the latter inclusion
is specified by
\begin{equation} \label{Action-Des}
 q\in \om{Sp}(1) \mapsto \left( \begin{array}{cc} 1& 0 \\ 0 & q
\end{array} \right) .
\end{equation} 
From the above discussion it follows that the spinor bundles $S^\pm$
obtain $\Gamma$-equivariant structures and that the spinor connections,
are $\Gamma$-invariant. By convention the action of $\om{Sp}(2)$ on $\HH P^1$
is given by
\[ \left( \begin{array}{cc} a & b \\ c & d \end{array} \right)
\cdot [x:y] = [x:y] \left( \begin{array}{cc} a^* & c^* \\ b^* & d^*
\end{array} \right) = [xa^*+yb^*:xc^*+yd^*] . \]
From this we see that in the chart 
$\HH\cong U\subset \HH P^1$, $z\mapsto [1:z]$,
this action corresponds to the standard linear action $\Gamma\times \HH\ra \HH$
given by $\gamma\cdot x = x\gamma^*$.

\begin{remark} By regarding $S^4\subset \HH\oplus \R$ as before the map
$\psi\colon \HH P^1\ra S^4$ given by
\[ \psi([x:y]) = \frac1{||x||^2+||y||^2}(2x^*y,||y||^2-||x||^2)  \]
is a $\Gamma$-equivariant, orientation preserving diffeomorphism, 
when $S^4$ is given the suspended action and orientation described earlier
(note that this is the opposite of the standard orientation on $S^4$). 
\end{remark} 

Let $G$ be a compact Lie group, $X$ a $G$-manifold and $P\colon \Gamma(V)\ra \Gamma(W)$ an elliptic $G$-operator, that is, $V$ and $W$ are $G$-equivariant
complex vector bundles and $P$ is equivariant with respect the induced actions. Then the $G$-index of $P$ is defined by
\[ \om{Ind}_G(P) = [\om{Ker}P]-[\om{Coker}(P)]\in R(G) . \]
The character of this virtual representation is denote by $\om{Ind}_g(P)=
\om{tr}(g|_{\om{Ker}(P)})-\om{tr}(g|_{\om{Coker}(P)})\in \C$ for
each $g\in G$. The fixed-point formula due to Atiyah, Segal and Singer expresses the fact that $\om{Ind}_g(P)$ only depends on information above the
fixed point set $F_g\coloneqq \{x\in X:gx=x\}$. In the case where $F_g$ is a finite set the formula (see \cite[Theorem~14.3]{LawsonMichelsohn89})
simplifies to
\begin{equation} \label{Index-Formula}
\om{Ind}_gP = \sum_{x\in F_g}\frac{\om{ch}_g(V_x)-\om{ch}_g(W_x)}
{\om{ch}_g(\lambda_{-1}(T_xX)_\C)}   .
\end{equation}
Here, $\om{ch}_g(V_x)=\om{tr}(g\colon V_x\ra V_x)$, $(T_xX)_\C$ is the complexification of $T_xX$ and $\lambda_{-1}$ is defined for a $G$
representation $U$ by 
\[ \lambda_{-1}(U) = \sum_{k=0}^{\om{dim}_\C U}(-1)^k \Lambda^k_\C(U)\in R(G)  .\]

Let $\Gamma\subset \om{SU}(2)=\om{Sp}(1)$ be a finite subgroup acting
on $\HH P^1$ in the way described above. This action is free away
from the two fixed points $S=[1:0]$ and $N=[0:1]$. 
Let $E\ra \HH P^1\cong S^4$ be a $\Gamma$-equivariant $\om{Sp}(1)$-bundle
with $[E_S]=\alpha$, $[E_N]=\beta$ and $c_2(E)[\HH P^1]=k$. Let $A$ be a $\Gamma$-invariant connection in $E$. Then the twisted Dirac operator
$D_A\colon \Gamma(S^+\otimes E)\ra \Gamma(S^-\otimes E)$
is an elliptic $\Gamma$-operator as described above. For our purpose
the formal adjoint $D_A^*\colon \Gamma(S^-\otimes E)\ra \Gamma(S^+\otimes E)$
is more relevant.

\begin{remark} Here we regard $S^\pm$ and $E$ as complex vector bundles
by restriction along $\C\subset \HH$. In particular, the tensor products
$S^\pm\otimes E=S^\pm\otimes_\C E$ are formed over the complex numbers.
\end{remark}  

\begin{proposition} \label{B-IndexCalc}
Let $E\ra \HH P^1\cong S^4$ be a $\Gamma$-equivariant
$\om{SU}(2)$-bundle with $[E_N],[E_S]=\alpha,\beta\in R(\Gamma)$
and $c_2(E)[\HH P^1]=k\in \Z$. Then $\Har\coloneqq \om{Ind}_\Gamma(D_A^*)\in R(\Gamma)$ satisfies
\[ (2-Q)\Har = \alpha-\beta,  \]
where $Q$ is the $2$-dimensional complex representation
associated with the inclusion $\Gamma\subset \om{SU}(2)$. Furthermore,
if $\eps:R(\Gamma)\ra \Z$ denotes the augmentation, then $\eps(\Har) = k$.  
\end{proposition}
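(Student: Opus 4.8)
The statement to prove is Proposition~\ref{B-IndexCalc}: for a $\Gamma$-equivariant $\om{SU}(2)$-bundle $E\ra \HH P^1\cong S^4$ with $[E_N]=\alpha$, $[E_S]=\beta$ and $c_2(E)[\HH P^1]=k$, and $A$ a $\Gamma$-invariant connection, the index $\Har=\om{Ind}_\Gamma(D_A^*)\in R(\Gamma)$ satisfies $(2-Q)\Har=\alpha-\beta$ and $\eps(\Har)=k$. The entire argument is an application of the Atiyah--Segal--Singer fixed-point formula \eqref{Index-Formula} to the elliptic $\Gamma$-operator $D_A^*\colon \Gamma(S^-\otimes E)\ra \Gamma(S^+\otimes E)$, whose fixed-point set for each $1\ne\gamma\in\Gamma$ consists of exactly the two points $N=[0:1]$ and $S=[1:0]$, plus a comparison of the resulting class-function identity against the regular representation to pin down its value as an element of $R(\Gamma)$.

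First I would set up the local data at the two fixed points. At $S=[1:0]$, working in the chart $\HH\cong U$, $z\mapsto[1:z]$, the $\Gamma$-action is the standard linear action $x\mapsto x\gamma^*$, so the tangent space $T_S\HH P^1$, complexified, is $Q$ (the canonical $2$-dimensional complex representation, which is self-dual); the fibers are $S^+_S\otimes E_S$ and $S^-_S\otimes E_S$, and one checks from the explicit spin structure ($S^+=\gamma$, $S^-=\wt\gamma$) that as $\Gamma$-representations $S^\pm_S\cong \C^2$ carries a representation I will compute, with $E_S=\beta$ (as a complex $\Gamma$-rep via $\C\subset\HH$). At $N=[0:1]$ the action on the other chart is trivial, so $(T_N\HH P^1)_\C$ is the trivial $2$-dimensional representation $2\cdot 1$ (or carries whatever the suspended/linear action dictates — I would determine this carefully), and $E_N=\alpha$. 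For each $1\ne\gamma$, plugging into \eqref{Index-Formula} gives
\[
\om{Ind}_\gamma(D_A^*)=\frac{\om{ch}_\gamma(S^-_N\otimes E_N)-\om{ch}_\gamma(S^+_N\otimes E_N)}{\om{ch}_\gamma(\lambda_{-1}((T_N)_\C))}+\frac{\om{ch}_\gamma(S^-_S\otimes E_S)-\om{ch}_\gamma(S^+_S\otimes E_S)}{\om{ch}_\gamma(\lambda_{-1}((T_S)_\C))}.
\]
The characters $\om{ch}_\gamma$ are just traces; the key algebraic simplification is that for a $2$-dimensional representation $W$ with $\det W$ trivial (which holds for $Q$ and for spinor bundles of an $\om{SU}(2)$-structure), $\lambda_{-1}(W)=1-W+\det W=2-W$ at the level of characters, while the difference $S^-\otimes E-S^+\otimes E$ at a fixed point evaluates, after the standard ``$\om{SU}(2)$ spinor cancellation,'' to something proportional to $E$ times $(\text{explicit factor})$. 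Carrying this through, the $\gamma$-trace identity becomes precisely $\chi_{(2-Q)\Har}(\gamma)=\chi_{\alpha-\beta}(\gamma)$ for all $\gamma\ne1$.

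Next I would handle $\gamma=1$: there the fixed-point formula does not apply, but $\om{Ind}_1(D_A^*)=\dim_\C\om{Ker}(D_A^*)-\dim_\C\om{Coker}(D_A^*)$ is the ordinary complex index, which by the Atiyah--Singer index theorem for the twisted Dirac operator on $S^4$ equals $2\,c_2(E)[S^4]$ up to the orientation sign built into our conventions; in terms of the augmentation this reads $\eps(\Har)=k$ (this is exactly the statement that the underlying $\om{SU}(2)$-index of $D_A^*$, i.e.\ the dimension of the ADHM space $\Har$ in \eqref{Harmonic-ADHM}, is $k$). Finally, I would combine: we now know $\chi_{(2-Q)\Har}$ and $\chi_{\alpha-\beta}$ agree at every $\gamma\ne1$, and we know $\eps(\Har)=k$; evaluating $\alpha-\beta$ at $1$ gives $0$ (both are $1$-dimensional quaternionic, hence $2$-dimensional complex), and evaluating $(2-Q)\Har$ at $1$ gives $(2-2)\eps(\Har)=0$, so the characters agree at $1$ as well. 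Since characters determine elements of $R(\Gamma)$, we conclude $(2-Q)\Har=\alpha-\beta$ in $R(\Gamma)$, as claimed. (Lemma~\ref{R(G)-equation} then guarantees, but is not needed for, uniqueness.)

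\textbf{Main obstacle.} The conceptual steps are routine, but the real work — and the place where sign errors lurk — is the \emph{bookkeeping of orientations and of the precise $\Gamma$-representations on the spinor fibers at $N$ and $S$}. The paper has been scrupulous about orientation conventions precisely because $c_2(\gamma)[\HH P^1]=-1$, the map $\psi\colon\HH P^1\to S^4$ reverses the standard orientation, and the suspended $\Gamma$-action is built into $\om{Sp}(1)\subset\om{Sp}(2)$ via \eqref{Action-Des} acting on the \emph{second} coordinate. So I would devote most of the care to: (i) identifying $S^\pm$ restricted to the fixed fibers as explicit $2$-dimensional $\Gamma$-representations using $S^+=\gamma$, $S^-=\wt\gamma$ and the description of $\gamma\subset\underline{\HH}^2$; (ii) verifying that at $S$ the relevant tangent representation is $Q$ and at $N$ it is trivial (or the correct twist), matching the chart computations; and (iii) getting the overall sign in $\eps(\Har)=k$ right, i.e.\ confirming it is $+k$ and not $-k$, which is where the orientation reversal of $\psi$ and the choice of $D_A^*$ rather than $D_A$ both enter. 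Once these are nailed down, the $\lambda_{-1}(W)=2-W$ simplification and the spinor cancellation make the two fixed-point contributions collapse to $\alpha-\beta$ almost immediately.
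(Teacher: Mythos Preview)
Your overall strategy is exactly the paper's: apply the Atiyah--Segal--Singer fixed-point formula at the two isolated fixed points, obtain a character identity away from $\gamma=1$, and close up using the ordinary index theorem at $\gamma=1$. However, two of your local computations are wrong and would derail the calculation.

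First, the tangent representation at $N=[0:1]$ is \emph{not} trivial. In the chart $w\mapsto[w:1]$ the $\Gamma$-action is $\gamma\cdot w=\gamma w$; this is nontrivial, and as a real representation it is again isomorphic to $Q_\R$ (the conjugation $*\colon\HH\to\HH$ intertwines the left and right actions). You could have seen this a priori: if the tangent representation at an isolated fixed point were trivial, then $\om{ch}_\gamma(\lambda_{-1}((T_N)_\C))$ would vanish and the fixed-point formula would be meaningless. So both fixed points contribute the \emph{same} denominator.

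Second, you treat the complexified tangent space as the $2$-dimensional representation $Q$. The real tangent space of a $4$-manifold is $4$-dimensional; here $T_x\HH P^1\cong Q_\R$, and $(Q_\R)_\C\cong Q\oplus Q^*\cong 2Q$. Hence $\lambda_{-1}((T_x)_\C)=(\lambda_{-1}Q)^2=(2-Q)^2$, not $2-Q$. The extra factor is exactly what is cancelled by the spinor contribution: from $S^+=\gamma$, $S^-=\wt\gamma$ one finds $S^+_S\cong\C^2$ trivial and $S^-_S\cong Q$, while $S^+_N\cong Q$ and $S^-_N\cong\C^2$ trivial, so each numerator $\om{ch}_\gamma(S^-\otimes E)-\om{ch}_\gamma(S^+\otimes E)$ carries a factor $\pm(2-\chi_Q(\gamma))$. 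After the cancellation the sum of the two local terms is $(\chi_\alpha(\gamma)-\chi_\beta(\gamma))/(2-\chi_Q(\gamma))$, which is the desired identity. (Also, the ordinary index of $D_A^*$ on $S^4$ is $c_2(E)[S^4]$, not $2c_2(E)[S^4]$.)
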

\begin{proof} To apply the formula (\ref{Index-Formula}) we have to
determine the action of $\Gamma$ on the fibers of $S^\pm\otimes E$
over the fixed points $S=[1:0]$ and $N=[0:1]$ as well as the
action on the tangent spaces $T_N\HH P^1, T_S\HH P^1$. From the
definition of $S^+=\gamma$ and $S^-=\wt{\gamma}$ we observe
that in terms of the inclusions $\gamma,\wt{\gamma}\subset \underline{\HH}^2$
\[ \begin{array}{cc} \gamma_S =\HH\oplus 0 & \gamma_N = 0\oplus \HH \\
                  \wt{\gamma}_S = 0\oplus \HH & \wt{\gamma}_N = \HH\oplus 0 \end{array} . \]
From the description of the action in (\ref{Action-Des}) it follows that
$\gamma_S \cong \wt{\gamma}_N \cong \C^2$, the trivial representation,
and $\gamma_N\cong \wt{\gamma}_S\cong Q$. 

Next we need to investigate the action on the tangent spaces. We have
already seen that in the chart $\HH\cong U\subset \HH P^1$ the action
corresponds to the standard linear action $\gamma\cdot x=x\gamma^*$.
Here $0\in \HH$ corresponds to $S\in \HH P^1$. 
It follows that $T_S\HH P^1\cong Q_\R$, that is, the canonical representation
regarded as a real representation. In the other chart
$\HH\cong V \subset \HH P^1$ given by $z\mapsto [z:1]$ one may verify
that the action is given by $\gamma\cdot z=\gamma z$. The differentiated
action on $\HH\cong T_0\HH$ is given by the same formula. 
The conjugation map $*\colon \HH\ra \HH$ is real linear and satisfies
$(\gamma x)^*=x^*\gamma^*$. Hence, $T\HH P^1_N \cong Q_\R$ as well. 
We need to calculate the term
$\lambda_{-1}((Q_\R)_\C)$. First, note that $(Q_\R)_\C\cong Q\oplus Q^*\cong 2Q$, since $Q$ is an $\om{SU}(2)$-representation and therefore self-dual. Thus,
\[ \lambda_{-1}((Q_\R)_\C) = \lambda_{-1}(2Q)=(\lambda_{-1}Q)^2
=(\C-Q+\Lambda^2Q)^2 = (2-Q)^2  ,\]
where we have used that $\lambda_{-1}(V\oplus W)=\lambda_{-1}(V)\lambda_{-1}(W)$ and the fact that $\Lambda^2 Q\cong \C$, the trivial representation.

Given an element $V\in R(\Gamma)$ we let $\chi_V\colon\Gamma\ra \C$ denote
the associated character. For any $g\neq 1\in \Gamma$ the index formula \eqref{Index-Formula} now gives $\chi_\Har(g)$ as
\[ \left( \frac{2\chi_\alpha(g) -\chi_Q(g)\chi_\alpha(g)}{(2-\chi_Q(g))^2} \right) + \left(\frac{\chi_Q(g)\chi_\beta(g)-2\chi_\beta(g)}{(2-\chi_Q(g))^2}\right) =\frac{\chi_\alpha(g)-\chi_\beta(g)}{2-\chi_Q(g)}, \]
where the first term corresponds to the fixed point $S=[1:0]$ and
the second term to $N=[0:1]$. 
This yields the equality of characters $(2-\chi_Q)\chi_\Har =\chi_\alpha-\chi_\beta$, since by the above it holds for all $g\neq 0$, while for
$g=1$ it is trivially satisfied as $\chi_Q(1)=\chi_\alpha(1)=\chi_\beta(1)=2$.
We therefore obtain the relation $(2-Q)\Har = \alpha-\beta$ in $R(\Gamma)$.
The final assertion follows from the regular index theorem
\[ \eps(\Har) = \om{Ind}(D_A^*) = c_2(E)[S^4]  .\]
\end{proof}

\subsection{The Chern-Simons Invariant}
Let $Y$ be a closed oriented $3$-manifold and let $E\ra Y$ be the
necessarily trivial $\om{Sp}(1)=\om{SU}(2)$-bundle over $Y$. The Chern-Simons
invariant of a flat connection $A$ in $E$ is the value $\om{cs}(A)\in \R/\Z$,
where $\om{cs}$ is the Chern-Simons functional defined in equation
(\ref{CS-Functional}). In this section we will explain that the theory
of section $(4)$ lead to a simple procedure for the calculation of
this invariant when $Y=Y_\Gamma$ for some finite subgroup $\Gamma\subset \om{SU}(2)$. 

Equip $\mathfrak{sp}(1)=\om{Im}\HH$ with the standard invariant inner product
for which $i,j,k$ is an orthonormal basis and write $g(x,y)=-(x,y)$
for its negative. The identification $\mathfrak{sp}(1)\cong \mathfrak{su}(2)$
matches $g$ with the symmetric bilinear form $(x,y)\mapsto \frac12 \om{tr}(xy)$. In this section we will prefer to work with the group $\om{Sp}(1)$
so we make the following definition (compare equation (\ref{CS-Functional})).

\begin{definition} Let $X$ be a manifold. For any $a\in \Omega^1(X,\mathfrak{sp}(1))$ define the Chern-Simons form $\zeta(a)\in \Omega^3(X,\R)$ by
\[ \zeta(a) = \frac1{4\pi^2}[ a\wedge_g da +\frac13 a\wedge_g (a\wedge_{ad}a)] .\] 
\end{definition} 
Here $\wedge_g$ and $\wedge_{ad}$ denote the combination of the wedge product
and the bilinear maps $g$ and $\om{ad} = [\cdot,\cdot]$, respectively. 
Note that $f^*(\zeta(a))=\zeta(f^*(a))$ for any smooth map $f\colon X\ra Y$ and $a\in \Omega^1(Y,\mathfrak{sp}(1))$. 

Let $Y$ be a closed oriented
$3$-manifold and let $E\ra Y$ be an $\om{Sp}(1)$-bundle. Fix a global
trivialization $E\cong Y\times \om{Sp}(1)$ so that the space of connections
$\MC{A}_E$ is identified with $\Omega^1(Y,\mathfrak{sp}(1))$. Then for
$a\in \Omega^1(Y,\mathfrak{sp}(1))$ we have $\om{cs}(a)=\int_Y \zeta(a) \mod{\Z}$. Observe that if $a$ is flat, that is, $da+\frac12 a\wedge_{ad}a=0$,
then 
\begin{equation}\label{CS-Flat-Formula}
\zeta(a) = \frac{-1}{24\pi^2} a\wedge_g (a\wedge_{ad} a) .
\end{equation} 

\begin{lemma} \label{VolS3}
Let $\theta\in \Omega^1(\om{Sp}(1),\mathfrak{sp}(1))$ be the
left Maurer-Cartan form, that is, $\theta_g(v)=dl_{g^{-1}}(v)$ for all $g\in \om{Sp}(1)$ and $v\in T_g\om{Sp}(1)$. Then
\[ \int_{\om{Sp}(1)} \zeta(\theta) =  1,\]
when $\om{Sp}(1)$ is oriented by requiring that $(i,j,k)$ is a positive
basis for $T_1\om{Sp}(1)=\mathfrak{sp}(1)=\om{Im}\HH$. 
\end{lemma}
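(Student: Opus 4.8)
The Chern--Simons form $\zeta(\theta)$ is a left-invariant $3$-form on the $3$-dimensional Lie group $\om{Sp}(1)$, so it is a constant multiple of the bi-invariant volume form determined by the metric $g$; the plan is to pin down that constant at the identity and then multiply by the Riemannian volume of $\om{Sp}(1)$. First I would use the Maurer--Cartan equation $d\theta+\tfrac12\theta\wedge_{ad}\theta=0$, which says $\theta$ is ``flat'' in the sense of the excerpt, to apply formula \eqref{CS-Flat-Formula}:
\[
\zeta(\theta)=\frac{-1}{24\pi^2}\,\theta\wedge_g(\theta\wedge_{ad}\theta).
\]
Next I would evaluate the right-hand side at $1\in\om{Sp}(1)$ on the positively oriented basis $(i,j,k)$ of $T_1\om{Sp}(1)=\om{Im}\HH$. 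Here $\theta_1=\mathrm{id}$, so $(\theta\wedge_{ad}\theta)(i,j)=[i,j]-[j,i]=2[i,j]=2\cdot 2k=4k$ (using $ij=k$, $ji=-k$ in the algebra convention of the appendix, so that the Lie bracket is $[x,y]=xy-yx$ and $[i,j]=2k$), and then $\theta\wedge_g(\theta\wedge_{ad}\theta)(i,j,k)$ is the signed sum over cyclic permutations of $g(i,4k)$-type terms. Since $g=-(\cdot,\cdot)$ and $(i,j,k)$ is orthonormal, $g(k,k)=-1$, and collecting the three cyclic terms gives $\theta\wedge_g(\theta\wedge_{ad}\theta)(i,j,k)=3\cdot(-1)\cdot 4=-12$. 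Hence $\zeta(\theta)_1(i,j,k)=\frac{-1}{24\pi^2}\cdot(-12)=\frac{1}{2\pi^2}$, i.e.\ $\zeta(\theta)=\frac{1}{2\pi^2}\,\mathrm{dvol}$ where $\mathrm{dvol}$ is the left-invariant volume form assigning value $1$ to the oriented orthonormal frame $(i,j,k)$.

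It then remains to compute $\int_{\om{Sp}(1)}\mathrm{dvol}=\mathrm{vol}(\om{Sp}(1))$ with respect to this metric. The metric $g$ on $\mathfrak{sp}(1)$ making $(i,j,k)$ orthonormal is $\tfrac14$ of the metric pulled back from the round unit sphere $S^3\subset\HH$ under the orbit map $q\mapsto q^*$ (one must be careful: on $S^3$ the tangent space at $1$ is again $\om{Im}\HH$, but the metric induced there as a submanifold of the round $S^3$ of radius $1$ makes $(i,j,k)$ orthonormal, so actually $g$ here matches the radius-$1$ round metric) --- more precisely I would simply observe that the bi-invariant metric on $\om{Sp}(1)$ with $|i|=|j|=|k|=1$ is isometric to the standard round $3$-sphere of radius $1$, whose volume is $2\pi^2$. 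Therefore $\int_{\om{Sp}(1)}\zeta(\theta)=\frac{1}{2\pi^2}\cdot 2\pi^2=1$, as claimed. One orientation check is needed: the left-invariant volume form built from $(i,j,k)$ is exactly the one used to define $\mathrm{vol}$, and the stated orientation of $\om{Sp}(1)$ is precisely the one making it positive, so no sign ambiguity arises.

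The main obstacle is bookkeeping rather than conceptual: getting every normalization constant right --- the factor $\tfrac13$ and $\tfrac{1}{4\pi^2}$ in the definition of $\zeta$, the exact value of $[i,j]$ under the algebra conventions of the appendix, the combinatorial factor from antisymmetrizing the wedge products over $(i,j,k)$, and the identification of the metric $g$ with the radius-$1$ round metric on $S^3$ (equivalently, that $\mathrm{vol}(\om{Sp}(1))=2\pi^2$ and not $\pi^2$ or $16\pi^2$). I would double-check the constant by the independent sanity check that $\zeta(\theta)$ must represent the generator of $H^3(\om{Sp}(1);\Z)\cong\Z$ under de Rham, which forces $\int\zeta(\theta)\in\Z$, and the degree-$\pm1$ normalization of the Chern--Simons form is exactly what makes this integer equal to $1$; this is consistent with the role the lemma plays in the surrounding Chern--Simons computations.
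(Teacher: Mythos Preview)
Your proposal is correct and follows essentially the same approach as the paper: use the Maurer--Cartan flatness of $\theta$ to apply \eqref{CS-Flat-Formula}, evaluate $\theta\wedge_g(\theta\wedge_{ad}\theta)$ at the identity on the frame $(i,j,k)$ to obtain $-12$, deduce $\zeta(\theta)=\tfrac{1}{2\pi^2}\,\mathrm{dvol}$, and finish with $\mathrm{vol}(S^3)=2\pi^2$. The only cosmetic difference is that the paper phrases the ``constant multiple of the volume form'' step via $\om{SO}(4)$-invariance and the explicit form $\omega_3$, whereas you invoke directly that left-invariant top-forms on a $3$-dimensional group are one-dimensional; both arguments are equivalent here.
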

\begin{proof} As $\theta$ is left invariant and $l_g^*\zeta(\theta)=\zeta(l_g^*(\theta))=\zeta(\theta)$ for each $g\in \om{Sp}(1)$, it follows that
$\zeta(\theta)\in \Omega^3(\om{Sp}(1),\R)$ is left invariant. Every
left invariant form of top dimension in a compact, connected Lie group is necessarily right invariant as well. Therefore, if we regard
$\om{Sp}(1)=S^3\subset \HH=\R^4$, we see that $\zeta(\theta)$ is invariant
under the action $\om{Sp}(1)\times \om{Sp}(1) \times S^3\ra S^3$
given by $(a,b)\cdot x = axb^{-1}$. This action factors through the
double covering $\om{Sp}(1)\times \om{Sp}(1)\ra \om{SO}(4)$ where
$\om{SO}(4)$ acts in the standard way on $S^3\subset \R^4$.   
We conclude that $\zeta(\theta)$ is $\om{SO}(4)$-invariant. 

The Riemannian volume form of $S^n$ equipped with the standard round
metric is the restriction of
\[ \omega_n =  \sum_{i=1}^{n+1} (-1)^{i+1}x_i dx_1\wedge \cdots \wedge
\hat{dx}_i \wedge \cdots \wedge dx_n \in \Omega^n(\R^{n+1},\R), \]
where the hat denotes omission. This form is invariant under the
standard transitive action of $\om{SO}(n+1)$. Specializing to
the case $n=3$, it follows that there is a constant $c\in \R$ such   
such that $\zeta(\theta)=c\omega_3$.  

We determine the constant $c$ by evaluating at $1=(1,0,0,0)\in \HH=\R^4$.
By the formula above we have $(\omega_3)_1 = dx_2\wedge dx_3\wedge dx_4$,
so that $c=\zeta(\theta)(i,j,k)$. As $\theta_1\colon \mathfrak{sp}(1)\ra \mathfrak{sp}(1)$ is the identity, we have for $a,b,c\in \mathfrak{sp}(1)$ that
\[ \theta\wedge_g (\theta\wedge_{ad}\theta) (a,b,c) =
   g(a,2[b,c])-g(b,2[a,c])+g(c,2[a,b]) =6g(a,[b,c]),  \]
since $g$ is symmetric and $g(b,[a,c])=g([b,a],c)=-g([a,b],c)=-g(a,[b,c])$.
Inserting $(a,b,c)=(i,j,k)$ we find
\[ \theta\wedge_g(\theta\wedge_{ad} \theta)(i,j,k)
= 6g(i,[j,k])=6g(i,2i) = -12(i,i)=-12  .\]
Consequently, using formula (\ref{CS-Flat-Formula}) we obtain $\zeta(\theta)_1(i,j,k)=\frac{1}{2\pi^2}$ and therefore
$\zeta(\theta) = \frac{1}{2\pi^2}\omega_3$. The volume of $S^3$ is given by
$\om{Vol}(S^3) = \int_{S^3} \omega = 2\pi^2$, so it follows that
$\int_{\om{Sp}(1)}\zeta(\theta) = 1$ as required. \end{proof}  

In the following we give $Y_\Gamma=S^3/\Gamma$ the orientation induced
from the standard orientation on $S^3\subset \HH$. 

\begin{lemma} \label{CS-degree-formula} 
Let $\alpha\in \om{Rep}^1(\Gamma,\HH)$ correspond to
a flat connection in the $\om{Sp}(1)$-bundle over $Y_\Gamma$. Choose a
representative $\rho_\alpha\colon \Gamma\ra \om{Sp}(1)$
for $\alpha$ and let $\om{Sp}(1)^\alpha$ denote the group $\om{Sp}(1)$
equipped with the $\Gamma$-action $\gamma\cdot q = \rho_\alpha(\gamma)q$.
Then for any $\Gamma$-equivariant map $g\colon S^3\ra \om{Sp}(1)^\alpha$ it
holds true that
\[ \om{cs}(\alpha) = \om{deg}(g)/|\Gamma | \mod{\Z}  .\]
\end{lemma}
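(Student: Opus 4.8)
The plan is to compute the Chern-Simons invariant of the flat connection $\alpha$ by pulling everything back along the covering map $p\colon S^3 \to Y_\Gamma$ and realizing the pulled-back flat connection as a pure gauge transformation. First I would recall that a flat connection in the trivial $\om{Sp}(1)$-bundle over $Y_\Gamma$ corresponds, via holonomy, to the representation $\rho_\alpha\colon \Gamma = \pi_1(Y_\Gamma) \to \om{Sp}(1)$; concretely, pulling the flat bundle back to the universal cover $S^3$, it becomes trivializable, and the flat connection becomes the product connection after a gauge transformation whose "twisting" across deck transformations is governed by $\rho_\alpha$. The key observation is that a $\Gamma$-equivariant map $g\colon S^3 \to \om{Sp}(1)^\alpha$ is exactly the data of a gauge transformation on $S^3$ that conjugates the product connection into a connection descending to the flat connection $\alpha$ on $Y_\Gamma$. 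If $a \in \Omega^1(Y_\Gamma, \mathfrak{sp}(1))$ is a connection form representing $\alpha$ in the fixed global trivialization, then $p^*a = g^*\theta$ for such a $g$, where $\theta$ is the left Maurer-Cartan form on $\om{Sp}(1)$.

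The next step is the computation of the integral. By definition $\om{cs}(\alpha) = \int_{Y_\Gamma} \zeta(a) \bmod \Z$, and since $p\colon S^3 \to Y_\Gamma$ is a $|\Gamma|$-sheeted orientation-preserving covering,
\[
|\Gamma| \cdot \om{cs}(\alpha) \equiv \int_{S^3} p^*\zeta(a) = \int_{S^3} \zeta(p^*a) = \int_{S^3} \zeta(g^*\theta) = \int_{S^3} g^*\zeta(\theta) \pmod{\Z},
\]
using the naturality $f^*\zeta(b) = \zeta(f^*b)$ noted right after the definition of $\zeta$. Then I would invoke that $\zeta(\theta)$ is (by Lemma \ref{VolS3}, up to the normalization there) the normalized generator of $H^3(\om{Sp}(1);\R)$ with $\int_{\om{Sp}(1)}\zeta(\theta) = 1$, so that $\int_{S^3} g^*\zeta(\theta) = \om{deg}(g) \cdot \int_{\om{Sp}(1)} \zeta(\theta) = \om{deg}(g)$, where the degree is computed with respect to the chosen orientations on $S^3$ (induced from $\HH$) and on $\om{Sp}(1)$. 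Combining, $\om{cs}(\alpha) \equiv \om{deg}(g)/|\Gamma| \pmod{\Z}$.

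I should be careful about two orientation bookkeeping points, and these are where the main obstacle lies. First, I must confirm that $p^*a$ really equals $g^*\theta$ (rather than $g^*\theta$ composed with some inversion or sign) for the $g$ appearing in the statement; this requires unwinding the correspondence between flat connections and representations — the relevant fact is that the holonomy of the connection $g^*\theta$ around a loop $\gamma$ is $g(x)^{-1}g(\gamma \cdot x)$, which for a $\Gamma$-equivariant $g\colon S^3 \to \om{Sp}(1)^\alpha$ works out, after descent, to conjugacy class of $\rho_\alpha(\gamma)$, confirming the connection descends to one representing $\alpha$. Second, I must double-check the degree convention: the statement uses $\om{deg}(g)$ for $g\colon S^3 \to \om{Sp}(1)^\alpha$ where $S^3 \subset \HH$ carries the orientation from $\HH \cong \R^4$ (not the outward-normal-first orientation of Definition \ref{RightCosets}), and $\om{Sp}(1)$ is oriented so $(i,j,k)$ is positive; these are precisely the conventions under which Lemma \ref{VolS3} was proved, so as long as I use the same conventions consistently the normalization $\int_{\om{Sp}(1)}\zeta(\theta) = 1$ applies directly. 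Once these compatibility checks are in place, the proof is a two-line integral computation, so the entire difficulty is in setting up the gauge-theoretic identity $p^*a = g^*\theta$ correctly with all orientations and conjugation conventions matched.
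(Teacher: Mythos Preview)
Your approach is essentially the paper's: pull back along $p\colon S^3\to Y_\Gamma$, realize $p^*a=g^*\theta$ for an equivariant $g$, and invoke Lemma~\ref{VolS3}. Two small points need tightening.

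First, in your displayed computation you start with $|\Gamma|\cdot\om{cs}(\alpha)$, but $\om{cs}(\alpha)$ already lives in $\R/\Z$, so after multiplying by $|\Gamma|$ you cannot divide back and recover the desired congruence (knowing $|\Gamma|\cdot x\equiv \om{deg}(g)\pmod{\Z}$ does not determine $x$ modulo $\Z$). The fix, which is what the paper does, is to work with the real number $\int_{Y_\Gamma}\zeta(a)$ for a fixed representative $a$: then $\int_{Y_\Gamma}\zeta(a)=\frac{1}{|\Gamma|}\int_{S^3}p^*\zeta(a)=\frac{1}{|\Gamma|}\int_{S^3}g^*\zeta(\theta)=\om{deg}(g)/|\Gamma|$ as an exact equality in $\R$, and one reduces mod $\Z$ only at the end.

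Second, the statement asserts the formula for \emph{any} equivariant $g\colon S^3\to\om{Sp}(1)^\alpha$, whereas your construction produces a specific $g$ from the chosen connection form $a$ (and indeed different choices of $a$ within the gauge class lead to different $g$'s). The paper closes this by citing Lemma~\ref{Classification-Lemma}: any two equivariant maps $S^3\to\om{Sp}(1)^\alpha$ have degrees congruent modulo $|\Gamma|$, so $\om{deg}(g)/|\Gamma|\bmod\Z$ is independent of the choice. You should make this step explicit.
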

\begin{proof} Choose a representative $a\in \Omega^1(Y_\Gamma,\mathfrak{sp}(1))$ for the flat connection in a global trivialization of the bundle. 
Then by definition
\[ \om{cs}(\alpha) = \int_{Y_\Gamma} \zeta(a) \mod{\Z}  .\]
Let $p\colon S^3\ra Y_\Gamma$ be the quotient map. Since $Y_\Gamma$ carries
the orientation induced from $S^3$ this map has degree $|\Gamma |$.
Let $b=p^*(a)\in \Omega^1(S^3,\mathfrak{sp}(1))$. Since $a$ and
$b$ are flat and $S^3$ is simply connected, there exists a smooth map
$g\colon S^3\ra \om{Sp}(1)$ such that $b = g^*\theta$, where
$\theta\in \Omega^1(\om{Sp}(1),\mathfrak{sp}(1))$ is the left Maurer-Cartan
form. Moreover, the map $g$ is unique up to left translation in the sense
that if $h\colon S^3\ra \om{Sp}(1)$ is another map with $h^*\theta=b$, then
$h=l_u\circ g$ for some $u\in \om{Sp}(1)$, where $l_u(v)=uv$ denotes left
translation in $\om{Sp}(1)$. For each $\gamma\in \Gamma$ we have
\[ \gamma^*(b)=\gamma^* p^*(a)=(p\circ \gamma)^*a = p^*(a)=b . \]
Then as $(g\circ \gamma)^*(\theta)=\gamma^*b=b$ we deduce from the
uniqueness of $g$ that there is a unique $\rho(\gamma)\in \om{Sp}(1)$
such that $g\circ \gamma = l_{\rho(\gamma)}\circ g$. One may easily verify
that $\rho\colon \Gamma\ra \om{Sp}(1)$ is a homomorphism, in fact, it encodes
the holonomy representation of the flat connection $a\in \Omega^1(Y_\Gamma,\mathfrak{sp}(1))$ we started with. If we replace $g\colon S^3\ra \om{Sp}(1)$
with $l_u\circ g$, then $\rho:\Gamma\ra \om{Sp}(1)$ is replaced by
$c_u\circ \rho$, where $c_u$ is conjugation by $u$ in $\om{Sp}(1)$.
Therefore, for a suitable choice of $u\in \om{Sp}(1)$ we can arrange
that $\rho=\rho_\alpha$. From the defining formula
$g\circ \gamma = l_{\rho_\alpha(\gamma)}\circ g$ it follows that
$g\colon S^3\ra \om{Sp}(1)^\alpha$ is equivariant. 

Our setup is summarized in the following diagram
\[ \begin{tikzcd} \theta  \arrow{r}{g^*} & b &
a \arrow{l}[swap]{p^*} \\
\om{Sp}(1) & S^3 \arrow{l}[swap]{g} \arrow{r}{p} & Y_\Gamma. \end{tikzcd} \]
We may now calculate
\[ \int_{Y_\Gamma} \zeta(a) = \frac{1}{|\Gamma |} \int_{S^3} p^*(\zeta(a))
=\frac{1}{|\Gamma |} \int_{S^3} g^*(\zeta(\theta)) =\frac{\om{deg}(g)}{|\Gamma |}
\int_{\om{Sp}(1)} \zeta(\theta)  .\]
By the previous lemma $\int_{\om{Sp}(1)}\zeta(\theta)=1$, so by reducing
this modulo integers we obtain $\om{cs}(\alpha)=\om{cs}(a)=\frac{\om{deg}g}{|\Gamma |} \mod{\Z}$. To complete the proof we must show that
$\om{deg}(h)/|\Gamma | \mod{\Z}$ is independent of the equivariant map
$h\colon S^3\ra \om{Sp}(1)^\alpha$. This is a consequence of Lemma
\ref{Classification-Lemma}, which shows that $\om{deg}h \mod{|\Gamma |}$
is independent of $h$. 
\end{proof} 
  
\begin{example} \label{CS-Calc-Example1}
Let $\Gamma\subset \om{Sp}(1)=\om{SU}(2)$ be any finite
subgroup. Let $\alpha =Q$ denote the the canonical representation
associated with the inclusion above. Then the map
$g\colon S^3\ra \om{Sp}(1)^\alpha$ given by $g(x)=x^*$ is equivariant. Indeed,
\[ g(\gamma\cdot x)=g(x\gamma^*)=(x\gamma^*)^*=\gamma x^*=\gamma\cdot g(x)  \]
for each $\gamma\in \Gamma$ and $x\in S^3$. Since $S^3\subset \HH$ and $\om{Sp}(1)\subset \HH$ both carry the standard orientation for which $(i,j,k)\in T_1S^3 =T_1\om{Sp}(1)$ is a positive basis, it follows that $\om{deg}g = -1$ and therefore
\[ \om{cs}(\alpha) = \frac{-1}{|\Gamma |} \mod{\Z} . \]
If we reverse the orientation of $Y_\Gamma$ then the value of $\om{cs}$
changes by a sign. 
\end{example} 

\begin{proposition} \label{CS-Calc-Tool}
Let $\Gamma\subset \om{SU}(2)$ be a finite subgroup
and let $\alpha,\beta$ be a pair of $\om{SU}(2)$-representations of
$\Gamma =\pi_1(Y_\Gamma)$ corresponding to a pair of flat connections
in the $\om{SU}(2)$-bundle over $Y_\Gamma$. Let $\Har$ be a solution
of the equation
\[ (2-Q)\Har = \alpha-\beta \]
in $R(\Gamma)$. Then $\om{cs}(\alpha)-\om{cs}(\beta)=\eps(\Har)/|\Gamma | \in \R/\Z$. 
\end{proposition}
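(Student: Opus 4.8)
�The plan is to reduce the identity for the \emph{difference} of Chern--Simons invariants to the degree formula for a single flat connection, namely Lemma \ref{CS-degree-formula}, by using the equivariant bundle over $S^4$ that interpolates between $\alpha$ and $\beta$. First I would fix representatives $\rho_\alpha,\rho_\beta\colon\Gamma\to\om{Sp}(1)$ and choose equivariant maps $g\colon S^3\to\om{Sp}(1)^\alpha$ and $h\colon S^3\to\om{Sp}(1)^\beta$, which exist by Lemma \ref{Classification-Lemma}. By Lemma \ref{CS-degree-formula} we then have $\om{cs}(\alpha)=\om{deg}(g)/|\Gamma|$ and $\om{cs}(\beta)=\om{deg}(h)/|\Gamma|$ modulo $\Z$, so that
\[
\om{cs}(\alpha)-\om{cs}(\beta)=\frac{\om{deg}(g)-\om{deg}(h)}{|\Gamma|}\in\R/\Z.
\]
Thus everything comes down to identifying $\om{deg}(g)-\om{deg}(h)$ with $\eps(\Har)$ modulo $|\Gamma|$.

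Next I would bring in the classification of $\Gamma$-equivariant $\om{SU}(2)$-bundles over $S^4$. By Corollary \ref{c-Corollary}, the constant $c$ attached to the pair $(\alpha,\beta)$ in Theorem \ref{Classification of Equivariant Bundles over $S^4$} may be taken to be exactly $c=\om{deg}(g)-\om{deg}(h)$; moreover a $\Gamma$-equivariant bundle with invariants $(\alpha,\beta,k)$ exists precisely when $k\equiv c\Mod{|\Gamma|}$. On the other hand, Lemma \ref{Austin-Equation} (proved via Proposition \ref{B-IndexCalc}) tells us that for such a bundle the class $\Har=\om{Ind}_\Gamma(D_A^*)\in R(\Gamma)$ solves $(2-Q)\Har=\alpha-\beta$ and satisfies $\eps(\Har)=k$. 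Since by Lemma \ref{R(G)-equation} the solutions of $(2-Q)\Har'=\alpha-\beta$ differ precisely by integer multiples of the regular representation $R$, and $\eps(R)=|\Gamma|$, the quantity $\eps(\Har')\Mod{|\Gamma|}$ is independent of the chosen solution $\Har'$. Choosing $\Har'=\Har$ coming from an actual equivariant bundle with $k=c$ gives $\eps(\Har')\equiv c\equiv \om{deg}(g)-\om{deg}(h)\Mod{|\Gamma|}$, hence for \emph{any} solution $\Har$ of $(2-Q)\Har=\alpha-\beta$ we get $\eps(\Har)\equiv\om{deg}(g)-\om{deg}(h)\Mod{|\Gamma|}$.

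Combining the two displays then yields
\[
\om{cs}(\alpha)-\om{cs}(\beta)=\frac{\om{deg}(g)-\om{deg}(h)}{|\Gamma|}=\frac{\eps(\Har)}{|\Gamma|}\in\R/\Z,
\]
which is the assertion. The one point that needs a little care — and which I expect to be the main obstacle, though a mild one — is the bookkeeping of orientations: Lemma \ref{CS-degree-formula} computes $\om{cs}$ for $Y_\Gamma$ with the orientation induced from the \emph{standard} orientation of $S^3\subset\HH$, whereas the equivariant bundle story in Appendix $B$ and the formula for $c$ in Corollary \ref{c-Corollary} are phrased using the map $g\colon S^3\to\om{Sp}(1)^\alpha$ with the same orientation convention, so the conventions do in fact match; I would state this explicitly to reassure the reader that no sign is being dropped. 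Everything else is a direct concatenation of the cited results, so no further computation is required.
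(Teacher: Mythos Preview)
Your proposal is correct and follows essentially the same approach as the paper: both combine Lemma \ref{CS-degree-formula} with Corollary \ref{c-Corollary} and Proposition \ref{B-IndexCalc} to identify $(\om{deg}g-\om{deg}h)/|\Gamma|$ with $\eps(\Har)/|\Gamma|$ modulo $\Z$, and then invoke Lemma \ref{R(G)-equation} to show independence of the chosen solution $\Har$. The only difference is the order of presentation, and your explicit remark on orientation conventions is a welcome addition.
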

\begin{proof} Let $E\ra S^4$ be a $\Gamma$-equivariant
$\om{SU}(2)$-bundle with $[E_N]=\alpha$ and $[E_S]=\beta$. Then by Proposition \ref{B-IndexCalc} the element
$\Har \coloneqq \om{Ind}(D_A^*)\in R(\Gamma)$, where $A$ is any $\Gamma$-invariant
connection in $E$, satisfies $(2-Q)\Har = \alpha-\beta$ and
$\eps(\Har) = c_2(E)[S^4]$. Moreover, by the classification theorem
\ref{Classification of Equivariant Bundles over $S^4$} and
Corollary \ref{c-Corollary} the integer $c_2(E)[S^4]$ satisfies a congruence 
\[ c_2(E)[S^4]\equiv \om{deg}g-\om{deg}h \mod{|\Gamma |},  \]
where $g\colon S^3\ra \om{Sp}(1)^\alpha$ and $h\colon S^3\ra \om{Sp}(1)^\beta$ are any choices of equivariant maps. From Lemma \ref{CS-degree-formula} we obtain
\[ \om{cs}(\alpha)-\om{cs}(\beta) \equiv (\om{deg}g-\om{deg}h)/|\Gamma |
\equiv c_2(E)[S^4]/|\Gamma | \equiv \eps(\Har)/|\Gamma |\mod{\Z}  .\]
This shows that the result is valid for a specific solution $\Har$.
However, by Lemma \ref{R(G)-equation} any other solution is given 
by $\Har+mR$, where $m\in \Z$ and $R=\C[\Gamma]$ is the regular representation.
Since $\om{dim}_\C R=|\Gamma |$ we conclude that $\eps(\Har)/|\Gamma |$
is independent of the solution $\Har$ modulo $\Z$ and the proof is
complete. 
\end{proof} 

This theorem reduces the calculation of Chern-Simons invariants to solving an equation in $R(\Gamma)$. Employing the
graphical solution procedure of Proposition \ref{R(G)-Adjacent-Solution}
one may easily solve for all the flat connections recursively
starting at the trivial connection. We will give an example in the
next section. 

\subsection{Connections to Group Cohomology}
Given a finite group $G$ its group cohomology may be defined
by $H^*(G;\Z)\coloneq H^*(BG;\Z)$, where $BG$ is its classifying space
(see for instance \cite[Chapter~II]{AdemMilgram04}). A representation
$\rho\colon G\ra U(n)$ gives rise to characteristic classes
$c_i(\rho)\in H^{2i}(\Gamma;\Z)$ by pulling back the universal
Chern classes $c_i \in H^{2i}(BU(n);\Z)$ along the induced map
$B\rho\colon B\Gamma\ra BU(n)$. The purpose of this section is to show that
for a finite group $\Gamma\subset \om{SU}(2)$ there is a natural
way to identify the Chern-Simons invariant $\om{cs}(\alpha)\in \R/\Z$
with the second Chern class $c_2(\rho_\alpha)\in H^4(\Gamma;\Z)$,
where $\rho_\alpha:\Gamma\ra \om{SU}(2)$ is the holonomy representation associated with $\alpha$. 

The following calculation is certainly known. We include a quick proof
for completeness.

\begin{lemma} Let $\Gamma\subset \om{SU}(2)$ be a finite subgroup.
Then $H^0(\Gamma;\Z)=\Z$, while for $i>0$
\[ H^i(\Gamma;\Z) \cong \left\{  \begin{array}{cl}
\Gamma^{ab} & \mbox{ for } i\equiv 2 \Mod{4} \\
\Z/|\Gamma | & \mbox{ for } i\equiv 0\Mod{4} \\
0 & \mbox{ for } i\equiv 1,3 \Mod{4} \end{array} \right. \]
Moreover, for any generator $e\in H^4(\Gamma;\Z)$ the cup product
$e\cup \colon H^i(\Gamma;\Z)\cong  H^{i+4}(\Gamma;\Z)$ is an isomorphism
for each $i>0$. \end{lemma}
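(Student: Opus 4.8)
The plan is to compute $H^*(\Gamma;\Z)$ for each finite subgroup $\Gamma\subset\om{SU}(2)$ by using the fact that $\Gamma$ acts freely on $S^3$, so that $S^3/\Gamma = Y_\Gamma$ is a model for a $3$-skeleton of $B\Gamma$, and then to exploit the periodicity coming from the Euler class of the associated $S^3$-bundle. First I would record the free action $\Gamma\curvearrowright S^3$ from Definition \ref{RightCosets}; since $S^3$ is $2$-connected, the classifying map $Y_\Gamma\ra B\Gamma$ is $3$-connected, so $H^i(\Gamma;\Z)\cong H^i(Y_\Gamma;\Z)$ for $i\le 2$ and $H^3(\Gamma;\Z)$ is a quotient of $H^3(Y_\Gamma;\Z)\cong\Z$. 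Combined with Lemma \ref{BP-homology} and the universal coefficient theorem this already gives $H^0=\Z$, $H^1=0$, $H^2\cong\Gamma^{ab}$, and pins down $H^3(\Gamma;\Z)$ once we know it must be finite (which follows from $H_*(\Gamma;\Q)=H_*(pt;\Q)$ as $\Gamma$ is finite), forcing $H^3(\Gamma;\Z)=0$ after checking there is no $\Z$ summand — the latter because $\widetilde H^3(B\Gamma;\Q)=0$.

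The heart of the argument is the $4$-periodicity. Since $\Gamma$ acts freely on $S^3$, Swan's theorem (or directly: the Gysin sequence of $S^3\ra S^3\times_\Gamma E\Gamma\ra B\Gamma$, i.e. of the spherical fibration $S^3\hookrightarrow Y_\Gamma\times_{\!} \cdots$) produces an Euler class $e\in H^4(\Gamma;\Z)$ whose cup product $e\cup(-)$ fits into a long exact Gysin sequence
\[
\cdots\ra H^{i}(\Gamma;\Z)\xrightarrow{\ e\cup\ } H^{i+4}(\Gamma;\Z)\ra H^{i+4}(Y_\Gamma;\Z)\ra H^{i+1}(\Gamma;\Z)\ra\cdots
\]
Because $H^j(Y_\Gamma;\Z)=0$ for $j\ge 4$, the map $e\cup\colon H^i(\Gamma;\Z)\ra H^{i+4}(\Gamma;\Z)$ is an isomorphism for all $i\ge 1$ (it is surjective because the next term $H^{i+4}(Y_\Gamma)$ vanishes, and injective because the previous term $H^{i+3}(Y_\Gamma)$ vanishes for $i\ge 1$). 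This is precisely the final assertion of the lemma, and it immediately propagates the low-degree computation: $H^{i+4}\cong H^i$ for $i>0$, so the groups cycle through $\Gamma^{ab}$ (degrees $\equiv 2$), $0$ (degrees $\equiv 3$), $H^4$ (degrees $\equiv 0$), $0$ (degrees $\equiv 1$).

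It remains to identify $H^4(\Gamma;\Z)\cong\Z/|\Gamma|$. I would do this via the Gysin sequence in degrees $0$ through $4$: the segment $0=H^{-1}(Y_\Gamma)\ra H^0(\Gamma)\xrightarrow{e\cup}H^4(\Gamma)\ra H^4(Y_\Gamma)=0$ is not quite available since $e\cup$ on $H^0$ need not be injective, so instead I would use the tail $H^3(\Gamma;\Z)=0\ra H^3(Y_\Gamma;\Z)\cong\Z\ra H^0(\Gamma;\Z)\cong\Z\xrightarrow{e\cup}H^4(\Gamma;\Z)\ra H^4(Y_\Gamma;\Z)=0$, where the connecting map $\Z\cong H^3(Y_\Gamma)\ra H^0(\Gamma)\cong\Z$ is multiplication by $\pm|\Gamma|$ — this is the standard computation of the transfer/degree for the covering $S^3\ra Y_\Gamma$, or equivalently follows from the fact that the Euler class of the flat $S^3=\om{Sp}(1)$-bundle integrates to $1$ over $\om{Sp}(1)$ (Lemma \ref{VolS3}) while $[Y_\Gamma]$ pulls back $[S^3]$ with multiplicity $|\Gamma|$. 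Exactness then gives $H^4(\Gamma;\Z)\cong\Z/|\Gamma|$. The main obstacle I anticipate is being careful about orientations and the precise multiplicity $|\Gamma|$ in the connecting homomorphism; everything else is a formal consequence of the Gysin sequence and the vanishing $H^{\ge 4}(Y_\Gamma;\Z)=0$.
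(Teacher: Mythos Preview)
Your proposal is correct and follows essentially the same route as the paper: both arguments run the Gysin sequence of the spherical fibration $S^3 \to E\Gamma\times_\Gamma S^3 \simeq Y_\Gamma \to B\Gamma$, use Lemma~\ref{BP-homology} for the low-degree cohomology of $Y_\Gamma$, and identify the connecting map with $p^*\colon H^3(Y_\Gamma)\to H^3(S^3)$ (multiplication by $|\Gamma|$ via the degree of the covering) to obtain $H^3(\Gamma)=0$ and $H^4(\Gamma)\cong\Z/|\Gamma|$. The only difference is that you take a small detour to argue $H^3(\Gamma;\Z)=0$ via the $3$-connectedness of $Y_\Gamma\to B\Gamma$ together with rational vanishing, whereas the paper reads it off directly from the injectivity of $p^*$ in the same Gysin segment; your reference to Lemma~\ref{VolS3} for the multiplicity is also unnecessary, since the degree of $p\colon S^3\to Y_\Gamma$ already gives $|\Gamma|$.
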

\begin{proof} Let $\Gamma$ act freely on $S^3$ in the standard way.
The action induces a fibration $S^3\ra S^3/\Gamma \ra B\Gamma$ (see for instance \cite[Lemma~6.2]{AdemMilgram04}).   
The associated Gysin sequence breaks up into the following pieces of
information:
\begin{enumerate}[label=(\arabic*),ref=(\arabic*)]
\item There are isomorphisms $H^i(\Gamma)\cong H^i(S^3/\Gamma)$ for $0\leq i\leq 2$.
\item There is an exact sequence 
\[ \begin{tikzcd} 0 \arrow{r} & H^3(\Gamma)\arrow{r} & H^3(S^3/\Gamma) \arrow{r}{p^*} & H^3(S^3) \arrow{r} & H^4(\Gamma) \arrow{r} & 0,
\end{tikzcd} \]
where $p:S^3\ra S^3/\Gamma$ is the quotient map.
\item The map $e\cup \colon H^i(\Gamma)\ra H^{i+4}(\Gamma)$ is surjective for 
$i=0$ and an isomorphism for $i\geq 1$, where $e\in H^4(\Gamma)$ is the Euler class of the fibration. 
\end{enumerate}
By Lemma \ref{BP-homology} we have $H^1(S^3/\Gamma)=0$ and $H^2(S^3/\Gamma)=\Gamma^{ab}$ so that
$H^1(\Gamma)=0$ and $H^2(\Gamma)\cong \Gamma^{ab}$ by the first point. Since
$p\colon S^3\ra S^3/\Gamma$ is a covering map of degree $|\Gamma|$, it follows
from the second point that $H^3(\Gamma)=0$ and $H^4(\Gamma)\cong \Z/|\Gamma|$.
The rest of the statement now follows from the third point.  
\end{proof} 

The above lemma does not give complete information about the product
structure in $H^*(\Gamma)$. The underlying Serre spectral sequence
giving rise to the Gysin sequence simply does not contain information
about the cup product $H^2(\Gamma;\Z)\times H^2(\Gamma;\Z)\ra H^4(\Gamma;\Z)$.
It is of course known that $H^*(\Z/m,\Z)\cong \Z[\beta]/(m\beta)$ with
$|\beta|=2$ for cyclic groups \cite[Exercise~6.7.4]{Weibel94}.
For the binary icosahedral group $I^*$ one has $(I^*)^{ab}=0$, so the cup product is necessarily trivial. However, for the remaining groups more refined techniques seem to be required.

\begin{definition} Define $e_\Gamma\in H^4(\Gamma;\Z)$ to be
$c_2(\iota)$ where $\iota\colon \Gamma\subset \om{SU}(2)$ denotes the
inclusion. \end{definition} 

\begin{proposition} Let $\Gamma\subset \om{SU}(2)$ be a finite subgroup.
Then $e_\Gamma\in H^4(\Gamma;\Z)$ is a generator. Define
$\tau\colon H^4(\Gamma;\Z)\ra \R/\Z$ to be the group homomorphism given by
$\tau(ke_\Gamma) = k/|\Gamma | \mod{\Z}$. Let $\alpha$ be a flat
connection in the trivial $\om{SU}(2)$-bundle over $Y_\Gamma$ with
associated representation $\rho_\alpha\colon \Gamma\ra \om{SU}(2)$. Then
\[ \tau(c_2(\rho_\alpha)) = -\om{cs}(\alpha) .\]
\end{proposition}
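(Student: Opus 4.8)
The plan is to identify both sides of the claimed equality $\tau(c_2(\rho_\alpha)) = -\om{cs}(\alpha)$ by relating them to a common quantity computed on $S^4$. The key observation is that a flat connection $\alpha$ with holonomy representation $\rho_\alpha \colon \Gamma \to \om{SU}(2)$ is classified, up to conjugacy, by the homotopy class of the map $B\rho_\alpha \colon B\Gamma \to B\om{SU}(2)$, and since $B\om{SU}(2) = \HH P^\infty$ with $c_2$ a generator of $H^4(\HH P^\infty;\Z)$, the class $c_2(\rho_\alpha) \in H^4(\Gamma;\Z)$ is exactly the image of this generator. On the other hand, using the fibration $S^3 \to S^3/\Gamma = Y_\Gamma \to B\Gamma$ from the preceding lemma, one has $H^4(\Gamma;\Z) \cong \Z/|\Gamma|$, and an equivariant map $g \colon S^3 \to \om{Sp}(1)^\alpha$ (which exists by Lemma \ref{Classification-Lemma}) descends to a map on the level of the fiber that records both $\om{cs}(\alpha)$ (via Lemma \ref{CS-degree-formula}, as $\om{deg}(g)/|\Gamma| \bmod \Z$) and a characteristic class on $B\Gamma$.

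First I would establish that $e_\Gamma = c_2(\iota)$ is a generator of $H^4(\Gamma;\Z) \cong \Z/|\Gamma|$. The cleanest route: the trivial $\om{SU}(2)$-bundle over $Y_\Gamma$ with the canonical flat connection $\theta = Q$ corresponds to the representation $\iota$, and by Example \ref{CS-Calc-Example1} we have $\om{cs}(Q) = -1/|\Gamma| \bmod \Z$, which (once the correspondence between $\om{cs}$ and $\tau \circ c_2$ is set up) forces $c_2(\iota)$ to have order exactly $|\Gamma|$, hence to be a generator. Alternatively one can argue directly: the map $g(x) = x^*$ from Example \ref{CS-Calc-Example1} has degree $-1$, so the corresponding clutching/Euler-class computation identifies $e_\Gamma$ with (minus) the Euler class of the fibration $S^3 \to Y_\Gamma \to B\Gamma$, which the previous lemma says is a generator. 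I would then define $\tau \colon H^4(\Gamma;\Z) \to \R/\Z$ by $\tau(ke_\Gamma) = k/|\Gamma| \bmod \Z$ as stated, noting this is well-defined precisely because $e_\Gamma$ is a generator of a cyclic group of order $|\Gamma|$.

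The main step is then the equality $\tau(c_2(\rho_\alpha)) = -\om{cs}(\alpha)$ for arbitrary $\alpha$. I would write $c_2(\rho_\alpha) = k_\alpha \, e_\Gamma$ for some $k_\alpha \in \Z/|\Gamma|$ and show $k_\alpha \equiv -\om{deg}(g_\alpha) \pmod{|\Gamma|}$ for any equivariant map $g_\alpha \colon S^3 \to \om{Sp}(1)^\alpha$; the result then follows from Lemma \ref{CS-degree-formula}. To see this, pull back along the quotient map $p \colon Y_\Gamma \to B\Gamma$: the composite $p^* c_2(\rho_\alpha) \in H^4(Y_\Gamma;\Z)$ vanishes since $H^4(Y_\Gamma;\Z) = 0$ for a $3$-manifold — so this does not directly detect $k_\alpha$. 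Instead I would use the map of fibrations comparing $S^3 \to Y_\Gamma \to B\Gamma$ with $S^3 \to ES^3 \times_{\om{SU}(2)} S^3 \to B\om{SU}(2)$, or more concretely: the associated $\om{Sp}(1)$-bundle $S^3 \times_\Gamma \om{Sp}(1)^\alpha \to Y_\Gamma$ is trivial (Lemma \ref{SP(1)-triv}), a trivialization is an equivariant map $g_\alpha \colon S^3 \to \om{Sp}(1)^\alpha$, and the ``difference'' between the $\rho_\alpha$-twisted $ES^3$-bundle and the untwisted one over $B\Gamma$ is measured by the image of $\om{deg}(g_\alpha)$ in $H^4(B\Gamma;\Z) = \Z/|\Gamma|$, which is exactly $\pm k_\alpha e_\Gamma$. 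Tracking the sign carefully against the normalization $\om{cs}(Q) = -1/|\Gamma|$ and $c_2(\iota) = e_\Gamma$ (so $k_Q = 1$ while $\om{deg}(g_Q) = \om{deg}(x \mapsto x^*) = -1$, giving $k_Q = -\om{deg}(g_Q)$, consistent) pins down the identity for all $\alpha$.

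The hard part will be the sign bookkeeping and making the ``difference class'' argument precise: one must be careful about the orientation conventions on $S^3 \subset \HH$ versus $\om{Sp}(1)$ (which differ by the orientation-reversing orbit map, as emphasized in the appendix's conventions subsection), about which of $B\rho_\alpha$ and the clutching construction introduces a sign, and about the normalization of $\tau$. My strategy to avoid a delicate spectral-sequence computation of the cup-product structure on $H^*(\Gamma;\Z)$ — which the preceding lemma explicitly does not provide — is to never actually evaluate $c_2(\rho_\alpha)$ as a product of degree-$2$ classes, but only to use that $H^4(\Gamma;\Z)$ is cyclic of order $|\Gamma|$ generated by $e_\Gamma$ and that both $\tau(c_2(\rho_\alpha))$ and $-\om{cs}(\alpha)$ are invariants of $\rho_\alpha$ that are additive under the relevant operations and that agree on the canonical representation; combined with Proposition \ref{CS-Calc-Tool} (which gives $\om{cs}(\alpha) - \om{cs}(\beta) = \eps(\Har)/|\Gamma|$ with $(2-Q)\Har = \alpha - \beta$), one reduces to checking that $\tau(c_2(\rho_\alpha)) - \tau(c_2(\rho_\beta))$ satisfies the same recursion, which ultimately comes down to the index-theoretic relation $\eps(\Har) = c_2(E)[S^4]$ from Proposition \ref{B-IndexCalc} together with the classification Theorem \ref{Classification of Equivariant Bundles over $S^4$} and Corollary \ref{c-Corollary}. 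So in effect the whole statement is assembled from results already proved in this appendix, and the only genuinely new content is the homotopy-theoretic identification of $c_2(\rho_\alpha)$ with the ``degree mod $|\Gamma|$'' of the classifying trivialization, plus the sign.
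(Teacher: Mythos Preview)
Your map-of-fibrations idea is the correct one and is exactly what the paper does, but you never commit to it, and the alternatives you pile on around it obscure how short the argument actually is. The paper's execution: compose an equivariant map $g\colon S^3\to\om{Sp}(1)^\alpha$ with the orientation-reversing orbit map $\om{Sp}(1)\to S^3$, $x\mapsto x\cdot 1$, to get $h\colon S^3\to S^3$ equivariant along $\rho_\alpha$; this induces a morphism of $S^3$-fibrations over $B\rho_\alpha\colon B\Gamma\to B\om{SU}(2)$. Naturality of the Serre transgression $d_4\colon E_4^{0,3}\to E_4^{4,0}$ then gives in one line
\[
c_2(\rho_\alpha)=(B\rho_\alpha)^*c_2=d_4(h^*u)=(\om{deg}h)\,d_4(u)=(\om{deg}h)\,e_\Gamma,
\]
where $u\in H^3(S^3)$ is the generator with $d_4(u)=c_2$ for the universal fibration. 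Specializing to $\rho_\alpha=\iota$ one may take $h=\om{id}$, which simultaneously shows $e_\Gamma=d_4(u)$ is a generator (the lower $d_4$ is surjective since $H^4(Y_\Gamma)=0$). Since $\om{deg}h=-\om{deg}g$, Lemma~\ref{CS-degree-formula} finishes. There is no cup-product computation in $H^*(\Gamma;\Z)$ anywhere; your worry about that is misplaced, and the entire sign bookkeeping collapses to the single observation $\om{deg}h=-\om{deg}g$.

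Your recursive alternative via Proposition~\ref{CS-Calc-Tool} does not avoid the work. To show that $\tau(c_2(\rho_\alpha))-\tau(c_2(\rho_\beta))$ obeys the same recursion you would need to identify $c_2(\rho_\alpha)-c_2(\rho_\beta)$ with $c_2(E)[S^4]\bmod|\Gamma|$ for the equivariant bundle $E$, and Corollary~\ref{c-Corollary} expresses the latter as $\om{deg}g_\alpha-\om{deg}g_\beta$. So you are reduced to proving $c_2(\rho_\alpha)\equiv -\om{deg}g_\alpha\cdot e_\Gamma$ for each $\alpha$ separately, which is precisely the statement you set out to prove. The recursion buys nothing unless you have an independent representation-theoretic formula for $c_2(\rho_\alpha)-c_2(\rho_\beta)$, and that would be harder than the three-line transgression argument above.
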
 
\begin{proof} According to Lemma \ref{Classification-Lemma} there
exists an equivariant map $g\colon S^3\ra \om{Sp}(1)\cong \om{SU}(2)$ in the
sense that $g(\gamma\cdot q)= \rho_\alpha(\gamma)\cdot g(q)$ for each $\gamma\in \Gamma$ and $q\in S^3$. Composing this map
with the orientation reversing orbit map $\om{SU}(2)\ra S^3$ given by
$x\mapsto x\cdot 1$, we obtain a map $h:S^3\ra S^3$, which is equivariant
along $\rho_\alpha:\Gamma\ra \om{SU}(2)$. Here $\om{SU}(2)$ acts
on $S^3\subset \C^2$ in the standard way. Note that $\om{deg}h = -\om{deg}g$. There is an induced morphism of fibrations
\[ \begin{tikzcd} S^3 \arrow{d}{h} \arrow{r} & E\Gamma \times_\Gamma S^3 \arrow{d} \arrow{r} & B\Gamma \arrow{d}{B\rho_\alpha} \\
 S^3 \arrow{r} & E\om{SU}(2)\times_{\om{SU}(2)} S^3 \arrow{r} & B\om{SU}(2),
\end{tikzcd}  \]
where $E\Gamma \ra B\Gamma$ and $E\om{SU}(2)\ra B\om{SU}(2)$ are the
universal bundles. As both $\Gamma$ and $\om{SU}(2)$ act freely on
$S^3$ there are homotopy equivalences
\[ E\Gamma \times_\Gamma S^3 \simeq S^3/\Gamma \; \mbox{ and } \;
 E\om{SU}(2) \times_{\om{SU}(2)} S^3 \simeq S^3/\om{SU}(2)=* .\]
The above morphism of fibrations induces a morphism of the associated (cohomology) Serre spectral sequences. As both fibrations have $S^3$ as fiber
the only nontrivial differentials are $d_4\colon E^{p,3}_4\ra E^{p+4,0}_4$ for $p\geq 0$. In particular, we obtain a commutative diagram
\[ \begin{tikzcd} H^3(S^3;\Z)= E^{0,3}_4 \arrow{r}{d_4} \arrow{d}{h^*} &
E^{4,0}_4=H^4(B\om{SU}(2);\Z)  \arrow{d}{B\rho_\alpha ^*} \\
H^3(S^3;\Z)=E^{0,3}_4 \arrow{r}{d_4} & E^{4,0}_4=H^4(B\Gamma;\Z)
\end{tikzcd} \] 
The upper differential must be an isomorphism since the spectral sequence
converges to $H^*(*)=\Z$. Let $u\in H^3(S^3;\Z)$ be the unique generator
with $d_4(u) = c_2 \in H^4(B\om{SU}(2);\Z)$. Then from the commutativity
of the diagram we obtain
\[ c_2(\alpha)=(B\rho_\alpha)^*(c_2) = d_4(h^*u)=(\om{deg}h) d_4(u)=(\om{deg}h)e_\Gamma  .\]
For the final equality, consider the same commutative diagram with
$\rho_\alpha$ replaced by the inclusion $\iota\colon \Gamma\subset \om{SU}(2)$.
Then we can take $h\colon S^3\ra S^3$ to be the identity, and it follows from
the same calculation as above that $e_\Gamma\coloneqq B\iota^*(c_2)=d^4(u)$.
This also shows that $e_\Gamma$ is a generator for $H^4(B\Gamma;\Z)$
as the lower differential must be surjective because $H^4(S^3/\Gamma;\Z)=0$. 

To conclude, recall from Lemma \ref{CS-degree-formula} that for the
equivariant map $g\colon S^3\ra \om{SU}(2)$ we started with, it holds true
that $\om{cs}(\alpha) = \om{deg}(g)/|\Gamma | \mod{\Z}$. Hence,
\[ \tau(c_2(\alpha)) = \tau(\om{deg}(h)e_\Gamma)
= -\tau(\om{deg}(g) e_\Gamma) = -\om{deg}(g)/|\Gamma | \mod{\Z}  \]
and the proof is complete. \end{proof} 

\begin{example} Let $\Gamma = T^*$. There are three flat connections
$\theta$, $\alpha$ and $\lambda$. Here $\theta$ corresponds to the
trivial representation, $\alpha$ corresponds to the canonical representation
$Q$, while $\lambda$ is reducible and corresponds to a representation
$\rho\oplus \rho^*$ where $\rho\colon T^*\ra U(1)$ is one of the $1$-dimensional representations (see the character table in $A.3$). By example
\ref{CS-Calc-Example1} we know that $\om{cs}(\alpha)=-1/|T^*|=-1/24$,
and, of course, $\om{cs}(\theta)=0$. To determine $\om{cs}(\lambda)$
we solve the equation $(2-Q)\Har = \alpha-\lambda$ in $R(\Gamma)$ using the technique of Proposition \ref{R(G)-Adjacent-Solution}. This yields a solution
$\Har$ with $\eps(\Har)=\om{dim}_\C \Har=15$. Then by Proposition
\ref{CS-Calc-Tool} we obtain
\[ \om{cs}(\alpha)-\om{cs}(\lambda) = \eps(\Har)/|T^*| = 15/24 \mod{\Z} .\]
Hence, $\om{cs}(\lambda) = -2/3 = 1/3 \mod{\Z}$.

This calculation can be used to determine the cup product 
$H^2(T^*;\Z)\times H^2(T^*;\Z)\ra H^4(T^*;\Z)$ using the
above proposition. As $c_1\colon \om{Hom}(T^*,U(1))\ra H^2(T^*;\Z)$ is
an isomorphism, we can take $c_1(\rho)\in H^2(T^*;\Z)\cong \Z/3$
as a generator. Then
\[ -c_1(\rho)^2 = c_1(\rho)\cup c_1(\rho^*) = c_2(\rho\oplus \rho^*)
= c_2(\lambda) = 8e_\Gamma .\]
In particular, the cup product is an isomorphism from 
$\Z/3\otimes \Z/3 \cong \Z/3$ onto the unique cyclic subgroup $8\Z/(24)$
of order $3$.   
\end{example}

\printbibliography

\end{document}